\documentclass[a4paper,11pt,reqno]{amsart}

\usepackage[utf8]{inputenc}
\usepackage{amsmath}
\usepackage{amssymb}
\usepackage{amsthm}
\usepackage{hyperref}
\usepackage{color}
\usepackage{a4wide}
\usepackage{dsfont}
\usepackage{mathdots}
\usepackage{tikz-cd}
\usepackage{enumitem}
\usepackage{comment}
\usepackage{euscript}
\usepackage{appendix}
\usepackage[normalem]{ulem}

\usepackage{mathrsfs}

\usepackage[new]{old-arrows}

\makeatletter

\newcommand{\fraka}{\mathfrak{a}}
\newcommand{\frakg}{\mathfrak{g}}
\newcommand{\frakh}{\mathfrak{h}}

\newcommand{\frakm}{\mathfrak{m}}
\newcommand{\frakn}{\mathfrak{n}}

\newcommand{\frakp}{\mathfrak{p}}

\newcommand{\fraku}{\mathfrak{u}}

\newcommand{\CC}{\mathbb{C}}
\newcommand{\DD}{\mathbb{D}}

\newcommand{\NN}{\mathbb{N}}

\newcommand{\RR}{\mathbb{R}}
\newcommand{\ZZ}{\mathbb{Z}}

\newcommand{\calD}{\mathcal{D}}

\newcommand{\calF}{\mathcal{F}}

\newcommand{\calL}{\mathcal{L}}

\newcommand{\calP}{\mathcal{P}}

\newcommand{\N}{\mathbb{N}}
\newcommand{\Z}{\mathbb{Z}}

\newcommand{\R}{\mathbb{R}}
\newcommand{\C}{\mathbb{C}}
\newcommand{\F}{\mathbb{F}}

\DeclareMathOperator{\Cay}{Cay}

\DeclareMathOperator{\Ind}{Ind}

\DeclareMathOperator{\ad}{ad}
\DeclareMathOperator{\Ad}{Ad}
\DeclareMathOperator{\End}{End}
\DeclareMathOperator{\Hom}{Hom}

\DeclareMathOperator{\Span}{span}
\DeclareMathOperator{\Pol}{Pol}

\DeclareMathOperator{\id}{id}

\DeclareMathOperator{\diag}{diag}

\renewcommand\Im{\operatorname{Im}}

\newcommand{\frakb}{{\mathfrak b}}
\newcommand{\frakl}{{\mathfrak l}}
\newcommand{\fraky}{{\mathfrak y}}

\newcommand{\abs}[1]{\left\vert#1\right\vert}

\newcommand{\ord}{{\mathrm{ord}}}
\newcommand{\irredsupp}{\mathrm{irred\text{-}supp}}

\newcommand{\irred}{\mathrm{irred}}

\DeclareMathOperator{\supp}{supp}
\DeclareMathOperator{\Supp}{Supp}

\DeclareMathOperator{\Diff}{Diff}
\DeclareMathOperator{\Sol}{Sol}

\newcommand{\fraksl}{{\mathfrak{sl}}}

\newcommand{\wJ}{\widetilde{J}}

\newcommand{\valpha}{\vec{\alpha}}

\newcommand{\Mb}{M^{\frakg}_{\frakb}}
\newcommand{\Mp}{M^{\frakg}_{\frakp}}
\newcommand{\Mpp}{M^{\frakg'}_{\frakp'}}
\newcommand{\Mbsl}{M^{\frakg_{\fraksl}}_{\frakb_{\fraksl}}}

\DeclareMathOperator{\To}{\longrightarrow}

\DeclareMathOperator{\Symb}{Symb}
\DeclareMathOperator{\Rest}{Rest}
\DeclareMathOperator{\sym}{sym}

\DeclareMathOperator{\Aut}{Aut}

\DeclareMathOperator{\wSol}{\widetilde{\Sol}}

\DeclareMathOperator{\Ker}{Ker}

\theoremstyle{plain}
\newtheorem{theorem}[equation]{Theorem}
\newtheorem{proposition}[equation]{Proposition}
\newtheorem{lemma}[equation]{Lemma}
\newtheorem{corollary}[equation]{Corollary}

\newtheorem{fact}[equation]{Fact}
\newtheorem{thmalph}{Theorem}

\theoremstyle{definition}
\newtheorem{definition}[equation]{Definition}
\newtheorem{example}[equation]{Example}

\newtheorem{remark}[equation]{Remark}

\numberwithin{equation}{section}

\theoremstyle{definition}
\newtheorem{problem}{Problem}

\title[differential symmetry breaking operators]{The ordered F-system for the F-method and differential symmetry breaking operators for $(GL(3,\mathbb{R}), GL(2,\mathbb{R}))$ }

\author{Jonathan Ditlevsen}
\address{Graduate School of Mathematical Sciences, The University of Tokyo, 3-8-1 Komaba, Meguro-ku, Tokyo 153-8914, Japan}
\email{JonathanDitlevsen@gmail.com}

\author{Toshihisa Kubo}
\address{Faculty of Economics, 
Ryukoku University, 67 Tsukamoto-cho, Fukakusa, Fushimi-ku, Kyoto 612-8577, Japan
}
\email{toskubo@econ.ryukoku.ac.jp}

\author{V{\' i}ctor P{\'e}rez-Vald{\'e}s}
\address{Graduate School of Mathematical Sciences, The University of Tokyo, 3-8-1 Komaba, Meguro-ku, Tokyo 153-8914, Japan}
\email{perez-valdes@g.ecc.u-tokyo.ac.jp}

\begin{document}

\subjclass{%2020
22E46, %Semisimple Lie groups and their representations
17B10, %Representations of Lie algebras and Lie superalgebras, algebraic theory (weights)
33C45} %Orthogonal polynomials and functions of hypergeometric type 

\keywords{differential symmetry breaking operator,
differential intertwining operator,
F-method, 
symmetrization operator,
ordered F-system,
branching law, 
generalized Verma module}

\date{\today}

%%%%%%%%%%%%%%%%%%%%%%%%%%%%%%%%%%%%%%%%%%
\begin{abstract} 
In this paper, we introduce a new aspect of the F-system for the F-method arising in the case where the nilpotent radical is not necessarily abelian.
For this, we define the \emph{symmetrization operator}
on the space of polynomial functions on the dual $\frakg^\vee$ 
of a finite-dimensional Lie algebra $\frakg$. 
In the context of the F-method,
the conjugation by this operator 
of the F-system yields 
a new system, which we call the \emph{ordered F-system}.
These two techniques allow one to consider the symmetrized form 
(with symmetrization) and ordered form (without symmetrization) of differential symmetry breaking operators (DSBOs) uniformly. 

As an application of this theory,
we classify and construct all the DSBOs 
between principal series representations
for $(GL(3,\R), GL(2,\R))$ in both symmetrized and ordered forms.
Here, we consider all embeddings $GL(2,\R) \hookrightarrow GL(3,\R)$ 
corresponding to the positive roots of $\mathfrak{gl}(3,\R)$. Furthermore, we utilize the DSBOs for the above pair to investigate differential intertwining operators (DIOs) for $GL(3,\R)$ and DSBOs for $(SL(3,\R), SL(2,\R))$. The branching laws of the corresponding Verma modules are also studied to support our results of DSBOs.
\end{abstract}

%%%%%%%%%%%%%%%%%%%%%%%%%%%%%%%%%%%%%%%%%%

\maketitle

%%%%%%%%%%%%%%%%%%%%%%%%%%%%%%%%%%%%%%%%%%
\setcounter{tocdepth}{1}
\tableofcontents

%%%%%%%%%%%%%%%%%%%%%%%%%%%%%%%%%%%%%%%%%
\section{Introduction}

Intertwining operators have played a central role in the representation theory of reductive groups since the origin of the field. 
Nevertheless, the classification and explicit construction of such operators remain difficult problems in general. One natural setting for these questions is that of branching problems, where one considers a pair of reductive groups $(G,G')$ with $G'\subseteq G$ (see \cite{Ko24} for an overview on branching problems). In this context, one may consider 
$G'$-intertwining operators in the space
\[
\Hom_{G'}(\pi|_{G'},\tau),
\]
where $\pi$ (resp.\ $\tau$) is a representation of $G$ (resp.\ $G'$). Such operators are often referred to as \emph{symmetry breaking operators} (SBOs). 
Furthermore, the SBOs that can be realized as differential operators 
are called
\emph{differential symmetry breaking operators} (DSBOs).

The DSBOs are of particular interest in this paper. More precisely,
we consider the classification and construction problems of DSBOs between principal
series representations (induced from minimal parabolic subgroups) for the following pairs of groups  $(G,G')$:
\begin{equation}\label{eqn:GGprime}
(GL(3,\R), GL(2,\R)),
\qquad
(GL(3,\R), GL(3,\R)),
\qquad
(SL(3,\R), SL(2,\R)).
\end{equation}
Moreover, we also investigate the branching law of 
the Verma module of $\frakg$ restricted to $\frakg'$
for the following pairs of 
complex Lie algebras $(\frakg, \frakg')$:
\begin{equation}\label{eqn:ggprime}
(\mathfrak{gl}(3,\C), \mathfrak{gl}(2,\C)),
\qquad
(\mathfrak{sl}(3,\C), \mathfrak{sl}(2,\C)).
\end{equation}
We remark that we consider all embeddings 
$\iota \colon G' \hookrightarrow G$ 
corresponding to the positive roots of $\mathfrak{gl}(3,\R)$
for $(G, G')=(GL(3,\R), GL(2,\R)),(SL(3,\R), SL(2,\R))$
and $(\frakg, \frakg')$ in \eqref{eqn:ggprime}.

Observe that the unipotent radical of the  minimal parabolic subgroup of $G$
in \eqref{eqn:GGprime} is non-abelian.
To deal with the formulas of DSBOs for the non-abelian setting uniformly,
we introduce two new techniques, namely, 
the \emph{symmetrization operator} and the \emph{ordered F-system}.
In what follows, we shall discuss our results in order.

%%%%%%%%%%%%%%%%%%%%%%%%%%%%%%%%%%%%%%%%%
\subsection{Symmetrization operator and ordered F-system}

In general, the classification and construction of DSBOs are challenging problems, even in the classical case $G'=G$. A major breakthrough for these problems was the introduction of the \emph{F-method} by T.~Kobayashi \cite{Kobayashi13}.
This is a powerful technique for the classification and construction of DSBOs between representations induced from parabolic subgroups. The fundamental idea is to translate 
the problem of classifying and constructing DSBOs into that of finding polynomial solutions to a certain system of partial differential equations. This is done via the algebraic Fourier transform $F_c$ of 
generalized Verma modules (GVMs)
and the duality map $\EuScript{D}$ between
the homomorphisms between GVMs and DSBOs. 
The resulting system of PDEs is referred to as 
the \emph{F-system} \cite[Fact 3.3]{KoKuPe16}.
The diagram \eqref{eqn:IntroSymb-triangle} below illustrates a brief idea of the F-method.
We remark that the F-method in principle works for any unipotent radicals
(not necessarily abelian).
\begin{equation}
\label{eqn:IntroSymb-triangle}
\begin{tikzcd}[column sep=1cm]
	& {\text{\fbox{Polynomial solutions to the F-system}}} &\\
	{\text{\fbox{Hom(GVMs)}}}&& {\text{\fbox{DSBOs}}}
	\arrow["{\text{algebraic Fourier transform $F_c$}}","{\sim}"', from=2-1, to=1-2]
	\arrow["{\text{duality map $\EuScript{D}$}}"',"{\sim}", from=2-1, to=2-3]
	\arrow["{\EuScript{D} \circ F_c^{-1}}","{\sim}"', dashed,from=1-2, to=2-3]
\end{tikzcd}
\end{equation}
Here ``Hom(GVMs)'' in \eqref{eqn:IntroSymb-triangle} 
means the homomorphisms between GVMs.

Better yet,
in \cite{KoPe16a,KoPe16b}, T.~Kobayashi and M.~Pevzner further
showed that if the unipotent radical of the parabolic subgroup in concern is abelian, then the composition $\EuScript{D} \circ F_c^{-1}$ 
in \eqref{eqn:IntroSymb-triangle}
can be realized as
$\Rest \circ \Symb^{-1}$,
where $\Rest$ is the restriction operator with respect to the inclusion $G'\hookrightarrow G$
and $\Symb^{-1}$ is the inverse of the symbol map $\Symb$ of differential operators with constant coefficients. The diagram \eqref{eqn:IntroSymb} below 
illustrates this procedure. (For simplicity, we omit the restriction operator 
$\Rest$.)
\begin{equation}
\label{eqn:IntroSymb}
\begin{tikzcd}[column sep=1cm]
%	{\text{(the F-system)}} &
	{\fbox{Polynomial solutions to the F-system}} && {\fbox{DSBOs}}
	&{\text{(abelian case)}} 
%	\arrow[from=1-1, to=1-2]
	\arrow["{\operatorname{Symb}^{-1}}","{\sim}"', from=1-1, to=1-3]
\end{tikzcd}
\end{equation}
This viewpoint revealed deep connections between DSBOs and orthogonal polynomials such as Gegenbauer polynomials and Jacobi polynomials. 
For instance, the celebrated Rankin--Cohen brackets can be realized by using 
Jacobi polynomials via $\Symb^{-1}$, see \cite[Thm.\ 8.1]{KoPe16b}.
For relevant work on DSBOs, see, for instance,
\cite{Kobayashi13, Kobayashi14, KoKuPe16, KoPe16a, KoPe16b, KOSS15, KuOr26, Nakahama23,PV23, PV26b,  PV26a,Somberg25} among others.

In general, unless the unipotent radical is abelian, the inverse $F_c^{-1}$ of the algebraic Fourier transform is not explicit. Thus, it had been unclear how to generalize the procedure in \eqref{eqn:IntroSymb} to the non-abelian setting.
Recently, the last two authors established 
this generalization
through the introduction of the \emph{truncated symbol map} $\Symb_0$ 
of left-invariant differential operators \cite{KPV25}.
For the details of this map, see \cite[Sect.~3]{KPV25}. 
In the case where the unipotent radical is not necessarily abelian, 
any DSBO can be obtained via 
the inverse $\Symb_0^{-1}$ of the truncated symbol map as follows.
\begin{equation}\label{eqn:IntroSymb2}
\begin{tikzcd}[column sep=1cm]
%	{\text{(the F-system)}} &
	{\fbox{Polynomial solutions to the F-system}} && {\fbox{DSBOs}}
	&{\text{(general case)}} 
%	\arrow[from=1-1, to=1-2]
	\arrow["{\operatorname{Symb}_0^{-1}}", "{\sim}"', from=1-1, to=1-3]
\end{tikzcd}
\end{equation}

A key idea for overcoming the obstacle of non-commutativity is to apply symmetrization to the left-invariant differential operators corresponding to polynomial solutions. Roughly speaking,
if $\zeta_1\zeta_2$ is a polynomial solution  
in the source side of \eqref{eqn:IntroSymb2}, then $\Symb_0^{-1}(\zeta_1\zeta_2)$ is given as
\begin{equation}\label{eqn:IntroD1D2}
\Symb_0^{-1}(\zeta_1\zeta_2) = \frac{1}{2}(D_1D_2+D_2D_1),
\end{equation}
where $D_1, D_2$ are the left-invariant differential operators
corresponding to $\zeta_1, \zeta_2$, respectively.
Therefore, the symmetrization appears in the explicit formula of 
a DSBO obtained through $\Symb_0^{-1}$.
We call such an expression of a DSBO the \emph{symmetrized form}.
Hence, we have the following.
\begin{equation}\label{eqn:IntroSymb3}
\begin{tikzcd}[column sep=1cm]
	{\fbox{Polynomial solutions to the F-system}} && {\fbox{DSBOs in symmetrized form}}
	\arrow["{\operatorname{Symb}_0^{-1}}","{\sim}"', from=1-1, to=1-3]
\end{tikzcd}
\end{equation}

On the other hand, DSBOs can also be expressed in terms of
the PBW-basis elements induced from a fixed ordered basis
of the nilpotent radical of the parabolic subalgebra in concern.
We call such an expression
the \emph{ordered form} with respect to the fixed ordered basis.
For instance, let $D_3$ denote the left-invariant differential operator 
such that $[D_1,D_2]=-D_3$ for  $D_1,D_2$ 
introduced in \eqref{eqn:IntroD1D2}. 
If we impose an ordering on $\{D_1,D_2\}$ in which $D_1$ comes
before $D_2$, then $\Symb_0^{-1}(\zeta_1\zeta_2)$ can also be given as
\begin{equation*}
\Symb_0^{-1}(\zeta_1\zeta_2) 
= \frac{1}{2}(D_1D_2+D_2D_1)=D_1D_2+\frac{1}{2}D_3.
\end{equation*}
The second expression is the ordered form with respect to the given ordering.

Therefore, DSBOs admit two types of expressions, namely, symmetrized form (with symmetrization) and  ordered form (without symmetrization).
See Definition \ref{def:symmetrized-ordered-forms} for 
the definitions of these forms.
Then one may consider the following problem.

\begin{problem}\label{prob:ordered}
Given a PBW-basis,
construct the ordered form of a DSBO directly from a polynomial
via a certain linear map, say $\Symb_{\ord}^{-1}$.
\end{problem}

One may understand Problem \ref{prob:ordered} 
as finding a suitable system of PDEs and a linear map 
$\Symb_{\ord}^{-1}$ such that the following diagram holds.
\begin{equation*}
\begin{tikzcd}
	{\fbox{Polynomial solutions to certain PDEs}} && {\fbox{DSBOs in ordered form}}
	\arrow["{\operatorname{Symb}_{\ord}^{-1}}", "{\sim}"',from=1-1, to=1-3]
\end{tikzcd}
\end{equation*}

In this paper, we solve Problem \ref{prob:ordered} 
by introducing the \emph{symmetrization operator} 
$S$ on the space $\Pol(\frakg^\vee)$
of polynomial functions on the dual $\frakg^\vee$ of a
finite-dimensional Lie algebra $\frakg$.
This can be considered as the pull-back of the symmetrization $\sym_{\ord}$
on the universal enveloping algebra $U(\frakg)$ via a certain linear isomorphism
$\Upsilon_{\ord}\colon \Pol(\frakg^\vee) \stackrel{\sim}{\to} U(\frakg)$. 
See Section \ref{sec:nonsym} for the precise  definition.
Quite surprisingly, this operator turns out to be a differential operator of infinite order 
if $\frakg$ is a Heisenberg Lie algebra.
For instance, if $\frakg$ is the three-dimensional Heisenberg Lie algebra $\frakh_{3}$, then
the symmetrization operator $S$ can be realized as
\begin{equation*}
S=e^J:=\sum_{\ell=0}^\infty \frac{J^\ell}{\ell!},
\end{equation*}
where
\begin{equation*}
J:=\frac{\zeta_3}{2}\frac{\partial^2}{\partial\zeta_1\partial\zeta_2}.
\end{equation*}
The symmetrization operator $S$ further relates the ``non-commutativity'' of $\frakh_3$ to a ${}_2F_0$-hypergeometric polynomial, also known as the 
generalized Bessel polynomial.
For the details,
see Section \ref{sec:Upn} for the three-dimensional case $\frakh_3$ and 
Appendix \ref{appendix:Heis} for 
Heisenberg Lie algebras of arbitrary dimension. 

Let $D_F$ denote the given F-system arising from the F-method. 
Then, the key idea for solving Problem \ref{prob:ordered} 
is to consider another system of PDEs obtained by conjugating $D_F$ by $S$, that is, $\Ad(S)D_F:=S \circ D_F \circ S^{-1}$.
We call this new system $\Ad(S)D_F$ 
the \emph{ordered F-system}.
The next theorem explains the motivation for the term; for the precise statement,
see Theorem \ref{thm:symb_ord}.

\begin{thmalph}[{{See Theorem \ref{thm:symb_ord}}}]\label{Intro:A}
Let $D_F$ be the given F-system.
Also, let $\Sol_{D_F}$ and $\Sol_{\Ad(S)D_F}$ denote the space of polynomial
solutions to $D_F$ and $\Ad(S)D_F$, respectively. Then, there exists a linear 
isomorphism $\Symb_{\ord}^{-1}$ such that the following 
diagram commutes.
\begin{equation}\label{eqn:A}
\begin{tikzcd}[row sep=1cm, column sep=1.5cm]
	{\Sol_{\Ad(S)D_F}} 
	& {\fbox{\textnormal{DSBOs in ordered form}}}
	\arrow[dl, pos=0.5, phantom, "\circlearrowleft"]\\
		{\Sol_{D_F}} 
	& {\fbox{\textnormal{DSBOs in symmetrized form}}}
	\arrow["{\sim}" sloped,"{S^{-1}}"', from=1-1, to=2-1]
	\arrow["{\id}", "{\sim}"' sloped, from=1-2, to=2-2]
	\arrow["{\operatorname{Symb}_{\ord}^{-1}}","{\sim}"', from=1-1, to=1-2]
	\arrow[ "{\sim}","{\operatorname{Symb}_{0}^{-1}}"',from=2-1, to=2-2]
\end{tikzcd}
\end{equation}
\end{thmalph}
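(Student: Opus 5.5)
The plan is to define $\Symb_{\ord}^{-1}$ as the composite that forces the square \eqref{eqn:A} to commute, namely $\Symb_{\ord}^{-1}:=\Symb_0^{-1}\circ S^{-1}$, restricted to $\Sol_{\Ad(S)D_F}$. Everything then reduces to three facts. First, $S$ is a linear automorphism of $\Pol(\frakg^\vee)$ (or of the relevant $\Pol(\frakn^\vee)$ carrying the F-system). Second, $S^{-1}$ maps $\Sol_{\Ad(S)D_F}$ bijectively onto $\Sol_{D_F}$. Third, the image of this composite is exactly the ordered form of the DSBO, in the sense of Definition \ref{def:symmetrized-ordered-forms}.

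For the first fact I would use the definition of $S$ as a pull-back, $S=\Upsilon_{\ord}^{-1}\circ\sym_{\ord}\circ\Upsilon_{\ord}$, where $\Upsilon_{\ord}$ sends a monomial $\zeta^\alpha$ to the ordered PBW monomial. The PBW theorem shows that both $\Upsilon_{\ord}$ and the symmetrization map $\Pol(\frakg^\vee)\cong S(\frakg)\to U(\frakg)$ are linear isomorphisms. Hence $S$ is invertible. It also preserves degree filtrations and is the identity on the top-degree part, so it preserves finite-dimensional filtered pieces; this makes $S^{-1}$ well defined on polynomials even when $S$ is written as an infinite-order operator such as $e^J$.

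The second fact is formal. Since $\Ad(S)D_F=S\circ D_F\circ S^{-1}$, we have $(\Ad(S)D_F)\,p=0$ if and only if $D_F(S^{-1}p)=0$. Thus $S^{-1}\colon\Sol_{\Ad(S)D_F}\to\Sol_{D_F}$ is a bijection with inverse $S$. Composing with the isomorphism $\Symb_0^{-1}\colon\Sol_{D_F}\stackrel{\sim}{\to}$ DSBOs from \cite{KPV25} gives an isomorphism $\Symb_{\ord}^{-1}$, and the square commutes by construction.

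The third fact is where the real content lies, and I expect it to be the main obstacle. We must check that, for $p\in\Sol_{\Ad(S)D_F}$, expanding $p$ in the monomial basis and replacing each $\zeta^\alpha$ by the ordered PBW monomial in the corresponding left-invariant operators (that is, applying $\Upsilon_{\ord}$, then the right regular action, then restriction) yields the same operator as $\Symb_0^{-1}(S^{-1}p)$.

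To prove this, I would first recall from \cite[Sect.~3]{KPV25} that $\Symb_0^{-1}$ factors as $\Rest\circ dR\circ\sym\circ(\text{identification } \Pol\cong S(\frakn))$; in other words, $\Symb_0^{-1}$ is the symmetrization map followed by realization as differential operators. The definition of $S$ then gives $\sym(S^{-1}p)=\Upsilon_{\ord}(p)$. Applying $\Rest\circ dR$ to both sides identifies $\Symb_0^{-1}(S^{-1}p)$ with the ordered-form expression $\Rest\circ dR(\Upsilon_{\ord}(p))$. This is precisely the statement that $\Symb_{\ord}^{-1}:=\Rest\circ dR\circ\Upsilon_{\ord}$ is the ordered form, and it shows that the right vertical arrow is $\id$.

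The delicate points are twofold. One must match the conventions ($dR$ versus $dL$, the sign in the Fourier transform, and $\frakn$ versus $\frakn_-$) so that the $\sym$ appearing in $\Symb_0$ is exactly the $\sym_{\ord}$ used to define $S$. One must also confirm that the identification of $\Pol$ with $S(\frakn)$ uses the same coordinates $\zeta_j$ as the ordered basis.
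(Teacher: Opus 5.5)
Your argument is essentially the paper's proof of Theorem \ref{thm:symb_ord}. The bijection $S_{\ord}^{-1}\colon \Sol_{\Ad(S)D_F}\to\Sol_{D_F}$ is Proposition \ref{prop:SolA}. Your identity $\sym(S_{\ord}^{-1}p)=\Upsilon_{\ord}(p)$ is Proposition \ref{prop:FcU}, namely $F_c^{-1}=\sym_{\ord}\circ\Upsilon_{\ord}=\Upsilon_{\ord}\circ S_{\ord}$ combined with Proposition \ref{prop:Fc}. The paper defines $\Symb_{\ord}^{-1}:=dR\circ(\Upsilon_{\ord}\otimes\id_W)$ directly rather than as $\Symb_0^{-1}\circ S^{-1}$, but these are the same map.

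The step you call formal needs one more input. The space on which $\Rest\circ\Symb_0^{-1}$ is an isomorphism (Theorem \ref{thm:symb}) consists of $M'A'$-homomorphisms $W^\vee\to\Pol(\frakn_+)\otimes V^\vee$. A modified F-system also only lives on the isotypic component $E(W^\vee)$. So you must show that $S_{\ord}$ commutes with the $M'A'$-action, so that it preserves $E(W^\vee)$ and equivariance. Only then does $S_{\ord}^{-1}$ carry $\Sol_{\Ad(S)D_F}$ onto $\Sol_{D_F}$ inside $\Hom_{M'A'}(W^\vee,E(W^\vee))$; the paper supplies this via Proposition \ref{prop:G_orderization} and Corollary \ref{cor:G_orderization}.

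Justify it by noting that $M'A'\subset MA$ acts diagonally on the ordered root-vector basis $\ord$, so $\Upsilon_{\ord}$ and $\sym_{\ord}$ are each equivariant. Do not appeal to general automorphisms, because $S_{\ord}$ does not commute with them. For example, on $\frakh_3$ take the automorphism $\phi$ with $X_1\leftrightarrow X_2$ and $X_3\mapsto -X_3$. Then $\phi\cdot S_{\ord}(\zeta_1\zeta_2)=\zeta_1\zeta_2-\tfrac{1}{2}\zeta_3$, whereas $S_{\ord}(\phi\cdot\zeta_1\zeta_2)=\zeta_1\zeta_2+\tfrac{1}{2}\zeta_3$.
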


In Section \ref{sec:Ordered_Fsystem}, we define $\Symb_{\ord}$ and 
$\Symb_{\ord}^{-1}$ explicitly. We refer to $\Symb_{\ord}$ as 
the \emph{ordered symbol map}.

Interestingly, it turns out that the ordered F-system $\Ad(S)D_F$ 
is strongly related to the generalized hypergeometric differential equation 
for ${}_3F_1$ and ${}_2F_0$ for $(G,G')$ in \eqref{eqn:GGprime}.
On the other hand,  certain tridiagonal determinants $\Cay_n(a,b)$ 
called \emph{Cayley continuants} occur as the coefficients of 
the polynomial solutions to the original F-system $D_F$.
Therefore, the symmetrization operator $S$ allows us to uncover
an intriguing relationship between 
hypergeometric polynomials and Cayley continuants
that appears to be new in the literature.
We next discuss the classification in more detail.

%%%%%%%%%%%%%%%%%%%%%%%%%%%%%%%%%%%%%%%%%
\subsection{Classification and construction of DSBOs}

In this paper, we consider the following problem for 
the classification and construction of DSBOs
between principal series representations 
(induced from minimal parabolic subgroups) for $(G,G')$ in \eqref{eqn:GGprime}.
(For the maximal parabolic case,  see, for instance, \cite{FW19, Kubo26}.)

\begin{problem}\label{prob:class_const}
Do the following.

\begin{enumerate}[label = \normalfont{(\arabic*)}]

\item Classify the parameters such that the space of DSBOs is non-zero.
\vskip 0.1in

\item Determine the dimension of the space of DSBOs.
\vskip 0.1in

\item Give generators of the space of DSBOs explicitly.
\end{enumerate}

\end{problem}

%%%%%%%%%%%%%%%%%%%%%%%%%%%%%%%%%%%%%%%%%
%\subsubsection{Three embeddings $\iota\colon G' \hookrightarrow G$}

In order to work on Problem \ref{prob:class_const}, one first needs to choose 
an embedding $\iota\colon G' \hookrightarrow G$ unless $G'=G$.
In this paper, we consider 
all embeddings $\iota\colon G' \hookrightarrow G$
corresponding to the positive roots of $\mathfrak{gl}(3,\R)$ 
for the case
$(G,G') = (GL(3,\R), GL(2,\R)),  (SL(3,\R), SL(2,\R))$. 
In the standard realization, the positive roots for $\mathfrak{gl}(3,\R)$ may be realized as
$\varepsilon_1-\varepsilon_2$, $\varepsilon_2-\varepsilon_3$ 
and $\varepsilon_1-\varepsilon_3$, where $\{\varepsilon_1, \varepsilon_2,\varepsilon_3\}$ is the dual basis
of the standard basis $\{e_1,e_2,e_3\}$ for $\R^3$. We write $\iota_1, \iota_2, \iota_3$ 
for the embeddings corresponding to
$\varepsilon_1-\varepsilon_2, \varepsilon_2-\varepsilon_3,
\varepsilon_1-\varepsilon_3$, respectively.
Then we put 
\begin{equation}\label{eqn:IntroEmb}
G'_i := \iota_i(G') \subset G
\end{equation}
for $G'=GL(2,\R), SL(2,\R)$.
See \eqref{eqn:embedding} for the concrete realization of the embeddings $\iota_i$.

We remark that the minimal parabolic subalgebra (Borel subalgebra) 
of $\mathfrak{gl}(3,\C)$ 
is \emph{compatible} with 
 the complexified Lie algebra $\frakg'_i$ of $G'_i$ 
for $i=3$, but not for $i=1,2$ 
(for the definition of compatibility, see Definition \ref{def:compatible}). 
The compatibility condition
affects the branching laws of Verma modules 
and, consequently, the DSBOs (we shall discuss them in Section \ref{sec:GVM}). 
In fact,
the results for DSBOs in the case $i=1,2$ are essentially the same,
whereas those for $G'_3$ are quite different.
For instance, the dimension of the space of DSBOs is at most one for 
$i=1,2$, but could be two for $i=3$ . 

We next present a detailed account of our results for Problem \ref{prob:class_const}
for $(G,G')$ in \eqref{eqn:GGprime}. 

%%%%%%%%%%%%%%%%%%%%%%%%%%%%%%%%%%%%%%%%%
\subsubsection{Case \texorpdfstring{$(G,G')=(GL(3,\R), GL(2,\R))$}{(G,G')=(GL(3,R),GL(2,R))}}
\label{sec:Intro_GL3GL2}

Let $I(\xi,\lambda)$ $(\xi\in (\ZZ/2\ZZ)^3,\lambda\in \CC^3)$, respectively 
$J(\eta,\nu)$ $(\eta\in (\ZZ/2\ZZ)^2,\nu\in \CC^2)$, denote 
the principal series representations
induced from the minimal parabolic subgroup of upper triangular matrices inside $G=GL(3,\RR)$, respectively $G'=GL(2,\RR)$, see Section \ref{sec:GL(3)} for more details. As in \eqref{eqn:IntroEmb}, we write $G'_i = \iota_i(G')\subset G$. 
Via the embedding $\iota_i \colon G' \hookrightarrow G$, 
we regard $J(\eta,\nu)$ as a representation of $G'_i$.

We put 
\begin{equation}\label{eqn:IntroDiff}
\Diff_{G_i^\prime}(I(\xi,\lambda),J(\eta,\nu)) 
:=\{\text{DSBOs $\DD \colon I(\xi,\lambda) \to J(\eta,\nu)$}\}.
\end{equation}
For $i\in \{1,2, 3\}$ and $j \in \{a ,b ,c \}$, let 
\begin{equation*}
\Supp_i(\DD),\, \Supp_{3,j}(\DD) \subset (\ZZ/2\ZZ)^5\times \CC^5
\end{equation*}
be the parameter sets 
given in Sections \ref{sec:Emb3} and \ref{sec:Emb12}.

Since the results for the case $i=3$ are considerably  different from 
those for the cases $i=1,2$, we consider them separately.
Moreover, it follows that the cases $i=1,2$ can be thought of as a degenerate case of $i=3$.
Therefore, we state the results for $i=3$ before those for $i=1,2$.

The classification of parameters is given as follows.

\begin{thmalph}[$i=3$, see Theorem \ref{thm:class3}]
\label{Intro:B}
The following conditions on $(\xi, \eta; \lambda, \nu) \in (\Z/2\Z)^5 \times \C^5$ are equivalent.
\begin{enumerate}[label = \normalfont{(\roman*)}]
    \item $\Diff_{G_3^\prime}\left(I(\xi, \lambda), J(\eta, \nu)\right) \neq \{0\}$.
    
    \item $\dim_\C \Diff_{G_3^\prime}\left(I(\xi, \lambda), J(\eta, \nu)\right) \in \{1, 2\}$.

    \item $(\xi, \eta; \lambda, \nu) \in \Supp_3(\DD)$.
\end{enumerate}
Moreover, the dimension of the space above is two if and only if $(\xi, \eta; \lambda, \nu) \in \Supp_{3,c}(\mathbb{D})$.
\end{thmalph}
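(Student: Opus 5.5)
The plan is to apply the F-method in the form recalled in the introduction. By the duality map $\EuScript{D}$ and the algebraic Fourier transform $F_c$,
\[
\Diff_{G_3'}(I(\xi,\lambda),J(\eta,\nu))
\]
is isomorphic to the space of polynomial solutions $\Sol_{D_F}$ of the F-system on $\frakn_+ \simeq \C^3$. Here the coordinates are $(\zeta_1,\zeta_2,\zeta_3)$, with $\zeta_3$ attached to the root $\varepsilon_1-\varepsilon_3$. The solutions must also satisfy the $M'A'$-equivariance condition.

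First I will use the torus $A'$ (the diagonal of $GL(2,\R)$ embedded via $\iota_3$) together with the center to obtain homogeneity. A solution must be a weighted-homogeneous polynomial whose weight is fixed by $\lambda-\nu$. This forces $\nu$ to be $\lambda$ shifted by nonnegative integers $(m,n)$, and the $M'$-condition gives the parity constraints relating $\xi$ and $\eta$. Under this homogeneity the polynomial has the form
\[
\sum_{k} c_k\,\zeta_1^{a-k}\zeta_2^{b-k}\zeta_3^{k}
\]
(or a similar one-parameter family), so the solution space is a finite-dimensional space of coefficient vectors.

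Next I will write out the F-system explicitly. It consists of the Fourier transforms of the $\frakn'_+$-action, which for $\iota_3$ is a single root vector $E_{13}$ (since $\frakb$ is compatible with $\frakg'_3$), together with the second-order operators coming from the non-abelian structure. Applying them to the ansatz should produce a three-term recurrence on $(c_k)$ with coefficients linear in $\lambda$. Equivalently, one can pass to the ordered F-system $\Ad(S)D_F$ via Theorem \ref{thm:symb_ord}, where the equation should become a ${}_3F_1$/${}_2F_0$-type hypergeometric equation. The solution space of the recurrence equals the kernel of a tridiagonal matrix whose determinant is a Cayley continuant $\Cay_n(a,b)$.

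The main obstacle is the case analysis of this kernel. Generically the recurrence forces a unique normalized solution or none. The dimension can jump to two only when the tridiagonal system decouples, which happens when an off-diagonal entry vanishes at an interior index so that two independent terminating blocks arise. I will determine exactly when the continuant and the boundary conditions vanish. This will produce the families $\Supp_{3,a},\Supp_{3,b}$ (dimension one) and $\Supp_{3,c}$ (dimension two), whose union is $\Supp_3(\DD)$.

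The bound $\dim\le 2$ comes from the rank of the tridiagonal matrix, since an $(n+1)\times(n+2)$ tridiagonal matrix with nonzero subdiagonal entries except possibly one has kernel of dimension at most two. This gives (ii). The implications (ii)$\Rightarrow$(i) and (iii)$\Rightarrow$(ii) follow from exhibiting explicit solutions, and (i)$\Rightarrow$(iii) follows from the necessity analysis above.
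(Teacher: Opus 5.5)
Your Step 1 is sound. Only $M_3'A_3'$-equivariance may be imposed, since the center of $GL(3,\R)$ is not contained in $G_3'$. It pins down $\alpha=(\nu_1-\lambda_1,\lambda_3-\nu_2)\in\N^2$ uniquely and gives the parity conditions. Your rank bound $\dim\le 2$ is also valid.

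The gap is in the part that carries the ``Moreover'': your account of when the dimension is two is wrong for the system you propose to solve. The F-system is the single equation $\widehat{d\pi_{\lambda^*}}(N_3^+)\psi=0$; there are no extra second-order operators, and compatibility plays no role. Put $N=\min(\alpha)$, $a_0=\nu_1+\nu_2-2\lambda_2$ and $b_0=\nu_1-\nu_2-2$. Then on $\Pol(\alpha)$ the equation is the recurrence, for $1\le k\le N$,
\[
k(b_0+1-k)\,c_k+\tfrac{a_0}{2}(k-1-\alpha_1)(k-1-\alpha_2)\,c_{k-1}+\tfrac14\prod_{i=1}^{2}(k-1-\alpha_i)(k-2-\alpha_i)\,c_{k-2}=0 .
\]
This consequence follows from the recurrence:
\begin{itemize}
\item These are $N$ equations in $N+1$ unknowns, so a solution always exists. ``A unique normalized solution or none'' is therefore false, and no determinant of this system is a Cayley continuant.
\item (i)$\Leftrightarrow$(iii) comes from Step 1 alone, because $\Phi_{3,a}\cup\Phi_{3,b}\cup\Phi_{3,c}=\Phi_3$.
\end{itemize}
Your mechanism for the dimension jump also fails:
\begin{itemize}
\item The $c_{k-2}$-coefficients never vanish for $k\le N$.
\item The $c_{k-1}$-coefficients vanish only when $a_0=0$, and then the even/odd decoupling still gives dimension one whenever $b_0\notin[0,N-1]$.
\item For $a_0=b_0=1$ and $N\ge 2$, no off-diagonal entry vanishes. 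Yet row $2$ is $\tfrac12(1-\alpha_1)(1-\alpha_2)$ times row $1$, so the dimension is two.
\end{itemize}

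Here is what actually happens in this picture. The pivot $k(b_0+1-k)$ vanishes for at most one $k\ge 1$, namely $k=b_0+1$, and this lies in $[1,N]$ iff $b_0\in[0,N-1]\cap\Z$. Substitute $c_k=\frac{(-\alpha_1)_k(-\alpha_2)_k}{2^k k!(-b_0)_k}d_k$. The recurrence then becomes the continuant recurrence $\Cay_k(a,b)=a\Cay_{k-1}(a,b)-(k-1)(b-k+2)\Cay_{k-2}(a,b)$. Hence the continuants are the normalized coefficients of the solution started at $c_0$. That solution survives row $k=b_0+1$ iff $\Cay_{b_0+1}(a_0,b_0)=0$. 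This is a monic polynomial of degree $b_0+1$ in $a_0$. By the generating function it vanishes exactly at $a_0\in\{-b_0,-b_0+2,\dots,b_0\}$, i.e.\ at $\lambda_2-\nu_2\in[1,\nu_1-\nu_2-1]\cap\Z$. Together with $b_0\in[0,N-1]\cap\Z$, this is $\Supp_{3,c}$. If it fails, then $c_0=\dots=c_{b_0}=0$, and only the solution starting at $t^{b_0+1}$ remains; this gives $\Supp_{3,b}$. This is the argument you still have to supply.

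The paper sidesteps the three-term recurrence. Conjugating by $e^{\wJ(\alpha)}$ gives the ${}_3\calF_1(-\alpha_1,-\alpha_2,\tfrac{a_0-b_0}{2};-b_0;t)$ equation, which is the two-term recurrence $k(k-b_0-1)q_k=(k-1-\alpha_1)(k-1-\alpha_2)(k-1+\tfrac{a_0-b_0}{2})q_{k-1}$. There your ``vanishing off-diagonal entry at an interior index'' picture is literally correct. The conditions (C1)--(C3) are then read off from the standard classification of polynomial solutions of the ${}_3F_1$ equation.
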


\begin{thmalph}[$i=1,2$, see Theorem \ref{thm:class1}]
\label{Intro:C}
The following conditions on $(\xi, \eta; \lambda, \nu) \in (\Z/2\Z)^5 \times \C^5$ are equivalent.
\begin{enumerate}[label = \normalfont{(\roman*)}]
    \item $\Diff_{G_i^\prime}\left(I(\xi, \lambda), J(\eta, \nu)\right) \neq \{0\}$.
    
    \item $\dim_\C \Diff_{G_i^\prime}\left(I(\xi, \lambda), J(\eta, \nu)\right) = 1$.

    \item $(\xi, \eta; \lambda, \nu) \in \Supp_i(\DD)$. 
\end{enumerate}
\end{thmalph}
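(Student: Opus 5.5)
The plan is to run the F-method for the minimal parabolic (Borel) setting and reduce Theorem C to solving an explicit system of PDEs in three variables. Let $\frakn_+$ be the nilradical of $\frakb$ in $\mathfrak{gl}(3,\C)$, with root vectors for $\varepsilon_1-\varepsilon_2$, $\varepsilon_2-\varepsilon_3$, $\varepsilon_1-\varepsilon_3$ and dual coordinates $\zeta_1,\zeta_2,\zeta_3$. The duality $\EuScript{D}$ and the algebraic Fourier transform $F_c$ identify $\Diff_{G_i^\prime}(I(\xi,\lambda),J(\eta,\nu))$ with the space of polynomial solutions $p(\zeta_1,\zeta_2,\zeta_3)$ to the F-system $D_F$ for $\frakb'_i=\frakb\cap\frakg'_i$. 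This space is a priori $\Hom_{\frakb'}$ of the inducing characters with $\Hom(GVM)$. The F-system consists of three requirements:
\begin{enumerate}[label=\normalfont{(\alph*)}]
\item a weight condition from the torus of $G'_i$, which forces $p$ to be homogeneous of a prescribed bidegree determined by $\lambda-\nu$;
\item a parity condition from the component group $M'$, relating $\xi$, $\eta$ and the degrees;
\item a single second-order equation $\widehat{d\pi}(N'_i)\,p=0$ for the positive root vector $N'_i$ of $\frakg'_i$.
\end{enumerate}
For $i=1$, the torus $\frakh'$ of $\frakg'_1$ is the full Cartan subalgebra of $\mathfrak{gl}(3)$, since $GL(2)$ is embedded in the upper left block together with the $(3,3)$-entry. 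The weight condition therefore forces $p$ to have a single weight, and $p$ is a linear combination of monomials $\zeta_1^{a}\zeta_2^{b}\zeta_3^{c}$ with $a+c$ and $b+c$ fixed. Case $i=2$ follows by the outer automorphism (transpose-inverse composed with the longest Weyl element), which swaps $\iota_1$ and $\iota_2$. I would therefore treat $i=1$ in detail and transfer the result.

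Next I would compute $\widehat{d\pi}(N'_1)$ explicitly, writing the left action of $E_{12}$ on $U(\frakn_-)$ via the Fourier transform. Since $\frakn$ is the Heisenberg algebra $\frakh_3$, this operator has the form
\[
\zeta_1\Bigl(\partial_{\zeta_1}^2 \text{-type terms}\Bigr)+(\text{first order terms in }\zeta_3\partial_{\zeta_2}, \lambda\text{-dependent})\cdots.
\]
It is cleaner to pass to the ordered F-system $\Ad(S)D_F$ via Theorem A (Theorem \ref{thm:symb_ord}), since $S=e^J$ is explicit and the conjugated operator is expected to be of hypergeometric (${}_2F_0$) type. Substituting $p=\sum_{c} a_c\,\zeta_1^{m-c}\zeta_2^{n-c}\zeta_3^{c}$, the equation becomes a two-term recursion in $c$. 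This recursion should be lower-bidiagonal because the weight is fixed, so it has a single free coefficient. The solution space is then at most one-dimensional, which proves (ii)$\Leftarrow$(i).

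Existence reduces to the endpoint of the recursion. The recursion terminates consistently exactly when a specific linear factor in $\lambda,\nu$ vanishes, or when $m$ or $n$ is zero. Together with the parity condition (b), this yields the set $\Supp_1(\DD)$, and the equivalence (i)$\Leftrightarrow$(iii) follows. The resulting generator, $\Symb_0^{-1}$ of the solution, gives the explicit DSBO.

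I expect the main obstacle to be the correct derivation of $\widehat{d\pi}(N'_1)$ in the non-abelian setting, including the $\zeta_3$-coupling terms, together with the bookkeeping of which top or bottom coefficient constraint survives. As a cross-check I would use the branching of Verma modules. Because $\frakb$ is not compatible with $\frakg'_1$, the restriction should be filtered by $\frakg'$-Verma modules whose highest weights match the set $\Supp_1(\DD)$. This would confirm that no solutions are missed.
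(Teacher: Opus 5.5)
Your framework is the paper's: the F-method, the $\Pol(\alpha)$-isotypic decomposition, reduction to the single variable $t=\zeta_3/(\zeta_1\zeta_2)$, conjugation by $S=e^J$, and transport to $i=2$ by the outer automorphism. Solving the ordered system first is the route of Appendix \ref{appendix:2F0}. Your bound $\dim\le 1$ also survives, but for a different reason than the one you give. Every form of the equation is lower triangular in $k$, with diagonal entry $k$ coming from the $\vartheta_t$ term, so only $c_0$ is free.

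The gap is the step that actually produces $\Supp_1(\DD)$. You only assert it, and your justification is false: fixing the weight gives the one-variable reduction and says nothing about band structure.
In $t$, the original equation is $\calP^{(\alpha)}(a_{0,1})p=0$ (Proposition \ref{prop:pihat12}). This is the three-term recursion $kc_k+\tfrac{a_{0,1}}{2}(\alpha_1-k+1)c_{k-1}+\tfrac14(\alpha_1-k+1)(\alpha_1-k+2)(\alpha_2-k+2)c_{k-2}=0$. The conjugate $\Ad(e^{\wJ(\alpha)})\calP^{(\alpha)}(a_{0,1})$ is not two-term either. It becomes ${}_2\calF_0(-\alpha_1,\tfrac{a_{0,1}-\alpha_2}{2};t)$ only after left multiplication by $1+\tfrac t2(\alpha_1-\vartheta_t)$. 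That rests on the identity $e^{\wJ(\alpha_1,\alpha_2+1)}e^{-\wJ(\alpha_1,\alpha_2)}=1+\tfrac t2(\alpha_1-\vartheta_t)$ (Lemma \ref{lem:eDD}, Proposition \ref{prop:eDsec9}).
The paper's main proof instead uses $\calL^{(\alpha_1,\alpha_2+1)}(a_{0,1},\alpha_2)=\calP^{(\alpha)}(a_{0,1})(\alpha_2+1-\vartheta_t)$. This reduces the problem to the ${}_3F_1$ classification already obtained for $i=3$ (Proposition \ref{prop:SolD1}).
An identity of this kind is needed. From the three-term recursion alone, the endpoint condition for $\alpha_1>\alpha_2$ is, up to the nonzero factor $(-\alpha_1)_{\alpha_2+1}$, the vanishing of the Cayley continuant $\Cay_{\alpha_2+1}(a_{0,1},\alpha_2)$. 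This is a polynomial of degree $\alpha_2+1$, and its zero set $\{\alpha_2-2k:0\le k\le\alpha_2\}$ is not visible without extra input.

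As a result, your predicted answer is wrong, and the identification with $\Supp_1(\DD)$ is not achieved. The correct dichotomy has two cases:
\begin{itemize}
\item (A1) $\alpha_1\le\alpha_2$: a solution exists for every $\lambda$, because the top-degree obstruction carries the factor $\alpha_1-\min(\alpha)=0$.
\item (A2) $\alpha_1>\alpha_2$ and $\lambda_1-\nu_2-1\in\{0,-1,\dots,-\alpha_2\}$.
\end{itemize}
These give $\Phi_{1,a}$ and $\Phi_{1,b}$ respectively. This is not ``a specific linear factor vanishes, or $m$ or $n$ is zero''. For instance, when $\alpha_2=0<\alpha_1$ one has $-\zeta_1\widehat{d\pi_{\lambda^*}}(N_1^+)\zeta_1^{\alpha_1}=\alpha_1(\lambda_1-\lambda_2+\alpha_1-1)\zeta_1^{\alpha_1}$, so a condition remains.

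Your Verma-module cross-check cannot confirm completeness either. Under duality, the highest weights $(\lambda_1-r,\lambda_2-\ell)$ in the Grothendieck-group branching correspond only to $\Phi_{1,a}$. Multiplicities bound Hom spaces only at irreducible parameters, so $\Phi_{1,b}$ is invisible there.

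Two smaller slips:
\begin{itemize}
\item $\iota_1(g')=\diag(g',1)$, so $\fraka_1'=\C H_1\oplus\C H_2$ is not the full Cartan subalgebra. The $\Pol(\alpha)$ are still the isotypic components, since the weight $(-\alpha_1,\alpha_1-\alpha_2)$ determines $\alpha$. But the matching conditions are only $\nu_1-\lambda_1=\alpha_1$ and $\lambda_2-\nu_2=\alpha_1-\alpha_2$, with $\lambda_3$ free.
\item $\widehat{d\pi_{\lambda^*}}(N_1^+)$ has order three, not two.
\end{itemize}
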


We next give the construction of the DSBOs in terms of left-invariant differential
operators on the opposite nilradical $\frakn_-(\R)=\Span_\R\{N_1^-,N_2^-,N_3^-\} \simeq \RR^3$. 
Consider  $D_i:=dR(N_i^-)$ for $i=1,2,3$, where $dR$ denotes the infinitesimal
right translation.
Then, for $\alpha=(\alpha_1,\alpha_2) \in \NN^2$
and 
$q(t)=\sum_{n=0}^{\min(\alpha)}c_n t^n \in \Pol_{\min(\alpha)}[t]$, 
let 
\begin{equation*}
(T^{\mathrm{sym}}_{\alpha} q)(D_1, D_2, D_3)
\quad
\text{and}
\quad
(T^\ord_{\alpha} q)(D_1, D_2, D_3)
\end{equation*}
denote 
the \emph{symmetrized operator} 
and
\emph{ordered operator}, respectively, defined in 
Section \ref{sec:const3}.
The operator
$(T^{\mathrm{sym}}_{\alpha} q)(D_1, D_2, D_3)$
(resp.\ $(T^\ord_{\alpha} q)(D_1, D_2, D_3)$)
represents the symmetrized form (resp.\ ordered form) of a DSBO.
Moreover, we define a polynomial
\begin{equation*}
p^{(\alpha)}_{a,b}(t)
:=\sum_{n=0}^{\min(\alpha)}\frac{(-\alpha_1)_n(-\alpha_2)_n}{2^n n!(-b)_n}
\Cay_n(a,b)\,t^\ell,
\end{equation*}
where $\Cay_n(a,b)$ is
the Cayley continuant mentioned above 
(see Appendix \ref{appendix:Cayely} for some details).
We write $\Rest_i$ for the restriction operator associated with the 
embedding $\iota_i\colon G' \hookrightarrow G$ for $i=1,2,3$.

\begin{thmalph}[$i=3$, see Theorem \ref{thm:cons3}]
\label{Intro:D}
We have
\begin{equation*}
\Diff_{G_3^\prime}\left(I(\xi, \lambda), J(\eta, \nu)\right)
=
\begin{cases}
\CC\, \DD^{(1)}_3(\lambda, \nu) & 
\textnormal{if $(\xi, \eta; \lambda, \nu) \in \Supp_{3,a}(\DD)$},\\[3pt]
\CC\, \DD^{(2)}_3(\lambda, \nu) & 
\textnormal{if $(\xi, \eta; \lambda, \nu) \in \Supp_{3,b}(\DD)$},\\[3pt]
\CC\, \DD^{(1)}_3(\lambda, \nu) 
\oplus
\CC\, \DD^{(2)}_3(\lambda, \nu)
& \textnormal{if $(\xi, \eta; \lambda, \nu) \in \Supp_{3,c}(\DD)$},\\[3pt]
\{0\} &\textnormal{otherwise},
\end{cases}
\end{equation*}
where $\DD^{(j)}_3(\lambda, \nu)$ for $j=1,2$ are given as follows.
\begin{align*}
\DD^{(1)}_3(\lambda,\nu)
&=\Rest_3\circ
\big(T^{\mathrm{sym}}_{\alpha} 
p^{(\alpha)}_{a,b}\big)
(D_1, D_2, D_3)\\[5pt]
&=\Rest_3\circ 
T^\ord_{\alpha}{}_3F_1\left [\begin{matrix}
    -\alpha_1 &-\alpha_2 & c\\
     &d&  
\end{matrix}; D_1, D_2, D_3 \right ],\\[10pt]
\DD^{(2)}_3(\lambda,\nu)
&=\Rest_3\circ \big(T^{\mathrm{sym}}_{\alpha} 
t^{m+1}p^{(\alpha)}_{a',-(m+2)}\big)(D_1, D_2, D_3) \\[5pt]
&=\Rest_3\circ  D_3^{m+1}
T^\ord_{\alpha}{}_3F_1\left [\begin{matrix}
    -\alpha_1' & -\alpha_2' & c'\\
    & m+2& 
\end{matrix}; D_1, D_2, D_3\right ].
\end{align*}
Here, $\alpha_1,\alpha_2,\alpha_1',\alpha_2',m\in \NN$ and $a,b,c,d,a',c'\in \CC$ are given in terms of $\lambda$ and $\nu$.
\end{thmalph}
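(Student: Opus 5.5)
The plan is to run the F-method of \cite{Kobayashi13} in the form \eqref{eqn:IntroSymb2}. First we reduce to the Lie algebra level. By the duality map $\EuScript{D}$, $\Diff_{G_3'}(I(\xi,\lambda),J(\eta,\nu))$ is isomorphic to the space of $(\frakg_3',B')$-homomorphisms between the corresponding Verma modules. After the algebraic Fourier transform $F_c$, this becomes the space $\Sol_{D_F}$ of polynomial solutions $\psi(\zeta_1,\zeta_2,\zeta_3)$ on $\frakn_+$ to the F-system. The F-system has three parts. The first is the torus equivariance of $B'$, which forces $\psi$ to be homogeneous of a bidegree determined by $\lambda-\nu$. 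The second is the $M'$-equivariance, which gives the parity conditions on $(\xi,\eta)$. The third is a single second-order equation $\widehat{d\pi_\mu}(N_1^+)\psi=0$ coming from the nilradical of the Borel of $\frakg_3'$.

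Since the Borel is compatible with $\frakg_3'$ for $i=3$, this nilradical is spanned by the root vector for $\varepsilon_1-\varepsilon_3$. The homogeneity then forces
\[
\psi(\zeta)=\zeta_1^{\alpha_1}\zeta_2^{\alpha_2}\, q\!\left(\frac{\zeta_3}{\zeta_1\zeta_2}\right)
\]
up to the usual reindexing, with $q\in\Pol_{\min(\alpha)}[t]$. The remaining equation reduces to a recurrence on the coefficients $c_n$ of $q$. I expect this recurrence to be three-term, because the Heisenberg bracket produces the cross term $\zeta_3\partial_{\zeta_1}\partial_{\zeta_2}$.

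The next step is to solve the recurrence. Its solutions are expressed through the Cayley continuants $\Cay_n(a,b)$, which arise as the tridiagonal determinants of the recurrence; this gives $q=p^{(\alpha)}_{a,b}$. Generically there is exactly one solution. Two degenerate situations occur. In the first, the coefficient $(-b)_n$ vanishes at $n=m+1$ with $b=-(m+2)$. The recurrence then splits, and a second solution of the form $t^{m+1}p^{(\alpha')}_{a',-(m+2)}$ appears. In the second, compatibility conditions kill the first solution. This case analysis should reproduce the sets $\Supp_{3,a}$, $\Supp_{3,b}$ and $\Supp_{3,c}$, and hence Theorem \ref{Intro:B}. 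Applying $\Rest_3\circ\Symb_0^{-1}$ then gives the symmetrized forms $\Rest_3\circ(T^{\mathrm{sym}}_\alpha q)(D_1,D_2,D_3)$, since $T^{\mathrm{sym}}_\alpha$ is designed exactly as $\Symb_0^{-1}$ applied to $\zeta_1^{\alpha_1}\zeta_2^{\alpha_2}q(\cdot)$.

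For the ordered forms, we invoke Theorem \ref{Intro:A}. We apply $S=e^J$ with $J=\frac{\zeta_3}{2}\partial_{\zeta_1}\partial_{\zeta_2}$ to the solutions above, or equivalently solve the ordered F-system $\Ad(S)D_F$ directly. The conjugated equation acts on the same ansatz. It should become the generalized hypergeometric equation for ${}_3F_1$ in $t$ with parameters $-\alpha_1,-\alpha_2,c;d$, which has a polynomial solution because $-\alpha_j\in-\NN$. Applying $\Symb_\ord^{-1}$ then yields the $T^\ord_\alpha\,{}_3F_1$ expressions. In the degenerate case the factor $t^{m+1}$ becomes $D_3^{m+1}$, since $D_3$ is central in $\frakn_-$ and therefore commutes past the ordering.

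The main obstacle is the explicit conjugation $\Ad(S)$, that is, showing that $e^J$ sends $\zeta_1^{\alpha_1}\zeta_2^{\alpha_2}p^{(\alpha)}_{a,b}$ to a multiple of the ${}_3F_1$ polynomial. My first approach is to avoid summing $e^J$ on Cayley continuants. Instead I would compute $e^J D_F e^{-J}$ by the BCH expansion, which terminates because $[J,\cdot]$ lowers degree in $\partial$. Then uniqueness of solutions within the correct parameter class gives the identification automatically. The identification of $a,b,c,d$ in terms of $\lambda$ and $\nu$ is a matter of bookkeeping.
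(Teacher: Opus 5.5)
Your architecture matches the paper's: the $M_3'A_3'$-isotypic reduction to $\zeta_1^{\alpha_1}\zeta_2^{\alpha_2}q(\zeta_3/(\zeta_1\zeta_2))$, conjugation by $e^{J}$ leading to ${}_3F_1$, then $\Symb_0^{-1}$ and $\Symb_{\ord}^{-1}$, with $D_3$ central. However, the step you call the main obstacle is not supplied, and the mechanism you propose for it is wrong.

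\textbf{The conjugation step.} The relevant equation is $-\zeta_3\widehat{d\pi_{\lambda^*}}(N_3^+)\psi=0$ (not $N_1^+$). It is of fourth order, since it contains $\frac14\zeta_3^2\partial_{\zeta_1}^2\partial_{\zeta_2}^2$. Because $J=\frac{\zeta_3}{2}\partial_{\zeta_1}\partial_{\zeta_2}$ has order two, $[J,\cdot\,]$ does not lower differential order; for instance $[J,\zeta_1]=\frac{\zeta_3}{2}\partial_{\zeta_2}$. On $\Pol(\alpha)$ the expansion of $e^{J}D_Fe^{-J}$ stops only because $J(\alpha)$ is nilpotent, after a number of terms growing with $\min(\alpha)$. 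That gives neither a closed form nor a ${}_3F_1$ operator.

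The missing idea is structural. On $\Pol(\alpha)$ one has $\vartheta_1=\alpha_1-\vartheta_3$ and $\vartheta_2=\alpha_2-\vartheta_3$, so Proposition \ref{prop:Fdpi} collapses to
\[
-\zeta_3\widehat{d\pi_{\lambda^*}}(N_3^+)\big\vert_{\Pol(\alpha)}=\vartheta_3(b_0+1-\vartheta_3)+a_0J+J^2 .
\]
This is \eqref{eqn:Lalpha2} after T-saturation. Moreover $[\vartheta_3,J]=J$, so $\Ad(e^{J})\vartheta_3=\vartheta_3-J$ exactly (Lemma \ref{lem:eD}), and $J$ is fixed. A two-line computation then gives
\[
-\Ad(e^{\widetilde{J}(\alpha)})\calL^{(\alpha)}(a_0,b_0)={}_3\calF_1(-\alpha_1,-\alpha_2,\tfrac{a_0-b_0}{2};-b_0;t).
\]
This is the only source of $c=\frac{a_0-b_0}{2}$ and $d=-b_0$. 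Without it, ``it should become the ${}_3F_1$ equation'' remains an expectation.

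\textbf{The case analysis.} Your three-term recurrence is correct. Substituting $c_n=\frac{(-\alpha_1)_n(-\alpha_2)_n}{2^nn!(-b_0)_n}C_n$ turns it into the continuant recurrence $C_n=a_0C_{n-1}+(n-1)(n-2-b_0)C_{n-2}$. So Cayley continuants do arise directly, which is a legitimate alternative to Appendix~\ref{appendix:Cayely}.

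The degeneration, however, is $b_0=\nu_1-\nu_2-2\in[0,\min(\alpha)-1]\cap\Z$. It is not ``$b=-(m+2)$''; that is the parameter after the shift by $t^{b_0+1}$. In this case the leading coefficient $n(b_0+1-n)$ vanishes at $n=b_0+1$, and the equation at that step becomes the compatibility condition $\Cay_{b_0+1}(a_0,b_0)\,c_0=0$. To separate $\Supp_{3,b}(\DD)$ from $\Supp_{3,c}(\DD)$ you need the Sylvester--Kac evaluation $\Cay_{b_0+1}(a_0,b_0)=\prod_{k=0}^{b_0}(a_0-b_0+2k)$, which your proposal does not provide. The paper avoids this by classifying on the ordered side: there the ${}_3F_1$ recurrence is two-term, the compatibility condition is simply $(\frac{a_0-b_0}{2})_{b_0+1}=0$, and the result is transported back by $e^{-\widetilde{J}(\alpha)}$.

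\textbf{The identification of the two forms.} Your ``uniqueness'' identification of $e^{\widetilde{J}(\alpha)}p^{(\alpha)}_{a_0,b_0}$ with the ${}_3F_1$ polynomial is a nice shortcut in the one-dimensional cases. It works because $e^{\widetilde{J}(\alpha)}$ is unipotent with respect to $t$-degree and both polynomials have constant term $1$. In the two-dimensional case $\Supp_{3,c}(\DD)$, however, it only fixes the image modulo the second solution $t^{b_0+1}(\cdots)$. There, equating your two expressions for $\DD^{(1)}_3$ requires the explicit identity \eqref{eqn:p3F1}.
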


\begin{thmalph}[$i=1,2$, see Theorem \ref{thm:cons1}]
\label{Intro:E}
We have 
\begin{equation*}
\Diff_{G_i^\prime}\left(I(\xi, \lambda), J(\eta, \nu)\right)
=
\begin{cases}
\CC\, \DD_i(\lambda, \nu) & 
\textnormal{if $(\xi, \eta; \lambda, \nu) \in \Supp_{i}(\DD)$},\\[3pt]
\{0\} & \textnormal{otherwise},
\end{cases}
\end{equation*}
where $\DD_i(\lambda, \nu)$ for $i=1,2$ are given as follows.
\vspace{5pt}
\begin{align*}
\DD_1(\lambda,\nu)
&=\Rest_1\circ
\big(T^{\mathrm{sym}}_{\alpha}
p_{a,\alpha_2}^{(\alpha)}\big)(D_1,D_2,D_3)\\[5pt]
&=\Rest_1\circ 
T^\ord_{\alpha}{}_2F_0
[-\alpha_1, c;  D_1, D_2, D_3],\\[10pt]
\DD_2(\lambda,\nu)
&=\Rest_2\circ 
\big(T^{\mathrm{sym}}_{\alpha'}
p_{a',\alpha_2'}^{(\alpha')}\big)(D_1,D_2,D_3) \\[5pt]
&=\Rest_2\circ  
T^\ord_{\alpha'}
{}_2F_0[-\alpha_1', c';D_1, D_2, D_3].
\end{align*}
Here, $\alpha=(\alpha_1,\alpha_2),\alpha^\prime=(\alpha_1',\alpha_2')\in \NN^2$ and $a,a',c,c'\in \CC$ are given in terms of $\lambda$ and $\nu$.
\end{thmalph}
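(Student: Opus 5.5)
The plan follows the F-method template of the proof of Theorem \ref{Intro:D}, specialized to the embeddings $\iota_1,\iota_2$, which are not compatible with the Borel subalgebra. First, by the duality in \eqref{eqn:IntroSymb-triangle} and \eqref{eqn:IntroSymb2}, $\Diff_{G_i'}(I(\xi,\lambda),J(\eta,\nu))$ is identified with the space of polynomial solutions of the F-system $D_F$. Here $D_F$ is attached to the Borel $\frakb'$ of $\frakg'_i$: one equation comes from the simple root vector of $\frakn'_i$, and the remaining conditions come from the torus and $M$-parts (the characters $\xi,\eta$). The torus equivariance forces a solution to be a finite sum of monomials $\zeta_1^{k_1}\zeta_2^{k_2}\zeta_3^{k_3}$ of a prescribed multidegree. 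Because only $\frakn'_i$ (one-dimensional) imposes a differential equation, the weight conditions should pin the multidegree down to a family indexed by $\alpha=(\alpha_1,\alpha_2)\in\NN^2$ determined by $\lambda,\nu$. This lets me write the ansatz $\big(T^{\mathrm{sym}}_\alpha q\big)$ with $q(t)=\sum_{n\le\min(\alpha)}c_nt^n$ and $t$ corresponding to $\zeta_3/(\zeta_1\zeta_2)$ up to normalization. The parity conditions from $M$ then cut out a subset of $(\ZZ/2\ZZ)^5$, which is the first part of $\Supp_i(\DD)$.

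Second, I will pass to the ordered F-system via Theorem \ref{Intro:A}, since there the equation should be simpler: it is related to ${}_2F_0$ rather than to continuants. For $\iota_1$, the root vector of $\frakn'_1$ is $N_1^-$, and conjugating its F-operator by $S=e^J$ turns it into an Euler-type operator in $t$. The equation on $S q$ then becomes a two-term recursion for the coefficients $c_n$. I expect this recursion to be the ${}_2F_0$ recursion $(n+1)c_{n+1}\propto(n-\alpha_1)(n+c)c_n$. The boundary conditions at $n=0$ and $n=\min(\alpha)$ give exactly one solution up to scalar when the parameters lie in $\Supp_1(\DD)$, namely the terminating ${}_2F_0[-\alpha_1,c;\cdot]$, and no solution otherwise. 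This yields the ordered-form expression in the statement. The case $\iota_2$ is symmetric, exchanging the roles of $D_1,D_2$, which explains $\alpha'$, $a'$, $c'$.

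Third, the symmetrized form follows by applying $S^{-1}$ to the ordered solution, using Theorem \ref{Intro:A}. Concretely, I need the identity $S^{-1}$ applied to ${}_2F_0[-\alpha_1,c]$ equals $p^{(\alpha)}_{a,\alpha_2}$. I would derive it by comparing with the $i=3$ computation. The claim to check is that the formula for $p^{(\alpha)}_{a,b}$ obtained there (Cayley continuant coefficients) specializes, at $b=\alpha_2$, to the $S^{-1}$-image of the ${}_2F_0$ polynomial. This fits the remark that $i=1,2$ is a degeneration of $i=3$: the ${}_3F_1$ with upper parameter $-\alpha_2$ and lower parameter $d$ collapses to ${}_2F_0$ when $d=-\alpha_2$. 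Finally, I would apply $\Rest_i$, and conclude $\dim=1$ exactly on $\Supp_i(\DD)$, which recovers Theorem \ref{Intro:C}.

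The main obstacle is the second step: computing $\Ad(S)$ of the F-operator for $\iota_1$ explicitly with $S$ an infinite-order operator. I would first work out how $J$ commutes with $\zeta_j$ and with $\partial/\partial\zeta_j$ (a finite-order nilpotent commutator computation in $\frakh_3$-type variables), and use the relations $e^J\zeta_je^{-J}=\zeta_j+[J,\zeta_j]$. A secondary difficulty is checking that the degenerate denominator $(-b)_n$ with $b=\alpha_2$ causes no problem for $n\le\min(\alpha)$. This should be handled by the convention on $\Cay_n$ or by the ranges in $\Supp_i(\DD)$.
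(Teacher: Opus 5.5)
The gap is in your Step~2, which carries essentially all of the content: conjugating the F-operator by $S$ does \emph{not} produce a two-term recursion. After $T$-saturation the F-operator becomes $T_\alpha^\sharp(-\zeta_1\widehat{d\pi_{\lambda^*}}(N_1^+))=t^{-1}\calP^{(\alpha)}(a_{0,1})=\tfrac{d}{dt}-\tfrac12(\vartheta_t-\alpha_1)\bigl(a_{0,1}+\wJ(\alpha)\bigr)$ (Proposition~\ref{prop:pihat12}). The $T$-saturation of its conjugate by $S$, which is what your commutator computation would produce, has the form $\tfrac{d}{dt}+Q(\vartheta_t)+\tfrac12\wJ(\alpha)(2\vartheta_t+a_{0,1}-\alpha_2)$ with $Q$ quadratic. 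That gives a three-term recursion. Neither the ${}_2F_0$ pattern $(n+1)c_{n+1}=(n-\alpha_1)(n+c)c_n$ nor solvability in $\Pol_{\min(\alpha)}[t]$ can be read off from it.

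The missing idea is that the ${}_2F_0$ operator appears only after composing with an invertible factor:
\begin{equation*}
\bigl(1+\tfrac{t}{2}(\alpha_1-\vartheta_t)\bigr)\,\Ad(e^{\wJ(\alpha)})\calP^{(\alpha)}(a)={}_2\calF_0\bigl(-\alpha_1,\tfrac{a-\alpha_2}{2};t\bigr).
\end{equation*}
This is Proposition~\ref{prop:eDsec9}. A short proof: put $K:=\tfrac t2(\vartheta_t-\alpha_1)$. Then $\wJ(\alpha)=K(\vartheta_t-\alpha_2)$, $\calP^{(\alpha)}(a)=\vartheta_t-K\bigl(a+\wJ(\alpha)\bigr)$ and $[\wJ(\alpha),K]=K^2$, hence $\Ad(e^{\wJ(\alpha)})K=K(1-K)^{-1}$. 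Combined with $\Ad(e^{\wJ(\alpha)})\vartheta_t=\vartheta_t-\wJ(\alpha)$ (Lemma~\ref{lem:eD}), this gives the identity.

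Multiplication by $t$ and by $1-K$ is injective on polynomials. So the ordered solutions are exactly the solutions of ${}_2\calF_0(-\alpha_1,c;t)q=0$ in $\Pol_{\min(\alpha)}[t]$, with $c=\tfrac{a_{0,1}-\alpha_2}{2}$. Only now does your boundary argument work: $c_0$ determines $q$, and a solution exists iff ${}_2F_0[-\alpha_1,c;t]$ has degree at most $\min(\alpha)$, i.e.\ iff (A1) or (A2) holds.

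The paper's main text avoids the conjugation altogether. It solves the original system first through $\calL^{(\alpha_1,\alpha_2+1)}(a,\alpha_2)=\calP^{(\alpha)}(a)(\alpha_2+1-\vartheta_t)$, reducing to Proposition~\ref{Prop:SolGeneralDiffEq1}, and then applies $e^{\wJ(\alpha)}$. Your Step~3 is fine: it is \eqref{eqn:p3F1} at $b=\alpha_2$ together with Remark~\ref{rem:3F1_2F0}, valid exactly under (A1) or (A2). Also $(-\alpha_2)_n\neq 0$ for $n\le\min(\alpha)$, so the denominator issue you raise does not arise.

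For $i=2$, be careful what ``symmetric'' means. The automorphism $N_1^-\leftrightarrow N_2^-$, $N_3^-\mapsto -N_3^-$ sends $J$ to $-J$, i.e.\ it swaps the order of $N_1^-,N_2^-$ in the PBW basis. Transporting the $i=1$ ordered operator through it therefore gives an ordered form with respect to $\{N_2^-,N_1^-,N_3^-\}$, not the stated one. This is why the paper's duality needs the conjugated map $\widetilde{\Theta}_{\Ad}$ of \eqref{eqn:wTheta_Ad} rather than just $t\mapsto -t$.

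What works directly is symmetry at the $T$-saturated level, with $T_\alpha$ and $\ord$ kept fixed. We have $(-t)T_\alpha^\sharp(-\zeta_2\widehat{d\pi_{\lambda^*}}(N_2^+))=\calP^{(\alpha_2,\alpha_1)}(a_{0,2})$, and $\wJ(\alpha)$ is symmetric in $\alpha_1,\alpha_2$. So the $i=1$ analysis applies verbatim with $(\alpha_1,\alpha_2,a_{0,1})$ replaced by $(\alpha_2,\alpha_1,a_{0,2})$. It yields ${}_2F_0[-\alpha_2,\tfrac{a_{0,2}-\alpha_1}{2};t]$ and $p^{(\alpha_2,\alpha_1)}_{a_{0,2},\alpha_1}(t)$, i.e.\ the stated $\DD_2(\lambda,\nu)$.
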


We remark that the symmetrization operator $S$ plays an essential role to prove 
Theorems \ref{Intro:B}--\ref{Intro:E}. Here is an outline of our strategy.
As mentioned above, we start with the case $i=3$, thereby proving
Theorems \ref{Intro:B} and \ref{Intro:D}.
In this case, 
we first solve the ordered F-system $\Ad(S)D_F$, since it can be realized by using
the hypergeometric differential equation for ${}_3F_1$. After that, in order to solve the original F-system $D_F$,
we apply the inverse of the symmetrization operator $S^{-1}$ to the solution to the ordered F-system 
$\Ad(S)D_F$.
The desired symmetrized and ordered forms of DSBOs 
are then obtained through $\Symb_{0}^{-1}$ and $\Symb_{\ord}^{-1}$, respectively
(cf.\ \eqref{eqn:A}).
It is remarked that we use a suitable T-saturation to solve 
both the original F-system $D_F$ and the ordered F-system $\Ad(S)D_F$.

To prove Theorems \ref{Intro:C} and \ref{Intro:E}, we begin with the case $i=1$.
In this case, we reduce the computations to the case $i=3$.
The ${}_3F_1$-hypergeometric polynomials in Theorem \ref{Intro:D} 
are degenerated to the ${}_2F_0$-hypergeometric polynomials
in Theorem \ref{Intro:E}.
The results for $i=2$ follow from those for $i=1$ via a certain duality 
(see Section \ref{sec:duality}). 

%%%%%%%%%%%%%%%%%%%%%%%%%%%%%%%%%%%%%%%%%
\subsubsection{Case \texorpdfstring{$(G,G')=(GL(3,\R), GL(3,\R))$}{(G,G')=(GL(3,R),GL(2,R))}}

In the case $G=G'$, the DSBOs are often referred to as 
\emph{differential intertwining operators} (DIOs).
As in \eqref{eqn:IntroDiff}, we write 
$\Diff_{G}\left(I(\xi, \lambda), I(\xi', \lambda')\right)$
for the space of DIOs $\calD \colon  I(\xi, \lambda) \to I(\xi', \lambda')$.

Similar to the previous cases in Section \ref{sec:Intro_GL3GL2}, 
let $\Supp(\calD)$ be the parameter set defined in Section \ref{sec:DIO}.

\begin{thmalph}[See Theorem \ref{thm:class_DIOs}]
\label{Intro:F}
The following conditions on  $(\xi, \xi'; \lambda, \lambda') \in (\Z/2\Z)^6 \times \C^6$ are equivalent.
\begin{enumerate}[label = \normalfont{(\roman*)}]
    \item $\Diff_{G}\left(I(\xi, \lambda), I(\xi', \lambda')\right) \neq \{0\}$.
    
    \item $\dim_\C \Diff_{G}\left(I(\xi, \lambda), I(\xi', \lambda')\right) = 1$.

    \item $(\xi, \xi'; \lambda, \lambda') \in \Supp(\calD)$.
\end{enumerate}
\end{thmalph}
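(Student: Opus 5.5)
The plan is to run the F-method, now with $G'=G$. By the duality in \eqref{eqn:IntroSymb-triangle}, $\Diff_{G}(I(\xi,\lambda),I(\xi',\lambda'))$ is isomorphic to the space of $\frakb$-homomorphisms between Verma modules of $\mathfrak{gl}(3,\C)$ that are compatible with the $M$-characters. Since $M\simeq(\ZZ/2\ZZ)^3$ acts on $\frakn_-$ through the root characters, this is the same as the space of $\frakb$-singular vectors of weight determined by $\lambda'$ in the Verma module $M^{\frakg}_{\frakb}(\lambda)$, subject to a parity condition relating $\xi$ and $\xi'$.

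The first step is to recall from classical theory that singular vectors in a Verma module have multiplicity at most one; for $\mathfrak{gl}(3)$ this is by Verma's theorem together with $\dim\Hom(M(\mu),M(\lambda))\le 1$. This gives (i)$\Leftrightarrow$(ii) at once, with no need to solve the full F-system. To keep the paper self-contained and obtain explicit operators, I would also derive it from the F-system. The $\frakn$-part of the F-system consists of two operators, for the simple root vectors $E_{12}$ and $E_{23}$, acting on $\Pol(\frakn_-^\vee)=\C[\zeta_1,\zeta_2,\zeta_3]$. The weight condition fixes the multidegree: $\zeta_1^{a}\zeta_2^{b}\zeta_3^{c}$ with $a+c$ and $b+c$ fixed by $\lambda-\lambda'$. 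The solution therefore reduces to a polynomial in one variable, of the form $\zeta_1^{\alpha_1}\zeta_2^{\alpha_2}q(\zeta_3/(\zeta_1\zeta_2))$, just as in the $T$-saturation used for the case $G_3'$.

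The second step is to note that the equation for $E_{12}$ alone (respectively $E_{23}$) is exactly the F-system for the pair $(G,G'_1)$ (respectively $(G,G_2')$) with $J(\eta,\nu)$ replaced by the $\frakb'$-character. Hence the DIOs are precisely the operators lying in $\Diff_{G'_1}\cap\Diff_{G'_2}$ for the matching parameters. Theorem~\ref{Intro:C} and Theorem~\ref{Intro:E} then give each one-variable solution space explicitly as a multiple of a ${}_2F_0$-polynomial, each of dimension at most one. So dimension at most one follows, and nonvanishing reduces to the two lines coinciding.

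The main obstacle is exactly this compatibility: deciding when the $E_{12}$-solution and the $E_{23}$-solution agree up to scalar, and thereby describing $\Supp(\calD)$. I would first go through the BGG/Verma list. Singular vectors occur precisely when $\lambda'$ is obtained from $\lambda$ (with the $\rho$-shift) by a chain of reflections with positive integral steps. For $\mathfrak{gl}(3)$ this gives six cases, indexed by $W=S_3$ (the identity being the trivial case). For the simple reflections the vectors are $D_1^{k}$ and $D_2^{k}$. For the longer Weyl elements I would write down the Malikov--Feigin--Fuchs type vectors and check that they satisfy both equations. I would then match the conditions with the ${}_2F_0$ coefficients obtained in Theorem~\ref{Intro:E}. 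Finally, I would impose the parity condition on $(\xi,\xi')$ coming from $M$ to obtain $\Supp(\calD)$, which gives (i)$\Leftrightarrow$(iii).
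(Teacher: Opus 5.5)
Your proposal is correct, and its first half matches the paper. For $G'=G$ the F-system is generated by $N_1^+,N_2^+$, so on each $\Pol(\alpha)$ the solution space is $\Sol_1^{(\alpha)}\cap\Sol_2^{(\alpha)}$. This is an intersection of polynomial solutions on $\Pol(\alpha)$, not literally of $\Diff_{G_1'}$ and $\Diff_{G_2'}$, whose elements include $\Rest_i$. The results for $i=1,2$ make each factor at most a line, whence $\dim\le 1$.

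You diverge at the decisive step, deciding when the two lines coincide. The paper settles this inside the F-method. In the ordered F-system both lines are spanned by ${}_2F_0$-polynomials with respect to the same PBW basis, namely ${}_2F_0[-\alpha_1,\tfrac{a_{0,1}-\alpha_2}{2};t]$ and ${}_2F_0[-\alpha_2,\tfrac{a_{0,2}-\alpha_1}{2};t]$. Lemma~\ref{lem:FF} shows that two such polynomials are equal iff their coefficients of $t$ and $t^2$ agree. Combined with (A1)/(A2) and (B1)/(B2), where (A2B2) is empty and (A1B1) forces $\alpha_1=\alpha_2$, this gives (D0)--(D5), i.e.\ (L0)--(L5), together with the explicit operator, and no Verma module theory is needed.

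You instead import Verma's multiplicity-one theorem and the BGG theorem ($\Hom(M(\mu'),M(\mu))\neq 0$ iff $\mu'\uparrow\mu$), so the classification becomes a strong-linkage computation for $\mathfrak{gl}(3,\C)$. This is legitimate and has a conceptual payoff. It explains why there are six families indexed by $S_3$. It also explains why (L4) and (L5) carry the inequalities $\lambda_1-\lambda_3'\neq 2$ and $\lambda_1'-\lambda_3\neq 2$ (both equivalent to $\alpha_1\neq\alpha_2$): when $\mu+\rho$ lies on the relevant simple wall, the length-two element produces the same weight as the reflection in $\varepsilon_1-\varepsilon_3$, i.e.\ case (L2).

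The price is dependence on deep external results, plus bookkeeping you have not yet carried out. You still need to enumerate all chains for non-integral and singular parameters. You also need to fix conventions correctly: by the duality \eqref{eqn:duality} (with $G'=G$), operators $I(\xi,\lambda)\to I(\xi',\lambda')$ correspond to singular vectors of weight $-\lambda'$ in $M^{\frakg}_{\frakb}(\xi,-\lambda)$, not in $M^{\frakg}_{\frakb}(\lambda)$, with $\rho=(1,0,-1)$.

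Once BGG is used in both directions, the Malikov--Feigin--Fuchs verification is superfluous for the classification. It is also awkward in case (L2) with $\lambda_2-\lambda_1\notin\Z$, where those vectors involve complex exponents. In the paper, by contrast, existence and non-existence are both read off from the F-system itself.
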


\begin{thmalph}[See Theorem \ref{thm:cons_DIOs}]
\label{Intro:G}
We have
\begin{equation*}
\Diff_{G}\left(I(\xi, \lambda), I(\xi', \lambda')\right)
=
\begin{cases}
\CC\, \calD(\lambda, \lambda') & 
\textnormal{if $(\xi, \xi'; \lambda, \lambda') \in \Supp(\calD)$},\\[3pt]
\{0\} & \textnormal{otherwise},
\end{cases}
\end{equation*}
where $\calD(\lambda, \lambda')$ 
is given as follows.
\begin{align*}
\calD(\lambda,\lambda')
&=T^{\mathrm{sym}}_{\alpha}
p_{a,\alpha_2}^{(\alpha)}
(D_1, D_2, D_3)\\[3pt]
&=T^\ord_{\alpha}
{}_2F_0\Big [-\alpha_1, c; D_1, D_2, D_3\Big ].
\end{align*}
Here, $\alpha=(\alpha_1,\alpha_2)\in \NN^2$ and $a,c\in \CC$ are given in terms of $\lambda$ and $\lambda'$.
\end{thmalph}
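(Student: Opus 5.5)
The plan is to obtain Theorem~\ref{Intro:G} by specializing the machinery already used for $(GL(3,\R),GL(2,\R))$ to the case $G'=G$, where the restriction operator is the identity. Since $\frakg'=\frakg$, the Borel subalgebra is trivially compatible. The F-system is the full system coming from the three operators $\widehat{d\pi_\mu}(E_{12})$, $\widehat{d\pi_\mu}(E_{23})$, $\widehat{d\pi_\mu}(E_{13})$ acting on $\Pol(\frakn_+)$, a polynomial ring in $\zeta_1,\zeta_2,\zeta_3$. Because $E_{13}=[E_{12},E_{23}]$, it suffices to impose the equations for the two simple root vectors. I will therefore write $D_F=\{P_1,P_2\}$, where the $P_j$ are second-order operators whose coefficients depend on $\lambda$. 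I will also record the weight (homogeneity) condition from the $\fraka$-action, which forces $\lambda-\lambda'$ to equal $-(\alpha_1(\varepsilon_1-\varepsilon_2)+\alpha_2(\varepsilon_2-\varepsilon_3))$ for some $\alpha\in\NN^2$, together with the parity condition on $(\xi,\xi')$.

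The first key step is to observe that the equation for $E_{12}$ is exactly the F-equation already treated for the embedding $\iota_1$, or $\iota_3$ in its degenerate form. A solution of weight $\alpha$ is then of T-saturated form
\[
\zeta_1^{\alpha_1}\zeta_2^{\alpha_2}\, q\!\left(\tfrac{\zeta_3}{\zeta_1\zeta_2}\right)
\]
with $\deg q\le\min(\alpha)$. Rather than solving $D_F$ directly, I will solve the ordered F-system $\Ad(S)D_F$ from Theorem~\ref{Intro:A}. There, following the $i=1$ computation behind Theorem~\ref{Intro:E}, the $E_{12}$ equation reduces to the ${}_2F_0$ hypergeometric equation in $t$. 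This forces
\[
q={}_2F_0[-\alpha_1,c;t]
\]
up to a scalar, with $c$ determined by $\lambda$.

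The second step is to impose the $E_{23}$ equation. By the duality of Section~\ref{sec:duality}, which interchanges the roles of $\iota_1$ and $\iota_2$, this equation gives a second ${}_2F_0$ condition, with $-\alpha_2$ in place of $-\alpha_1$ and some $c'$ in place of $c$. The main obstacle is the compatibility analysis: deciding for which $\lambda$ both conditions can hold simultaneously by a nonzero $q$. I would first try comparing coefficients of $t^n$ in the two recursions, expecting the result to be the following. A nonzero $q$ exists only when either $q$ is a constant multiple of the monomial $\zeta_1^{\alpha_1}$ or $\zeta_2^{\alpha_2}$ (the case where one of the $\alpha_j$ vanishes), or when the parameters are linked by a relation in $\lambda$ which makes the two ${}_2F_0$ polynomials coincide, for instance $c'=-\alpha_2$ paired with $c=-\alpha_1$ in a suitable way. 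In every case the solution is unique up to scalar. This uniqueness gives the equivalence (i)$\Leftrightarrow$(ii)$\Leftrightarrow$(iii) of Theorem~\ref{Intro:F} with $\Supp(\calD)$, and rules out a two-dimensional space, unlike the $\Supp_{3,c}$ case.

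Finally, I transport the solution back through the commutative diagram \eqref{eqn:A}. The map $\Symb_{\ord}^{-1}$ applied to the ${}_2F_0$ solution gives the ordered form $T^{\ord}_\alpha{}_2F_0[-\alpha_1,c;D_1,D_2,D_3]$. Applying $S^{-1}$ produces the symmetrized polynomial, and $\Symb_0^{-1}$ then gives $T^{\mathrm{sym}}_\alpha p^{(\alpha)}_{a,\alpha_2}$. Here I use the identity from the $i=1$ case that $S^{-1}$ sends ${}_2F_0$ to the Cayley continuant polynomial with $b=\alpha_2$. No restriction operator appears, because $\Rest$ is the identity when $G'=G$.
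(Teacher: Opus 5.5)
Your route is the paper's. You use $N_3^+=[N_1^+,N_2^+]$ to reduce to $\Sol^{(\alpha)}=\Sol^{(\alpha)}_1\cap\Sol^{(\alpha)}_2$. You then take the ordered solutions ${}_2F_0[-\alpha_1,c;t]$ for $i=1$ and, by duality, ${}_2F_0[-\alpha_2,c';t]$ for $i=2$, where $c=\lambda_1-\lambda_2'-1$ and $c'=\lambda_3'-\lambda_2+1$. Finally you transport back through $e^{-\wJ(\alpha)}$, $\Symb_0^{-1}$ and $\Symb_{\ord}^{-1}$.

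The gap is the step you yourself called the main obstacle and then left as an expectation. You never determine when a common nonzero solution exists, so you never reach $\Supp(\calD)$, i.e.\ the conditions (L0)--(L5). That set is the actual content of the theorem. Uniqueness up to scalar only gives $\dim\le 1$, i.e.\ (i)$\Leftrightarrow$(ii); it does not give the identification with (iii).

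Your sketch of the expected outcome is also off in two places:
\begin{itemize}
\item When $\alpha_1=0<\alpha_2$, the $E_{23}$ equation still forces $c'=0$, i.e.\ $\lambda_2-\lambda_3'=1$. So $\zeta_2^{\alpha_2}$ is not automatically a solution.
\item The ``linked'' case is $(c,c')=(-\alpha_2,-\alpha_1)$, not $c=-\alpha_1$, $c'=-\alpha_2$. Your pairing gives ${}_2F_0[-\alpha_1,-\alpha_1;t]$ and ${}_2F_0[-\alpha_2,-\alpha_2;t]$, which agree only when $\alpha_1=\alpha_2$.
\end{itemize}

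The missing argument is short.
\begin{itemize}
\item Each single-equation solution space in $\Pol_{\min(\alpha)}[t]$ is at most one-dimensional and spanned by a ${}_2F_0$ with constant term $1$. So a common nonzero solution exists iff the two polynomials are equal.
\item The degree condition $\le\min(\alpha)$, i.e.\ (A1)/(A2) and (B1)/(B2), is then automatic. A common polynomial has degree at most $\alpha_1$ and at most $\alpha_2$.
\item Equality ${}_2F_0[-n,s;t]={}_2F_0[-m,r;t]$ is decided by the coefficients of $t$ and $t^2$ alone (Lemma~\ref{lem:FF}): $sn=rm$ and $sn(s-r+m-n)=0$.
\item With $(n,m,s,r)=(\alpha_1,\alpha_2,c,c')$, the solutions are: $c\alpha_1=c'\alpha_2=0$ (both polynomials equal $1$); or $\alpha_1=\alpha_2$ with $c=c'$, which forces $\alpha_1=\alpha_2=\lambda_3-\lambda_1+2$; or $(c,c')=(-\alpha_2,-\alpha_1)$.
\item Sorting these by whether $\alpha_1,\alpha_2$ vanish or coincide gives (D0)--(D5). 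Translating via $\alpha_1=\lambda_1'-\lambda_1$, $\alpha_2=\lambda_3-\lambda_3'$ and adding the parity condition gives (L0)--(L5), hence $\Supp(\calD)$.
\end{itemize}
The paper carries out the same analysis through the case split (A1B1), (A1B2), (A2B1) in Propositions~\ref{prop:A1B1} and~\ref{prop:A1B2_A2B1}.
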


Theorems \ref{Intro:F} and \ref{Intro:G} are obtained by considering the simultaneous polynomial solutions to the original F-system $(i=1,2)$ 
and those to the ordered F-system $(i=1,2)$ in Section \ref{sec:Intro_GL3GL2}.
We remark that the ${}_2F_0$-hypergeometric polynomial 
appears also in the 
symmetrized form of the DIOs $\calD(\lambda, \lambda')$ for certain parameters. 
See Theorem \ref{thm:cons_DIOs} for more detailed formulas of 
$\calD(\lambda, \lambda')$.

%%%%%%%%%%%%%%%%%%%%%%%%%%%%%%%%%%%%%%%%%
\subsubsection{Case \texorpdfstring{$(G,G')=(SL(3,\R), SL(2,\R))$}{(G,G')=(SL(3,R),SL(2,R))}}

To distinguish the $SL$-case from the $GL$-case, 
we write $G_{SL}:=SL(3,\RR)$ and $G'_{SL}:=SL(2,\RR)$. 
Then, as in the $GL$-case, 
we denote by  $I(\delta,u)_{SL}$ $(\delta \in (\ZZ/2\ZZ)^2,u\in \CC^2)$ 
and $J(\sigma,v)_{SL}$ $(\sigma \in \ZZ/2\ZZ, v\in \CC)$
the principal series representations
of $G_{SL}$ and $G'_{SL}$, respectively. 
See Section \ref{sec:SL} for more details. 

We put $G'_{SL,i} := \iota_i(G'_{SL})\subset G_{SL}$ and 
write
$\Diff_{G'_{SL,3}}(I(\delta,u)_{SL}, J(\sigma,v)_{SL})$
for the space of 
DSBOs $\DD \colon I(\delta,u)_{SL} \to J(\sigma,v)_{SL}$.
For $i=1,2,3$,  let $\Supp_{SL,i}(\DD)$ be the parameter set defined in Section \ref{sec:class-const-SL}.

We start with the classification theorems. It turns out that the classification of DSBOs
in the $SL$-case is significantly different from that for the $GL$-case; 
for instance, the space of DSBOs for the $SL$-case is not uniformly bounded.
Since the results for $i=1,2$ and $i=3$ are also substantially different, 
we consider the two cases separately.

\begin{thmalph}
[{$i=1,2$, see Theorem \ref{thm:classSL12}}]
\label{Intro:H}
The following conditions on $(\delta,\sigma; u, v)  
\in (\Z/2\Z)^3\times \C^3$ are equivalent.
\begin{enumerate}[label=\normalfont{(\roman*)}]
\item $\Diff_{G'_{SL,i}}(I(\delta,u)_{SL}, J(\sigma,v)_{SL})\neq \{0\}$.
\item $\dim_\C\Diff_{G'_{SL,i}}(I(\delta,u)_{SL}, J(\sigma,v)_{SL})=\infty$.
\item $(\delta,\sigma; u, v) \in \Supp_{SL,i}(\DD)$.
\end{enumerate}
\end{thmalph}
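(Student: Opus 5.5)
We will prove Theorem~\ref{Intro:H} by reducing the $SL$-case to the $GL$-case through the F-method. First we set up the F-system for $(G_{SL},G'_{SL,i})$. The nilradical $\frakn_-$ is the same for $\mathfrak{sl}(3)$ and $\mathfrak{gl}(3)$, so $\Pol(\frakn_+)$ and the differential operators $\widehat{d\pi_\mu}(N_j^+)$ are as before. The only change is in the equations coming from the Levi factor: $\frakl' = \fraka'\oplus\frakm'$ for $SL(2)$ is one-dimensional, whereas for $GL(2)$ it is two-dimensional. Hence the F-system for $SL$ consists of the $\frakn'_+$-equations, which coincide with those of the $GL$-case, and a \emph{single} homogeneity (Euler-type) condition. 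In the $GL$-case the analogous conditions are two independent homogeneity conditions.

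Next, for $i=1$ we consider the T-saturated form of the $\frakn'_+$-equation used in the proof of Theorem~\ref{Intro:C}. Write a polynomial as $\sum_{\alpha}\psi_\alpha$, decomposed by bidegree $\alpha=(\alpha_1,\alpha_2)$ with respect to the torus of $GL(3)$. The $\frakn'_+$-operator preserves this bigrading, since it commutes with the full Cartan of $\mathfrak{gl}(3)$. The $SL$ homogeneity condition fixes only one linear combination of $\alpha_1,\alpha_2$, for instance $\alpha_1-\alpha_2$ or $\alpha_1+\alpha_2$ depending on the embedding, combined with the constraint linking $\nu$ to $\lambda$. Consequently, a solution of the $SL$ F-system is a sum of solutions of $GL$ F-systems, each with parameters $(\lambda,\nu)$ obtained by lifting $(u,v)$. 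The lifts differ by a one-parameter shift, namely the central character of $GL(3)$ together with the freedom in $\nu_1+\nu_2$, and this shift is indexed by $\alpha$ running along a ray in $\NN^2$.

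We then apply Theorems~\ref{Intro:C} and \ref{Intro:E} to each graded piece. Each piece contributes at most one dimension, spanned by $\Symb_0(\DD_i(\lambda,\nu))$ with the corresponding ${}_2F_0$ polynomial. The solution space is therefore the direct sum over those $\alpha$ on the ray for which the lifted parameter lies in $\Supp_i(\DD)$. We will check that the membership conditions defining $\Supp_i(\DD)$, once restricted to the ray, are either violated for all $\alpha$ or satisfied for infinitely many $\alpha$. The mechanism is that the conditions in $\Supp_i(\DD)$ constrain only a linear form in $(\lambda,\nu)$ that is invariant under the shift, together with a parity condition that is periodic along the ray. Defining $\Supp_{SL,i}(\DD)$ as the image of $\Supp_i(\DD)$ under $(\xi,\eta;\lambda,\nu)\mapsto(\delta,\sigma;u,v)$ then gives (i)$\Leftrightarrow$(iii), and (ii) follows from the infinitude of $\alpha$. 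The parity conditions $\xi,\eta\mapsto\delta,\sigma$ are handled by the $M$-equivariance, which in the $SL$-case imposes only a condition modulo $2$ on $\alpha_1+\alpha_2$ or on a similar combination.

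The case $i=2$ then follows from $i=1$ by the duality of Section~\ref{sec:duality}, which restricts to $SL$.

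The main obstacle is the decoupling step. We must verify that the $\frakn'_+$-equations genuinely respect the finer $GL$-bigrading, which amounts to showing that the center of $\mathfrak{gl}(3)$ and the extra torus direction commute with the relevant operators. We must also verify that no cancellation occurs between pieces of different bidegree. We will check this directly from the explicit vector fields $\widehat{d\pi_\mu}(N_j^+)$. A further point to confirm is that the shift along the ray really leaves the support conditions invariant, rather than only finitely many $\alpha$ qualifying. Because this invariance is exactly what produces dimension $\infty$, we will carry it out concretely on the explicit description of $\Supp_1(\DD)$.
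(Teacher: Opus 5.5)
\textbf{Where your argument breaks.} Your overall architecture matches the paper's. You grade by $\alpha$ and use that $-\zeta_1\widehat{d\pi_{\lambda^*}}(N_1^+)$ preserves each $\Pol(\alpha)$ (Proposition~\ref{prop:frakn-mod}). Strictly, it is this operator, not $\widehat{d\pi_{\lambda^*}}(N_1^+)$ itself, that has weight zero. You then write the $SL$ solution space as $\bigoplus_{\alpha\in\Gamma_1(\ell)}$ of the $GL$ pieces and count nonzero pieces.

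The lifts $(\lambda,\nu)$ are unnecessary. The operator on $\Pol(\alpha)$ depends only on $\lambda(u)$, so Proposition~\ref{prop:SolD1} applies directly.

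The real problem is the step you single out as producing $\dim=\infty$: it rests on a false claim. The conditions defining $\Supp_1(\DD)$ are \emph{not} invariant along the ray.
\begin{itemize}
\item In your lifted coordinates, $\Phi_{1,a}$ requires $\nu_2-\lambda_2=\alpha_2-\alpha_1\in\N$.
\item On $\Gamma_1(\ell)=\{\alpha_2-2\alpha_1=\ell\}$ this quantity equals $\alpha_1+\ell$, which changes with $\alpha$.
\item Only the parameter $a_{0,1}(u)=2u_1-2-\ell$ is constant on the ray.
\item The linear form cut out by the $SL$ torus is $\alpha_2-2\alpha_1$ (resp.\ $\alpha_1-2\alpha_2$ for $i=2$), not $\alpha_1\pm\alpha_2$. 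The infinitude of $\Gamma_i(\ell)$ depends on the opposite signs of its coefficients; for $\alpha_1+\alpha_2$ (the case $i=3$) the set is finite.
\end{itemize}

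\textbf{The correct mechanism is monotonicity, not invariance.} Since $\alpha_2-\alpha_1=\alpha_1+\ell$ grows along $\Gamma_1(\ell)$, condition (A1), namely $\alpha_1\le\alpha_2$, holds for every $\alpha\in\Gamma_1(\ell)$ with $\alpha_1\ge\max(0,-\ell)$. All these $\alpha$ lie in $\N^2$. In each such piece (A1) alone yields the nonzero solution $p^{(\alpha)}_{a_{0,1}(u),\alpha_2}$. The parity constraint on the ray is $\alpha_2\equiv\ell \bmod 2$, which is constant.

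Hence, as soon as $u_1-v\in\Z$ and $\delta_1+\sigma\equiv u_1-v \bmod 2$, infinitely many pieces are nonzero. Your alternative ``violated for all $\alpha$'' never occurs once $\Hom_{SL,1}\neq\{0\}$.

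This matters for (iii). $\Supp_{SL,1}(\DD)$ is already defined by (S1), so you cannot simply define it as the image of $\Supp_1(\DD)$. You must show that this image imposes no condition beyond $u_1-v\in\Z$ and the parity, which is exactly what the monotonicity argument gives.

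With this replacement the proof goes through. It is precisely the paper's argument: start from $\alpha=(|\ell|,\ell+2|\ell|)$, then shift $\alpha\mapsto(\alpha_1+j,\alpha_2+2j)$ for $j\in\N$. The case $i=2$ follows symmetrically or via your duality.
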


\begin{thmalph}
[{$i=3$, see Theorem \ref{thm:classSL3}}]
\label{Intro:I}
The following conditions on 
$(\delta,\sigma; u, v)  \in (\Z/2\Z)^3\times \C^3$ are equivalent.
\begin{enumerate}[label=\normalfont{(\roman*)}]
\item $\Diff_{G'_{SL,3}}(I(\delta,u)_{SL}, J(\sigma,v)_{SL})\neq \{0\}$.
\item $(\delta,\sigma; u, v) \in \Supp_{SL,3}(\DD)$.
\end{enumerate}
Moreover, for $(\delta,\sigma; u, v)  \in \Supp_{SL,3}(\DD)$ with $j=-(u_1+u_2-v)\in\N$,
we have
\begin{equation*}
\dim_\C\Diff_{G'_{SL,3}}(I(\delta,u)_{SL}, J(\sigma,v)_{SL}) \geq j+1.
\end{equation*}
\end{thmalph}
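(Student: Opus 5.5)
Theorem \ref{Intro:I} follows from the F-method, via the duality \eqref{eqn:IntroSymb-triangle}: first reduce the $SL$-problem to the $GL$-results of Theorems \ref{Intro:B} and \ref{Intro:D}, then count solutions of the $SL$ F-system by hand. Since $SL(3,\RR)$ and $GL(3,\RR)$ share the opposite nilradical $\frakn_-(\RR)$, and the $SL$ minimal parabolic is the $GL$ one intersected with $SL$, each $SL$ F-system is obtained from the $GL$ one by forgetting one central constraint. The $GL$ F-system for $G'_3$ consists of:
\begin{itemize}
\item the vector-field equations coming from $\frakn'_{+}$ of $\frakg'_3$, which involve only $D_1,D_2,D_3$ and so are unchanged;
\item homogeneity equations coming from the Cartan $\frakh'$ of $\frakg'_3$.
\end{itemize}
The Cartan of $\frakg'_{SL,3}$ has dimension one instead of two. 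So in the $SL$ setting only one linear combination of the two homogeneity conditions is imposed. The solution space is then a direct sum, over bidegrees $(\alpha_1,\alpha_2)$ and powers of $\zeta_3$ with a single linear constraint, of the $GL$-type solution spaces.

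The key steps are as follows.
\begin{enumerate}
\item Write the $SL$ F-system explicitly and decompose $\Pol(\frakn_+)$ into joint eigenspaces of the residual torus. Choose the weight normalization so that the remaining constraint reads $\alpha_1+\alpha_2+\text{(power of }\zeta_3)=$ a quantity determined by $j=-(u_1+u_2-v)$, or a closely related one.
\item For each admissible splitting, the homogeneous component solves exactly the $GL$ F-system for the $GL$-parameters $(\lambda,\nu)$ obtained by lifting $(u,v)$ with a free central character tuned to that splitting.
\item Theorem \ref{Intro:B} then gives the support: $\Diff\neq\{0\}$ exactly when some lift lies in $\Supp_3(\DD)$. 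This set of $(\delta,\sigma;u,v)$ is what I take $\Supp_{SL,3}(\DD)$ to be, with the parity conditions on $(\delta,\sigma)$ obtained from those on $(\xi,\eta)$ by restricting characters of $M$.
\item For the lower bound, fix $(u,v)$ with $j\in\NN$ and exhibit $j+1$ linearly independent solutions, one for each splitting $k=0,\dots,j$ of the total degree. For each $k$, I would check that the corresponding lift lands in $\Supp_{3,a}$ or $\Supp_{3,b}$. Then $\DD^{(1)}_3$ or $\DD^{(2)}_3$ from Theorem \ref{Intro:D}, in its ordered ${}_3F_1$ form, supplies a nonzero solution of that bidegree. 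Solutions of distinct bidegrees are automatically independent.
\end{enumerate}

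The main obstacle is step 4: showing that every one of the $j+1$ lifts satisfies the integrality conditions defining $\Supp_{3,a}\cup\Supp_{3,b}$. Those conditions likely require two integers, e.g. $\alpha_1,\alpha_2\in\NN$ or $m\in\NN$, and only their sum is controlled by $j$. I would first try the leading-monomial argument for the F-system: a polynomial $\zeta_1^{\alpha_1}\zeta_2^{\alpha_2}$ plus lower terms in the T-saturated variable. I would check that the recursion for the coefficients $c_n$ (the Cayley continuant recursion) never forces vanishing when only the sum $\alpha_1+\alpha_2$ is prescribed. If some splittings fail, I would compensate with the $D_3^{m+1}$-family from $\DD^{(2)}_3$, since varying $m$ between $0$ and $j$ should also yield $j+1$ independent operators.
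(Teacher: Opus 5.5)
Your reduction is the same as the paper's. You decompose $\Pol(\frakn_+)$ under the one-dimensional torus of $G'_{SL,3}$ as $\Pol_3(\ell)=\bigoplus_{\alpha_1+\alpha_2=-\ell}\Pol(\alpha)$. You then treat each $\Pol(\alpha)$ separately. That step needs $-\zeta_3\widehat{d\pi_{\lambda(u)^*}}(N_3^+)$ to preserve each $\Pol(\alpha)$, which uses equivariance under the full torus of $GL(3,\RR)$ (Proposition~\ref{prop:frakn-mod}), not just the residual one. On each $\Pol(\alpha)$ you recover the $GL$ equation for $\lambda=\lambda(u)$, with a lift satisfying $\nu_1-\nu_2=u_1+u_2+\alpha_1+\alpha_2=v$.

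\textbf{The gap.} The decisive step is left open, and you have misidentified what it is. Once you fix a splitting $\alpha=(k,j-k)$, both $\alpha_1,\alpha_2\in\NN$ are prescribed, so the integrality conditions $\nu_1-\lambda_1,\lambda_3-\nu_2\in\NN$ hold automatically. What you actually need is that the $GL$ solution space on $\Pol(\alpha)$ is nonzero for \emph{every} $\alpha\in\NN^2$ and \emph{every} $\lambda$, with no further condition.

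In the paper this is the fact that (C1)--(C3) exhaust all cases, i.e.\ $\Phi_3=\Phi_{3,a}\cup\Phi_{3,b}\cup\Phi_{3,c}$ (Fact~\ref{fact:3F1-sols}, Proposition~\ref{prop:SolDsec7}). The ordered F-system on $\Pol(\alpha)$ is ${}_3\calF_1(-\alpha_1,-\alpha_2,\tfrac{a_0-b_0}{2};-b_0;t)$. It always has a nonzero solution in $\Pol_{\min(\alpha)}[t]$: the terminating ${}_3F_1$ when $b_0\notin[0,\min(\alpha)-1]\cap\ZZ$, and $t^{1+b_0}{}_3F_1[\cdots]$ otherwise.

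Your recursion idea also works, and more simply than you suggest. $\calL^{(\alpha)}(a_0,b_0)$ sends $t^n$ to $n(b_0+1-n)t^n$ plus multiples of $t^{n+1}$ and $t^{n+2}$. So on $\Pol_{\min(\alpha)}[t]$ it is triangular with a zero diagonal entry at $n=0$, hence has nonzero kernel.

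Without this fact you get neither (ii)$\Rightarrow$(i) nor $\dim\ge j+1$. You also cannot just take $\Supp_{SL,3}(\DD)$ to be the set of parameters admitting a good lift. It is the fixed set (S3), and matching the two again requires knowing that $\Supp_3(\DD)$ imposes nothing beyond $\nu_1-\lambda_1,\lambda_3-\nu_2\in\NN$ and the parity conditions.

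\textbf{The fallback.} Your contingency plan would fail. The exponent of $D_3$ in $\DD^{(2)}_3$ is $\nu_1-\nu_2-1=v-1$; equivalently $b_0(u)=u_1+u_2-2+\alpha_1+\alpha_2=v-2$. This is fixed by $v$ and independent of the splitting, so there is no $m$ to vary over $0,\dots,j$. Moreover, $\DD^{(2)}_3$ exists for a given $\alpha$ only when $v-2\in[0,\min(\alpha)-1]\cap\ZZ$. The $j+1$ independent operators do come, as you intended, one from each summand $\Pol(\alpha)$ with $\alpha\in\Gamma_3(-j)$ in the direct sum \eqref{eqn:DIffDimSL3c}. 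That count only holds once nonvanishing on every summand has been proved.
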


We next consider the construction of DSBOs. 

\begin{thmalph}
[{See Theorems \ref{thm:consSL12} and \ref{thm:consSL3}}]
\label{Intro:J}
Any DSBO $\DD \in \Diff_{G'_{SL,i}}(I(\delta,u)_{SL}, J(\sigma,v)_{SL})$
can be given as a linear combination of DSBOs 
for the $GL$-case for all $i=1,2,3$.
\end{thmalph}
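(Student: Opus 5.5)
The plan is to compare the $SL$-case F-system directly with the $GL$-case F-system; the point is that both live on the same nilradical. Since $SL(3,\R)$ and $GL(3,\R)$ share the same opposite nilradical $\frakn_-(\R)=\Span_\R\{N_1^-,N_2^-,N_3^-\}$, and $\iota_i(SL(2,\R))$ and $\iota_i(GL(2,\R))$ share the same nilradical, the duality map $\EuScript{D}$ and the truncated symbol map $\Symb_0$ identify DSBOs in both settings with polynomials on $\frakn_+$. Concretely, $\Diff_{G'_{SL,i}}(I(\delta,u)_{SL},J(\sigma,v)_{SL})$ is identified with polynomial solutions $\psi$ of a system $D_F^{SL}$. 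This system consists of
\begin{itemize}
\item the nilpotent equations coming from $\frakn'_+$, which are literally the same operators as in the $GL$-case, and
\item the weight conditions with respect to the Cartan subalgebra and the component group of the Levi of $G'_{SL}$.
\end{itemize}
For the $GL$-case, the Cartan of $\mathfrak{gl}(2)$ has one extra direction, the central element of $\mathfrak{gl}(2)$, together with an extra sign character. Hence the $SL$ F-system is obtained from the $GL$ F-system $D_F$ by discarding one Euler-type homogeneity equation and one parity constraint.

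The first step is to decompose any solution $\psi\in\Sol_{D_F^{SL}}$ into its components under the grading by the central element of $\mathfrak{gl}(2,\C)$ acting on $\Pol(\frakn_+)$. This is an Euler operator $E$ in $\zeta_1,\zeta_2,\zeta_3$ with integer eigenvalues. Since $E$ commutes with the $\frakn'_+$-equations and with the $\mathfrak{sl}(2)$-Cartan condition, each homogeneous component $\psi_k$ is again a solution. The second step is to choose, for each such component, a lift of parameters. That is, given $(\delta,\sigma;u,v)$, pick $(\xi,\eta;\lambda,\nu)$ with $\lambda|_{\mathfrak{sl}}=u$, $\nu|_{\mathfrak{sl}}=v$, and central characters adjusted so that the $GL$ homogeneity equation has eigenvalue exactly $k$; the signs $\xi,\eta$ are chosen compatibly with the parity of $\psi_k$. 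Then $\psi_k\in\Sol_{D_F}$ for these $GL$ parameters, so by Theorem \ref{Intro:D} or \ref{Intro:E} (equivalently Theorems \ref{thm:cons3}, \ref{thm:cons1}), $\psi_k$ is a multiple of the symbol of $\DD^{(j)}_3(\lambda,\nu)$ or $\DD_i(\lambda,\nu)$. Applying $\Symb_0^{-1}$, and using that the restriction operator $\Rest_i$ agrees on the two groups, shows that $\DD$ is the finite sum of the corresponding $GL$ DSBOs, viewed as operators between the restricted $SL$ principal series.

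I expect the main obstacle to be the lifting step. One must check that for each $k$ a $GL$ parameter exists realizing the required homogeneity, and this is always solvable since the central characters of $\lambda$ and $\nu$ are free complex parameters. One must also check that the induced representations $I(\xi,\lambda)|_{SL(3,\R)}$ and $I(\delta,u)_{SL}$ (and likewise for $J$) coincide, including the sign characters, since $M_{SL}$ is a proper subgroup of $M_{GL}$. This identification is the same as the one made in Section \ref{sec:SL}. A compatible $\xi,\eta$ can be chosen because the $M_{GL}$-character condition reduces on $M_{SL}$ to the $SL$ parity conditions plus a free overall sign.
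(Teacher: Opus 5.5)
Your argument is correct and is essentially the paper's proof: the joint eigenspaces of the $\mathfrak{sl}(2)$-Cartan and the central grading of $\mathfrak{gl}(2)$ are exactly the isotypic pieces $\Pol(\alpha)$, $\alpha\in\Gamma_i(\ell)$, into which the paper splits $\Sol_{SL,i}$ (Propositions \ref{prop:SolSLi} and \ref{prop:DSBO_SL}), and each piece is then recognized as a $GL$-solution. The only difference is that you make the parameter lift $(\xi,\eta;\lambda,\nu)\in L_i(\alpha)$ explicit, whereas the paper reuses the $GL$ analysis on $\Pol(\alpha)$ with $\lambda=\lambda(u)$, implicitly relying on the fact that $-\zeta_i\widehat{d\pi_{\lambda^*}}(N_i^+)$ depends only on the $\mathfrak{sl}$-part of $\lambda$.
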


In Theorems \ref{thm:consSL12} and \ref{thm:consSL3}, 
we give the DSBOs for the $GL$-case contributing to 
$\DD \in \Diff_{G'_{SL,i}}(I(\delta,u)_{SL}, J(\sigma,v)_{SL})$ explicitly.
A key idea for the proofs of Theorems \ref{Intro:H}, \ref{Intro:I} and \ref{Intro:J} is 
to make use of the classification of polynomial solutions for $i=1,2,3$ in
Section \ref{sec:Intro_GL3GL2}. We shall discuss the details in Section \ref{sec:SL}.

%%%%%%%%%%%%%%%%%%%%%%%%%%%%%%%%%%%%%%%%%
\subsection{Branching laws of Verma modules}

In addition to the DSBOs for $(G,G')$ in \eqref{eqn:GGprime}, we also consider 
the branching laws of Verma modules in the Grothendieck group for the pair 
$(\frakg,\frakg')$ in \eqref{eqn:ggprime} with the embeddings $\iota_i$ for $i=1,2,3$.
Although explicit branching laws are obtained, 
we state only multiplicity results here.

Let $(\frakg,\frakg')=(\mathfrak{gl}(3,\C), \mathfrak{gl}(2,\C))$.
We write $m_i(\nu,\lambda)$ for the multiplicity of the Verma module
$[M^{\frakg'_i}_{\frakb'_i}(\nu)]$
for $\frakg_i':=\iota_i(\mathfrak{gl}(2,\C))$ in the branching law
for $[\Mb(\lambda)\vert_{\frakg_i'}]$ in the Grothendieck group.
Here $\frakb$ and $\frakb'_i$ are the Borel subalgebras of $\frakg$ and $\frakg'_i$
corresponding to the minimal parabolic subgroups for $G$ and $G'_i$, respectively.

In the $\mathfrak{gl}$-case, the multiplicity $m_i(\nu,\lambda)$ behaves nicely as follows.

\begin{thmalph}
[{See Theorem \ref{thm:BranchingEmb}}]
\label{Intro:K}
We have $m_i(\nu,\lambda) \leq 1$ for all $i=1,2,3$.
\end{thmalph}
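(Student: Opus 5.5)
The plan is to compute the branching law of $\Mb(\lambda)|_{\frakg'_i}$ in the Grothendieck group by a character (formal weight) computation. Then we read off the multiplicities directly.

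First, as $\frakb\cap\frakg'_i$-modules (in fact as $\frakh$-weight modules),
\[
\Mb(\lambda)\simeq U(\frakn_-)\otimes\CC_{\lambda}\simeq S(\frakn_-)\otimes \CC_\lambda .
\]
Hence the formal character is $\operatorname{ch}\Mb(\lambda)=e^{\lambda}\prod_{\beta>0}(1-e^{-\beta})^{-1}$, with $\beta$ running over the three positive roots of $\mathfrak{gl}(3,\CC)$. The Cartan subalgebra $\frakh$ of $\frakg$ is contained in $\frakg'_i$ for all $i$: the image $\iota_i(\mathfrak{gl}(2,\CC))$ is the Levi-type subalgebra $\frakh+\frakg_{\pm\beta_i}$ with $\beta_i$ the corresponding root. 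Therefore weights of $\frakg$ and $\frakg'_i$ can be compared without projection. In the Grothendieck group of $\frakh$-semisimple $\frakg'_i$-modules with finite-dimensional weight spaces and weights bounded above with respect to $\frakb'_i$, the class of a module is determined by its character. Moreover, Verma module characters $e^{\nu}(1-e^{-\beta_i'})^{-1}$ form a basis of the relevant completion, where $\beta'_i$ is the positive root of $\frakg'_i$ determined by $\frakb'_i$.

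Next we split into cases according to compatibility (Definition \ref{def:compatible}).

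For $i=3$, $\frakb$ is compatible with $\frakg'_3$, and $\beta'_3=\varepsilon_1-\varepsilon_3$ is a positive root of $\frakg$. Hence
\[
\operatorname{ch}\Mb(\lambda)=\sum_{a,b\ge0} e^{\lambda-a(\varepsilon_1-\varepsilon_2)-b(\varepsilon_2-\varepsilon_3)}\cdot (1-e^{-(\varepsilon_1-\varepsilon_3)})^{-1}.
\]
This gives $[\Mb(\lambda)|_{\frakg'_3}]=\sum_{a,b\in\NN}[M^{\frakg'_3}_{\frakb'_3}(\lambda-a(\varepsilon_1-\varepsilon_2)-b(\varepsilon_2-\varepsilon_3))]$. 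Since $\varepsilon_1-\varepsilon_2$ and $\varepsilon_2-\varepsilon_3$ are linearly independent in $\frakh^\vee$ (for $\mathfrak{gl}$), the map $(a,b)\mapsto\nu$ is injective, so $m_3\le1$.

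For $i=1$ (and similarly $i=2$, or via the duality of Section \ref{sec:duality}), the Borel $\frakb'_1$ induced from the minimal parabolic of $G'$ might correspond to the root $-\beta_1$ relative to $\frakb$. This is exactly the noncompatible situation. The factor $(1-e^{-\beta_1})^{-1}$ in the character is then an expansion in the "wrong" direction. It has to be rewritten as $-e^{\beta_1}(1-e^{\beta_1})^{-1}$ after combining with the remaining factors, or, more rigorously, handled by exhibiting an explicit $\frakb'_1$-stable filtration. Concretely, I would first find the $\frakg'_1$-highest weight vectors via the F-system solutions of Section \ref{sec:Intro_GL3GL2} (Theorem \ref{Intro:C}: at most one-dimensional, which already suggests multiplicity at most $1$). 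I would then construct a filtration of $\Mb(\lambda)$ indexed by $\NN^2$, with graded pieces Verma modules $M^{\frakg'_1}_{\frakb'_1}(\nu_{a,b})$, where $\nu_{a,b}=\lambda-a\beta-b\gamma$ and $\{\beta,\gamma\}$ are the two roots not equal to $\pm\beta_1$. Alternatively I would compare characters after substituting the correct expansion direction. The weights $\nu_{a,b}$ are again pairwise distinct by linear independence, which gives $m_1\le1$.

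The main obstacle is the noncompatible case $i=1,2$. There the character of $\Mb(\lambda)$ is not a priori a well-defined nonnegative sum of $\frakg'_i$-Verma characters in the same completion, and the Grothendieck group statement needs the right category (the restriction is not in category $\mathcal O$ for $\frakb'_i$). I would first try to build an explicit filtration of $U(\frakn_-)$ adapted to $\frakg'_i$, using PBW ordering with the $\frakg'_i$-root vector first, and check that it produces a $\frakb'_i$-Verma flag. Failing that, I would define the Grothendieck group by formal characters and verify the identity directly. Once the decomposition is in hand, injectivity of $(a,b)\mapsto\nu_{a,b}$ finishes all three cases.
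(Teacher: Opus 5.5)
Your strategy is the paper's: factor the $\frakg_i'$-Verma character out of $\operatorname{ch}\Mb(\lambda)$ and read off $m_i$ from the weights of $\C_\lambda\otimes S(\frakn_-/\frakn'_{i,-})$. The paper does this via Theorem \ref{thm:bGVM} with Remark \ref{rem:compatibility}, then Proposition \ref{prop:BranchingEmb}. However, the step that actually gives $m_i\le 1$ rests on a false premise. The Cartan subalgebra $\frakh=\fraka$ is \emph{not} contained in $\frakg_i'$. The algebra $\iota_i(\mathfrak{gl}(2,\C))$ is $4$-dimensional with $2$-dimensional Cartan subalgebra $\fraka_i'$ (for instance $H_2\notin\frakg_3'$); it is not the $5$-dimensional Levi subalgebra $\frakh+\frakg_{\beta_i}+\frakg_{-\beta_i}$. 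Since $m_i(\nu;\lambda)$ is indexed by $\nu\in(\fraka_i')^*\simeq\C^2$, every weight must be restricted to $\fraka_i'$, and distinct $\frakh$-weights can merge. Linear independence of the two remaining roots in $\frakh^*$ therefore proves nothing. The same reasoning would ``prove'' multiplicity one for $(\fraksl(3,\C),\fraksl(2,\C))$ with $i=3$. There $\varepsilon_1-\varepsilon_2$ and $\varepsilon_2-\varepsilon_3$ restrict to the same weight of $\fraka'_{\fraksl,3}=\C H_{1,3}$, and Theorem \ref{thm:BranchingEmb_SL3} gives multiplicity $j+1$. What you must check, and what the paper checks via Table \ref{table:adH}, is independence of the \emph{restricted} weights:
\begin{itemize}
\item $N_2^-,N_3^-$ have $\fraka_1'$-weights $(0,-1),(-1,0)$;
\item $N_1^-,N_3^-$ have $\fraka_2'$-weights $(1,0),(0,1)$;
\item $N_1^-,N_2^-$ have $\fraka_3'$-weights $(-1,0),(0,1)$.
\end{itemize}
This yields the highest weights $(\lambda_1-r,\lambda_2-\ell)$, $(\lambda_2+r,\lambda_3+\ell)$ and $(\lambda_1-r,\lambda_3+\ell)$, each occurring once.

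Second, you leave $i=1,2$ unfinished, and the obstacle you anticipate does not exist. Since $\frakn'_{1,+}=\C N_1^+=\C E_{1,2}$, the Borel subalgebra $\frakb_1'=\frakb\cap\frakg_1'$ corresponds to $+\beta_1$, the same orientation as $\frakb$. After restriction to $\fraka_1'$, the factor $(1-e^{-\beta_1})^{-1}$ is exactly the $\frakg_1'$-Verma denominator and all weight spaces are finite-dimensional. The factorization therefore goes through verbatim as for $i=3$, and the paper treats all three cases uniformly in this way. In fact $\diag(2,1,0)\in\frakg_1'$ and $\diag(0,-1,-2)\in\frakg_2'$ are hyperbolic elements $H$ with $\frakp(H)=\frakb$. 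So $\frakb$ is $\frakg_i'$-compatible in the sense of Definition \ref{def:compatible} for every $i$; the argument given for Proposition \ref{prop:compatibility} only tests $H=H_{1,3}$.

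Finally, the F-system results cannot supply the bound, even heuristically. The DSBO spaces compute $\frakg_i'$-homomorphisms $M^{\frakg'_i}_{\frakb'_i}(-\nu)\to\Mb(-\lambda)$, which are controlled by $m_i(-\nu;-\lambda)$ only for irreducible parameters (Section \ref{sec:BranchingDSBO_GL}). For $(\lambda,\nu)\in\Phi_{1,b}$ there are nonzero DSBOs although $m_1(-\nu;-\lambda)=0$. On $\Supp_{3,c}(\DD)$ the DSBO space is $2$-dimensional while $m_3(-\nu;-\lambda)=1$.
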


Next, let $(\frakg,\frakg')=(\mathfrak{sl}(3,\C), \mathfrak{sl}(2,\C))$.
To distinguish this case from the $\mathfrak{gl}$-case, we write
$\frakg_{\fraksl} := \mathfrak{sl}(3,\C)$ and $\frakg_{\fraksl,i}':=\iota_i(\mathfrak{sl}(2,\C))$.
We then denote by
$m_{\fraksl,i}(v;u)$ the multiplicity of the Verma module
$[M^{\frakg'_{\fraksl,i}}_{\frakb'_{\fraksl,i}}(v)]$ for $\frakg_{\fraksl,i}'$ 
in the branching law $[\Mbsl(u)\vert_{\frakg_{\fraksl,i}'}]$ in the Grothendieck group.
As in the $\mathfrak{gl}$-case, let  $\frakb_{\fraksl}$ (resp.\ $\frakb'_{\fraksl,i}$) denote
the Borel subalgebra of $\frakg_{\fraksl}$ (resp.\ $\frakg'_{\fraksl,i}$).

We consider the cases $i=1,2$ and $i=3$, separately.

\begin{thmalph}
[{$i=1,2$, see Theorem \ref{thm:BranchingEmb_SL12}}]
\label{Intro:L}
Given $u \in \C^2$, the following conditions on $v \in \C$ are equivalent.
\begin{enumerate}[label=\normalfont{(\roman*)}]
\item $m_{\fraksl,i}(v;u)\neq 0$.
\item $m_{\fraksl,i}(v;u)=\infty$.
\end{enumerate}
\end{thmalph}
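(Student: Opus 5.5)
Theorem \ref{Intro:L} concerns the Verma module $\Mbsl(u)$ of $\fraksl(3,\C)$ restricted to $\frakg'_{\fraksl,i}$ for $i=1,2$, where the Borel subalgebra is not compatible with $\frakg'_{\fraksl,i}$. I would compute the branching law in the Grothendieck group by comparing characters with respect to the Cartan subalgebra of $\frakg'_{\fraksl,i}$. This Cartan is one-dimensional, spanned by $H_i=\iota_i(\diag(1,-1))$.

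\textbf{Step 1: formal character of the restriction.} By PBW, $\Mbsl(u)\simeq U(\frakn_-)\otimes\C_u$, with basis $(N_1^-)^{a}(N_2^-)^{b}(N_3^-)^{c}\otimes 1$. I would record the $H_i$-eigenvalue of each basis vector. For $i=1$ the root $\varepsilon_1-\varepsilon_2$ gives $H_1$-weights $-2$, $1$, $-1$ on $N_1^-$, $N_2^-$, $N_3^-$ (up to the paper's labelling). The key point is that one negative root vector has \emph{positive} $H_1$-weight, namely $N_2^-$, the root vector for $-(\varepsilon_2-\varepsilon_3)$. Combined with $N_3^-$ of weight $-1$, the monomials $(N_2^-)^k(N_3^-)^k$ give infinitely many linearly independent vectors of the same $H_1$-weight. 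So the $H_1$-weight spaces are infinite-dimensional, although the restriction still lies in the BGG-type category, since $\iota_1(\frakn')$ acts locally nilpotently.

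\textbf{Step 2: decomposition into Verma modules.} Write $\frakn_-=\C N_1^-\oplus\frakm$, where $N_1^-$ spans the negative nilradical of $\frakg'_{\fraksl,1}$ and $\frakm=\Span\{N_2^-,N_3^-\}$. As in the proof of Theorem \ref{thm:BranchingEmb} for the $\mathfrak{gl}$-case, I would show that $\Mbsl(u)|_{\frakg'}$ has a filtration by $\frakg'$-Verma modules indexed by a basis of $\C[N_2^-,N_3^-]$ graded by $H_1$-weight. The coinvariant-type quotient $\Mbsl(u)/N_1^-\Mbsl(u)\simeq S(\frakm)\otimes\C_u$ is $\frakb'$-stable, and its formal $H_1$-character determines the multiplicities. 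Hence
\[
m_{\fraksl,1}(v;u)=\dim\{\,(k,l)\in\N^2 : u(H_1)+k-l = v\,\},
\]
up to the normalisation $v\leftrightarrow$ weight. The difference from the $\mathfrak{gl}$-case is that there the extra central coordinate of $\frakg'$ separates the pair $(k,l)$, which gives $m_i\le 1$. In the $\fraksl$-case only $k-l$ is detected.

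\textbf{Step 3: conclusion.} For a fixed $v$, the set $\{(k,l) : k-l=v-u(H_1)\}$ is empty unless $v-u(H_1)\in\Z$. When $v-u(H_1)\in\Z$, it is infinite, since $(k+t,l+t)$ works for all $t\in\N$. This gives (i)$\Leftrightarrow$(ii), and (ii)$\Rightarrow$(i) is trivial. The case $i=2$ follows by the duality of Section \ref{sec:duality}, or by the same computation.

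\textbf{Main obstacle.} The hardest step is justifying Step 2: that the Grothendieck-group multiplicity equals the count of $H_1$-weights in $S(\frakm)$, even though the weight spaces are infinite-dimensional. I would handle this by proving freeness of $\Mbsl(u)$ over $U(\C N_1^-)$ and using the $\frakb'$-stable filtration by total degree in $\frakm$. Each graded piece is then a finite direct sum of $\frakg'$-Verma modules, so the multiplicity is finite in each filtration degree and the total multiplicity is the count over all degrees.
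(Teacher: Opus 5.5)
Your argument is correct, and its core is the same computation as the paper's proof of Proposition \ref{prop:BranchingSLEmb12}: $\ad(H_{1,2})$ acts on $(N_2^-)^\ell(N_3^-)^r$ by $\ell-r$, so every weight of $\C_u\otimes S(\frakn_-/\frakn'_{\fraksl,1,-})$ that occurs at all occurs infinitely often.

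The genuine difference is how $m_{\fraksl,i}(v;u)$ is tied to the Grothendieck group. The paper simply defines $m_{\fraksl,i}(v;u)$ by \eqref{eqn:mult_i_sl}. It takes the branching-law meaning from Theorem \ref{thm:bGVM} via Remark \ref{rem:compatibility}, which needs care here because the restriction has infinite-dimensional weight spaces. Your Step 2 proves the identification directly, and it works better than you state. Since $\frakm=\Span\{N_2^-,N_3^-\}$ is an abelian ideal of $\frakn_-$ stable under $\ad(\frakg'_{\fraksl,1})$, the subspace $U(\frakm)\otimes\C_u\subset\Mbsl(u)$ is a $\frakb'_{\fraksl,1}$-submodule. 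PBW freeness over $U(\C N_1^-)$ then gives $\Mbsl(u)\cong U(\frakg'_{\fraksl,1})\otimes_{U(\frakb'_{\fraksl,1})}(S(\frakm)\otimes\C_u)$. The $\frakm$-degree splits this as a $\frakg'_{\fraksl,1}$-stable direct sum of the modules $S^d(\frakm)\otimes M^{\frakg'_{\fraksl,1}}_{\frakb'_{\fraksl,1}}(u_1)$. Each of these has a finite filtration whose subquotients are Verma modules of highest weights $u_1+d-2j$ ($0\le j\le d$).

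A few small corrections:
\begin{enumerate}
\item The quotient $\Mbsl(u)/N_1^-\Mbsl(u)$ is not $\frakb'$-stable, since $N_1^+N_1^-=N_1^-N_1^++H_{1,2}$. You should work with the submodule $U(\frakm)\otimes\C_u$ instead.
\item The graded pieces are in general only Verma-filtered, not direct sums of Verma modules. This is all the Grothendieck group sees, so nothing is lost.
\item For $i=2$, repeat the computation with $\frakm=\Span\{N_1^-,N_3^-\}$. Section \ref{sec:duality} concerns F-systems, not these branching laws.
\end{enumerate}

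Finally, your support $v-u_1\in\Z$ is the correct one. The paper's proof lists only the monomials with $\ell\ge r$ and so records $v-u_i\in\N$, but this does not affect the equivalence of (i) and (ii).
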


\begin{thmalph}
[{$i=3$, see Theorem \ref{thm:BranchingEmb_SL3}}]
\label{Intro:M}
Given $u \in \C^2$, for every $k \in \N_+$, there exists $v_k \in \C$ such that
$m_{\fraksl,3}(v_k;u)=k$.
\end{thmalph}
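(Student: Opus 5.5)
The plan is to compute the restriction of the Verma module $\Mbsl(u)$ to $\frakg'_{\fraksl,3}=\iota_3(\mathfrak{sl}(2,\C))$ in the Grothendieck group directly, using characters. Two structural facts drive this. First, the Borel subalgebra $\frakb_{\fraksl}$ is compatible with $\frakg'_{\fraksl,3}$ (Definition \ref{def:compatible}). Second, $\frakn_-=\Span\{N_1^-,N_2^-,N_3^-\}$ decomposes under the Cartan of $\frakg'_{\fraksl,3}$ so that $N_3^-$ spans the nilradical of $\frakb'_{\fraksl,3}$. Consequently, as a module over $\frakb'_{\fraksl,3}$, we have $U(\frakn_-)\simeq U(\C N_3^-)\otimes S(\C N_1^-\oplus\C N_2^-)$. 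The standard compatibility argument, via a filtration of $\Mbsl(u)$ by $\frakb'$-stable pieces, then gives
\begin{equation*}
[\Mbsl(u)\vert_{\frakg'_{\fraksl,3}}]=\sum_{a,b\in\N}[M^{\frakg'_{\fraksl,3}}_{\frakb'_{\fraksl,3}}(u|_{\frakh'}-a\alpha_1|_{\frakh'}-b\alpha_2|_{\frakh'})].
\end{equation*}
Here $\alpha_1=\varepsilon_1-\varepsilon_2$ and $\alpha_2=\varepsilon_2-\varepsilon_3$ are the weights of $N_1^-,N_2^-$ (up to sign convention). In the detailed form this is Theorem \ref{thm:BranchingEmb_SL3}; we use it as the starting point, or reprove it by comparing formal characters, since Verma characters are linearly independent.

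The key observation is that the Cartan subalgebra $\frakh'$ of $\frakg'_{\fraksl,3}$ is one-dimensional, spanned by $H=E_{11}-E_{33}$. Both $\alpha_1$ and $\alpha_2$ take the value $1$ on $H$. So $a\alpha_1+b\alpha_2$ restricts to $(a+b)$ times a fixed weight. The multiplicity of the $\frakg'$-Verma module with highest weight $v=u(H)-(a+b)$ is therefore
\begin{equation*}
m_{\fraksl,3}(v;u)=\#\{(a,b)\in\N^2: a+b=k-1\}=k
\end{equation*}
when $u(H)-v=k-1\in\N$, and zero otherwise.

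Given $k\in\N_+$, set $v_k:=u(H)-(k-1)$; in the paper's normalization of $u$ and $v$ this is an affine-linear expression. Then $m_{\fraksl,3}(v_k;u)=k$, which proves Theorem \ref{Intro:M}. The result is consistent with Theorem \ref{Intro:I}: there $j=-(u_1+u_2-v)$ plays the role of $k-1$, and the bound $j+1$ matches the multiplicity. I will use this as a check on sign conventions.

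The main obstacle is bookkeeping rather than conceptual. I must fix correctly which root vectors lie in $\frakn'_-$ versus its complement for the embedding $\iota_3$ in \eqref{eqn:embedding}, and translate $u\in\C^2$, $v\in\C$ into the $H$-eigenvalues. I will also need to justify the Grothendieck-group identity: in the compatible case, $\Mbsl(u)$ admits a $\frakg'$-filtration with Verma subquotients indexed by a weight basis of $S(\frakn_-/\frakn'_-)$, for example via the standard $U(\frakg')\otimes S(\frakn_-/\frakn'_-)$ filtration argument.
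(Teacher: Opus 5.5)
Your proposal is correct and follows essentially the same route as the paper. The paper also uses the $\frakg'_{\fraksl,3}$-compatibility of $\frakb_{\fraksl}$ (Corollary \ref{cor:compatibility}) together with Kobayashi's character identity (Theorem \ref{thm:bGVM}), and observes that $N_1^-$ and $N_2^-$ both have $H_{1,3}$-weight $-1$. This gives $m_{\fraksl,3}(u_1+u_2-j;u)=j+1$ (Proposition \ref{prop:BranchingSLEmb3}, Theorem \ref{thm:BranchingEmb_SL3}), so $v_k=u_1+u_2-(k-1)$ works. One wording slip: $N_3^-$ spans the opposite nilradical $\frakn'_{3,-}$, not the nilradical of $\frakb'_{\fraksl,3}$.
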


Theorems \ref{Intro:K}, \ref{Intro:L} and \ref{Intro:M} are obtained by
Kobayashi's character formula of generalized Verma modules
(see Theorem \ref{thm:bGVM}).
In addition to the branching laws in the Grothendieck group,
we also give the branching laws for actual Verma modules 
for irreducible parameters (see Corollaries \ref{cor:BranchingEmb} and
\ref{cor:BranchingEmb_SL3}).
At the end of Section \ref{sec:GVM}, we further discuss the relations between
the results of DSBOs and Verma modules.

%%%%%%%%%%%%%%%%%%%%%%%%%%%%%%%%%%%%%%%%%
\subsection{Relation to other work}

This paper provides the first systematic treatment of the symmetrized and ordered forms of DSBOs arising from the non-abelian structure of the nilpotent radical. 
However, our work overlaps to some extent with the existing literature.
For instance, 
the results of Theorems \ref{Intro:B}--\ref{Intro:E}  
were obtained by the first author with Q.\ Labriet in \cite{DiLa26}, where DSBOs are given in the ordered form in terms of distributional kernels on the opposite unipotent radical $N_-$. The $SL(3,\RR)$-analogue of Theorems \ref{Intro:F} and \ref{Intro:G}
was also previously obtained by the last two authors in its symmetrized form in \cite{KPV25}. See Table~\ref{Table:1} for an overview and comparison of the various realizations. 
We remark that even in the overlapping cases, the strategies to construct DSBOs
are significantly different. In \cite{DiLa26, KPV25}, certain recurrence relations
are solved to obtain DSBOs, whereas we investigate 
the differential equations themselves and apply our theory of the ordered F-system in this paper.

\begin{table}[t]\label{Table:1}
\caption{Relationship to Ditlevsen--Labriet and Kubo--P{\'e}rez-Vald{\'e}s}
\begin{center}
\renewcommand{\arraystretch}{1.2}
{
\begin{tabular}{|c|c|c|c|}
\hline
& Differential operators & Symmetrized form & Ordered form\\
 \hline
 This paper & DSBOs and DIOs & Yes & Yes\\
 \hline
Ditlevsen--Labriet \cite{DiLa26} & DSBOs & No & Yes\\
\hline
Kubo--P{\'e}rez-Vald{\'e}s \cite{KPV25} & DIOs &  Yes & No\\
\hline
\end{tabular}
}
\end{center}

\end{table}%

We also mention the work of K{\v{r}}i{\v{z}}ka--Somberg \cite{KrSom17}, who studied the classification of differential symmetry breaking operators for the pair
$(SL(n+2,\CC),SL(n+2-r,\CC))$ with Heisenberg parabolic subgroups
under the condition $n-r>2$.
Their assumptions nevertheless exclude the low-rank cases
considered in the present paper.

%%%%%%%%%%%%%%%%%%%%%%%%%%%%%%%%%%%%%%%%%%
\subsection{Organization of the paper}
This paper is organized into twelve sections including this introduction, followed by three appendices.
The aim of Section \ref{sec:Fmethod} is to give an overview of the F-method. 
We in particular review the inverse of the truncated symbol map
$\Symb_0^{-1}$. Then we introduce the symmetrization operator $S$ and the ordered F-system $\Ad(S)D_F$ in Section \ref{sec:nonsym}. We first discuss the symmetrization operator $S$ in full generality, namely, for an arbitrary finite-dimensional Lie algebra $\frakg$ over $\F$, where $\F=\R$ or $\C$. 
Then, the discussion is specialized to 
the case of the nilpotent radical $\frakn_-$ of a parabolic subalgebra $\frakp$
over $\C$. 
The ordered F-system
$\Ad(S)D_F$ and the ordered symbol map $\Symb_{\ord}$ are also defined.
At the end of this section, we provide the recipe of the F-method using the ordered F-system.

Sections \ref{sec:GL(3)}--\ref{sec:DIO} are devoted to solving 
Problem \ref{prob:class_const} for the $GL$-case; namely, 
$G=GL(3,\R)$ and 
$G'=GL(2,\R),GL(3,\R)$. 
In Section \ref{sec:GL(3)},
we fix some notation and normalizations to prepare for the problem. 
The symmetrization operator $S$ and the ordered F-system $\Ad(S)D_F$ are 
then studied for $G=GL(3,\R)$ in Section \ref{sec:ordered-GL(3)}. 
We consider Problem \ref{prob:class_const}  for 
$(G,G')=(GL(3,\R),GL(2,\R))$
in Sections \ref{sec:Emb3}--\ref{sec:proof2} for $i=3$ and 
in Section \ref{sec:Emb12} for $i=1,2$.
As mentioned above, for $i=3$, we solve the ordered F-system
$\Ad(S)D_F$ first, and then solve the original F-system $D_F$ by applying  
the inverse of the symmetrization operator  $S^{-1}$ (see \eqref{eqn:A}).
In Section \ref{sec:Emb12}, we utilize the results for $i=3$ to obtain DSBOs for $i=1,2$. 
After obtaining all DSBOs for $i=1,2$, we consider Problem \ref{prob:class_const} for $G=G'=GL(3,\R)$ in Section \ref{sec:DIO}. The key idea is to investigate the simultaneous solutions to the F-system for $i=1,2$.

The aim of Section \ref{sec:SL} is to study DSBOs for $(G,G')=(SL(3,\R), SL(2,\R))$.
In this section, we in particular show that any DSBO for the $SL$-case 
can be given as a linear combination of those for the $GL$-case.

It is known that there is a certain duality between the space of DSBOs and 
that of homomorphisms between generalized Verma modules. 
With this in mind, we discuss the branching laws of Verma modules for $(\frakg, \frakg')$ in \eqref{eqn:ggprime} in Section \ref{sec:GVM}. 
At the end of the section 
we verify our results for DSBOs from the viewpoint of Verma modules.

The last three appendices are devoted to topics 
that might otherwise interrupt the flow of the main arguments,
namely, the symmetrization operator for Heisenberg Lie algebras of arbitrary 
dimension, the proof of Proposition \ref{Prop:SolGeneralDiffEq1} 
and an alternative proof 
of Corollary \ref{prop:SolD1}. 

We discuss the symmetrization operator
of Heisenberg Lie algebras in Appendix \ref{appendix:Heis}. 
It will be shown that the symmetrization operator can be 
given as a product of copies of the symmetrization operator of the 
three-dimensional Heisenberg Lie algebra.

The main purpose of Appendix \ref{appendix:Cayely} is to give the proof of 
Proposition \ref{Prop:SolGeneralDiffEq1}, which concerns the symmetrized forms
of DSBOs for $i=3$. Since the Cayley continuants $\Cay_n(a,b)$ are involved in
the arguments, we first review some basic properties of these tridiagonal
determinants. We then give the proof by 
applying the inverse of the symmetrization operator 
$S^{-1}$ to the ${}_3F_1$-hypergeometric polynomials.

In Appendix \ref{appendix:2F0}, which is the last material of this paper, 
we give an alternative proof of Corollary \ref{cor:Sol2F0}, 
which provides
the polynomial solutions to the ordered F-system $\Ad(S)D_F$ for $i=1$. 
In Section \ref{sec:Thms12B}, we obtain the assertion 
by applying the symmetrization operator $S$
to the polynomial solutions to the original F-system $D_F$ (see \eqref{eqn:A}).
In the appendix we carefully observe 
the hypergeometric differential equation for ${}_2F_0$
to solve the ordered F-system $\Ad(S)D_F$ directly.

\medskip
%%%%%%%%%%%%%%%%%%%%%%%%%%%%%%%%%%%%%%%%%%
\noindent
\textbf{Notation}: 
$\N:=\{0, 1, 2, 3, \ldots\}$ and $\N_+:=\{1, 2, 3, \ldots \}$
\vspace{5pt}

%%%%%%%%%%%%%%%%%%%%%%%%%%%%%%%%%%%%%%%%%%
\noindent
\textbf{Convention}:
For $b < a$, we regard $[a,b]$ as $[a,b]=\emptyset$.
\vspace{5pt}

%%%%%%%%%%%%%%%%%%%%%%%%%%%%%%%%%%%%%%%%%%
\section{Brief overview of the F-method}
\label{sec:Fmethod}

The aim of this section is to overview the F-method and the inverse 
of the truncated symbol map $\Symb_0^{-1}$ for DSBOs $\DD$. For the details of the F-method, see, for instance, \cite{KoPe16a, KPV25}. 

%%%%%%%%%%%%%%%%%%%%%%%%%%%%%%%%%%%%%%%%%%
\subsection{Notation}\label{sec:prelim}

Let $G$ be a real reductive Lie group and $P=MAN_+ $ a Langlands decomposition of a parabolic subgroup $P$ of $G$. We denote by $\frakg(\R)$ and 
$\frakp(\R) = \frakm(\R) \oplus \fraka(\R) \oplus \frakn_+(\R)$ the Lie algebras of $G$ and 
$P=MAN_+$, respectively.

For a real Lie algebra $\mathfrak{y}(\R)$, we write $\mathfrak{y}$
and $U(\mathfrak{y})$ for its complexification and the universal enveloping algebra of 
$\fraky$, respectively. 
For instance, $\frakg, \frakp, \frakm, \fraka$, and $\frakn_+$ are the complexifications of $\frakg(\R), \frakp(\R), \frakm(\R), \fraka(\R)$, and $\frakn_+(\R)$, 
respectively.

For $\lambda \in \fraka^* \simeq \Hom_\R(\fraka(\R),\C)$,
we denote by $\C_\lambda$ 
the one-dimensional representation of $A$ defined by 
$a\mapsto a^\lambda:=e^{\lambda(\log a)}$. 
For a finite-dimensional irreducible 
representation $(\sigma, V)$ of $M$ and $\lambda \in \fraka^*$,
we consider  the outer tensor product representation 
$\sigma_\lambda:=\sigma \boxtimes \C_\lambda$
on $V$,
namely, 
$\sigma_\lambda \colon
ma \mapsto a^\lambda\sigma(m)$. By letting $N_+$ act trivially, we regard 
$\sigma_\lambda$ as a representation of $P$. Let $\mathcal{V}:=G \times_P V \to G/P$
be the $G$-equivariant vector bundle over the real flag variety $G/P$
 associated with the 
representation $(\sigma_\lambda, V)$ of $P$. We identify the Fr{\' e}chet space 
$C^\infty(G/P, \mathcal{V})$ of smooth sections with 
\begin{equation*}
C^\infty(G, V)^P:=\{f \in C^\infty(G,V) : 
f(gp) = \sigma_\lambda^{-1}(p)f(g)
\;\;
\text{for any $p \in P$}\},
\end{equation*}
the space of $P$-invariant smooth functions on $G$.
Then, via the left regular representation $L$ of $G$ on $C^\infty(G)$,
we realize the parabolically induced representation 
\begin{equation*}
\pi_{(\sigma, \lambda)} = I(\sigma,\lambda)
:=\Ind_{P}^G(\sigma\boxtimes \CC_\lambda)
\end{equation*}
on $C^\infty(G/P, \mathcal{V})$.

Let $G'$ be a reductive subgroup of $G$ and $P'=M'A'N_+'$ 
a parabolic subgroup of $G'$
with $P' \subset P$
so that there exists a natural morphism $G'/P' \to G/P$
from the real flag variety $G'/P'$ to $G/P$.
We further assume that 
\begin{equation}\label{eqn:MAN}
M'A' \subset MA \quad \text{and} \quad N_+' \subset N_+.
\end{equation}

As for $G/P$, given a finite-dimensional irreducible
representation $(\delta_\nu, W)$ of $M'A'$ with 
$\delta_\nu:=\delta \boxtimes \CC_{\nu}$,
we define
the induced representation 
\begin{equation*}
\pi'_{(\delta, \nu)}=J(\delta, \nu):=\Ind_{P'}^{G'}(\delta \boxtimes \CC_{\nu})
\end{equation*}
on the space $C^\infty(G'/P', \mathcal{W})$ of smooth sections for the 
$G'$-equivariant vector bundle $\mathcal{W}:=G'\times_{P'}W \to G'/P'$.

Via the morphism $G'/P' \to G/P$, we define 
differential operators $\DD\colon C^\infty(G/P, \mathcal{V}) \to C^\infty(G'/P', \mathcal{W})$
in the sense of \cite[Def.\ 2.1]{KoPe16a}.
As $C^\infty(G/P, \mathcal{V})$ is a $G$-representation and $G' \subset G$,
the space $C^\infty(G/P, \mathcal{V})$ is also a $G'$-representation.
We then denote by
\begin{equation*}
\Diff_{G'}(I(\sigma,\lambda),J(\delta,\nu))
\end{equation*}
the space of 
\emph{differential symmetry breaking operators} 
(differential $G'$-intertwining operators)
$\DD \colon C^\infty(G/P, \mathcal{V}) \to C^\infty(G'/P', \mathcal{W})$.

Let 
$\mathfrak{g}(\R)=\mathfrak{n}_-(\R) \oplus \mathfrak{m}(\R) 
\oplus \mathfrak{a}(\R) \oplus \mathfrak{n}_+(\R)$ 
be the Gelfand--Naimark decomposition of $\mathfrak{g}(\R)$
compatible with the Langlands decomposition
$\frakp(\R) = \frakm(\R) \oplus \fraka(\R) \oplus \frakn_+(\R)$,
and write $N_- = \exp(\frakn_-(\R))$. We identify $N_-$ with the 
open Bruhat cell $N_-P$ of $G/P$ via the embedding 
$\iota\colon N_- \hookrightarrow G/P$, $\bar{n} \mapsto \bar{n}P$.
By restricting the vector bundle $\mathcal{V} \to G/P$ to the open Bruhat cell
$N_-\stackrel{\iota}{\hookrightarrow} G/P$,
we regard $C^\infty(G/P,\mathcal{V})$ as a subspace of 
$C^\infty(N_-) \otimes V$.

Likewise,
let 
$\mathfrak{g}'(\R)=\mathfrak{n}_-'(\R) \oplus 
\mathfrak{m}'(\R) \oplus \mathfrak{a}'(\R) \oplus \mathfrak{n}_+'(\R)$ 
be the Gelfand--Naimark decomposition of $\mathfrak{g}'(\R)$
so that
$\mathfrak{p}'(\R) = \mathfrak{m}'(\R) \oplus \mathfrak{a}'(\R) \oplus 
\mathfrak{n}_+'(\R)$ is the Langlands decomposition of 
the parabolic subalgebra $\mathfrak{p}'(\R)$ of $P'$ with respect to $P'=M'A'N_+'$. 
Write $N_-' = \exp(\frakn_-'(\R))$.
As for $C^\infty(G/P,\mathcal{V})$, we regard $C^\infty(G'/P',\mathcal{W})$
as a subspace of $C^\infty(N_-')\otimes W$.

We often view differential symmetry breaking operators
$\DD \colon
C^\infty(G/P,\mathcal{V})
\to C^\infty(G'/P',\mathcal{W})$
as differential operators 
\begin{equation*}
\widetilde{\DD} \colon C^\infty(N_-) \otimes V
\to C^\infty(N_-') \otimes W
\end{equation*}
such that
the restriction $\widetilde{\DD}\vert_{C^\infty(G/P,\mathcal{V})}$
to $C^\infty(G/P,\mathcal{V})$ is a map
\begin{equation*}
\widetilde{\DD}\vert_{C^\infty(G/P,\mathcal{V})}\colon
C^\infty(G/P,\mathcal{V})
\to C^\infty(G'/P',\mathcal{W}).
\end{equation*}
In particular, we regard $\Diff_{G'}(I(\sigma,\lambda),J(\delta,\nu))$ as
\begin{equation}\label{eqn:space_DifferenitalOperators}
\Diff_{G'}(I(\sigma,\lambda),J(\delta,\nu))
\subset \Diff_\C(C^\infty(N_-)\otimes V, C^\infty(N_-')\otimes W).
\end{equation}

%%%%%%%%%%%%%%%%%%%%%%%%%%%%%%%%%%%%%%%%%%
\subsection{Fourier transformed representation \texorpdfstring{$\widehat{d\pi_{(\sigma,\lambda)^*}}$}{ }}
\label{sec:dpi2}

For a representation $\mu$ of $G$, we denote by $d\mu$ 
the infinitesimal representation of $\frakg(\R)$. 
For instance,  $dL$ and $dR$ denote the infinitesimal representations of $\frakg(\R)$ of
the left and right regular representations $L$ and $R$ of $G$ on $C^\infty(G)$, respectively.
As usual, we naturally extend representations of $\frakg(\R)$ 
to ones of the universal enveloping algebra $U(\frakg)$ of its complexification $\frakg$.
The same convention is applied for closed subgroups of $G$.

For $g \in N_-MAN_+$, we write
\begin{equation*}
g = p_-(g)p_0(g)p_+(g),
\end{equation*}
where $p_\pm(g) \in N_{\pm}$ and $p_0(g) \in MA$. 
Similarly, for $Y \in \frakg = \frakn_- \oplus \frakl\oplus \frakn_+$ with $\frakl= \frakm \oplus \fraka$,
we write
\begin{equation*}
Y=Y_{\frakn_-} + Y_{\frakl} + Y_{\frakn_+},
\end{equation*}
where $Y_{\frakn_{\pm}} \in \frakn_{\pm}$ and $Y_\frakl \in \frakl$.

For $2\rho\equiv 2\rho(\frakn_+)= \mathrm{Trace}(\ad\vert_{\frakn_+})\in \mathfrak{a}^*$,
we denote by  $\C_{2\rho}$ the one-dimensional representation of $P$
defined by
$p \mapsto \chi_{2\rho}(p)=
\abs{\mathrm{det}(\mathrm{Ad}(p)\colon 
\mathfrak{n}_+ \to \mathfrak{n}_+)}$.
For the contragredient representation
$((\sigma_\lambda)^\vee, V^\vee)$ of $(\sigma_\lambda,V)$,
we consider the outer tensor product representation
$\sigma^\vee \boxtimes \C_{2\rho-\lambda}$.
As for $\sigma \boxtimes \CC_\lambda$, 
we regard $\sigma^\vee \boxtimes \C_{2\rho-\lambda}$ 
as a representation of $P$.
Define the induced representation
$\pi_{(\sigma, \lambda)^*} 
= \mathrm{Ind}_P^G(\sigma^\vee\boxtimes\CC_{2\rho-\lambda})$
on the space $C^\infty(G/P,\mathcal{V}^*)$ of smooth sections 
for the vector bundle $\mathcal{V}^*=G\times_P (V^\vee\otimes \C_{2\rho})$
associated with $\sigma^\vee \boxtimes \C_{2\rho-\lambda}$, 
which is isomorphic to the tensor bundle
of the dual vector bundle $\mathcal{V}^\vee = G\times_PV^\vee$ and the bundle
of densities over $G/P$.
Then, the integration on $G/P$ gives a 
$G$-invariant non-degenerate bilinear form
$
\mathrm{Ind}_P^G(\sigma\boxtimes\CC_{\lambda})
\times 
\mathrm{Ind}_P^G(\sigma^\vee\boxtimes\CC_{2\rho-\lambda}) \to \C.
$

As for $C^\infty(G/P,\mathcal{V})$,
the space $C^\infty(G/P,\mathcal{V}^*)$ can be regarded as 
a subspace of $C^\infty(N_-) \otimes V^\vee$.
Then the infinitesimal representation $d\pi_{(\sigma,\lambda)^*}(X)$
on $C^\infty(N_-)\otimes V^\vee$
for $X \in \frakg$ is given as
\begin{equation}\label{eqn:dpi3}
d\pi_{(\sigma,\lambda)^*}(X)f(\bar{n})
=d\sigma_\lambda^*((\Ad(\bar{n}^{-1})X)_\frakl)f(\bar{n})
-\left(dR((\Ad(\cdot^{-1})X)_{\frakn_-})f\right)(\bar{n}).
\end{equation}
(For the details, see, for instance, \cite[Sect.\ 2]{KuOr25a}.)
Via the exponential map $\exp\colon \frakn_-(\R) \xrightarrow{\sim}  N_-$, one can regard
$d\pi_{(\sigma,\lambda)^*}(X)$ as a representation 
on $C^\infty(\mathfrak{n}_-(\R)) \otimes V^\vee$.

Let  
\begin{equation}\label{eqn:ord-n}
\ord:=\{X_1,\ldots, X_n\}
\end{equation}
be an ordered basis of $\frakn(\R)$
with $n:=\dim \frakn_-(\R)$.
We write $(x_1, \ldots, x_n)$ for the coordinates on $\frakn_-(\R)$ with respect to 
the ordered basis $\ord$.
It then follows from \eqref{eqn:dpi3} that 
$d\pi_{(\sigma,\lambda)^*}$ gives a Lie algebra homomorphism
\begin{equation*}
d\pi_{(\sigma,\lambda)^*}\colon \mathfrak{g} \To 
\C[\frakn_-(\R);x, \tfrac{\partial}{\partial x}]  \otimes \mathrm{End}(V^\vee).
\end{equation*}
We regard $\ord=\{X_1,\ldots, X_n\}$ also as an ordered basis of $\frakn_-$.
Let  $(z_1, \ldots, z_n)$ denote the coordinates on $\frakn_-$
with respect to the basis $\ord$.
Then $d\pi_{(\sigma,\lambda)^*}$ can be considered as a homomorphism
\begin{equation}\label{eqn:dpi-complex-g}
d\pi_{(\sigma,\lambda)^*}\colon \mathfrak{g} \To 
\C[\frakn_-;z, \tfrac{\partial}{\partial z}]  \otimes \mathrm{End}(V^\vee).
\end{equation}

Now we fix a non-degenerate $\Ad$-invariant symmetric bilinear form $\kappa$ on
$\frakg$. Via $\kappa$, we identify 
$\frakn_+$  with the dual space $\frakn_-^\vee$ of 
$\frakn_-$. 
Write $(\zeta_1, \ldots, \zeta_n)$ for the dual coordinates on 
$\frakn_+\simeq \frakn_-^\vee$ corresponding to $(z_1, \ldots, z_n)$ on $\frakn_-$.
Then
the algebraic Fourier transform $\widehat{\;\;\cdot\;\;}$
of Weyl algebras \cite[Def.\ 3.1]{KoPe16a} gives a Weyl algebra isomorphism
\begin{equation*}
\widehat{\;\;\cdot\;\;}\;\colon
\C[\frakn_-;z, \tfrac{\partial}{\partial z}] 
\stackrel{\sim}{\To} 
\C[\frakn_+;\zeta, \tfrac{\partial}{\partial \zeta}].
\end{equation*}
In particular, it gives a Lie algebra homomorphism
\begin{equation}\label{eqn:hdpi}
\widehat{d\pi_{(\sigma,\lambda)^*}}\colon \mathfrak{g} \To 
\C[\frakn_+;\zeta, \tfrac{\partial}{\partial \zeta}]\otimes \mathrm{End}(V^\vee).
\end{equation}

Now consider the generalized Verma module 
\begin{equation*}
\Mp(V^\vee) := U(\frakg)\otimes_{U(\frakp)}V^\vee
\end{equation*}
for the pair $(\frakg, \frakp)$ induced from $V^\vee$. 
By letting the parabolic subgroup $P$ acting on $\Mp(V^\vee)$ diagonally, we regard
$\Mp(V^\vee)$ as a $(\frakg, P)$-module. It follows from \cite[Sect.\ 3.4]{KoPe16a} that 
the Lie algebra homomorphism \eqref{eqn:hdpi} gives rise to a $(\frakg,P)$-isomorphism 
\begin{equation}\label{eqn:Fc}
F_c\colon \Mp(V^\vee) 
\stackrel{\sim}{\To}
\Pol(\frakn_+) \otimes V^\vee,
\quad u\otimes v^\vee \longmapsto 
\widehat{d\pi_{(\sigma,\lambda)^*}}(u)(1\otimes v^\vee),
\end{equation}
where $\Pol(\frakn_+)$ is the space of polynomial functions on $\frakn_+$.
The isomorphism $F_c$ is called
the \emph{algebraic Fourier transform of the generalized Verma module 
$\Mp(V^\vee)$}.

%%%%%%%%%%%%%%%%%%%%%%%%%%%%%%%%%%%%%%%%%%
\subsection{The F-method}
\label{sec:Fmethod2}
Now, 
for the outer tensor product representations 
$\sigma \boxtimes \C_\lambda$ and $\delta \boxtimes \C_\nu$
on $V$ and $W$, respectively, we put
\begin{equation}\label{eqn:Sol2a}
\mathrm{Sol}(\sigma,\delta;\lambda,\nu):=
\operatorname{Hom}_{M'A'}(W^\vee,
(\mathrm{Pol}(\mathfrak{n}_+) 
\otimes V^\vee)^{\widehat{d\pi_{(\sigma,\lambda)^*}}(\frakn_+')}),
\end{equation}
where $M'A'$ acts on $\Pol(\frakn_+)$ by
\begin{equation}\label{eqn:sharp}
\Ad_{\#}(l) \colon p(\cdot) \mapsto p(\Ad(l^{-1})\cdot)
\;\; \text{for $l \in M'A'$}.
\end{equation}
Via the identification
$\textrm{Hom}_{M'A'}(W^\vee, \Pol(\frakn_+)\otimes V^\vee)
\simeq
\big((\Pol(\frakn_+)\otimes V^\vee) \otimes W\big)^{M'A'}$,
we have
\begin{align}\label{eqn:Sol}
&\mathrm{Sol}(\sigma,\delta;\lambda,\nu)\nonumber \\[3pt]
&=\{ \psi \in \Hom_{M'A'}(W^\vee, \Pol(\frakn_+)\otimes V^\vee): 
\text{
$\psi$ solves the system \eqref{eqn:Fsys} of PDEs below.}\}
\end{align}
\begin{equation}\label{eqn:Fsys}
(\widehat{d\pi_{(\sigma,\lambda)^*}}(C) \otimes \id_W)\psi=0
\,\,
\text{for all $C \in \frakn_+'$}.
\end{equation}
The system \eqref{eqn:Fsys} of PDEs is referred to
as the \emph{F-system}.

The F-method asserts that there exists a linear isomorphism
from the solution space $\mathrm{Sol}(\sigma,\delta;\lambda,\nu)$ to
$\Diff_{G'}(I(\sigma,\lambda),J(\delta,\nu))$.
To describe the linear isomorphism explicitly, we first observe 
the inverse of the algebraic Fourier transform $F_c^{-1}$.

For $X_1^{r_1}\cdots X_n^{r_n} \in U(\frakn_-)$ with 
$|\mathbf{r}|:= r_1+\cdots+r_n$,
we write $\sym_{|\mathbf{r}|}(X_1^{r_1}\cdots X_n^{r_n})$ 
for the symmetrization of $X_1^{r_1}\cdots X_n^{r_n}$, namely,
\begin{equation}\label{eqn:symm}
\sym_{|\mathbf{r}|}(X_1^{r_1}\cdots X_n^{r_n}):=
\frac{1}{|\mathbf{r}|!}\sum_{\tau \in \mathfrak{S}_{|\mathbf{r}|}}
X_{j_{\tau(1)}} \cdots X_{j_{\tau(|r|)}},
\end{equation}
for $X_1^{r_1}\cdots X_n^{r_n}=X_{j_1}\cdots X_{j_{|\mathbf{r}|}}$.
Here, $\mathfrak{S}_{|\mathbf{r}|}$ denotes the symmetric group on 
$|\mathbf{r}|$ letters. 

\begin{proposition}[{\cite[Cor.\ 3.30]{KPV25}}]
\label{prop:Fc}
The inverse 
\begin{equation*}
F_c^{-1}\colon \Pol(\frakn_+)\otimes V^\vee
\stackrel{\sim}{\To} 
 \Mp(V^\vee)
\end{equation*}
is given by
\begin{equation*}
F_c^{-1}(\zeta_1^{r_1}\cdots \zeta_n^{r_n}\otimes v^\vee) =
\sym_{|\mathbf{r}|}(X_1^{r_1}\cdots X_n^{r_n})\otimes v^\vee.
\end{equation*}
\end{proposition}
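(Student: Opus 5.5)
To prove this, I would show that the map $\Phi\colon \Pol(\frakn_+)\otimes V^\vee\to \Mp(V^\vee)$, $\zeta_1^{r_1}\cdots\zeta_n^{r_n}\otimes v^\vee\mapsto \sym_{|\mathbf{r}|}(X_1^{r_1}\cdots X_n^{r_n})\otimes v^\vee$, satisfies $F_c\circ\Phi=\id$. Since $F_c$ is already known to be an isomorphism by \eqref{eqn:Fc}, this suffices. The basic observation is that the symmetrizations $\sym_k$ are precisely the images of the powers $Y^k$ under polarization. For $Y=\sum_j t_jX_j\in\frakn_-$, the multinomial expansion gives
\begin{equation*}
Y^k=\sum_{|\mathbf{r}|=k}\frac{k!}{r_1!\cdots r_n!}\,t^{\mathbf{r}}\,\sym_k(X_1^{r_1}\cdots X_n^{r_n}).
\end{equation*}
The monomials $\zeta^{\mathbf{r}}$ are polarized in exactly the same way by $\langle \zeta, Y\rangle^k$, where $\langle\zeta,Y\rangle=\sum_j t_j\zeta_j$. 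Comparing coefficients of $t^{\mathbf{r}}$, it is therefore enough to prove
\begin{equation*}
F_c(Y^k\otimes v^\vee)=\Big(\sum_j t_j\zeta_j\Big)^k\otimes v^\vee \qquad\text{for all } Y\in\frakn_-,\ k\in\N.
\end{equation*}
(The pairing normalization should be chosen so that the $\zeta_j$ are the coordinates dual to $X_j$.)

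To establish this identity, I would compute $\widehat{d\pi_{(\sigma,\lambda)^*}}(Y)$ for $Y\in\frakn_-$. By \eqref{eqn:dpi3}, $\Ad(\bar n^{-1})Y$ stays in $\frakn_-$, so the $\frakl$-term vanishes. The remaining term is $-dR(\Ad(\bar n^{-1})Y)$, which is the left-invariant vector field of $Y$ with a sign flip. In exponential coordinates, this is a first-order operator of the form
\begin{equation*}
-\sum_j t_j\partial_{z_j}+(\text{terms }\ \text{polynomial in } z \text{ of degree}\ge1 \text{ times } \partial_z).
\end{equation*}
The higher terms come from the Baker--Campbell--Hausdorff series, which is finite because $\frakn_-$ is nilpotent.

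Under the algebraic Fourier transform, $\partial_{z_j}\mapsto -\zeta_j$ (up to the sign and $\sqrt{-1}$ conventions of \cite[Def.~3.1]{KoPe16a}), and $z_j\mapsto \partial_{\zeta_j}$. Hence $\widehat{d\pi}(Y)$ is multiplication by $\langle\zeta,Y\rangle$ plus an operator $E_Y$ in which every term contains some $\partial_\zeta$. The constant $1\otimes v^\vee$ is killed by $E_Y$, but $E_Y$ does not kill higher powers. This is where the difficulty lies.

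The main obstacle is to show that $E_Y$ annihilates $\langle\zeta,Y\rangle^m$ for every $m$. My first approach is the following. Along the one-parameter subgroup $\exp(sY)$, the left- and right-invariant fields of $Y$ coincide, so on the dual side the correction terms are built from $\ad(Y)$. Concretely, I expect each term of $E_Y$ to have the form $\zeta([\,\cdot\,,Y]\cdots)$ contracted with derivatives $\partial_\zeta$ in directions $\ad(Y)^p X_j$. Applying such a term to a function of $\langle\zeta,Y\rangle$ produces a factor $\kappa$-pairing of $\ad(Y)^pX$ with $Y$ itself, and these vanish: they are either brackets with $Y$ paired against $Y$ (zero by $\Ad$-invariance), or they vanish by antisymmetry of the BCH terms.

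A cleaner alternative, which I would try if the direct bookkeeping gets messy, is to conjugate by $\Ad_\#$ and use the equivariance of $F_c$, which reduces the problem to a generating-function identity: $F_c(e^{Y})$ should equal $e^{\langle\zeta,Y\rangle}$, interpreted formally. This follows because $e^{Y}\otimes v^\vee$ corresponds, via the duality between $\Mp(V^\vee)$ and distributions supported at the base point, to the translated delta distribution $\delta_{\exp(Y)}$. Its Fourier transform is the exponential character, and the BCH corrections play no role because only the single point $\exp Y$ is involved. Expanding in degree $k$ then gives the displayed identity, and hence the proposition.
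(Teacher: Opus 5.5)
The paper gives no proof of this statement; it is quoted from \cite[Cor.~3.30]{KPV25}, so your proposal has to stand on its own. Your polarization reduction in $Y=\sum_j t_jX_j$ to the identity $F_c(Y^k\otimes v^\vee)=\langle\zeta,Y\rangle^k\otimes v^\vee$ is correct. Your second route proves that identity once it is stated precisely, as follows.

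With $\widehat{\partial_{z_j}}=-\zeta_j$ and $\widehat{z_j}=\partial_{\zeta_j}$, the transform $T\mapsto\langle T_z,e^{\langle z,\zeta\rangle}\rangle$ on compactly supported distributions intertwines $D$ with $\widehat D$ and sends $\delta_0$ to $1$. So \eqref{eqn:Fc} says that $F_c(u\otimes v^\vee)$ is the transform of $d\pi_{(\sigma,\lambda)^*}(u)\delta_0\otimes v^\vee$. On the open cell, $N_-$ acts by left translation with no multiplier, and Haar measure is Lebesgue measure in exponential coordinates. Hence $\pi_{(\sigma,\lambda)^*}(\exp sY)\delta_0=\delta_{sY}$. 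Differentiating $k$ times at $s=0$ gives $d\pi_{(\sigma,\lambda)^*}(Y)^k\delta_0=\frac{d^k}{ds^k}\big|_{s=0}\delta_{sY}$, whose transform is $\frac{d^k}{ds^k}\big|_{s=0}e^{s\langle\zeta,Y\rangle}=\langle\zeta,Y\rangle^k$.

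The decisive fact is that the one-parameter subgroup $\exp(sY)$ is the straight line $sY$ in exponential coordinates. That, not ``only one point is involved'', is why symmetrization appears. Two repairs are needed in this route. First, drop the $\Ad_\#$-conjugation: $\Ad_\#$ is the action of $M'A'\subset P$, which fixes the base point, so it contributes nothing; what you actually use is the $N_-$-action built into \eqref{eqn:Fc}. Second, $e^Y\otimes v^\vee$ is not an element of $\Mp(V^\vee)$; the $s$-derivatives replace it.

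Your first route does not prove the key step as written. The operator $-dR(\Ad(\bar n^{-1})Y)$ is minus the \emph{right}-invariant field, not the left-invariant one. In exponential coordinates it equals $-\sum_b v_b(z)\partial_{z_b}$ with
$v(z)=\frac{\ad z}{e^{\ad z}-1}Y=\sum_{p\ge 0}\frac{B_p}{p!}(\ad z)^pY$, where $B_p$ are the Bernoulli numbers. Its transform is $\sum_b v_b(\partial_\zeta)\circ\zeta_b$, with the derivatives on the left.

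There are two gaps here. First, even your claim that $E_Y$ kills $1$ requires the normal-ordering term $(\operatorname{div}v)(\partial_\zeta)$ to vanish, i.e.\ $\operatorname{div}v=0$. This holds by unimodularity of $N_-$; at first order it is $\tfrac12\operatorname{tr}(\ad Y|_{\frakn_-})=0$.

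Second, the vanishing on $\langle\zeta,Y\rangle^m$ is not a $\kappa$-pairing. $\kappa$ is identically zero on $\frakn_-\times\frakn_-$, so $\kappa(\ad(Y)^pX,Y)=0$ for a trivial reason, and that quantity never occurs in the computation. What occurs is that a homogeneous constant-coefficient operator $q(\partial_\zeta)$ of degree $p$ sends $\langle\zeta,Y\rangle^m$ to $\frac{m!}{(m-p)!}q(Y)\langle\zeta,Y\rangle^{m-p}$ (zero if $p>m$); that is, it evaluates the coefficient at $z=Y$. Hence
$E_Y\langle\zeta,Y\rangle^m=\sum_{p\ge1}\frac{B_p}{p!}\frac{m!}{(m-p)!}\langle\zeta,(\ad Y)^pY\rangle\langle\zeta,Y\rangle^{m-p}=0$,
because $(\ad Y)^pY=0$.

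With these two repairs the first route is also correct, and it is just the infinitesimal form of the second.
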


Now, let $\Diff_{N_-}(N_-)$ denote the space of left-invariant differential operators
on $N_-$. It is known that there exists a linear isomorphism
\begin{equation}\label{eqn:U_Diff_N}
dR\colon U(\frakn_-) \stackrel{\sim}{\To} \Diff_{N_-}(N_-),
\quad
u \mapsto dR(u).
\end{equation}
Then, by using $F_c^{-1}$,  we define a linear isomorphism
\begin{equation*}
\Symb_0^{-1} \colon\Pol(\frakn_+)\otimes \Hom_\CC(V,W) \stackrel{\sim}{\To} 
\Diff_{N_-}(N_-) \otimes \Hom_\CC(V,W)
\end{equation*}
as
\begin{equation}\label{eqn:invSymb}
\Symb_0^{-1} := dR \circ (F_c^{-1}\otimes \id_{W}).
\end{equation}

\begin{remark}
The linear isomorphism $\Symb_0^{-1}$ is indeed the inverse of a certain linear map $\Symb_0$ called the \emph{truncated symbol map}. For the details, see \cite{KPV25}.
\end{remark}

The following theorem is our main tool to classify and construct 
DSBOs $\DD$. It is first showed in \cite{KoPe16a} for 
$\frakn_+$ abelian. The general case is then proved in \cite{KPV25}.

\begin{theorem}[{\cite[Thm.\ 4.1]{KoPe16a} and \cite[Thm.\ 3.32]{KPV25}}]
\label{thm:symb}
The map $\Rest \circ \Symb_0^{-1}$ provides a linear isomorphism
\begin{equation}\label{eqn:RestSymb0}
\Rest \circ \Symb_0^{-1} \colon 
\mathrm{Sol}(\sigma,\delta;\lambda,\nu) \stackrel{\sim}{\To} 
\Diff_{G'}(I(\sigma,\lambda),J(\delta,\nu)),
\end{equation}
where $\Rest$ denotes the restriction map 
from $C^\infty(G/P',\mathcal{W})$ to $C^\infty(G'/P',\mathcal{W})$.
\end{theorem}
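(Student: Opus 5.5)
The isomorphism is assembled from three standard identifications, and the plan is to compose them and check that the composite is $\Rest\circ\Symb_0^{-1}$.

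\textbf{Step 1 (duality).} Invoke the duality of \cite{KoPe16a}. The space $\Diff_{G'}(I(\sigma,\lambda),J(\delta,\nu))$ is in natural bijection with
\[
\Hom_{(\frakg',P')}\big(\Mpp(W^\vee),\Mp(V^\vee)\big).
\]
An operator $\DD$ corresponds to the image of $1\otimes w^\vee$, which lies in $\big(\Mp(V^\vee)\otimes W\big)^{P'}$. More precisely, the image is a $P'$-equivariant element annihilated by $\frakn_+'$. By Frobenius reciprocity, this Hom space is
\[
\Hom_{M'A'}\big(W^\vee,\Mp(V^\vee)^{\frakn_+'}\big).
\]
Here we use $N_+'\subset N_+$ and $M'A'\subset MA$ from \eqref{eqn:MAN}.

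\textbf{Step 2 (Fourier transform).} Apply the $(\frakg,P)$-isomorphism $F_c$ of \eqref{eqn:Fc}. Since $F_c$ intertwines the action of $\frakn_+'\subset\frakg$ with $\widehat{d\pi_{(\sigma,\lambda)^*}}$, it also intertwines the $M'A'$-action with $\Ad_\#\otimes\sigma^\vee$. Hence it maps $\Hom_{M'A'}(W^\vee,\Mp(V^\vee)^{\frakn_+'})$ isomorphically onto $\mathrm{Sol}(\sigma,\delta;\lambda,\nu)$ as defined in \eqref{eqn:Sol2a}.

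\textbf{Step 3 (explicit composite).} Compose Steps 1 and 2, so that the isomorphism $\mathrm{Sol}\to\Diff_{G'}$ is $\EuScript{D}\circ F_c^{-1}$. We must then identify this map with $\Rest\circ dR\circ(F_c^{-1}\otimes\id_W)$. Recall that the duality map $\EuScript{D}$ sends $u\otimes\varphi\in U(\frakn_-)\otimes\Hom(V,W)$ to $\Rest\circ(dR(u)\otimes\varphi)$. This is checked by realizing $\Mp(V^\vee)\simeq U(\frakn_-)\otimes V^\vee$ as distributions supported at the origin. Pairing a section $f$ with $u\otimes v^\vee$ then gives $(dR(u)f)(e)$, and translating by $G'$ produces the restriction.

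Finally, Proposition \ref{prop:Fc} gives $F_c^{-1}$ explicitly via symmetrization. Combined with \eqref{eqn:invSymb}, this yields exactly $\Rest\circ\Symb_0^{-1}$.

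\textbf{Main obstacle.} The hard step is Proposition \ref{prop:Fc} itself, which concerns the inverse of $F_c$ when $\frakn_+$ is non-abelian. There one has to show that
\[
\widehat{d\pi_{(\sigma,\lambda)^*}}\big(\sym(X_1^{r_1}\cdots X_n^{r_n})\big)(1\otimes v^\vee)=\zeta^{\mathbf r}\otimes v^\vee.
\]
Since that result is assumed here, the remaining care goes into Step 3. Specifically, the sign and ordering conventions for $dR$ versus left translation, and the Fourier sign conventions, must match so that no antipode or extra factors appear. I would verify this first on degree-one elements and then extend by multiplicativity of $dR$.
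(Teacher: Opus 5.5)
Your proposal is correct and is essentially the argument behind the cited results, namely the F-method triangle \eqref{eqn:IntroSymb-triangle}. It combines duality with Frobenius reciprocity, transport by the $(\frakg,P)$-isomorphism $F_c$, and the identification of the duality map with $u\otimes\varphi\mapsto\Rest\circ(dR(u)\otimes\varphi)$, which gives $\EuScript{D}\circ F_c^{-1}=\Rest\circ dR\circ(F_c^{-1}\otimes\id_W)$. The one adjustment is in emphasis: since this paper \emph{defines} $\Symb_0^{-1}$ by \eqref{eqn:invSymb}, Proposition \ref{prop:Fc} is not needed for the isomorphism at all (it only yields the explicit symmetrized formula \eqref{eqn:Symb0-inverse}), and the convention check you flag in Step 3 goes through without an antipode, because the transpose of the left-regular action evaluated at the identity is exactly $(dR(u)f)(e)$.
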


The inverse of the truncated symbol map $\Symb_0^{-1}$ can be given as follows.
First, take $\psi(\zeta) \in  \Pol(\frakn_+)\otimes \Hom_\C(V,W)$ such that
\begin{equation*}
\psi(\zeta)=\sum_{j}\psi_{j}(\zeta) \otimes T_j,
\end{equation*}
with $\psi_{j}(\zeta) \in \Pol(\frakn_+)$ and $T_j \in \Hom_\C(V,W)$.
Write
\begin{equation*}
\psi_{j}(\zeta)=\sum_{\mathbf{r} \in \NN^n}a_\mathbf{r} \, 
\zeta_1^{r_1}\cdots \zeta_n^{r_n}
\end{equation*}
for $\mathbf{r} = (r_1, \ldots, r_n)$. 
Then we have
\begin{equation}\label{eqn:Symb0-inverse}
\Symb_0^{-1}(\psi)=\sum_{j}\psi_{j}(\sym_{|\mathbf{r}|}(dR(X))) \otimes T_j
\in \Diff_{N_-}(N_-)\otimes \Hom_\C(V,W),
\end{equation}
where
\begin{equation*}
\psi_{j}(\sym_{|\mathbf{r}|}(dR(X)))
=\sum_{\mathbf{r} \in \NN^n}a_\mathbf{r}\, 
\sym_{|\mathbf{r}|}(dR(X_1)^{r_1}\cdots dR(X_n)^{r_n}).
\end{equation*}

It follows from 
Theorem \ref{thm:symb} and \eqref{eqn:Symb0-inverse} that
any DSBO $\mathbb{D}$ obtained 
through $\Rest \circ \Symb_0^{-1}$
in \eqref{eqn:RestSymb0} contains $\sym_{|\mathrm{r}|}$
in its expression. On the other hand, any DSBO $\mathbb{D}$ can also be expressed 
in terms of the PBW-basis elements of $U(\frakn_-)$ 
induced from a given ordered basis of $\frakn_-$.

In order to define the two types of expressions of a DSBO $\DD$
more explicitly,
recall from \eqref{eqn:ord-n}
that $\ord=\{X_1, X_2, \ldots, X_n\}$ is the ordered basis 
of $\frakn_-(\R)$ used to define 
the Lie algebra homomorphism \eqref{eqn:dpi-complex-g}. 
We regard $\ord$ also as an ordered basis of 
$\frakn_-$.
The corresponding PBW-basis $\mathrm{PBW}_\ord$ of $U(\frakn_-)$ is given as
\begin{equation}\label{eqn:PBW}
\mathrm{PBW}_\ord =\{X_1^{r_1}X_2^{r_2}\cdots X_n^{r_n}: r_j\geq 0\}.
\end{equation}
For $\tau \in \mathfrak{S}_n$, 
we define another ordered basis
\begin{equation}\label{eqn:ord-tau}
\ord(\tau):=\{X_{\tau(1)}, X_{\tau(2)},\ldots, X_{\tau(n)}\}.
\end{equation}
The PBW-basis $\mathrm{PBW}_{\ord(\tau)}$ induced from
$\ord(\tau)$ is
\begin{equation*}
\mathrm{PBW}_{\ord(\tau)} =\{X_{\tau(1)}^{r_1}X_{\tau(2)}^{r_2}\cdots 
X_{\tau(n)}^{r_n}: r_j\geq 0\}.
\end{equation*}
Then we define the two types of expressions of $\DD$ as follows.

\begin{definition}\label{def:symmetrized-ordered-forms}
We say that a DSBO $\mathbb{D}$ is in 
\emph{symmetrized form} if $\mathbb{D}$ is expressed as  a linear combination of 
$\sym_{|\mathbf{r}|}(dR(X_1)^{r_1}\cdots dR(X_n)^{r_n})$.
If $\mathbb{D}$ is expressed as a linear combination of
$dR(X_{\tau(1)})^{r_1}\cdots dR(X_{\tau(n)})^{r_n}$
for the PBW-basis elements $X_{\tau(1)}^{r_1}\cdots X_{\tau(n)}^{r_n}
\in \mathrm{PBW}_{\ord(\tau)}$ for some $\tau \in \mathfrak{S}_n$,
then we say that $\mathbb{D}$ is in \emph{ordered form}.
\end{definition}

In the next section we discuss the relationship between these two expressions
in detail.
%%%%%%%%%%%%%%%%%%%%%%%%%%%%%%%%%%%%%%%%%
\section{Symmetrization operator and ordered F-system}
\label{sec:nonsym}

The aim of this section is to formulate a general framework that 
relates the symmetrized form of a DSBO $\DD$
to the ordered form with respect to a given ordered basis. 
To this end, we define a new  operator on the space $\Pol(\frakg^\vee)$ 
of polynomial functions on the dual $\frakg^\vee$ of $\frakg$, which we call the symmetrization operator.
We then use this operator to define a new system of PDEs, called
the ordered F-system.
In Theorem \ref{thm:ImFc2}, we show a relationship between
the symmetrized form and the ordered form via the ordered F-system.
We shall apply this theorem to construct DSBOs $\DD$ in later sections.

Throughout this section, let $\F:=\R$ or $\C$, and we assume that 
$\frakg$ is a finite-dimensional Lie algebra over $\F$ 
with $\dim_\F \frakg =n$, unless otherwise specified. 

%%%%%%%%%%%%%%%%%%%%%%%%%%%%%%%%%%%%%%%%%
\subsection{Symmetrization operator}

Let  
$\ord:=\{X_1,X_2,\ldots, X_n\}$ 
be an ordered basis of $\frakg$.
As in \eqref{eqn:PBW},
the PBW-basis $\mathrm{PBW}_\ord$ of $U(\frakg)$ induced from
$\ord$ is 
\begin{equation*}
\mathrm{PBW}_\ord =\{X_1^{r_1}X_2^{r_2}\cdots X_n^{r_n}: r_j\geq 0\}.
\end{equation*}

In order to define the symmetrization operator, one first needs to define
the \emph{symmetrization} $\sym_{\ord}$ on $U(\frakg)$ and the
\emph{ordering map} $\Upsilon_\ord$ from $\Pol(\frakg^\vee)$ to $U(\frakg)$.
By using the fixed PBW-basis $\mathrm{PBW}_\ord$,
the symmetrization $\sym_{\ord}$ on $U(\frakg)$ is defined as follows.

\begin{definition}
The  \emph{symmetrization $\sym_{\ord}$ on $U(\frakg)$ with respect to 
$\ord$} is a linear isomorphism on $U(\frakg)$ defined by
\begin{equation}\label{eqn:sym-ord}
\sym_{\ord}\colon U(\frakg) \stackrel{\sim}{\To} U(\frakg),
\quad
u\longmapsto
\sum_{\mathbf{r} \in \NN^n} 
a_\mathbf{r}\, \sym_{|\mathbf{r}|}(X^{\mathbf{r}}),
\end{equation}
where  $u=\sum_{\mathbf{r} \in \NN^n} 
a_\mathbf{r}\, X^{\mathbf{r}} \in U(\frakg)$
with $\mathbf{r} :=(r_1,\ldots, r_n)$ and 
$X^{\mathbf{r}} := X_1^{r_1}\cdots X_n^{r_n}\in \mathrm{PBW}_\ord$.
\end{definition}

\begin{remark}
Since any element $u \in U(\frakg)$ can be uniquely expressed 
as a linear combination of  
$\sym_{|\mathbf{r}|}(X_1^{r_1}\cdots X_n^{r_n})$ 
for some $(r_1,\dots, r_n) \in \NN^n$ (cf.\ \cite[p.\ 225]{Knapp02}), 
the map \eqref{eqn:sym-ord} is indeed  a linear isomorphism.
\end{remark}

As opposed to the symmetrization map
from the symmetric algebra $S(\frakg)$ to $U(\frakg)$ (cf.\ \cite[p.\ 225]{Knapp02}), 
in general, we have $\sym_{\ord}(X_1X_2) \neq \sym_{\ord}(X_2X_1)$, 
as the following example shows.

\begin{example}\label{example:symmetrization}
Let $\mathfrak{g}=\Span_\F\{X_1, X_2, X_3\}$ 
with $[X_1, X_2]=-X_3$ and $[X_3, X_j]=0$ for $j=1,2$;
that is,
$\frakg$ is the three-dimensional Heisenberg Lie algebra.
Fix the ordered basis $\ord:=\{X_1, X_2, X_3\}$.
Then, $\sym_\ord(X_2X_1)$ is given  as
\begin{align*}
\sym_\ord(X_2X_1)
&=\sym_\ord(X_1X_2+X_3)\\
&=\sym_2(X_1X_2)+\sym_1(X_3) \\
&= \frac{1}{2}(X_1X_2+X_2X_1) + X_3,
\end{align*}
whereas we have 
\begin{align*}
\sym_\ord(X_1X_2)
&=\sym_2(X_1X_2)\\
&= \frac{1}{2}(X_1X_2+X_2X_1).
\end{align*}
\end{example}

Now, observe that, for each  $X_{j_1}X_{j_2}\cdots X_{j_m} \in U(\frakg)$,
there exists $\tau \in \mathfrak{S}_m$ such that 
$j_{\tau(k)} \leq j_{\tau(k+1)}$ for all $k=1,\ldots, m-1$.
In particular, we have 
\begin{equation*}
X_{j_{\tau(1)}}X_{j_{\tau(2)}} \cdots X_{j_{\tau(m)}} \in \mathrm{PBW}_\ord.
\end{equation*}
If $\tau' \in \mathfrak{S}_{m}$ also satisfies
$j_{\tau'(k)} \leq j_{\tau'(k+1)}$ for all $k=1,\ldots, m-1$,
then 
\begin{equation*}
X_{j_{\tau'(1)}}X_{j_{\tau'(2)}} \cdots X_{j_{\tau'(m)}}=
X_{j_{\tau(1)}}X_{j_{\tau(2)}} \cdots X_{j_{\tau(m)}}.
\end{equation*}
Thus, for such $\tau \in \mathfrak{S}_m$, we write
\begin{equation}\label{eqn:X-ord}
(X_{j_1}X_{j_2}\cdots X_{j_m})_\ord
:=X_{j_{\tau(1)}}X_{j_{\tau(2)}} \cdots X_{j_{\tau(m)}} \in \mathrm{PBW}_\ord.
\end{equation}

\begin{example}\label{example:arrangement1}
Let $\ord:=\{X_1,X_2, X_3,X_4\}$ be an ordered basis
of a Lie algebra $\frakg$. Then
we have 
\begin{equation*}
(X_4X_2X_1X_3^2)_\ord=X_1X_2X_3^2X_4.    
\end{equation*}
\end{example}

Let $(\zeta_1,\zeta_2,\ldots, \zeta_n)$ denote 
the coordinates on $\frakg^\vee$ with respect to 
the dual basis of $\ord$. 
Now the ordering map $\Upsilon_{\ord}$ from $\Pol(\frakg^\vee)$ to $U(\frakg)$
is defined as follows.

\begin{definition}
The \emph{ordering map $\Upsilon_{\ord}$ from 
$\Pol(\frakg^\vee)$ to $U(\frakg)$ 
with respect to $\ord$} is a linear isomorphism defined by
\begin{equation}\label{eqn:Upsilon_ord}
\Upsilon_{\ord} \colon \Pol(\frakg^\vee) \stackrel{\sim}{\To} U(\frakg), \quad 
\zeta_{j_1}\zeta_{j_2}\cdots \zeta_{j_m} 
\longmapsto
(X_{j_1}X_{j_2}\cdots X_{j_m})_\ord.
\end{equation}
\end{definition}

By definition, we have
\begin{equation}\label{eqn:Uu}
\Upsilon_{\ord} 
(\zeta_1^{r_1} \zeta_2^{r_2}\cdots\zeta_n^{r_n}) 
= X_1^{r_1}X_2^{r_2}\cdots X_n^{r_n}.
\end{equation}

\begin{example}
Let $\ord$ be the ordered basis defined in 
Example \ref{example:arrangement1}.
Then $\Upsilon_{\ord}(\zeta_4\zeta_2\zeta_1\zeta_3^2)$
is given as
\begin{equation*}
\Upsilon_{\ord}(\zeta_4\zeta_2\zeta_1\zeta_3^2) 
=(X_4X_2X_1X_3^2)_\ord
=  X_1 X_2 X_3^2X_4.
\end{equation*}
\end{example}

By using the symmetrization $\sym_\ord \colon U(\frakg) \stackrel{\sim}{\to} U(\frakg)$and ordering map $\Upsilon_\ord \colon \Pol(\frakg^\vee) \stackrel{\sim}{\to} U(\frakg)$,
we define the symmetrization operator 
$S_{\ord}$ on $\Pol(\frakg^\vee)$ as follows.

\begin{definition}
The \emph{symmetrization operator
$S_{\ord}$ on $\Pol(\frakg^\vee)$
with respect to $\ord$} is a linear isomorphism 
$S_{\ord}\colon 
\Pol(\frakg^\vee)\stackrel{\sim}{\to}\Pol(\frakg^\vee)$ 
satisfying the following identity:
\begin{equation*}
\Upsilon_{\ord} \circ S_{\ord}=
\sym_{\ord} \circ \Upsilon_{\ord}.
\end{equation*}
Equivalently, the following diagram commutes.
\begin{equation}\label{eqn:orderization}
\begin{tikzcd}
	{\Pol(\frakg^\vee)}& {U(\frakg)}
	\arrow[dl, pos=0.5, phantom, "\circlearrowleft"] \\
	{\Pol(\frakg^\vee)} & {U(\frakg)}
	\arrow["{\Upsilon_{\ord}}", "\sim"' sloped, from=1-1, to=1-2]
	\arrow["{S_{\ord}}","\sim"' sloped,  from=2-1, to=1-1]
	\arrow["\sim" sloped, "\sym_{\ord}"',  from=2-2, to=1-2]
	\arrow["\sim" sloped,"{\Upsilon_{\ord}}"', from=2-1, to=2-2]
\end{tikzcd}
\end{equation}
\end{definition}

Here are some examples of $S_{\ord}$.

\begin{example}
If $\frakg$ is abelian, then
the symmetrization $\sym$ on $U(\frakg)$
is the identity map $\sym = \id_{U(\frakg)}$.
Therefore, the symmetrization operator $S_{\ord}$ is given as 
$S_{\ord}=\id_{\Pol(\frakg^\vee)}$.
\end{example}

\begin{example}
Let $\mathfrak{g}=\Span_\F\{X_1, X_2, X_3\}$ 
be the three-dimensional Heisenberg Lie algebra considered 
in Example \ref{example:symmetrization}.
Since the symmetrization $\sym_\ord(X_1 X_2)$
with respect to the ordered basis $\ord=\{X_1, X_2, X_3\}$
is given as
\begin{align*}
\sym_\ord(X_1 X_2)
&=\sym_2(X_1 X_2) \\
&=\frac{1}{2}(X_1 X_2 + X_2 X_1) \\
&= X_1X_2 +\frac{1}{2}X_3,
\end{align*}
we have
\begin{equation*}
S_{\ord}(\zeta_1 \zeta_2) 
= \zeta_1\zeta_2 + \frac{1}{2}\zeta_3.
\end{equation*}

We shall give a general formula for $S_{\ord}$
in this three-dimensional Heisenberg case in Proposition \ref{prop:eJU} (also, see
Proposition \ref{Prop:SymmetrizationMapIdentity}).
Moreover, the symmetrization operator $S_{\ord}$ 
for the Heisenberg Lie algebras of arbitrary dimension is discussed in
Appendix \ref{appendix:Heis}.
\end{example}

\begin{example}
Let $\mathfrak{g}=\Span_\F\{X_1, X_2, X_3\}$ with $[X_1,X_2] = X_3,\, [X_1,X_3]=-2X_1$ and $[X_2,X_3]=2X_2$; that is, $\mathfrak{g} = \fraksl(2,\F)$.
With respect to the ordered basis $\ord:=\{X_1, X_2, X_3\}$, one can show the following:
\begin{alignat*}{3}
S_{\ord}(\zeta_1 \zeta_2) 
&= \zeta_1\zeta_2 - \frac{1}{2}\zeta_3,
\quad
&&S_{\ord}(\zeta_1\zeta_2\zeta_3) 
&&= \zeta_1\zeta_2\zeta_3 - \frac{1}{2}\zeta_3,\\
S_{\ord}(\zeta_1 \zeta_3) 
&= \zeta_1\zeta_3 + \zeta_1,
\quad
&&S_{\ord}(\zeta_1\zeta_3^2) 
&&= \zeta_1\zeta_3^2 + 2\zeta_1\zeta_3 + \frac{4}{3}\zeta_1.
\\
S_{\ord}(\zeta_2 \zeta_3) 
&= \zeta_2\zeta_3 - \zeta_2,
\end{alignat*}
\end{example}

Now let $\Aut(\frakg)$ denote the group of automorphisms
on $\frakg$. For later convenience, we give the following proposition.

\begin{proposition}\label{prop:G_orderization}
The symmetrization operator $S_{\ord}$ 
is an $\Aut(\frakg)$-isomorphism on $\Pol(\frakg^\vee)$.
\end{proposition}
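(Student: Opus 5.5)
The plan is to reduce the statement to the equivariance of the canonical symmetrization map. Identify $\Pol(\frakg^\vee)$ with the symmetric algebra $S(\frakg)$, so that $\zeta_j$ corresponds to $X_j$. Let
\[
\beta\colon \Pol(\frakg^\vee)\simeq S(\frakg)\To U(\frakg)
\]
be the canonical symmetrization map of \cite[p.\ 225]{Knapp02}, sending $\zeta_{j_1}\cdots\zeta_{j_m}$ to $\frac{1}{m!}\sum_{\tau\in\mathfrak{S}_m}X_{j_{\tau(1)}}\cdots X_{j_{\tau(m)}}$.

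The first step is the identity
\[
\sym_\ord\circ\Upsilon_\ord=\beta .
\]
Both sides are linear, so it suffices to check it on monomials $\zeta^{\mathbf r}=\zeta_1^{r_1}\cdots\zeta_n^{r_n}$. By \eqref{eqn:Uu} we have $\Upsilon_\ord(\zeta^{\mathbf r})=X^{\mathbf r}$. By \eqref{eqn:sym-ord} we then get $\sym_\ord(X^{\mathbf r})=\sym_{|\mathbf r|}(X^{\mathbf r})$, and by \eqref{eqn:symm} this equals $\beta(\zeta^{\mathbf r})$. Consequently
\[
S_\ord=\Upsilon_\ord^{-1}\circ\beta .
\]

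Next, let $\varphi\in\Aut(\frakg)$ act on $\Pol(\frakg^\vee)\simeq S(\frakg)$ by the induced algebra automorphism $S(\varphi)$, and on $U(\frakg)$ by $U(\varphi)$. The map $\beta$ is defined intrinsically, without reference to the basis: it is the unique linear map with $\beta(Y^m)=Y^m$ for all $Y\in\frakg$ and $m\in\NN$ (the polarization characterization). Since both $S(\varphi)$ and $U(\varphi)$ send $Y^m$ to $\varphi(Y)^m$, we obtain $U(\varphi)\circ\beta=\beta\circ S(\varphi)$.

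Combining this with the formula for $S_\ord$ gives
\[
\bigl(\Upsilon_\ord^{-1}U(\varphi)\Upsilon_\ord\bigr)\circ S_\ord
=\Upsilon_\ord^{-1}U(\varphi)\beta
=\Upsilon_\ord^{-1}\beta S(\varphi)
=S_\ord\circ S(\varphi).
\]
So $S_\ord$ intertwines two actions of $\Aut(\frakg)$ on $\Pol(\frakg^\vee)$: the natural action $S(\varphi)$ on the source, and the action transported from $U(\frakg)$ through the ordering map, $\varphi\mapsto \Upsilon_\ord^{-1}U(\varphi)\Upsilon_\ord$, on the target. This is the sense in which I would read ``$\Aut(\frakg)$-isomorphism''. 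Bijectivity is automatic, since $\Upsilon_\ord$, $\sym_\ord$ and $\beta$ are linear isomorphisms.

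The main obstacle is fixing the correct action on the target. $\Upsilon_\ord$ depends on the chosen ordering, and with the naive natural action on both sides the statement fails. For example, on the Heisenberg algebra take $\varphi\colon X_1\leftrightarrow X_2$, $X_3\mapsto -X_3$. Then
\[
S_\ord\bigl(S(\varphi)\zeta_1\zeta_2\bigr)=\zeta_1\zeta_2+\tfrac12\zeta_3,
\qquad
S(\varphi)\bigl(S_\ord\zeta_1\zeta_2\bigr)=\zeta_1\zeta_2-\tfrac12\zeta_3 .
\]
I would therefore make the transported action explicit in the proof.

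If the authors intend instead a subgroup of $\Aut(\frakg)$ that preserves the ordering, for instance automorphisms that are diagonal in $\ord$ (such as the action of $M'A'$ via \eqref{eqn:sharp}), then the argument simplifies. In that case $\Upsilon_\ord$ is itself equivariant: each monomial $\zeta^{\mathbf r}$ is mapped to a scalar multiple of itself, and its image $X^{\mathbf r}$ is scaled by the same scalar. The natural actions on both sides then agree, and $S_\ord$ commutes with them. I would state and prove this special case as well, since it is presumably the case used with the $\Ad_\#$-action later.
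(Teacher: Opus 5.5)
Your counterexample is correct. So the proposition is false as literally stated, i.e.\ for the natural actions on $\Pol(\frakg^\vee)\simeq S(\frakg)$ and on $U(\frakg)$. The paper's proof fails at its first sentence: it asserts that $\sym_\ord$ and $\Upsilon_\ord$ are each $\Aut(\frakg)$-equivariant, and then reads the claim off the diagram \eqref{eqn:orderization}. Your $\varphi$ refutes both assertions.
\begin{itemize}
\item For $\Upsilon_\ord$: $U(\varphi)\Upsilon_\ord(\zeta_1\zeta_2)=X_2X_1=X_1X_2+X_3$, whereas $\Upsilon_\ord(S(\varphi)\zeta_1\zeta_2)=X_1X_2$.
\item For $\sym_\ord$: by Example~\ref{example:symmetrization}, $\sym_\ord(U(\varphi)X_1X_2)-U(\varphi)\sym_\ord(X_1X_2)=X_3$.
\end{itemize}
The consequence Corollary~\ref{cor:G_orderization} also fails for $H=\{1,\varphi\}$: $\zeta_1\zeta_2$ is $\varphi$-fixed, but $S_\ord(\zeta_1\zeta_2)=\zeta_1\zeta_2+\tfrac12\zeta_3$ is not.

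The paper's argument is valid exactly in your special case, namely automorphisms with $\varphi(X_j)\in\F X_j$ for all $j$, where both maps are visibly equivariant. That is all the paper actually uses: Proposition~\ref{prop:symU} needs only $M_i'A_i'$ acting through $\Ad_\#$, which is diagonal in $\{N_1^-,N_2^-,N_3^-\}$. In the general parabolic setting of Section~\ref{sec:nonsym}, compatibility of the $M'A'$-action with $\ord$ is an extra hypothesis, not something automatic.

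Your factorization $S_\ord=\Upsilon_\ord^{-1}\circ\beta$ through the basis-free symmetrization $\beta$ is a better route than the paper's. Since $\beta$ is bijective and $\Aut(\frakg)$-equivariant, $S_\ord$ commutes with $S(\varphi)$ if and only if $\Upsilon_\ord\circ S(\varphi)=U(\varphi)\circ\Upsilon_\ord$. This puts the whole question in the ordering map, and both the counterexample and the diagonal case become one-line checks.

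I would not, however, offer the transported-action version as the meaning of ``$\Aut(\frakg)$-isomorphism''. It is correct, but it only says $\Upsilon_\ord^{-1}U(\varphi)\Upsilon_\ord=S_\ord S(\varphi)S_\ord^{-1}$. It does not give what Corollary~\ref{cor:G_orderization} needs, namely that $S_\ord$ preserves isotypic components for the natural action. Make the restricted statement your main result: the claim for the subgroup of automorphisms for which $\Upsilon_\ord$ is equivariant, e.g.\ those diagonal in $\ord$. Then use the counterexample to explain why the restriction is necessary.
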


\begin{proof}
It is readily seen that the symmetrization 
$\sym_{\ord} \colon U(\frakg) \stackrel{\sim}{\to} U(\frakg)$
and the ordering map $\Upsilon_{\ord} \colon \Pol(\frakg) \stackrel{\sim}{\to} U(\frakg)$ are both $\Aut(\frakg)$-isomorphisms. 
The commutative diagram \eqref{eqn:orderization} then concludes 
the desired assertion.
\end{proof}

Let $H$ be a subgroup of $\Aut(\frakg)$ and $V$ a finite-dimensional 
representation of $H$. For later convenience, 
we consider the dual $V^\vee$ rather than $V$ itself.
Via the diagonal action, we regard $\Pol(\frakg^\vee) \otimes V^\vee$
as an $H$-representation.

For an $H$-subrepresentation $U \subset \Pol(\frakg^\vee) \otimes V^\vee$,
we denote by $E(U)$ the $U$-isotypic component in 
$\Pol(\frakg^\vee) \otimes V^\vee$, namely,
\begin{equation*}
E(U):=\bigoplus_{R}  R,
\end{equation*}
where the direct sum runs over 
$H$-subrepresentations $R \subset \Pol(\frakg^\vee) \otimes V^\vee$
such that $R$ is equivalent to $U$ as an $H$-representation.

\begin{corollary}\label{cor:G_orderization}
The $U$-isotypic component $E(U) \subset \Pol(\frakg^\vee) \otimes V^\vee$
is  an $S_{\ord}\otimes \id_{V^\vee}$-invariant subspace of $\Pol(\frakg^\vee)\otimes V^\vee$.
\end{corollary}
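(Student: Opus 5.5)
The plan is to deduce the statement directly from Proposition \ref{prop:G_orderization}. That proposition says $S_{\ord}$ commutes with the action of $\Aut(\frakg)$ on $\Pol(\frakg^\vee)$. It therefore commutes with the action of the subgroup $H \subset \Aut(\frakg)$. Since $H$ acts on $\Pol(\frakg^\vee)\otimes V^\vee$ diagonally and $\id_{V^\vee}$ commutes trivially with the action on the second factor, the operator $\widetilde S := S_{\ord}\otimes \id_{V^\vee}$ is an $H$-equivariant linear isomorphism of $\Pol(\frakg^\vee)\otimes V^\vee$.

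Next I will show that $\widetilde S$ sends each summand $R$ of $E(U)$ into $E(U)$. Take an $H$-subrepresentation $R \subset \Pol(\frakg^\vee)\otimes V^\vee$ with $R \simeq U$. By $H$-equivariance, $\widetilde S(R)$ is again an $H$-subrepresentation. Since $\widetilde S$ is injective, its restriction $\widetilde S|_R \colon R \to \widetilde S(R)$ is an $H$-isomorphism, so $\widetilde S(R)\simeq R \simeq U$. Hence $\widetilde S(R)$ is one of the subrepresentations occurring in the sum defining $E(U)$, and so $\widetilde S(R)\subset E(U)$.

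Summing over all such $R$ and using linearity gives $\widetilde S(E(U))\subset E(U)$, which is the claimed invariance. If equality is wanted, I will apply the same argument to $\widetilde S^{-1} = S_{\ord}^{-1}\otimes\id_{V^\vee}$, which is also $H$-equivariant, to get the reverse inclusion.

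The only point needing care is that $E(U)$ is written as a ``direct sum'' over possibly non-independent subrepresentations. I will read it as the sum (span) of all subrepresentations isomorphic to $U$. With that reading, the inclusion argument above does not depend on any choice of decomposition, and the main obstacle is merely this bookkeeping.
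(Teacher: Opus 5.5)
Your argument is the paper's argument. Both invoke Proposition~\ref{prop:G_orderization} to see that $S_{\ord}\otimes\id_{V^\vee}$ is an $H$-equivariant isomorphism, and then observe that it carries a copy of $U$ to a copy of $U$. Your version is slightly more careful: you apply this to every $R\simeq U$ and read $E(U)$ as a span, whereas the paper only mentions the image of $U$ itself. Given the proposition, your deduction is sound.

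The problem is the input, which you take on trust. Proposition~\ref{prop:G_orderization} is false for general automorphisms. The reason is that $\Upsilon_{\ord}$ and $\sym_{\ord}$ depend on the ordered basis and are not equivariant under automorphisms that mix it. Consequently the corollary itself fails for an arbitrary subgroup $H\subset\Aut(\frakg)$.

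Here is a counterexample. Take $\frakg=\frakh_3$ with $[X_1,X_2]=-X_3$ and $\ord=\{X_1,X_2,X_3\}$. Let $\varphi\colon X_1\mapsto X_2,\ X_2\mapsto X_1,\ X_3\mapsto -X_3$; this is an automorphism since $[X_2,X_1]=X_3=\varphi(-X_3)$. Because $\varphi^2=1$, it acts on $\Pol(\frakg^\vee)$ by $\zeta_1\leftrightarrow\zeta_2$, $\zeta_3\mapsto-\zeta_3$ for either natural convention. Set $H=\{1,\varphi\}$ and take $V$ and $U$ trivial. Then $E(U)$ is the space of $\varphi$-invariant polynomials, which contains $\zeta_1\zeta_2$. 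But $S_{\ord}(\zeta_1\zeta_2)=\zeta_1\zeta_2+\tfrac12\zeta_3$ is not $\varphi$-invariant.

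What your argument does prove is the case where every $h\in H$ acts diagonally on $\ord$, say $h(X_j)=c_jX_j$. Put $c^{\mathbf r}=\prod_j c_j^{r_j}$. Then $h$ multiplies $\zeta^{\mathbf r}$, $X^{\mathbf r}$ and $\sym_{|\mathbf r|}(X^{\mathbf r})$ by the same scalar $c^{\mathbf r}$. Hence $\Upsilon_{\ord}$, $\sym_{\ord}$, and therefore $S_{\ord}$, are $H$-equivariant, and the rest of your proof goes through unchanged.

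This is the only case the paper ever uses: there $H=M'A'\subset MA$ acts by $\Ad$ on $\frakn_-$, and the basis $N_1^-,N_2^-,N_3^-$ consists of root vectors. You should either state this diagonal hypothesis, or verify the equivariance directly for the group at hand, rather than citing the proposition for arbitrary $H$.
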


\begin{proof}
It follows from Proposition \ref{prop:G_orderization} that 
$S_{\ord}\otimes \id_{V^\vee}$ is an $H$-isomorphism on 
$\Pol(\frakg^\vee)\otimes V^\vee$.
In particular, we have $\Im(S_{\ord}\otimes \id_{V^\vee})\vert_{U} \simeq U$ 
as $H$-representations. 
Thus, $\Im(S_{\ord}\otimes \id_{V^\vee})\vert_{U}  \subset E(U)$.
This concludes the corollary.
\end{proof}

By abuse of notation, for $T \in \{S_{\ord}, \Upsilon_{\ord}, \sym_\ord\}$,
we simply write
\begin{equation*}
T\equiv T \otimes \id_{V^\vee}.
\end{equation*}

%%%%%%%%%%%%%%%%%%%%%%%%%%%%%%%%%%%%%%%%%
\subsection{Conjugation \texorpdfstring{$\Ad(S)$}{Ad(S)}}
\label{sec:ConT}

Suppose that $E$ is an
$S_\ord$-invariant
subspace of $\Pol(\frakg^\vee)\otimes V^\vee$.
Then the following diagram commutes.
\begin{equation}\label{eqn:RS}
\begin{tikzcd}
	{E}& {U(\frakg)\otimes V^\vee}
	\arrow[dl, pos=0.5, phantom, "\circlearrowleft"] \\
	{E} & {U(\frakg)\otimes V^\vee}
	\arrow["{\Upsilon_{\ord}}", hook, from=1-1, to=1-2]
	\arrow["{S_\ord}", "\sim"' sloped,  from=2-1, to=1-1]
	\arrow["\sim" sloped, "\sym_{\ord}"',  from=2-2, to=1-2]
	\arrow["{\Upsilon_{\ord}}"', hook, from=2-1, to=2-2]
\end{tikzcd}
\end{equation}
In other words, we have 
\begin{equation*}
\sym_{\ord} \circ \Upsilon_{\ord}\vert_{E} = \Upsilon_{\ord} 
\circ S_\ord\vert_{E}.
\end{equation*}

For $A \in \End_\F(E)$ we put
\begin{align}
\Ad(S)A&:= S_\ord\circ A \circ S_\ord^{-1}, \label{eqn:adA}\\
\Sol_A(E)&:=\{\psi(\zeta) \in E : A \psi(\zeta)=0\}.\nonumber
\end{align}
As $\Ad(S)A \in \End_\F(E)$, one has
\begin{equation*}
 \Sol_{\Ad(S)A}(E)
=\{\varphi(\zeta) \in E : (\Ad(S)A)\varphi(\zeta)=0\}.
\end{equation*}

\begin{proposition}\label{prop:SolA}
The symmetrization operator $S_\ord$ 
gives rise to a linear isomorphism
\begin{equation*}
 \Sol_{\Ad(S)A}(E)
 \stackrel{\sim}{\To} 
 \Sol_A(E),
\quad
\varphi(\zeta) \longmapsto S_\ord^{-1}\varphi(\zeta).
\end{equation*}
\end{proposition}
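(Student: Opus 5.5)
The plan is a direct verification from the definition $\Ad(S)A = S_\ord\circ A\circ S_\ord^{-1}$ in \eqref{eqn:adA}. The only structural input is that $S_\ord$ restricts to a linear automorphism of $E$. This holds because $E$ is assumed $S_\ord$-invariant and $S_\ord$ is injective on $\Pol(\frakg^\vee)\otimes V^\vee$. To make this rigorous I will use a dimension argument. The symmetrization $\sym_\ord$ preserves the degree filtration: $\sym_{|\mathbf r|}(X^{\mathbf r})$ differs from $X^{\mathbf r}$ by terms of lower degree. Hence $S_\ord$ preserves $\Pol_{\le d}(\frakg^\vee)\otimes V^\vee$ and acts there as unipotent plus lower-order terms. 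So $S_\ord|_{E\cap \Pol_{\le d}}$ is an injective endomorphism of a finite-dimensional space, and is therefore bijective. This gives $S_\ord(E)=E$, so $S_\ord^{-1}$ also preserves $E$. Consequently $\Ad(S)A\in\End_\F(E)$ is well defined, as already asserted before the statement.

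Next I check that the map lands in the right space. Take $\varphi\in \Sol_{\Ad(S)A}(E)$ and set $\psi:=S_\ord^{-1}\varphi\in E$. Then
\begin{equation*}
A\psi = S_\ord^{-1}\bigl(S_\ord A S_\ord^{-1}\bigr)\varphi = S_\ord^{-1}(\Ad(S)A)\varphi=0,
\end{equation*}
so $\psi\in\Sol_A(E)$. Conversely, for $\psi\in\Sol_A(E)$, put $\varphi:=S_\ord\psi\in E$. Then $(\Ad(S)A)\varphi=S_\ord A\psi=0$. These two assignments are mutually inverse and linear, which yields the claimed isomorphism $\varphi\mapsto S_\ord^{-1}\varphi$.

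The main (and only mild) obstacle is the bijectivity of $S_\ord$ on $E$ when $E$ is infinite-dimensional. Invariance alone, $S_\ord(E)\subset E$, does not by itself give surjectivity onto $E$. I will handle this with the filtration argument above, applied one degree piece $E\cap(\Pol_{\le d}(\frakg^\vee)\otimes V^\vee)$ at a time. If $E$ is not filtration-compatible, I will instead interpret the standing hypothesis as $S_\ord(E)=E$, which the commutative diagram \eqref{eqn:RS} implicitly uses through the arrow labelled $\sim$. In the intended applications $E$ is an isotypic component as in Corollary \ref{cor:G_orderization}. For the torus part of the Levi, the degree pieces of such an $E$ are finite-dimensional, so the argument applies.
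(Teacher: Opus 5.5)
Your argument is correct and is essentially the paper's: the paper also just applies $S_\ord^{-1}$ to $(S_\ord\circ A\circ S_\ord^{-1})\varphi=0$. It takes for granted that $S_\ord$ is bijective on $E$ and leaves the reverse inclusion implicit, and you supply both. Your degree-filtration argument for $S_\ord(E)=E$ in fact works for any $S_\ord$-invariant subspace $E$, because $\Pol_{\le d}(\frakg^\vee)\otimes V^\vee$ is always finite-dimensional and every element of $E$ lies in some $E\cap(\Pol_{\le d}(\frakg^\vee)\otimes V^\vee)$, so the fallback hedging in your last paragraph is unnecessary.
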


\begin{proof}
The injectivity and surjectivity of the proposed map
$\varphi(\zeta) \mapsto S_\ord^{-1}\varphi(\zeta)$ simply follow from  
the fact that $S_\ord$ is an isomorphism on $E$. 
Thus, to prove the proposition, 
it suffices to show that $S_\ord^{-1}\varphi(\zeta) \in \Sol_A(E)$.
Indeed, 
for $\varphi(\zeta) \in  \Sol_{\Ad(S)A}(E)$, we have
\begin{equation*}
(\Ad(S)A) \varphi(\zeta)=0,
\end{equation*}
which is, by \eqref{eqn:adA}, 
\begin{equation}\label{eqn:SAS}
(S_\ord \circ A \circ S_\ord^{-1}) \varphi(\zeta)=0.
\end{equation}
Now apply $S_\ord^{-1}$ to both sides of \eqref{eqn:SAS} 
to obtain $AS_\ord^{-1}\varphi(\zeta) = 0$, 
which is $S_\ord^{-1}\varphi(\zeta) \in \Sol_A(E)$.
\end{proof}

\begin{corollary}\label{cor:SolAU}
The following diagram commutes.
\begin{equation}\label{eqn:RS2}
\begin{tikzcd}
	{\Sol_{\Ad(S)A}(E)}& {U(\frakg)\otimes V^\vee}
	\arrow[dl, pos=0.5, phantom, "\circlearrowleft"] \\
	{\Sol_A(E)} & {U(\frakg)\otimes V^\vee}
	\arrow["{\Upsilon_{\ord}}", hook, from=1-1, to=1-2]
	\arrow["\sim" sloped, "{S_{\ord}^{-1}}"',   from=1-1, to=2-1]
	\arrow["\sim" sloped, "\sym_{\ord}"',  from=2-2, to=1-2]
	\arrow["{\Upsilon_{\ord}}"', hook, from=2-1, to=2-2]
\end{tikzcd}
\end{equation}
\end{corollary}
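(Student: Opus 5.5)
The plan is to combine the commutative diagram \eqref{eqn:RS}, valid for any $S_\ord$-invariant subspace $E$, with the isomorphism of Proposition \ref{prop:SolA}. Commutativity of \eqref{eqn:RS2} means that for every $\varphi(\zeta)\in\Sol_{\Ad(S)A}(E)$ we have
\begin{equation*}
\sym_\ord\bigl(\Upsilon_\ord(S_\ord^{-1}\varphi)\bigr)=\Upsilon_\ord(\varphi).
\end{equation*}
Before checking this identity, I will confirm that every arrow in \eqref{eqn:RS2} is well defined. Since $\Sol_{\Ad(S)A}(E)\subset E$ and $\Sol_A(E)\subset E$, the horizontal arrows are the restrictions of the injective map $\Upsilon_\ord\otimes\id_{V^\vee}$, which is injective on all of $\Pol(\frakg^\vee)\otimes V^\vee$. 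The left vertical arrow is the isomorphism $\varphi\mapsto S_\ord^{-1}\varphi$ supplied by Proposition \ref{prop:SolA}. The right vertical arrow is $\sym_\ord\otimes\id_{V^\vee}$, which is an isomorphism by the remark following \eqref{eqn:sym-ord}.

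To verify the identity, set $\psi:=S_\ord^{-1}\varphi$. Then $\psi$ lies in $E$, because $E$ is $S_\ord$-invariant and $S_\ord|_E$ is an isomorphism of the finite-dimensional graded pieces, hence also $S_\ord^{-1}$-invariant. Applying the identity $\sym_\ord\circ\Upsilon_\ord|_E=\Upsilon_\ord\circ S_\ord|_E$ from \eqref{eqn:RS} to $\psi$ gives
\begin{equation*}
\sym_\ord(\Upsilon_\ord(\psi))=\Upsilon_\ord(S_\ord\psi)=\Upsilon_\ord(\varphi),
\end{equation*}
which is exactly the required commutativity. Alternatively, one can apply the defining identity $\Upsilon_\ord\circ S_\ord=\sym_\ord\circ\Upsilon_\ord$ from \eqref{eqn:orderization} on the whole of $\Pol(\frakg^\vee)\otimes V^\vee$ and then restrict.

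The only point needing care is that $S_\ord^{-1}$ preserves $E$, and I expect this to be the main obstacle, mild as it is. The definition of $S_\ord$ via $\sym_\ord$ and $\Upsilon_\ord$ shows that it preserves the degree filtration $\Pol_{\le m}(\frakg^\vee)$, since symmetrizing and reordering only produce lower-order correction terms. So $S_\ord$ restricts to an injective map on each finite-dimensional space $E\cap\Pol_{\le m}(\frakg^\vee)\otimes V^\vee$, hence is bijective there, and therefore $S_\ord^{-1}(E)\subset E$. This is also implicitly used in Proposition \ref{prop:SolA}, so it may simply be cited from there.
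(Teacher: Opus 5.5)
Your proof is correct and follows the paper's argument: Proposition \ref{prop:SolA} places $S_\ord^{-1}\varphi$ in $\Sol_A(E)$, and the identity $\sym_\ord\circ\Upsilon_\ord=\Upsilon_\ord\circ S_\ord$ then gives $\sym_\ord(\Upsilon_\ord(S_\ord^{-1}\varphi))=\Upsilon_\ord(\varphi)$. Your extra filtration argument, that $S_\ord^{-1}$ preserves $E$ because $S_\ord$ equals the identity plus degree-lowering terms, is a valid justification of a point the paper takes for granted when it treats $S_\ord$ as an isomorphism of $E$.
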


\begin{proof}
For $\varphi(\zeta) \in  \Sol_{\Ad(S)A}(E)$,
by Proposition \ref{prop:SolA}, we have $S_\ord^{-1}\varphi(\zeta) \in \Sol_{A}(E)$.
Then, as $\sym_{\ord} \circ \Upsilon_{\ord} 
= \Upsilon_{\ord} \circ S_\ord$, we have 
\begin{equation*}
(\sym_{\ord} \circ \Upsilon_{\ord} \circ S_\ord^{-1})\varphi(\zeta)
=(\Upsilon_{\ord} \circ S_\ord \circ S_\ord^{-1})\varphi(\zeta)
=\Upsilon_{\ord}(\varphi(\zeta)).\qedhere
\end{equation*}
\end{proof}

%%%%%%%%%%%%%%%%%%%%%%%%%%%%%%%%%%%%%%%%%
\subsection{Further identification of \texorpdfstring{$\Sol_A(E)$}{Sol_A(E)}}
\label{sec:ImFc}

Suppose that there exist a linear isomorphism 
$T\colon Q \stackrel{\sim}{\to} E$ from 
some vector space $Q$ to 
an $S_\ord$-invariant
subspace $E$ of $\Pol(\frakg^\vee)\otimes V^\vee$. 
For $A\in \End_\F(E)$, we define 
$T^\sharp A\in \End_\F(Q)$ by
\begin{equation*}
T\circ T^\sharp A=A\circ T, 
\end{equation*}
so that the following diagram commutes.
\[\begin{tikzcd}
	{Q} & {E} 	\arrow[dl, pos=0.5, phantom, "\circlearrowleft"] \\
	{Q} & {E}
	\arrow["{T}", "\sim"', from=1-1, to=1-2]
	\arrow["{T^\sharp A}", from=2-1, to=1-1]
	\arrow["A"', from=2-2, to=1-2]
	\arrow["\sim", "{T}"', from=2-1, to=2-2]
\end{tikzcd}\]

For $A \in \End_\F(E)$, we write 
\begin{equation*}
\Sol_{T^\sharp A}(Q)
:=\{q \in Q : (T^\sharp A)q=0\}.
\end{equation*}
Then the restriction $T\vert_{\Sol_{T^\sharp A}(Q)}$ of $T$ on 
$\Sol_{T^\sharp A}(Q)$ gives rise to a linear isomorphism
\begin{equation*}
T\vert_{\Sol_{T^\sharp A}(Q)}: \Sol_{T^\sharp A}(Q)
\stackrel{\sim}{\To} 
\Sol_A(E).
\end{equation*}
Since $S_\ord \in GL(E)$, we have
$T^\sharp S_\ord \colon Q \stackrel{\sim}{\to} Q$.
Thus, for $B \in \End_\F(Q)$, one can define
\begin{equation}\label{eqn:adB}
\Ad(T^\sharp S_\ord)B
:= 
T^\sharp S \circ B \circ (T^\sharp S)^{-1}.
\end{equation}
To simplify the notation, we write
\begin{equation*}
T^\sharp S  \equiv T^\sharp S_\ord. 
\end{equation*}
As
\begin{equation*}
\Ad(T^\sharp S)T^\sharp A = T^\sharp \Ad(S)A,
\end{equation*}
one can extend the commutative diagram \eqref{eqn:RS2} as follows.
\begin{equation}\label{eqn:SSSS}
\begin{tikzcd}
	{\Sol_{\Ad(T^\sharp S)T^\sharp A }(Q)} & 
	{\Sol_{\Ad(S)A}(E)} 	\arrow[dl, pos=0.5, phantom, "\circlearrowleft"] 	
	& {U(\frakg)\otimes V^\vee}
	\arrow[dl, pos=0.5, phantom, "\circlearrowleft"]\\
	{\Sol_{T^\sharp A}(Q)} & {\Sol_{A}(E)} & {U(\frakg)\otimes V^\vee}
	\arrow["{T}", "\sim"' , from=1-1, to=1-2]
	\arrow["{\Upsilon_{\ord}}", hook, from=1-2, to=1-3]
	\arrow["\sim" sloped, "(T^\sharp S)^{-1}"', from=1-1, to=2-1]
	\arrow["S_\ord^{-1}", "\sim"' sloped,  from=1-2, to=2-2]
	\arrow["\sim" sloped, "\sym_{\ord}"',  from=2-3, to=1-3]
	\arrow["{\Upsilon_{\ord}}"', hook, from=2-2, to=2-3]
	\arrow["\sim" , "{T}"',  from=2-1, to=2-2]
\end{tikzcd}
\end{equation}

%%%%%%%%%%%%%%%%%%%%%%%%%%%%%%%%%%%%%%%%%
\subsection{Case of the nilpotent radicals \texorpdfstring{$\frakn_\pm$}{n_±}}

We now apply the preceding arguments to
$\frakg=\frakn_-$ and $\frakg^\vee=\frakn_+$,
where $\frakn_\pm$ are nilpotent radicals of a parabolic subalgebra.
Throughout the rest of this section, we resume the notation from Section \ref{sec:Fmethod}. In particular, the base field $\F$ is $\F=\C$.

Recall from Section \ref{sec:Fmethod} that 
$P=MAN_+$ (resp.\ $P'=M'A'N_+'$) is a parabolic subgroup of $G$ (resp.\ $G'$),
and $\ord(\tau)=\{X_{\tau(1)},\ldots, X_{\tau(n)}\}$ is the ordered basis
of $\frakn_-$
defined in \eqref{eqn:ord-tau} for some $\tau \in \mathfrak{S}_n$.
For a finite-dimensional $P$-representation $V^\vee$, 
by abuse of notation, we write
\begin{equation*}
S_\ord\equiv S_{\ord(\tau)}\otimes \id_{V^\vee}, \quad
\Upsilon_\ord\equiv \Upsilon_{\ord(\tau)}\otimes \id_{V^\vee}, \quad
\sym_\ord \equiv \sym_{\ord(\tau)}\otimes \id_{V^\vee}.
\end{equation*}
Then, via the identification $\frakn_-^\vee \simeq \frakn_+$,
we consider the following situation.
\begin{equation*}
\begin{tikzcd}
	{\Pol(\frakn_+)\otimes V^\vee}& {U(\frakn_-)\otimes V^\vee}
	\arrow[dl, pos=0.5, phantom, "\circlearrowleft"] \\
	{\Pol(\frakn_+)\otimes V^\vee} & {U(\frakn_-)\otimes V^\vee}
	\arrow["{\Upsilon_{\ord}}", "\sim"' sloped, from=1-1, to=1-2]
	\arrow["{S_{\ord}}","\sim"' sloped,  from=2-1, to=1-1]
	\arrow["\sim" sloped, "\sym_{\ord}"',  from=2-2, to=1-2]
	\arrow["\sim" sloped,"{\Upsilon_{\ord}}"', from=2-1, to=2-2]
\end{tikzcd}
\end{equation*}

Observe that the subgroup $MA$ acts on $\frakn_{\pm}$ 
via the adjoint action $\Ad\vert_{\frakn_{\pm}}$.
 Since $M'A'$ is assumed to be $M'A' \subset MA$ 
in \eqref{eqn:MAN}, the nilpotent radicals $\frakn_{\pm}$ are also 
$M'A'$-representations. Moreover, one can regard the $P$-representation
$V^\vee$ as an $M'A'$-representation.
Therefore, 
the symmetrization operator
\begin{equation*}
S_{\ord}
\colon \Pol(\frakn_+)\otimes V^\vee \stackrel{\sim}{\To} \Pol(\frakn_+)\otimes V^\vee
\end{equation*}
is an $M'A'$-isomorphism.

Recall from Proposition \ref{prop:Fc} that
the inverse $F_c^{-1}$ of the algebraic Fourier transform is given by
\begin{equation}\label{eqn:Fc2}
F_c^{-1}\colon \Pol(\frakn_+)\otimes V^\vee 
\stackrel{\sim}{\To} U(\frakn_-)\otimes V^\vee,
\quad 
\zeta_1^{r_1}\cdots \zeta_n^{r_n}\otimes v^\vee
\mapsto
\sym_{|\mathbf{r}|}(X_1^{r_1}\cdots X_n^{r_n}) \otimes v^\vee.
\end{equation}
Let $E$ be an $S_\ord$-invariant subspace 
of $\Pol(\frakn_+)\otimes V$.

\begin{proposition}\label{prop:FcU}
The inverse $F_c^{-1}\vert_{E}$ on $E$ is factored as 
\begin{equation*}
F_c^{-1}\vert_{E} = \Upsilon_{\ord}  \circ S_\ord\vert_{E}.
\end{equation*}
Equivalently, the following diagram commutes.
\begin{equation}\label{eqn:diagram2}
\begin{tikzcd}[row sep=1cm, column sep=1cm]
E
\arrow[r, "\Upsilon_{\ord}", hook]
&
U(\frakn_-)\otimes V^\vee
\\
E
\arrow[u, "S_\ord","\sim"' sloped]
\arrow[ur, "F_c^{-1}"', hook]
&
\arrow[ul, pos=0.8, phantom, "\circlearrowleft"]
\end{tikzcd}
\end{equation}
\end{proposition}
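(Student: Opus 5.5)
This is a direct comparison on monomials; the plan is to check both sides on the monomial basis of $\Pol(\frakn_+)\otimes V^\vee$ and then restrict to $E$. Since $S_\ord$ is defined on all of $\Pol(\frakn_+)\otimes V^\vee$ (the $S_\ord$-invariance of $E$ only guarantees that $S_\ord\vert_E$ lands back in $E$), it suffices to prove the global identity
\begin{equation*}
F_c^{-1}=\Upsilon_\ord\circ S_\ord
\end{equation*}
on $\Pol(\frakn_+)\otimes V^\vee$ and then restrict to $E$.

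First, I will rewrite the right-hand side using the defining property of the symmetrization operator. By the commutative diagram \eqref{eqn:orderization}, tensored with $\id_{V^\vee}$, we have $\Upsilon_\ord\circ S_\ord=\sym_\ord\circ\Upsilon_\ord$. So it is enough to show that $F_c^{-1}=\sym_\ord\circ\Upsilon_\ord$. Both sides are linear, so I will check this on the basis $\zeta_1^{r_1}\cdots\zeta_n^{r_n}\otimes v^\vee$.

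Next, I compute each side on this basis. By \eqref{eqn:Uu}, $\Upsilon_\ord(\zeta_1^{r_1}\cdots\zeta_n^{r_n})=X_1^{r_1}\cdots X_n^{r_n}$, which is itself a single element of $\mathrm{PBW}_\ord$. Then \eqref{eqn:sym-ord}, applied to this single PBW monomial with coefficient one, gives $\sym_{|\mathbf r|}(X_1^{r_1}\cdots X_n^{r_n})$. By Proposition \ref{prop:Fc}, in the form \eqref{eqn:Fc2}, this is exactly $F_c^{-1}(\zeta_1^{r_1}\cdots\zeta_n^{r_n}\otimes v^\vee)$. Hence the two sides agree on every basis vector.

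The only point requiring care is bookkeeping of the ordered basis. When $\ord=\ord(\tau)$ is a reordering, the coordinates $\zeta_j$ and the PBW monomials must be indexed compatibly with that ordering. The symmetrized element $\sym_{|\mathbf r|}(X^{\mathbf r})$ averages over all orderings of the factors, so it does not depend on which order the monomial is written in. The same argument therefore applies verbatim, with $\Upsilon_{\ord(\tau)}$ sending the monomial to the $\ord(\tau)$-ordered product. I do not expect any genuine obstacle beyond this.
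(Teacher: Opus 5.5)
Your proposal is correct and follows the paper's proof. The paper likewise reads off $F_c^{-1}=\sym_\ord\circ\Upsilon_\ord$ from \eqref{eqn:Uu} and \eqref{eqn:Fc2} on monomials, then concludes using the defining relation $\sym_\ord\circ\Upsilon_\ord=\Upsilon_\ord\circ S_\ord$ in its restricted form \eqref{eqn:RS}; your remark that symmetrization does not depend on the order of the factors correctly handles the reordered bases $\ord(\tau)$, which the paper leaves implicit.
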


\begin{proof}
By \eqref{eqn:Uu} and \eqref{eqn:Fc2},
we have $F_c^{-1} = \sym_{\ord} \circ \Upsilon_{\ord}$.
The commutative diagram \eqref{eqn:RS} then concludes the proposition.
\end{proof}

For $A \in \End_\C(E)$, we  write 
\begin{equation*}
\Im F_c^{-1}(A):=\{F_c^{-1}(\psi(\zeta)) \in U(\frakn_-)\otimes V^\vee: 
\psi(\zeta) \in \Sol_A(E)\},
\end{equation*}
namely, the image of $F_c^{-1}$ 
on $\Sol_A(E) \subset \Pol(\frakn_+)\otimes V^\vee$.

\begin{theorem}\label{thm:ImFc2}
Suppose that $E$ is an 
$S_{\ord}$-invariant subspace of $\Pol(\frakn_+)\otimes V^\vee$.
Then, for  $A \in \End_\C(E)$, we have
\begin{align*}
\Im F_c^{-1}(A)
&=\{(\Upsilon_{\ord} \circ T)q
: q \in \Sol_{\Ad(T^\sharp S)T^\sharp A }(Q)\}\\[3pt]
&=\{(F_c^{-1}\circ T \circ (T^\sharp S)^{-1})q : 
q \in \Sol_{\Ad(T^\sharp S)T^\sharp A }(Q)\}.
\end{align*}
Equivalently, the following diagram commutes.
\begin{equation}\label{eqn:SolSolU}
\begin{tikzcd}
	{\Sol_{\Ad(T^\sharp S)T^\sharp A }(Q)} & 
	{\Sol_{\Ad(S)A}(E)} 	\arrow[dl, pos=0.5, phantom, "\circlearrowleft"]
	&{U(\frakn_-)\otimes V^\vee} \\
	{\Sol_{T^\sharp A}(Q)} & {\Sol_{A}(E)} &
	\arrow[ul, pos=0.85, phantom, "\circlearrowleft"]
	\arrow[r, "\Upsilon_{\ord}", hook, from=1-2, to=1-3]
	\arrow["F_c^{-1}"', hook, from=2-2, to=1-3]
	\arrow["{T}", "\sim"' , from=1-1, to=1-2]
	\arrow["\sim" sloped, "(T^\sharp S)^{-1}"', from=1-1, to=2-1]
	\arrow["S^{-1}_{\ord}", "\sim"' sloped,  from=1-2, to=2-2]
	\arrow["\sim" , "{T}"',  from=2-1, to=2-2]
\end{tikzcd}
\end{equation}
\end{theorem}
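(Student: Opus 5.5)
The plan is to reduce the statement to a chain of previously established bijections, namely Proposition \ref{prop:FcU}, Proposition \ref{prop:SolA} and the restriction of $T$ to solution spaces from Section \ref{sec:ImFc}, and then to check that each composite gives the same image. Two identities do the work. First, $F_c^{-1}\vert_E = \Upsilon_{\ord}\circ S_\ord\vert_E$ holds by Proposition \ref{prop:FcU}. Second, we need the intertwining relation $T\circ T^\sharp S = S_\ord\circ T$, which holds by the definition of $T^\sharp$ applied to $A=S_\ord$. From the latter we also obtain $T\circ (T^\sharp S)^{-1} = S_\ord^{-1}\circ T$ on $Q$.

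\textbf{Step 1 (the left square).} We check that $T$ restricts to a bijection $\Sol_{\Ad(T^\sharp S)T^\sharp A}(Q)\to \Sol_{\Ad(S)A}(E)$. This uses the identity $\Ad(T^\sharp S)T^\sharp A = T^\sharp\Ad(S)A$, which follows by conjugating $T\circ T^\sharp(\cdot)=(\cdot)\circ T$ and using the intertwining relation. We then apply the general fact that $T$ maps $\Sol_{T^\sharp B}(Q)$ isomorphically onto $\Sol_B(E)$, taking $B=\Ad(S)A$. Commutativity of the left square, $S_\ord^{-1}\circ T = T\circ (T^\sharp S)^{-1}$, is exactly the consequence of the intertwining relation noted above. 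By Proposition \ref{prop:SolA}, the vertical maps are bijections onto $\Sol_{T^\sharp A}(Q)$ and $\Sol_A(E)$ respectively.

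\textbf{Step 2 (the triangle).} For $\varphi\in\Sol_{\Ad(S)A}(E)$ we have
\[
F_c^{-1}(S_\ord^{-1}\varphi)=\Upsilon_{\ord}S_\ord S_\ord^{-1}\varphi=\Upsilon_{\ord}\varphi,
\]
by Proposition \ref{prop:FcU}. This is valid because $E$ is $S_\ord$-invariant, so $S_\ord^{-1}\varphi\in E$. (This is the content of Corollary \ref{cor:SolAU}.) Since $S_\ord^{-1}$ maps $\Sol_{\Ad(S)A}(E)$ onto $\Sol_A(E)$, we conclude
\[
\Im F_c^{-1}(A)=\Upsilon_{\ord}\bigl(\Sol_{\Ad(S)A}(E)\bigr).
\]

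\textbf{Step 3 (the two set equalities).} Substituting $\varphi=Tq$ with $q$ ranging over $\Sol_{\Ad(T^\sharp S)T^\sharp A}(Q)$, which is surjective onto $\Sol_{\Ad(S)A}(E)$ by Step 1, gives the first description of $\Im F_c^{-1}(A)$. For the second description, we rewrite
\[
\Upsilon_{\ord}Tq=F_c^{-1}S_\ord^{-1}Tq=F_c^{-1}T(T^\sharp S)^{-1}q,
\]
using Step 2 and the left-square identity from Step 1.

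I expect no genuine obstacle here. The only delicate point is bookkeeping: making sure $S_\ord^{\pm1}$ is applied only within $E$, which the $S_\ord$-invariance hypothesis guarantees, and that $T^\sharp S$ is invertible. The latter holds because $T^\sharp S=T^{-1}S_\ord T$.
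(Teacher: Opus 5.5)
Your proposal is correct and follows the paper's route. The paper's proof simply cites the diagram \eqref{eqn:SSSS} together with Proposition~\ref{prop:FcU}. The left square of \eqref{eqn:SSSS} is your Step~1 and its right square is Corollary~\ref{cor:SolAU}, so your Steps~1--3 just unpack that argument. The one point that both you and the paper take for granted is that $S_\ord\vert_E$ is bijective onto $E$, not merely that $S_\ord(E)\subseteq E$. This holds because $S_\ord$ preserves each finite-dimensional space of polynomials of degree at most $m$ and acts unipotently there. Hence it is injective, and therefore bijective, on every finite-dimensional $S_\ord$-invariant piece of $E$.
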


\begin{proof}
This is a direct consequence of 
\eqref{eqn:SSSS} and  Proposition \ref{prop:FcU}.
\end{proof}

\begin{remark}\label{rem:ImFc2}
It is clear that Theorem \ref{thm:ImFc2} holds 
also for  a system of linear operators in $\End_\C(E)$. 
Indeed, suppose that $A$ is the system of linear operators
$A_1, A_2, \ldots, A_m \in \End_\C(E)$. Then put 
\begin{equation*}
\Sol_{A}(E):=\bigcap_{j=1}^m\Sol_{A_j}(E).
\end{equation*}
Theorem \ref{thm:ImFc2} holds for the space $\Sol_{A}(E)$ 
of simultaneous solutions.
\end{remark}

\begin{remark}
For $\sum_{j} u_j\otimes v_j^\vee =(\Upsilon_{\ord} \circ T)q = 
 (F_c^{-1}\circ T \circ (T^\sharp S)^{-1})q
\in \Im F_c^{-1}(A)$,  write
$\calD = \sum_{j} dR(u_j)\otimes v_j^\vee
\in \Diff_{N_-}(N_-)\otimes V^\vee$. Then the formula 
\begin{equation*}
\calD =  dR \circ (F_c^{-1}\circ T \circ (T^\sharp S)^{-1})q 
\end{equation*}
contains symmetrization, whereas
\begin{equation*}
\calD=  dR \circ (\Upsilon_{\ord} \circ T)q 
\end{equation*}
does not.
\end{remark}

%%%%%%%%%%%%%%%%%%%%%%%%%%%%%%%%%%%%%%%%%
\subsection{Ordered F-system}
\label{sec:Ordered_Fsystem}

Now we consider the solution of the F-system in \eqref{eqn:Fsys}: 
\begin{equation}\label{eqn:Sol-C}
\mathrm{Sol}(\sigma,\delta;\lambda,\nu)
=\{ \psi \in \Hom_{M'A'}(W^\vee, \Pol(\frakn_+)\otimes V^\vee): 
\text{$\psi$ solves \eqref{eqn:Fsys-C}}\}
\end{equation}
\begin{equation}\label{eqn:Fsys-C}
(\widehat{d\pi_{(\sigma,\lambda)^*}}(C) \otimes \id_W)\psi=0
\,\,
\text{for all $C \in \frakn_+'$}.
\end{equation}
Let $E(W^\vee)$ denote 
the $W^\vee$-isotypic component of $\Pol(\frakn_+)\otimes V^\vee$.
Then $E(W^\vee)$ is $S_\ord$-invariant by Corollary \ref{cor:G_orderization}),
and we have 
\begin{equation*}
\mathrm{Sol}(\sigma,\delta;\lambda,\nu)
=\{ \psi \in \Hom_{M'A'}(W^\vee, E(W^\vee)): 
\text{$\psi$ solves  \eqref{eqn:Fsys}}\}.
\end{equation*}

Suppose that there exist $D_{F,1}, \ldots, D_{F,m} \in \End_\C(E(W^\vee))$
such that the system \eqref{eqn:Fsys-C} 
is equivalent to
\begin{equation}\label{eqn:Fsys-DF}
(D_{F,j} \otimes \id_W)\psi=0
\,\,
\text{for all $j=1,2,\ldots,m$},
\end{equation}
in the sense that $\psi$ solves \eqref{eqn:Fsys-C} if and only if 
it solves \eqref{eqn:Fsys-DF}.
Let $D_F$ denote the system \eqref{eqn:Fsys-DF} and put
\begin{equation*}
\mathrm{Sol}_{D_F}(\sigma,\delta;\lambda,\nu)
:=\{ \psi \in \Hom_{M'A'}(W^\vee, E(W^\vee)): 
\text{$\psi$ solves \eqref{eqn:Fsys-DF}}\}.
\end{equation*}
By definition, we have 
\begin{equation*}
\mathrm{Sol}_{D_F}(\sigma,\delta;\lambda,\nu)=
\mathrm{Sol}(\sigma,\delta;\lambda,\nu).
\end{equation*}

Then we give the following definition.

\begin{definition}\label{def:modified-Fsystem}
We refer to the system $D_F$ in \eqref{eqn:Fsys-DF} 
as a \emph{modified F-system}.    
\end{definition}

It follows from Corollary \ref{cor:G_orderization} that 
the $W^\vee$-isotypic component $E(W^\vee)$ is an 
$S_\ord$-invariant subspace of $\Pol(\frakn_+) \otimes V^\vee$.
Since $D_{F,j} \in \End_{\C}(E(W^\vee))$, 
the conjugation $\Ad(S)D_{F,j}$ by the symmetrization operator 
$S_{\ord}$ is a well-defined operator in $E(W^\vee)$.
Now we define the ordered F-system $\Ad(S)D_F$ of $D_F$
as follows.

\begin{definition}
The \emph{ordered F-system $\Ad(S)D_F$ of $D_F$}
is the system of differential equations
\begin{equation}\label{eqn:Ad(S)DF}
(\Ad(S)D_{F,j} \otimes \id_W)\psi=0
\,\,
\text{for all $j=1,2,\ldots,m$},
\end{equation}
where $\psi \in \Hom_{M'A'}(W^\vee, E(W^\vee))$.
\end{definition}

We put
\begin{equation*}
\mathrm{Sol}_{\Ad(S)D_F}(\sigma,\delta;\lambda,\nu) 
=\{ \psi \in \Hom_{M'A'}(W^\vee, E(W^\vee)): 
\text{$\psi$ solves \eqref{eqn:Ad(S)DF}}\}.
\end{equation*}

Recall from \eqref{eqn:U_Diff_N} that there exists a linear isomorphism
\begin{equation*}
dR\colon U(\frakn_-)\otimes V^\vee \stackrel{\sim}{\To} 
\Diff_{N_-}(N_-)\otimes V^\vee,
\quad
u \otimes v^\vee\mapsto dR(u) \otimes v^\vee.
\end{equation*} 
Then, by using the ordering map 
$\Upsilon_{\ord}\colon \Pol(\frakn_+)\otimes V^\vee 
\stackrel{\sim}{\to} U(\frakn_-)\otimes V^\vee$,
we define the \emph{ordered symbol map $\Symb_{\ord}$ 
with respect to $\ord$} by 
\begin{equation*}
\Symb_{\ord}\colon 
\Diff_{N_-}(N_-)
\otimes W\stackrel{\sim}{\To} 
\Pol(\frakn_+)\otimes W,
\quad
dR(u)\otimes v^\vee \otimes w 
\longmapsto \Upsilon_{\ord}^{-1}(u) \otimes v^\vee \otimes w.
\end{equation*}
Its inverse $\Symb_{\ord}^{-1}$ is given as 
\begin{equation*}
\Symb_{\ord}^{-1}=dR \circ 
(\Upsilon_{\ord}\otimes \id_{W}) .
\end{equation*}
On the other hand,
recall from \eqref{eqn:invSymb} that we have 
\begin{equation*}
\Symb_0^{-1}=dR\circ (F_c^{-1}\otimes \id_{W}).
\end{equation*}

The following theorem shows that the ordered F-system 
$\Ad(S)D_F$  indeed yields the ordered form of 
DSBOs arising from the F-system $D_F$.

\begin{theorem}\label{thm:symb_ord}
For a modified F-system $D_F$,
the following diagram commutes.
\begin{equation}\label{eqn:RestSymbOrd}
\begin{tikzcd}[row sep=1cm, column sep=1.5cm]
\mathrm{Sol}_{\Ad(S)D_F}(\sigma,\delta;\lambda,\nu)
\arrow[r, "\Symb_{\ord}^{-1}", "\sim"']
\arrow[d, "\sim" sloped,"S^{-1}_\ord"']
&
\Diff_{G'}(I(\sigma,\lambda),J(\delta,\nu))
\\
\mathrm{Sol}_{D_F}(\sigma,\delta;\lambda,\nu)
\arrow[ur,"\sim" sloped, "\Symb_0^{-1}"']
&
\arrow[ul, pos=0.8, phantom, "\circlearrowleft"]
\end{tikzcd}
\end{equation}
Here, for simplicity, the restriction map $\Rest$ is omitted 
for $\Symb_{\ord}^{-1}$ and $\Symb_0^{-1}$.
\end{theorem}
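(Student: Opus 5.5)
The plan is to reduce Theorem \ref{thm:symb_ord} to Theorem \ref{thm:symb} together with the factorization of $F_c^{-1}$ in Proposition \ref{prop:FcU}. We take $E:=E(W^\vee)$, which is $S_\ord$-invariant by Corollary \ref{cor:G_orderization}. We also take $A$ to be the system $D_{F,1},\dots,D_{F,m}$, as in Remark \ref{rem:ImFc2}. Everything is tensored with $\id_W$ and restricted to $M'A'$-homomorphisms $W^\vee\to E(W^\vee)$.

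\emph{Step 1: the vertical map is an isomorphism.} We show that $S_\ord^{-1}$ maps $\mathrm{Sol}_{\Ad(S)D_F}(\sigma,\delta;\lambda,\nu)$ bijectively onto $\mathrm{Sol}_{D_F}(\sigma,\delta;\lambda,\nu)$. Since $S_\ord$ is an $M'A'$-isomorphism of $\Pol(\frakn_+)\otimes V^\vee$ preserving $E(W^\vee)$ (Proposition \ref{prop:G_orderization}), composing with $S_\ord^{\pm1}\otimes\id_W$ preserves $\Hom_{M'A'}(W^\vee,E(W^\vee))$. Proposition \ref{prop:SolA}, applied componentwise for each $j$ and tensored with $\id_W$, then shows that $\psi$ solves $\Ad(S)D_{F,j}\psi=0$ for all $j$ if and only if $S_\ord^{-1}\psi$ solves $D_{F,j}(S_\ord^{-1}\psi)=0$ for all $j$. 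Hence the left arrow is a linear isomorphism.

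\emph{Step 2: commutativity.} For $\psi$ in the top space, Proposition \ref{prop:FcU} gives
\[
F_c^{-1}(S_\ord^{-1}\psi)=\Upsilon_\ord S_\ord S_\ord^{-1}\psi=\Upsilon_\ord\psi .
\]
This is Corollary \ref{cor:SolAU}, and it holds on $E\otimes W$ after tensoring with $\id_W$. Applying $dR$ and then $\Rest$ yields
\[
\Rest\circ\Symb_0^{-1}(S_\ord^{-1}\psi)=\Rest\circ\Symb_\ord^{-1}(\psi),
\]
which is exactly the commutativity of \eqref{eqn:RestSymbOrd}.

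\emph{Step 3: the horizontal map is an isomorphism.} The diagonal arrow $\Rest\circ\Symb_0^{-1}$ is an isomorphism onto $\Diff_{G'}(I(\sigma,\lambda),J(\delta,\nu))$ by Theorem \ref{thm:symb}, using $\mathrm{Sol}_{D_F}=\mathrm{Sol}$. By Step 2, $\Rest\circ\Symb_\ord^{-1}$ is the composition of two isomorphisms on the top space. It is therefore well defined with values in the space of DSBOs, and it is an isomorphism.

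The main obstacle is bookkeeping rather than conceptual. One point to check is that the identification $\Hom_{M'A'}(W^\vee,\cdot)\simeq(\cdot\otimes W)^{M'A'}$ is compatible with applying $S_\ord\otimes\id_W$ and $\Upsilon_\ord\otimes\id_W$. The other is that the $dR$-images land in $\Diff_{N_-}(N_-)\otimes\Hom_\C(V,W)$ consistently in both symbol maps. Both follow because all maps act only on the $\Pol(\frakn_+)$ or $U(\frakn_-)$ factor and are $M'A'$-equivariant.
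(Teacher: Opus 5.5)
Your proposal is correct and follows the paper's route. The paper's one-line proof invokes Theorem~\ref{thm:ImFc2} and Remark~\ref{rem:ImFc2} with $A=D_F$; taking $Q=E(W^\vee)$ and $T=\id$, that theorem is exactly the combination of Proposition~\ref{prop:SolA}, Corollary~\ref{cor:SolAU} and Proposition~\ref{prop:FcU} that you unpack, with Theorem~\ref{thm:symb} supplying the isomorphism onto $\Diff_{G'}(I(\sigma,\lambda),J(\delta,\nu))$.

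Your check that $S_\ord^{\pm1}$ preserves $\Hom_{M'A'}(W^\vee,E(W^\vee))$ is a detail the paper leaves implicit. It rests on the $M'A'$-equivariance of $S_\ord=\Upsilon_\ord^{-1}\circ F_c^{-1}$, equivalently of $\Upsilon_\ord$. This is clear when $M'A'$ rescales each vector of $\ord$, as in the $GL(3,\R)$ application, but it is not automatic for arbitrary automorphisms of $\frakn_-$, despite the general wording of Proposition~\ref{prop:G_orderization}.
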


\begin{proof}
The assertion is a straightforward consequence of Theorem \ref{thm:ImFc2}
and Remark \ref{rem:ImFc2} for the case that $A=D_F$.
\end{proof}

%%%%%%%%%%%%%%%%%%%%%%%%%%%%%%%%%%%%%%%%%%
\subsection{Recipe for the F-method using the ordered F-system}
\label{sec:recipe}

By Theorem \ref{thm:symb_ord},
one may classify and construct DSBOs
$\DD \in \Diff_{G'}(I(\sigma,\lambda),J(\delta,\nu))$
for $(\sigma_\lambda, V)$ and $(\delta_\nu,W)$
by computing 
$\varphi \in \mathrm{Sol}_{\Ad(S)D_F}(\sigma,\delta;\lambda,\nu)$
and 
$\psi \in\mathrm{Sol}_{D_F}(\sigma,\delta;\lambda,\nu)$
as follows.
\vskip 0.1in

\begin{enumerate}
\item[Part 1] Preperation
\vskip 0.1in

\begin{enumerate}

\item[Step 1]
Determine the $E(W^\vee)$-isotypic component of
$\Pol(\frakn_+)\otimes V^\vee$.
\vskip 0.1in

\item[Step 2]
Fix an ordered basis $\ord$ and
compute the symmetrization operator $S_\ord$ on $\Pol(\frakn_+)$.
\vskip 0.1in

\item[Step 3]
Compute $d\pi_{(\sigma,\lambda)^*}(C)$ and 
$\widehat{d\pi_{(\sigma,\lambda)^*}}(C)$
for $C \in \frakn_+'$.
\vskip 0.1in

\item[Step 4]
Find a modified F-system $D_F$.
\end{enumerate}

\vskip 0.2in

\item[Part 2] Solving the F-systems

\vskip 0.1in

\begin{enumerate}

\item[Step 5]
Classify and construct
\begin{equation*}
\psi \in \Hom_{M'A'}(W^\vee, \Pol(\frakn_+)\otimes V^\vee)
=\Hom_{M'A'}(W^\vee, E(W^\vee)).    
\end{equation*}
\vskip 0.1in

\item[Step 6]
Apply the conjugation $\Ad(S)$ by $S_\ord$ to the modified F-system $D_F$ 
to obtain the ordered F-system 
$\Ad(S)D_F$.

\vskip 0.1in

\item[Step 7]
Compute $\mathrm{Sol}_{D_F}(\sigma,\delta;\lambda,\nu)$ 
and 
$\mathrm{Sol}_{\Ad(S)D_F}(\sigma,\delta;\lambda,\nu)$.
\vskip 0.1in

\item[Step 8]
Do the following.
\vspace{3pt}

\begin{enumerate}
\item 
Apply $\Rest \circ \Symb_0^{-1}$
to $\psi \in \mathrm{Sol}_{D_F}(\sigma,\delta;\lambda,\nu)$.
\vspace{5pt}

\item
Apply $\Rest \circ\Symb_{\ord}^{-1}$
to  $\varphi \in \mathrm{Sol}_{\Ad(S)D_F}(\sigma,\delta;\lambda,\nu)$.

\end{enumerate}

\end{enumerate}

\end{enumerate}

\vskip 0.1in

\begin{remark}
Since
\begin{equation*}
\Hom_{M'A'}(W^\vee, \Pol(\frakn_+)\otimes V^\vee)
\simeq 
\Hom_{M'A'}(V, \Pol(\frakn_+)\otimes W),
\end{equation*}
one may, alternatively, choose to find the $E(V)$-isotypic component of 
$\Pol(\frakn_+)\otimes W$ instead of the $E(W^\vee)$-isotypic component in Step 1. 
\end{remark}

From the next section onward, we shall carry out the recipe for the F-method
in the case 
\begin{equation*}
(G,G')=(GL(3,\R), GL(2,\R))
\end{equation*}
 with minimal
parabolic subgroups $P$ and $P'$ and $\dim_\C V=\dim_\C W =1$.

%%%%%%%%%%%%%%%%%%%%%%%%%%%%%%%%%%%%%%%%%
\section{Setup and notation for \texorpdfstring{$G=GL(3,\RR)$}{G=GL(3,R)} and \texorpdfstring{$G'=GL(2,\RR)$}{G'=GL(2,R)}}
\label{sec:GL(3)}

In this short section, we first introduce some 
notation and conventions used in the rest of the paper.
We then compute the isotypic decomposition of $\Pol(\frakn_+)$
as Step 1 of the recipe in Section \ref{sec:recipe}.

As in Section \ref{sec:Fmethod},
for a real Lie algebra $\mathfrak{y}(\R)$, we write $\mathfrak{y}$
and $U(\mathfrak{y})$ for its complexification and the universal enveloping algebra of $\fraky$, respectively. 

%%%%%%%%%%%%%%%%%%%%%%%%%%%%%%%%%%%%%%%%%
\subsection{Notation for \texorpdfstring{$G=GL(3,\RR)$}{G=GL(3,R)}}
\label{sec:notation}

Let $G=GL(3,\RR)$ with Lie algebra $\mathfrak{g}(\RR)=\mathfrak{gl}(3,\RR)$ and set 
\begin{alignat}{3}
   N_1^+&=E_{1,2}, \quad 
    N_2^+&&=E_{2,3}, \quad  N_3^+&&=E_{1,3},\label{eqn:N+}\\[3pt]
   N_1^-&=E_{2,1}, \quad N_2^-&&=E_{3,2}, \quad  N_3^-&&=E_{3,1},\nonumber\\[3pt]
H_1&=E_{1,1}, \quad H_2&&=E_{2,2}, \quad  H_3&&=E_{3,3},\nonumber
\end{alignat}
where $E_{i,j}$ are the standard matrix units. We set $\fraka(\RR)=\Span_\RR\{H_1, H_2, H_3\}$ and $\frakn_\pm(\RR)=\Span_\RR\{N_1^\pm,N_2^\pm,N_3^\pm\}$.  Then $\frakg(\RR)=\frakn_-(\RR)\oplus \fraka(\RR)\oplus\frakn_+(\RR)$ is a Gelfand--Naimark decomposition of $\frakg(\RR)$ and 
\begin{equation}\label{eqn:minimal_parabolic}
\frakp(\RR):=\fraka(\RR)\oplus\frakn_+(\RR)
= \big\{
\begin{pmatrix} 
* & * & * \\
0 & * & * \\
0 & 0 & *
\end{pmatrix}\in \frakg(\RR)\big\}
\end{equation}
 is a minimal parabolic subalgebra of $\frakg(\RR)$. We let $N_\pm =\exp(\frakn_\pm(\RR))$ and $A=\exp(\fraka(\RR))$ and fix a maximal compact subgroup $K=O(3)$. Then $G=KAN_+$ is an Iwasawa decomposition of $G$. 
We write $M=Z_K(\fraka(\RR))$, namely,
\[
M=\big\{\diag(m_1,m_2,m_3) :  m_1,m_2,m_3\in \{-1,1\}\big\} \simeq (\Z/2\Z)^3,
\]
so that $P=MAN_+$ is a minimal parabolic subgroup of $G$ with Lie algebra $\fraka(\RR)\oplus\frakn_+(\RR)$. Here, the subgroup $MA$ is given by
\begin{equation*}
MA=\{\diag(\ell_1,\ell_2,\ell_3): \ell_j \in \R^{\times} \; (j=1,2,3)\}.
\end{equation*}

For $\xi=(\xi_1,\xi_2,\xi_3)\in (\ZZ/2\ZZ)^3$, we define a one-dimensional representation $\CC_\xi=(\chi_{M,\xi},\CC)$ of $M$ by 
\begin{equation}\label{eqn:characterM}
\chi_{M,\xi}\colon
\diag(m_1,m_2,m_3) \mapsto m_1^{\xi_1}m_2^{\xi_2}m_3^{\xi_3}.
\end{equation}
Up to equivalence, all irreducible representations of $M$ are of the form $\CC_\xi$ for some $\xi\in (\ZZ/2\ZZ)^3$. Similarly, for $\lambda=(\lambda_1,\lambda_2,\lambda_3)\in \CC^3$, we define a one-dimensional representation $\CC_\lambda=(\chi_{A,\lambda},\CC)$ of $A$ by
\begin{equation}\label{eqn:characterA}
\chi_{A,\lambda}\colon
\exp(t_1H_1+t_2H_2+t_3H_3)\mapsto \exp(\lambda_1t_1+\lambda_2t_2+\lambda_3t_3).
\end{equation}
Then all irreducible representations of $A$ are, up to equivalence,  
of the form $\CC_\lambda$ for some $\lambda\in \CC^3$.
By extending trivially to $N_+$, the representations $\CC_\xi\boxtimes \CC_\lambda$ constitute all irreducible representations of $P=MAN_+$. 

We let 
\[
I(\xi,\lambda)=\Ind_P^G(\CC_\xi\boxtimes\CC_\lambda),\qquad (\xi\in (\ZZ/2\ZZ)^3,\lambda\in \CC^3)
\]
be the unnormalized parabolically induced representations, also known as the principal series representations of $G$.

%%%%%%%%%%%%%%%%%%%%%%%%%%%%%%%%%%%%%%%%%
\subsection{Notation for \texorpdfstring{$G'=GL(2,\RR)$}{G'=GL(2,R)}}

We wish to consider the subgroup $G'=GL(2,\RR)$ of $G$.
However, there are three different ways of embedding $G'$ into $G$ such that
a parabolic subgroup $P'=M'A'N_+'$ of $G'$ satisfies
$M'A' \subset MA$ and $N_+'\subset N_+$ for $P=MAN_+$.
So first, we fix some notation for $G'$ and then we consider these three embeddings in $G$. 

We introduce notation for $G'$ similar to the one we introduced for $G$. 
Write $\frakg'(\RR) = \mathfrak{gl}(2,\RR)$ and put 
\[
H'_1=E_{1,1},\qquad H'_2=E_{2,2}.
\]
We then set 
$\fraka'(\RR)=\Span_\RR\{H_1',H_2'\}$, 
$\frakn'_{+}(\RR)=\RR E_{1,2}$ and $\frakn'_-(\RR)=\RR E_{2,1}$.
Then $\frakg'(\RR)=\frakn'_-(\RR)\oplus \fraka'(\RR)\oplus \frakn_+'(\RR)$ is a Gelfand--Naimark decomposition and 
$\frakp'(\RR):=\fraka'(\RR)\oplus \frakn_+'(\RR)$ is a minimal parabolic subalgebra of $\frakg'(\RR)$. Let $A'=\exp(\fraka'(\RR))$ and $N'=\exp(\frakn_+'(\RR))$ and let $K'=O(2)$ such that $G'=K'A'N_+'$ is an Iwasawa decomposition of $G'$. We write $M'=Z_{K'}(\fraka'(\RR))$ such that $P'=M'A'N'$ is a minimal parabolic subgroup of $G'$ with Lie algebra $\fraka'(\RR)\oplus \frakn_+'(\RR)$.

The subgroup $M'$ is given by
\[
M'=\big\{\diag(m_1',m_2') :  m_1',m_2'\in \{-1,1\}\big\}.
\]
Similarly to the case for $P$, we can parameterize all irreducible finite-dimensional representations of $P'$ by $(\eta,\nu)\in (\ZZ/2\ZZ)^2\times \CC^2$ and define the principal series representations 
\[
J(\eta,\nu)=\Ind_{P'}^{G'}(\CC_{\eta}\boxtimes \CC_\nu),\qquad (\eta\in (\ZZ/2\ZZ)^2,\nu\in \CC^2)
\]
as unnormalized parabolic induction. Here $\CC_\eta=(\chi_{M',\eta},\CC)$ and $\CC_\nu=(\chi_{A',\nu},\CC)$ are given by 
\begin{alignat}{2}
&\chi_{M',\eta}\colon \diag(m_1,m_2)&&\mapsto m_1^{\eta_1}m_2^{\eta_2},
\label{eqn:characterMprime} \\[3pt]
&\chi_{A',\nu}\colon 
\exp(t_1H_1'+t_2H_2')&&\mapsto \exp(t_1\nu_1+t_2\nu_2).
\label{eqn:characterAprime}
\end{alignat}

%%%%%%%%%%%%%%%%%%%%%%%%%%%%%%%%%%%%%%%%%
\subsection{Embeddings of \texorpdfstring{$G'$}{G'} in \texorpdfstring{$G$}{G}}
\label{sec:Gprime}

We consider the following three embeddings of $G'$ in $G$ given by
\begin{equation}\label{eqn:embedding}
\iota_1: g'\mapsto \begin{pmatrix}
    g' & 0\\
     0& 1
\end{pmatrix},\qquad
\iota_2: g'\mapsto \begin{pmatrix}
    1 &0 \\
    0& g'
\end{pmatrix},\qquad
\iota_3:\begin{pmatrix}
    *_1 & *_2\\
    *_3 & *_4
\end{pmatrix}\mapsto \begin{pmatrix}
    *_1 & 0 & *_2\\
    0 & 1 & 0\\
    *_3 & 0 & *_4
\end{pmatrix},
\end{equation}
and similarly on the Lie algebra level we can embed $\frakg'(\RR)$ into $\frakg(\RR)$ by
\begin{equation}\label{eqn:embedding2}
\iota_1:X\mapsto \begin{pmatrix}
    X &0 \\
    0&0 \\
\end{pmatrix},\qquad \iota_2:X\mapsto \begin{pmatrix}
    0 & 0\\
    0 & X
\end{pmatrix},\qquad \iota_3:\begin{pmatrix}
    *_1 & *_2\\
    *_3 & *_4
\end{pmatrix}\mapsto \begin{pmatrix}
    *_1 & 0 & *_2\\
    0 & 0 & 0\\
    *_3 & 0 & *_4
\end{pmatrix},
\end{equation}
where we abuse notation and do not make a distinction in the notation of the maps on Lie group and Lie algebra level. 

We then let $\frakg_i'(\RR)=\iota_i(\frakg'(\RR))$,  $\fraka_i'(\RR)=\iota_i(\fraka'(\RR))$ and 
$\frakn_{i,\pm}'(\RR)=\iota_i(\frakn'_\pm(\RR))$ such that 
\[
\fraka_1'(\RR)=\Span_\RR\{H_1,H_2\},\qquad \fraka_2'(\RR)=\Span_\RR\{H_2,H_3\},\qquad \fraka_3'(\RR)=\Span_\RR\{H_1,H_3\},
\]
and
\[
\frakn_{i,\pm}'(\RR)=\RR N_i^{\pm},\qquad (i=1,2,3).
\]
We can then set $M_i'=\iota_i(M')$, $A'_i=\exp(\fraka_i'(\RR))$ and $N_{i,\pm}'=\exp(\frakn_{i,\pm}'(\RR))$ such that 
\[
P_i':=M_i'A_i'N_{i,+}'\subset P
\]
with $M_i'\subset M$, $A_i'\subset A$ and $N_{i,\pm}'\subset N_{\pm}$. We also write $G^\prime\simeq  \iota_i(G^\prime)=: G^\prime_i\subset G$.

Then our aim is to classify and construct DSBOs
\begin{equation*}
\DD \in \Diff_{G_i^\prime}(I(\xi, \lambda), J(\eta,\nu))
\end{equation*}
for $i=1,2,3$, and differential intertwining operators (DIOs)
\begin{equation*}
\mathcal{D} \in \Diff_{G}(I(\xi, \lambda), I(\xi',\lambda')).
\end{equation*}

%%%%%%%%%%%%%%%%%%%%%%%%%%%%%%%%%%%%%%%%%
\subsection{DSBOs \texorpdfstring{$\DD$}{D} in the non-compact picture}

As in \eqref{eqn:space_DifferenitalOperators},
for each embedding $\iota_i \colon G^\prime \hookrightarrow G$,
we understand DSBOs
$\DD \in \Diff_{G_i^\prime}(I(\xi, \lambda), J(\eta,\nu))$
as 
$\DD\in \Diff_\CC(C^\infty(\R^3), C^\infty(\R))$
via the diffeomorphisms
\begin{alignat}{2}\label{eqn:coordinate}
\R^{3} 
&\stackrel{\sim}{\To} N_-, \quad (x_1, x_2, x_3) 
&&\mapsto \exp(x_1 N^-_1 + x_2 N^-_2+x_3N^-_3),
\end{alignat}
and
\begin{alignat}{2}
\R 
&\stackrel{\sim}{\To} N_{1,-}', \quad (x_1,0, 0) 
&&\mapsto \exp(x_1 N^-_1), \label{eqn:coordinate1}\\[3pt]
\R
&\stackrel{\sim}{\To} N_{2,-}', \quad (0,x_2,0) 
&&\mapsto \exp(x_2 N^-_2), \label{eqn:coordinate2}\\[3pt]
\R
&\stackrel{\sim}{\To} N_{3,-}', \quad (0,0, x_3) 
&&\mapsto \exp(x_3 N^-_3). \label{eqn:coordinate3}
\end{alignat}

%%%%%%%%%%%%%%%%%%%%%%%%%%%%%%%%%%%%%%%%%
\subsection{Isotypic decomposition of \texorpdfstring{$\Pol(\frakn_+)$}{Pol(n_+)}}

Put $M_0'A_0':=MA$. 
In the current setting, we consider the space
\begin{equation}\label{eqn:VW}
\Hom_{M'_iA'_i}\big(W^\vee,\Pol(\frakn_+)\otimes V^\vee\big)
\simeq 
\Hom_{M'_iA'_i}\big(V,\Pol(\frakn_+)\otimes W\big)
\end{equation}
of $M'_iA'_i$-maps ($i=0,1,2,3$)  for the following $(V,W)$.
\vskip 0.1in

\begin{enumerate}[label = \normalfont{(\arabic*)}]

\item $i=0$: 
$(V, W) = (\CC_\xi\boxtimes \CC_{\lambda},\CC_{\xi'}\boxtimes \CC_{\lambda'})$
for $(\xi, \xi'; \lambda, \lambda')\in (\Z/2\Z)^6 \times \C^6$.

\vskip 0.1in

\item $i=1,2,3$: 
$(V,W) = (\CC_\xi\boxtimes \CC_{\lambda},\CC_{\eta}\boxtimes \CC_{\nu})$
for $(\xi,\eta;\lambda,\nu) \in (\ZZ/2\ZZ)^5 \times \CC^5$.
\vskip 0.1in

\end{enumerate}

For a character $\chi_i$ of $M'_iA_i'$ for $i= 0,1,2,3$,
we denote by $E(\chi_i)$ the 
$\chi_i$-isotypic component of $\Pol(\frakn_+)$. 
Since $\dim_\C V= \dim_\C W=1$, we have 
\begin{equation}\label{eqn:HomMA}
\Hom_{M'_iA'_i}\big(V,\Pol(\frakn_+)\otimes W\big)
= \bigoplus_{\text{characters $\chi_i$}} \Hom_{M'_iA'_i}\big(V,E(\chi_i)\otimes W\big),
\end{equation}
where the direct sum runs over all characters $\chi_i$ of $M'_iA'_i$.
Then we end this section by investigating the 
isotypic components of $\Pol(\frakn_+)$ as Step 1 in the recipe in 
Section \ref{sec:recipe}.

As in Section \ref{sec:dpi2}, let $(\zeta_1, \zeta_2, \zeta_3)$ denote
the dual coordinates on $\frakn_+$ such that $\zeta_i(N_j^+)=\delta_{i,j}$
for the basis $\{N_1^+, N_2^+, N_3^+\}$ of $\frakn_+$.
One may understand
$\psi(\zeta) \equiv \psi(\zeta_1,\zeta_2,\zeta_3) \in \Pol(\frakn_+)=\C[\zeta_1,\zeta_2,\zeta_3]$ as 
\[
\psi(\zeta_1,\zeta_2,\zeta_3)=\psi(\zeta_1N_1^++\zeta_2N_2^++\zeta_3N_3^+).
\]
For $\alpha=(\alpha_1,\alpha_2)\in \NN^2$ and 
$k = 0,1,\dots,\min(\alpha)$, put
\begin{equation}\label{eqn:q_alpha_k}
\psi_{\alpha,k}(\zeta) := \zeta_1^{\alpha_1-k}\zeta_2^{\alpha_2-k}\zeta_3^k.
\end{equation}
Then we have the following.
\begin{lemma}\label{lem:MApol}
The one-dimensional space $\C \psi_{\alpha,k}(\zeta)$ is a representation of $MA$.
In fact, the following hold.
\begin{equation*}
\C \psi_{\alpha, k}(\zeta) \simeq 
\begin{cases}
\C_{(\alpha_1,\, \alpha_1 - \alpha_2,\, \alpha_2)}
\boxtimes 
\C_{(-\alpha_1,\, \alpha_1 - \alpha_2,\, \alpha_2)} 
&\textnormal{as an $MA$-module},\\[3pt]
\hspace{15.7pt}
\C_{(\alpha_1\, \alpha_1 - \alpha_2)}
\boxtimes 
\C_{(-\alpha_1,\, \alpha_1 - \alpha_2)} 
& \textnormal{as an $M_1'A_1'$-module},\\[3pt]
\hspace{13.5pt}
\C_{(\alpha_1 - \alpha_2,\, \alpha_2)}
\boxtimes 
\C_{(\alpha_1 - \alpha_2,\, \alpha_2)} 
& \textnormal{as an $M_2'A_2'$-module},\\[3pt]
\hspace{30pt}
\C_{(\alpha_1,\, \alpha_2)} 
\boxtimes\C_{(-\alpha_1,\, \alpha_2)} & \textnormal{as an  $M_3'A_3'$-module}.\\[3pt]
\end{cases}
\end{equation*}
\end{lemma}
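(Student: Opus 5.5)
The plan is a direct computation of the $MA$-action $\Ad_{\#}$ from \eqref{eqn:sharp} on the coordinate functions $\zeta_1,\zeta_2,\zeta_3$. From that, the monomial $\psi_{\alpha,k}(\zeta)$ will turn out to be a weight vector, and the four cases follow by reading off its character and restricting it to the subgroups $M_i'A_i'$.

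\emph{Step 1: the action on the basis.} Take $l=\diag(\ell_1,\ell_2,\ell_3)\in MA$ with $\ell_j\in\R^\times$. From $\Ad(l^{-1})E_{i,j}=\ell_i^{-1}\ell_jE_{i,j}$ and \eqref{eqn:N+} we get
\begin{equation*}
\Ad(l^{-1})N_1^+=\ell_1^{-1}\ell_2N_1^+,\quad
\Ad(l^{-1})N_2^+=\ell_2^{-1}\ell_3N_2^+,\quad
\Ad(l^{-1})N_3^+=\ell_1^{-1}\ell_3N_3^+.
\end{equation*}

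\emph{Step 2: the action on $\psi_{\alpha,k}$.} We regard $\psi(\zeta)$ as $\psi(\zeta_1N_1^++\zeta_2N_2^++\zeta_3N_3^+)$. Then $\Ad_{\#}(l)$ acts on the variables by
\begin{equation*}
\zeta_1\mapsto \ell_1^{-1}\ell_2\zeta_1,\quad
\zeta_2\mapsto \ell_2^{-1}\ell_3\zeta_2,\quad
\zeta_3\mapsto \ell_1^{-1}\ell_3\zeta_3.
\end{equation*}
Substituting these into $\psi_{\alpha,k}=\zeta_1^{\alpha_1-k}\zeta_2^{\alpha_2-k}\zeta_3^k$ gives
\begin{equation*}
\Ad_{\#}(l)\psi_{\alpha,k}=\ell_1^{-\alpha_1}\ell_2^{\alpha_1-\alpha_2}\ell_3^{\alpha_2}\,\psi_{\alpha,k}.
\end{equation*}
The powers of $k$ cancel: $\zeta_3$ has the same weight as $\zeta_1\zeta_2$. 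Hence $\C\psi_{\alpha,k}$ is $MA$-stable, and its character does not depend on $k$.

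\emph{Step 3: the $MA$ case.} Write $\ell_j=m_je^{t_j}$ with $m_j\in\{\pm1\}$, $t_j\in\R$. Comparing with \eqref{eqn:characterM} and \eqref{eqn:characterA}, the $A$-part is $\C_{(-\alpha_1,\alpha_1-\alpha_2,\alpha_2)}$. The $M$-part is $m_1^{-\alpha_1}m_2^{\alpha_1-\alpha_2}m_3^{\alpha_2}$. Since $m_1^{-\alpha_1}=m_1^{\alpha_1}$, this is $\C_{(\alpha_1,\alpha_1-\alpha_2,\alpha_2)}$, with indices read modulo $2$.

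\emph{Step 4: restriction to $M_i'A_i'$.} We use \eqref{eqn:embedding}. For $i=1$ we set $\ell_3=1$; for $i=2$ we set $\ell_1=1$; for $i=3$ we set $\ell_2=1$. Relabelling the remaining two entries as the coordinates of $\diag(m_1',m_2')$ and $(t_1,t_2)$ in \eqref{eqn:characterMprime} and \eqref{eqn:characterAprime} gives the stated characters. Again we use $m^{-\alpha_1}=m^{\alpha_1}$ on the $M'$-side.

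There is no genuine obstacle here; the only care needed is in the sign and inverse conventions. Specifically, $\Ad_{\#}$ involves $l^{-1}$, and the parity reduction must be applied to the $M$-character but not to the $A$-character.
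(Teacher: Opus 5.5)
Your proposal is correct and follows essentially the same route as the paper. The paper also computes the $\Ad_{\#}$-action of the diagonal torus on $\psi_{\alpha,k}$, using the one-parameter elements $\diag(t,1,1)$, $\diag(1,t,1)$, $\diag(1,1,t)$, which is equivalent to your general $\diag(\ell_1,\ell_2,\ell_3)$, and reads off the characters. Your explicit handling of the $M$-parity and of the restrictions via $\iota_i$ simply spells out details the paper leaves implicit.
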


\begin{proof}
The polynomial $\psi_{\alpha,k}(\zeta)$ enjoys the following transformation laws:
\begin{align*}
    &\Ad_\#(\diag(t,1,1))\psi_{\alpha,k}(\zeta)=t^{-\alpha_1}\psi_{\alpha,k}(\zeta),\\[3pt]
    &\Ad_\# (\diag(1,t,1))\psi_{\alpha,k}(\zeta)=t^{\alpha_1-\alpha_2}\psi_{\alpha,k}(\zeta),\\[3pt]
    &\Ad_\#(\diag(1,1,t))\psi_{\alpha,k}(\zeta)=t^{\alpha_2}\psi_{\alpha,k}(\zeta).
\end{align*}
This shows the lemma.
\end{proof}

Now put
\begin{equation}\label{eq:DefinitionPol(alpha)}
\Pol(\alpha) := \Span_\C\{\psi_{\alpha,k}(\zeta): k=0, \ldots, \min(\alpha)\},
\end{equation}
so that 
\begin{equation}\label{eqn:Pol}
\Pol(\frakn_+) = \bigoplus_{\alpha \in \N^2} \Pol(\alpha).
\end{equation}

In the following lemma, by the $\C \psi_{\alpha,k}$-isotypic component,
we mean the isotypic component corresponding to
the one-dimensional representation of $M_i'A_i'$ on $\C \psi_{\alpha,k}$ shown in Lemma \ref{lem:MApol}.

\begin{lemma}\label{lem:MApol2}
The subspace
$\Pol(\alpha)$ is the $\C \psi_{\alpha,k}$-isotypic 
component of $\Pol(\frakn_+)$ as a representation of $M_i'A_i'$ (i=0,1,2,3).
Equivalently, the $M_i'A_i'$-module 
$\Pol(\alpha)\big\vert_{M_i'A_i'}$  can be characterized as 
\begin{equation}\label{eqn:Pol_alpha_MjAj}
\Pol(\alpha)\big\vert_{M_i'A_i'}
=\{\psi(\zeta) \in \Pol(\frakn_+):
\text{$\C \psi(\zeta) \simeq \C \psi_{\alpha,k}$ as $M_i'A_i'$-modules}\}.
\end{equation}
Moreover, the subspaces $\Pol(\alpha)$ are all 
isotypic components in $\Pol(\frakn_+)$ as $M_i'A_i'$-modules.
\end{lemma}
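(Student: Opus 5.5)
The plan is to work with the monomial basis of $\Pol(\frakn_+)=\C[\zeta_1,\zeta_2,\zeta_3]$, which consists of simultaneous weight vectors for $MA$. By the same computation as in the proof of Lemma \ref{lem:MApol}, a monomial $\zeta_1^{a}\zeta_2^{b}\zeta_3^{c}$ transforms under $\Ad_\#(\diag(t_1,t_2,t_3))$ by the character $t_1^{-(a+c)}t_2^{a-b}t_3^{b+c}$. On the $M$-part the same formula holds with $t_j=m_j\in\{\pm1\}$. Setting $\alpha_1:=a+c$ and $\alpha_2:=b+c$, the monomial equals $\psi_{\alpha,c}(\zeta)$ with $0\le c\le\min(\alpha)$. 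Every monomial therefore lies in exactly one $\Pol(\alpha)$, which recovers \eqref{eqn:Pol}, and all of $\Pol(\alpha)$ carries a single character of $MA$, namely the one described in Lemma \ref{lem:MApol}.

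The key step is to show that distinct $\alpha$ give distinct characters of each $M_i'A_i'$. For this it suffices to look at the $A_i'$-part, i.e.\ the exponents of $t$ restricted to the relevant diagonal entries:
\begin{itemize}
\item for $i=0$ the exponents are $(-\alpha_1,\alpha_1-\alpha_2,\alpha_2)$;
\item for $i=1$ they are $(-\alpha_1,\alpha_1-\alpha_2)$;
\item for $i=2$ they are $(\alpha_1-\alpha_2,\alpha_2)$;
\item for $i=3$ they are $(-\alpha_1,\alpha_2)$.
\end{itemize}
In each case the map $\alpha\mapsto$ (character) is injective on $\NN^2$, since $\alpha_1$ and $\alpha_2$ can be read off linearly. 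For instance, when $i=2$ we recover $\alpha_2$ directly and then $\alpha_1=(\alpha_1-\alpha_2)+\alpha_2$. Hence each $\Pol(\alpha)$ is a full weight space for $M_i'A_i'$: it is contained in the isotypic component of its character, and no other $\Pol(\beta)$ with $\beta\neq\alpha$ contributes to that component.

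Next, take $\psi$ with $\C\psi\simeq\C\psi_{\alpha,k}$ as $M_i'A_i'$-modules. Decompose $\psi$ along \eqref{eqn:Pol} and compare the $A_i'$-weights. Since $A_i'$ acts diagonalizably with distinct characters on distinct summands, every component outside $\Pol(\alpha)$ must vanish. The reverse inclusion is immediate from Lemma \ref{lem:MApol}. This proves \eqref{eqn:Pol_alpha_MjAj}.

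Finally, because $\Pol(\frakn_+)$ is the direct sum of the $\Pol(\alpha)$ and each summand is a single isotypic component, these summands exhaust all isotypic components. I do not expect any real obstacle here. The only point requiring care is the injectivity check for $i=1,2,3$ (in particular that $\alpha_1\ge0$, $\alpha_2\ge0$ are recovered uniquely), together with stating the sign conventions of $\Ad_\#$ consistently with Lemma \ref{lem:MApol}.
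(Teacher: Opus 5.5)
Your proposal is correct and is essentially the paper's argument. The paper's proof only says the lemma follows from Lemma~\ref{lem:MApol} and the decomposition \eqref{eqn:Pol}, and you supply the one step it leaves implicit: for each $i=0,1,2,3$, the map sending $\alpha$ to the $A_i'$-character of $\Pol(\alpha)$ is injective on $\NN^2$ (this is exactly what fails for the one-dimensional $A'_{SL,i}$ in Section~\ref{sec:SL}, where $\Pol_i(\ell)$ is a sum of several $\Pol(\alpha)$).
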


\begin{proof}
The lemma readily follows from Lemma \ref{lem:MApol}  and the $M'_iA'_i$-decomposition \eqref{eqn:Pol}.
\end{proof}

It follows from Lemma \ref{lem:MApol2} that,
for $(V,W) = (\CC_\xi\boxtimes \CC_{\lambda},\CC_{\xi'}\boxtimes \CC_{\lambda'})$
for $i=0$ and 
$(V,W) = (\CC_\xi\boxtimes \CC_{\lambda},\CC_{\eta}\boxtimes \CC_{\nu})$
for $i=1,2,3$, the decomposition in  \eqref{eqn:HomMA} is given as follows.

\vskip 0.1in

\begin{enumerate}[label = \normalfont{(\arabic*)}]
\item $i=0$: We have
\begin{align}
&\Hom_{MA}\big(\CC_\xi\boxtimes \CC_{\lambda},\Pol(\frakn_+)\otimes (\CC_{\xi'}\boxtimes \CC_{\lambda'})\big) \nonumber \\[5pt]
&\hspace{4cm}
=\bigoplus_{\alpha \in \N^2}
\Hom_{MA}\big(\CC_\xi\boxtimes \CC_{\lambda},\Pol(\alpha)\otimes (\CC_{\xi'}\boxtimes \CC_{\lambda'})\big). \label{eqn:Homalpha2}
\end{align}

\item $i=1,2,3$: We have 
\begin{align}
&\Hom_{M_i'A_i'}\big((\CC_\xi\boxtimes \CC_{\lambda})\big\rvert_{M_i'A_i'},\Pol(\frakn_+)\otimes (\CC_\eta\boxtimes \CC_{\nu})\big) \nonumber \\[5pt]
&\hspace{4cm}
=\bigoplus_{\alpha \in \N^2}
\Hom_{M_i'A_i'}\big((\CC_\xi\boxtimes \CC_{\lambda})\big\rvert_{M_i'A_i'}, \Pol(\alpha)\otimes (\CC_\eta\boxtimes \CC_{\nu})\big). \label{eqn:Homalpha}
\end{align}
\end{enumerate}

We now focus on the case $i=1,2,3$;
the case $i=0$ (the case $G=G'$) is considered carefully in Section \ref{sec:DIO}.
Let $\Sol_i(\xi,\eta; \lambda,\nu)$ denote the space of solutions to the F-system for $i=1,2,3$.
Since  $\frakn_{i,+}'=\Span_\C\{N_i^+\}$ for $i=1,2,3$,
the space $\Sol_i(\xi,\eta; \lambda,\nu)$ is given as follows.
\begin{align}
&\Sol_i(\xi,\eta; \lambda,\nu) \nonumber\\
&\hspace{50pt}= \{ \psi \in 
\Hom_{M_i'A_i'}\big((\CC_\xi\boxtimes \CC_{\lambda})\big\rvert_{M_i'A_i'},\Pol(\frakn_+)\otimes (\CC_\eta\boxtimes \CC_{\nu})\big) : 
\text{\eqref{eqn:PDE_SymmetryBreaking} holds}\}
\end{align}
\begin{equation}\label{eqn:PDE_SymmetryBreaking}
\widehat{d\pi_{(\xi,\lambda)^*}}(N_i^+)\psi = 0
\end{equation}

Moreover, we define
\begin{align}
\Sol_i^{(\alpha)}(\xi,\lambda) 
:= \{ \psi \in \Hom_{M_i'A_i'}\big((\CC_\xi\boxtimes \CC_{\lambda})\big\rvert_{M_i'A_i'},\Pol(\alpha)\otimes (\CC_\eta\boxtimes \CC_{\nu})\big) : \text{\eqref{eqn:PDE_SymmetryBreaking} holds}\}, \label{eqn:Solalpha}
\end{align}
so that
\begin{equation}\label{eqn:SolDecomp}
\Sol_i(\xi,\eta; \lambda,\nu) 
= \bigoplus_{\alpha \in \NN^2}\Sol_i^{(\alpha)}(\xi,\lambda).
\end{equation}

In the next section, we discuss the symmetrization operator 
and the ordered F-system to the PDE \eqref{eqn:PDE_SymmetryBreaking}.

%%%%%%%%%%%%%%%%%%%%%%%%%%%%%%%%%%%%%%%%%
\section{The ordered F-system for \texorpdfstring{$G=GL(3,\RR)$}{G=GL(3,R)} and \texorpdfstring{$G'=GL(2,\RR)$}{G'=GL(2,R)}}
\label{sec:ordered-GL(3)}

The aim of this section is to proceed with Steps 2--4 in the recipe 
in Section \ref{sec:recipe}.
In particular, we compute the commutative diagram
\eqref{eqn:RestSymbOrd} in the present setting.
This is done in Theorem \ref{eqn:conji}. 

As part of Steps 2--4, 
we also consider the following commutative diagram
from  \eqref{eqn:SolSolU}.
\begin{equation*}
\begin{tikzcd}
	{\Sol_{\Ad(T^\sharp S)T^\sharp A }(Q)} & 
	{\Sol_{\Ad(S)A}(E)} 	\arrow[dl, pos=0.5, phantom, "\circlearrowleft"]
	&{U(\frakn_-)\otimes V^\vee} \\
	{\Sol_{T^\sharp A}(Q)} & {\Sol_{A}(E)} &
	\arrow[ul, pos=0.85, phantom, "\circlearrowleft"]
	\arrow[r, "\Upsilon_{\ord}", hook, from=1-2, to=1-3]
	\arrow["F_c^{-1}"', hook, from=2-2, to=1-3]
	\arrow["{T}", "\sim"' , from=1-1, to=1-2]
	\arrow["\sim" sloped, "(T^\sharp S)^{-1}"', from=1-1, to=2-1]
	\arrow["S^{-1}_{\ord}", "\sim"' sloped,  from=1-2, to=2-2]
	\arrow["\sim" , "{T}"',  from=2-1, to=2-2]
\end{tikzcd}
\end{equation*}

In the current setting,
the $S_\ord$-invariant subspace $E \subset \Pol(\frakn_+)$ 
is taken to be the isotypic component $\Pol(\alpha)$ 
(cf.\  Corollary \ref{cor:G_orderization}).
Then we do the following.
\vspace{3pt}

\begin{itemize}

\item
Compute the symmetrization operator $S_\ord$.
\vskip 0.1in

\item
Find a vector space $Q$ with a linear map 
$T\colon Q \stackrel{\sim}{\to} \Pol(\alpha)$.
\vskip 0.1in

\item
Compute a linear operator $A \in \End_\C(\Pol(\alpha))$
such that $A$ yields a modified F-system $D_F$ (see Definition \ref{def:modified-Fsystem}).
\vskip 0.1in

\end{itemize}

We remark that the vector space $Q$ with the linear operator 
$T\colon Q \stackrel{\sim}{\to} \Pol(\alpha)$
provides the T-saturation (\cite[Sec.\ 3.2]{KoPe16b}) of $A$, which will reduce the modified F-system to an ordinary differential equation.

At the end of the section,
we revisit the second part of the recipe 
in Section \ref{sec:recipe} using the T-saturation.

%%%%%%%%%%%%%%%%%%%%%%%%%%%%%%%%%%%%%%%%%
\subsection{Symmetrization operator for the nilpotent radical}
\label{sec:Upn}

Fix the ordered basis 
\begin{equation}\label{eqn:ord_nilpotent}
\ord:=\{N_1^-, N_2^-, N_3^-\}
\end{equation}
of $\mathfrak{n}_-$. Then the PBW-basis $\mathrm{PBW}_\ord$
of $U(\frakn_-)$ induced from $\ord$ is 
\begin{equation*}
\mathrm{PBW}_\ord=\{(N_1^-)^{r_1}(N_2^-)^{r_2}(N_3^-)^{r_3}: r_j \geq 0\}.
\end{equation*}
As in \eqref{eqn:X-ord}, for each $(N_{j_1}^-)(N_{j_2}^-)\cdots (N_{j_m}^-) \in U(\frakn_-)$ with $j_k=1,2,3$, we write
\begin{equation*}
\big( (N_{j_1}^-)(N_{j_2}^-)\cdots (N_{j_m}^-) \big)_\ord
:=
(N_{j_{\tau(1)}}^-)(N_{j_{\tau(2)}}^-)\cdots (N_{j_{\tau(m)}}^-)
\in \mathrm{PBW}_\ord
\end{equation*}
for $\tau \in \mathfrak{S}_m$ such that $j_{\tau(k)} \leq j_{\tau(k+1)}$
for all $k=1,2,\ldots, m-1$.

Write $(\zeta_1, \zeta_2, \zeta_3)$ for the coordinates 
on $\frakn_+\simeq (\frakn_-)^\vee$ with respect to the 
dual basis of $\ord$.
Then, by \eqref{eqn:Upsilon_ord},
the ordering map $\Upsilon_\ord$ from $\Pol(\frakn_+)$ to 
$U(\mathfrak{n}_-)$ is given by
\begin{equation}\label{eqn:upsilon_ord2}
\Upsilon_{\ord} \colon \Pol(\frakg^\vee) \stackrel{\sim}{\To} U(\frakg), \quad 
\zeta_{j_1}\zeta_{j_2}\cdots \zeta_{j_m} 
\longmapsto
\big((N_{j_1}^-)(N_{j_2}^-)\cdots (N_{j_m}^-) \big)_\ord.
\end{equation}

To compute the symmetrization operator $S_{\ord}$ on $\Pol(\frakn_+)$,
we put
\begin{equation*}
J:=\frac{\zeta_3}{2}\frac{\partial^2}{\partial\zeta_1\partial\zeta_2}
\end{equation*}
and formally define
\begin{equation*}
e^{J}:=\sum_{\ell=0}^\infty \frac{J^\ell}{\ell!}.
\end{equation*}
A direct computation shows that
\begin{equation}\label{eqn:JL}
J^\ell
\zeta_1^i\zeta_2^j\zeta_3^k = 
\frac{(-i)_\ell (-j)_\ell}{2^\ell}\zeta_1^{i-\ell}\zeta_2^{j-\ell}\zeta_3^{k+\ell},
\end{equation}
where $(a)_n:=a(a+1)(a+2)\cdots (a+n-1) = \frac{\Gamma(a+n)}{\Gamma(a)}$ is the Pochhammer symbol.
As $(-i)_\ell(-j)_\ell$ vanish for $\ell>\min(i,j)$, it follows that $e^{J}$ is a well-defined operator on $\Pol(\frakn_+)= \C[\zeta_1, \zeta_2, \zeta_3]$.

We claim that $e^{J}=S_{\ord}$.
For the proof, we first consider the following identity about
the universal enveloping algebra
$U(\frakh_3)$ of the Heisenberg Lie algebra 
$\frakh_3 = \Span\{X_1, X_2, X_3\}$ with $[X_1, X_2]=-X_3$.

\begin{proposition}\label{Prop:SymmetrizationMapIdentity}
For $i+j+k =n$,
we have
\begin{align}\label{eqn:HeisSym}
\sym_n(X_1^iX_2^jX_3^k)
&=\sum_{\ell=0}^{\min(i,j)}\frac{(-i)_\ell(-j)_\ell}{2^\ell \ell!}
X_1^{i-\ell}X_2^{j-\ell}X_3^{k+\ell}.
\end{align}
\end{proposition}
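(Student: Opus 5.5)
The plan is a generating-function argument. Because $X_3$ is central, the $X_3$'s factor out of the symmetrization. The remaining symmetrization of $X_1^iX_2^j$ is then read off from a Baker--Campbell--Hausdorff identity, which is exact in the Heisenberg case.

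\emph{Step 1: reduction to $k=0$.} Since $X_3$ is central in $\frakh_3$, every word in the average \eqref{eqn:symm} defining $\sym_n(X_1^iX_2^jX_3^k)$ equals $X_3^k$ times the word in $X_1,X_2$ obtained by deleting the $X_3$'s. Each arrangement of $i$ letters $X_1$ and $j$ letters $X_2$ arises from the same number of permutations in $\mathfrak S_n$. Hence
\[
\sym_n(X_1^iX_2^jX_3^k)=\sym_{i+j}(X_1^iX_2^j)\,X_3^k ,
\]
and it suffices to prove \eqref{eqn:HeisSym} for $k=0$.

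\emph{Step 2: generating function.} I work in the formal power series ring $U(\frakh_3)[[s,t]]$, where $s,t$ are commuting central indeterminates. Expanding $(sX_1+tX_2)^m$ in this ring gives
\[
(sX_1+tX_2)^m=\sum_{i+j=m}\binom{m}{i}s^it^j\,\sym_m(X_1^iX_2^j).
\]
Summing over $m$ then yields
\[
e^{sX_1+tX_2}=\sum_{i,j}\frac{s^it^j}{i!\,j!}\,\sym_{i+j}(X_1^iX_2^j).
\]

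\emph{Step 3: Heisenberg BCH.} Put $A=sX_1$ and $B=tX_2$. Then $[A,B]=-stX_3$ is central, so $e^{A+B}=e^Ae^Be^{-[A,B]/2}$, that is,
\[
e^{sX_1+tX_2}=e^{sX_1}e^{tX_2}e^{stX_3/2}.
\]
This is the main point requiring care. To justify it in the formal setting, I would set $F(u)=e^{u(A+B)}$ and $G(u)=e^{uA}e^{uB}e^{-u^2[A,B]/2}$ in $U(\frakh_3)[[s,t,u]]$. Using $e^{uA}Be^{-uA}=B+u[A,B]$, which holds because higher commutators vanish, one checks that both sides satisfy $\frac{d}{du}Y=(A+B)Y$ with $Y(0)=1$. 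The coefficients of this ODE are determined recursively, so $F=G$; setting $u=1$ gives the identity.

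\emph{Step 4: compare coefficients.} Expanding the right-hand side, the coefficient of $s^it^j$ is
\[
\sum_{\ell}\frac{X_1^{i-\ell}X_2^{j-\ell}X_3^{\ell}}{(i-\ell)!\,(j-\ell)!\,\ell!\,2^\ell}.
\]
Multiply by $i!\,j!$ and use $i!/(i-\ell)!=(-1)^\ell(-i)_\ell$, and likewise for $j$. The signs cancel, giving
\[
\sym_{i+j}(X_1^iX_2^j)=\sum_{\ell=0}^{\min(i,j)}\frac{(-i)_\ell(-j)_\ell}{2^\ell\ell!}X_1^{i-\ell}X_2^{j-\ell}X_3^\ell .
\]
Combined with Step 1, this is \eqref{eqn:HeisSym}.
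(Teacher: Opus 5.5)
Your proposal is correct and follows the paper's proof essentially step for step: you reduce to $k=0$ using the centrality of $X_3$, expand both sides of the Heisenberg BCH identity $e^{sX_1}e^{tX_2}e^{stX_3/2}=e^{sX_1+tX_2}$, and compare the coefficients of $s^it^j$. The only addition is your formal ODE verification of the BCH identity, which the paper simply cites.
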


\begin{proof}
Observe that, as $X_3$ is a central element, we have 
\begin{equation*}
\sym_n(X_1^iX_2^jX_3^k)=\sym_{n-k}(X_1^iX_2^j)X_3^k.
\end{equation*}
It thus suffices to show \eqref{eqn:HeisSym} for  $k=0$.

Take $s, t \in \CC$. Since $[X_1, X_2]=-X_3$, 
the Baker--Campbell--Hausdorff formula implies that 
\begin{equation}\label{Eq:BCH-formula}
\exp(sX_1)\exp(tX_2)\exp(\tfrac{st}{2}X_3)=\exp(sX_1+tX_2).
\end{equation}
By expanding the left-hand side of \eqref{Eq:BCH-formula} we obtain
\begin{align}
\exp(sX_1)\exp(tX_2)\exp(\tfrac{st}{2}X_3)
&=
\sum_{p,q,\ell=0}^\infty\frac{s^{p+\ell}t^{q+\ell}}{p!q!\ell! 2^\ell}
X_1^pX_2^qX_3^\ell \nonumber \\[3pt]
&=
\sum_{i,j=0}^\infty s^it^j\bigg(\sum_{\ell=0}^{\min(i,j)}\frac{X_1^{i-\ell}X_2^{j-\ell}X_3^\ell}{(i-\ell)!(j-\ell)!\ell!2^\ell}\bigg).\label{eqn:expSym1}
\end{align}
Likewise, by expanding the right-hand side of \eqref{Eq:BCH-formula} we get
\begin{align}
\exp(sX_1+tX_2)
&=\sum_{\ell=0}^\infty \frac{(sX_1+tX_2)^\ell}{\ell!}\nonumber\\[3pt]
&=\sum_{i,j=0}^\infty\frac{s^it^j}{i!j!}\sym_{i+j}(X_1^iX_2^j),\label{eqn:expSym2}
\end{align}
where the following expansion of $(sX_1+tX_2)^\ell$ is applied from the first line 
to the second:
\begin{equation*}
(sX_1+tX_2)^\ell
=\sum_{i+j=\ell}\frac{\ell!}{i!j!}\sym_{i+j}(X_1^iX_2^j).
\end{equation*}
The desired identity is now obtained by comparing the coefficients of 
$s^it^j$ in \eqref{eqn:expSym1} with that in \eqref{eqn:expSym2}.
\end{proof}

\begin{remark}
If we choose other ordered basis, namely,
$\ord':=\{X_2, X_1, X_3\}$, 
for the Heisenberg Lie algebra $\frakh_3$
with $[X_2, X_1]=X_3$,
then 
\begin{equation*}
\sym_n(X_1^iX_2^jX_3^k)
=\sum_{\ell=0}^{\min(i,j)}(-1)^\ell\frac{(-i)_\ell(-j)_\ell}{2^\ell \ell!}
X_2^{i-\ell}X_1^{j-\ell}X_3^{k+\ell}.
\end{equation*}
\end{remark}
 
\begin{proposition}\label{prop:eJU}
The symmetrization operator $S_\ord$ with respect to the ordered basis 
$\ord$ in \eqref{eqn:ord_nilpotent} is given as 
\begin{equation*}
S_{\ord}=e^{J}.
\end{equation*}
\end{proposition}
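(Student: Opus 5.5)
Since both $S_\ord$ and $e^J$ are linear on $\Pol(\frakn_+)=\C[\zeta_1,\zeta_2,\zeta_3]$, the plan is to compare them on the monomial basis $\zeta_1^i\zeta_2^j\zeta_3^k$. By definition, $S_\ord$ is characterized by $\Upsilon_\ord\circ S_\ord=\sym_\ord\circ\Upsilon_\ord$, and $\Upsilon_\ord$ is a linear isomorphism. So it suffices to show
\begin{equation*}
\Upsilon_\ord\big(e^J\zeta_1^i\zeta_2^j\zeta_3^k\big)=\sym_\ord\big(\Upsilon_\ord(\zeta_1^i\zeta_2^j\zeta_3^k)\big)
\end{equation*}
for all $i,j,k\in\N$.

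First I identify $\frakn_-$ with $\frakh_3$. With $N_1^-=E_{2,1}$, $N_2^-=E_{3,2}$, $N_3^-=E_{3,1}$ we get $[N_1^-,N_2^-]=E_{2,1}E_{3,2}-E_{3,2}E_{2,1}=-E_{3,1}=-N_3^-$, and $N_3^-$ is central. Hence $X_j:=N_j^-$ realizes the Heisenberg algebra of Proposition \ref{Prop:SymmetrizationMapIdentity}, with the same ordering and sign convention.

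Next I evaluate the right-hand side. By \eqref{eqn:Uu}, $\Upsilon_\ord(\zeta_1^i\zeta_2^j\zeta_3^k)=X_1^iX_2^jX_3^k\in\mathrm{PBW}_\ord$. By the definition \eqref{eqn:sym-ord} of $\sym_\ord$, its image is $\sym_n(X_1^iX_2^jX_3^k)$ with $n=i+j+k$. Proposition \ref{Prop:SymmetrizationMapIdentity} expands this as
\begin{equation*}
\sum_{\ell=0}^{\min(i,j)}\frac{(-i)_\ell(-j)_\ell}{2^\ell\ell!}X_1^{i-\ell}X_2^{j-\ell}X_3^{k+\ell}.
\end{equation*}

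On the left-hand side, \eqref{eqn:JL} gives
\begin{equation*}
e^J\zeta_1^i\zeta_2^j\zeta_3^k=\sum_{\ell\ge0}\frac{(-i)_\ell(-j)_\ell}{2^\ell\ell!}\zeta_1^{i-\ell}\zeta_2^{j-\ell}\zeta_3^{k+\ell}.
\end{equation*}
The sum is finite, since the terms with $\ell>\min(i,j)$ vanish. Applying $\Upsilon_\ord$ monomial by monomial via \eqref{eqn:Uu} produces exactly the same expression as the right-hand side, which gives the identity.

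There is no real obstacle here. The substantive input, the BCH computation, is already contained in Proposition \ref{Prop:SymmetrizationMapIdentity}. The only point needing care is the sign check $[N_1^-,N_2^-]=-N_3^-$, because the opposite convention would introduce factors of $(-1)^\ell$, as the remark following that proposition shows.
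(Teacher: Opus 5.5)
Your proposal is correct and follows the paper's own proof. Like the paper, you expand $e^J$ on the monomials $\zeta_1^i\zeta_2^j\zeta_3^k$ via \eqref{eqn:JL}, check $[N_1^-,N_2^-]=-N_3^-$ to identify $\frakn_-$ with $\frakh_3$, and match the result term by term with the expansion of $\sym_n$ in Proposition \ref{Prop:SymmetrizationMapIdentity}. Your explicit reduction to $\Upsilon_\ord\circ e^J=\sym_\ord\circ\Upsilon_\ord$ on PBW monomials just spells out what the paper leaves implicit.
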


\begin{proof}
It follows from \eqref{eqn:JL} that
\begin{equation}\label{eqn:ijk}
e^{J}\zeta_1^i\zeta_2^j\zeta_3^k 
=\sum_{\ell=0}^{\min(i,j)} 
\frac{(-i)_\ell (-j)_\ell}{2^\ell \ell!}\zeta_1^{i-\ell}\zeta_2^{j-\ell}\zeta_3^{k+\ell}.
\end{equation}
As $\frakn_-$ is the Heisenberg Lie algebra such that
$\frakn_-=\Span_\CC\{N_1^-, N_2^-, N_3^-\}$
 with $[N_1^-, N_2^-]=-N_3^-$, 
Proposition \ref{Prop:SymmetrizationMapIdentity} concludes the proof.
\end{proof}

\begin{proposition}\label{prop:symU}
The subspace $\Pol(\alpha)$ is $e^{J}$-invariant.
\end{proposition}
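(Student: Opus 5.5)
The statement can be checked directly from the explicit formula \eqref{eqn:ijk}, and I would verify it on the spanning set of $\Pol(\alpha)$. Recall that $\Pol(\alpha)$ is spanned by $\psi_{\alpha,k}(\zeta)=\zeta_1^{\alpha_1-k}\zeta_2^{\alpha_2-k}\zeta_3^k$ for $0\le k\le\min(\alpha)$. By linearity it therefore suffices to show that $e^J\psi_{\alpha,k}\in\Pol(\alpha)$ for each such $k$.

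Apply \eqref{eqn:ijk} with $i=\alpha_1-k$, $j=\alpha_2-k$ and exponent $k$ on $\zeta_3$. This gives
\begin{equation*}
e^J\psi_{\alpha,k}(\zeta)=\sum_{\ell=0}^{\min(\alpha)-k}\frac{(k-\alpha_1)_\ell(k-\alpha_2)_\ell}{2^\ell\ell!}\,\zeta_1^{\alpha_1-(k+\ell)}\zeta_2^{\alpha_2-(k+\ell)}\zeta_3^{k+\ell}.
\end{equation*}
Each term on the right equals $\psi_{\alpha,k+\ell}(\zeta)$ with $k+\ell\le\min(\alpha)$. Hence $e^J\psi_{\alpha,k}$ lies in $\Pol(\alpha)$.

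An alternative, more conceptual route avoids computation. By Proposition \ref{prop:eJU}, $e^J=S_\ord$. By Lemma \ref{lem:MApol2}, $\Pol(\alpha)$ is an $MA$-isotypic component of $\Pol(\frakn_+)$. Corollary \ref{cor:G_orderization} with $H=MA$ then gives invariance directly. The only point needing care is that $MA$ acts on $\frakn_-$ by Lie algebra automorphisms, so that Proposition \ref{prop:G_orderization} applies. This holds because $\Ad(l)$ for $l\in MA$ preserves $\frakn_-$ and each basis line $\C N_j^-$, hence preserves both $\sym_\ord$ and $\Upsilon_\ord$.

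There is no genuine obstacle in either route; the only thing to check is the index bookkeeping in the first one, namely that the truncation $\ell\le\min(i,j)$ matches $k+\ell\le\min(\alpha)$.
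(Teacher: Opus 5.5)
Your proposal is correct. Your second, conceptual route is exactly the paper's proof. Lemma~\ref{lem:MApol2} identifies $\Pol(\alpha)$ as an isotypic component, and Corollary~\ref{cor:G_orderization}, applied via $e^J=S_\ord$, gives invariance.

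Your first route is genuinely different and more elementary. You read off from \eqref{eqn:ijk} that $e^J\psi_{\alpha,k}$ is $\psi_{\alpha,k}$ plus a combination of the $\psi_{\alpha,k+\ell}$ with $1\le\ell\le\min(\alpha)-k$. This uses neither the identification $e^J=S_\ord$ nor any equivariance, and it also yields more. Taking $\ell=1$ in \eqref{eqn:JL} gives $J\psi_{\alpha,k}\in\C\,\psi_{\alpha,k+1}$, so $J$ itself preserves $\Pol(\alpha)$ and is nilpotent there. The paper uses this tacitly right afterwards:
\begin{itemize}
\item when $J(\alpha)$ is defined as a restriction;
\item when $e^{J(\alpha)}$ is treated as invertible, with inverse $e^{-J(\alpha)}$;
\item in Proposition~\ref{prop:DD}.
\end{itemize}
The conceptual route, by contrast, is what carries over to the abstract setting of Section~\ref{sec:nonsym}, where no explicit formula for $S_\ord$ is available.

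Your justification on that second route is the right one, and the point matters. What is actually needed is that $\Ad(l)$ for $l\in MA$ rescales each basis vector $N_j^-$. Equivariance of $\sym_\ord$ fails for general automorphisms. For example, let $\phi$ be the automorphism of $\frakn_-$ with $N_1^-\leftrightarrow N_2^-$ and $N_3^-\mapsto -N_3^-$. Then $\phi$ fixes $\sym_\ord(N_1^-N_2^-)$, whereas $\sym_\ord(\phi(N_1^-N_2^-))=\sym_\ord(N_2^-N_1^-)=\sym_\ord(N_1^-N_2^-)+N_3^-$. So Proposition~\ref{prop:G_orderization} is only valid for automorphisms that are diagonal in the ordered basis, which covers the $MA$-action used here.
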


\begin{proof}
It follows from Lemma \ref{lem:MApol2} that
$\Pol(\alpha)$ is the $\C \psi_{\alpha,k}$-isotypic 
component of $\Pol(\frakn_+)$.
Since $e^J$ is the symmetrization operator on $\Pol(\frakn_+)$,
Corollary \ref{cor:G_orderization} concludes the desired proposition.
\end{proof}

We write
\begin{equation*}
J(\alpha)
:=\frac{\zeta_3}{2}\frac{\partial^2}{\partial\zeta_1\partial\zeta_2}\, \bigg\vert_{\Pol(\alpha)}.
\end{equation*}
Then, by Propositions \ref{prop:eJU} and \ref{prop:symU}, we have 
\begin{equation}\label{eqn:symU}
\sym_{\ord} \circ \Upsilon_{\ord}\vert_{\Pol(\alpha)} 
= \Upsilon_{\ord} \circ e^{J(\alpha)},
\end{equation}
that is, the following diagram commutes.
\begin{equation}\label{eqn:diagram0}
\begin{tikzcd}[row sep=1cm, column sep=1cm]
\Pol(\alpha)
\arrow[r, "\Upsilon_{\ord}", hook]
& 
U(\frakn_-)
\arrow[dl, pos=0.5, phantom, "\circlearrowleft"]
\\
\Pol(\alpha)
\arrow[u, "e^{J(\alpha)}","\sim"' sloped]
\arrow[r, "\Upsilon_{\ord}"', hook]
& U(\frakn_-)
\arrow[u, "\sim" sloped, "\sym_{\ord}"']
\end{tikzcd}
\end{equation}

%%%%%%%%%%%%%%%%%%%%%%%%%%%%%%%%%%%%%%%%%
\subsection{T-saturation}
\label{sec:T-saturation}

Observe that one may identify $\Pol(\alpha)$ with the space $\Pol_{\min(\alpha)}[t]$ of polynomials $p(t)$ of one variable $t$ with $\deg p(t) \leq \min(\alpha)$ via the linear map
\begin{equation}\label{eqn:Tsat}
T_\alpha\colon
\Pol_{\min(\alpha)}[t]\xrightarrow{\sim}\Pol(\alpha),\quad 
p(t)\mapsto \zeta_1^{\alpha_1}\zeta_2^{\alpha_2}p\Big(\frac{\zeta_3}{\zeta_1\zeta_2}\Big).
\end{equation}
We call $T_{\alpha}$ a \emph{saturation map}.
As in Section \ref{sec:ImFc}, 
given $A\in \End_\C(\Pol(\alpha))$, we define 
$T_\alpha^\sharp A\in \End_\C(\Pol_{\min(\alpha)}[t])$
of $A$ by
\begin{equation}\label{eqnTsharp}
T_\alpha\circ T_\alpha^\sharp A=A\circ T_\alpha, 
\end{equation}
so that the following diagram commutes.
\[\begin{tikzcd}
	{\Pol_{\min(\alpha)}[t]} & {\Pol(\alpha)}
	\arrow[dl, pos=0.5, phantom, "\circlearrowleft"] \\
	{\Pol_{\min(\alpha)}[t]} & {\Pol(\alpha)}
	\arrow["{T_\alpha}", "\sim"', from=1-1, to=1-2]
	\arrow[ "{T_\alpha^\sharp A}"', from=1-1, to=2-1]
	\arrow["A",  from=1-2, to=2-2]
	\arrow["\sim", "{T_\alpha}"', from=2-1, to=2-2]
\end{tikzcd}\]
Following \cite[Sec.\ 3.2]{KoPe16b}, 
we call $T_\alpha^\sharp A$ the \emph{T-saturation} of $A$.

For $\alpha=(\alpha_1,\alpha_2) \in \NN^2$, we put
\begin{equation}\label{eqn:Da}
\wJ(\alpha):=\frac{t}{2}(\vartheta_t-\alpha_1)(\vartheta_t-\alpha_2).
\end{equation}
For $k,\ell\in \N$, a direct computation shows that 
\begin{equation}\label{eqn:D1}
\wJ(\alpha)^\ell t^k 
=\frac{(k-\alpha_1)_\ell(k-\alpha_2)_\ell}{2^\ell}
t^{k+\ell},
\end{equation}
where $(x)_\ell:=x(x+1) \cdots (x+\ell-1) = \frac{\Gamma(x+\ell)}{\Gamma(x)}$.
In particular, we have 
$\wJ(\alpha) \in \End_\CC(\Pol_{\min(\alpha)}[t])$.
As for $e^{J(\alpha)}$, one can define 
$e^{\wJ(\alpha)} \in GL(\Pol_{\min(\alpha)}[t])$ by
\begin{equation*}
e^{\wJ(\alpha)}
:=\sum_{\ell=0}^\infty \frac{\wJ(\alpha)^\ell}{\ell!}.
\end{equation*}

\begin{proposition}\label{prop:DD}
We have 
\begin{equation*}
e^{\wJ(\alpha)}=T_\alpha^\sharp e^{J(\alpha)}.
\end{equation*}
\end{proposition}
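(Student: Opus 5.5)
The plan is to reduce everything to the single identity $T_\alpha^\sharp J(\alpha)=\wJ(\alpha)$ and then exponentiate. Since $T_\alpha^\sharp$ is conjugation by the linear isomorphism $T_\alpha$, that is, $T_\alpha^\sharp A=T_\alpha^{-1}\circ A\circ T_\alpha$, it is an algebra homomorphism $\End_\C(\Pol(\alpha))\to\End_\C(\Pol_{\min(\alpha)}[t])$. Hence $T_\alpha^\sharp (J(\alpha)^\ell)=(T_\alpha^\sharp J(\alpha))^\ell$ for every $\ell$. Both exponential series are finite sums on the relevant finite-dimensional spaces: $J(\alpha)^\ell=0$ for $\ell>\min(\alpha)$ by \eqref{eqn:JL}, and similarly $\wJ(\alpha)^\ell=0$ by \eqref{eqn:D1}. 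So $T_\alpha^\sharp$ commutes with the exponential, and it suffices to prove $T_\alpha^\sharp J(\alpha)=\wJ(\alpha)$, i.e.\ $J(\alpha)\circ T_\alpha=T_\alpha\circ\wJ(\alpha)$.

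I will check this identity on the basis $t^k$, $k=0,\ldots,\min(\alpha)$, of $\Pol_{\min(\alpha)}[t]$. By \eqref{eqn:Tsat},
\begin{equation*}
T_\alpha t^k=\zeta_1^{\alpha_1-k}\zeta_2^{\alpha_2-k}\zeta_3^k=\psi_{\alpha,k}(\zeta).
\end{equation*}
Applying \eqref{eqn:JL} with $\ell=1$, $i=\alpha_1-k$, $j=\alpha_2-k$ gives
\begin{equation*}
J(\alpha)\psi_{\alpha,k}=\tfrac12(\alpha_1-k)(\alpha_2-k)\,\psi_{\alpha,k+1},
\end{equation*}
where the right-hand side vanishes automatically when $k=\min(\alpha)$. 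On the other side, \eqref{eqn:D1} with $\ell=1$ gives
\begin{equation*}
\wJ(\alpha)t^k=\tfrac12(k-\alpha_1)(k-\alpha_2)t^{k+1},
\end{equation*}
and $T_\alpha t^{k+1}=\psi_{\alpha,k+1}$. Since $(k-\alpha_1)(k-\alpha_2)=(\alpha_1-k)(\alpha_2-k)$, the two sides agree.

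Alternatively, one can compare the full formulas for all $\ell$ directly: \eqref{eqn:ijk} gives the coefficient $(-(\alpha_1-k))_\ell(-(\alpha_2-k))_\ell/(2^\ell\ell!)$, and \eqref{eqn:D1} gives $(k-\alpha_1)_\ell(k-\alpha_2)_\ell/(2^\ell\ell!)$, which are identical. I expect no real obstacle. The only points needing care are that $T_\alpha^\sharp$ is multiplicative, and that the boundary case $k=\min(\alpha)$ gives zero on both sides, so $\wJ(\alpha)$ genuinely preserves $\Pol_{\min(\alpha)}[t]$.
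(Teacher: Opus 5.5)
Your proposal is correct and follows the paper's proof: reduce to $T_\alpha^\sharp J(\alpha)=\wJ(\alpha)$, then check this on the monomials $t^k$ using \eqref{eqn:JL} and \eqref{eqn:D1}. You also make explicit why conjugation by $T_\alpha$ commutes with the exponential series: the map $T_\alpha^\sharp$ is multiplicative and both series terminate. The paper leaves that step implicit.
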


\begin{proof}
It suffices to show
\begin{equation*}
\wJ(\alpha)=T_\alpha^\sharp J(\alpha).
\end{equation*}
The lemma then immediately follows from \eqref{eqn:JL} and \eqref{eqn:D1}.
\end{proof}

It follows from Proposition \ref{prop:DD} that the commutative diagram \eqref{eqn:diagram0}
can be extended as follows.
\begin{equation}\label{eqn:diagram01}
\begin{tikzcd}[row sep=1cm, column sep=1cm]
\Pol_{\min(\alpha)}[t]
\arrow[r, "T_\alpha", "\sim"']
& 
\Pol(\alpha)
\arrow[dl, pos=0.5, phantom, "\circlearrowleft"]
\arrow[r, "\Upsilon_{\ord}", hook]
& 
U(\frakn_-)
\arrow[dl, pos=0.5, phantom, "\circlearrowleft"]
\\
\Pol_{\min(\alpha)}[t]
\arrow[u, "e^{\wJ(\alpha)}","\sim"' sloped]
\arrow[r,  "\sim", "T_\alpha"']
& 
\Pol(\alpha)
\arrow[u, "e^{J(\alpha)}","\sim"' sloped]
\arrow[r, "\Upsilon_{\ord}"', hook]
& U(\frakn_-)
\arrow[u, "\sim" sloped, "\sym_{\ord}"']
\end{tikzcd}
\end{equation}

\begin{remark}\label{rem:Heis_2F0}
Via the commutative diagram \eqref{eqn:diagram01}, one may relate
the symmetrization in \eqref{eqn:HeisSym}
to the generalized hypergeometric polynomial ${}_2F_0[-n, -m;t]$.
Indeed, take $1 \in \Pol_{\min(\alpha)}[t]$ at the lower left corner of \eqref{eqn:diagram01}.
For $\alpha=(\alpha_1,\alpha_2) \in \NN^2$, 
it follows from \eqref{eqn:D1} that 
\begin{equation}\label{eqn:eJ1}
e^{\wJ(\alpha)} 1 
= \sum_{\ell=0}^{\min(\alpha)}\frac{(-\alpha_1)_\ell(-\alpha_2)_\ell}{2^\ell \ell!}t^\ell
= {}_2F_0[-\alpha_1, -\alpha_2;\tfrac{t}{2}].
\end{equation}
Then the images of $1$ under the maps in \eqref{eqn:diagram01} are traced as 
follows.
\begin{equation*}
\begin{tikzcd}[row sep=1cm, column sep=1cm]
{}_2F_0[-\alpha_1, -\alpha_2;\tfrac{t}{2}]
\arrow[r, "T_\alpha", mapsto]
& 
T_\alpha({}_2F_0[-\alpha_1, -\alpha_2;\tfrac{t}{2}])
\arrow[dl, pos=0.5, phantom, "\circlearrowleft"]
\arrow[r, "\Upsilon_{\ord}", mapsto]
& 
\sym_{\alpha_1+\alpha_2}((N_1^-)^{\alpha_1}(N_2^-)^{\alpha_2})
\arrow[dl, pos=0.5, phantom, "\circlearrowleft"]
\\
1
\arrow[u, "e^{\wJ(\alpha)}", mapsto]
\arrow[r, "T_\alpha"', mapsto]
& 
\zeta_1^{\alpha_1}\zeta_2^{\alpha_2}
\arrow[u, "e^{J(\alpha)}",mapsto]
\arrow[r, "\Upsilon_{\ord}"', mapsto]
& (N_1^-)^{\alpha_1}(N_2^-)^{\alpha_2}
\arrow[u,  "\sym_{\ord}"', mapsto]
\end{tikzcd}
\end{equation*}
At the upper right corner, we have
\begin{align*}
\sym_{\alpha_1+\alpha_2}((N_1^-)^{\alpha_1}(N_2^-)^{\alpha_2})
&= \sum_{\ell=0}^{\min(\alpha)}\frac{(-\alpha_1)_\ell(-\alpha_2)_\ell}{2^\ell \ell!}
(N_1^-)^{\alpha_1-\ell}(N_2^-)^{\alpha_2-\ell}(N_3^-)^\ell\\[3pt]
&=(\Upsilon_{\ord}\circ T_\alpha)({}_2F_0[-\alpha_1, -\alpha_2;\tfrac{t}{2}]).
\end{align*}
As $\sym_{i+j+k}((N_1^-)^i(N_2^-)^j(N_3^-)^k) 
= (N_3^-)^k\sym_{i+j}\big((N_1^-)(N_2^-)\big)$, the general case also follows from the 
above arguments.
\end{remark}

\begin{remark}
One can readily show that the symmetrization operator for
Heisenberg Lie algebras of arbitrary dimension is 
a product of copies of the symmetrization operator for 
the three-dimensional Heisenberg Lie algebra. 
Since the generalization is not the main objective of this section, we will defer it to Appendix \ref{appendix:Heis}.
\end{remark}

For $A \in \End_\C(\Pol(\alpha))$, we write
\begin{align*}
\Sol_{A}^{(\alpha)}&=\{\psi(\zeta) \in \Pol(\alpha) : A\psi(\zeta) = 0\},\\
\wSol_{T_\alpha^\sharp A}^{(\alpha)}&=
\{q(t) \in \Pol_{\min(\alpha)}[t] : (T_\alpha^\sharp A)q(t)=0\}.
\end{align*}
Then, by Theorem \ref{thm:ImFc2}, we have
\begin{align*}
\Im F_c^{-1}(A)
&=\{(\Upsilon_{\ord} \circ T)q(t)
: q(t) \in \wSol_{\Ad(e^{\wJ(\alpha)})T_\alpha^\sharp A}^{(\alpha)}\}\\
&=\{(F_c^{-1}\circ T \circ e^{-\wJ(\alpha)})q(t) : p(t) \in 
\wSol_{\Ad(e^{\wJ(\alpha)})T_\alpha^\sharp A}^{(\alpha)}\}.
\end{align*}
Equivalently, the following diagram commutes.
\begin{equation}\label{eqn:cdA}
\begin{tikzcd}
	{\wSol_{\Ad(e^{\wJ(\alpha)})T_\alpha^\sharp A}^{(\alpha)}} & 
	\Sol_{\Ad(e^{J(\alpha)})A}^{(\alpha)}  	
	\arrow[dl, pos=0.5, phantom, "\circlearrowleft"]
	&{U(\frakn_-)} \\
	{\wSol_{T_\alpha^\sharp A}^{(\alpha)}}  & {\Sol_{A}^{(\alpha)}} &
	\arrow[ul, pos=0.886, phantom, "\circlearrowleft"]
	\arrow[r, "\Upsilon_{\ord}", hook, from=1-2, to=1-3]
	\arrow["F_c^{-1}"', hook, from=2-2, to=1-3]
	\arrow["{T_\alpha}", "\sim"' , from=1-1, to=1-2]
	\arrow["\sim" sloped, "e^{-\wJ(\alpha)}"', from=1-1, to=2-1]
	\arrow["e^{-J(\alpha)}", "\sim"' sloped,  from=1-2, to=2-2]
	\arrow["\sim" , "{T_\alpha}"',  from=2-1, to=2-2]
\end{tikzcd}
\end{equation}

%%%%%%%%%%%%%%%%%%%%%%%%%%%%%%%%%%%%%%%%%%
\subsection{The case \texorpdfstring{$A=-\zeta_i\widehat{d\pi_{(\xi,\lambda)^*}}(N_i^+)$}{A=-ζdπ^}}
\label{sec:dpihat2}

We now specialize $A \in \End_{\CC}(\Pol(\alpha))$ 
to a partial differential operator on $\Pol(\alpha)$.
More precisely, we show 
that $-\zeta_i\widehat{d\pi_{(\xi,\lambda)^*}}(N_i^+) \in \End_\C(\Pol(\alpha))$
for $i=1,2,3$.
Since the character $\chi_\xi$ of $M$ does not contribute to solving the F-system, we simply write $d\pi_{\lambda^*} \equiv d\pi_{(\xi,\lambda)^*}$, where
\[
d\chi_\lambda^*=d\chi_{2\rho - \lambda}=(2-\lambda_1,-\lambda_2,-\lambda_3-2).
\]
Here, we identify $\fraka^* \simeq \C^3$ such that 
$2\rho = (2, 0, -2)$.

\begin{proposition}\label{prop:frakn-mod}
We have 
$-\zeta_i\widehat{d\pi_{\lambda^*}}(N_i^+) \in \End_\C(\Pol(\alpha))$
for all $i=1,2,3$.
\end{proposition}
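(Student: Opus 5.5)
Since $\Pol(\alpha)$ is the $\C\psi_{\alpha,k}$-isotypic component for $MA$ (Lemma \ref{lem:MApol2}), the strategy is to show that $-\zeta_i\widehat{d\pi_{\lambda^*}}(N_i^+)$ is an $MA$-equivariant operator on $\Pol(\frakn_+)$. Such an operator preserves every $MA$-isotypic component, in particular $\Pol(\alpha)$. Equivariance will follow from two facts. First, $\widehat{d\pi_{\lambda^*}}(N_i^+)$ shifts $MA$-weights by a fixed amount. Second, multiplication by $\zeta_i$ shifts them back.

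For the first fact we use the $(\frakg,P)$-equivariance of $F_c$ in \eqref{eqn:Fc}, with $MA\subset P$ acting on $\Pol(\frakn_+)$ via $\Ad_\#$ and on $V^\vee$. This gives, for $l\in MA$,
\[
\Ad_\#(l)\circ\widehat{d\pi_{\lambda^*}}(N_i^+)\circ\Ad_\#(l)^{-1}=\widehat{d\pi_{\lambda^*}}(\Ad(l)N_i^+)=\chi_{i}(l)\,\widehat{d\pi_{\lambda^*}}(N_i^+),
\]
where $\chi_i$ is the root character of $N_i^+$, e.g.\ $\chi_1(\diag(\ell_1,\ell_2,\ell_3))=\ell_1/\ell_2$. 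Here the character $\C_{2\rho-\lambda}$ of $V^\vee$ is a scalar, so it cancels under conjugation. For the second fact, by the transformation laws in the proof of Lemma \ref{lem:MApol}, $\zeta_i$ spans an $MA$-line with character $\chi_i^{-1}$ under $\Ad_\#$; for instance $\zeta_1$ has weights $(t^{-1},t,1)$. Hence $\zeta_i\cdot$ conjugates to $\chi_i(l)^{-1}\zeta_i\cdot$. Combining the two facts, $\zeta_i\widehat{d\pi_{\lambda^*}}(N_i^+)$ commutes with $\Ad_\#(MA)$, and so it preserves $\Pol(\alpha)$.

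As a safeguard, and because the explicit operators are needed in Steps 3--4 anyway, I would also check the statement directly. Compute $d\pi_{\lambda^*}(N_i^+)$ from \eqref{eqn:dpi3} using $\Ad(\bar n^{-1})N_i^+$ with $\bar n=\exp(x_1N_1^-+x_2N_2^-+x_3N_3^-)$. Then apply the algebraic Fourier transform $x_j\mapsto -\partial_{\zeta_j}$, $\partial_{x_j}\mapsto\zeta_j$. The expected result is that $\widehat{d\pi_{\lambda^*}}(N_i^+)$ is a sum of terms of the form (constant)$\cdot\partial_{\zeta_i}$, (Euler operator)$\cdot\partial_{\zeta_i}$, and second-order terms such as $\zeta_2\partial_{\zeta_3}$ or $\zeta_3\partial_{\zeta_2}\partial_{\zeta_1}$. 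Each of these terms, multiplied by $\zeta_i$, should preserve the bidegree $(\deg_{\zeta_1}+\deg_{\zeta_3},\deg_{\zeta_2}+\deg_{\zeta_3})=\alpha$. The main obstacle is this bookkeeping: because $\frakn_-$ is non-abelian, the second-order terms come from commutators in $\Ad(\bar n^{-1})$, and their weights must be checked. The equivariance argument guarantees the outcome without that computation.
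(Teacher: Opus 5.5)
Your main equivariance argument is correct and is essentially the paper's proof. The paper likewise notes that $MA$ acts on $\zeta_i$ and on $N_i^+$ by mutually inverse characters, so $-\zeta_i\widehat{d\pi_{\lambda^*}}(N_i^+)$ commutes with $MA$ and hence preserves the isotypic component $\Pol(\alpha)$. You additionally derive, from the $P$-equivariance of $F_c$, the covariance $\Ad_\#(l)\,\widehat{d\pi_{\lambda^*}}(N_i^+)\,\Ad_\#(l)^{-1}=\chi_i(l)\,\widehat{d\pi_{\lambda^*}}(N_i^+)$, which the paper leaves implicit. Your direct check corresponds to the paper's remark after Proposition~\ref{prop:Fdpi}.

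Only your optional list of sample terms is off. For example, $\zeta_3\partial_{\zeta_1}\partial_{\zeta_2}$ preserves the bidegree, so it cannot occur in $\widehat{d\pi_{\lambda^*}}(N_i^+)$, which must shift the bidegree. For $i=1$, the genuine higher-order term is a multiple of $\zeta_3\partial_{\zeta_1}^2\partial_{\zeta_2}$.
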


\begin{proof}
Observe that $MA$ acts on $\zeta_i$ and $N_i^+$ as characters.
Since $\zeta_i$ and $N_i^+$ are dual to each other, it implies that
$MA$ acts on $-\zeta_i\widehat{d\pi_{\lambda^*}}(N_i^+)$ trivially.
Thus, for $\psi_{\alpha,k}(\zeta) \in \Pol(\frakn_+)$, we have 
\begin{equation*}
\C (-\zeta_i\widehat{d\pi_{\lambda^*}}(N_i^+) \psi_{\alpha,k}(\zeta)) 
\simeq \C \psi_{\alpha,k}(\zeta)
\quad
\text{as $MA$-modules}.
\end{equation*}
Now \eqref{eqn:Pol_alpha_MjAj} completes the proof.
\end{proof}

For later use, we next compute
$-\zeta_i\widehat{d\pi_{\lambda^*}}(N_i^+)$.
To do so, we start by computing 
$d\pi_{\lambda^*}(N_i^+)$ for $i=1,2,3$ 
for $I(\xi,\lambda)$.

\begin{proposition}
\label{prop:dpi}
We have
\begin{align}
\label{eq:dpimu_N1}
d\pi_{\lambda^*}(N_1^+)&=x_1\Big(\lambda_2-\lambda_1+2+x_1\frac{\partial}{\partial x_1}\Big)+(x_3-\tfrac{1}{2}x_1x_2)\frac{\partial}{\partial x_2}+\frac{x_1}{2}(x_3+\tfrac{1}{2}x_1x_2)\frac{\partial}{\partial x_3},\\[3pt]
\label{eq:dpimu_N2}
d\pi_{\lambda^*}(N_2^+)&=x_2\Big(\lambda_3-\lambda_2+2+x_2\frac{\partial}{\partial x_2}\Big)-(x_3+\tfrac{1}{2}x_1x_2)\frac{\partial}{\partial x_1}+\frac{x_2}{2}(x_3-\tfrac{1}{2}x_1x_2)\frac{\partial}{\partial x_3},\\[3pt]
d\pi_{\lambda^*}(N_3^+)&=(x_3+\tfrac{1}{2}x_1x_2)\Big(\lambda_2-\lambda_1+2+x_1\frac{\partial}{\partial x_1}\Big) \nonumber\\
&\enspace +(x_3-\tfrac{1}{2}x_1x_2)\Big(\lambda_3-\lambda_2+2+x_2\frac{\partial}{\partial x_2}\Big)
+(x_3^2+\tfrac{1}{4}x_1^2x_2^2)\frac{\partial}{\partial x_3}.\label{eq:dpimu_N3}
\end{align}
\end{proposition}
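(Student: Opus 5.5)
We compute directly from formula \eqref{eqn:dpi3}. Since $\dim V=1$ and $\frakm$ is trivial on the Lie algebra level, the formula becomes
\[
d\pi_{\lambda^*}(X)f(\bar n)=d\chi_{2\rho-\lambda}\big((\Ad(\bar n^{-1})X)_{\fraka}\big)f(\bar n)-\big(dR((\Ad(\bar n^{-1})X)_{\frakn_-})f\big)(\bar n),
\]
where $\bar n=\exp(x_1N_1^-+x_2N_2^-+x_3N_3^-)$ and $d\chi_{2\rho-\lambda}=(2-\lambda_1,-\lambda_2,-\lambda_3-2)$.

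\textbf{Step 1: matrix computations.} Write
\[
Y:=x_1N_1^-+x_2N_2^-+x_3N_3^-=\begin{pmatrix}0&0&0\\ x_1&0&0\\ x_3&x_2&0\end{pmatrix}.
\]
Since $Y^3=0$, we have $\bar n^{-1}=I-Y+\tfrac12 Y^2$, with $Y^2=x_1x_2E_{3,1}$. Thus $\bar n$ and $\bar n^{-1}$ are explicit lower unitriangular matrices, with $(3,1)$ entries $x_3\pm\tfrac12x_1x_2$. For $X=E_{1,2},E_{2,3},E_{1,3}$ we compute $\bar n^{-1}X\bar n$ as a $3\times3$ matrix and read off two pieces. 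The diagonal part gives the $\fraka$-component; pairing it with $2\rho-\lambda$ produces the zeroth-order terms, e.g.\ $x_1(\lambda_2-\lambda_1+2)$ for $N_1^+$. The strictly lower part gives the $\frakn_-$-component, a linear combination $c_1N_1^-+c_2N_2^-+c_3N_3^-$ with polynomial coefficients.

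\textbf{Step 2: express $dR(N_j^-)$ in coordinates.} This is the main obstacle: $\frakn_-$ is nonabelian, so $dR(N_j^-)$ is not simply $\partial/\partial x_j$. We use
\[
\exp(Y)\exp(sN_j^-)=\exp\big(Y+sN_j^-+\tfrac s2[Y,N_j^-]\big)
\]
(BCH is exact here since $\frakn_-$ is $2$-step nilpotent), together with $[N_1^-,N_2^-]=-N_3^-$. This gives
\[
dR(N_1^-)=\partial_1+\tfrac{x_2}{2}\partial_3,\qquad dR(N_2^-)=\partial_2-\tfrac{x_1}{2}\partial_3,\qquad dR(N_3^-)=\partial_3,
\]
where $\partial_j:=\partial/\partial x_j$; the signs should be double-checked against $[N_1^-,N_2^-]=-N_3^-$.

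\textbf{Step 3: assemble and check.} Substitute the coefficients $c_j$ into $-\sum_j c_j\,dR(N_j^-)$ and add the $\fraka$-term. Expanding and collecting terms should yield exactly \eqref{eq:dpimu_N1}--\eqref{eq:dpimu_N3}. As a consistency check we would verify the bracket relation $[d\pi(N_1^+),d\pi(N_2^+)]=d\pi(N_3^+)$, since $[E_{1,2},E_{2,3}]=E_{1,3}$. Alternatively, $d\pi(N_3^+)$ can be computed directly as this commutator; we would use whichever route catches sign errors.
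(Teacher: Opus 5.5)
Your proposal is correct, but it argues differently from the paper. The paper computes nothing from \eqref{eqn:dpi3}. It imports \eqref{eq:dpimu_N1} and \eqref{eq:dpimu_N2} from \cite[Prop.~5.10]{KPV25}, and then obtains \eqref{eq:dpimu_N3} solely from the facts that $d\pi_{\lambda^*}$ is a Lie algebra homomorphism and $N_3^+=[N_1^+,N_2^+]$, i.e.\ as $[d\pi_{\lambda^*}(N_1^+),d\pi_{\lambda^*}(N_2^+)]$. Your ``alternative'' for the third formula is therefore exactly the paper's route. You instead derive all three formulas directly, and this works as written:
\begin{itemize}
\item Your expressions for $dR(N_j^-)$ have the right signs; they coincide with $D_1,D_2,D_3$ in Section~\ref{sec:const3}.
\item For $N_3^+$ the direct computation is short, because $\Ad(\bar n^{-1})E_{1,3}$ is the rank-one product of the first column of $\bar n^{-1}$ with the third row of $\bar n$.
\item Put $u:=x_3+\tfrac12x_1x_2$ and $w:=x_3-\tfrac12x_1x_2$. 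The $\fraka$-part is $uH_1-x_1x_2H_2-wH_3$. Its pairing with $(2-\lambda_1,-\lambda_2,-\lambda_3-2)$ is $u(\lambda_2-\lambda_1+2)+w(\lambda_3-\lambda_2+2)$, since $u-w=x_1x_2$.
\item The $\frakn_-$-part is $-x_1u\,N_1^--x_2w\,N_2^--uw\,N_3^-$. After substituting your $dR(N_j^-)$, its $\partial/\partial x_3$-coefficient is $\tfrac{x_1x_2}{2}(u-w)+uw=x_3^2+\tfrac14x_1^2x_2^2$, as required.
\end{itemize}
One small slip: the $(3,1)$ entry of $\bar n^{-1}$ is $-w=-x_3+\tfrac12x_1x_2$, not $x_3-\tfrac12x_1x_2$. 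Keep that sign when you read off the coefficients.

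Comparing the two routes: yours is self-contained and uniform across the three generators. For $N_3^+$ it also involves less algebra than expanding the commutator of two first-order operators with polynomial coefficients. The paper's route is a two-line argument, but it rests on the published formulas for $N_1^+,N_2^+$ and leaves the commutator expansion to the reader. Your bracket check $[d\pi_{\lambda^*}(N_1^+),d\pi_{\lambda^*}(N_2^+)]=d\pi_{\lambda^*}(N_3^+)$ is therefore a genuine cross-validation of both. It does hold with this sign: for instance, the zeroth-order terms of the commutator give $u(\lambda_2-\lambda_1+2)+w(\lambda_3-\lambda_2+2)$, and the $\partial/\partial x_1$-coefficient is $x_1u$.
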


\begin{proof}
The formulas \eqref{eq:dpimu_N1} and \eqref{eq:dpimu_N2}
are computed in {\cite[Prop.\ 5.10]{KPV25}}.
Regarding the third formula \eqref{eq:dpimu_N3}, 
since $N_3^+=[N_1^+, N_2^+]$, the operator $d\pi_{\lambda*}(N_3^+)$ can be computed as $d\pi_{\lambda^*}(N_1^+)d\pi_{\lambda*}(N_2^+)-d\pi_{\lambda*}(N_2^+)d\pi_{\lambda*}(N_1^+)$. 
The expression follows now from the first two formulas.
\end{proof}

Write $\vartheta_i=\zeta_i\frac{\partial}{\partial \zeta_i}$ for $i=1,2,3$.

\begin{proposition}
\label{prop:Fdpi}
We have
\begin{align*}
-\zeta_1\widehat{d\pi_{\lambda^*}}(N_1^+) &=(\lambda_1-\lambda_2)\vartheta_1+(\vartheta_1^2-\vartheta_1)-\frac{1}{2}\vartheta_1\vartheta_2+\frac{1}{2}\vartheta_1\vartheta_3+\frac{\zeta_1\zeta_2}{\zeta_3}\vartheta_3+\frac{1}{4}\frac{\zeta_3}{\zeta_1\zeta_2}(\vartheta_1^2-\vartheta_1)\vartheta_2,\\[3pt]
-\zeta_2\widehat{d\pi_{\lambda^*}}(N_2^+)&=(\lambda_2-\lambda_3)\vartheta_2+(\vartheta_2^2-\vartheta_2)-\frac{1}{2}\vartheta_1\vartheta_2+\frac{1}{2}\vartheta_2\vartheta_3-\frac{\zeta_1\zeta_2}{\zeta_3}\vartheta_3-\frac{1}{4}\frac{\zeta_3}{\zeta_1\zeta_2}\vartheta_1(\vartheta_2^2-\vartheta_2),\\[3pt]
-\zeta_3\widehat{d\pi_{\lambda^*}}(N_3^+)&=(\lambda_1-\lambda_3-1+\vartheta_1+\vartheta_2+\vartheta_3)\vartheta_3+\frac{1}{2}\frac{\zeta_3}{\zeta_1\zeta_2}(\lambda_1-2\lambda_2+\lambda_3+\vartheta_1-\vartheta_2)\vartheta_1\vartheta_2\\
&\enspace +\frac{1}{4}\Big(\frac{\zeta_3}{\zeta_1\zeta_2}\Big)^2(\vartheta_1^2-\vartheta_1)(\vartheta_2^2-\vartheta_2).
\end{align*}
\end{proposition}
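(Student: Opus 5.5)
The plan is to apply the algebraic Fourier transform of Weyl algebras termwise to the formulas of Proposition \ref{prop:dpi}, and then multiply by $-\zeta_i$. With the convention of \cite[Def.\ 3.1]{KoPe16a}, the transform is
\[
\widehat{x_j}=-\frac{\partial}{\partial\zeta_j},\qquad \widehat{\frac{\partial}{\partial x_j}}=\zeta_j .
\]
It is an algebra isomorphism, so a monomial $x^a\partial_x^b$ goes to $(-\partial_\zeta)^a\zeta^b$ with the same ordering. I will first check the sign convention on a simple term: $x_1(\lambda_2-\lambda_1+2+x_1\partial_{x_1})$ should give $(\lambda_1-\lambda_2)\vartheta_1+\vartheta_1^2-\vartheta_1$ after multiplying by $-\zeta_1$.

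Indeed, $\widehat{x_1}\,\widehat{x_1\partial_{x_1}}=\partial_{\zeta_1}\partial_{\zeta_1}\zeta_1$. Using $\partial_{\zeta_1}\zeta_1=\vartheta_1+1$ and $\zeta_1\partial_{\zeta_1}=\vartheta_1$, each term should be rewritten as
\[
\zeta_1^{-a}\,\zeta_2^{-b}\,\zeta_3^{-c}\times\bigl(\text{polynomial in }\vartheta_1,\vartheta_2,\vartheta_3\bigr).
\]
The basic identities I will use are
\[
\zeta_j^m\partial_{\zeta_j}^m=\vartheta_j(\vartheta_j-1)\cdots(\vartheta_j-m+1),\qquad
\partial_{\zeta_j}\,f(\vartheta)=f(\vartheta+1)\,\partial_{\zeta_j}\ \text{(shift in the $j$th slot)}.
\]
Each term in \eqref{eq:dpimu_N1}--\eqref{eq:dpimu_N3} transforms as follows:
\begin{itemize}
\item[] A term $x^a\partial_x$ with total $x$-degree one more than its $\partial$-degree becomes, after the prefactor $-\zeta_i$, a Laurent monomial of weight zero times a polynomial in the $\vartheta$'s. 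This matches Proposition \ref{prop:frakn-mod}: only the combinations $\zeta_1\zeta_2/\zeta_3$ and $\zeta_3/(\zeta_1\zeta_2)$ can appear.
\item[] For example, $x_3\partial_{x_2}$ in \eqref{eq:dpimu_N1} gives $-\zeta_1\cdot(-\partial_{\zeta_3})\zeta_2=\zeta_1\zeta_2\,\partial_{\zeta_3}=\frac{\zeta_1\zeta_2}{\zeta_3}\vartheta_3$, as claimed.
\item[] The quartic term $\frac14 x_1^2x_2\partial_{x_3}$ gives $-\frac14\zeta_1\partial_{\zeta_1}^2\partial_{\zeta_2}\zeta_3=-\frac14\frac{\zeta_3}{\zeta_1\zeta_2}(\vartheta_1^2-\vartheta_1)\vartheta_2$. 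I will have to track the overall sign carefully here, since $(-1)^3$ from three $x$'s combines with the leading minus.
\end{itemize}

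For $N_3^+$, I will not transform $d\pi_{\lambda^*}(N_3^+)$ directly. The transform is a Lie algebra homomorphism, so
\[
\widehat{d\pi}(N_3^+)=\bigl[\widehat{d\pi}(N_1^+),\widehat{d\pi}(N_2^+)\bigr].
\]
Even so, expanding \eqref{eq:dpimu_N3} termwise is just as mechanical and serves as an independent check; I would do both. The terms $x_3^2\partial_{x_3}$ and $\frac14x_1^2x_2^2\partial_{x_3}$ produce the $\vartheta_3$-quadratic part and the $(\zeta_3/\zeta_1\zeta_2)^2$ term respectively. The mixed products $(x_3\pm\frac12x_1x_2)x_j\partial_{x_j}$ contribute the middle term, which has a single factor of $\zeta_3/(\zeta_1\zeta_2)$. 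Cross terms of type $x_3x_j\partial_{x_j}$ should combine into $(\vartheta_1+\vartheta_2)\vartheta_3$ together with constant shifts that adjust $\lambda_1-\lambda_3+4$ down to $\lambda_1-\lambda_3-1$.

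The main obstacle is bookkeeping rather than any conceptual step. When commuting $\partial_{\zeta}$'s past $\zeta$'s, each shift $\vartheta_j\mapsto\vartheta_j\pm1$ must be handled correctly, and the constants from $\rho$ must be collected correctly. The $N_3^+$ formula is the hardest, especially the constant $-1$ in the first term and the coefficient $\lambda_1-2\lambda_2+\lambda_3+\vartheta_1-\vartheta_2$. I would verify these by applying both sides to a monomial $\zeta_1^i\zeta_2^j\zeta_3^k$ and comparing the coefficients.
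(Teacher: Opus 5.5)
Your route is the paper's: it cites \cite{KPV25} for the first two formulas and gets the third by applying the algebraic Fourier transform directly to \eqref{eq:dpimu_N3}, which is your termwise computation; the commutator route is only an optional cross-check. There is, however, a concrete error. As set up, your computation produces a different operator, because the convention of \cite[Def.~3.1]{KoPe16a} is $\widehat{x_j}=\partial/\partial\zeta_j$, $\widehat{\partial/\partial x_j}=-\zeta_j$, the opposite of the one you wrote.

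The two conventions differ by the automorphism $\zeta_j\mapsto-\zeta_j$, $\partial_{\zeta_j}\mapsto-\partial_{\zeta_j}$. With yours, $-\zeta_i\widehat{d\pi_{\lambda^*}}(N_i^+)$ becomes minus the image of the true operator under $\zeta\mapsto-\zeta$. Concretely, every term of even weight in $\zeta_3/(\zeta_1\zeta_2)$ changes sign, including the whole pure-$\vartheta$ part, while the odd-weight terms do not. The result is therefore not even proportional to the claimed operator.

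Your own proposed check detects this. With your convention, the transform of $x_1(\lambda_2-\lambda_1+2+x_1\partial_{x_1})$ multiplied by $-\zeta_1$ is $(\lambda_2-\lambda_1+2)\vartheta_1-\zeta_1\partial_{\zeta_1}^2\zeta_1=(\lambda_2-\lambda_1)\vartheta_1-\vartheta_1^2+\vartheta_1$. With the correct convention it is $(\lambda_1-\lambda_2-2)\vartheta_1+\zeta_1\partial_{\zeta_1}^2\zeta_1=(\lambda_1-\lambda_2)\vartheta_1+\vartheta_1^2-\vartheta_1$, as stated.

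Two further sign slips need fixing.
\begin{itemize}
\item For $\frac14x_1^2x_2\partial_{x_3}$ the sign does not depend on the convention, since there are four factors. With either convention, $-\frac14\zeta_1\,\partial_{\zeta_1}^2\partial_{\zeta_2}(-\zeta_3)=+\frac14\frac{\zeta_3}{\zeta_1\zeta_2}(\vartheta_1^2-\vartheta_1)\vartheta_2$, in agreement with the statement. Your displayed $-\frac14$ drops the factor $(-1)^3$.
\item For $N_3^+$, the constant in \eqref{eq:dpimu_N3} is $(\lambda_2-\lambda_1+2)+(\lambda_3-\lambda_2+2)=\lambda_3-\lambda_1+4$, not $\lambda_1-\lambda_3+4$. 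After multiplying by $-\zeta_3$ it becomes $\lambda_1-\lambda_3-4$. The normal-ordering shifts $\partial_{\zeta_j}\zeta_j=\vartheta_j+1$ from $x_3x_1\partial_{x_1}$ and $x_3x_2\partial_{x_2}$, together with $\partial_{\zeta_3}^2\zeta_3=\partial_{\zeta_3}(\vartheta_3+1)$ from $x_3^2\partial_{x_3}$, then raise it by $3$ to $\lambda_1-\lambda_3-1$.
\end{itemize}

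The weight-one part of $N_3^+$ comes from $\frac12x_1x_2(2\lambda_2-\lambda_1-\lambda_3)+\frac12x_1^2x_2\partial_{x_1}-\frac12x_1x_2^2\partial_{x_2}$. Its transform times $-\zeta_3$ is $-\frac12\zeta_3\partial_{\zeta_1}\partial_{\zeta_2}(2\lambda_2-\lambda_1-\lambda_3-\vartheta_1+\vartheta_2)$, which is exactly the middle term. With the correct convention and these corrections, the termwise computation reproduces all three formulas.
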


\begin{proof}
The first two formulas are computed in {\cite[Props.\ 5.11]{KPV25}}.
The third one follows from a direct application of the algebraic Fourier transform
to  $d\pi_{\lambda*}(N_3^+)$ in \eqref{eq:dpimu_N3}.
\end{proof}

\begin{remark}
The assertion in Proposition \ref{prop:frakn-mod} can also be shown  
directly from the explicit formulas of $-\zeta_i\widehat{d\pi_{\lambda^*}}(N_i^+)$
in Proposition \ref{prop:Fdpi}.
\end{remark}

Observe that the differential equation
$\widehat{d\pi_{\lambda^*}}(N_i^+)\psi = 0$ in 
\eqref{eqn:PDE_SymmetryBreaking}
is equivalent to
\begin{equation}\label{PDE_SymmetryBreaking_123}
-\zeta_i\widehat{d\pi_{\lambda^*}}(N_i^+)\psi = 0.
\end{equation}
Thus, the space $\Sol_i(\xi,\eta; \lambda,\nu)$ of the solutions to 
$\widehat{d\pi_{\lambda^*}}(N_i^+)\psi = 0$ can be given as follows.
\begin{align}
&\Sol_i(\xi,\eta; \lambda,\nu) \nonumber\\
&\hspace{30pt}= \{ \psi \in 
\Hom_{M_i'A_i'}\big((\CC_\xi\boxtimes \CC_{\lambda})\big\rvert_{M_i'A_i'},\Pol(\frakn_+)\otimes (\CC_\eta\boxtimes \CC_{\nu})\big) : 
\text{\eqref{PDE_SymmetryBreaking_123} holds}\}.
\end{align}
Since $-\zeta_i\widehat{d\pi_{\lambda^*}}(N_i^+) \in \End_\C(\Pol(\alpha))$
for isotypic components $\Pol(\alpha)$, the differential equation \eqref{PDE_SymmetryBreaking_123} is a modified F-system (see Definition \ref{def:modified-Fsystem}).
We then consider the ordered F-system to \eqref{PDE_SymmetryBreaking_123}.

We put
\begin{align}
\Sol_{\Ad(e^{J(\alpha)}),\, i}^{(\alpha)}(\xi,\lambda)
&:=\{\varphi(\zeta) \in \Pol(\alpha) : 
\Ad(e^{J(\alpha)})(-\zeta_i\widehat{d\pi_{\lambda^*}}(N_i^+))\varphi(\zeta)=0\},
\label{eqn:SolalphaAd}\\
\Sol_{\Ad(e^{J}),\, i}(\xi,\eta; \lambda,\nu) 
&:= \bigoplus_{\alpha \in \NN^2}\Sol_{\Ad(e^{J(\alpha)}),\, i}^{(\alpha)}(\xi,\lambda).
\end{align}

The following theorem is a specialization of Theorem \ref{thm:symb_ord} 
in the case under consideration.

\begin{theorem}
The following commutative diagram holds for $i=1,2,3$.
\begin{equation}\label{eqn:conji}
\begin{tikzcd}[column sep=1.5cm]
	{\Sol_{\Ad(e^{J}),\, i}(\xi,\eta; \lambda,\nu)}  	
	&{\Diff_{G_i^\prime}\left(I(\xi, \lambda), J(\eta, \nu)\right)} \\
	 {\Sol_i(\xi,\eta; \lambda,\nu)} &
	\arrow[ul, pos=0.886, phantom, "\circlearrowleft"]
	\arrow["\Symb_{\ord}^{-1}", "\sim"' sloped,  from=1-1, to=1-2]
	\arrow["\sim" sloped, "\Symb_0^{-1}"', from=2-1, to=1-2]
	\arrow["\sim" sloped, "e^{-J(\alpha)}"',  from=1-1, to=2-1]
\end{tikzcd}
\end{equation}
Here, for simplicity, the restriction map $\Rest$ is omitted 
for $\Symb_{\ord}^{-1}$ and $\Symb_0^{-1}$.
\end{theorem}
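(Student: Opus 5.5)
The plan is to obtain the diagram \eqref{eqn:conji} by specializing Theorem \ref{thm:symb_ord}, with one small change: we apply it on each isotypic component $\Pol(\alpha)$ separately and then sum over $\alpha\in\NN^2$. The ingredients needed for that theorem are exactly what Sections \ref{sec:Upn}--\ref{sec:dpihat2} have prepared, so the work is mainly checking hypotheses.

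First, fix $i\in\{1,2,3\}$ and $\alpha\in\NN^2$, and take $E=\Pol(\alpha)$. By Proposition \ref{prop:symU}, $E$ is $S_\ord$-invariant, and Proposition \ref{prop:eJU} gives $S_\ord|_E=e^{J(\alpha)}$. By Proposition \ref{prop:frakn-mod}, $A:=-\zeta_i\widehat{d\pi_{\lambda^*}}(N_i^+)$ lies in $\End_\C(\Pol(\alpha))$. The F-system \eqref{eqn:PDE_SymmetryBreaking} is equivalent to \eqref{PDE_SymmetryBreaking_123}, since multiplication by $\zeta_i$ is injective on polynomials. Hence $A$ is a modified F-system in the sense of Definition \ref{def:modified-Fsystem}. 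Because $V$ and $W$ are one-dimensional, an element of $\Hom_{M_i'A_i'}$ into $\Pol(\alpha)\otimes W$ is just a polynomial in the relevant isotypic component (Lemma \ref{lem:MApol2}). So $\Sol^{(\alpha)}_i(\xi,\lambda)=\Sol_A(\Pol(\alpha))$ and $\Sol^{(\alpha)}_{\Ad(e^{J(\alpha)}),i}(\xi,\lambda)=\Sol_{\Ad(S)A}(\Pol(\alpha))$.

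Next, apply Proposition \ref{prop:SolA}: $e^{-J(\alpha)}$ maps $\Sol^{(\alpha)}_{\Ad(e^{J(\alpha)}),i}$ isomorphically onto $\Sol^{(\alpha)}_i$. Proposition \ref{prop:FcU} gives $F_c^{-1}|_{\Pol(\alpha)}=\Upsilon_\ord\circ e^{J(\alpha)}$, which is the diagram \eqref{eqn:diagram0}. From this we get $\Upsilon_\ord\varphi=F_c^{-1}(e^{-J(\alpha)}\varphi)$ for each $\varphi$. Composing with $dR$ and $\Rest$ yields $\Rest\circ\Symb_\ord^{-1}=\Rest\circ\Symb_0^{-1}\circ e^{-J(\alpha)}$ on each summand.

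Finally, take the direct sum over $\alpha$, using \eqref{eqn:SolDecomp} and the definition of $\Sol_{\Ad(e^J),i}$. The map $e^{-J}=\bigoplus_\alpha e^{-J(\alpha)}$ is then a well-defined isomorphism, because $e^{J}$ is a finite sum on each $\Pol(\alpha)$. Theorem \ref{thm:symb} shows that $\Rest\circ\Symb_0^{-1}$ is an isomorphism onto $\Diff_{G_i'}(I(\xi,\lambda),J(\eta,\nu))$. The top arrow is therefore an isomorphism too, and the triangle commutes.

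The only delicate point is bookkeeping rather than mathematics: one has to be sure the isotypic condition in the $\Hom_{M_i'A_i'}$ spaces agrees with membership in $\Pol(\alpha)$ for every $i$, including $i=1,2$ where $M_i'A_i'$ is smaller than $MA$. That is exactly the content of Lemma \ref{lem:MApol2}, so we rely on it there.
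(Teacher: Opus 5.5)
This is essentially the paper's proof: there the per-$\alpha$ triangle is read off from Theorem~\ref{thm:symb} and the diagram \eqref{eqn:cdA}, and one then sums over $\alpha$ via \eqref{eqn:SolDecomp}, just as you do. Diagram \eqref{eqn:cdA} is exactly Propositions~\ref{prop:SolA} and~\ref{prop:FcU} for $E=\Pol(\alpha)$, plus a T-saturation layer that plays no role here.

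The one point to tighten is that $\Sol_i^{(\alpha)}(\xi,\lambda)=\Sol_A(\Pol(\alpha))$ holds only when $(\xi,\eta;\lambda,\nu)\in L_i(\alpha)$. Otherwise the $\Hom_{M_i'A_i'}$ space, and hence $\Sol_i^{(\alpha)}(\xi,\lambda)$, vanishes, and the $\Ad(e^{J(\alpha)})$-side must be cut down by the same equivariance condition. So the direct sum really has at most one nonzero summand.
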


\begin{proof}
It follows from Theorem \ref{thm:symb} and \eqref{eqn:cdA} that
for $\alpha \in \NN^2$, we have
\begin{equation}\label{eqn:conji2}
\begin{tikzcd}[column sep=1.5cm]
	{ \Sol_{\Ad(e^{J(\alpha)}),\,i}^{(\alpha)}(\xi,\lambda)}  	
	&{\Diff_{G_i^\prime}\left(I(\xi, \lambda), J(\eta, \nu)\right)} \\
	 {\Sol_i^{(\alpha)}(\xi,\lambda)} &
	\arrow[ul, pos=0.886, phantom, "\circlearrowleft"]
	\arrow["\Symb_{\ord}^{-1}",  from=1-1, to=1-2]
	\arrow[ "\Symb_0^{-1}"', from=2-1, to=1-2]
	\arrow["\sim" sloped, "e^{-J(\alpha)}"',  from=1-1, to=2-1]
\end{tikzcd}
\end{equation}
Now the theorem follows from \eqref{eqn:SolDecomp}.
\end{proof}

%%%%%%%%%%%%%%%%%%%%%%%%%%%%%%%%%%%%%%%%%%
\subsection{Second part of the recipe using the T-saturation}
\label{sec:recipe2}

With \eqref{eqn:cdA} and \eqref{eqn:conji2} in mind,
we shall proceed with following steps to carry out the second part of the 
recipe for the F-method in Section \ref{sec:recipe2}
to classify and construct DSBOs 
$\DD \in \Diff_{G_i^\prime}(I(\xi,\lambda),J(\eta,\nu))$ 
with respect to the embedding $\iota_i \colon G^\prime \hookrightarrow G$ for $i=1,2,3$.
\vspace{3pt}

\begin{enumerate}

\item[Step 1]
Classify and construct
$\psi \in \Hom_{M'_iA'_i}((\CC_\xi\boxtimes \CC_{\lambda})\big\rvert_{M_i'A_i'}, 
\Pol(\mathfrak{n}_+)\otimes (\CC_{\eta}\boxtimes\CC_{\nu}))$.
\vskip 0.1in

\item[Step 2]
Compute the T-saturation
$T_\alpha^\sharp \big(-\zeta_i\widehat{d\pi_{\lambda^*}}(N_i^+)\big)$. 
\vskip 0.1in

\item[Step 3]
Compute
$\wSol_{\Ad(e^{\wJ(\alpha)})T_\alpha^\sharp A}^{(\alpha)}$
and
$\wSol_{T_\alpha^\sharp A}^{(\alpha)}$
in  \eqref{eqn:cdA} for 
$A=-\zeta_i\widehat{d\pi_{(\xi,\lambda)^*}}(N_i^+)$ 
to determine
$\Sol_{i}(\xi,\eta; \lambda,\nu)$
and 
$\Sol_{\Ad(e^{J}),\, i}(\xi,\eta; \lambda,\nu)$.

\vskip 0.1in

\item[Step 4]
Do the following.
\vspace{3pt}

\begin{enumerate}
\item 
Apply $\Rest \circ \Symb_0^{-1}$
to $\psi \in \Sol_{i}(\xi,\eta; \lambda,\nu)$.
\vspace{5pt}

\item
Apply $\Rest \circ\Symb_{\ord}^{-1}$
to  $\varphi \in \Sol_{\Ad(e^{J(\alpha)}),\, i}(\xi,\eta; \lambda,\nu)$.

\end{enumerate}

\end{enumerate}

%%%%%%%%%%%%%%%%%%%%%%%%%%%%%%%%%%%%%%%%%
\section{DSBOs \texorpdfstring{$\DD_3(\lambda,\nu)$}{D₃(λ,ν)} for \texorpdfstring{$(GL(3,\R), G_3')$}{(GL(3,R),G₃')}}
\label{sec:Emb3}

The aim of this short section is to give the main results about
the classification and construction of DSBOs $\mathbb{D}$ with
respect to the third embedding 
$\iota_3 \colon GL(2,\R) \hookrightarrow GL(3,\R)$ 
in \eqref{eqn:embedding2}. The proofs will be discussed in Sections 
\ref{sec:proof1} and \ref{sec:proof2}.
The other two embeddings $\iota_i$ for $i=1,2$ are 
studied in Section \ref{sec:Emb12}.

%%%%%%%%%%%%%%%%%%%%%%%%%%%%%%%%%%%%%%%%%
\subsection{Classification of DSBOs \texorpdfstring{$\DD$}{ } for \texorpdfstring{$(GL(3,\R), G'_3)$}{(GL(3,R),G₃')}}
\label{sec:ClassThmsDSBOs}

We start with the classification of DSBOs $\DD$.
Let $\Phi_{3}$ be the subset of $\C^3\times \C^2 \simeq \C^5$ defined by
\begin{equation}\label{eqn:Phi3}
\Phi_{3} := \{(\lambda, \nu) \in \C^5: \nu_1-\lambda_1,\, \lambda_3-\nu_2 \in \N\}.\\
\end{equation}
We put
\begin{align*}
    \Phi_{3,a} &:= \{(\lambda, \nu) \in \Phi_3: \eqref{eq:condition_set_3a} \text{ holds}\},\\[3pt]
    \Phi_{3,b} &:= \{(\lambda, \nu) \in \Phi_3: \eqref{eq:condition_set_3b} \text{ holds}\},\\[3pt]
    \Phi_{3,c} &:= \{(\lambda, \nu) \in \Phi_3: \eqref{eq:condition_set_3c} \text{ holds}\},
\end{align*}
\begin{equation}\label{eq:condition_set_3a}
    \nu_1-\nu_2-1 \notin [1, \min(\nu_1-\lambda_1, \lambda_3-\nu_2)]\cap \Z,
\end{equation}
\begin{alignat}{2}
    \nu_1-\nu_2-1 &\in  [1, \min(\nu_1-\lambda_1, \lambda_3-\nu_2)]\cap \Z
     \quad \text{and} \quad
     \lambda_2 -\nu_2 &\notin [1, \nu_1-\nu_2-1]\cap \Z, 
     \label{eq:condition_set_3b}\\[3pt]
    \nu_1-\nu_2-1 &\in  [1, \min(\nu_1-\lambda_1, \lambda_3-\nu_2)]\cap \Z
     \quad \text{and} \quad
     \lambda_2 -\nu_2 &\in [1, \nu_1-\nu_2-1]\cap \Z.
          \label{eq:condition_set_3c}
\end{alignat}
We have 
\begin{equation*}
\Phi_{3} = \Phi_{3,a}\cup\Phi_{3,b}\cup\Phi_{3,c}.
\end{equation*}

\begin{remark}
Since $[a,b]$ for $b < a$ is regarded as $[a,b]=\emptyset$, 
the parameters 
$(\lambda,\nu)$ with $\min(\nu_1-\lambda_1,\lambda_3-\nu_2) =0$ are in $\Phi_{3,a}$. 
\end{remark}

Now, for $\omega \in \{a, b, c\}$ we define 
$\Supp_{3,\omega}(\DD)$ and $\Supp_3(\DD)$ 
as follows:
\begin{align*}
\Supp_{3,\omega}(\DD)&:= \{(\xi, \eta; \lambda, \nu) \in (\Z/2\Z)^5 \times \Phi_{3,\omega}: \eqref{eq:condition_full_set_3} \text{ holds}\},
\end{align*}
\begin{equation}
\label{eq:condition_full_set_3}
\xi_1+\eta_1 \equiv \nu_1-\lambda_1,\,\,  \xi_3+\eta_2 \equiv \lambda_3-\nu_2 \bmod{2},
\end{equation}
\begin{equation*}
    \Supp_3(\DD) := \Supp_{3,a}(\DD)\cup \Supp_{3,b}(\DD)\cup \Supp_{3,c}(\DD).
\end{equation*}

\begin{theorem}
\label{thm:class3}
The following conditions on the tuple $(\xi, \eta; \lambda, \nu) \in (\Z/2\Z)^5 \times \C^5$ are equivalent.
\begin{enumerate}[label = \normalfont{(\roman*)}]
    \item $\Diff_{G_3^\prime}\left(I(\xi, \lambda), J(\eta, \nu)\right) \neq \{0\}$.
    
    \item $\dim_\C \Diff_{G_3^\prime}\left(I(\xi, \lambda), J(\eta, \nu)\right) \in \{1, 2\}$.

    \item $(\xi, \eta; \lambda, \nu) \in \Supp_3(\DD)$.
\end{enumerate}
Moreover, the dimension of the space above is two if and only 
if $(\xi, \eta; \lambda, \nu) \in \Supp_{3,c}(\DD)$.
\end{theorem}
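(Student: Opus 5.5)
By Theorem \ref{thm:symb} and the decomposition \eqref{eqn:SolDecomp}, the space $\Diff_{G_3'}(I(\xi,\lambda),J(\eta,\nu))$ is isomorphic to $\bigoplus_{\alpha}\Sol_3^{(\alpha)}(\xi,\lambda)$. The plan is to compute this space in two stages: first the $M_3'A_3'$-equivariance, then the differential equation.

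\textbf{Step 1: equivariance.} By Lemma \ref{lem:MApol}, $\Pol(\alpha)$ carries the $M_3'A_3'$-character $\C_{(\alpha_1,\alpha_2)}\boxtimes\C_{(-\alpha_1,\alpha_2)}$. We must match this against $(\C_\xi\boxtimes\C_\lambda)|_{M_3'A_3'}$ tensored with $\C_\eta\boxtimes\C_\nu$.
\begin{itemize}
\item On the $A$-part this forces $\alpha_1=\nu_1-\lambda_1$ and $\alpha_2=\lambda_3-\nu_2$, so both must lie in $\N$.
\item On the $M$-part it forces the parity conditions \eqref{eq:condition_full_set_3}.
\end{itemize}
Hence at most one $\alpha$ contributes, namely $\alpha=(\nu_1-\lambda_1,\lambda_3-\nu_2)$. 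Outside $(\Z/2\Z)^5\times\Phi_3$ with \eqref{eq:condition_full_set_3}, the space is $\{0\}$. For this $\alpha$, the problem reduces to computing $\dim\Sol^{(\alpha)}_A$ with $A=-\zeta_3\widehat{d\pi_{\lambda^*}}(N_3^+)$ acting on $\Pol(\alpha)$.

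\textbf{Step 2: reduction to an ODE.} Rather than solving $A$ directly, I will use the commutative diagram \eqref{eqn:cdA}: $e^{-\wJ(\alpha)}$ is an isomorphism, so it suffices to count polynomial solutions of degree at most $\min(\alpha)$ to $\Ad(e^{\wJ(\alpha)})T_\alpha^\sharp A$.

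First compute $T_\alpha^\sharp A$ from Proposition \ref{prop:Fdpi}. Substituting $\vartheta_1\mapsto \alpha_1-\vartheta_t$, $\vartheta_2\mapsto\alpha_2-\vartheta_t$, $\vartheta_3\mapsto\vartheta_t$ gives an operator $L$ in $t$ with three parts:
\begin{itemize}
\item a $t^0$ part $(\lambda_1-\lambda_3-1+\alpha_1+\alpha_2-\vartheta_t)\vartheta_t$;
\item a $t^1$ part involving $(\vartheta_t-\alpha_1)(\vartheta_t-\alpha_2)$;
\item a $t^2$ part.
\end{itemize}
Next conjugate by $e^{\wJ(\alpha)}$, using $\wJ(\alpha)=\tfrac t2(\vartheta_t-\alpha_1)(\vartheta_t-\alpha_2)$ and the shift rule \eqref{eqn:D1}. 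The expectation, consistent with the ${}_3F_1$ in Theorem \ref{Intro:D}, is that the $t^2$ term cancels. The conjugated operator should then become, up to a factor, the hypergeometric operator
\[
\vartheta_t(\vartheta_t+d-1)-t(\vartheta_t-\alpha_1)(\vartheta_t-\alpha_2)(\vartheta_t+c),
\]
with $d-1=\nu_2-\nu_1+1$ and $c$ linear in $\lambda_2-\nu_2$ (after substituting $\alpha$). This computation is routine but lengthy, and I would carry it out with the commutation relation $[\vartheta_t,t]=t$.

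\textbf{Step 3: counting solutions.} Expand $q=\sum_n c_nt^n$. The equation gives the two-term recurrence
\[
n(n+d-1)\,c_n=(n-1-\alpha_1)(n-1-\alpha_2)(n-1+c)\,c_{n-1}.
\]
\begin{itemize}
\item If $1-d=\nu_1-\nu_2-1$ does not lie in $[1,\min(\alpha)]\cap\Z$, the left coefficient never vanishes for $1\le n\le\min(\alpha)$. Then $c_0$ determines $q$, which truncates automatically at $\min(\alpha)$ because of the factor $(n-1-\alpha_j)$, and it equals ${}_3F_1$. The dimension is $1$; this is case $\Phi_{3,a}$.
\item If $m+1:=\nu_1-\nu_2-1\in[1,\min(\alpha)]$, the recurrence at $n=m+1$ reads $0\cdot c_{m+1}=(\cdots)(m+c)\,c_m$.
  \begin{itemize}
  \item A solution started at $t^{m+1}$ always exists, giving $t^{m+1}{}_3F_1[\ldots;m+2]$.
  \item A solution started at $c_0$ survives exactly when the chain of coefficients from $c_0$ to $c_m$ vanishes at $n=m+1$. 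This happens iff $(n-1+c)$ vanishes for some $1\le n\le m+1$, i.e.\ $c\in\{0,-1,\dots,-m\}$. Translated back, this should be the condition $\lambda_2-\nu_2\in[1,\nu_1-\nu_2-1]$.
  \end{itemize}
\end{itemize}
This yields dimension $1$ on $\Phi_{3,b}$ and dimension $2$ on $\Phi_{3,c}$, which proves (i)$\Leftrightarrow$(ii)$\Leftrightarrow$(iii) together with the dimension statement.

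\textbf{Main obstacle.} I expect the hardest step to be the conjugation in Step 2: verifying that $\Ad(e^{\wJ(\alpha)})$ kills the $t^2$ term and identifying the exact parameters $c,d$. If the direct conjugation gets messy, my fallback is to work out the recurrence for $T_\alpha^\sharp A$ itself on the unconjugated side. That recurrence is three-term (which is where the Cayley continuants appear), and I would count its solutions through a tridiagonal determinant argument.
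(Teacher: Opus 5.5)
Your proposal is correct and follows essentially the paper's route: Proposition \ref{prop:Invariants_SB3} pins down $\alpha=(\nu_1-\lambda_1,\lambda_3-\nu_2)$, Proposition \ref{prop:ODEdiffoperator3} gives $T_\alpha^\sharp A=\calL^{(\alpha)}(a_0,b_0)$, and Proposition \ref{prop:AdL} identifies $-\Ad(e^{\wJ(\alpha)})\calL^{(\alpha)}(a_0,b_0)$ with ${}_3\calF_1(-\alpha_1,-\alpha_2,\tfrac{a_0-b_0}{2};-b_0;t)$; the paper then cites Fact \ref{fact:3F1-sols} for the polynomial solutions, whereas you count them directly with the two-term recurrence, which is an equally valid self-contained check. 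The step you flag as the main obstacle is short: write $\calL^{(\alpha)}(a,b)=\vartheta_t(b+1-\vartheta_t)+a\wJ(\alpha)+\wJ(\alpha)^2$ and use $\Ad(e^{\wJ(\alpha)})\vartheta_t=\vartheta_t-\wJ(\alpha)$ (Lemma \ref{lem:eD}, which follows from $\vartheta_t\wJ(\alpha)=\wJ(\alpha)(\vartheta_t+1)$), and the $\wJ(\alpha)^2$ terms cancel immediately.
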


%%%%%%%%%%%%%%%%%%%%%%%%%%%%%%%%%%%%%%%%%
\subsection{Construction of DSBOs \texorpdfstring{$\DD$}{ } for \texorpdfstring{$(GL(3,\R), G'_3)$}{(GL(3,R),G₃')}} 
\label{sec:const3}

We next show the explicit formulas of DSBOs $\DD$ in both symmetrized and ordered forms.
Write
\begin{equation*}
D_1:=dR(N_1^-)=\frac{\partial}{\partial x_1}+\frac{x_2}{2}\frac{\partial}{\partial x_3},\quad D_2:=dR(N_2^-)=\frac{\partial}{\partial x_2}-\frac{x_1}{2}\frac{\partial}{\partial x_3},\quad D_3=:dR(N_3^{-})=\frac{\partial}{\partial x_3}.
\end{equation*}

By slight abuse of notation, we write 
$\ord=\{D_1, D_2, D_3\}$.
Then, for $\alpha=(\alpha_1,\alpha_2) \in \NN^2$
and 
$q(t)=\sum_{n=0}^{\min(\alpha)}c_n t^n \in \Pol_{\min(\alpha)}[t]$, 
we define the \emph{symmetrized operator} 
$(T^{\mathrm{sym}}_{(\alpha_1, \alpha_2)} q)(D_1, D_2, D_3)$
and 
\emph{ordered operator}
$(T^\ord_{(\alpha_1, \alpha_2)} q)(D_1, D_2, D_3)$ 
with respect to $\ord$
as follows.
\begin{align}
(T^{\mathrm{sym}}_{(\alpha_1, \alpha_2)} q)(D_1, D_2, D_3)
&:=\sum_{n=0}^{\min(\alpha)}c_n\, \sym_{\alpha_1+\alpha_2-n}(D_1^{\alpha_1-n}D_2^{\alpha_2-n}D_3^n),
\label{eqn:symop}\\[3pt]
(T^\ord_{(\alpha_1, \alpha_2)} q)(D_1, D_2, D_3)
&:=\sum_{n=0}^{\min(\alpha)}c_n\, D_1^{\alpha_1-n}D_2^{\alpha_2-n}D_3^n.
\label{eqn:nonsymop}
\end{align}
Furthermore, define a polynomial $p^{(\alpha_1, \alpha_2)}_{a,b}(t) \in \Pol_{\min(\alpha)}[t]$ as 
\begin{equation}\label{eqn:pab}
p^{(\alpha_1, \alpha_2)}_{a,b}(t)
:=\sum_{n=0}^{\min(\alpha)}\frac{(-\alpha_1)_n(-\alpha_2)_n}{2^n n!(-b)_n}
\Cay_n(a,b)\,t^n,
\end{equation}
where $\Cay_n(a,b)$ denotes a certain tridiagonal determinant called 
the Cayley continuant (see Appendix \ref{appendix:Cayely} for some details).

For 
$\upsilon \in \{\sym, \ord\}$ and 
the hypergeometric polynomial
${}_pF_q[\pmb{\gamma};\pmb{\beta};t]$ with appropriate parameters $\pmb{\gamma}$ and $\pmb{\beta}$, 
we also write
\begin{equation}\label{eqn:Tupsilon}
T^\upsilon_{(\alpha_1,\alpha_2)}{}_pF_q[\pmb{\gamma};\pmb{\beta};D_1, D_2, D_3]
:=\big(T^\upsilon_{(\alpha_1,\alpha_2)}{}_pF_q[\pmb{\gamma};\pmb{\beta};t]\big)(D_1, D_2, D_3).
\end{equation}
The symmetrized form
$T^{\sym}_{(\alpha_1,\alpha_2)}{}_pF_q[\pmb{\gamma};\pmb{\beta};D_1, D_2, D_3]$
of hypergeometric polynomial ${}_pF_q[\pmb{\gamma};\pmb{\beta};t]$
will appear only in Theorem \ref{thm:cons_DIOs}.

Let $\Rest_3:=\Rest_{x_1=x_2=0}$ denote
the restriction map with respect to the coordinates
\eqref{eqn:coordinate} and  \eqref{eqn:coordinate3}.

\begin{theorem}\label{thm:cons3}
We have
\begin{equation*}
\Diff_{G_3^\prime}\left(I(\xi, \lambda), J(\eta, \nu)\right)
=
\begin{cases}
\CC\, \DD^{(1)}_3(\lambda, \nu) & 
\textnormal{if $(\xi, \eta; \lambda, \nu) \in \Supp_{3,a}(\DD)$},\\[3pt]
\CC\, \DD^{(2)}_3(\lambda, \nu) & 
\textnormal{if $(\xi, \eta; \lambda, \nu) \in \Supp_{3,b}(\DD)$},\\[3pt]
\CC\, \DD^{(1)}_3(\lambda, \nu) 
\oplus
\CC\, \DD^{(2)}_3(\lambda, \nu)
& \textnormal{if $(\xi, \eta; \lambda, \nu) \in \Supp_{3,c}(\DD)$},\\[3pt]
\{0\} &\textnormal{otherwise}.
\end{cases}
\end{equation*}
Here $\DD^{(j)}_3(\lambda, \nu)$ for $j=1,2$ 
are given as follows.
\vspace{5pt}
\begin{align*}
\DD^{(1)}_3(\lambda,\nu)
&=\Rest_3\circ
\big(T^{\mathrm{sym}}_{(\nu_1-\lambda_1, \,\lambda_3-\nu_2)} 
p^{(\nu_1-\lambda_1, \,\lambda_3-\nu_2)}_{\nu_1+\nu_2-2\lambda_2, \,\nu_1-\nu_2-2}\big)
(D_1, D_2, D_3)\\[5pt]
&=\Rest_3\circ 
T^\ord_{(\nu_1-\lambda_1,\,\lambda_3-\nu_2)}{}_3F_1
\left [\begin{matrix}
    \lambda_1-\nu_1 &\nu_2-\lambda_3 & \nu_2-\lambda_2+1\\
    & \nu_2-\nu_1+2 & 
\end{matrix}; D_1, D_2, D_3 \right],\\[10pt]
\DD^{(2)}_3(\lambda,\nu)
&=\Rest_3\circ  D_3^{\nu_1-\nu_2-1}\big(T^{\mathrm{sym}}_{(\nu_2-\lambda_1+1,\,\lambda_3-\nu_1+1)} p^{(\nu_2-\lambda_1+1, \,\lambda_3-\nu_1+1)}_{\nu_1+\nu_2-2\lambda_2, \,\nu_2-\nu_1}\big)(D_1, D_2, D_3) \\[5pt]
&=\Rest_3\circ (\star),
\end{align*}
where
\begin{equation*}
(\star):=
 D_3^{\nu_1-\nu_2-1}
T^\ord_{(\nu_2-\lambda_1+1,\,\lambda_3-\nu_1+1)}{}_3F_1\left [\begin{matrix}
    \lambda_1-\nu_2-1 & \nu_1-\lambda_3-1 & \nu_1-\lambda_2\\
    & \nu_1-\nu_2& 
\end{matrix}; D_1, D_2, D_3\right].
\end{equation*}
\end{theorem}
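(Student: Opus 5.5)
Following the recipe of Section \ref{sec:recipe2}, I reduce everything to one-variable ODEs on each isotypic piece $\Pol(\alpha)$, using the decomposition \eqref{eqn:SolDecomp}. \textbf{First (Step 1)} I determine which $\alpha$ contribute. By Lemma \ref{lem:MApol}, $\C\psi_{\alpha,k}\simeq\C_{(\alpha_1,\alpha_2)}\boxtimes\C_{(-\alpha_1,\alpha_2)}$ as an $M_3'A_3'$-module. Hence $\Hom_{M_3'A_3'}((\C_\xi\boxtimes\C_\lambda)|_{M_3'A_3'},\Pol(\alpha)\otimes(\C_\eta\boxtimes\C_\nu))\ne0$ exactly when $\alpha=(\nu_1-\lambda_1,\lambda_3-\nu_2)$ and the parities \eqref{eq:condition_full_set_3} hold. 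This yields $\Phi_3$ and the parity condition. For such parameters the whole of $\Pol(\alpha)$ appears, so only the unique $\alpha$ above contributes.

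\textbf{Second (Step 2)} I compute the T-saturation $T_\alpha^\sharp(-\zeta_3\widehat{d\pi_{\lambda^*}}(N_3^+))$ using Proposition \ref{prop:Fdpi}. On $T_\alpha p$ one has $\vartheta_3=\vartheta_t$, $\vartheta_1=\alpha_1-\vartheta_t$ and $\vartheta_2=\alpha_2-\vartheta_t$, while $\zeta_3/(\zeta_1\zeta_2)$ acts as multiplication by $t$. This gives a second-order operator in $t$ with polynomial coefficients of degree $\le2$:
\[
(\lambda_1-\lambda_3-1+\alpha_1+\alpha_2-\vartheta_t)\vartheta_t+\tfrac t2(\cdots)(\vartheta_t-\alpha_1)(\vartheta_t-\alpha_2)+\tfrac{t^2}4(\cdots).
\]
Next I conjugate by $e^{\wJ(\alpha)}$, with $\wJ(\alpha)=\tfrac t2(\vartheta_t-\alpha_1)(\vartheta_t-\alpha_2)$, using Proposition \ref{prop:DD}. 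I expect that $\Ad(e^{\wJ(\alpha)})$ cancels the $t$ and $t^2$ terms up to a first-order correction. The computation is via $[\wJ,\cdot]$ series, which terminate since $\ad\wJ$ raises the $t$-degree. The result should be a product of a first-order factor in $\vartheta_t$ with the ${}_3F_1$ operator
\[
\vartheta_t(\vartheta_t+d-1)-t(\vartheta_t-\alpha_1)(\vartheta_t-\alpha_2)(\vartheta_t+c),
\]
with $d=\nu_2-\nu_1+2$ and $c=\nu_2-\lambda_2+1$. \textbf{This conjugation is the main obstacle.} I would first verify it on monomials $t^k$ by comparing the recurrence for coefficients. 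The first-order factor I expect to carry the $\vartheta_t$-root $\nu_1-\nu_2-1$.

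\textbf{Third (Step 3)} I solve for polynomial solutions in $\Pol_{\min(\alpha)}[t]$. The indicial roots at $0$ are $0$ and $1-d=\nu_1-\nu_2-1$. The root $0$ always gives the terminating ${}_3F_1[-\alpha_1,-\alpha_2,c;d;t]$, except when $d$ is a nonpositive integer with $1-d\le\min(\alpha)$. In that exceptional case the coefficient recurrence breaks at $n=1-d$. The solution then survives only if a numerator factor vanishes there; I expect this to be the condition $\lambda_2-\nu_2\in[1,\nu_1-\nu_2-1]$, coming from $(c)_n$ vanishing. The second root $m+1=\nu_1-\nu_2-1$ gives $t^{m+1}{}_3F_1$ with shifted parameters. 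These are exactly the cases $\Phi_{3,a}$, $\Phi_{3,b}$ and $\Phi_{3,c}$, with dimension $1$, $1$ and $2$ respectively. This proves Theorem \ref{thm:class3} via Theorem \ref{thm:symb}.

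\textbf{Finally (Step 4)} I obtain both forms of the operators. The ordered forms follow by applying $\Rest_3\circ\Symb_{\ord}^{-1}=\Rest_3\circ dR\circ\Upsilon_\ord\circ T_\alpha$; the resulting $D_3^{m+1}$ factor may be pulled out front since $D_3$ is central. The symmetrized forms follow from the diagram \eqref{eqn:cdA}, applying $e^{-\wJ(\alpha)}$ to the ${}_3F_1$ and then $\Symb_0^{-1}$. The identification $e^{-\wJ(\alpha)}{}_3F_1=p^{(\alpha)}_{a,b}$ with Cayley continuant coefficients is Proposition \ref{Prop:SolGeneralDiffEq1}, which I take as given (proved in Appendix \ref{appendix:Cayely}).
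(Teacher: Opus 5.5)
Your outline is the paper's proof: Steps 1--4 of the recipe, with the $M_3'A_3'$-analysis forcing $\alpha=(\nu_1-\lambda_1,\lambda_3-\nu_2)$, T-saturation to $\calL^{(\alpha)}(a_0,b_0)$, conjugation by $e^{\wJ(\alpha)}$, Fact~\ref{fact:3F1-sols}, and Proposition~\ref{Prop:SolGeneralDiffEq1} for the symmetrized form. Your parameters $d=\nu_2-\nu_1+2$, $c=\nu_2-\lambda_2+1$ and your Step~3 case analysis are correct.

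\textbf{The gap.} You do not carry out the conjugation you flag as the main obstacle, and the form you predict for it is wrong. There is no extra first-order factor; one has exactly
\[
-\Ad(e^{\wJ(\alpha)})\calL^{(\alpha)}(a_0,b_0)={}_3\calF_1\bigl(-\alpha_1,-\alpha_2,\tfrac{a_0-b_0}{2};-b_0;t\bigr).
\]
This exact equality is what your Step~3 uses, since you read the indicial roots $0$ and $1-d=\nu_1-\nu_2-1$ off the ${}_3F_1$ operator alone. Suppose instead the conjugated operator were $F\cdot{}_3\calF_1$ or ${}_3\calF_1\cdot F$, with $F$ first order in $\vartheta_t$ vanishing at $\nu_1-\nu_2-1$. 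Then its kernel on $\Pol_{\min(\alpha)}[t]$ would in general differ from that of ${}_3\calF_1$; for instance, a left factor admits every $q$ with ${}_3\calF_1 q\in\C t^{\nu_1-\nu_2-1}$. So as written, Step~3 does not follow from Step~2.

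\textbf{The fix.} Because $\wJ(\alpha)^2=\tfrac{t^2}{4}(\vartheta_t-\alpha_1+1)(\vartheta_t-\alpha_1)(\vartheta_t-\alpha_2+1)(\vartheta_t-\alpha_2)$, the T-saturated operator is $\calL^{(\alpha)}(a,b)=\vartheta_t(b+1-\vartheta_t)+a\wJ(\alpha)+\wJ(\alpha)^2$. From $\vartheta_t\wJ(\alpha)=\wJ(\alpha)(\vartheta_t+1)$ you get $[\wJ(\alpha),\vartheta_t]=-\wJ(\alpha)$, which commutes with $\wJ(\alpha)$. So your commutator series stops after one term: $\Ad(e^{\wJ(\alpha)})\vartheta_t=\vartheta_t-\wJ(\alpha)$ (Lemma~\ref{lem:eD}), while $\wJ(\alpha)$ is fixed. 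Substituting, the $\wJ^2$ terms cancel and
\[
\Ad(e^{\wJ(\alpha)})\calL^{(\alpha)}(a,b)=\vartheta_t(b+1-\vartheta_t)+\wJ(\alpha)(2\vartheta_t+a-b),
\]
with $\wJ(\alpha)(2\vartheta_t+a-b)=t(\vartheta_t-\alpha_1)(\vartheta_t-\alpha_2)(\vartheta_t+\tfrac{a-b}{2})$. This is Proposition~\ref{prop:AdL}. With it in place, the rest of your argument goes through as in the paper.
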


As differential operators,
the orders 
of the DSBOs $\DD^{(j)}_3(\lambda,\nu)$ are
$\nu_1-\lambda_1+\lambda_3 - \nu_2$ for $j=1$
and 
$\lambda_3-\lambda_1+1$ for $j=2$.

We shall proceed with the modified recipe in Section \ref{sec:recipe2}
to prove Theorems \ref{thm:class3} and \ref{thm:cons3}.

%%%%%%%%%%%%%%%%%%%%%%%%%%%%%%%%%%%%%%%%%
\section{Proofs of  Theorems \texorpdfstring{\ref{thm:class3}}{6.7} and \texorpdfstring{\ref{thm:cons3}}{6.12}: Steps 1 and 2}
\label{sec:proof1}

The aim of the next two sections is to prove Theorems \ref{thm:class3} and \ref{thm:cons3},
that is, to determine 
\begin{align*}
\Sol_3(\xi,\eta; \lambda,\nu) := \{ \psi \in \Hom_{M_3'A_3'}\big((\CC_\xi\boxtimes \CC_{\lambda})\big\rvert_{M_3'A_3'},\Pol(\frakn_+)\otimes (\CC_\eta\boxtimes \CC_{\nu})\big) : \eqref{PDE_SymmetryBreaking_3} \text{ holds}\},
\end{align*}
\begin{equation}\label{PDE_SymmetryBreaking_3}
\widehat{d\pi_{\lambda^*}}(N_{3}^+)\psi = 0,
\end{equation}
for $(\xi,\eta; \lambda,\nu) \in (\ZZ/2\ZZ)^5\times \CC^5$.

We follow the recipe of the F-method in Section \ref{sec:recipe2}.
In this section, we handle the former two steps, namely Steps 1 and 2;
the latter two steps will be considered in Section \ref{sec:proof2}.

%%%%%%%%%%%%%%%%%%%%%%%%%%%%%%%%%%%%%%%%%
\subsection{Step 1: Characterization of \texorpdfstring{$M_3'A_3'$}{M₃'A₃'}-invariant elements}

We start by characterizing the space 
\begin{align*}
&\Hom_{M_3'A_3'}\big((\CC_\xi\boxtimes \CC_{\lambda})\big\rvert_{M_3'A_3'},\Pol(\frakn_+)\otimes (\CC_\eta\boxtimes \CC_{\nu})\big)
\end{align*}
of $M_3'A_3'$-invariant elements. 
For $\alpha=(\alpha_1,\alpha_2) \in \N^2$, put
\begin{equation*}
L_3(\alpha):=\{(\xi, \eta; \lambda, \nu) \in (\ZZ/2\ZZ)^5\times \CC^5: 
\text{\eqref{eqn:ch31} and \eqref{eqn:ch32} hold}  \},
\end{equation*}
\begin{alignat}{3}
\xi_1+\eta_1&\equiv \alpha_1 \bmod{2}, \quad &&\xi_3+\eta_2&&\equiv\alpha_2 \bmod{2},
\label{eqn:ch31}\\[3pt]
\nu_1-\lambda_1&=\alpha_1, \quad &&\lambda_3-\nu_2&&=\alpha_2.
\label{eqn:ch32}
\end{alignat}

\begin{proposition}\label{prop:Invariants_SB3} 
The following conditions on $(\xi, \eta; \lambda, \nu)\in (\ZZ/2\ZZ)^5\times \CC^5$ are equivalent.
\begin{enumerate}[label=\normalfont{(\roman*)}]
    \item $\Hom_{M_3'A_3'}\big((\CC_\xi\boxtimes \CC_{\lambda})\big\rvert_{M_3'A_3'},\Pol(\frakn_+)\otimes (\CC_\eta\boxtimes \CC_{\nu})\big)\neq \{0\}$.
    \item There exists some $\alpha\in \NN^2$ such that 
    $(\xi, \eta; \lambda, \nu) \in L_3(\alpha)$.
\end{enumerate}
\end{proposition}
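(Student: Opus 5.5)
The plan is to reduce the statement to a comparison of one-dimensional characters of $M_3'A_3'$, using the isotypic decomposition already established. By \eqref{eqn:Homalpha} with $i=3$, the space in (i) is the direct sum over $\alpha\in\NN^2$ of
\[
\Hom_{M_3'A_3'}\big((\CC_\xi\boxtimes \CC_{\lambda})\big\rvert_{M_3'A_3'},\Pol(\alpha)\otimes (\CC_\eta\boxtimes \CC_{\nu})\big).
\]
So (i) holds if and only if one of these summands is nonzero. By Lemma \ref{lem:MApol2}, $\Pol(\alpha)$ is, as an $M_3'A_3'$-module, a direct sum of copies of the single character $\C\psi_{\alpha,k}$ computed in Lemma \ref{lem:MApol}. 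Hence the summand for $\alpha$ is nonzero exactly when the character $(\CC_\xi\boxtimes\CC_\lambda)|_{M_3'A_3'}$ equals $\C\psi_{\alpha,k}\otimes(\CC_\eta\boxtimes\CC_\nu)$ as a character of $M_3'A_3'$.

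Next I will make both sides explicit in the coordinates $\diag(m_1,1,m_3)\in M_3'$ and $\exp(t_1H_1+t_3H_3)\in A_3'$.

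\emph{Left side.} By \eqref{eqn:characterM} and \eqref{eqn:characterA}, the left side restricts to the $M$-part $m_1^{\xi_1}m_3^{\xi_3}$ and the $A$-part $(\lambda_1,\lambda_3)$.

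\emph{Right side.} Through $\iota_3$, the character $\CC_\eta\boxtimes\CC_\nu$ gives $m_1^{\eta_1}m_3^{\eta_2}$ and $(\nu_1,\nu_2)$. By Lemma \ref{lem:MApol}, $\C\psi_{\alpha,k}$ contributes $m_1^{\alpha_1}m_3^{\alpha_2}$ on $M_3'$ and $(-\alpha_1,\alpha_2)$ on $A_3'$.

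The one point needing care is the sign convention. I would recheck it directly from the transformation laws in the proof of Lemma \ref{lem:MApol}, with $\Ad_\#$ as in \eqref{eqn:sharp}, so that the resulting equations read $\lambda_1=\nu_1-\alpha_1$ and $\lambda_3=\nu_2+\alpha_2$. These match \eqref{eqn:ch32}. On the $M$-part, the signs of $\pm1$ are irrelevant modulo $2$.

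Equating the characters then gives exactly the parity conditions \eqref{eqn:ch31} and the equalities \eqref{eqn:ch32}, that is, $(\xi,\eta;\lambda,\nu)\in L_3(\alpha)$. Conversely, if $(\xi,\eta;\lambda,\nu)\in L_3(\alpha)$, then the map $1\mapsto\psi_{\alpha,0}\otimes 1$ is a nonzero $M_3'A_3'$-homomorphism. This gives (ii)$\Rightarrow$(i).

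The main obstacle is only the bookkeeping of sign and dualization conventions, namely whether $V$ or $V^\vee$ appears and the action $\Ad_\#(l)p=p(\Ad(l^{-1})\cdot)$. I would settle these by testing a single monomial, for instance $\zeta_1$ under $\diag(t,1,1)$.
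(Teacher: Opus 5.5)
Your proposal is correct and follows essentially the same route as the paper. Both arguments decompose the Hom-space via \eqref{eqn:Homalpha}, read off the $M_3'A_3'$-character $\C_{(\alpha_1,\alpha_2)}\boxtimes\C_{(-\alpha_1,\alpha_2)}$ of $\Pol(\alpha)$ from Lemma \ref{lem:MApol}, and compare it with $(\C_\xi\boxtimes\C_\lambda)|_{M_3'A_3'}=\C_{(\xi_1,\xi_3)}\boxtimes\C_{(\lambda_1,\lambda_3)}$ twisted by $\C_\eta\boxtimes\C_\nu$. Your sign check ($\lambda_1=\nu_1-\alpha_1$, $\lambda_3=\nu_2+\alpha_2$) agrees with \eqref{eqn:ch32}. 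Your explicit homomorphism $1\mapsto\psi_{\alpha,0}\otimes 1$ spells out the converse direction, which the paper leaves implicit.
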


\begin{proof}
It follows from Lemma \ref{lem:MApol} that
\begin{equation*}
\Pol(\alpha) \simeq \bigoplus_{k=0}^{\min(\alpha)} 
\C_{(\alpha_1,\, \alpha_2)} 
\boxtimes \C_{(-\alpha_1, \alpha_2)} \qquad (\text{as $M_3'A'_3$-modules}).
\end{equation*}
Since 
$(\CC_\xi\boxtimes \CC_{\lambda})\big\rvert_{M_3'A_3'}
=\CC_{(\xi_1,\xi_3)}\boxtimes \CC_{(\lambda_1,\lambda_3)}$,
the decomposition \eqref{eqn:Homalpha} then concludes the proposition.
\end{proof}

%%%%%%%%%%%%%%%%%%%%%%%%%%%%%%%%%%%%%%%%%
\subsection{Step 2: 
Compute the T-saturation
\texorpdfstring{$T_\alpha^\sharp \big(-\zeta_3\widehat{d\pi_{\lambda^*}}(N_3^+)\big)$}{ }}
\label{sec:Step2}

We next reduce the PDE $-\zeta_3\widehat{d\pi_{\lambda^*}}(N_3^+)\psi(\zeta) = 0$ to an ODE via the saturation map $T_\alpha$ in \eqref{eqn:Tsat}
for $-\zeta_3\widehat{d\pi_{\lambda^*}}(N_3^+)$ in Proposition \ref{prop:Fdpi}.
The following formulas are useful to compute 
the pullbacks 
\begin{equation*}
T_\alpha^\sharp D:=
T_\alpha \circ D \circ T_\alpha^{-1}
\in \End_\C(\Pol_{\min(\alpha)}[t])
\end{equation*}
of differential operators $D$ on $\Pol(\alpha)$.
We write $\vartheta_t=t\frac{d}{dt}$ for the Euler operator in $t$. 

\begin{lemma}\label{lem:T-saturation}
For any $\alpha\in \NN^2$, we have
\begin{enumerate}[label=\normalfont{(\arabic*)}]
    \item $\vartheta_1T_\alpha=T_\alpha \circ (\alpha_1-\vartheta_t)$,
    \vspace{3pt}
    \item $\vartheta_2T_\alpha=T_\alpha \circ (\alpha_2-\vartheta_t)$,
    \vspace{3pt}
    \item $\vartheta_3T_\alpha=T_\alpha \circ\vartheta_t$,
    \vspace{3pt}
    \item $\frac{\zeta_3}{\zeta_1\zeta_2}T_\alpha p(\zeta)=T_\alpha (tp)(\zeta)$,
    \vspace{3pt}
    \item $\frac{\zeta_1\zeta_2}{\zeta_3}T_\alpha p(\zeta)=T_\alpha(t^{-1}p)(\zeta)$.    
\end{enumerate}
\end{lemma}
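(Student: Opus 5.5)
The plan is to verify each identity on the monomial basis $\{t^k : 0\le k\le \min(\alpha)\}$ of $\Pol_{\min(\alpha)}[t]$. Both sides of every identity are linear in $p$, so this suffices. By \eqref{eqn:Tsat} and \eqref{eqn:q_alpha_k}, we have
\begin{equation*}
T_\alpha(t^k)=\zeta_1^{\alpha_1}\zeta_2^{\alpha_2}\Big(\frac{\zeta_3}{\zeta_1\zeta_2}\Big)^k=\zeta_1^{\alpha_1-k}\zeta_2^{\alpha_2-k}\zeta_3^{k}=\psi_{\alpha,k}(\zeta).
\end{equation*}
The whole lemma then reduces to how the Euler operators and multiplication operators act on the monomial $\psi_{\alpha,k}$.

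For (1)--(3), I will use that $\vartheta_j$ acts on a monomial by multiplication by its degree in $\zeta_j$. Thus $\vartheta_1\psi_{\alpha,k}=(\alpha_1-k)\psi_{\alpha,k}$, $\vartheta_2\psi_{\alpha,k}=(\alpha_2-k)\psi_{\alpha,k}$ and $\vartheta_3\psi_{\alpha,k}=k\,\psi_{\alpha,k}$. On the other side, $\vartheta_t t^k=k t^k$, so $(\alpha_1-\vartheta_t)t^k=(\alpha_1-k)t^k$, and similarly for the other two. Applying $T_\alpha$ gives exactly the three identities.

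For (4) and (5), I will compute directly. Multiplying $\psi_{\alpha,k}$ by $\zeta_3/(\zeta_1\zeta_2)$ gives $\psi_{\alpha,k+1}=T_\alpha(t^{k+1})$, and dividing instead gives $\psi_{\alpha,k-1}=T_\alpha(t^{k-1})$.

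The only point needing care is that these identities are statements in $\C[t,t^{-1}]$ and in rational functions of $\zeta$, not inside $\Pol_{\min(\alpha)}[t]$. I interpret them with $T_\alpha$ extended by the same formula to Laurent polynomials in $t$; the computation above is valid verbatim for all $k\in\ZZ$. The lemma is then used only in combinations where the total operator preserves $\Pol(\alpha)$ (Proposition \ref{prop:frakn-mod}). I therefore do not expect a real obstacle: this bookkeeping of the domain is the only subtle step, and everything else is routine.
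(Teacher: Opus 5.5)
Your proof is correct and is essentially the paper's argument. The paper also verifies the identities directly from the definition \eqref{eqn:Tsat}, applying the chain rule to $\vartheta_1(T_\alpha p)$ for a general $p$, whereas you check on the basis $t^k\mapsto\psi_{\alpha,k}$; the two computations amount to the same thing, and your reading of (4) and (5) with $T_\alpha$ extended to Laurent polynomials makes explicit a point the paper leaves implicit.
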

\begin{proof}
This follows directly from the definition of $T_\alpha$. The first identity can be computed in the following way:
\begin{align*}
\vartheta_1 (T_\alpha p)(\zeta_1,\zeta_2,\zeta_3)&=\zeta_1\frac{\partial}{\partial\zeta_1}\Big( \zeta_1^{\alpha_1}\zeta_2^{\alpha_2}p\Big(\frac{\zeta_3}{\zeta_1\zeta_2}\Big)\Big)\\
&=\alpha_1\zeta_1^{\alpha_1}\zeta_2^{\alpha_2}p\Big(\frac{\zeta_3}{\zeta_1\zeta_2}\Big)-\zeta_1^{\alpha_1}\zeta_2^{\alpha_2}\frac{\zeta_3}{\zeta_1\zeta_2}p'\Big(\frac{\zeta_3}{\zeta_1\zeta_2}\Big)\\
&=T_\alpha\big((\alpha_1-\vartheta_t)p\big)(\zeta_1,\zeta_2,\zeta_3).
\end{align*}
The rest can be shown similarly.
\end{proof}

For $\alpha=(\alpha_1,\alpha_2)\in \NN^2$ and $a,b\in \CC$, 
let $\calL^{(\alpha)}(a,b)$ be the following differential operator:
\begin{align}
\calL^{(\alpha)}(a,b) 
:= \; &\vartheta_t(b+1-\vartheta_t)+\frac{a}{2}t(\vartheta_t-\alpha_1)
(\vartheta_t-\alpha_2) \nonumber \\
\enspace &+\frac{1}{4}t^2(\vartheta_t-\alpha_1+1)
(\vartheta_t-\alpha_1)(\vartheta_t-\alpha_2+1)(\vartheta_t-\alpha_2).\label{eqn:Lalpha}\end{align}
We put
\begin{equation}\label{eqn:ab}
\begin{aligned}
a_0&=a_0(\lambda;\alpha_1,\alpha_2):= \lambda_1+\lambda_3-2\lambda_2+\alpha_1-\alpha_2, \\[3pt]
b_0&=b_0(\lambda;\alpha_1,\alpha_2):= \lambda_1-\lambda_3-2+\alpha_1+\alpha_2.
\end{aligned}
\end{equation} 

\begin{proposition}\label{prop:ODEdiffoperator3}
We have
\begin{equation*}
T_\alpha^\sharp \big(-\zeta_3\widehat{d\pi_{\lambda^*}}(N_3^+)\big)
=\calL^{(\alpha)}(a_0,b_0\big).
\end{equation*}
\end{proposition}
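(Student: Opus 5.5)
The plan is a direct substitution. Take the explicit formula for $-\zeta_3\widehat{d\pi_{\lambda^*}}(N_3^+)$ from Proposition~\ref{prop:Fdpi} and conjugate it term by term through $T_\alpha$, using the intertwining rules of Lemma~\ref{lem:T-saturation}. That formula is a polynomial in $\vartheta_1,\vartheta_2,\vartheta_3$ and the multiplier $\frac{\zeta_3}{\zeta_1\zeta_2}$. Since $T_\alpha^\sharp$ is conjugation, it is multiplicative: $T_\alpha^\sharp(D D')=T_\alpha^\sharp D\circ T_\alpha^\sharp D'$. So it suffices to replace
\[
\vartheta_1\mapsto \alpha_1-\vartheta_t,\qquad \vartheta_2\mapsto \alpha_2-\vartheta_t,\qquad \vartheta_3\mapsto \vartheta_t,\qquad \tfrac{\zeta_3}{\zeta_1\zeta_2}\mapsto t,
\]
keeping the order of composition exactly as written in Proposition~\ref{prop:Fdpi}, so that each $t$ stays to the left of the $\vartheta$-factors that follow it.

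The computation splits into the three summands.

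First summand, $(\lambda_1-\lambda_3-1+\vartheta_1+\vartheta_2+\vartheta_3)\vartheta_3$. Substitution gives
\[
(\lambda_1-\lambda_3-1+\alpha_1+\alpha_2-\vartheta_t)\vartheta_t=\vartheta_t(b_0+1-\vartheta_t),
\]
with $b_0$ as in \eqref{eqn:ab}. The two factors commute because both are polynomials in $\vartheta_t$.

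Second summand, $\tfrac12\frac{\zeta_3}{\zeta_1\zeta_2}(\lambda_1-2\lambda_2+\lambda_3+\vartheta_1-\vartheta_2)\vartheta_1\vartheta_2$. Here $\vartheta_1-\vartheta_2\mapsto \alpha_1-\alpha_2$, a constant. The scalar factor therefore becomes exactly $a_0$, and $\vartheta_1\vartheta_2\mapsto(\vartheta_t-\alpha_1)(\vartheta_t-\alpha_2)$. The result is $\frac{a_0}{2}t(\vartheta_t-\alpha_1)(\vartheta_t-\alpha_2)$.

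Third summand, $\tfrac14\big(\frac{\zeta_3}{\zeta_1\zeta_2}\big)^2(\vartheta_1^2-\vartheta_1)(\vartheta_2^2-\vartheta_2)$. Write $\vartheta_1^2-\vartheta_1=\vartheta_1(\vartheta_1-1)$. This maps to $(\alpha_1-\vartheta_t)(\alpha_1-1-\vartheta_t)=(\vartheta_t-\alpha_1)(\vartheta_t-\alpha_1+1)$, and similarly for the index $2$. The result is $\tfrac14 t^2(\vartheta_t-\alpha_1+1)(\vartheta_t-\alpha_1)(\vartheta_t-\alpha_2+1)(\vartheta_t-\alpha_2)$.

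Summing the three pieces gives exactly $\calL^{(\alpha)}(a_0,b_0)$ in \eqref{eqn:Lalpha}.

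The only point needing care is whether Lemma~\ref{lem:T-saturation} may be applied to operators that are only meaningful on $\Pol(\alpha)$, such as the multiplier $\frac{\zeta_3}{\zeta_1\zeta_2}$, which is not a polynomial operator on its own. I would handle this by computing on the basis $T_\alpha(t^k)=\psi_{\alpha,k}$ and working formally in Laurent monomials; the lemma holds there verbatim. Proposition~\ref{prop:frakn-mod} guarantees that the full composite preserves $\Pol(\alpha)$, so the identity on monomials is an identity in $\End_\C(\Pol_{\min(\alpha)}[t])$. The one remaining source of error is the sign bookkeeping in $\vartheta_1\mapsto\alpha_1-\vartheta_t$, which I would double-check by testing the final operator on $t^0$ (only the $t^2$ and $t$ terms can be nonzero there).
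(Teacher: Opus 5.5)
Your proposal is correct and is the paper's own argument: the paper proves this by the same direct computation, substituting the rules of Lemma~\ref{lem:T-saturation} into the third formula of Proposition~\ref{prop:Fdpi}. Your three term-by-term results, namely $\vartheta_t(b_0+1-\vartheta_t)$, $\frac{a_0}{2}t(\vartheta_t-\alpha_1)(\vartheta_t-\alpha_2)$, and the quartic $t^2$ term, all check out, and your remark about working on Laurent monomials to justify the multiplier $\frac{\zeta_3}{\zeta_1\zeta_2}$ supplies rigor the paper leaves implicit.
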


\begin{proof}
This follows from a direct computation using Lemma \ref{lem:T-saturation}.
\end{proof}

Recall from \eqref{eqn:Solalpha} and \eqref{eqn:SolalphaAd} that we have
\begin{align*}
\Sol_3^{(\alpha)}(\xi,\lambda) &= \{ \psi \in \Hom_{M_3'A_3'}\big((\CC_\xi\boxtimes \CC_{\lambda})\big\rvert_{M_3'A_3'},\Pol(\alpha)\otimes (\CC_\eta\boxtimes \CC_{\nu})\big) : \eqref{PDE_SymmetryBreaking_3} \text{ holds}\},\\
\Sol_{\Ad(e^{J(\alpha)}),\,3}^{(\alpha)}(\xi,\lambda)
&=\{\varphi(\zeta) \in \Pol(\alpha) : 
\Ad(e^{J(\alpha)})(-\zeta_3\widehat{d\pi_{\lambda^*}}(N_3^+))\varphi(\zeta)=0\}.
\end{align*}
We then put
\begin{align*}
\wSol_\calL^{(\alpha)}(\xi,\lambda) &:= \{p(t) \in \Pol_{\min(\alpha)}[t] : 
\calL^{(\alpha)}(a_0,b_0)p(t) = 0\},\\
\wSol_{\Ad(e^{\wJ(\alpha)}) \calL}^{(\alpha)}(\xi,\lambda)
&:=\{q(t) \in \Pol_{\min(\alpha)}[t] : 
\Ad(e^{\wJ(\alpha)})\calL(a_0,b_0)q(t)=0\}.
\end{align*}
Then the commutative diagram \eqref{eqn:cdA} 
for the case for $A = -\zeta_3\widehat{d\pi_{\lambda^*}}(N_3^+)$
with $(\xi,\eta,\lambda, \nu)\in L_3(\alpha)$
can be given as follows up to the restriction map $\Rest_3$.
\begin{equation}\label{eqn:cdSol3}
\begin{tikzcd}[column sep=1.5cm]
	{\wSol_{\Ad(e^{\wJ(\alpha)}) \calL}^{(\alpha)}(\xi,\lambda)} & 
	\Sol_{\Ad(e^{J(\alpha)}),\,3}^{(\alpha)}(\xi,\lambda)  	
	\arrow[dl, pos=0.5, phantom, "\circlearrowleft"]
	&{\Diff_{G_3^\prime}\left(I(\xi, \lambda), J(\eta, \nu)\right)} \\
	{\wSol_\calL^{(\alpha)}(\xi,\lambda)}  & {\Sol_3^{(\alpha)}(\xi,\lambda)} &
	\arrow[ul, pos=0.886, phantom, "\circlearrowleft"]
	\arrow["\Symb_{\ord}^{-1}", from=1-2, to=1-3]
	\arrow["\Symb_0^{-1}"', from=2-2, to=1-3]
	\arrow["{T_\alpha}", "\sim"' , from=1-1, to=1-2]
	\arrow["\sim" sloped, "e^{-\wJ(\alpha)}"', from=1-1, to=2-1]
	\arrow["e^{-J(\alpha)}", "\sim"' sloped,  from=1-2, to=2-2]
	\arrow["\sim" , "{T_\alpha}"',  from=2-1, to=2-2]
\end{tikzcd}
\end{equation}

In the next section we identify the conjugated differential operator
$\Ad(e^{\wJ(\alpha)})\calL^{(\alpha)}(a_0,b_0)$ as 
a certain hypergeometric differential operator
to classify the space $\wSol_{\Ad(e^{\wJ(\alpha)}) \calL}^{(\alpha)}(\xi,\lambda)$.

%%%%%%%%%%%%%%%%%%%%%%%%%%%%%%%%%%%%%%%%%
\section{Proofs of  Theorems \texorpdfstring{\ref{thm:class3}}{6.7} and \texorpdfstring{\ref{thm:cons3}}{6.12}: Steps 3 and 4}
\label{sec:proof2}

The aim of this section is to carry out Steps 3 and 4 of the recipe in Section \ref{sec:recipe2}. 
We handle Step 3 in Section \ref{sec:Step3}. The proof will then be completed in 
Section \ref{sec:Step4}. 

%%%%%%%%%%%%%%%%%%%%%%%%%%%%%%%%%%%%%%%%%
\subsection{Step 3: Solve the differential equation}
\label{sec:Step3}

Our primary goal is to solve the differential equation
\begin{equation*}
\Ad(e^{\wJ(\alpha)})\calL^{(\alpha)}(a_0,b_0)q(t)=0.
\end{equation*}
To do so, we first identify it as a differential equation of hypergeometric type.
Recall from \eqref{eqn:Da} that 
we have 
\begin{equation*}
\wJ(\alpha)=\frac{t}{2}(\vartheta_t-\alpha_1)(\vartheta_t-\alpha_2).
\end{equation*}
It follows from \eqref{eqn:D1} that,
for $k,\ell\in \N$, we have
\begin{equation*}
\wJ(\alpha)^\ell t^k 
=\left(\frac{t}{2}\right)^\ell(k-\alpha_1)_\ell
(k-\alpha_2)_\ell t^k.
\end{equation*}
Then, by \eqref{eqn:Lalpha}, the operator 
$\calL^{(\alpha)}(a,b)$ on $\Pol_{\min(\alpha)}[t]$ can be given by
\begin{equation}\label{eqn:Lalpha2}
\calL^{(\alpha)}(a,b) = 
\vartheta_t(b+1-\vartheta_t)+a\wJ(\alpha)+\wJ(\alpha)^2.
\end{equation}
Hereafter, we always consider $\calL^{(\alpha)}(a,b)$ on $\Pol_{\min(\alpha)}[t]$
so that \eqref{eqn:Lalpha2} holds.

\medskip

The following lemma plays a key role.

\begin{lemma}\label{lem:eD}
We have 
\begin{equation*}
\Ad(e^{\wJ(\alpha)})\vartheta_t 
=\vartheta_t-\wJ(\alpha).
\end{equation*}
\end{lemma}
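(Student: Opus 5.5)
The plan is to compute the commutator $[\vartheta_t,\wJ(\alpha)]$ and then use the standard identity $e^{X}Ye^{-X}=\sum_{m\ge 0}\frac{1}{m!}(\ad X)^m Y$. This identity holds here as a finite sum because $\wJ(\alpha)$ is nilpotent on the finite-dimensional space $\Pol_{\min(\alpha)}[t]$. By \eqref{eqn:D1} it raises the $t$-degree by one, so $\wJ(\alpha)^{\ell}=0$ on $\Pol_{\min(\alpha)}[t]$ once $\ell>\min(\alpha)$.

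The key step is the commutator. Since $\vartheta_t$ commutes with $(\vartheta_t-\alpha_1)(\vartheta_t-\alpha_2)$ and $[\vartheta_t,t]=t$, we get
\begin{equation*}
[\vartheta_t,\wJ(\alpha)]=\tfrac12[\vartheta_t,t](\vartheta_t-\alpha_1)(\vartheta_t-\alpha_2)=\wJ(\alpha).
\end{equation*}
One can also check this on monomials with \eqref{eqn:D1}: $\wJ(\alpha)t^k$ is a multiple of $t^{k+1}$, so $\vartheta_t\wJ(\alpha)t^k-\wJ(\alpha)\vartheta_t t^k=(k+1-k)\wJ(\alpha)t^k$. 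Consequently
\begin{equation*}
(\ad\wJ(\alpha))\vartheta_t=[\wJ(\alpha),\vartheta_t]=-\wJ(\alpha),
\qquad
(\ad\wJ(\alpha))^2\vartheta_t=-[\wJ(\alpha),\wJ(\alpha)]=0.
\end{equation*}

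The series therefore truncates after the linear term:
\begin{equation*}
\Ad(e^{\wJ(\alpha)})\vartheta_t=e^{\wJ(\alpha)}\vartheta_t e^{-\wJ(\alpha)}=\vartheta_t+[\wJ(\alpha),\vartheta_t]=\vartheta_t-\wJ(\alpha).
\end{equation*}
If one prefers not to invoke the $\ad$-expansion, an equivalent route is to prove $\vartheta_t\wJ(\alpha)^\ell=\wJ(\alpha)^\ell(\vartheta_t+\ell)$ by induction on $\ell$. Summing over $\ell$ with weights $1/\ell!$ then gives $\vartheta_t e^{\wJ(\alpha)}=e^{\wJ(\alpha)}\vartheta_t+\wJ(\alpha)e^{\wJ(\alpha)}$, and rearranging yields the claim.

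The only point needing care is that all manipulations take place on $\Pol_{\min(\alpha)}[t]$, where $\wJ(\alpha)$ is a genuine endomorphism: by \eqref{eqn:D1} it kills $t^{\min(\alpha)}$, because $(\min(\alpha)-\alpha_j)_1=0$ for the index $j$ attaining the minimum. Hence $e^{\pm\wJ(\alpha)}$ are finite sums and mutually inverse, and there is no real obstacle beyond this bookkeeping.
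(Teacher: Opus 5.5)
Your proposal is correct and is essentially the paper's argument. Both rest on the single relation $[\vartheta_t,\wJ(\alpha)]=\wJ(\alpha)$. The paper uses it in the integrated form $\vartheta_t\wJ(\alpha)^k=\wJ(\alpha)^k(\vartheta_t+k)$, summed against $(-1)^k/k!$, to get $\vartheta_t e^{-\wJ(\alpha)}=e^{-\wJ(\alpha)}(\vartheta_t-\wJ(\alpha))$; this is exactly your fallback route, while your main route packages the same relation through the truncating $\ad$-expansion, with your nilpotency remark supplying the finiteness the paper leaves implicit.
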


\begin{proof}
A direct computation shows that
\begin{equation*}
\vartheta_t \wJ(\alpha)^k = \wJ(\alpha)^k(\vartheta_t + k).
\end{equation*}
Therefore,
\begin{align}
\vartheta_t e^{-\wJ(\alpha)} &= \sum_{k=0}^\infty \frac{(-\wJ(\alpha))^k}{k!} (\vartheta_t + k) \nonumber \\[3pt]
&= e^{-\wJ(\alpha)}(\vartheta_t -\wJ(\alpha)). \label{eqn:ewD}
\end{align}
Now apply $e^{\wJ(\alpha)}$ to both
sides of \eqref{eqn:ewD} to obtain the identity.
\end{proof}

For $\pmb{\beta}:=\beta_1,\ldots, \beta_q$, 
$\pmb{\gamma}:=\gamma_1,\ldots, \gamma_p \in \CC$,
put
\begin{equation*}
{}_p\calF_q(\pmb{\gamma};\pmb{\beta};t):=
\vartheta_t(\vartheta_t+\beta_{1}-1)\cdots(\vartheta_t+\beta_{q}-1)-
t(\vartheta_t+\gamma_{1})%
\cdots(\vartheta_t+\gamma_{p})
\end{equation*}
so that 
\begin{equation*}
{}_p\calF_q(\pmb{\gamma};\pmb{\beta};t) f(t) =0
\end{equation*}
is the hypergeometric equation that $f(t) = {}_pF_q[\pmb{\gamma};\pmb{\beta};t]$
satisfies. (See, for instance, \cite[(16.8.3)]{DLMF}.)

\begin{proposition}\label{prop:AdL}
We have
\begin{equation*}
-\Ad(e^{\wJ(\alpha)})\calL^{(\alpha)}(a,b)
={}_3\calF_1(-\alpha_1,-\alpha_2,\tfrac{a-b}{2};-b;t),
\end{equation*}
that is,
\begin{equation*}
-\Ad(e^{\wJ(\alpha)})\calL^{(\alpha)}(a,b)
=
\vartheta_t(\vartheta_t-b-1)
-t(\vartheta_t-\alpha_1)(\vartheta_t-\alpha_2)(\vartheta_t+\tfrac{a-b}{2}).
\end{equation*}
\end{proposition}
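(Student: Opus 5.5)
We use \eqref{eqn:Lalpha2}, which writes $\calL^{(\alpha)}(a,b)=\vartheta_t(b+1-\vartheta_t)+a\wJ(\alpha)+\wJ(\alpha)^2$ on $\Pol_{\min(\alpha)}[t]$, together with Lemma~\ref{lem:eD}. Conjugation by $e^{\wJ(\alpha)}$ is an algebra automorphism of $\End_\C(\Pol_{\min(\alpha)}[t])$, and $\wJ(\alpha)$ commutes with $e^{\wJ(\alpha)}$. Hence $\Ad(e^{\wJ(\alpha)})\wJ(\alpha)=\wJ(\alpha)$, and by Lemma~\ref{lem:eD}, $\Ad(e^{\wJ(\alpha)})\vartheta_t=\vartheta_t-\wJ(\alpha)$. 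Writing $\wJ:=\wJ(\alpha)$ for brevity, this gives
\begin{equation*}
\Ad(e^{\wJ})\calL^{(\alpha)}(a,b)=(\vartheta_t-\wJ)(b+1-\vartheta_t+\wJ)+a\wJ+\wJ^2 .
\end{equation*}

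We then expand, keeping track of the non-commutativity of $\vartheta_t$ and $\wJ$. From the proof of Lemma~\ref{lem:eD} we have $\vartheta_t\wJ=\wJ(\vartheta_t+1)$. Expanding gives
\begin{equation*}
\vartheta_t(b+1-\vartheta_t)+\vartheta_t\wJ-\wJ(b+1)+\wJ\vartheta_t-\wJ^2+a\wJ+\wJ^2 .
\end{equation*}
The $\wJ^2$ terms cancel, and this cancellation is what makes the result first order in $t$. Substituting $\vartheta_t\wJ=\wJ\vartheta_t+\wJ$, the expression becomes
\begin{equation*}
\vartheta_t(b+1-\vartheta_t)+\wJ\,(2\vartheta_t+a-b).
\end{equation*}

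Finally, insert $\wJ=\frac{t}{2}(\vartheta_t-\alpha_1)(\vartheta_t-\alpha_2)$. The second term is then $t(\vartheta_t-\alpha_1)(\vartheta_t-\alpha_2)(\vartheta_t+\tfrac{a-b}{2})$. Negating the whole expression yields $\vartheta_t(\vartheta_t-b-1)-t(\vartheta_t-\alpha_1)(\vartheta_t-\alpha_2)(\vartheta_t+\frac{a-b}{2})$. This is ${}_3\calF_1(-\alpha_1,-\alpha_2,\tfrac{a-b}{2};-b;t)$, since $\beta_1-1=-b-1$.

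The only delicate point is justifying the algebra on the finite-dimensional space $\Pol_{\min(\alpha)}[t]$. There $\wJ$ is nilpotent, so $e^{\pm\wJ}$ is a finite sum and $\Ad$ is a genuine automorphism. Also, \eqref{eqn:Lalpha2} holds as operators on this space, because $(\vartheta_t-\alpha_1)(\vartheta_t-\alpha_2)$ annihilates $t^{\min(\alpha)}$, so the identity $\wJ^2=\frac{t^2}{4}(\vartheta_t-\alpha_1+1)\cdots$ matches \eqref{eqn:Lalpha}. The resulting third-order operator also preserves $\Pol_{\min(\alpha)}[t]$ for the same reason, so all identities are equalities in $\End_\C(\Pol_{\min(\alpha)}[t])$.
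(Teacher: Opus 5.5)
Your proof is correct and follows the paper's argument. The paper also invokes \eqref{eqn:Lalpha2}, Lemma~\ref{lem:eD} and the invariance $\Ad(e^{\wJ(\alpha)})\wJ(\alpha)^k=\wJ(\alpha)^k$, but leaves the rest as ``a direct computation''; you carry that computation out correctly, including the cancellation of the $\wJ(\alpha)^2$ terms via $\vartheta_t\wJ(\alpha)=\wJ(\alpha)(\vartheta_t+1)$.

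One small correction to your final paragraph: \eqref{eqn:Lalpha2} is an identity of differential operators on all of $\C[t]$. Indeed, $(\vartheta_t-c)\,t=t\,(\vartheta_t+1-c)$ gives $\wJ(\alpha)^2=\frac{t^2}{4}(\vartheta_t-\alpha_1+1)(\vartheta_t-\alpha_1)(\vartheta_t-\alpha_2+1)(\vartheta_t-\alpha_2)$ identically. So the vanishing on $t^{\min(\alpha)}$ is needed only to make $\wJ(\alpha)$, and hence $e^{\pm\wJ(\alpha)}$, a nilpotent endomorphism of $\Pol_{\min(\alpha)}[t]$, not for the identity itself.
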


\begin{proof}
It follows from Lemma \ref{lem:eD} that 
\begin{equation*}
\Ad(e^{\wJ(\alpha)})\vartheta_t = \vartheta_t - \wJ(\alpha).
\end{equation*}
Further, we have $\Ad(e^{\wJ(\alpha)})\wJ(\alpha)^k=\wJ(\alpha)^k$ for all $k \in \N$.
Now, by \eqref{eqn:Lalpha2}, the desired equation follows from a direct computation.
\end{proof}

It follows from Proposition \ref{prop:AdL} that 
$\wSol_{\Ad(e^{\wJ(\alpha)}) \calL}^{(\alpha)}(\xi,\lambda)$ can be described as
\begin{equation}\label{eqn:AdSol3F1}
\wSol_{\Ad(e^{\wJ(\alpha)}) \calL}^{(\alpha)}(\xi,\lambda)
=\{q(t) \in \Pol_{\min(\alpha)}[t] : 
{}_3\calF_1(-\alpha_1,-\alpha_2,\tfrac{a_0-b_0}{2};-b_0;t)q(t)=0\}
\end{equation}
with $(a_0, b_0)$ in \eqref{eqn:ab}.

We are now ready to classify 
$\wSol_{\Ad(e^{\wJ(\alpha)}) \calL}^{(\alpha)}(\xi,\lambda)$.
Let us first recall a general fact on the polynomial solutions 
to the equation
${}_3\calF_1(-n,-m,c;d;t)q(t)=0$ for $m, n \in \NN$ and $c, d \in\CC$.
Observe that, for $d \notin [1-\min(n,m),0]\cap \Z$, we have
\begin{equation*}
{}_3F_1\left [\begin{matrix}
    -n& -m & c \\
   & d &  
\end{matrix};t\right ]=\sum_{k=0}^{\min(n,m)}\frac{(-n)_k(-m)_k(c)_k}{(d)_kk!}t^k
\in \Pol_{\min(n,m)}[t].
\end{equation*}
We then consider the following 
three conditions on $(n,m,c, d) \in \N^2\times \C^2$:
\vspace{2pt}
\begin{enumerate}[label=\normalfont{(\arabic*)}]
\item[(C1)$'$] $d \notin [1-\min(n,m),0]\cap \Z$;
\vspace{2pt}
\item[(C2)$'$] $d \in [1-\min(n,m),0]\cap \Z$ and 
$c \notin [d, 0] \cap \Z$;
\vspace{2pt}
\item[(C3)$'$] $d\in [1-\min(n,m),0]\cap \Z$ 
and $c\in [d, 0] \cap \Z$.
\end{enumerate}

\begin{remark}\label{rem:zero-case}
If $\min(n,m)=0$, then only (C1)$'$ holds
as  $[a,b]=\emptyset$ for $b< a$.
\end{remark}

Furthermore, to simplify the presentation of the next statement, we write
\begin{align*}
\text{(F1)$'$}
&:=\CC\, 
{}_3F_1\left [\begin{matrix}
    -n& -m & c \\
   & d &  
\end{matrix};t\right ],
\\[3pt]
\text{(F2)$'$}
&:=
\CC\, 
t^{1-d}{}_3F_1\left [\begin{matrix}
    1-d-n & 1-d-m & 1-d+c \\
   & 2-d &  
\end{matrix};t\right ],
\\[3pt]
\text{(F3)$'$}
&:=
\CC\, 
{}_3F_1\left [\begin{matrix}
    -n& -m & c \\
   & d &  
\end{matrix};t\right ]
\oplus
\CC\, 
t^{1-d}{}_3F_1\left [\begin{matrix}
    1-d-n & 1-d-m & 1-d+c \\
   & 2-d &  
\end{matrix};t\right ].
\end{align*}
For $m, n \in \NN$ and $c, d \in\CC$, we put
\begin{equation*}
\Sol^{(n,m)}_{3\calF1}(c,d):=\{q(t) \in \Pol_{\min{(n,m)}}[t]: 
{}_3\calF_1(-n,-m,c;d;t)q(t)=0\}.
\end{equation*}

\begin{fact}[{cf. \cite[Eq. 16.8.6]{DLMF}}]
\label{fact:3F1-sols}
We have
$\Sol^{(n,m)}_{3\calF1}(c,d)\neq \{0\}$
for any $(n,m,c, d) \in \N^2\times \C^2$.
Moreover,
the space $\Sol^{(n,m)}_{3\calF1}(c,d)$
can be classified as follows.
\begin{equation*}
\Sol^{(n,m)}_{3\calF1}(c,d)
=
\begin{cases}
\textnormal{(F1)$'$}
& \textnormal{if (C1)$'$ holds},\\[3pt]
\textnormal{(F2)$'$}
& \textnormal{if (C2)$'$ holds},\\[3pt]
\textnormal{(F3)$'$}
& \textnormal{if (C3)$'$ holds}.
\end{cases}
\end{equation*}
\end{fact}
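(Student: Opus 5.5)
We prove Fact \ref{fact:3F1-sols} by a direct analysis of the coefficient recursion. Write $q(t)=\sum_{k=0}^{N}c_k t^k$ with $N:=\min(n,m)$. The operator ${}_3\calF_1(-n,-m,c;d;t)=\vartheta_t(\vartheta_t+d-1)-t(\vartheta_t-n)(\vartheta_t-m)(\vartheta_t+c)$ maps $\Pol_N[t]$ into itself. Indeed, the only term that could raise the degree is $t(\vartheta_t-n)(\vartheta_t-m)(\vartheta_t+c)\,t^N$, and it vanishes because $(N-n)(N-m)=0$. Comparing coefficients of $t^{k}$ turns the equation into the two-term recursion
\begin{equation*}
k(k+d-1)\,c_k=(k-1-n)(k-1-m)(k-1+c)\,c_{k-1},\qquad 1\le k\le N,
\end{equation*}
with no condition on $c_0$. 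The top relation at $k=N+1$ holds automatically, since its right-hand side contains the factor $(N-n)(N-m)=0$. Hence the solution space is determined entirely by where the factor $k(k+d-1)$ vanishes for $1\le k\le N$.

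Case (C1)$'$: the factor $k+d-1$ is nonzero for all $1\le k\le N$. Then $c_0$ determines every $c_k$ uniquely, and solving the recursion gives $c_k=c_0\,(-n)_k(-m)_k(c)_k/((d)_k k!)$. So the space is $\CC\,{}_3F_1$, which is (F1)$'$.

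Cases (C2)$'$ and (C3)$'$: here $d=1-k_0$ with $1\le k_0\le N$. For $k\ne k_0$ the recursion still determines $c_k$ from $c_{k-1}$. At $k=k_0$ the left-hand side vanishes, so we need $(k_0-1-n)(k_0-1-m)(k_0-1+c)\,c_{k_0-1}=0$, while $c_{k_0}$ becomes a free parameter. The first two factors are nonzero since $k_0-1<N$.

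In case (C2)$'$ we have $c\notin[d,0]\cap\ZZ$, so $(c)_{k_0-1}\neq0$ and $k_0-1+c\ne0$. Then $c_{k_0-1}$ is a nonzero multiple of $c_0$, and the constraint forces $c_0=\dots=c_{k_0-1}=0$. Starting from $c_{k_0}$, the recursion with the shift $k=k_0+j$ gives the coefficients of $t^{k_0}\,{}_3F_1[k_0-n,k_0-m,k_0+c;k_0+1;t]$. Since $1-d=k_0$, this is exactly (F2)$'$. The shift computation is $(k-1-n)=(j+k_0-n-1)$, which yields the Pochhammer parameter $1-d-n$, and $k(k+d-1)=(j+k_0)j$, which yields $(2-d)_j\,j!$ after normalization.

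In case (C3)$'$, the constraint is satisfied by an arbitrary $c_0$. This holds either because $k_0-1+c=0$, or because $(c)_{k}$ already vanished for some $k<k_0$, in which case $c_{k_0-1}=0$. Thus both $c_0$ and $c_{k_0}$ are free. The branch starting at $c_0$ is the terminating ${}_3F_1$: because $c\in[d,0]$, the Pochhammer $(c)_k$ vanishes for $k\ge1-c$ and $1-c\le k_0$, so the series is well defined as a polynomial of degree $<k_0$, interpreting $(c)_k/(d)_k$ by cancellation. The second branch is the same as in (C2)$'$. This gives (F3)$'$, and in every case the space is nonzero.

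The main obstacle is the bookkeeping at the singular index $k_0$. In (C3)$'$ one has to check that the expression ${}_3F_1[-n,-m,c;d;t]$ with $d$ a nonpositive integer is interpreted as the truncation at $k=-c<k_0$, so that it is a genuine solution. One must also confirm that the two listed solutions are linearly independent, which is clear from their degrees: one has degree $<k_0$ and the other has lowest term $t^{k_0}$.
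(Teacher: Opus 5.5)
Your argument is correct. It follows a different route from the paper, which gives no proof and only refers to the local solutions in \cite[Eq.~16.8.6]{DLMF}.

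That reference supplies the two candidate solutions ${}_3F_1[-n,-m,c;d;t]$ and $t^{1-d}\,{}_3F_1[1-d-n,1-d-m,1-d+c;2-d;t]$. However, it describes formal local solutions for generic $d$. When $d\in\ZZ_{\le 0}$, the exponents $0$ and $1-d$ differ by a positive integer and the first series need not even be defined. The citation also does not tell you which solutions lie in $\Pol_{\min(n,m)}[t]$, or that there are no others.

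Your recursion settles exactly these points. You use $k(k+d-1)c_k=(k-1-n)(k-1-m)(k-1+c)c_{k-1}$, together with the observation that the relation at $k=N+1$ is automatic because $(N-n)(N-m)=0$. This gives exhaustiveness and the dimension count directly. It also shows that the split between (C2)$'$ and (C3)$'$ is precisely whether $(c)_{k_0}=0$ at the unique resonant index $k_0=1-d$. So the citation only provides the shape of the solutions, while your argument is self-contained and is what actually justifies the Fact as stated for polynomials.

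One point to tidy up in (C3)$'$. You speak both of ``interpreting $(c)_k/(d)_k$ by cancellation'' and of ``truncation at $k=-c$''. These disagree in the boundary case $c=d$: there, cancellation produces the full sum $\sum_{k=0}^{N}(-n)_k(-m)_k\,t^k/k!$ rather than a polynomial of degree $<k_0$. Both polynomials are solutions, and both are independent of $t^{k_0}(\cdots)$, so the span (F3)$'$ is unaffected. Still, you should commit to the truncation convention. Note that for $k\le -c\le k_0-1$ one has $(d)_k\neq 0$, so no cancellation is needed at all.
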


\begin{remark}
We regard the generalized hypergeometric polynomial
$
{}_3F_1\left [\begin{matrix}
-n& -m & c \\
& -m &  
\end{matrix};t\right ]
$ with $m,n \in \NN$ and $c \in \CC$ as
\begin{equation*}
{}_3F_1\left [\begin{matrix}
-n& -m & c \\
& -m &  
\end{matrix};t\right ]
=
\sum_{k=0}^{\min(n,m)}
\frac{(-n)_k(c)_k}{k!}t^k,
\end{equation*}
so that 
$
{}_3F_1\left [\begin{matrix}
-n& -m & c \\
& -m&  
\end{matrix};t\right ]
$
is a solution to the equation
${}_3\calF_1(-n,-m,c;-m;t)q(t)=0$.
In general, we have
$
{}_3F_1\left [\begin{matrix}
-n& -m & c \\
& -m &  
\end{matrix};t\right ]
\neq
{}_2F_0[-n, c ;t]
$
For instance,
\begin{equation*}
{}_3F_1\left[\begin{matrix}
-2 & -1 & -2\\
& -1 &
\end{matrix};
t\right] = \sum_{k=0}^{1} \frac{(-2)_k(-2)_k}{k!}t^k = 1+4t,
\end{equation*}
whereas
\begin{equation*}
{}_2F_0[-2, -2; t] = 1+4t+2t^2.
\end{equation*}
\end{remark}

\begin{remark}\label{rem:3F1_2F0}
If $d=-m \in -\Z_{\geq 0}$ with $d< -n$ or $d < c$ with $c \in -\N$, then
\begin{equation*}
{}_3F_1\left [\begin{matrix}
-n& d & c \\
& d &  
\end{matrix};t\right ]
=\sum_{k=0}^{n}
\frac{(-n)_k(c)_k}{k!}t^k
={}_2F_0[-n, c ;t].
\end{equation*}
\end{remark}

Now, to classify 
the solution space $\wSol_{\Ad(e^{\wJ(\alpha)}) \calL}^{(\alpha)}(\xi,\lambda)$,
we define the conditions (C1), (C2) and (C3) on 
$(\alpha_1, \alpha_2, a_0, b_0) \in \N^2\times \C^2$
with $(a_0, b_0)$ in \eqref{eqn:ab}  as follows:
\vspace{2pt}
\begin{enumerate}[label=\normalfont{(\arabic*)}]
\item[(C1)] $b_0 \notin [0,\min(\alpha)-1]\cap \Z$;
\vspace{2pt}
\item[(C2)] $b_0 \in [0,\min(\alpha)-1]\cap \Z$ and 
$\frac{a_0-b_0}{2} \notin [-b_0,0]\cap \Z$;
\vspace{2pt}
\item[(C3)] $b_0\in [0,\min(\alpha)-1]\cap \Z$ 
and $\frac{a_0-b_0}{2}\in [-b_0,0]\cap \Z$.
\end{enumerate}
Here, as in Remark \ref{rem:zero-case},
only (C1) holds if $\min(\alpha)=0$.

As for Fact \ref{fact:3F1-sols},
we write
\begin{align*}
\text{(F1)}
&:=\CC\, 
{}_3F_1\left [\begin{matrix}
-\alpha_1& -\alpha_2 & \frac{a_0-b_0}{2} \\
& -b_0 &  
\end{matrix};t\right ], \\[3pt]
\text{(F2)}
&:=
\CC\, 
t^{1+b_0}{}_3F_1
\left [\begin{matrix}
b_0+1-\alpha_1& b_0+1-\alpha_2 & \frac{a_0+b_0}{2}+1 \\
& 2+b_0 &  
\end{matrix};t\right ], \\[3pt]
\text{(F3)}
&:=
\CC\, 
{}_3F_1\left [\begin{matrix}
-\alpha_1& -\alpha_2 & \frac{a_0-b_0}{2} \\
& -b_0 &  
\end{matrix};t\right ] 
\oplus
\CC\, 
t^{1+b_0}{}_3F_1
\left [\begin{matrix}
b_0+1-\alpha_1& b_0+1-\alpha_2 & \frac{a_0+b_0}{2}+1 \\
& 2+b_0 &  
\end{matrix};t\right ].
\end{align*}

\begin{proposition}\label{prop:SolDsec7}
We have
$\wSol_{\Ad(e^{\wJ(\alpha)}) \calL}^{(\alpha)}(\xi,\lambda)\neq \{0\}$
for any $(\xi, \eta; \lambda, \nu) \in L_3(\alpha)$.
Moreover, for $(\xi, \eta; \lambda, \nu) \in L_3(\alpha)$,
the solution space $\wSol_{\Ad(e^{\wJ(\alpha)}) \calL}^{(\alpha)}(\xi,\lambda)$ 
is classified as follows.
\begin{align*}
\wSol_{\Ad(e^{\wJ(\alpha)}) \calL}^{(\alpha)}(\xi,\lambda)
=
\begin{cases}
\textnormal{(F1)}
& \textnormal{if (C1) holds},\\[3pt]
\textnormal{(F2)}
& \textnormal{if (C2) holds},\\[3pt]
\textnormal{(F3)}
& \textnormal{if (C3) holds}.
\end{cases}
\end{align*}
\end{proposition}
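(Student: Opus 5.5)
The plan is to reduce Proposition \ref{prop:SolDsec7} directly to Fact \ref{fact:3F1-sols} through the description \eqref{eqn:AdSol3F1}. By Proposition \ref{prop:AdL}, the equation $\Ad(e^{\wJ(\alpha)})\calL^{(\alpha)}(a_0,b_0)q(t)=0$ on $\Pol_{\min(\alpha)}[t]$ is the same as ${}_3\calF_1(-\alpha_1,-\alpha_2,\tfrac{a_0-b_0}{2};-b_0;t)q(t)=0$. Hence
\[
\wSol_{\Ad(e^{\wJ(\alpha)}) \calL}^{(\alpha)}(\xi,\lambda)=\Sol^{(\alpha_1,\alpha_2)}_{3\calF1}\big(\tfrac{a_0-b_0}{2},-b_0\big),
\]
with $(n,m,c,d)=(\alpha_1,\alpha_2,\tfrac{a_0-b_0}{2},-b_0)$. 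The membership $(\xi,\eta;\lambda,\nu)\in L_3(\alpha)$ only fixes $\alpha\in\N^2$, so $(n,m)\in\N^2$ and $c,d\in\C$ are admissible. The nonvanishing assertion then follows at once from the first part of Fact \ref{fact:3F1-sols}.

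Next I would translate the conditions. The condition $d\in[1-\min(n,m),0]\cap\Z$ reads $-b_0\in[1-\min(\alpha),0]\cap\Z$, which is exactly $b_0\in[0,\min(\alpha)-1]\cap\Z$. Likewise $c\in[d,0]\cap\Z$ becomes $\tfrac{a_0-b_0}{2}\in[-b_0,0]\cap\Z$. So (C$j$)$'$ is equivalent to (C$j$) for $j=1,2,3$, and Remark \ref{rem:zero-case} matches the remark that only (C1) occurs when $\min(\alpha)=0$.

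Finally I would match the solution families. (F1)$'$ with these parameters is literally (F1). For (F2)$'$ we have $1-d=1+b_0$, $1-d-n=b_0+1-\alpha_1$, $1-d-m=b_0+1-\alpha_2$, $2-d=2+b_0$, and $1-d+c=1+b_0+\tfrac{a_0-b_0}{2}=\tfrac{a_0+b_0}{2}+1$. These give exactly (F2), and (F3) follows the same way.

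The only step needing care is the convention issue in Fact \ref{fact:3F1-sols} in the degenerate cases. In case (C3), ${}_3F_1$ with lower parameter $-b_0$ is a nonpositive integer and the series must be read as terminating before the pole. One has to check that the numerator factor $(c)_k$ vanishes before $(d)_k$ does, which is the condition $c\in[d,0]$. I would verify this directly by the recurrence for coefficients $c_k$ of $q=\sum c_kt^k$, namely
\[
k(k-b_0-1)c_k=(k-1-\alpha_1)(k-1-\alpha_2)\big(k-1+\tfrac{a_0-b_0}{2}\big)c_{k-1}.
\]
This recurrence also confirms the dimension count: the coefficient $c_{b_0+1}$ is free exactly when the right-hand side vanishes at $k=b_0+1$. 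Since $b_0+1\le\min(\alpha)$ in (C2) and (C3), this happens iff $\tfrac{a_0-b_0}{2}\in[-b_0,0]$. Otherwise the chain starting from $c_0$ is forced to $c_0=0$, which yields (F2) in place of (F3).
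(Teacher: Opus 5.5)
Your proposal is correct and is essentially the paper's proof, which simply invokes Fact \ref{fact:3F1-sols} via \eqref{eqn:AdSol3F1}. Your explicit translation of (C$j$)$'$ into (C$j$) and of (F$j$)$'$ into (F$j$) is exactly the bookkeeping the paper leaves implicit. One small slip in your optional recurrence check: at $k=b_0+1$ the left-hand coefficient $k(k-b_0-1)$ vanishes, so $c_{b_0+1}$ is always free, and the vanishing of the right-hand side is instead the condition $(c)_{b_0+1}c_0=0$ that permits $c_0\neq 0$ — which is the conclusion you draw anyway.
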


\begin{proof}
By  \eqref{eqn:AdSol3F1},
this is a straightforward consequence of Fact \ref{fact:3F1-sols}.
 \end{proof}

Next, we classify the solution space $\wSol_\calL^{(\alpha)}(\xi,\lambda)$.
Recall from \eqref{eqn:pab} that,
for $\alpha=(\alpha_1,\alpha_2) \in \N^2$ and $a,b\in \C$, we denote by
$p^{(\alpha_1, \alpha_2)}_{a,b}(t)$ the polynomial defined as
\begin{equation}\label{eqn:pab1}
p_{a,b}^{(\alpha_1, \alpha_2)}(t)=\sum_{n=0}^{\min(\alpha)} \frac{(-\alpha_1)_n(-\alpha_2)_n}{2^nn!(-b)_n}\Cay_n(a,b)t^n.
\end{equation}

\begin{proposition}\label{Prop:SolGeneralDiffEq1}
We have
$\wSol_\calL^{(\alpha)}(\xi,\lambda)\neq \{0\}$
for any $(\xi, \eta; \lambda, \nu) \in L_3(\alpha)$.
Moreover, 
for $(\xi, \eta; \lambda, \nu) \in L_3(\alpha)$,
the solution space $\wSol_\calL^{(\alpha)}(\xi,\lambda)$ is classified as follows.
\begin{align*}
\wSol_\calL^{(\alpha)}(\xi,\lambda)
=
\begin{cases}
\CC\,
p_{a_0,b_0}^{(\alpha_1, \alpha_2)}(t)
& \textnormal{if (C1) holds},\\[10pt]
 \CC\,
t^{1+b_0}p_{a_0,-b_0-2}^{(\alpha_1-b_0-1,\alpha_2-b_0-1)}(t)
& \textnormal{if (C2) holds},\\[10pt]
\CC\, 
p_{a_0,b_0}^{(\alpha_1, \alpha_2)}(t)
\oplus \CC\,
t^{1+b_0}p_{a_0,-b_0-2}^{(\alpha_1-b_0-1,\alpha_2-b_0-1)}(t)
& \textnormal{if (C3) holds}.
\end{cases}
\end{align*}
\end{proposition}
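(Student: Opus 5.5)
The plan is to transport Proposition \ref{prop:SolDsec7} to the original equation through the left column of the commutative diagram \eqref{eqn:cdSol3}. Since $e^{-\wJ(\alpha)}$ is a linear isomorphism
\[
\wSol_{\Ad(e^{\wJ(\alpha)})\calL}^{(\alpha)}(\xi,\lambda)\stackrel{\sim}{\To}\wSol_\calL^{(\alpha)}(\xi,\lambda)
\]
by Proposition \ref{prop:SolA}, non-vanishing and the dimension count (one in cases (C1), (C2), two in case (C3)) follow immediately. It remains to identify the images of the ${}_3F_1$-polynomials explicitly. Concretely, I will show
\[
e^{-\wJ(\alpha)}\,{}_3F_1\left[\begin{matrix}-\alpha_1&-\alpha_2&\frac{a_0-b_0}{2}\\ &-b_0&\end{matrix};t\right]=p^{(\alpha_1,\alpha_2)}_{a_0,b_0}(t)
\]
in case (C1). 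For the second solution, I will treat the $t^{1+b_0}$ factor as a shift of $\alpha$.

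For the first identity, I will expand $e^{-\wJ(\alpha)}$ using \eqref{eqn:D1}:
\[
e^{-\wJ(\alpha)}t^k=\sum_{\ell}\frac{(-1)^\ell(k-\alpha_1)_\ell(k-\alpha_2)_\ell}{2^\ell\ell!}t^{k+\ell}.
\]
Applying this to the ${}_3F_1$-series and using $(-\alpha_i)_k(k-\alpha_i)_\ell=(-\alpha_i)_{k+\ell}$, the coefficient of $t^n$ factors as $(-\alpha_1)_n(-\alpha_2)_n/2^n$ times
\[
\sum_{k=0}^{n}\frac{(c)_k\,2^k(-1)^{n-k}}{(-b_0)_k\,k!\,(n-k)!},\qquad c=\tfrac{a_0-b_0}{2}.
\]
The task then reduces to the identity
\[
\frac{\Cay_n(a,b)}{n!\,(-b)_n}=\sum_{k}\frac{(-1)^{n-k}2^k(\tfrac{a-b}{2})_k}{k!\,(n-k)!\,(-b)_k},
\]
which I would prove from the three-term recurrence of the Cayley continuants recalled in Appendix \ref{appendix:Cayely}. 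The alternative is to check directly that $p_{a,b}$ solves $\calL^{(\alpha)}(a,b)p=0$: by \eqref{eqn:Lalpha2}, this gives a three-term recurrence on the coefficients, which should match the continuant recurrence $\Cay_{n+1}=a\Cay_n-n(b-n+1)\Cay_{n-1}$ (up to the normalisation fixed in the appendix). I would expect the second route to be cleaner, since the tridiagonal structure is exactly what a second-order recurrence in the coefficients produces. Uniqueness up to scalar then comes from the dimension count.

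For cases (C2) and (C3), I will note that $\calL^{(\alpha)}(a,b)$ intertwines with $t^{m}$ shifts. Writing $m=1+b_0$, $\vartheta_t t^m=t^m(\vartheta_t+m)$ and $(\vartheta_t-\alpha_i)t^m=t^m(\vartheta_t-(\alpha_i-m))$, I will verify
\[
\calL^{(\alpha)}(a,b)\,t^{m}=t^{m}\,\calL^{(\alpha-m)}(a,-b-2)\qquad\text{when } m=b+1.
\]
Indeed, $(\vartheta+m)(b+1-\vartheta-m)=(\vartheta+b+1)(-\vartheta)=\vartheta(-b-2+1-\vartheta)$, so the first term has the required form with $b$ replaced by $-b-2$. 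The solution in these cases is therefore $t^{1+b_0}$ times a solution of $\calL^{(\alpha_1-b_0-1,\alpha_2-b_0-1)}(a_0,-b_0-2)$, to which the case (C1) formula applies. To justify this, I need $-b_0-2\notin[0,\min(\alpha)-b_0-2]$, which holds because it is negative. Combining this with the first-solution formula in case (C3) yields the direct sum.

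The main obstacle is the combinatorial identity linking the conjugated ${}_3F_1$ coefficients to $\Cay_n(a,b)/(-b)_n$. This requires care with the normalisation of the continuant and with the poles at $(-b)_n=0$, which is exactly why the first solution is only asserted under (C1).
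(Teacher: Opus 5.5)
Your overall strategy is the paper's: transport Proposition~\ref{prop:SolDsec7} through $e^{-\wJ(\alpha)}$ to get non-vanishing and the dimensions, then exhibit explicit solutions. Your two concrete tools are valid alternatives to Appendix~\ref{appendix:Cayely}.

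First, the coefficient of $t^n$ in $\calL^{(\alpha)}(a,b)\,p^{(\alpha)}_{a,b}$ is indeed
\[
\tfrac{(-\alpha_1)_n(-\alpha_2)_n}{2^n(n-1)!\,(-b)_{n-1}}\bigl(a\Cay_{n-1}(a,b)-(n-1)(b-n+2)\Cay_{n-2}(a,b)-\Cay_n(a,b)\bigr)=0 .
\]
This replaces the paper's ${}_2F_1$ transformation formula by the tridiagonal recurrence.

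Second, $\calL^{(\alpha)}(a,b)\,t^{b+1}=t^{b+1}\calL^{(\alpha_1-b-1,\alpha_2-b-1)}(a,-b-2)$ replaces the paper's identity $e^{-\wJ(\beta_1+\ell,\beta_2+\ell)}t^\ell=t^\ell e^{-\wJ(\beta)}$. With the dimension count, these two facts settle (C1) and (C2).

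The gap is the first solution in case (C3). Contrary to your closing remark, the statement asserts $p^{(\alpha)}_{a_0,b_0}$ as a solution under (C3) as well. There $b_0\in[0,\min(\alpha)-1]\cap\mathbb{Z}$, so $(-b_0)_n=0$ for $b_0<n\le\min(\alpha)$, and this range is nonempty. Moreover, with $c=\frac{a_0-b_0}{2}\in[-b_0,0]\cap\mathbb{Z}$, the generating function \eqref{eqn:gen_Cay} becomes $(1+t)^{b_0+c}(1-t)^{-c}$, a polynomial of degree $b_0$. Hence $\Cay_n(a_0,b_0)=0$ for the same $n$.

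The coefficients in \eqref{eqn:pab} are therefore $0/0$. Your recurrence check cannot be set up there, and your step ``combining this with the first-solution formula in case (C3)'' has no proved formula to combine with. Two things are missing:
\begin{enumerate}
\item[(i)] a meaning for $p^{(\alpha)}_{a_0,b_0}$ in (C3), for instance the limit along $(a_0+s,\,b_0+s)$;
\item[(ii)] a proof that this limit exists and lies in the kernel.
\end{enumerate}

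Your own tools supply both. For non-integer $b$ near $b_0$, condition (C1) holds for the pair $(b+2c,b)$. Then $p^{(\alpha)}_{b+2c,b}$ and $e^{-\wJ(\alpha)}{}_3F_1\bigl[-\alpha_1,-\alpha_2,c;-b;t\bigr]$ both lie in the one-dimensional kernel and both have constant term $1$, so they coincide. The right-hand side is continuous at $b=b_0$, because the ${}_3F_1$ terminates at degree $-c\le b_0$ and so never meets a zero of $(-b)_k$. Therefore the limit exists and equals $e^{-\wJ(\alpha)}$ applied to the terminating ${}_3F_1$, which is a solution by Proposition~\ref{prop:SolDsec7}. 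Its constant term $1$ makes it independent of $t^{1+b_0}p^{(\alpha_1-b_0-1,\alpha_2-b_0-1)}_{a_0,-b_0-2}$.

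This is the situation the paper's Proposition~\ref{prop:Cayley_and_3F1_relation} is meant to cover by being stated ``whenever ${}_3F_1$ is well-defined.'' Without such a step, your argument proves only half of case (C3).
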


\begin{proof}
By the commutative diagram \eqref{eqn:cdSol3}
and 
Proposition \ref{prop:SolDsec7}, it suffices to show the following identities:
\begin{align*}
p_{a_0,b_0}^{(\alpha_1,\alpha_2)}(t)
&=
e^{-\wJ(\alpha)}
{}_3F_1\left [\begin{matrix}
-\alpha_1& -\alpha_2 & \frac{a_0-b_0}{2} \\
& -b_0 &  
\end{matrix};t\right ],\\[5pt]
t^{1+b_0}p_{a_0,-b_0-2}^{(\alpha_1-b_0-1,\alpha_2-b_0-1)}(t)
&=
e^{-\wJ(\alpha)}
t^{1+b_0}{}_3F_1
\left [\begin{matrix}
b_0+1-\alpha_1& b_0+1-\alpha_2 & \frac{a_0+b_0}{2}+1 \\
& 2+b_0 &  
\end{matrix};t\right ].
\end{align*}
Since the proof of these identities requires some 
lengthy computations, we give it as 
Proposition \ref{prop:Cayley_and_3F1_relation} in 
Appendix \ref{appendix:Cayely}.
\end{proof}

\medskip

We now classify the solution spaces 
$\Sol_3(\xi, \eta; \lambda, \nu)$ and 
$\Sol_{\Ad(e^{J(\alpha)}),\, 3}(\xi,\eta; \lambda,\nu)$; that is, the spaces \eqref{eqn:Solalpha} and \eqref{eqn:SolalphaAd} for $i=3$.
We put
\begin{align*}
\psi_3^{(1)}(\zeta) &:= T_{\nu_1-\lambda_1, \,\lambda_3-\nu_2}
(p^{(\nu_1-\lambda_1, \,\lambda_3-\nu_2)}_{\nu_1+\nu_2-2\lambda_2,\, \nu_1-\nu_2-2}(t))(\zeta),\\[3pt]
\psi_3^{(2)}(\zeta) 
&:= T_{\nu_1-\lambda_1,\, \lambda_3-\nu_2}(t^{\nu_1-\nu_2-1}p^{(\nu_2-\lambda_1+1, \,\lambda_3-\nu_1+1)}_{\nu_1+\nu_2-2\lambda_2, \,\nu_2-\nu_1}(t))(\zeta)\\[3pt]
&= \zeta_3^{\nu_1-\nu_2-1}
T_{(\nu_2-\lambda_1+1,\,\lambda_3-\nu_1+1)}(p^{(\nu_2-\lambda_1+1, \,\lambda_3-\nu_1+1)}_{\nu_1+\nu_2-2\lambda_2, \,\nu_2-\nu_1}(t))(\zeta),
\end{align*}
and
\begin{align*}
\varphi_3^{(1)}(\zeta) &:= T_{\nu_1-\lambda_1,\, \lambda_3-\nu_2}
\Big({}_3F_1\left [\begin{matrix}
\lambda_1 - \nu_1& \nu_2 - \lambda_3 & \nu_2 - \lambda_2 + 1 \\
& \nu_2 - \nu_1 + 2 &  
\end{matrix};t\right ]\Big)(\zeta),\\[3pt]
\varphi_3^{(2)} (\zeta)
&:= T_{\nu_1-\lambda_1, \,\lambda_3-\nu_2}\Big(t^{\nu_1-\nu_2-1}{}_3F_1
\left [\begin{matrix}
\lambda_1 - \nu_2 - 1& \nu_1 - \lambda_3 - 1 & \nu_1 - \lambda_2 \\
& \nu_1 - \nu_2 &  
\end{matrix};t\right ]\Big)(\zeta)\\[3pt]
&=  \zeta_3^{\nu_1-\nu_2-1}
T_{\nu_2-\lambda_1+1,\,\lambda_3-\nu_1+1}
\Big({}_3F_1
\left [\begin{matrix}
\lambda_1 - \nu_2 - 1& \nu_1 - \lambda_3 - 1 & \nu_1 - \lambda_2 \\
& \nu_1 - \nu_2 &  
\end{matrix};t\right ]\Big)(\zeta).
\end{align*}
Now, we have the following.

\begin{proposition}\label{prop:Sol_3_characterization}
The solution spaces 
$\Sol_3(\xi, \eta; \lambda, \nu)$ and 
$\Sol_{\Ad(e^{J(\alpha)}),\, 3}(\xi,\eta; \lambda,\nu)$ are classified as follows.
\begin{align}
\Sol_3(\xi, \eta; \lambda, \nu) 
&= 
\begin{cases}
\C\psi_3^{(1)}(\zeta) & \textnormal{if $(\xi, \eta; \lambda, \nu) \in \Supp_{3,a}(\DD)$},\\[3pt]
\C\psi_3^{(2)}(\zeta) & \textnormal{if $(\xi, \eta; \lambda, \nu) \in \Supp_{3,b}(\DD)$},\\[3pt]
\C\psi_3^{(1)}(\zeta) \oplus \C\psi_3^{(2)}(\zeta) & 
\textnormal{if $(\xi, \eta; \lambda, \nu) \in \Supp_{3,c}(\DD)$},\\[3pt]
\{0\}, & \textnormal{otherwise,}\\
\end{cases}\label{eqn:Sol3}\\[5pt]
\Sol_{\Ad(e^{J(\alpha)}),\, 3}(\xi,\eta; \lambda,\nu)
&= 
\begin{cases}
\C\varphi_3^{(1)}(\zeta) & \textnormal{if $\xi, \eta, \lambda, \nu) \in \Supp_{3,a}(\DD)$},\\[3pt]
\C\varphi_3^{(2)}(\zeta) & \textnormal{if $(\xi, \eta; \lambda, \nu) \in \Supp_{3,b}(\DD)$},\\[3pt]
\C\varphi_3^{(1)}(\zeta) \oplus \C\varphi_3^{(2)}(\zeta) &
\textnormal{if $(\xi, \eta; \lambda, \nu) \in \Supp_{3,c}(\DD)$},\\[3pt]
\{0\}, & \textnormal{otherwise}.\\
\end{cases}\label{eqn:SolAd3}
\end{align}
\end{proposition}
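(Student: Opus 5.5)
The plan is to assemble Proposition~\ref{prop:Sol_3_characterization} from the decomposition \eqref{eqn:SolDecomp}, the characterization of $M_3'A_3'$-invariants in Proposition~\ref{prop:Invariants_SB3}, and the ODE classifications in Propositions~\ref{prop:SolDsec7} and \ref{Prop:SolGeneralDiffEq1}, transported through the diagram \eqref{eqn:cdSol3}.

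\textbf{Step 1: reduction to a single $\alpha$.} By \eqref{eqn:SolDecomp} and Proposition~\ref{prop:Invariants_SB3}, $\Sol_3(\xi,\eta;\lambda,\nu)$ vanishes unless $(\xi,\eta;\lambda,\nu)\in L_3(\alpha)$ for some $\alpha\in\NN^2$. The equalities \eqref{eqn:ch32} force $\alpha=(\nu_1-\lambda_1,\lambda_3-\nu_2)$, so at most one summand survives. The condition $\alpha\in\NN^2$ is exactly $(\lambda,\nu)\in\Phi_3$, and the parity conditions \eqref{eqn:ch31} coincide with \eqref{eq:condition_full_set_3}. Hence $\bigcup_\alpha L_3(\alpha)=\Supp_3(\DD)$, and this gives the ``otherwise'' case. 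On $L_3(\alpha)$, the map $T_\alpha$ identifies $\Sol_3^{(\alpha)}$ with $\wSol^{(\alpha)}_\calL$ and $\Sol^{(\alpha)}_{\Ad(e^{J(\alpha)}),3}$ with $\wSol^{(\alpha)}_{\Ad(e^{\wJ(\alpha)})\calL}$, by \eqref{eqn:cdSol3}; the Hom-space is one-dimensional, so nothing further is lost.

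\textbf{Step 2: translating the conditions.} Substituting $\alpha$ into \eqref{eqn:ab} should give:
\begin{itemize}
\item $b_0=\nu_1-\nu_2-2$;
\item $a_0=\nu_1+\nu_2-2\lambda_2$;
\item $\tfrac{a_0-b_0}{2}=\nu_2-\lambda_2+1$;
\item $\tfrac{a_0+b_0}{2}+1=\nu_1-\lambda_2$.
\end{itemize}
Then (C1) reads $\nu_1-\nu_2-1\notin[1,\min(\alpha)]\cap\ZZ$, which is \eqref{eq:condition_set_3a}. Next, $\tfrac{a_0-b_0}{2}\in[-b_0,0]$ is equivalent to $\lambda_2-\nu_2\in[1,\nu_1-\nu_2-1]$. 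So (C2) and (C3) match \eqref{eq:condition_set_3b} and \eqref{eq:condition_set_3c}. This is a routine but sign-sensitive check, and I expect it to be the main place where errors could creep in.

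\textbf{Step 3: identifying the generators.} Insert the parameters into (F1)/(F2) and into the polynomials of Proposition~\ref{Prop:SolGeneralDiffEq1}, using:
\begin{itemize}
\item $1+b_0=\nu_1-\nu_2-1$;
\item $\alpha_1-b_0-1=\nu_2-\lambda_1+1$;
\item $\alpha_2-b_0-1=\lambda_3-\nu_1+1$;
\item $-b_0-2=\nu_2-\nu_1$;
\item $2+b_0=\nu_1-\nu_2$.
\end{itemize}
Applying $T_\alpha$ then yields $\psi_3^{(j)}$ and $\varphi_3^{(j)}$.

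The second expressions, with $\zeta_3^{\nu_1-\nu_2-1}$ pulled out, follow from the factorization
\[
T_\alpha(t^{m}q)=\zeta_3^{m}\,T_{(\alpha_1-m,\alpha_2-m)}(q),
\]
which is immediate from \eqref{eqn:Tsat}.

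The only non-formal input is the identity $e^{-\wJ(\alpha)}\,{}_3F_1=p$ relating the two solution families. This is already encapsulated in Proposition~\ref{Prop:SolGeneralDiffEq1} (via the appendix), so here it is simply invoked.
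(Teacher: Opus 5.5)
Your proposal is correct and follows the same route as the paper. You reduce to the single $\alpha=(\nu_1-\lambda_1,\lambda_3-\nu_2)$ via Proposition~\ref{prop:Invariants_SB3}, match (C1)--(C3) with \eqref{eq:condition_set_3a}--\eqref{eq:condition_set_3c}, and read off the generators from Propositions~\ref{prop:SolDsec7} and \ref{Prop:SolGeneralDiffEq1} through $T_\alpha$; the parameter substitutions you spell out (which the paper leaves implicit) all check out.

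One small wording fix: on $L_3(\alpha)$ the Hom-space is all of $\Pol(\alpha)$, of dimension $\min(\alpha)+1$, not one-dimensional --- it is $V$ and $W$ that are one-dimensional --- but this does not affect the argument.
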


\begin{proof}
It follows from Proposition \ref{prop:Invariants_SB3}  that 
(C1), (C2) and (C3) correspond to 
\eqref{eq:condition_set_3a}, 
\eqref{eq:condition_set_3b} and 
\eqref{eq:condition_set_3c}, respectively, 
through the conditions
$\alpha_1=\nu_1-\lambda_1$ and $\alpha_2 = \lambda_3-\nu_2$.
The preceding proposition also shows that 
\eqref{eq:condition_full_set_3} is necessary for the existence of non-zero solutions.
Propositions \ref{prop:SolDsec7} and \ref{Prop:SolGeneralDiffEq1}
then lead to  \eqref{eqn:Sol3} and \eqref{eqn:SolAd3}, respectively.
\end{proof}

%%%%%%%%%%%%%%%%%%%%%%%%%%%%%%%%%%%%%%%%%
\subsection{Step 4: Applying \texorpdfstring{$\Symb_0^{-1}$}{Symb_0^{-1}} and
\texorpdfstring{$\Symb_{\ord}^{-1}$}{Symb_ord^{-1}}}
\label{sec:Step4}

We are now proceeding with the last step of the recipe given in 
Section \ref{sec:recipe2} to complete the proof of Theorems \ref{thm:class3} and \ref{thm:cons3}. 

\begin{proof}[Proofs of  Theorems \ref{thm:class3} and \ref{thm:cons3}]
The equivalence on $(\xi, \eta; \lambda, \nu)$ in Theorem  \ref{thm:class3}
can be readily read off from Proposition \ref{prop:Sol_3_characterization} as
$\Supp_3(\DD) = \bigcup_{\omega=a, b, c} \Supp_{3,\omega}(\DD)$.
About Theorem \ref{thm:cons3}, 
apply $\Symb_0^{-1}$ to $\psi^{(j)}_3(\zeta)$ 
in Proposition \ref{prop:Sol_3_characterization} to obtain the symmetrized form
of $\DD_{3}^{(j)}(\lambda,\nu)$. Analogously, the ordered form is obtained by applying 
 $\Symb_{\ord}^{-1}$ to $\varphi^{(j)}_3(\zeta)$  for $j=1,2$.
 This completes the proof. 
\end{proof}

%%%%%%%%%%%%%%%%%%%%%%%%%%%%%%%%%%%%%%%%%
\section{DSBOs \texorpdfstring{$\DD_i(\lambda,\nu)$}{Di(λ,ν)} for \texorpdfstring{$(GL(3,\R), G_i')$}{(GL(3,R),Gi')}, \texorpdfstring{$i=1,2$}{i=1,2}}
\label{sec:Emb12}

In this section, we apply the F-method to determine the space of DSBOs for the first and second embeddings, i.e., we characterize the following space:
\begin{equation*}
\Diff_{G_i'}(I(\xi, \lambda), J(\eta, \nu)),
\end{equation*}
for $i=1,2$. We follow the same steps as in the case of $G_3'$.

%%%%%%%%%%%%%%%%%%%%%%%%%%%%%%%%%%%%%%%%%
\subsection{Classification of DSBOs \texorpdfstring{$\DD_i(\lambda,\nu)$}{D_i(λ,ν)} 
for \texorpdfstring{$(GL(3, \R), G'_i)$}{(GL(3,R),G_i')} for \texorpdfstring{$i=1,2$}{i=1,2}}
\label{sec:ClassThmsDSBOs2}

We start with the classification of DSBOs $\mathbb{D}$.
For $i=1,2$, define $\Phi_{i,a}, \Phi_{i,b} \subset \C^3\times \C^2 \simeq \C^5$ 
as follows:
\begin{align}
    \Phi_{1,a} &:= \{(\lambda, \nu) \in \C^5: \nu_1-\lambda_1,\, \nu_2-\lambda_2 \in \N\},
    \label{eqn:Phi1a}\\
    \Phi_{1,b} &:= \{(\lambda, \nu) \in \C^5: \nu_1-\lambda_1,\, -(\nu_2-\lambda_2) \in \N_+ \, \text{ such that } \eqref{eq:condition_set_1b} \text{ holds}\},
\end{align}
\begin{equation}\label{eq:condition_set_1b}
    \nu_1-\lambda_1 + \nu_2 - \lambda_2 \in \N, \enspace \nu_1-\lambda_2 \in [1, \nu_1-\lambda_1 + \nu_2 - \lambda_2+1]\cap \Z
\end{equation}
\begin{align*}
    \Phi_{2,a} &:= \{(\lambda, \nu) \in \C^5: \lambda_3-\nu_2,\, \lambda_2-\nu_1 \in \N\},\\
    \Phi_{2,b} &:= \{(\lambda, \nu) \in \C^5: \lambda_3-\nu_2,\, -(\lambda_2 - \nu_1)\in \N_+ \text{ such that } \eqref{eq:condition_set_2b} \text{ holds}\},
\end{align*}
\begin{equation}\label{eq:condition_set_2b}
\lambda_3 - \nu_2 + \lambda_2-\nu_1 \in \N, \enspace\lambda_2 - \nu_2 \in [1, \lambda_3 - \nu_2 + \lambda_2-\nu_1 + 1]\cap \Z
\end{equation}
We set 
\begin{equation*}
\Phi_i := \Phi_{i,a}\cup \Phi_{i,b}.
\end{equation*} 
Now, we define $\Supp_i(\DD)$ for $i=1,2$  as given below.
\begin{align*}
    \Supp_1(\DD) &:= \{(\xi, \eta; \lambda, \nu) \in (\Z/2\Z)^5 \times \C^5: \eqref{eq:condition_full_set_1} \text{ holds}\},\\
    \Supp_2(\DD) &:= \{(\xi, \eta; \lambda, \nu) \in (\Z/2\Z)^5 \times \C^5: \eqref{eq:condition_full_set_2} \text{ holds}\},
\end{align*}
\begin{align}
\label{eq:condition_full_set_1}
    (\lambda, \nu) \in \Phi_1, \quad \xi_1+\eta_1 \equiv \nu_1-\lambda_1,\, \xi_2+\eta_2 \equiv \nu_2-\lambda_2 \mod{2},\\
\label{eq:condition_full_set_2}
    (\lambda, \nu) \in \Phi_2, \quad \xi_2+\eta_1 \equiv \lambda_2-\nu_1,\, \xi_3+\eta_2 \equiv \lambda_3-\nu_2 \mod{2}.
\end{align}

\begin{theorem}
\label{thm:class1}
For $i=1,2$, 
the following conditions on the tuple $(\xi, \eta; \lambda, \nu) \in (\Z/2\Z)^5 \times \C^5$ are equivalent.
\begin{enumerate}[label = \normalfont{(\roman*)}]
    \item $\Diff_{G_i^\prime}\left(I(\xi, \lambda), J(\eta, \nu)\right) \neq \{0\}$.
    
    \item $\dim_\C \Diff_{G_i^\prime}\left(I(\xi, \lambda), J(\eta, \nu)\right) = 1$.

    \item $(\xi, \eta; \lambda, \nu) \in \Supp_i(\DD)$. 
\end{enumerate}
\end{theorem}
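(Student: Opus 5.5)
The plan is to prove the case $i=1$ by running the recipe of Section \ref{sec:recipe2}, in parallel with Sections \ref{sec:proof1} and \ref{sec:proof2}. The case $i=2$ will then follow from the duality of Section \ref{sec:duality}: it exchanges $\iota_1$ and $\iota_2$ and carries $\Phi_{1}$, together with its parity condition \eqref{eq:condition_full_set_1}, to $\Phi_2$ and \eqref{eq:condition_full_set_2}.

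\emph{Step 1.} By Lemma \ref{lem:MApol}, $\Pol(\alpha)$ is $\C_{(\alpha_1,\alpha_1-\alpha_2)}\boxtimes\C_{(-\alpha_1,\alpha_1-\alpha_2)}$-isotypic as an $M_1'A_1'$-module. Arguing as in Proposition \ref{prop:Invariants_SB3}, the $M_1'A_1'$-Hom space is nonzero only for the single $\alpha$ with
\[
\alpha_1=\nu_1-\lambda_1,\qquad \alpha_1-\alpha_2=\nu_2-\lambda_2 .
\]
The $M$-parity conditions are then exactly those in \eqref{eq:condition_full_set_1}. Because $\alpha$ is unique, $\Sol_1(\xi,\eta;\lambda,\nu)=\Sol_1^{(\alpha)}(\xi,\lambda)$.

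\emph{Step 2.} I will compute $T_\alpha^\sharp\big(-\zeta_1\widehat{d\pi_{\lambda^*}}(N_1^+)\big)$ from Proposition \ref{prop:Fdpi} using Lemma \ref{lem:T-saturation}. Each Euler operator and each ratio is replaced by its one-variable counterpart ($\vartheta_1\mapsto\alpha_1-\vartheta_t$, $\vartheta_2\mapsto\alpha_2-\vartheta_t$, $\vartheta_3\mapsto\vartheta_t$, $\zeta_3/(\zeta_1\zeta_2)\mapsto t$, $\zeta_1\zeta_2/\zeta_3\mapsto t^{-1}$). This gives an operator of the form
\[
t^{-1}\vartheta_t+P_0(\vartheta_t)+t\,P_1(\vartheta_t),
\]
with $P_0,P_1$ polynomials. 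I will then rewrite the $t$-term using $\wJ(\alpha)=\tfrac t2(\vartheta_t-\alpha_1)(\vartheta_t-\alpha_2)$, as was done in \eqref{eqn:Lalpha2}. Conjugating by $e^{\wJ(\alpha)}$ via Lemma \ref{lem:eD} should make the cross terms cancel, leaving (up to a factor) a ${}_2F_0$-type operator
\[
\vartheta_t-t(\vartheta_t-\alpha_1)(\vartheta_t+c),\qquad c=c(\lambda,\nu),
\]
as predicted by Theorem \ref{Intro:E}. In either form, the equation is a two-term recursion $c_{k+1}=r(k)\,c_k$ on the coefficients of $q(t)=\sum c_kt^k$. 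The coefficient $c_0$ determines everything, so $\dim\le1$; this gives (i)$\Leftrightarrow$(ii).

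\emph{Step 3.} This is the main obstacle: deciding when the recursion yields a nonzero polynomial of degree $\le\min(\alpha)$. There are two cases.
\begin{itemize}
\item[(a)] If $\nu_2-\lambda_2\in\N$, then $\alpha_2\le\alpha_1$ and $\min(\alpha)=\alpha_2$. Here I expect the relevant factor of $r(k)$ (coming from $\vartheta_t-\alpha_2$ hidden in $\wJ(\alpha)$) to force truncation at degree $\alpha_2$ automatically, so that $c_0=1$ always works. This should give $\Phi_{1,a}$ with the polynomial $p^{(\alpha)}_{a,\alpha_2}$.
\item[(b)] If $\nu_2-\lambda_2<0$, then $\min(\alpha)=\alpha_1<\alpha_2$, and the degree bound must come from $c$. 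Either $c\in-\N$ with $-c\le\alpha_1$, or the starting exponent shifts; this should translate into the conditions \eqref{eq:condition_set_1b} defining $\Phi_{1,b}$, together with the requirement $\alpha_2\ge0$, i.e.\ $\nu_1-\lambda_1+\nu_2-\lambda_2\in\N$.
\end{itemize}
I would first carry out the case analysis on the ordered side, where the ${}_2F_0$ truncation is transparent. Then I would transfer the result to the symmetrized side through $e^{-\wJ(\alpha)}$, using diagram \eqref{eqn:cdA}; I expect the ${}_3F_1$ results of Section \ref{sec:proof2} to degenerate in the same way.

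\emph{Step 4.} Finally, Theorem \ref{thm:symb} gives $\Sol_1\simeq\Diff_{G_1'}$, which yields (i)$\Leftrightarrow$(iii).
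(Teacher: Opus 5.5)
Your architecture is the paper's: a unique $\alpha$ from the $M_1'A_1'$-weights, T-saturation, $\dim\le 1$ (the coefficient of $c_{k+1}$ in the $t^k$-equation is $k+1\neq 0$), duality for $i=2$, and Theorem~\ref{thm:symb} at the end. But Step~3, which carries all the content, does not go through as written.

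\textbf{Sign error in Step~1.} Matching $(\lambda_1,\lambda_2)=(-\alpha_1,\alpha_1-\alpha_2)+(\nu_1,\nu_2)$ gives $\alpha_1-\alpha_2=\lambda_2-\nu_2$ (cf.\ \eqref{eqn:ch12}), i.e.\ $\alpha_2=\nu_1-\lambda_1+\nu_2-\lambda_2$. That is in fact what you use at the end of (b). So in (a) one has $\min(\alpha)=\alpha_1\le\alpha_2$, and in (b) $\min(\alpha)=\alpha_2<\alpha_1$, the reverse of what you wrote. Your justification of (a) is therefore false. The operator you predict, $\vartheta_t-t(\vartheta_t-\alpha_1)(\vartheta_t+c)$, truncates automatically only at degree $\alpha_1$, never at $\alpha_2$. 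When $\alpha_2<\alpha_1$ one genuinely needs $c=\lambda_1-\nu_2-1\in\{0,-1,\dots,-\alpha_2\}$, i.e.\ $a_{0,1}\in\{\alpha_2-2k:0\le k\le\alpha_2\}$; this is (A2), i.e.\ $\Phi_{1,b}$. There is also no ``shifted starting exponent'' alternative, since the $t^0$-part of ${}_2\calF_0$ is $\vartheta_t$ alone. With the sign fixed, your (a)/(b) become the paper's (A1)/(A2).

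\textbf{The reduction to ${}_2\calF_0$ is missing (the more serious gap).} Lemma~\ref{lem:eD} does not reduce the ordered system to a two-term ${}_2F_0$ recursion. It only handles operators in the algebra generated by $\vartheta_t$ and $\wJ(\alpha)$, which is why it works for $i=3$ via \eqref{eqn:Lalpha2}. For $i=1$,
\[
T_\alpha^\sharp\big(-\zeta_1\widehat{d\pi_{\lambda^*}}(N_1^+)\big)=t^{-1}\calP^{(\alpha)}(a_{0,1})=\frac{d}{dt}+\frac{a_{0,1}}{2}(\alpha_1-\vartheta_t)-\frac{1}{2}(\vartheta_t-\alpha_1)\wJ(\alpha).
\]
Here $\frac{d}{dt}$ lowers degree, so it lies outside that algebra. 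Conjugating gives
\[
\frac{d}{dt}+Q(\vartheta_t)+\wJ(\alpha)\big(\vartheta_t+\frac{a_{0,1}-\alpha_2}{2}\big)
\]
with $Q$ quadratic. This is still a three-term recursion, just like the symmetrized one, so your claim of a two-term recursion ``in either form'' is false. Its polynomial kernel does coincide with that of ${}_2\calF_0(-\alpha_1,\frac{a_{0,1}-\alpha_2}{2};t)$, but proving this is exactly the missing idea.

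The paper gets around this with the factorization \eqref{eqn:LPapx1}, $\calL^{(\alpha_1,\alpha_2+1)}(a_{0,1},\alpha_2)=\calP^{(\alpha)}(a_{0,1})(\alpha_2+1-\vartheta_t)$, where $\alpha_2+1-\vartheta_t$ is invertible on $\Pol_{\min(\alpha)}[t]$. This turns the $i=1$ equation into the already solved $i=3$ equation: Proposition~\ref{Prop:SolGeneralDiffEq1} with parameters $(\alpha_1,\alpha_2+1)$ and $b=\alpha_2$. That yields Proposition~\ref{prop:SolD1}, and the ordered side then follows from \eqref{eqn:p3F1} and Remark~\ref{rem:3F1_2F0}.

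Your ordered-first route is essentially the paper's Appendix~\ref{appendix:2F0}. There the extra input is Lemma~\ref{lem:eDD}, $e^{\wJ(\alpha_1,\alpha_2+1)}e^{-\wJ(\alpha_1,\alpha_2)}=1+\frac{t}{2}(\alpha_1-\vartheta_t)$, which leads to Proposition~\ref{prop:eDsec9}. Without one of these identities, the condition in case (b) remains an unevaluated polynomial condition of degree $\alpha_2+1$ in $a_{0,1}$ (a Cayley-continuant condition on the symmetrized side), so Step~3 remains a conjecture.
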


%%%%%%%%%%%%%%%%%%%%%%%%%%%%%%%%%%%%%%%%%
\subsection{Construction of DSBOs 
\texorpdfstring{$\DD_i(\lambda,\nu)$}{D_i(λ,ν)} for \texorpdfstring{$(GL(3, \R), G'_i)$}{(GL(3,R),G_i')} for \texorpdfstring{$i=1,2$}{i=1,2}} 
\label{sec:const12}

As in Section \ref{sec:const3}, we denote by 
$T^{\mathrm{sym}}_{(\alpha_1, \alpha_2)} q(D_1, D_2, D_3)$
and
$T^\ord_{(\alpha_1, \alpha_2)} q(D_1, D_2, D_3)$ 
the  symmetrized and ordered operators
associated to $q(t) \in \Pol[t]$, respectively.
Also, let $p_{a,b}^{(\alpha_1,\alpha_2)}(t)$ be the polynomial given by the Cayley 
continuants $\Cay_n(a,b)$  defined in \eqref{eqn:pab}.
In particular, if $b=\alpha_2$, then
\begin{equation}\label{eqn:p_alpha}
p_{a,\alpha_2}^{(\alpha_1, \alpha_2)}(t) = 
\sum_{n=0}^{\min(\alpha)} 
\frac{(-\alpha_1)_n}{2^n n! }\Cay_n(a,\alpha_2)t^n.
\end{equation}

We write 
\[
\Rest_1:=\Rest_{x_2=x_3=0} \quad \text{and}\quad \Rest_2:=\Rest_{x_1=x_3=0}
\]
for the restriction maps with respect to the coordinates
\eqref{eqn:coordinate1} and \eqref{eqn:coordinate2}, respectively.

%%%%%%%%%%%%%%%%%%%%%%%%%%%%%%%%%%%%%%%%%%%%%%%%
\begin{theorem}\label{thm:cons1}
For $i=1,2$, we have
\begin{equation*}
\Diff_{G_i^\prime}\left(I(\xi, \lambda), J(\eta, \nu)\right)
=
\begin{cases}
\CC\, \DD_i(\lambda, \nu) & 
\textnormal{if $(\xi, \eta; \lambda, \nu) \in \Supp_i(\DD)$},\\[3pt]
\{0\} & \textnormal{otherwise},
\end{cases}
\end{equation*}
where $\DD_i(\lambda, \nu)$ for $i=1,2$ are given as follows.
\vspace{5pt}
\begin{align*}
\DD_1(\lambda,\nu)
&=\Rest_1\circ
\big(T^{\mathrm{sym}}_{(\nu_1-\lambda_1,\,\nu_1+\nu_2-\lambda_1-\lambda_2)}
p_{\lambda_1+\nu_1-\lambda_2-\nu_2-2,\,\nu_1+\nu_2-\lambda_1-\lambda_2}^{(\nu_1-\lambda_1,\,\nu_1+\nu_2-\lambda_1-\lambda_2)}\big)(D_1,D_2,D_3)\\[5pt]
&=\Rest_1\circ 
T^\ord_{(\nu_1-\lambda_1,\,\nu_1+\nu_2-\lambda_1-\lambda_2)}{}_2F_0
[\lambda_1-\nu_1, \lambda_1-\nu_2-1;  D_1, D_2, D_3],\\[10pt]
\DD_2(\lambda,\nu)
&=\Rest_2\circ 
\big(T^{\mathrm{sym}}_{(\lambda_2+\lambda_3-\nu_1-\nu_2,\,\lambda_3-\nu_2)}
p_{\lambda_3+\nu_2-\lambda_2-\nu_1+2,\,\lambda_2+\lambda_3-\nu_1-\nu_2}^{(\lambda_3-\nu_2,\, \lambda_2+\lambda_3-\nu_1-\nu_2)}\big)(D_1,D_2,D_3) \\[5pt]
&=\Rest_2\circ  
T^\ord_{(\lambda_2+\lambda_3-\nu_1-\nu_2,\,\lambda_3-\nu_2)}
{}_2F_0[\nu_2-\lambda_3, \nu_2-\lambda_2+1;D_1, D_2, D_3].
\end{align*}
\end{theorem}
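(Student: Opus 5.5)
The plan is to run the recipe of Section~\ref{sec:recipe2} for $i=1$, exactly as was done for $i=3$ in Sections~\ref{sec:proof1}--\ref{sec:proof2}, and then transfer the result to $i=2$ by duality. This proves Theorems~\ref{thm:class1} and~\ref{thm:cons1} together.

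\textbf{Step 1 (invariants).} By Lemma~\ref{lem:MApol}, $\Pol(\alpha)$ is $M_1'A_1'$-isotypic of type $\C_{(\alpha_1,\alpha_1-\alpha_2)}\boxtimes\C_{(-\alpha_1,\alpha_1-\alpha_2)}$. Compare this with $(\C_\xi\boxtimes\C_\lambda)|_{M_1'A_1'}=\C_{(\xi_1,\xi_2)}\boxtimes\C_{(\lambda_1,\lambda_2)}$ and with $\C_\eta\boxtimes\C_\nu$, as in Proposition~\ref{prop:Invariants_SB3}. This should force
\[
\alpha_1=\nu_1-\lambda_1,\qquad \alpha_2=\nu_1+\nu_2-\lambda_1-\lambda_2,
\]
together with the parity conditions $\xi_1+\eta_1\equiv\nu_1-\lambda_1$ and $\xi_2+\eta_2\equiv\nu_2-\lambda_2$. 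In particular, for given parameters at most one $\alpha$ contributes, which is already the source of the bound $\dim\le1$.

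\textbf{Step 2 (saturation).} Apply Lemma~\ref{lem:T-saturation} to the formula for $-\zeta_1\widehat{d\pi_{\lambda^*}}(N_1^+)$ in Proposition~\ref{prop:Fdpi}, substituting $\vartheta_1\mapsto\alpha_1-\vartheta_t$, $\vartheta_2\mapsto\alpha_2-\vartheta_t$, $\vartheta_3\mapsto\vartheta_t$. This gives a second-order operator on $\Pol_{\min(\alpha)}[t]$. I expect it can be rewritten, in the spirit of \eqref{eqn:Lalpha2}, as a polynomial in $\vartheta_t$, $t^{-1}\vartheta_t$ and $\wJ(\alpha)$.

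\textbf{Step 3 (solving both systems).} Conjugate by $e^{\wJ(\alpha)}$ using Lemma~\ref{lem:eD} and $\Ad(e^{\wJ(\alpha)})\wJ(\alpha)=\wJ(\alpha)$. The expectation is a first-order-in-$t$ hypergeometric operator, namely the degenerate ${}_3\calF_1$ whose lower parameter equals $-\alpha_2$, which cancels against an upper parameter. The result should be the ${}_2\calF_0$ equation
\[
\vartheta_t-t(\vartheta_t-\alpha_1)(\vartheta_t+\lambda_1-\nu_2-1)
\]
up to normalization. Its polynomial solutions of degree $\le\min(\alpha)$ are then classified directly by the coefficient recursion, in the manner of Remark~\ref{rem:3F1_2F0} and Fact~\ref{fact:3F1-sols}:
\begin{itemize}
\item when $\nu_2-\lambda_2\in\N$, i.e.\ $\alpha_2\ge\alpha_1$ (the set $\Phi_{1,a}$), the terminating ${}_2F_0[-\alpha_1,\lambda_1-\nu_2-1;t]$ has degree $\le\min(\alpha)$;
\item when $\alpha_2<\alpha_1$, the degree bound $\min(\alpha)=\alpha_2$ is met only if the second upper parameter truncates the series early enough, i.e.\ $\nu_2-\lambda_1+1\in[-\alpha_2,0]$. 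I expect this to reproduce exactly condition \eqref{eq:condition_set_1b}, giving the set $\Phi_{1,b}$.
\end{itemize}
Applying $e^{-\wJ(\alpha)}$ then gives the solution of the original F-system. To identify it with $p^{(\alpha)}_{a,\alpha_2}$, I would specialize Proposition~\ref{prop:Cayley_and_3F1_relation} to $b=\alpha_2$, where $(-b)_n$ cancels $(-\alpha_2)_n$ as in \eqref{eqn:p_alpha}. If that limit is delicate, the fallback is to apply $e^{\wJ(\alpha)}$ to the Cayley polynomial directly and compare coefficients.

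\textbf{Step 4 (conclusion and $i=2$).} Apply $\Rest_1\circ\Symb_0^{-1}$ and $\Rest_1\circ\Symb_{\ord}^{-1}$, via the diagram \eqref{eqn:conji}, to get the two stated forms of $\DD_1(\lambda,\nu)$. For $i=2$, use the outer automorphism of $GL(3,\R)$ given by $g\mapsto w_0\,{}^tg^{-1}w_0$. It swaps the embeddings $\iota_1$ and $\iota_2$, exchanges $N_1^\pm\leftrightarrow N_2^\pm$, and sends $(\lambda,\nu)$ to $(-\lambda_3,-\lambda_2,-\lambda_1;-\nu_2,-\nu_1)$. Transporting the $i=1$ result then gives $\Phi_2$ and $\DD_2$.

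\textbf{Main obstacle.} The hardest part is Step~3 in the case $\alpha_2<\alpha_1$. The hypergeometric series then has more potential terms than the degree bound allows, so the truncation condition defining $\Phi_{1,b}$ must be extracted carefully. The identification with the Cayley-continuant polynomials at the degenerate value $b=\alpha_2$ also needs care.
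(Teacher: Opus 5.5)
There is a genuine gap in Step~3, and every later step depends on it.

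\textbf{The conjugation step.} By Proposition~\ref{prop:pihat12}, the saturated $i=1$ operator is $t^{-1}\calP^{(\alpha)}(a_{0,1})$, and one checks that
\[
\calP^{(\alpha)}(a)=\vartheta_t+\tfrac{t}{2}(\alpha_1-\vartheta_t)\bigl(a+\wJ(\alpha)\bigr).
\]
The factor $\tfrac{t}{2}(\alpha_1-\vartheta_t)$ (equivalently, your $t^{-1}\vartheta_t$ term) does not commute with $\wJ(\alpha)$. So Lemma~\ref{lem:eD} and $\Ad(e^{\wJ(\alpha)})\wJ(\alpha)=\wJ(\alpha)$ do not determine the conjugated operator, and you give no other way to compute it. Your expected answer also fails as an operator identity: $\Ad(e^{\wJ(\alpha)})\calP^{(\alpha)}(a)$ is in general not first order in $t$. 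What actually holds (Proposition~\ref{prop:eDsec9}) is
\[
\bigl(1+\tfrac{t}{2}(\alpha_1-\vartheta_t)\bigr)\,\Ad(e^{\wJ(\alpha)})\calP^{(\alpha)}(a)={}_2\calF_0\bigl(-\alpha_1,\tfrac{a-\alpha_2}{2};t\bigr),
\]
where the left factor is injective but not a scalar. Without this, the identification of the ordered F-system is unproved, and so are the classification and both formulas.

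\textbf{The missing idea.} It is the factorization \eqref{eqn:LPapx1},
\[
\calL^{(\alpha_1,\alpha_2+1)}(a,\alpha_2)=\calP^{(\alpha)}(a)\,(\alpha_2+1-\vartheta_t),
\]
where $\alpha_2+1-\vartheta_t$ is invertible on $\Pol_{\min(\alpha)}[t]$. This is where your ``degenerate ${}_3\calF_1$ with lower parameter $-\alpha_2$'' really comes from. By Proposition~\ref{prop:AdL},
\[
-\Ad(e^{\wJ(\alpha_1,\alpha_2+1)})\calL^{(\alpha_1,\alpha_2+1)}(a,\alpha_2)={}_3\calF_1(-\alpha_1,-\alpha_2-1,\tfrac{a-\alpha_2}{2};-\alpha_2;t)={}_2\calF_0(-\alpha_1,\tfrac{a-\alpha_2}{2};t)(\vartheta_t-\alpha_2-1).
\]
However, that conjugation is by $e^{\wJ(\alpha_1,\alpha_2+1)}$, not $e^{\wJ(\alpha)}$, and returning to $e^{\wJ(\alpha)}$ requires Lemma~\ref{lem:eDD}.

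The paper's main proof uses the factorization on the original side instead. It reduces $\calP^{(\alpha)}(a_{0,1})p=0$ to the already classified $i=3$ equation (Proposition~\ref{Prop:SolGeneralDiffEq1}). This needs care when $\alpha_1>\alpha_2$, since then $\min(\alpha_1,\alpha_2+1)\neq\min(\alpha)$. The ordered side then follows from \eqref{eqn:p3F1} and Remark~\ref{rem:3F1_2F0}.

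\textbf{Two smaller points.} First, once the ${}_2\calF_0$ equation is in hand, the truncation analysis is immediate, so your ``main obstacle'' is misplaced. Your truncation condition also has a sign error. It should read $\lambda_1-\nu_2-1\in[-\alpha_2,0]\cap\Z$, which matches \eqref{eq:condition_set_1b} via $\nu_1-\lambda_2=\alpha_2+(\lambda_1-\nu_2-1)+1$.

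Second, the automorphism $g\mapsto w_0\,{}^tg^{-1}w_0$ is a valid way to transfer the classification and the symmetrized form to $i=2$. But it sends $N_1^-\mapsto -N_2^-$ and $N_2^-\mapsto -N_1^-$. So the transported ordered form is ordered as $D_2^{\,\cdot}D_1^{\,\cdot}D_3^{\,\cdot}$, not with respect to $\ord=\{D_1,D_2,D_3\}$ as the theorem requires. You still have to re-order, for example by applying $e^{\wJ(\alpha)}$ and \eqref{eqn:p3F1} to the transported symmetrized solution under (B1)/(B2), as in Proposition~\ref{prop:SolD2}.
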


The order of the DSBO $\DD_i(\lambda,\nu)$
as a differential operator is
$2(\nu_1-\lambda_1) + \nu_2 - \lambda_2$ for $i=1$
and
$\lambda_2 - \nu_1 + 2(\lambda_3 - \nu_2)$ for $i=2$.

%%%%%%%%%%%%%%%%%%%%%%%%%%%%%%%%%%%%%%%%%
\subsection{Proof of  Theorems \texorpdfstring{\ref{thm:class1}}{9.7} and \texorpdfstring{\ref{thm:cons1}}{9.9}: Part 1} 
\label{sec:Thms12A}

As in Sections \ref{sec:proof1} and \ref{sec:proof2}, we follow the recipe of the F-method
in Section \ref{sec:recipe2}. Our goal is to determine
\begin{align*}
\Sol_i(\xi,\eta; \lambda,\nu) := \{ \psi \in \Hom_{M_i'A_i'}\big((\CC_\xi\boxtimes \CC_{\lambda})\big\rvert_{M_i'A_i'},\Pol(\frakn_+)\otimes (\CC_\eta\boxtimes \CC_{\nu})\big) : \eqref{PDE_SymmetryBreaking_12} \text{ holds}\},
\end{align*}
\begin{equation}\label{PDE_SymmetryBreaking_12}
\widehat{d\pi_{\lambda^*}}(N_{i}^+)\psi = 0,
\end{equation}
for $i=1,2$ and $(\xi,\eta; \lambda,\nu) \in (\ZZ/2\ZZ)^5\times \CC^5$.

We start with 
the $M_i'A_i'$-invariants 
\begin{equation*}
\Hom_{M_i'A_i'}\big((\CC_\xi\boxtimes \CC_{\lambda})\big\rvert_{M_i'A_i'},\Pol(\frakn_+)\otimes (\CC_\eta\boxtimes \CC_{\nu})\big).
\end{equation*}
For $\alpha=(\alpha_1,\alpha_2) \in \N^2$, we define $L_i(\alpha) \subset (\ZZ/2\ZZ)^5\times \CC^5$
 for $i=1,2$ as follows:
\begin{equation*}
L_1(\alpha):=\{(\xi, \eta; \lambda, \nu) \in (\ZZ/2\ZZ)^5\times \CC^5: 
\text{\eqref{eqn:ch11} and \eqref{eqn:ch12} hold}  \},
\end{equation*}
\begin{alignat}{3}
\xi_1+\eta_1&\equiv \alpha_1 \bmod{2}, \quad &&\xi_2+\eta_2&&\equiv \alpha_1-\alpha_2 \bmod{2},
\label{eqn:ch11}\\[3pt]
\nu_1-\lambda_1&=\alpha_1, \quad &&\lambda_2-\nu_2&&=\alpha_1-\alpha_2.
\label{eqn:ch12}
\end{alignat}
\vspace{-5pt}
\begin{equation*}
L_2(\alpha):=\{(\xi, \eta; \lambda, \nu) \in (\ZZ/2\ZZ)^5\times \CC^5: 
\text{\eqref{eqn:ch21} and \eqref{eqn:ch22} hold}  \},
\end{equation*}
\begin{alignat}{3}
\xi_2+\eta_1&\equiv\alpha_1-\alpha_2 \bmod{2}, \quad &&
\xi_3+\eta_2&&\equiv \alpha_2 \bmod{2},
\label{eqn:ch21}\\[3pt]
 \lambda_2-\nu_1&=\alpha_1-\alpha_2, \quad &&\lambda_3-\nu_2&&=\alpha_2.
\label{eqn:ch22}
\end{alignat}

\begin{proposition}\label{prop:Invariants_SB12} 
For $i=1,2$,
the following conditions on $(\xi, \eta; \lambda, \nu)\in (\ZZ/2\ZZ)^5\times \CC^5$ are equivalent.
\begin{enumerate}[label=\normalfont{(\roman*)}]
    \item $\Hom_{M_i'A_i'}\big((\CC_\xi\boxtimes \CC_{\lambda})\big\rvert_{M_i'A_i'},\Pol(\frakn_+)\otimes (\CC_\eta\boxtimes \CC_{\nu})\big)\neq \{0\}$.
    \item There exists some $\alpha\in \NN^2$ such that 
    $(\xi, \eta; \lambda, \nu) \in L_i(\alpha)$.
\end{enumerate}
\end{proposition}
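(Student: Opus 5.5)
The plan follows the proof of Proposition \ref{prop:Invariants_SB3} verbatim, replacing the $M_3'A_3'$-structure with the $M_i'A_i'$-structure from Lemma \ref{lem:MApol}. Since $V$ and $W$ are one-dimensional, the decomposition \eqref{eqn:Homalpha} reduces condition (i) to the existence of some $\alpha\in\NN^2$ with
\[
\Hom_{M_i'A_i'}\big((\CC_\xi\boxtimes\CC_\lambda)\vert_{M_i'A_i'},\Pol(\alpha)\otimes(\CC_\eta\boxtimes\CC_\nu)\big)\neq\{0\}.
\]
By Lemma \ref{lem:MApol2}, $\Pol(\alpha)$ is isotypic as an $M_i'A_i'$-module and is a direct sum of $\min(\alpha)+1$ copies of one character. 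The Hom space is therefore nonzero exactly when $(\CC_\xi\boxtimes\CC_\lambda)\vert_{M_i'A_i'}\simeq \chi\otimes(\CC_\eta\boxtimes\CC_\nu)$, where $\chi$ is that character.

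For $i=1$ we have $M_1'A_1'=\{\diag(\ell_1,\ell_2,1)\}$, and Lemma \ref{lem:MApol} gives
\[
\Pol(\alpha)\simeq\bigoplus_{k}\CC_{(\alpha_1,\alpha_1-\alpha_2)}\boxtimes\CC_{(-\alpha_1,\alpha_1-\alpha_2)}.
\]
The restriction is $(\CC_\xi\boxtimes\CC_\lambda)\vert_{M_1'A_1'}=\CC_{(\xi_1,\xi_2)}\boxtimes\CC_{(\lambda_1,\lambda_2)}$. Matching the $M$-parts mod $2$ gives $\xi_1\equiv\alpha_1+\eta_1$ and $\xi_2\equiv\alpha_1-\alpha_2+\eta_2$, which is \eqref{eqn:ch11}. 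Matching the $A$-parts gives $\lambda_1=-\alpha_1+\nu_1$ and $\lambda_2=\alpha_1-\alpha_2+\nu_2$, which is \eqref{eqn:ch12}.

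For $i=2$ we have $M_2'A_2'=\{\diag(1,\ell_2,\ell_3)\}$, with character $\CC_{(\alpha_1-\alpha_2,\alpha_2)}\boxtimes\CC_{(\alpha_1-\alpha_2,\alpha_2)}$, and the restriction of $V$ is $\CC_{(\xi_2,\xi_3)}\boxtimes\CC_{(\lambda_2,\lambda_3)}$. Matching gives $\lambda_2-\nu_1=\alpha_1-\alpha_2$ and $\lambda_3-\nu_2=\alpha_2$, with the parities as in \eqref{eqn:ch21}, \eqref{eqn:ch22}.

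The only point needing care is the sign bookkeeping: the action on $\Pol(\frakn_+)$ is via $\Ad_\#$, and the character of a $\Hom$ is taken as source versus target. This should be fixed consistently with the $i=3$ case, using the transformation laws from the proof of Lemma \ref{lem:MApol}, for example $\Ad_\#(\diag(t,1,1))\psi_{\alpha,k}=t^{-\alpha_1}\psi_{\alpha,k}$. That case yields $\nu_1-\lambda_1=\alpha_1$, and the $i=1,2$ computations should be checked to produce the stated signs in the same way. Otherwise both directions are immediate, since nonvanishing for a given $\alpha$ is equivalent to membership in $L_i(\alpha)$.
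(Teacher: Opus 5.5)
Your proposal is correct and matches the paper's argument: the paper omits the proof as identical to that of Proposition~\ref{prop:Invariants_SB3}, which decomposes the Hom space via \eqref{eqn:Homalpha} and matches the $M_i'A_i'$-character of $\Pol(\alpha)$ from Lemma~\ref{lem:MApol} against $(\CC_\xi\boxtimes\CC_\lambda)\vert_{M_i'A_i'}$ and $\CC_\eta\boxtimes\CC_\nu$. Your explicit matchings for $i=1,2$ already yield \eqref{eqn:ch11}--\eqref{eqn:ch22} with the correct signs, so the closing caveat about sign bookkeeping is unnecessary.
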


\begin{proof}
The proof is identical to that of Proposition \ref{prop:Invariants_SB3} and is therefore omitted.
\end{proof}

The next step is to compute the T-saturation
$T_\alpha^\sharp \big(-\zeta_i\widehat{d\pi_{\lambda^*}}(N_i^+)\big)$
of $-\zeta_i\widehat{d\pi_{\lambda^*}}(N_i^+)$ ($i=1,2$)  in 
Proposition \ref{prop:Fdpi}.
For $(\beta_1,\beta_2) \in \CC^2$ and $a\in \CC$, let
\begin{equation}\label{eqn:Palpha}
\calP^{(\beta_1,\beta_2)}(a) :=\vartheta_t+\frac{a}{2}t(\beta_1-\vartheta_t)+\frac{1}{4}t^2(\beta_1-1-\vartheta_t)(\beta_1-\vartheta_t)(\beta_2-\vartheta_t).
\end{equation}
Further, for $\alpha_1, \alpha_2 \in \NN$,  we put
\begin{equation*}
    \alpha(i):=
    \begin{cases}
        (\alpha_1,\alpha_2), &\text{if $i=1$},\\
        (\alpha_2,\alpha_1), & \text{if $i=2$}.
    \end{cases}
\end{equation*}
Also, write
\begin{equation}\label{eqn:a012}
\begin{aligned}
a_{0,1}
&=a_{0,1}(\lambda;\alpha_1,\alpha_2)
:=2(\lambda_1-\lambda_2-1+\alpha_1-\tfrac{1}{2}\alpha_2),
\\
a_{0,2}
&=a_{0,2}(\lambda;\alpha_1,\alpha_2)
:= 2(\lambda_3-\lambda_2+1+\tfrac{1}{2}\alpha_1-\alpha_2).
\end{aligned}
\end{equation}

The T-saturation 
$T_\alpha^\sharp \big(-\zeta_i\widehat{d\pi_{\lambda^*}}(N_i^+)\big)$
are computed in \cite[Prop.\ 5.13]{KPV25} as follows.

\begin{proposition}[{\cite[Prop.\ 5.13]{KPV25}}]
\label{prop:pihat12}
For $\alpha=(\alpha_1,\alpha_2) \in \NN^2$,
we have
\begin{align}
tT_\alpha^\sharp(-\zeta_1\widehat{d\pi_{\lambda^*}}(N_1^+))
&=\calP^{\alpha(1)}(a_{0,1}),\label{eqn:P1}\\[3pt]
(-t)T_\alpha^\sharp(-\zeta_2\widehat{d\pi_{\lambda^*}}(N_2^+))
&=\calP^{\alpha(2)}(a_{0,2}).\label{eqn:P2}
\end{align}
\end{proposition}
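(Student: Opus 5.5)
We prove both identities by direct substitution: apply the dictionary of Lemma \ref{lem:T-saturation} term by term to the explicit formulas for $-\zeta_i\widehat{d\pi_{\lambda^*}}(N_i^+)$ in Proposition \ref{prop:Fdpi}, then multiply by $\pm t$. Two points need care. First, the Euler operators $\vartheta_1,\vartheta_2,\vartheta_3$ commute with each other. Hence Lemma \ref{lem:T-saturation}(1)--(3) iterates: a polynomial $f(\vartheta_1,\vartheta_2,\vartheta_3)$ satisfies
\[
f(\vartheta_1,\vartheta_2,\vartheta_3)\,T_\alpha=T_\alpha\circ f(\alpha_1-\vartheta_t,\alpha_2-\vartheta_t,\vartheta_t).
\]
Second, the monomial factors $\zeta_3/(\zeta_1\zeta_2)$ and $\zeta_1\zeta_2/\zeta_3$ always stand to the left of the Euler operators. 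So by Lemma \ref{lem:T-saturation}(4)--(5) they become left multiplication by $t$ and by $t^{-1}$, with no reordering. The only term involving $t^{-1}$ is $t^{-1}\vartheta_t$, which preserves polynomials, so everything is well defined on $\Pol_{\min(\alpha)}[t]$.

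For $i=1$, write $\vartheta=\vartheta_t$. Substitution gives $T_\alpha^\sharp(-\zeta_1\widehat{d\pi_{\lambda^*}}(N_1^+))$ as
\begin{align*}
&(\lambda_1-\lambda_2)(\alpha_1-\vartheta)+(\alpha_1-\vartheta)(\alpha_1-\vartheta-1)-\tfrac12(\alpha_1-\vartheta)(\alpha_2-\vartheta)+\tfrac12(\alpha_1-\vartheta)\vartheta\\
&\quad+t^{-1}\vartheta+\tfrac14 t(\alpha_1-\vartheta)(\alpha_1-1-\vartheta)(\alpha_2-\vartheta).
\end{align*}
Here we used $\vartheta_1^2-\vartheta_1=\vartheta_1(\vartheta_1-1)\mapsto(\alpha_1-\vartheta)(\alpha_1-\vartheta-1)$. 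The key simplification is to factor $(\alpha_1-\vartheta)$ out of the four first-order-in-$t^0$ terms. The bracket that remains is
\[
\lambda_1-\lambda_2+\alpha_1-\vartheta-1-\tfrac12\alpha_2+\tfrac12\vartheta+\tfrac12\vartheta=\lambda_1-\lambda_2-1+\alpha_1-\tfrac12\alpha_2=\tfrac12 a_{0,1}.
\]
All $\vartheta$'s cancel here. Multiplying on the left by $t$ then gives exactly
\[
\vartheta+\tfrac{a_{0,1}}{2}t(\alpha_1-\vartheta)+\tfrac14t^2(\alpha_1-1-\vartheta)(\alpha_1-\vartheta)(\alpha_2-\vartheta)=\calP^{\alpha(1)}(a_{0,1}).
\]

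For $i=2$ we proceed identically from the second formula of Proposition \ref{prop:Fdpi}; now the factor pulled out is $(\alpha_2-\vartheta)$. The term $-\tfrac{\zeta_1\zeta_2}{\zeta_3}\vartheta_3$ becomes $-t^{-1}\vartheta$. The quartic term becomes $-\tfrac14 t(\alpha_1-\vartheta)(\alpha_2-\vartheta)(\alpha_2-1-\vartheta)$. After factoring, the remaining constant is
\[
\lambda_2-\lambda_3+\alpha_2-1-\tfrac12\alpha_1=-\tfrac12 a_{0,2}.
\]
Multiplying by $-t$ flips all signs and yields $\calP^{(\alpha_2,\alpha_1)}(a_{0,2})=\calP^{\alpha(2)}(a_{0,2})$.

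The computation is routine. The one step needing care is keeping track of operator order: $t$ and $t^{-1}$ must stay on the left of polynomials in $\vartheta$, and we must not commute $t$ past $\vartheta$, since $\vartheta t=t(\vartheta+1)$. We check this by verifying that the cancellation of $\vartheta$ inside the bracket is consistent with Proposition \ref{prop:Fdpi} as written.
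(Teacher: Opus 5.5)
Your proposal is correct and matches how this identity is obtained: the paper cites it from \cite{KPV25}, and treats its $i=3$ analogue, Proposition~\ref{prop:ODEdiffoperator3}, as a direct term-by-term substitution of Lemma~\ref{lem:T-saturation} into the formulas of Proposition~\ref{prop:Fdpi}. Your bookkeeping holds up: the $\vartheta$'s cancel inside the factored brackets, leaving $\tfrac12 a_{0,1}$ and $-\tfrac12 a_{0,2}$, and left multiplication by $t$ and $-t$ reproduces $\calP^{(\alpha_1,\alpha_2)}(a_{0,1})$ and $\calP^{(\alpha_2,\alpha_1)}(a_{0,2})$ exactly.
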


Recall from \eqref{eqn:Solalpha} and \eqref{eqn:SolalphaAd} that,
for $i=1,2$,  we have
\begin{align*}
\Sol_i^{(\alpha)}(\xi,\lambda) &= \{ \psi \in \Hom_{M_i'A_i'}\big((\CC_\xi\boxtimes \CC_{\lambda})\big\rvert_{M_i'A_i'},\Pol(\alpha)\otimes (\CC_\eta\boxtimes \CC_{\nu})\big) : \eqref{PDE_SymmetryBreaking_12} \text{ holds}\},\\
\Sol_{\Ad(e^{J(\alpha)}),\,i}^{(\alpha)}(\xi,\lambda)
&=\{\varphi(\zeta) \in \Pol(\alpha) : 
\Ad(e^{J(\alpha)})(-\zeta_i\widehat{d\pi_{\lambda^*}}(N_i^+))\varphi(\zeta)=0\}.
\end{align*}
We then put
\begin{align}
\wSol_{\calP,\, i}^{(\alpha)}(\xi,\lambda) &:= \{p(t) \in \Pol_{\min(\alpha)}[t] : 
\calP^{\alpha(i)}(a_{0,i})p(t) = 0\},\label{eqn:Sol_tilde_i}\\
\wSol_{\Ad(e^{\wJ(\alpha)}) \calP,\, i}^{(\alpha)}(\xi,\lambda)
&:=\{q(t) \in \Pol_{\min(\alpha)}[t] : 
\Ad(e^{\wJ(\alpha)})\calP^{\alpha(i)}(a_{0,i})q(t)=0\}. \nonumber
\end{align}
Similar to \eqref{eqn:cdSol3} for $i=3$,
the commutative diagram for $i=1,2$
corresponding to \eqref{eqn:cdA} with $(\xi, \eta; \lambda, \nu) \in L_i(\alpha)$
is given as follows.
\begin{equation}\label{eqn:conjTsec9}
\begin{tikzcd}[column sep=1.5cm]
	{\wSol_{\Ad(e^{\wJ(\alpha)}) \calP,\, i}^{(\alpha)}(\xi,\lambda)} & 
	\Sol_{\Ad(e^{J(\alpha)}),\,i}^{(\alpha)}(\xi,\lambda)  	
	\arrow[dl, pos=0.5, phantom, "\circlearrowleft"]
	&{\Diff_{G_i^\prime}\left(I(\xi, \lambda), J(\eta, \nu)\right)} \\
	{\wSol_{\calP,\, i}^{(\alpha)}(\xi,\lambda)}  & {\Sol_i^{(\alpha)}(\xi,\lambda)} &
	\arrow[ul, pos=0.886, phantom, "\circlearrowleft"]
	\arrow["\Symb_{\ord}^{-1}",  from=1-2, to=1-3]
	\arrow[ "\Symb_0^{-1}"', from=2-2, to=1-3]
	\arrow["{T_\alpha}", "\sim"' , from=1-1, to=1-2]
	\arrow["\sim" sloped, "e^{-\wJ(\alpha)}"', from=1-1, to=2-1]
	\arrow["e^{-J(\alpha)}", "\sim"' sloped,  from=1-2, to=2-2]
	\arrow["\sim" , "{T_\alpha}"',  from=2-1, to=2-2]
\end{tikzcd}
\end{equation}

%%%%%%%%%%%%%%%%%%%%%%%%%%%%%%%%%%%%%%%%%
\subsection{Proof of  Theorems \texorpdfstring{\ref{thm:class1}}{9.7} and \texorpdfstring{\ref{thm:cons1}}{9.9}: Part 2} 
\label{sec:Thms12B}

We first consider the case $i=1$; the other case $i=2$ will be handled in 
Section \ref{sec:duality} via a certain duality.
As for $\wSol_3^{(\alpha)}(\xi,\lambda)$, 
we wish to understand
$\wSol_{\calP,\, 1}^{(\alpha)}(\xi,\lambda)$
and 
$\wSol_{\Ad(e^{\wJ(\alpha)}) \calP,\, 1}^{(\alpha)}(\xi,\lambda)$.

As opposed to Section \ref{sec:Step3} where we first solve the ordered F-system, we start with the classification of the original F-system
$\wSol_{\calP,\, 1}^{(\alpha)}(\xi,\lambda)$ here.
We define two conditions 
 (A1) and (A2) on $(\alpha_1, \alpha_2, a_{0,1}) \in \N^2 \times \C$ as follows:
\vspace{5pt}
\begin{enumerate}[label=\normalfont{(\arabic*)}]
\item[(A1)] $\alpha_1\leq \alpha_2$;
\vspace{5pt}
\item[(A2)] $\alpha_1>\alpha_2$ and 
$a_{0,1}\in \{\alpha_2 - 2k: k=0,1,2,\ldots, \alpha_2\}$.
\end{enumerate}
In the following,
we consider the case $(a,b)=(a_{0,1}, \alpha_2)$ for 
$p_{a,b}^{(\alpha_1, \alpha_2)}(t)$ in \eqref{eqn:p_alpha}.

\begin{proposition}\label{prop:SolD1}
For $(\xi, \eta; \lambda, \nu) \in L_1(\alpha)$,
the solution space $\wSol_{\calP,\, 1}^{(\alpha)}(\xi,\lambda)$ 
is classified as follows.
\begin{equation}\label{eqn:SolCay}
\wSol_{\calP,\, 1}^{(\alpha)}(\xi,\lambda)
=
\begin{cases}
\CC\,
p_{a_{0,1},\alpha_2}^{(\alpha)}(t) & 
\textnormal{if (A1) or (A2) holds,}\\[3pt]
\{0\} & \textnormal{otherwise.}
\end{cases}
\end{equation}
\end{proposition}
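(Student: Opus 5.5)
We solve the ODE $\calP^{(\alpha_1,\alpha_2)}(a_{0,1})p(t)=0$ on $\Pol_{\min(\alpha)}[t]$ directly by a coefficient recurrence. By Proposition \ref{prop:pihat12}, $\calP^{(\alpha_1,\alpha_2)}(a)=\vartheta_t+\frac{a}{2}t(\alpha_1-\vartheta_t)+\frac14 t^2(\alpha_1-1-\vartheta_t)(\alpha_1-\vartheta_t)(\alpha_2-\vartheta_t)$. Multiplying by $t$ is harmless on the solution space, so $\wSol_{\calP,\,1}^{(\alpha)}(\xi,\lambda)$ is exactly the kernel of this operator on $\Pol_{\min(\alpha)}[t]$.

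Write $p(t)=\sum_{n=0}^{\min(\alpha)}c_nt^n$. Comparing coefficients of $t^n$ gives the three-term recurrence
\[
nc_n+\tfrac{a}{2}(\alpha_1-n+1)c_{n-1}+\tfrac14(\alpha_1-n+3)(\alpha_1-n+2)(\alpha_2-n+2)c_{n-2}=0,
\]
for $0\le n\le \min(\alpha)+2$, with $c_{-1}=c_{-2}=0$ and $c_n=0$ for $n>\min(\alpha)$.

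For $n\le\min(\alpha)$, the leading factor $n$ is nonzero once $n\ge1$. Hence $c_0$ determines every $c_n$ uniquely, and the solution space has dimension at most one. The candidate $p^{(\alpha)}_{a,\alpha_2}$ should match this: the Cayley continuant $\Cay_n(a,b)$ satisfies the standard three-term recurrence $\Cay_n=a\Cay_{n-1}+(n-1)(b-n+2)\Cay_{n-2}$ (up to the paper's normalization in Appendix \ref{appendix:Cayely}). Substituting $c_n=\frac{(-\alpha_1)_n}{2^nn!}\Cay_n(a,\alpha_2)$ should reduce our recurrence to that one. Checking this, and adjusting signs or normalization if needed, is the routine algebraic step.

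The real content is the two boundary equations, at $n=\min(\alpha)+1$ and $n=\min(\alpha)+2$, which must hold automatically or impose conditions.

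\emph{Case (A1), $\alpha_1\le\alpha_2$.} Here $\min(\alpha)=\alpha_1$. At $n=\alpha_1+1$ the $c_{n-1}$ coefficient $(\alpha_1-n+1)$ vanishes, and the $c_{n-2}$ coefficient contains $(\alpha_1-n+2)=0$. At $n=\alpha_1+2$ the factor $(\alpha_1-n+2)$ again vanishes. So the boundary equations are automatic and $\dim=1$.

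\emph{Case $\alpha_1>\alpha_2$.} Here $m:=\min(\alpha)=\alpha_2$. At $n=m+2$ the factor $(\alpha_2-n+2)$ vanishes, so that equation is automatic. At $n=m+1$ the $c_{m-1}$ term is killed by $(\alpha_2-m+1)$? No: that factor equals $1$, so it survives. What remains is $\frac a2(\alpha_1-m)c_m+\frac14(\alpha_1-m+2)(\alpha_1-m+1)\cdot 1\cdot c_{m-1}=0$. This is one extra linear constraint on the unique candidate.

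The main obstacle is showing this constraint is equivalent to $\Cay_{\alpha_2+1}(a_{0,1},\alpha_2)=0$, and then identifying that zero set as $\{\alpha_2-2k:0\le k\le\alpha_2\}$. The identification follows from the classical Sylvester--Kac evaluation $\Cay_{b+1}(x,b)=\prod_{k=0}^{b}(x-b+2k)$, which I expect is among the Cayley-continuant facts recorded in Appendix \ref{appendix:Cayely}; I would invoke it there. To get the equivalence, I would extend the recurrence one step formally: $c_{m+1}$ defined by the recurrence is proportional to $\Cay_{m+1}$, and the boundary equation at $n=m+1$ says precisely that this $c_{m+1}$ vanishes. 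One must check that the prefactor $(-\alpha_1)_{m+1}$ is nonzero, which holds since $\alpha_1>m$. This yields (A2) and completes the proof. Alternatively, Theorem \ref{thm:symb_ord} permits a cross-check via the ordered ${}_2F_0$ equation.
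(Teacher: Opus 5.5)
Your route differs from the paper's and can be made to work, but the central recurrence is miscomputed, and two of your steps depend on that error.

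\textbf{The recurrence.} Since $\calP^{(\alpha)}(a)t^k=kt^k+\frac a2(\alpha_1-k)t^{k+1}+\frac14(\alpha_1-1-k)(\alpha_1-k)(\alpha_2-k)t^{k+2}$, taking $k=n-2$ gives the coefficient $\frac14(\alpha_1-n+1)(\alpha_1-n+2)(\alpha_2-n+2)$ for $c_{n-2}$. You wrote $\frac14(\alpha_1-n+3)(\alpha_1-n+2)(\alpha_2-n+2)$. The correct recurrence is
\[
nc_n+(\alpha_1-n+1)\Bigl[\tfrac a2\,c_{n-1}+\tfrac14(\alpha_1-n+2)(\alpha_2-n+2)\,c_{n-2}\Bigr]=0,
\]
and the common factor $(\alpha_1-n+1)$ is what the whole argument hinges on.

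\textbf{The (A1) step.} At $n=\alpha_1+1$ you assert that the $c_{n-2}$-coefficient contains $(\alpha_1-n+2)=0$. But $\alpha_1-(\alpha_1+1)+2=1$. With your recurrence this equation would be the nontrivial constraint $\frac12(\alpha_2-\alpha_1+1)c_{\alpha_1-1}=0$ (for $\alpha_1\ge1$). It is automatic only because of the factor $(\alpha_1-n+1)$.

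\textbf{The identification step.} The ``routine'' identification with $p^{(\alpha)}_{a,\alpha_2}$ cannot succeed with your coefficient, whatever the sign or normalization. Normalizing $c_0=1$, your recurrence gives $c_2=\frac{\alpha_1}{8}\bigl((\alpha_1-1)a^2-(\alpha_1+1)\alpha_2\bigr)$ instead of $\frac{\alpha_1(\alpha_1-1)}{8}(a^2-\alpha_2)$. With the correct coefficient you have $(\alpha_1-n+1)(-\alpha_1)_{n-1}=-(-\alpha_1)_n$ and $(\alpha_1-n+1)(\alpha_1-n+2)(-\alpha_1)_{n-2}=(-\alpha_1)_n$. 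Then substituting $c_n=\frac{(-\alpha_1)_n}{2^nn!}\Cay_n(a,\alpha_2)$ reduces exactly to $\Cay_n(a,b)=a\Cay_{n-1}(a,b)-(n-1)(b-n+2)\Cay_{n-2}(a,b)$ at $b=\alpha_2$. Note the minus sign: this is the last-row expansion of \eqref{eqn:Cayley}.

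\textbf{The case $\alpha_1>\alpha_2$.} After this repair your argument is correct. The equation at $n=\alpha_2+1$ reads $(\alpha_1-\alpha_2)\bigl[\frac a2c_{\alpha_2}+\frac14(\alpha_1-\alpha_2+1)c_{\alpha_2-1}\bigr]=0$. This says the formally continued $c_{\alpha_2+1}$ vanishes, i.e.\ $\Cay_{\alpha_2+1}(a_{0,1},\alpha_2)=0$, because $(-\alpha_1)_{\alpha_2+1}\neq0$.

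The Sylvester--Kac product is not recorded in Appendix~\ref{appendix:Cayely}, but it follows at once from \eqref{eqn:gen_Cay}. For $x=b-2k$ with $0\le k\le b$, the generating function is the degree-$b$ polynomial $(1+t)^{b-k}(1-t)^{k}$, so $\Cay_{b+1}(b-2k,b)=0$. These are $b+1$ distinct roots of the monic degree-$(b+1)$ polynomial $x\mapsto\Cay_{b+1}(x,b)$, which gives the product formula.

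\textbf{Comparison with the paper.} The paper never writes a recurrence. It uses the factorization $\calL^{(\alpha_1,\alpha_2+1)}(a_{0,1},\alpha_2)=\calP^{(\alpha)}(a_{0,1})(\alpha_2+1-\vartheta_t)$, notes that $\alpha_2+1-\vartheta_t$ is invertible on $\Pol_{\min(\alpha)}[t]$, and reads off the kernel from Proposition~\ref{Prop:SolGeneralDiffEq1}. That proposition rests on the ${}_3F_1$ classification of the ordered F-system and on the identity \eqref{eqn:p3F1}. The condition in (A2) then appears as condition (C3), namely $\frac{a_{0,1}-\alpha_2}{2}\in[-\alpha_2,0]\cap\Z$.

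The paper's route recycles the $i=3$ machinery and leads directly to the ${}_2F_0$ form in Corollary~\ref{cor:Sol2F0}. Your corrected recurrence is self-contained and elementary. It needs neither Fact~\ref{fact:3F1-sols} nor the ${}_2F_1$ transformation used in Appendix~\ref{appendix:Cayely}. It also avoids having to interpret the top coefficient of $p^{(\alpha_1,\alpha_2+1)}_{a_{0,1},\alpha_2}$, whose denominator $(-\alpha_2)_{\alpha_2+1}$ vanishes in case (A2).
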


\begin{proof}
By
\eqref{eqn:Lalpha}
and
\eqref{eqn:Palpha},
one can easily check that 
\begin{equation}\label{eqn:LPapx1}
\calL^{(\alpha_1,\alpha_2+1)}(a_{0,1},\alpha_2)
=\calP^{(\alpha)}(a_{0,1})(\alpha_2+1-\vartheta_t).
\end{equation}
Then take $p(t) \in \Pol_{\min(\alpha)}[t]$ and 
assume $\calP^{(\alpha)}(a_{0,1})p(t)=0$.
Observe that, as $\Ker(\alpha_2+1-\vartheta_t)\big\vert_{\Pol_{\min(\alpha)}[t]}=\{0\}$, 
the operator
$(\alpha_2+1-\vartheta_t)^{-1}$ is well-defined on $\Pol_{\min(\alpha)}[t]$. 
Then put $q(t) := (\alpha_2+1-\vartheta_t)^{-1}p(t)$. 
By \eqref{eqn:LPapx1}, we have
\begin{equation}\label{eqn:PLapx2}
0=\calP^{(\alpha)}(a_{0,1})p(t)
=\calP^{(\alpha)}(a_{0,1})(\alpha_2+1-\vartheta_t)q(t)
=\calL^{(\alpha_1,\alpha_2+1)}(a_{0,1},\alpha_2)q(t).
\end{equation}
Now we consider the cases $\alpha_1 \leq \alpha_2$ and $\alpha_1 > \alpha_2$,
separately.

First, suppose $\alpha_1 \leq \alpha_2$. Then 
$\min(\alpha) = \alpha_1$ and so $\Pol_{\min(\alpha)}[t]= \Pol_{\alpha_1}[t]$.
By Proposition \ref{Prop:SolGeneralDiffEq1}, we have
$\Ker\calL^{(\alpha_1,\alpha_2+1)}(a_{0,1},\alpha_2)\big\vert_{\Pol_{\alpha_1}[t]}
=\CC p^{(\alpha_1,\alpha_2+1)}_{a_{0,1},\alpha_2}(t)$,
where
\begin{equation*}
p^{(\alpha_1,\alpha_2+1)}_{a_{0,1},\alpha_2}(t)
=\sum_{n=0}^{\alpha_1} 
\frac{(-\alpha_1)_n(-\alpha_2-1)_n}{2^nn!(-\alpha_2)_n}\Cay_n(a_{0,1},\alpha_2)t^n.
\end{equation*}

Thus the polynomial $q(t)$ in \eqref{eqn:PLapx2} is
$q(t) = p^{(\alpha_1,\alpha_2+1)}_{a_{0,1},\alpha_2}(t)$ up to scalar.
As $q(t) = (\alpha_2+1-\vartheta_t)^{-1}p(t)$, this shows that
\begin{align*}
p(t)
&=(\alpha_2+1-\vartheta_t)\sum_{n=0}^{\alpha_1} 
\frac{(-\alpha_1)_n(-\alpha_2-1)_n}{2^nn!(-\alpha_2)_n}\Cay_n(a_{0,1},\alpha_2)t^n\\
&=(\alpha_2+1)\sum_{n=0}^{\alpha_1} 
\frac{(-\alpha_1)_n}{2^nn!}
\Cay_n(a_{0,1},\alpha_2)t^n,
\end{align*}
concluding the assertion in the case (A1).

Next suppose $\alpha_1 > \alpha_2$.
In this case 
$\min(\alpha) = \alpha_2$ and so $\Pol_{\min(\alpha)}[t]= \Pol_{\alpha_2}[t]$.
By Proposition \ref{Prop:SolGeneralDiffEq1}, the following conditions on 
$a_{0,1}$ are equivalent:
\vspace{3pt}
\begin{enumerate}[label=\normalfont{(\roman*)}]
\item $\Ker\calL^{(\alpha_1,\alpha_2+1)}(a_{0,1},\alpha_2)\big \vert_{\Pol_{\alpha_2}[t]}
\neq \{0\}$;
\vspace{2pt}
\item $\Ker\calL^{(\alpha_1,\alpha_2+1)}(a_{0,1},\alpha_2)\big \vert_{\Pol_{\alpha_2}[t]}=\CC p^{(\alpha_1,\alpha_2+1)}_{a_{0,1},\alpha_2}(t)$;
\vspace{3pt}
\item $a_{0,1} \in \{\alpha_2-2k : k=0,1,\ldots, \alpha_2\}$.
\vspace{3pt}
\end{enumerate}
The same arguments for the case (A1) then draw the desired conclusion for (A2).
\end{proof}

\begin{corollary}\label{cor:Sol2F0}
For $(\xi, \eta; \lambda, \nu) \in L_1(\alpha)$,
the solution space
$\wSol_{\Ad(e^{\wJ(\alpha)}) \calP,\, 1}^{(\alpha)}(\xi,\lambda)$
is classified as follows.
\begin{equation*}
\wSol_{\Ad(e^{\wJ(\alpha)}) \calP,\, 1}^{(\alpha)}(\xi,\lambda)
=
\begin{cases}
\CC\, 
{}_2F_0[-\alpha_1,\tfrac{a_{0,1}-\alpha_2}{2};t]
& 
\textnormal{if (A1) or (A2) holds,}\\[3pt]
\{0\} & \textnormal{otherwise.}
\end{cases}
\end{equation*}
\end{corollary}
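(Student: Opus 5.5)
By the commutative diagram \eqref{eqn:conjTsec9}, the map $e^{\wJ(\alpha)}$ restricts to a linear isomorphism
$$\wSol_{\calP,\,1}^{(\alpha)}(\xi,\lambda)\stackrel{\sim}{\To}\wSol_{\Ad(e^{\wJ(\alpha)})\calP,\,1}^{(\alpha)}(\xi,\lambda).$$
Proposition \ref{prop:SolD1} therefore already settles everything except the generator. The ordered solution space is $\{0\}$ unless (A1) or (A2) holds, and it is one-dimensional if one of them holds. So the plan has two parts: show that under (A1) or (A2) the polynomial ${}_2F_0[-\alpha_1,\tfrac{a_{0,1}-\alpha_2}{2};t]$ lies in $\Pol_{\min(\alpha)}[t]$ and is non-zero, and show that it is annihilated by $\Ad(e^{\wJ(\alpha)})\calP^{(\alpha)}(a_{0,1})$. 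The first part is immediate when $\alpha_1\le\alpha_2$, since then the series terminates at degree $\alpha_1=\min(\alpha)$. When $\alpha_1>\alpha_2$, condition (A2) says exactly that $c:=\tfrac{a_{0,1}-\alpha_2}{2}\in\{-k: 0\le k\le \alpha_2\}$, so the series terminates at degree $\le\alpha_2=\min(\alpha)$. Its constant term is $1$, so it is non-zero.

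For the annihilation I would compute the conjugated operator explicitly, as in Proposition \ref{prop:AdL}. First, rewrite $\calP^{(\alpha)}(a)$ in terms of $\wJ(\alpha)=\frac t2(\vartheta_t-\alpha_1)(\vartheta_t-\alpha_2)$. Using $t\,f(\vartheta_t)=f(\vartheta_t-1)\,t$, the cubic term $\frac14t^2(\alpha_1-1-\vartheta_t)(\alpha_1-\vartheta_t)(\alpha_2-\vartheta_t)$ composed on the right with a suitable linear factor in $\vartheta_t$ becomes $\pm\wJ(\alpha)^2$. Similarly, $\frac a2 t(\alpha_1-\vartheta_t)$ composed with that factor becomes $\pm a\wJ(\alpha)$ plus a multiple of $t(\vartheta_t-\alpha_1)$. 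The natural candidate for the factor is $(\vartheta_t-\alpha_2+1)$ or $(\alpha_2-\vartheta_t)$, in the spirit of \eqref{eqn:LPapx1}. So $\calP^{(\alpha)}(a)\circ(\text{linear factor})$ becomes a polynomial in $\vartheta_t$ and $\wJ(\alpha)$, with a remainder involving $t(\vartheta_t-\alpha_1)$. Second, apply Lemma \ref{lem:eD}, that is $\vartheta_t\mapsto\vartheta_t-\wJ(\alpha)$ with $\wJ(\alpha)$ fixed. I expect the $\wJ(\alpha)^2$ terms to cancel, leaving an operator of the form
$$\bigl(\vartheta_t-t(\vartheta_t-\alpha_1)(\vartheta_t+c)\bigr)\circ(\text{factor}),$$
that is, ${}_2\calF_0(-\alpha_1,c;t)$ composed with a first-order factor. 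The factor is the conjugate of the invertible linear factor (invertible on $\Pol_{\min(\alpha)}[t]$ as in the proof of Proposition \ref{prop:SolD1}), so it is again invertible. Finally, ${}_2F_0[-\alpha_1,c;t]$ solves ${}_2\calF_0(-\alpha_1,c;t)f=0$, which is the standard hypergeometric equation \cite[(16.8.3)]{DLMF}, and this would finish the proof.

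The main obstacle is this operator bookkeeping. The linear factor does not commute with $\wJ(\alpha)$: conjugating $(\alpha_2-\vartheta_t)$ produces $(\alpha_2-\vartheta_t+\wJ(\alpha))$. One must check that the factorization still closes up, possibly with the factor on the other side or with shifted $\alpha$ as in \eqref{eqn:LPapx1}. If that becomes messy, the fallback is a direct coefficient check. Write $e^{\wJ(\alpha)}p^{(\alpha)}_{a_{0,1},\alpha_2}(t)$ using \eqref{eqn:D1}, and compare with the ${}_2F_0$ coefficients. This needs the three-term recurrence for the Cayley continuants $\Cay_n(a,b)$ from Appendix \ref{appendix:Cayely}. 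Alternatively, substitute ${}_2F_0$ into $\Ad(e^{\wJ(\alpha)})\calP$ term by term, using $\wJ(\alpha)t^k=\tfrac12(k-\alpha_1)(k-\alpha_2)t^{k+1}$, and verify that the resulting recurrence on coefficients is the ${}_2F_0$ one.
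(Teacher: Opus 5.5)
You have the reduction right. The map $e^{\wJ(\alpha)}$ identifies $\wSol_{\calP,\,1}^{(\alpha)}(\xi,\lambda)$ with the ordered solution space. Proposition~\ref{prop:SolD1} then gives vanishing outside (A1)/(A2) and one-dimensionality inside. Your check that ${}_2F_0[-\alpha_1,c;t]$, with $c=\tfrac{a_{0,1}-\alpha_2}{2}$, is a nonzero element of $\Pol_{\min(\alpha)}[t]$ under (A1) or (A2) is also correct.

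The gap is the annihilation step, which is the real content of the corollary. As you describe it, that mechanism would not work, for three reasons.
\begin{itemize}
\item Your candidate factors $(\vartheta_t-\alpha_2+1)$ and $(\alpha_2-\vartheta_t)$ are not invertible on $\Pol_{\min(\alpha)}[t]$ in general. They kill $t^{\alpha_2-1}$, resp.\ $t^{\alpha_2}$, whenever $1\le\alpha_2\le\alpha_1$, which includes all of (A2) with $\alpha_2\ge 1$. The invertible factor in the proof of Proposition~\ref{prop:SolD1} is $(\alpha_2+1-\vartheta_t)$.
\item With the unshifted $\wJ(\alpha)$, the factor $(\vartheta_t+1-\alpha_2)$ does turn the $t^2$-term into $-\wJ(\alpha)^2$. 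But it leaves the remainder $-\tfrac a2 t(\vartheta_t-\alpha_1)$, which does not commute with $\wJ(\alpha)$, so Lemma~\ref{lem:eD} does nothing for it.
\item The outcome you expect, ${}_2\calF_0\circ G$ with $G$ the invertible conjugated factor, would force $\Ad(e^{\wJ(\alpha)})\calP^{(\alpha)}(a)={}_2\calF_0(-\alpha_1,c;t)$ on $\Pol_{\min(\alpha)}[t]$. That is false in general: already for $\alpha=(2,2)$ the two operators differ on $1$ by $-ct^2$. The correct identity is Proposition~\ref{prop:eDsec9}, $(1+\tfrac t2(\alpha_1-\vartheta_t))\Ad(e^{\wJ(\alpha)})\calP^{(\alpha)}(a)={}_2\calF_0(-\alpha_1,\tfrac{a-\alpha_2}{2};t)$, with the invertible factor on the \emph{left}.
\end{itemize}

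The missing ingredient is the shift. The factor $(\alpha_2+1-\vartheta_t)$ pairs with $\wJ(\alpha_1,\alpha_2+1)$. Then \eqref{eqn:LPapx1} has no remainder, and Proposition~\ref{prop:AdL} gives $\Ad(e^{\wJ(\alpha_1,\alpha_2+1)})\bigl(\calP^{(\alpha)}(a)(\alpha_2+1-\vartheta_t)\bigr)={}_2\calF_0(-\alpha_1,\tfrac{a-\alpha_2}{2};t)\,(\alpha_2+1-\vartheta_t)$. This is conjugation by the wrong exponential. Transporting it back to $\Ad(e^{\wJ(\alpha)})$ requires the nontrivial identity $e^{\wJ(\alpha_1,\alpha_2+1)}e^{-\wJ(\alpha_1,\alpha_2)}=1+\tfrac t2(\alpha_1-\vartheta_t)$ of Lemma~\ref{lem:eDD}. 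That is the paper's alternative proof in Appendix~\ref{appendix:2F0}.

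The paper's main proof is shorter and is essentially your first fallback, except that nothing needs to be rederived from a Cayley recurrence. Use \eqref{eqn:p3F1} with $b=\alpha_2$, i.e.\ Proposition~\ref{prop:Cayley_and_3F1_relation}. This already underlies Proposition~\ref{Prop:SolGeneralDiffEq1} and hence Proposition~\ref{prop:SolD1}, so no new dependency is introduced. It gives $e^{\wJ(\alpha)}p^{(\alpha)}_{a_{0,1},\alpha_2}(t)={}_3F_1[-\alpha_1,-\alpha_2,c;-\alpha_2;t]=\sum_{k=0}^{\min(\alpha)}\frac{(-\alpha_1)_k(c)_k}{k!}t^k$. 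Your termination argument is precisely Remark~\ref{rem:3F1_2F0}, which identifies this truncation with ${}_2F_0[-\alpha_1,c;t]$ under (A1) or (A2).
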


\begin{proof}
It follows from the commutative diagram \eqref{eqn:conjTsec9} that 
\begin{equation*}
\wSol_{\Ad(e^{\wJ(\alpha)}) \calP,\, 1}^{(\alpha)}(\xi,\lambda)
=\{e^{\wJ(\alpha)}p(t) \in \Pol_{\min(\alpha)}[t] : p(t) \in \wSol_{\calP,\, 1}^{(\alpha)}(\xi,\lambda)\}.
\end{equation*}
Then Proposition \ref{prop:SolD1} implies
\begin{equation*}
\wSol_{\Ad(e^{\wJ(\alpha)}) \calP,\, 1}^{(\alpha)}(\xi,\lambda)
=
\begin{cases}
\CC\,
e^{\wJ(\alpha)}p_{a_{0,1},\alpha_2}^{(\alpha)}(t) & 
\textnormal{if (A1) or (A2) holds,}\\[3pt]
\{0\} & \textnormal{otherwise.}
\end{cases}
\end{equation*}
By \eqref{eqn:p3F1}, we have 
\begin{equation*}
e^{\wJ(\alpha)}p_{a_{0,1},\alpha_2}^{(\alpha)}(t) 
= {}_3F_1
\left [\begin{matrix}
-\alpha_1& -\alpha_2 & \frac{a_{0,1}-\alpha_2}{2} \\
& -\alpha_2 &  
\end{matrix};t\right ].
\end{equation*}
We claim that 
$ {}_3F_1
\left [\begin{matrix}
-\alpha_1& -\alpha_2 & \frac{a_{0,1}-\alpha_2}{2} \\
& -\alpha_2 &  
\end{matrix};t\right ] 
= 
{}_2F_0[-\alpha_1,\tfrac{a_{0,1}-\alpha_2}{2};t]$
in the cases (A1) and (A2).
Indeed, by definition, we have $-\alpha_2 \leq -\alpha_1$ in the case (A1).
In the case (A2), the condition on $a_{0,1}$ implies that  
$-\alpha_1 < -\alpha_2 \leq - \tfrac{a_{0,1}-\alpha_2}{2} \in \N$.
Therefore, it follows from Remark \ref{rem:3F1_2F0} that
if (A1) or (A2) holds, then
\begin{equation*}
{}_3F_1
\left [\begin{matrix}
-\alpha_1& -\alpha_2 & \frac{a_{0,1}-\alpha_2}{2} \\
& -\alpha_2 &  
\end{matrix};t\right ]
=
{}_2F_0[-\alpha_1,\tfrac{a_{0,1}-\alpha_2}{2};t].
\end{equation*}
This completes the proof.
\end{proof}

\begin{remark}
One can relate the differential equation
$\Ad(e^{\wJ(\alpha)})\calP^{\alpha(i)}(a_{0,1})q(t)=0$
more directly to 
${}_2\calF_0(-\alpha_1,\tfrac{a_{0,1}-\alpha_2}{2};t)q(t)=0$
as in Proposition \ref{prop:AdL}.
Since some preliminary computations are needed,
we discuss it in Appendix \ref{appendix:2F0}
as an alternative proof of Corollary \ref{cor:Sol2F0}.
\end{remark}

We now classify the spaces 
$\Sol_1(\xi, \eta; \lambda, \nu)$ and 
$\Sol_{\Ad(e^{J(\alpha)}),\, 1}(\xi,\eta; \lambda,\nu)$  of solutions
in \eqref{eqn:Solalpha} and \eqref{eqn:SolalphaAd} for $i=1$, respectively.
We put
\begin{align*}
\psi_1(\zeta) &:= 
T_{\nu_1-\lambda_1,\,\nu_1+\nu_2-\lambda_1-\lambda_2}
\big(p_{\lambda_1+\nu_1-\lambda_2-\nu_2-2,\,\nu_1+\nu_2-\lambda_1-\lambda_2}^{(\nu_1-\lambda_1,\,\nu_1+\nu_2-\lambda_1-\lambda_2)}(t)\big)(\zeta),\\[5pt]
\varphi_1(\zeta)&:= 
T_{\nu_1-\lambda_1,\,\nu_1+\nu_2-\lambda_1-\lambda_2}
({}_2F_0[\lambda_1-\nu_1, \lambda_1-\nu_2-1; t])(\zeta).
\end{align*}

\begin{proposition}\label{prop:Sol_1_characterization}
The spaces $\Sol_1(\xi, \eta; \lambda, \nu)$ and 
$\Sol_{\Ad(e^{J(\alpha)}),\, 1}(\xi,\eta; \lambda,\nu)$ are classified as follows.
\begin{align*}
\Sol_1(\xi, \eta; \lambda, \nu) 
&= 
\begin{cases}
\C\psi_1(\zeta) & \textnormal{if $(\xi, \eta; \lambda, \nu) \in \Supp_1(\DD)$},\\[3pt]
\{0\}, & \textnormal{otherwise,}
\end{cases}\\[3pt]
\Sol_{\Ad(e^{J(\alpha)}),\, 1}(\xi,\eta; \lambda,\nu)
&= 
\begin{cases}
\C\varphi_1(\zeta) & \textnormal{if $(\xi, \eta; \lambda, \nu) \in \Supp_1(\DD)$},\\[3pt]
\{0\}, & \textnormal{otherwise}.
\end{cases}
\end{align*}
\end{proposition}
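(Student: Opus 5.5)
The proof follows the pattern of Proposition \ref{prop:Sol_3_characterization}: combine the decomposition \eqref{eqn:SolDecomp} with the classification of $M_1'A_1'$-invariants in Proposition \ref{prop:Invariants_SB12}, and then translate the conditions (A1), (A2) into the parameter set $\Supp_1(\DD)$.

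\textbf{Step 1: reduction to a single $\alpha$.} By Proposition \ref{prop:Invariants_SB12}, $\Sol_1^{(\alpha)}(\xi,\lambda)$ can be nonzero only when $(\xi,\eta;\lambda,\nu)\in L_1(\alpha)$. The equations \eqref{eqn:ch12} force
\[
\alpha_1=\nu_1-\lambda_1,\qquad \alpha_2=\alpha_1-(\lambda_2-\nu_2)=\nu_1+\nu_2-\lambda_1-\lambda_2 .
\]
So at most one $\alpha$ contributes, and $\Sol_1(\xi,\eta;\lambda,\nu)=\Sol_1^{(\alpha)}(\xi,\lambda)$ for this $\alpha$. The parity conditions \eqref{eqn:ch11} then become exactly the congruences in \eqref{eq:condition_full_set_1}, because $\alpha_1-\alpha_2=\lambda_2-\nu_2\equiv\nu_2-\lambda_2$. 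Through the saturation map $T_\alpha$ and the diagram \eqref{eqn:conjTsec9}, $\Sol_1^{(\alpha)}$ and $\Sol^{(\alpha)}_{\Ad(e^{J(\alpha)}),1}$ are the images of $\wSol_{\calP,1}^{(\alpha)}$ and $\wSol_{\Ad(e^{\wJ(\alpha)})\calP,1}^{(\alpha)}$. Proposition \ref{prop:SolD1} and Corollary \ref{cor:Sol2F0} identify these spaces. Substituting the value of $\alpha$ gives
\[
a_{0,1}=2(\lambda_1-\lambda_2-1)+2\alpha_1-\alpha_2=\lambda_1+\nu_1-\lambda_2-\nu_2-2,
\qquad
\tfrac{a_{0,1}-\alpha_2}{2}=\lambda_1-\nu_2-1 .
\]
With these values the solutions are exactly $\psi_1$ and $\varphi_1$, since $-\alpha_1=\lambda_1-\nu_1$.

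\textbf{Step 2: matching (A1) $\cup$ (A2) with $\Phi_1$.} This is the main bookkeeping, and I would split it into two cases.
\begin{itemize}
\item[(A1)] The condition $\alpha\in\NN^2$ with $\alpha_1\le\alpha_2$ says that $\nu_1-\lambda_1\in\NN$ and $\alpha_2-\alpha_1=\nu_2-\lambda_2\in\NN$. This is $\Phi_{1,a}$.
\item[(A2)] Here $\alpha_1>\alpha_2\ge0$, i.e.\ $\nu_1-\lambda_1\in\NN_+$, $-(\nu_2-\lambda_2)\in\NN_+$, and $\nu_1-\lambda_1+\nu_2-\lambda_2\in\NN$. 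In addition $a_{0,1}=\alpha_2-2k$ for some $0\le k\le\alpha_2$. Rewriting, $a_{0,1}-\alpha_2=2(\lambda_1-\nu_2-1)=-2k$, i.e.\ $\nu_2-\lambda_1+1=k\in[0,\alpha_2]$.
\end{itemize}
I must check that this last condition matches \eqref{eq:condition_set_1b}, whose second part reads $\nu_1-\lambda_2\in[1,\alpha_2+1]$. The expected obstacle is exactly this identification. Writing $\nu_1-\lambda_2=\alpha_2+1-(\nu_2-\lambda_1+1)+\dots$ in terms of $\alpha$, one should get $\nu_1-\lambda_2 = \alpha_2-k+1$, so $k\in[0,\alpha_2]$ iff $\nu_1-\lambda_2\in[1,\alpha_2+1]$. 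I would verify this linear identity carefully, using $\alpha_2=\nu_1+\nu_2-\lambda_1-\lambda_2$. If a sign or shift disagrees, I would recheck the normalization of $a_{0,1}$ in \eqref{eqn:a012}. With this, (A2) is precisely $\Phi_{1,b}$.

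\textbf{Conclusion.} Combining Steps 1 and 2: the space is one-dimensional, spanned by $\psi_1$ (respectively $\varphi_1$), exactly when $(\xi,\eta;\lambda,\nu)\in\Supp_1(\DD)$, and it is zero otherwise.
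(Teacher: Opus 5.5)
Your proposal is correct and follows essentially the same route as the paper. The paper likewise reduces to Proposition~\ref{prop:SolD1} and Corollary~\ref{cor:Sol2F0} via Proposition~\ref{prop:Invariants_SB12}, and translates (A1), (A2) into $\Phi_{1,a}$, $\Phi_{1,b}$ through $\alpha_1=\nu_1-\lambda_1$, $\alpha_1-\alpha_2=\lambda_2-\nu_2$, $a_{0,1}=\lambda_1-\lambda_2+\nu_1-\nu_2-2$. The identity you were unsure about holds exactly: with $k=\nu_2-\lambda_1+1$ one has $\nu_1-\lambda_2=\alpha_2+1-k$, so no correction to the normalization of $a_{0,1}$ is needed.
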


\begin{proof}
The idea of the proof is analogous to that of 
Proposition \ref{prop:Sol_3_characterization}.
Indeed,
it follows from Proposition \ref{prop:Invariants_SB12} that
the conditions (A1) and (A2) are equivalent to
the conditions for $\Phi_{1,a}$ and $\Phi_{1,b}$, respectively, through
the identifications
$\alpha_1= \nu_1-\lambda_1$, 
$\alpha_1-\alpha_2=\lambda_2-\nu_2$
and
$a_{0,1}=\lambda_1-\lambda_2+\nu_1-\nu_2-2$.
Since the remaining  arguments are exactly the same as those in the proof of Proposition \ref{prop:Sol_3_characterization}, we omit the details.
\end{proof}

%%%%%%%%%%%%%%%%%%%%%%%%%%%%%%%%%%%%%%%%%
\subsection{Duality \texorpdfstring{$\Sol_1(\xi,\eta; \lambda,\nu)
\leftrightarrow \Sol_2(\xi,\eta; \lambda,\nu)$}{Sol_1 -> Sol_2}}
\label{sec:duality}

In this subsection,  for $\alpha=(\alpha_1,\alpha_2) \in \N^2$,
we write $(\alpha)=(\alpha_1,\alpha_2)$ to emphasize $\alpha_1, \alpha_2 \in \N$.
For instance, we write $L_i(\alpha)=L_i(\alpha_1, \alpha_2)$,
$\Pol(\alpha)=\Pol(\alpha_1,\alpha_2)$ and 
$\Sol^{(\alpha)}_i(\xi,\lambda) = \Sol^{(\alpha_1,\alpha_2)}_i(\xi,\lambda)$.
We also write $T_\alpha = T_{\alpha_1,\alpha_2}$.

Our initial aim is to establish a certain duality between 
$\Sol_1(\xi,\eta; \lambda,\nu)$ and $\Sol_2(\xi,\eta; \lambda,\nu)$
to handle the case $i=2$ by using the results for $i=1$.
For the purpose, first define an involution $\varpi_m$ on $\C^m$ by
\begin{equation*}
\varpi_m: \C^m \rightarrow \C^m, \; 
(u_1, \ldots, u_m) \mapsto -(u_m, u_{m-1}, \dots, u_2, u_1).
\end{equation*}
Similarly, let  $\varpi'_n$ denote the involution on $(\Z/2\Z)^n$ defined by
\begin{equation*}
\varpi'_n: (\Z/2\Z)^n \rightarrow (\Z/2\Z)^n, \; 
 (v_1, \ldots, v_n) \mapsto (v_n, v_{n-1}, \dots, v_2, v_1).
\end{equation*}
For simplicity, for $i=1,2$, we write
\begin{equation*}
\Hom_i(\alpha_1,\alpha_2;\xi,\eta; \lambda,\nu):=
\Hom_{M_i'A_i'}\big((\CC_\xi\boxtimes \CC_{\lambda})\big\rvert_{M_i'A_i'},
\Pol(\alpha_1,\alpha_2)\otimes (\CC_\eta\boxtimes \CC_{\nu})\big).
\end{equation*}

\begin{proposition}\label{prop:Hom12}
The following conditions 
on $(\xi,\eta; \lambda,\nu) \in (\Z/2\Z)^5\times \C^5$ are equivalent.
\begin{enumerate}[label=\normalfont{(\roman*)}]
\item
$\Hom_2(\alpha_1,\alpha_2;\xi,\eta; \lambda,\nu)\neq \{0\}$,
\vspace{3pt}

\item 
$\Hom_1(\alpha_2,\alpha_1;(\varpi'_3(\xi), \varpi'_2(\eta); \varpi_3(\lambda), \varpi_2(\nu))\neq \{0\}$.
\end{enumerate}

\end{proposition}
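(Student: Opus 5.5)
Both conditions are explicit linear/parity conditions. The plan is to compute each of them and compare.

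First, by Lemma \ref{lem:MApol2}, $\Pol(\alpha_1,\alpha_2)$ is the isotypic component on which $M_i'A_i'$ acts by the character of Lemma \ref{lem:MApol}. Hence $\Hom_i(\alpha_1,\alpha_2;\xi,\eta;\lambda,\nu)\neq\{0\}$ if and only if the character $(\CC_\xi\boxtimes\CC_\lambda)|_{M_i'A_i'}$ equals that character tensored with $\CC_\eta\boxtimes\CC_\nu$. This is exactly the argument of Proposition \ref{prop:Invariants_SB3}, applied for fixed $\alpha$. It gives:
\begin{itemize}
\item[] condition (i) $\iff(\xi,\eta;\lambda,\nu)\in L_2(\alpha_1,\alpha_2)$, namely
\[
\xi_2+\eta_1\equiv\alpha_1-\alpha_2,\quad \xi_3+\eta_2\equiv\alpha_2 \pmod 2,\quad \lambda_2-\nu_1=\alpha_1-\alpha_2,\quad \lambda_3-\nu_2=\alpha_2;
\]
\item[] condition (ii) $\iff$ the transformed tuple lies in $L_1(\alpha_2,\alpha_1)$.
\end{itemize}

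Next, I would substitute the transformed tuple explicitly:
\[
\varpi_3'(\xi)=(\xi_3,\xi_2,\xi_1),\quad \varpi_2'(\eta)=(\eta_2,\eta_1),\quad \varpi_3(\lambda)=(-\lambda_3,-\lambda_2,-\lambda_1),\quad \varpi_2(\nu)=(-\nu_2,-\nu_1).
\]
Plugging these into \eqref{eqn:ch11} and \eqref{eqn:ch12} with $(\alpha_1,\alpha_2)$ replaced by $(\alpha_2,\alpha_1)$ gives:
\begin{itemize}
\item[] parities: $\xi_3+\eta_2\equiv\alpha_2$ and $\xi_2+\eta_1\equiv\alpha_2-\alpha_1\equiv\alpha_1-\alpha_2 \pmod 2$;
\item[] the first linear equation: $-\nu_2-(-\lambda_3)=\lambda_3-\nu_2=\alpha_2$;
\item[] the second linear equation: $-\lambda_2-(-\nu_1)=\nu_1-\lambda_2=\alpha_2-\alpha_1$, i.e.\ $\lambda_2-\nu_1=\alpha_1-\alpha_2$.
\end{itemize}
These coincide exactly with the conditions defining $L_2(\alpha_1,\alpha_2)$, which proves the equivalence.

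The only step needing care is the first one: checking that the characters in Lemma \ref{lem:MApol} for $M_1'A_1'$ and $M_2'A_2'$ produce precisely the sets $L_1$ and $L_2$. There are sign subtleties, since the $A$-action involves $\Ad_\#$ and the character of $\Pol(\alpha)$ has components $(-\alpha_1,\alpha_1-\alpha_2)$, and the parity on $\eta$ versus $\xi$ must be tracked. I would verify these by matching against \eqref{eqn:ch11}–\eqref{eqn:ch22}, which the paper already states as the outcome of Proposition \ref{prop:Invariants_SB12}. Everything after that is bookkeeping, and the sign flip $\alpha_2-\alpha_1\leftrightarrow\alpha_1-\alpha_2$ is harmless modulo $2$.
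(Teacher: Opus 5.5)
Your proposal is correct and matches the paper's proof. Both arguments reduce the two nonvanishing conditions to membership in $L_2(\alpha_1,\alpha_2)$ and $L_1(\alpha_2,\alpha_1)$ respectively, using the fixed-$\alpha$ form of Proposition \ref{prop:Invariants_SB12}, and then check by substitution that $\varpi_3',\varpi_2',\varpi_3,\varpi_2$ carry one set of conditions onto the other; your explicit computation is exactly the ``simple observation'' the paper leaves to the reader.
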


\begin{proof}
A simple observation shows that 
the following conditions 
on $(\xi,\eta; \lambda,\nu) \in (\Z/2\Z)^5\times \C^5$ are equivalent.
\begin{enumerate}[label=\normalfont{(\roman*)}]
\item $(\xi,\eta; \lambda,\nu) \in L_2(\alpha_1,\alpha)$,
\vspace{3pt}
\item $(\varpi'_3(\xi), \varpi'_2(\eta); \varpi_3(\lambda), \varpi_2(\nu)) 
\in L_1(\alpha_2,\alpha_1)$.
\end{enumerate}
Proposition \ref{prop:Invariants_SB12} then yields the proposed assertion.
\end{proof}

It follows from Proposition \ref{prop:Hom12} that there exists 
a linear isomorphism
\begin{equation*}
\Theta \colon 
\Hom_2(\alpha_1,\alpha_2;\xi,\eta; \lambda,\nu)
\stackrel{\sim}{\To}
\Hom_1(\alpha_2,\alpha_1;(\varpi'_3(\xi), \varpi'_2(\eta); \varpi_3(\lambda), \varpi_2(\nu))
\end{equation*}
such that the following diagram commutes.
\begin{equation}\label{eqn:diagram12}
\begin{tikzcd}
\Hom_2(\alpha_1,\alpha_2;\xi,\eta; \lambda,\nu)
\ar[r, "\sim" sloped, "\Theta"'] 
\ar[draw=none]{d}{\bigcup}
& 
\Hom_1(\alpha_2,\alpha_1;(\varpi'_3(\xi), \varpi'_2(\eta); \varpi_3(\lambda), \varpi_2(\nu))
\ar[draw=none]{d}{\bigcup}
\\
\Sol^{(\alpha_1,\alpha_2)}_2(\xi,\lambda)
  \arrow[r, "\sim", "\Theta_{\Sol}"']
 & 
\Sol^{(\alpha_2,\alpha_1)}_1(\varpi'_3(\xi),\varpi_3(\lambda))\\
\wSol_{\calP,\, 2}^{(\alpha_1,\alpha_2)}(\xi,\lambda) 
  \arrow[r, "\sim", "\widetilde{\Theta}"']
  \arrow[u, "T_{\alpha_1,\alpha_2}", "\sim"' sloped]
  \arrow[ur,  pos=0.41, phantom, "\circlearrowleft"]
 & 
\wSol_{\calP,\, 1}^{(\alpha_2, \alpha_1)}(\varpi_3'(\xi),\varpi_3(\lambda))
\arrow[u, "\sim" sloped, "T_{\alpha_2,\alpha_1}"']\\
 \wSol_{\Ad(e^{\wJ(\alpha_1,\alpha_2)}) \calP,\, 2}^{(\alpha_1,\alpha_2)}(\xi,\lambda)
  \arrow[r, "\sim", "\widetilde{\Theta}_{\Ad}"']
  \arrow[u, "e^{-\wJ(\alpha_1,\alpha_2)}", "\sim"' sloped]
  \arrow[ur,  pos=0.3, phantom, "\circlearrowleft"]  
 & 
\wSol_{\Ad(e^{\wJ(\alpha_2,\alpha_1)}) \calP,\, 1}^{(\alpha_2, \alpha_1)}(\varpi_3'(\xi),\varpi_3(\lambda))
\arrow[u, "\sim" sloped, "e^{-\wJ(\alpha_2,\alpha_1)}"']
\end{tikzcd}
\end{equation}
Here, the maps $\Theta_{\Sol}$, 
$\widetilde{\Theta}$ and $\widetilde{\Theta}_{\Ad}$ are defined as follows.
\begin{align}
\Theta_{\Sol}&:=\Theta\big\vert_{\Sol^{(\alpha_1,\alpha_2)}_2(\xi,\lambda)},
\nonumber\\[3pt]
\widetilde{\Theta}&:= 
T^{-1}_{\alpha_2,\alpha_1} \circ \Theta_{\Sol} \circ T_{\alpha_1,\alpha_2},
\nonumber\\[3pt]
\widetilde{\Theta}_{\Ad}
&:=
e^{\wJ(\alpha_2,\alpha_1)}
\circ
\widetilde{\Theta}
\circ
e^{-\wJ(\alpha_1,\alpha_2)}.\label{eqn:wTheta_Ad}
\end{align}

We wish to determine the linear map $\Theta_{\Sol}$ to establish
the duality in concern.
For this purpose, we first determine $\widetilde{\Theta}$.
Recall from \eqref{eqn:Sol_tilde_i} that 
\begin{equation*}
\wSol_{\calP,\, i}^{(\alpha_1,\alpha_2)}(\xi,\lambda) = \{p(t) \in \Pol_{\min(\alpha)}[t] : 
\calP^{\alpha(i)}(a_{0,i})p(t) = 0\},
\end{equation*}
where 
\begin{align*}
\calP^{(\alpha_1,\alpha_2)}(a_{0,i}) 
&:=\vartheta_t+\frac{a_{0,i}}{2}t(\alpha_1-\vartheta_t)
+\frac{1}{4}t^2(\alpha_1-1-\vartheta_t)(\alpha_1-\vartheta_t)(\alpha_2-\vartheta_t),\\[3pt]
\alpha(i)
&:=\begin{cases}
        (\alpha_1,\alpha_2), &\text{if $i=1$},\\
        (\alpha_2,\alpha_1), & \text{if $i=2$},
\end{cases}
\end{align*}
and
\begin{alignat*}{2}
a_{0,1}
&=a_{0,1}(\lambda;\alpha_1,\alpha_2)
&&:=2(\lambda_1-\lambda_2-1+\alpha_1-\tfrac{1}{2}\alpha_2),
\\
a_{0,2}
&=a_{0,2}(\lambda;\alpha_1,\alpha_2)
&&:= 2(\lambda_3-\lambda_2+1+\tfrac{1}{2}\alpha_1-\alpha_2).
\end{alignat*}

\begin{proposition}\label{prop:Sol12}
We have
\begin{equation}\label{eqn:wTheta}
\widetilde{\Theta}
\colon
\wSol_{\calP,\, 2}^{(\alpha_1,\alpha_2)}(\xi,\lambda) 
\stackrel{\sim}{\To}
\wSol_{\calP,\, 1}^{(\alpha_2, \alpha_1)}(\varpi_3'(\xi),\varpi_3(\lambda)),
\quad
p(t)\longmapsto p(-t).
\end{equation}
\end{proposition}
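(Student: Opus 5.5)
The plan is to check directly that the reflection $\sigma\colon p(t)\mapsto p(-t)$ intertwines the two ordinary differential operators that define the solution spaces. Since $\sigma$ is an involution of $\Pol_{\min(\alpha)}[t]$ and $\min(\alpha_1,\alpha_2)=\min(\alpha_2,\alpha_1)$, it will then map the kernel of one operator isomorphically onto the kernel of the other. No information about the actual solutions, such as Cayley continuants, is needed.

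\emph{Step 1: identify the two operators.} By \eqref{eqn:Sol_tilde_i}, the source space $\wSol_{\calP,\,2}^{(\alpha_1,\alpha_2)}(\xi,\lambda)$ is the kernel on $\Pol_{\min(\alpha)}[t]$ of $\calP^{\alpha(2)}(a_{0,2}(\lambda;\alpha_1,\alpha_2))$, where $\alpha(2)=(\alpha_2,\alpha_1)$.

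For the target we take $\alpha'=(\alpha_2,\alpha_1)$, so that $\alpha'(1)=(\alpha_2,\alpha_1)$. The target space $\wSol_{\calP,\,1}^{(\alpha_2,\alpha_1)}(\varpi_3'(\xi),\varpi_3(\lambda))$ is then the kernel of $\calP^{(\alpha_2,\alpha_1)}(a_{0,1}(\varpi_3(\lambda);\alpha_2,\alpha_1))$. Thus both operators are of the form $\calP^{(\alpha_2,\alpha_1)}(a)$ and differ only in the scalar $a$.

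\emph{Step 2: conjugate $\calP^{(\beta_1,\beta_2)}(a)$ by $\sigma$.} Since $\sigma\vartheta_t\sigma=\vartheta_t$ and $\sigma t\sigma=-t$, the definition \eqref{eqn:Palpha} gives
\begin{equation*}
\sigma\circ\calP^{(\beta_1,\beta_2)}(a)\circ\sigma=\vartheta_t-\frac{a}{2}t(\beta_1-\vartheta_t)+\frac14t^2(\beta_1-1-\vartheta_t)(\beta_1-\vartheta_t)(\beta_2-\vartheta_t)=\calP^{(\beta_1,\beta_2)}(-a).
\end{equation*}
The linear term changes sign, while the quadratic term is even in $t$ and is unchanged.

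\emph{Step 3: compare the parameters.} With $\varpi_3(\lambda)=(-\lambda_3,-\lambda_2,-\lambda_1)$, the definition \eqref{eqn:a012} gives
\begin{equation*}
a_{0,1}(\varpi_3(\lambda);\alpha_2,\alpha_1)=2(-\lambda_3+\lambda_2-1+\alpha_2-\tfrac12\alpha_1)=-a_{0,2}(\lambda;\alpha_1,\alpha_2).
\end{equation*}
Combining this with Step 2, $p$ is annihilated by the first operator if and only if $\sigma p$ is annihilated by the second. Hence $\sigma$ restricts to a linear isomorphism between the two solution spaces.

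\emph{Step 4: consistency with the diagram.} The parameter translation is already matched by Proposition~\ref{prop:Hom12}, so $\Theta$ may be chosen so that $\widetilde{\Theta}=T^{-1}_{\alpha_2,\alpha_1}\circ\Theta_{\Sol}\circ T_{\alpha_1,\alpha_2}$ equals $\sigma$; in other words, \eqref{eqn:wTheta} can be taken as the definition of $\widetilde\Theta$, and the square in \eqref{eqn:diagram12} then commutes by construction. The only delicate point is the bookkeeping of the swapped pair $(\alpha_1,\alpha_2)\leftrightarrow(\alpha_2,\alpha_1)$ together with the $\alpha(i)$ convention, which I would recheck carefully. The computation itself is routine.
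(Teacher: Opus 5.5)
Your proposal is correct and follows the paper's proof. The paper likewise observes that $p(t)\mapsto p(-t)$ interchanges the kernels of $\calP^{(\alpha_2,\alpha_1)}(a_{0,2}(\lambda;\alpha_1,\alpha_2))$ and $\calP^{(\alpha_2,\alpha_1)}(a_{0,1}(\varpi_3(\lambda);\alpha_2,\alpha_1))$, via the sign flip $\calP(a)\mapsto\calP(-a)$ and the identity $a_{0,1}(\varpi_3(\lambda);\alpha_2,\alpha_1)=-a_{0,2}(\lambda;\alpha_1,\alpha_2)$. Your Step~4 remark, that this in effect fixes the choice of $\Theta$, is a fair reading of the paper's somewhat loose definition of $\Theta$.
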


\begin{proof}
One can readily check that the following conditions on 
$p(t) \in \Pol_{\min(\alpha_1,\alpha_2)}[t]$ are equivalent.
\begin{enumerate}[label=\normalfont{(\roman*)}]
\item $\calP^{(\alpha_2,\alpha_1)}(a_{0,2}(\lambda;\alpha_1,\alpha_2))p(t) = 0$,
\vspace{3pt}
\item $\calP^{(\alpha_2,\alpha_1)}(a_{0,1}(\varpi_3(\lambda);\alpha_2,\alpha_1))p(-t) = 0$.
\end{enumerate}
The proposed isomorphism
now follows immediately from the equivalence.
\end{proof}

As in \eqref{eqn:q_alpha_k}, we write
\begin{equation*}
\psi_{\alpha_1,\alpha_2,k}(\zeta) := \zeta_1^{\alpha_1-k}\zeta_2^{\alpha_2-k}\zeta_3^k.
\end{equation*}

\begin{corollary}\label{cor:duality}
We have 
\begin{align*}
\Theta_{\Sol}\colon
\Sol^{(\alpha_1,\alpha_2)}_2(\xi,\lambda) 
\stackrel{\sim}{\To} 
\Sol^{(\alpha_2,\alpha_1)}_1(\varpi'_3(\xi),\varpi_3(\lambda)),
    \;
    \psi_{\alpha_1,\alpha_2,k}(\zeta) \mapsto (-1)^k \psi_{\alpha_2,\alpha_1,k}(\zeta).
\end{align*}
\end{corollary}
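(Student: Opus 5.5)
Since $\Theta_{\Sol}$ is the restriction of $\Theta$ and the diagram \eqref{eqn:diagram12} defines $\widetilde{\Theta}=T^{-1}_{\alpha_2,\alpha_1}\circ\Theta_{\Sol}\circ T_{\alpha_1,\alpha_2}$, I would recover $\Theta_{\Sol}$ as $T_{\alpha_2,\alpha_1}\circ\widetilde{\Theta}\circ T^{-1}_{\alpha_1,\alpha_2}$. By Proposition \ref{prop:Sol12}, $\widetilde{\Theta}$ is $p(t)\mapsto p(-t)$. I would then compute this composite on the monomial basis $\psi_{\alpha_1,\alpha_2,k}$ of $\Pol(\alpha_1,\alpha_2)$.

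By the definition \eqref{eqn:Tsat}, $T_{\alpha_1,\alpha_2}(t^k)=\zeta_1^{\alpha_1}\zeta_2^{\alpha_2}(\zeta_3/\zeta_1\zeta_2)^k=\psi_{\alpha_1,\alpha_2,k}(\zeta)$. Hence $T^{-1}_{\alpha_1,\alpha_2}\psi_{\alpha_1,\alpha_2,k}=t^k$, and $\widetilde{\Theta}$ sends this to $(-1)^kt^k$. Applying $T_{\alpha_2,\alpha_1}$ gives $(-1)^k\zeta_1^{\alpha_2}\zeta_2^{\alpha_1}(\zeta_3/\zeta_1\zeta_2)^k=(-1)^k\psi_{\alpha_2,\alpha_1,k}(\zeta)$.

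Note that $k$ ranges over $0,\dots,\min(\alpha_1,\alpha_2)$, and this range is symmetric in $(\alpha_1,\alpha_2)$. So $T_{\alpha_2,\alpha_1}$ and $T_{\alpha_1,\alpha_2}$ are defined on the same space $\Pol_{\min(\alpha)}[t]$, and the formula makes sense. The map is only asserted on solution spaces, where solutions are linear combinations of the $\psi_{\alpha_1,\alpha_2,k}$. By linearity, $\Theta_{\Sol}$ acts by the stated rule on each $\psi_{\alpha_1,\alpha_2,k}$ appearing in a solution. Bijectivity between the two solution spaces then follows from the commutative squares in \eqref{eqn:diagram12} together with Proposition \ref{prop:Sol12}.

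The main subtlety is the choice of $\Theta$. The diagram only asserts that some isomorphism $\Theta$ exists making it commute, so one must fix $\Theta$ on $\Hom_2$ by the same monomial rule. Concretely, I would define $\Theta(\psi_{\alpha_1,\alpha_2,k})=(-1)^k\psi_{\alpha_2,\alpha_1,k}$, which is compatible with the identification of both Hom spaces with $\Pol(\alpha)$ in Proposition \ref{prop:Hom12}. With this choice, the upper square commutes by Proposition \ref{prop:Sol12}, and the computation above confirms the formula for $\Theta_{\Sol}$.
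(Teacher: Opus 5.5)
Your proposal is correct and follows the paper's proof. The paper likewise reads off $\Theta_{\Sol}=T_{\alpha_2,\alpha_1}\circ\widetilde{\Theta}\circ T^{-1}_{\alpha_1,\alpha_2}$ from diagram \eqref{eqn:diagram12}, applies Proposition \ref{prop:Sol12}, and evaluates on $T_{\alpha_1,\alpha_2}(t^k)=\psi_{\alpha_1,\alpha_2,k}$.

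Your extra step is a useful clarification. You fix $\Theta$ on all of $\Pol(\alpha_1,\alpha_2)$ by the monomial rule, so that $\widetilde{\Theta}$ is $p(t)\mapsto p(-t)$ by construction and Proposition \ref{prop:Sol12} then shows it preserves the solution spaces. This makes explicit a point the paper leaves implicit, since there $\Theta$ is only asserted to exist.
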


\begin{proof}
By the diagram \eqref{eqn:diagram12}, we have 
$\Theta_{\Sol}=T_{\alpha_2,\alpha_1} \circ \widetilde{\Theta} \circ T^{-1}_{\alpha_1,\alpha_2}$.
Proposition \ref{prop:Sol12} and the saturation map $T_\alpha$ then conclude
the desired map.
\end{proof}

We are now going to determine 
$\wSol_{\calP,\, 2}^{(\alpha_1,\alpha_2)}(\xi,\lambda)$, 
$\wSol_{\Ad(e^{\wJ(\alpha_1,\alpha_2)}) \calP,\, 2}^{(\alpha_1,\alpha_2)}(\xi,\lambda)$
and
$\Sol^{(\alpha_1,\alpha_2)}_2(\xi,\lambda)$.
To do so, we first observe the polynomial 
$p_{a,\alpha_1}^{(\alpha_2,\alpha_1)}(t)$ for $a \in \C$.

It is known that the Cayley continuants $\Cay_n(a,b)$ for $a,b\in \C$ 
enjoy the following property:
\begin{equation*}
\Cay_n(-a,b)=(-1)^n\Cay_n(a,b).
\end{equation*}
(See, for instance, the generating function in \eqref{eqn:gen_Cay} 
in Appendix \ref{appendix:Cayely}.)
Therefore, we have
\begin{equation}\label{eqn:polynomial_p}
p_{-a,\alpha_1}^{(\alpha_2,\alpha_1)}(t) = 
\sum_{n=0}^{\min(\alpha_1,\alpha_2)} 
\frac{(-\alpha_2)_n}{2^n n! }\Cay_n(-a,\alpha_2)t^n
=p_{a,\alpha_1}^{(\alpha_2,\alpha_1)}(-t).
\end{equation}

We introduce two conditions 
 (B1) and (B2) on $(\alpha_1, \alpha_2, a_{0,2}) \in \N^2 \times \C$ as follows:
\vspace{5pt}
\begin{enumerate}[label=\normalfont{(\arabic*)}]
\item[(B1)] $\alpha_2\leq \alpha_1$;
\vspace{5pt}
\item[(B2)] $\alpha_2>\alpha_1$ and 
$a_{0,2}\in \{\alpha_1 - 2k: k=0,1,2,\ldots, \alpha_1\}$.
\end{enumerate}

\begin{proposition}\label{prop:SolD2}
Let $(\xi, \eta; \lambda, \nu) \in L_2(\alpha)$.
Then the solution spaces
$\wSol_{\calP,\, 2}^{(\alpha_1,\alpha_2)}(\xi,\lambda)$ and 
$\wSol_{\Ad(e^{\wJ(\alpha_1,\alpha_2)}) \calP,\, 2}^{(\alpha_1,\alpha_2)}(\xi,\lambda)$
are classified as follows.
\begin{align*}
\wSol_{\calP,\, 2}^{(\alpha_1,\alpha_2)}(\xi,\lambda)
&=
\begin{cases}
\CC\,
p_{a_{0,2},\alpha_1}^{(\alpha_2,\alpha_1)}(t) & 
\textnormal{if (B1) or (B2) holds,}\\[3pt]
\{0\} & \textnormal{otherwise.}
\end{cases}\\[3pt]
\wSol_{\Ad(e^{\wJ(\alpha_1,\alpha_2)}) \calP,\, 2}^{(\alpha_1,\alpha_2)}(\xi,\lambda)
&=
\begin{cases}
\CC\, 
{}_2F_0[-\alpha_2,\tfrac{a_{0,2}-\alpha_1}{2};t]
& 
\textnormal{if (B1) or (B2) holds,}\\[5pt]
\{0\} & \textnormal{otherwise,}
\end{cases}
\end{align*}
\end{proposition}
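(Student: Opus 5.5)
We will deduce the statement from the case $i=1$ through the duality $\widetilde{\Theta}$ of Proposition \ref{prop:Sol12} and the commutative diagram \eqref{eqn:diagram12}. For $(\xi,\eta;\lambda,\nu)\in L_2(\alpha_1,\alpha_2)$, the map $\widetilde{\Theta}\colon p(t)\mapsto p(-t)$ identifies $\wSol_{\calP,\,2}^{(\alpha_1,\alpha_2)}(\xi,\lambda)$ with $\wSol_{\calP,\,1}^{(\alpha_2,\alpha_1)}(\varpi_3'(\xi),\varpi_3(\lambda))$. Moreover, $(\varpi'_3(\xi),\varpi'_2(\eta);\varpi_3(\lambda),\varpi_2(\nu))\in L_1(\alpha_2,\alpha_1)$ by the proof of Proposition \ref{prop:Hom12}. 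We therefore apply Proposition \ref{prop:SolD1} with $\alpha$ replaced by $(\alpha_2,\alpha_1)$ and $\lambda$ replaced by $\varpi_3(\lambda)$. The key parameter check is
\[
a_{0,1}(\varpi_3(\lambda);\alpha_2,\alpha_1)=2(-\lambda_3+\lambda_2-1+\alpha_2-\tfrac12\alpha_1)=-a_{0,2}(\lambda;\alpha_1,\alpha_2).
\]
Under this substitution, condition (A1) for $(\alpha_2,\alpha_1)$ becomes $\alpha_2\le\alpha_1$, which is (B1). Condition (A2) becomes $\alpha_2>\alpha_1$ together with $-a_{0,2}\in\{\alpha_1-2k : 0\le k\le\alpha_1\}$. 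That set is symmetric under negation, since $k\mapsto \alpha_1-k$ sends $\alpha_1-2k$ to $-(\alpha_1-2k)$, so the condition is exactly (B2).

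In those cases, Proposition \ref{prop:SolD1} gives the solution space $\CC\,p^{(\alpha_2,\alpha_1)}_{-a_{0,2},\alpha_1}(t)$ on the $i=1$ side. Pulling back by $\widetilde{\Theta}^{-1}$, i.e. substituting $t\mapsto -t$, gives $p^{(\alpha_2,\alpha_1)}_{-a_{0,2},\alpha_1}(-t)$. By \eqref{eqn:polynomial_p}, which rests on $\Cay_n(-a,b)=(-1)^n\Cay_n(a,b)$, this equals $p^{(\alpha_2,\alpha_1)}_{a_{0,2},\alpha_1}(t)$. Outside (B1)–(B2) both spaces are zero. This proves the first classification.

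For the ordered space, the diagram \eqref{eqn:conjTsec9} with $i=2$ shows that $\wSol_{\Ad(e^{\wJ(\alpha)})\calP,\,2}^{(\alpha)}$ is the image of $\wSol_{\calP,\,2}^{(\alpha)}$ under $e^{\wJ(\alpha_1,\alpha_2)}$. One option is to transport Corollary \ref{cor:Sol2F0} through \eqref{eqn:wTheta_Ad}; for that we would need $\widetilde{\Theta}_{\Ad}$ to also be $p(t)\mapsto p(-t)$. Since $\wJ(\alpha_1,\alpha_2)$ is symmetric in $\alpha_1,\alpha_2$ and odd in $t$, the reflection $t\mapsto -t$ should intertwine $e^{\wJ}$ with $e^{-\wJ}$, not with $e^{\wJ}$. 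So this shortcut may fail, and I would instead compute directly.

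The direct route applies \eqref{eqn:p3F1} from Appendix \ref{appendix:Cayely} with $(\alpha_2,\alpha_1)$ in place of $(\alpha_1,\alpha_2)$ and $b=\alpha_1$. Because $\wJ(\alpha_1,\alpha_2)=\wJ(\alpha_2,\alpha_1)$, this gives
\[
e^{\wJ(\alpha)}p^{(\alpha_2,\alpha_1)}_{a_{0,2},\alpha_1}(t)={}_3F_1\left[\begin{matrix}-\alpha_2&-\alpha_1&\frac{a_{0,2}-\alpha_1}{2}\\&-\alpha_1&\end{matrix};t\right].
\]
Remark \ref{rem:3F1_2F0} then reduces this to ${}_2F_0[-\alpha_2,\tfrac{a_{0,2}-\alpha_1}{2};t]$, exactly as in the proof of Corollary \ref{cor:Sol2F0}. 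Under (B1) the lower parameter satisfies $-\alpha_1\le-\alpha_2$, so the reduction applies directly. Under (B2) it applies because $\tfrac{a_{0,2}-\alpha_1}{2}\in[-\alpha_1,0]\cap\ZZ$ and $-\alpha_2<-\alpha_1$, so the series terminates before the lower parameter $-\alpha_1$ can vanish.

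The main obstacle is this last step. One must check that the convention for ${}_3F_1$ with lower parameter $-\alpha_1$ matches the truncation in both cases, and in particular that the roles of $\alpha_1$ and $\alpha_2$ swap correctly inside \eqref{eqn:p3F1}.
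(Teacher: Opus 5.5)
Your proposal is correct and is essentially the paper's proof: the first classification is obtained exactly as you do it. You use $\widetilde{\Theta}\colon p(t)\mapsto p(-t)$, the identity $a_{0,1}(\varpi_3(\lambda);\alpha_2,\alpha_1)=-a_{0,2}$, Proposition~\ref{prop:SolD1}, and \eqref{eqn:polynomial_p}.

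For the ordered space, the paper does transport Corollary~\ref{cor:Sol2F0}, but through $\widetilde{\Theta}_{\Ad}^{-1}=e^{\wJ(\alpha)}\circ\widetilde{\Theta}^{-1}\circ e^{-\wJ(\alpha)}$ rather than through $p(t)\mapsto p(-t)$. Your suspicion that the latter fails is right, since $t\mapsto -t$ conjugates $\wJ$ to $-\wJ$. The last step of the paper's round trip is precisely your direct computation $e^{\wJ(\alpha)}p^{(\alpha_2,\alpha_1)}_{a_{0,2},\alpha_1}(t)={}_2F_0[-\alpha_2,\tfrac{a_{0,2}-\alpha_1}{2};t]$ via \eqref{eqn:p3F1} and Remark~\ref{rem:3F1_2F0}, so your version just skips the redundant detour.
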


\begin{proof}
By the commutative diagram \eqref{eqn:diagram12}, it suffices to consider
$\wSol_{\calP,\, 1}^{(\alpha_2, \alpha_1)}(\varpi_3'(\xi),\varpi_3(\lambda))$
and
$\wSol_{\Ad(e^{\wJ(\alpha_2,\alpha_1)}) \calP,\, 1}^{(\alpha_2, \alpha_1)}(\varpi_3'(\xi),\varpi_3(\lambda))$. It follows from Proposition \ref{prop:SolD1} 
and Corollary \ref{cor:Sol2F0} that 
\begin{align}
\wSol_{\calP,\, 1}^{(\alpha_2, \alpha_1)}(\varpi_3'(\xi),\varpi_3(\lambda))
&=
\begin{cases}
\CC\,
p_{-a_{0,2},\alpha_1}^{(\alpha_2,\alpha_1)}(t) & 
\textnormal{if (B1) or (B2) holds,}\\[3pt]
\{0\} & \textnormal{otherwise,}
\end{cases}\label{eqn:Sol_P_2}\\[3pt]
\wSol_{\Ad(e^{\wJ(\alpha_2,\alpha_1)}) \calP,\, 1}^{(\alpha_2, \alpha_1)}(\varpi_3'(\xi),\varpi_3(\lambda))
&=
\begin{cases}
\CC\, 
{}_2F_0[-\alpha_2,-\tfrac{a_{0,2}+\alpha_1}{2};t]
& 
\textnormal{if (B1) or (B2) holds,}\\[5pt]
\{0\} & \textnormal{otherwise.}
\end{cases}\label{eqn:Sol_AdP_2}
\end{align}
Here, to obtain $a_{0,2}$ on the right-hand side, 
the following computation was carried out:
\begin{equation*}
a_{0,1}(\varpi_3(\lambda);\alpha_2,\alpha_1)
=2(\lambda_2-\lambda_3-1+\alpha_2-\tfrac{1}{2}\alpha_1)
=-a_{0,2}.
\end{equation*}
By \eqref{eqn:polynomial_p}, we have 
$p_{-a_{0,2},\alpha_1}^{(\alpha_2,\alpha_1)}(t) =
p_{a_{0,2},\alpha_1}^{(\alpha_2,\alpha_1)}(-t)$.
Now, the description of $\wSol_{\calP,\, 2}^{(\alpha_1,\alpha_2)}(\xi,\lambda)$ follows from \eqref{eqn:Sol_P_2} and from Proposition \ref{prop:Sol12}.

To show the assertion for the space $\wSol_{\Ad(e^{\wJ(\alpha_1,\alpha_2)}) \calP,\, 2}^{(\alpha_1,\alpha_2)}(\xi,\lambda)$, by \eqref{eqn:Sol_AdP_2} and \eqref{eqn:diagram12},
it suffices to apply
$\widetilde{\Theta}^{-1}_{\Ad}$
to ${}_2F_0[-\alpha_2,-\tfrac{a_{0,2}+\alpha_1}{2};t]$.
So, suppose that (B1) or (B2) holds. 
By Remark \ref{rem:3F1_2F0}, the polynomial
${}_2F_0[-\alpha_2,-\tfrac{a_{0,2}+\alpha_1}{2};t]$ can be given as
\begin{equation*}
{}_2F_0[-\alpha_2,-\tfrac{a_{0,2}+\alpha_1}{2};t]=
{}_3F_1
\left [\begin{matrix}
-\alpha_1& -\alpha_2 & -\tfrac{a_{0,2}+\alpha_1}{2} \\
& -\alpha_1 &  
\end{matrix};t\right ].
\end{equation*}
Thus, it follows from \eqref{eqn:p3F1} in Appendix \ref{appendix:Cayely} and 
\eqref{eqn:polynomial_p} that
\begin{align*}
e^{-\wJ(\alpha_2,\alpha_1)}
{}_2F_0[-\alpha_2,-\tfrac{a_{0,2}+\alpha_1}{2};t]
&=e^{-\wJ(\alpha_2,\alpha_1)}
{}_3F_1
\left [\begin{matrix}
-\alpha_1& -\alpha_2 & -\tfrac{a_{0,2}+\alpha_1}{2} \\
& -\alpha_1 &  
\end{matrix};t\right ]\\[3pt]
&=p_{a_{0,2},\alpha_1}^{(\alpha_2,\alpha_1)}(-t).
\end{align*}
Therefore, by \eqref{eqn:wTheta_Ad}, we have
\begin{align*}
\widetilde{\Theta}^{-1}_{\Ad}
\big({}_2F_0[-\alpha_2,-\tfrac{a_{0,2}+\alpha_1}{2};t]\big)
&= 
\big(e^{\wJ(\alpha_2,\alpha_1)}
\circ
\widetilde{\Theta}^{-1}
\circ
e^{-\wJ(\alpha_2,\alpha_1)}\big)
({}_2F_0[-\alpha_2,-\tfrac{a_{0,2}+\alpha_1}{2};t])\\[3pt]
&=
\big(e^{\wJ(\alpha_2,\alpha_1)}
\circ
\widetilde{\Theta}^{-1}\big)
(p_{a_{0,2},\alpha_1}^{(\alpha_2,\alpha_1)}(-t))\\[3pt]
&=
\big(e^{\wJ(\alpha_2,\alpha_1)}\big)
(p_{a_{0,2},\alpha_1}^{(\alpha_2,\alpha_1)}(t))\\[3pt]
&=
{}_2F_0[-\alpha_2,\tfrac{a_{0,2}-\alpha_1}{2};t],
\end{align*}
which concludes the description of $\wSol_{\Ad(e^{\wJ(\alpha_1,\alpha_2)}) \calP,\, 2}^{(\alpha_1,\alpha_2)}(\xi,\lambda)$.
\end{proof}

We now put
\begin{align*}
\psi_2(\zeta) &:= 
T_{\lambda_2+\lambda_3-\nu_1-\nu_2,\,\lambda_3-\nu_2}
\big(p_{\lambda_3+\nu_2-\lambda_2-\nu_1+2,\,\lambda_2+\lambda_3-\nu_1-\nu_2}^{(\lambda_3-\nu_2,\, \lambda_2+\lambda_3-\nu_1-\nu_2)}(t)\big)(\zeta),\\[5pt]
\varphi_2(\zeta)&:= 
T_{\lambda_2+\lambda_3-\nu_1-\nu_2,\,\lambda_3-\nu_2}
\big({}_2F_0[\nu_2-\lambda_3, \nu_2-\lambda_2+1;t]\big)(\zeta).
\end{align*}

\begin{proposition}\label{prop:Sol_2_characterization}
The spaces $\Sol_2(\xi, \eta; \lambda, \nu)$ and 
$\Sol_{\Ad(e^{J(\alpha)}),\, 2}(\xi,\eta; \lambda,\nu)$ are classified as follows.
\begin{align*}
\Sol_2(\xi, \eta; \lambda, \nu) 
&= 
\begin{cases}
\C\psi_2(\zeta) & \textnormal{if $(\xi, \eta; \lambda, \nu) \in \Supp_{2}(\DD)$},\\[3pt]
\{0\}, & \textnormal{otherwise,}
\end{cases}\\[3pt]
\Sol_{\Ad(e^{J(\alpha)}),\, 2}(\xi,\eta; \lambda,\nu)
&= 
\begin{cases}
\C\varphi_2(\zeta) & \textnormal{if $(\xi, \eta; \lambda, \nu) \in \Supp_{2}(\DD)$},\\[3pt]
\{0\}, & \textnormal{otherwise}.
\end{cases}
\end{align*}
\end{proposition}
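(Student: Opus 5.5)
The proof will mirror that of Proposition \ref{prop:Sol_1_characterization}, with Proposition \ref{prop:SolD2} playing the role of Proposition \ref{prop:SolD1} and Corollary \ref{cor:Sol2F0}. First, by \eqref{eqn:SolDecomp} and Proposition \ref{prop:Invariants_SB12}, the space $\Sol_2(\xi,\eta;\lambda,\nu)$ splits as $\bigoplus_\alpha \Sol_2^{(\alpha)}(\xi,\lambda)$. A summand can be non-zero only if $(\xi,\eta;\lambda,\nu)\in L_2(\alpha)$. By \eqref{eqn:ch22}, this forces
\begin{equation*}
\alpha_2=\lambda_3-\nu_2, \qquad \alpha_1=\lambda_2+\lambda_3-\nu_1-\nu_2 .
\end{equation*}
So at most one $\alpha$ contributes, and the parity conditions \eqref{eqn:ch21} become exactly the congruences in \eqref{eq:condition_full_set_2}. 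The same reasoning applies to $\Sol_{\Ad(e^{J(\alpha)}),\,2}$.

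Next I translate (B1) and (B2) into the parameter conditions defining $\Phi_{2,a}$ and $\Phi_{2,b}$. With the above values of $\alpha$ we have $\alpha_1-\alpha_2=\lambda_2-\nu_1$. Hence (B1), i.e. $\alpha_2\le\alpha_1$ with $\alpha_1,\alpha_2\in\N$, is equivalent to $\lambda_3-\nu_2,\ \lambda_2-\nu_1\in\N$, which is $\Phi_{2,a}$.

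For (B2), I compute $a_{0,2}$ from \eqref{eqn:a012}:
\begin{equation*}
a_{0,2}=2(\lambda_3-\lambda_2+1)+\alpha_1-2\alpha_2=\lambda_3+\nu_2-\lambda_2-\nu_1+2 .
\end{equation*}
The condition $a_{0,2}=\alpha_1-2k$ with $0\le k\le \alpha_1$ then becomes $k=\lambda_2-\nu_2-1$. So (B2) reads $\lambda_3-\nu_2\in\N$, $-(\lambda_2-\nu_1)\in\N_+$, $\lambda_2+\lambda_3-\nu_1-\nu_2\in\N$, and $\lambda_2-\nu_2\in[1,\alpha_1+1]\cap\Z$. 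This is precisely \eqref{eq:condition_set_2b}, i.e. $\Phi_{2,b}$. I expect this bookkeeping to be the only real point of care; a sign or off-by-one slip in identifying $k$ is the likely pitfall, so I would double-check it against the $i=1$ case via the duality $\varpi$.

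Finally I apply $T_\alpha$ to the generators in Proposition \ref{prop:SolD2}, using the commutative diagram \eqref{eqn:conjTsec9}. The first generator $p^{(\alpha_2,\alpha_1)}_{a_{0,2},\alpha_1}(t)$ becomes exactly the polynomial inside $\psi_2$ after substituting the values above. For the second, I check that
\begin{equation*}
-\alpha_2=\nu_2-\lambda_3, \qquad \tfrac{a_{0,2}-\alpha_1}{2}=\lambda_3-\lambda_2+1-\alpha_2=\nu_2-\lambda_2+1 ,
\end{equation*}
so ${}_2F_0[-\alpha_2,\tfrac{a_{0,2}-\alpha_1}{2};t]$ is the ${}_2F_0$ appearing in $\varphi_2$. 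Since $T_\alpha$ is an isomorphism, this gives both classifications, with one-dimensional solution spaces on $\Supp_2(\DD)$ and $\{0\}$ otherwise.
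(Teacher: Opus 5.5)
Your proposal is correct. It differs from the paper only in how the support $\Supp_2(\DD)$ is identified.

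The paper does not translate (B1)/(B2) into $\Phi_{2,a}$/$\Phi_{2,b}$ directly. It observes that $(\xi,\eta;\lambda,\nu)\in\Supp_2(\DD)$ if and only if $(\varpi'_3(\xi),\varpi'_2(\eta);\varpi_3(\lambda),\varpi_2(\nu))\in\Supp_1(\DD)$. It then transports the non-vanishing criterion of Proposition \ref{prop:Sol_1_characterization} through Proposition \ref{prop:Hom12}, i.e.\ the duality of Section \ref{sec:duality}. The generators $\psi_2$ and $\varphi_2$ are read off from Proposition \ref{prop:SolD2}, exactly as you do. So the consistency check you planned to run via $\varpi$ is in fact the paper's main argument, and your explicit computation ($a_{0,2}=\lambda_3+\nu_2-\lambda_2-\nu_1+2$, $k=\lambda_2-\nu_2-1$) is what the paper leaves as ``easy to see.''

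The two routes trade off as follows:
\begin{itemize}
\item Your route is the literal analogue of the proof of Proposition \ref{prop:Sol_1_characterization}. It makes the dictionary between (B1)/(B2) and $\Phi_{2,a}$/$\Phi_{2,b}$ explicit, and it records that $L_2(\alpha)$ determines $\alpha$ uniquely, which is what gives dimension exactly one.
\item The paper's route avoids redoing the bookkeeping by reducing everything to a relabeling of the already-verified $i=1$ conditions.
\end{itemize}

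One cosmetic point: $\Phi_{2,b}$ requires $\lambda_3-\nu_2\in\N_+$ rather than $\N$. This is automatic from your other conditions, since $\lambda_3-\nu_2=\alpha_1+(\nu_1-\lambda_2)\geq 1$.
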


\begin{proof}
It is easy to see that $(\xi,\eta; \lambda,\nu) \in \Supp_2(\DD)$
if and only if 
$(\varpi'_3(\xi), \varpi'_2(\eta); \varpi_3(\lambda), \varpi_2(\nu)) \in \Supp_1(\DD)$.
Thus, by Propositions \ref{prop:Sol_1_characterization} and
\ref{prop:Hom12}, the following two conditions on $(\xi, \eta; \lambda, \nu)$ 
are equivalent.
\begin{enumerate}[label=\normalfont{(\roman*)}]
\item $\Sol_2(\xi, \eta; \lambda, \nu),
\Sol_{\Ad(e^{J(\alpha)}),\, 2}(\xi,\eta; \lambda,\nu)\neq \{0\}$.
\item $(\xi,\eta; \lambda,\nu) \in \Supp_2(\DD)$.
\end{enumerate}
The explicit formulas of $\psi_2(\zeta)$ and $\varphi_2(\zeta)$ 
follow from Proposition \ref{prop:SolD2}. 
\end{proof}

%%%%%%%%%%%%%%%%%%%%%%%%%%%%%%%%%%%%%%%%%
\subsection{Proof of Theorems \texorpdfstring{\ref{thm:class1}}{9.7} and \texorpdfstring{\ref{thm:cons1}}{9.9}}
\label{sec:proof12}

We are now at the last step to complete 
a proof of Theorems \ref{thm:class1} and \ref{thm:cons1}.

\begin{proof}[Proof of  Theorems \ref{thm:class1} and \ref{thm:cons1}]
As for $\DD_{3}(\lambda,\nu)$, apply
 $\Symb_0^{-1}$ to $\psi_i(\zeta)$ to obtain 
the symmetrized form of $\DD_{i}(\lambda,\nu)$ for $i=1,2$.
Likewise, the ordered forms are obtained by applying 
 $\Symb_{\ord}^{-1}$ to $\varphi_i(\zeta)$ for $i=1,2$.
 This completes the proof. 
\end{proof}

%%%%%%%%%%%%%%%%%%%%%%%%%%%%%%%%%%%%%%%%%
\section{DIOs \texorpdfstring{$\calD(\lambda,\lambda')$}{D(λ,λ')} for \texorpdfstring{$GL(3,\R)$}{GL(3,R)}}
\label{sec:DIO}

The aim of this short section is to utilize the results for DSBOs $\DD_i(\lambda,\nu)$
 for $i=1,2$ to classify and construct DIOs 
 \begin{equation*}
\calD \in \Diff_{G}(I(\xi, \lambda),I(\xi', \lambda'))
\end{equation*}
for $G=GL(3,\RR)$.
Those results are obtained in Theorems \ref{thm:class_DIOs} and 
\ref{thm:cons_DIOs}. 

%%%%%%%%%%%%%%%%%%%%%%%%%%%%%%%%%%%%%%%%%
\subsection{Classification and construction of DIOs
\texorpdfstring{$\calD(\lambda,\lambda')$}{D(λ,λ')} for \texorpdfstring{$GL(3, \R)$}{GL(3,R)}} 

As for DSBOs $\DD$, we start with the classification of DIOs $\calD$.
First, for $j=0, 1,\ldots, 5$, we put
\begin{equation*}
\Lambda_j := \{(\lambda, \lambda') \in\C^6 : \text{(L$j$) holds}\},
\end{equation*}
\begin{equation}
\lambda=\lambda', \tag{L0}
\end{equation}
%\vspace{-20pt}
\begin{alignat}{4}
&\lambda_1 = \lambda_1', \quad 
&&\lambda_2' - \lambda_2 \in \N_+, \quad
&&\lambda_2 - \lambda_3' = 1, \quad
&&\lambda_3 - \lambda_2' = -1, \tag{L1} \\[3pt]
&\lambda_2 = \lambda_2', \quad 
&&\lambda_3 - \lambda_3' \in \N_+, \quad 
&&\lambda_1 - \lambda_3' = 2, \quad 
&&\lambda_3 - \lambda_1' = -2, \tag{L2}\\[3pt]
&\lambda_3 = \lambda_3', \quad 
&&\lambda_1' - \lambda_1 \in \N_+, \quad 
&&\lambda_2 - \lambda_1' = -1, \quad 
&&\lambda_1 - \lambda_2' = 1, \tag{L3}
\end{alignat}
%\vspace{-20pt}
\begin{alignat}{4}
&\lambda_1' - \lambda_1, \lambda_3 - \lambda_3' \in \N_+,\quad
&&\lambda_1'-\lambda_3 = 2,\quad
&&\lambda_1 - \lambda_3' \neq 2, \quad
&&(\lambda_2, \lambda_2') = (\lambda_3'+1, \lambda_1-1), \tag{L4}\\[3pt]
&\lambda_1' - \lambda_1, \lambda_3 - \lambda_3' \in \N_+,  \quad
&&\lambda_1'-\lambda_3 \neq 2, \quad
&&\lambda_1 - \lambda_3' = 2, \quad
&&(\lambda_2, \lambda_2') = (\lambda_1'-1, \lambda_3+1). \tag{L5}
\end{alignat}
\noindent
Furthermore, for $j=0,1,\ldots, 5$, we set
\begin{equation*}
\Supp(\calD)_j 
:= \{(\xi, \xi'; \lambda, \lambda')\in (\Z/2\Z)^6 \times \C^6: \text{\eqref{eqn:61} holds}\},
\end{equation*}
\vspace{-10pt}
\begin{equation}\label{eqn:61}
(\lambda, \lambda') \in  \Lambda_j
\quad \text{and} \quad 
\xi_k + \xi_k' \equiv \lambda_k' - \lambda_k \bmod{2} \enspace (k = 1, 2, 3).
\end{equation}
We define
\begin{equation*}
\Supp(\calD):=\bigcup_{j=0}^5\Supp(\calD)_j.
\end{equation*}

\begin{theorem}\label{thm:class_DIOs}
The following conditions on the tuple $(\xi, \xi'; \lambda, \lambda') \in (\Z/2\Z)^6 \times \C^6$ are equivalent.
\begin{enumerate}[label = \normalfont{(\roman*)}]
    \item $\Diff_{G}\left(I(\xi, \lambda), I(\xi', \lambda')\right) \neq \{0\}$.
    
    \item $\dim_\C \Diff_{G}\left(I(\xi, \lambda), I(\xi', \lambda')\right) = 1$.

    \item $(\xi, \xi'; \lambda, \lambda') \in \Supp(\calD)$.
\end{enumerate}
\end{theorem}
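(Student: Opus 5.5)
By the F-method (Theorem \ref{thm:symb} with $G'=G$, $P'=P$), $\Diff_{G}(I(\xi,\lambda),I(\xi',\lambda'))$ is isomorphic to the space of $\psi\in\Hom_{MA}(\C_\xi\boxtimes\C_\lambda,\Pol(\frakn_+)\otimes(\C_{\xi'}\boxtimes\C_{\lambda'}))$ annihilated by $\widehat{d\pi_{\lambda^*}}(C)$ for all $C\in\frakn_+$. Since $N_3^+=[N_1^+,N_2^+]$, this is exactly the simultaneous kernel of $\widehat{d\pi_{\lambda^*}}(N_1^+)$ and $\widehat{d\pi_{\lambda^*}}(N_2^+)$. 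So the space of DIOs equals the intersection of the $i=1$ and $i=2$ solution spaces of the previous section, restricted to full $MA$-equivariance. Using \eqref{eqn:Homalpha2} and Lemma \ref{lem:MApol}, the $MA$-condition forces a unique $\alpha\in\N^2$ with
\[
\lambda_1'-\lambda_1=\alpha_1,\quad \lambda_2'-\lambda_2=\alpha_2-\alpha_1,\quad \lambda_3'-\lambda_3=-\alpha_2,
\]
together with the parity conditions $\xi_k+\xi'_k\equiv\lambda_k'-\lambda_k \bmod 2$, which give the parity part of \eqref{eqn:61}. Everything then reduces to computing
\[
\wSol^{(\alpha)}_{\calP,1}\cap \wSol^{(\alpha)}_{\calP,2}\subset\Pol_{\min(\alpha)}[t],
\]
with $a_{0,1},a_{0,2}$ as in \eqref{eqn:a012}. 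Each space is at most one-dimensional, so the dimension is at most one, which gives (i)$\Leftrightarrow$(ii).

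Proposition \ref{prop:SolD1} gives the $i=1$ space as $\C\,p^{(\alpha_1,\alpha_2)}_{a_{0,1},\alpha_2}(t)$ under (A1) or (A2). Proposition \ref{prop:SolD2} gives the $i=2$ space as $\C\,p^{(\alpha_2,\alpha_1)}_{a_{0,2},\alpha_1}(t)$ under (B1) or (B2). The trivial case $\alpha=(0,0)$ contributes the constant and yields (L0). For $\alpha\neq 0$, I require both conditions to hold and the two polynomials to be proportional. I will compare them coefficient by coefficient via Corollary \ref{cor:Sol2F0}, which is easier: after applying $e^{\wJ(\alpha)}$, the $i=1$ solution is ${}_2F_0[-\alpha_1,\tfrac{a_{0,1}-\alpha_2}{2};t]$. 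The $i=2$ solution is ${}_2F_0[-\alpha_2,\tfrac{a_{0,2}-\alpha_1}{2};t]$. Both live in $\Pol_{\min(\alpha)}[t]$ and have constant term $1$, so they must coincide.

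Since ${}_2F_0[a,b;t]$ is symmetric in $a,b$, equality of the parameter multisets $\{-\alpha_1,\tfrac{a_{0,1}-\alpha_2}{2}\}$ and $\{-\alpha_2,\tfrac{a_{0,2}-\alpha_1}{2}\}$ suffices. Conversely, equality of coefficients up to degree $\min(\alpha)$ forces it, apart from degenerate truncations. I will split into cases. If $\alpha_1=0$ or $\alpha_2=0$, the polynomial is constant, and the conditions (A$\ast$), (B$\ast$) alone give (L1) and (L3) after translating $\alpha$ into $\lambda,\lambda'$ (for instance $\alpha_1=0$ should give $\lambda_1=\lambda_1'$, with (B2) pinning $a_{0,2}$). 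Otherwise I compare the $t$ and $t^2$ coefficients, $\alpha_1 c_1=\alpha_2 c_2$ and so on, with $c_1=\tfrac{a_{0,1}-\alpha_2}{2}$ and $c_2=\tfrac{a_{0,2}-\alpha_1}{2}$. This should yield the branches: matched parameters give (L2) (the case $\alpha_1=\alpha_2$), and cross-matchings together with the truncation constraints give (L4) and (L5).

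The main obstacle is this last case analysis. When $\min(\alpha)$ is small, only a few coefficients are available, so parameter matching is not automatic. The truncation via Remark \ref{rem:3F1_2F0} also interacts with the (A2)/(B2) membership conditions. I will handle it by writing the coefficients explicitly as $(-\alpha_1)_n(c_1)_n/n!$ and solving the resulting polynomial identities in $c_1,c_2$ degree by degree. I will then check that every resulting parameter set is exactly one of $\Lambda_1,\dots,\Lambda_5$, verifying the stated inequalities ($\neq 2$, $\in\N_+$) against the exclusions in (A2)/(B2).
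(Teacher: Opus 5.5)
Your proposal is correct and follows the paper's proof essentially step by step. The shared steps are:
\begin{enumerate}
\item the reduction to $\wSol^{(\alpha)}_{\calP,1}\cap\wSol^{(\alpha)}_{\calP,2}$ for the unique $\alpha$ with $(\xi,\xi';\lambda,\lambda')\in L_0(\alpha)$;
\item the passage to the ${}_2F_0$ forms via Corollary \ref{cor:Sol2F0} and Proposition \ref{prop:SolD2};
\item the comparison of the $t$ and $t^2$ coefficients, which the paper packages as Lemma \ref{lem:FF} (showing that these two coefficients already force full equality);
\item the case analysis (A1B1)/(A1B2)/(A2B1), which yields (D0)--(D5), i.e.\ (L0)--(L5).
\end{enumerate}
One small correction to your bookkeeping: (L4) does not come from a cross-matching. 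It is the degenerate case $c_1=c_2=0$, where both polynomials equal $1$, i.e.\ $(a_{0,1},a_{0,2})=(\alpha_2,\alpha_1)$. Only (L5) is the genuine cross-matching $c_1=-\alpha_2$, $c_2=-\alpha_1$.
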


We next consider the construction of DIOs $\calD$.
Recall from Section \ref{sec:const3} that the operators
$(T^{\mathrm{sym}}_{(\alpha_1, \alpha_2)} q)(D_1, D_2, D_3)$
and
$(T^\ord_{(\alpha_1, \alpha_2)} q)(D_1, D_2, D_3)$ 
denote
the symmetrized and ordered operators
associated to 
$q(t) \in \Pol[t]$, respectively.
Also, we denote by
$p_{a,\alpha_2}^{(\alpha_1, \alpha_2)}(t)$
the polynomial given by the Cayley continuants $\Cay_n(a,\alpha_2)$
in \eqref{eqn:p_alpha}.

\begin{theorem}\label{thm:cons_DIOs}
We have
\begin{equation*}
\Diff_{G}\left(I(\xi, \lambda), I(\xi', \lambda')\right)
=
\begin{cases}
\CC\, \calD(\lambda, \lambda') & 
\textnormal{if $(\xi, \xi'; \lambda, \lambda') \in \Supp(\calD)$},\\[3pt]
\{0\} & \textnormal{otherwise},
\end{cases}
\end{equation*}
where $\calD(\lambda, \lambda')$ 
is given as follows.
\begin{align*}
\calD(\lambda,\lambda')
&=T^{\mathrm{sym}}_{(\lambda_1'-\lambda_1,\lambda_3-\lambda_3')}
p_{\lambda_1+\lambda_1'-\lambda_2-\lambda_2'-2,\lambda_3-\lambda_3'}^{(\lambda_1'-\lambda_1,\lambda_3-\lambda_3')}
(D_1, D_2, D_3)\\[3pt]
&=T^\ord_{(\lambda_1'-\lambda_1,\lambda_3-\lambda_3')}
{}_2F_0\Big [\lambda_1-\lambda_1', \lambda_1-\lambda_2'-1; D_1, D_2, D_3\Big ].
\end{align*}
Moreover, the differential operator $\calD(\lambda,\lambda')$ has 
the following form for each $(\xi, \xi'; \lambda, \lambda') \in \Supp(\calD)_j$.
\begin{enumerate}[label=\normalfont{(\arabic*)}]
\item 
Symmetrized form: We have
\begin{equation*}
\calD(\lambda,\lambda')
=
\begin{cases}
\id & 
\textnormal{if $(\xi, \xi'; \lambda, \lambda') \in \Supp(\calD)_0$},\\[5pt]
D_2^{\lambda_3-\lambda_3'} & 
\textnormal{if $(\xi, \xi'; \lambda, \lambda')  \in \Supp(\calD)_1$},\\[5pt]
T^{\mathrm{sym}}_{(\lambda_1'-\lambda_1,\lambda_3-\lambda_3')}
p_{\lambda_1+\lambda_1'-\lambda_2-\lambda_2'-2,\lambda_3-\lambda_3'}^{(\lambda_1'-\lambda_1,\lambda_3-\lambda_3')}
(D_1, D_2, D_3) & 
\textnormal{if $(\xi, \xi'; \lambda, \lambda')  \in \Supp(\calD)_2$},\\[5pt]
D_1^{\lambda_1-\lambda_1'} & 
\textnormal{if $(\xi, \xi'; \lambda, \lambda')  \in \Supp(\calD)_3$},\\[5pt]
T^{\mathrm{sym}}_{(\lambda_1'-\lambda_1,\lambda_3-\lambda_3')}
{}_2F_0\Big [\lambda_1-\lambda_1', \lambda_3-\lambda_3'; D_1, D_2, -D_3/2\Big ]
& 
\textnormal{if $(\xi, \xi'; \lambda, \lambda')  \in \Supp(\calD)_4$},\\[5pt]
T^{\mathrm{sym}}_{(\lambda_1'-\lambda_1,\lambda_3-\lambda_3')}
{}_2F_0\Big [\lambda_1-\lambda_1', \lambda_3-\lambda_3'; D_1, D_2, D_3/2\Big ]
& 
\textnormal{if $(\xi, \xi'; \lambda, \lambda')  \in \Supp(\calD)_5$}.
\end{cases}
\end{equation*}

\item 
Ordered form: We have
\begin{equation*}
\calD(\lambda,\lambda')
=
\begin{cases}
\id & 
\textnormal{if $(\xi, \xi'; \lambda, \lambda')  \in \Supp(\calD)_0$},\\[5pt]
D_2^{\lambda_3-\lambda_3'} &
\textnormal{if $(\xi, \xi'; \lambda, \lambda')  \in \Supp(\calD)_1$},\\[5pt]
T^\ord_{(\lambda_1'-\lambda_1,\lambda_3-\lambda_3')}
{}_2F_0\Big [\lambda_1-\lambda_1', \lambda_1-\lambda_2'-1; D_1, D_2, D_3\Big ]
& 
\textnormal{if $(\xi, \xi'; \lambda, \lambda')  \in \Supp(\calD)_2$},\\[5pt]
D_1^{\lambda_1-\lambda_1'} 
& 
\textnormal{if $(\xi, \xi'; \lambda, \lambda')  \in \Supp(\calD)_3$},\\[5pt]
D_1^{\lambda_1-\lambda_1'} D_2^{\lambda_3-\lambda_3'} 
& 
\textnormal{if $(\xi, \xi'; \lambda, \lambda')  \in \Supp(\calD)_4$},\\[5pt]
T^\ord_{(\lambda_1'-\lambda_1,\lambda_3-\lambda_3')}
{}_2F_0\Big [\lambda_1-\lambda_1', \lambda_3-\lambda_3'; D_1, D_2, D_3\Big ]
& 
\textnormal{if $(\xi, \xi'; \lambda, \lambda')  \in \Supp(\calD)_5$}.
\end{cases}
\end{equation*}

\end{enumerate}
\end{theorem}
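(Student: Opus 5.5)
The plan is to prove Theorems \ref{thm:class_DIOs} and \ref{thm:cons_DIOs} together via the F-method for $G'=G$, with $P'=P$. Since $\frakn_+$ is generated by $N_1^+$ and $N_2^+$ (and $N_3^+=[N_1^+,N_2^+]$), the F-system \eqref{eqn:Fsys} for $\frakn_+'=\frakn_+$ reduces to the two equations
\[
\widehat{d\pi_{\lambda^*}}(N_1^+)\psi=0=\widehat{d\pi_{\lambda^*}}(N_2^+)\psi .
\]
The equation for $N_3^+$ then follows automatically.

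\textbf{Step 1: isotypic reduction.} By \eqref{eqn:Homalpha2} and Lemma \ref{lem:MApol}, the $MA$-invariance forces $\psi\in\Pol(\alpha)$ with
\[
\alpha_1=\lambda_1'-\lambda_1,\qquad \alpha_1-\alpha_2=\lambda_2-\lambda_2',\qquad \alpha_2=\lambda_3-\lambda_3',
\]
together with the parity conditions in \eqref{eqn:61}. The three conditions on $\lambda,\lambda'$ combine to the trace constraint $\sum_k\lambda_k=\sum_k\lambda'_k$. The space of DIOs is then the intersection $\Sol_1^{(\alpha)}\cap\Sol_2^{(\alpha)}$ inside the single space $\Pol(\alpha)$, where $\nu$ is read off as $(\lambda_1',\lambda_2')$ for $i=1$ and $(\lambda_2',\lambda_3')$ for $i=2$. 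Note that these parameters lie in $L_1(\alpha)$ and $L_2(\alpha)$ for the same $\alpha$.

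\textbf{Step 2: intersecting the two solution lines.} By Proposition \ref{prop:SolD1} and Proposition \ref{prop:SolD2}, each solution space is at most one-dimensional. The $i=1$ space is spanned by $T_\alpha p^{(\alpha)}_{a_{0,1},\alpha_2}$ under (A1) or (A2); the $i=2$ space is spanned by $T_\alpha p^{(\alpha_2,\alpha_1)}_{a_{0,2},\alpha_1}$ under (B1) or (B2). This already gives $\dim\le 1$, and the DIO space is nonzero exactly when both conditions hold and the two polynomials are proportional.

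The comparison is easiest on the ordered side, using Theorem \ref{thm:symb_ord}. There the condition becomes that the two hypergeometric polynomials
\[
{}_2F_0[-\alpha_1,\tfrac{a_{0,1}-\alpha_2}{2};t]
\quad\text{and}\quad
{}_2F_0[-\alpha_2,\tfrac{a_{0,2}-\alpha_1}{2};t],
\]
both truncated to degree $\le\min(\alpha)$, are proportional. Comparing the constant terms (both equal to $1$) and the coefficients of $t^n$ yields the equations
\[
(-\alpha_1)_n\bigl(\tfrac{a_{0,1}-\alpha_2}{2}\bigr)_n=(-\alpha_2)_n\bigl(\tfrac{a_{0,2}-\alpha_1}{2}\bigr)_n,\qquad n\le\min(\alpha).
\]
The case analysis then runs as follows:
\begin{itemize}
\item $\min(\alpha)=0$ gives the monomial cases $\Lambda_0,\Lambda_1,\Lambda_3$.
\item Otherwise, either the two parameter sets coincide, giving $\Lambda_2$, or both polynomials truncate to degree $0$ or to a common shortened form because a second numerator parameter vanishes early, giving $\Lambda_4$ and $\Lambda_5$.
\end{itemize}
In each case one must rewrite $a_{0,1},a_{0,2}$ in terms of $(\lambda,\lambda')$ and check compatibility with (A$\ast$) and (B$\ast$).

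\textbf{Step 3: explicit forms.} The displayed formulas come from applying $\Symb_{\ord}^{-1}$ to $T_\alpha$ of the common ${}_2F_0$ polynomial, and $\Symb_0^{-1}$ to $T_\alpha p^{(\alpha)}_{a_{0,1},\alpha_2}$, exactly as in the proof of Theorem \ref{thm:cons1}. The case-by-case simplifications are handled separately:
\begin{itemize}
\item In $\Lambda_4$, the ordered ${}_2F_0$ collapses to $1$, and hence to $D_1^{\alpha_1}D_2^{\alpha_2}$. Its symmetrized counterpart is $e^{-\wJ(\alpha)}1={}_2F_0[-\alpha_1,-\alpha_2;-t/2]$ by \eqref{eqn:eJ1}.
\item In $\Lambda_5$, one instead uses $e^{-\wJ}$ applied to ${}_2F_0[-\alpha_1,-\alpha_2;t]$, which gives the argument $D_3/2$.
\end{itemize}

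\textbf{Main obstacle.} I expect the bookkeeping in Step 2 to be the hardest part: matching the coefficient equations against (A1)/(A2) and (B1)/(B2) to land exactly on $\Lambda_0,\dots,\Lambda_5$, and in particular confirming the exclusivity conditions $\lambda_1-\lambda_3'\neq 2$ and $\lambda_1'-\lambda_3\neq2$ that separate $\Lambda_4$ and $\Lambda_5$ from $\Lambda_2$. My first attempt would be to use the $n=1$ equation, which reads $\alpha_1(a_{0,1}-\alpha_2)=\alpha_2(a_{0,2}-\alpha_1)$, to pin down $\lambda_2,\lambda_2'$ in terms of the other parameters, and then verify that the higher-$n$ equations hold automatically in each resulting branch.
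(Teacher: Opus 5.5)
Your route is the paper's. You reduce to $\Pol(\alpha)$ with $\alpha=(\lambda_1'-\lambda_1,\lambda_3-\lambda_3')$, intersect the one-dimensional solution lines for $i=1,2$, compare the two ${}_2F_0$ polynomials on the ordered side, and pull back by $e^{-\wJ(\alpha)}$ for the symmetrized forms.

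\textbf{The gap.} The step that would fail is your bookkeeping tactic. The $t^1$-coefficient equation alone does not pin things down, and the higher coefficients do \emph{not} then hold automatically. Take the mixed branch (A1)+(B2) with $\alpha_2>\alpha_1>0$ (symmetrically (A2)+(B1)). There (B2) gives $a_{0,2}=\alpha_1-2k$ with $k\in\{0,\dots,\alpha_1\}$. For every such $k$, the equation $\alpha_1(a_{0,1}-\alpha_2)=\alpha_2(a_{0,2}-\alpha_1)$ produces an $a_{0,1}$ compatible with (A1). Yet only $k=0$ and $k=\alpha_1$ give DIOs. For example, $\alpha=(2,3)$, $a_{0,2}=0$, $a_{0,1}=0$ satisfies (A1), (B2) and the $n=1$ equation. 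But ${}_2F_0[-2,-\tfrac32;t]=1+3t+\tfrac34t^2\neq 1+3t={}_2F_0[-3,-1;t]$, so there is no DIO at these parameters.

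\textbf{The fix.} The missing ingredient is the $t^2$-coefficient, which the paper packages in Lemma~\ref{lem:FF}: ${}_2F_0[-n,s;t]={}_2F_0[-m,r;t]$ holds iff $sn=rm$ and $sn(s-r+m-n)=0$. These two conditions already force all coefficients to agree, since they leave only three possibilities:
\begin{itemize}
\item $sn=0=rm$, so both sides equal $1$;
\item $s=-m$ and $r=-n$;
\item $n=m$ and $s=r$.
\end{itemize}
In the mixed branch, $s=-k\alpha_2/\alpha_1$ and $r=-k$, so the second condition becomes $k(k-\alpha_1)(\alpha_1-\alpha_2)=0$. This cuts the family down to two cases:
\begin{itemize}
\item $k=0$, i.e.\ $(a_{0,1},a_{0,2})=(\alpha_2,\alpha_1)$, which is $\Lambda_4$;
\item $k=\alpha_1$, i.e.\ $(a_{0,1},a_{0,2})=(-\alpha_2,-\alpha_1)$, which is $\Lambda_5$.
\end{itemize}
These are the cases carrying the exclusivity conditions you flagged. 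In the branch $\alpha_1=\alpha_2$ your tactic does work, because $n=1$ already forces $a_{0,1}=a_{0,2}$.

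\textbf{A smaller point.} In $\Lambda_5$ you assert $e^{-\wJ(\alpha)}{}_2F_0[-\alpha_1,-\alpha_2;t]={}_2F_0[-\alpha_1,-\alpha_2;t/2]$ without justification. It follows at once from $e^{2\wJ(\alpha)}1={}_2F_0[-\alpha_1,-\alpha_2;t]$ (by \eqref{eqn:D1}) together with \eqref{eqn:eJ1}. The paper instead derives it from \eqref{eqn:p3F1} and $\Cay_n(-y,y)=(-y)_n$.
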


As a differential operator,
the order of the DIO 
$\mathcal{D}(\lambda,\lambda')$ is 
\begin{equation*}
\lambda_1'-\lambda_1+\lambda_3-\lambda_3'
=
2(\lambda_1' - \lambda_1) + \lambda_2'-\lambda_2.
\end{equation*}

We shall show Theorems \ref{thm:class_DIOs} and \ref{thm:cons_DIOs} 
along with the recipe in Section \ref{sec:recipe2}.

%%%%%%%%%%%%%%%%%%%%%%%%%%%%%%%%%%%%%%%%%
\subsection{Proof of Theorems \texorpdfstring{\ref{thm:class_DIOs}}{10.2} and \texorpdfstring{\ref{thm:cons_DIOs}}{10.3}} 

We start with the classification of $MA$-invariants.
For $\alpha=(\alpha_1, \alpha_2) \in \N^2$, we put
\begin{equation*}
L_0(\alpha):=\{(\xi, \xi'; \lambda, \lambda') \in (\ZZ/2\ZZ)^6\times \CC^6: 
\text{\eqref{eqn:ch41} and \eqref{eqn:ch42} hold}  \},
\end{equation*}
\begin{alignat}{5}
\xi_1+\xi_1' &\equiv \alpha_1 \bmod{2}, 
\quad &&
\xi_2+\xi_2' &&\equiv \alpha_1-\alpha_2 \bmod{2},
\quad &&
\xi_3+\xi_3' &&\equiv \alpha_2 \bmod{2},
\label{eqn:ch41}\\[3pt]
\lambda_1'-\lambda_1&=\alpha_1, 
\quad &&
\lambda_2-\lambda_2'&&=\alpha_1-\alpha_2,
\quad &&
\lambda_3-\lambda_3' &&=\alpha_2.
\label{eqn:ch42}
\end{alignat}

\begin{proposition}\label{prop:Invariants_Intertwining} 
The following conditions on $(\xi, \xi'; \lambda, \lambda') \in (\ZZ/2\ZZ)^6\times \CC^6$ are equivalent.
\begin{enumerate}[label=\normalfont{(\roman*)}]
    \item $\Hom_{MA}\big(\CC_\xi\boxtimes \CC_{\lambda}, \Pol(\frakn_+)\otimes (\CC_{\xi'}\boxtimes \CC_{\lambda'})\big)\neq \{0\}$.
    \item There exists some $\alpha\in \NN^2$ such that 
    $(\xi,\xi';\lambda,\lambda')\in L_0(\alpha)$.
\end{enumerate}
\end{proposition}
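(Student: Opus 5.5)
The plan is to follow the proofs of Propositions \ref{prop:Invariants_SB3} and \ref{prop:Invariants_SB12}, now for the full group $MA=M_0'A_0'$. Since $V=\CC_\xi\boxtimes\CC_\lambda$ and $W=\CC_{\xi'}\boxtimes\CC_{\lambda'}$ are one-dimensional, the decomposition \eqref{eqn:Homalpha2} writes the space in (i) as $\bigoplus_{\alpha\in\N^2}\Hom_{MA}\big(\CC_\xi\boxtimes\CC_\lambda,\Pol(\alpha)\otimes(\CC_{\xi'}\boxtimes\CC_{\lambda'})\big)$. Hence (i) holds if and only if some summand is nonzero.

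Next I describe each summand using Lemma \ref{lem:MApol}. As an $MA$-module, $\Pol(\alpha)$ is a direct sum of $\min(\alpha)+1$ copies of the single character $\C_{(\alpha_1,\alpha_1-\alpha_2,\alpha_2)}\boxtimes\C_{(-\alpha_1,\alpha_1-\alpha_2,\alpha_2)}$. Therefore $\Pol(\alpha)\otimes(\CC_{\xi'}\boxtimes\CC_{\lambda'})$ is a sum of copies of the character with $M$-part $(\alpha_1+\xi_1',\ \alpha_1-\alpha_2+\xi_2',\ \alpha_2+\xi_3')$ modulo $2$, and $A$-part $(\lambda_1'-\alpha_1,\ \lambda_2'+\alpha_1-\alpha_2,\ \lambda_3'+\alpha_2)$. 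Because distinct characters of the abelian group $MA$ admit no nonzero intertwiners, the summand is nonzero exactly when this character equals $\CC_\xi\boxtimes\CC_\lambda$.

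Equality of the $M$-parts gives $\xi_k+\xi_k'\equiv\alpha_1,\ \alpha_1-\alpha_2,\ \alpha_2 \pmod 2$ for $k=1,2,3$, using $-1\equiv 1$. This is exactly \eqref{eqn:ch41}. Equality of the $A$-parts gives $\lambda_1=\lambda_1'-\alpha_1$, $\lambda_2=\lambda_2'+\alpha_1-\alpha_2$ and $\lambda_3=\lambda_3'+\alpha_2$, which is exactly \eqref{eqn:ch42}. So the summand for $\alpha$ is nonzero if and only if $(\xi,\xi';\lambda,\lambda')\in L_0(\alpha)$, and this proves the equivalence of (i) and (ii).

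The one step that needs care is keeping the sign conventions consistent. The action $\Ad_\#$ in \eqref{eqn:sharp} makes $\zeta_1$ transform with weight $-1$ under $\diag(t,1,1)$, which is already built into Lemma \ref{lem:MApol}. On the other side, the identification $\Hom_{MA}(V,\Pol\otimes W)$ requires the character of $\Pol(\alpha)\otimes W$ to equal that of $V$, not of $V^\vee$. I expect no other difficulty.
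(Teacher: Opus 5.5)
Your proposal is correct and is essentially the paper's own argument. The paper's proof is the one line ``immediate consequence of Lemma \ref{lem:MApol} and \eqref{eqn:Homalpha2}'', and you have written out the same decomposition and character matching in full, with the sign conventions handled consistently.
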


\begin{proof}
This is an immediate consequence 
of Lemma \ref{lem:MApol} and \eqref{eqn:Homalpha2}.
\end{proof}

We next consider solving the F-system in concern.
Define
\begin{align}
\Sol(\xi, \xi^\prime; \lambda, \lambda^\prime) 
&:= \{ \psi \in \Hom_{MA}\big(\CC_\xi\boxtimes\CC_{\lambda}, \Pol(\frakn_+)\otimes (\CC_{\xi'}\otimes \CC_{\lambda'})\big) : \eqref{PDE_Intertwining} \text{ holds}\},
\nonumber\\[5pt]
\Sol^{(\alpha)}(\xi,\lambda) &:= \{ \psi \in \Hom_{MA}\big(\CC_\xi\boxtimes\CC_{\lambda}, \Pol(\alpha)\otimes (\CC_{\xi'}\otimes \CC_{\lambda'})\big) : \eqref{PDE_Intertwining} \text{ holds}\},
\label{SolSpace_zeta_Int}
\end{align}
\begin{equation}\label{PDE_Intertwining}
\widehat{d\pi_{(\xi,\lambda)^*}}(N_{j}^+)\psi = 0 \enspace \text{ for } j = 1, 2, 3,
\end{equation}
so that 
\begin{equation*}
\Sol(\xi, \xi^\prime; \lambda, \lambda^\prime) 
=\bigoplus_{\alpha \in \NN^2} \Sol^{(\alpha)}(\xi,\lambda).
\end{equation*}

Observe that,  as $N_3^+=[N_1^+, N_2^+]$,
we have 
\begin{equation*}
\Sol^{(\alpha)}(\xi,\lambda) = 
\Sol_1^{(\alpha)}(\xi,\lambda)
\cap
\Sol_2^{(\alpha)}(\xi,\lambda).
\end{equation*}
We put
\begin{align*}
\wSol_{\calP,\, \calP}^{(\alpha)}(\xi,\lambda)
&:=
\wSol_{\calP,\, 1}^{(\alpha)}(\xi,\lambda) \cap
\wSol_{\calP,\, 2}^{(\alpha)}(\xi,\lambda),\\
\vspace{5pt}
\wSol_{\Ad(e^{\wJ(\alpha)})(\calP,\, \calP)}^{(\alpha)}
(\xi,\lambda)
&:=
\wSol_{\Ad(e^{\wJ(\alpha)}) \calP,\, 1}^{(\alpha)}
(\xi,\lambda)
\cap
\wSol_{\Ad(e^{\wJ(\alpha)}) \calP,\, 2}^{(\alpha)}
(\xi,\lambda).
\end{align*}
The saturation map $T_\alpha$ 
together with the identities \eqref{eqn:P1} and \eqref{eqn:P2}
leads the following diagram for $(\xi,\xi';\lambda,\lambda')\in L_0(\alpha)$\begin{equation}\label{eqn:cdSol3b}
\begin{tikzcd}[column sep=1.5cm]
	{\wSol_{\Ad(e^{\wJ(\alpha)})(\calP,\, \calP)}^{(\alpha)}(\xi,\lambda)} & 
	\Sol_{\Ad(e^{J(\alpha)})}^{(\alpha)}(\xi,\lambda)  	
	\arrow[dl, pos=0.5, phantom, "\circlearrowleft"]
	&{\Diff_{G}\left(I(\xi, \lambda), I(\xi', \lambda')\right)} \\
	{\wSol_{\calP,\,\calP}^{(\alpha)}(\xi,\lambda)}  & {\Sol^{(\alpha)}(\xi,\lambda)} &
	\arrow[ul, pos=0.886, phantom, "\circlearrowleft"]
	\arrow["\Symb_{\ord}^{-1}", from=1-2, to=1-3]
	\arrow["\Symb_0^{-1}"', from=2-2, to=1-3]
	\arrow["{T_\alpha}", "\sim"' , from=1-1, to=1-2]
	\arrow["\sim" sloped, "e^{-\wJ(\alpha)}"', from=1-1, to=2-1]
	\arrow["e^{-J(\alpha)}", "\sim"' sloped,  from=1-2, to=2-2]
	\arrow["\sim" , "{T_\alpha}"',  from=2-1, to=2-2]
\end{tikzcd}
\end{equation}

Our initial goal is to determine 
the spaces 
$\wSol_{\calP,\, \calP}^{(\alpha)}(\xi,\lambda)$
and 
$\wSol_{\Ad(e^{\wJ(\alpha)})(\calP,\, \calP)}^{(\alpha)}(\xi,\lambda)$.
It follows from 
Propositions \ref{prop:SolD1} and \ref{prop:SolD2} and
Corollary \ref{cor:Sol2F0} that one needs to consider 
the following four cases:
\begin{enumerate}[label=\normalfont{(\arabic*)}]
\item[(A1B1)] Both (A1) and (B1) hold.
\vspace{5pt}
\item[(A1B2)] Both (A1) and (B2) hold.
\vspace{5pt}
\item[(A2B1)] Both (A2) and (B1) hold.
\vspace{5pt}
\item[(A2B2)] Both (A2) and (B2) hold.
\vspace{5pt}
\end{enumerate}
It is easy to see that (A2B2) cannot happen as it requires
$\alpha_1 > \alpha_2$ and $\alpha_1 < \alpha_2$. Therefore, it suffices to consider
the first three cases. 

We focus on $\wSol_{\Ad(e^{\wJ(\alpha)})(\calP,\, \calP)}^{(\alpha)}(\xi,\lambda)$;
the classification of $\wSol_{\calP,\, \calP}^{(\alpha)}(\xi,\lambda)$
is obtained by applying $e^{-J(\alpha)}$ to $\wSol_{\Ad(e^{\wJ(\alpha)})(\calP,\, \calP)}^{(\alpha)}(\xi,\lambda)$. 
By Corollary \ref{cor:Sol2F0} and Proposition \ref{prop:SolD2},
we wish to determine when the following identity holds:
\begin{equation}\label{eqn:2F0_2F0}
{}_2F_0[-\alpha_1, \tfrac{a_{0,1}-\alpha_2}{2};t]
=
{}_2F_0[-\alpha_2, \tfrac{a_{0,2}-\alpha_1}{2};t].
\end{equation}

For this purpose, we first show one lemma, which deals with
the equality \eqref{eqn:2F0_2F0} in a general situation.

\begin{lemma}\label{lem:FF}
For $n, m \in \N$ and $s, r \in \C$, the following two conditions 
on $(n,m, s,r)$ are equivalent.
\begin{enumerate}[label = \normalfont{(\roman*)}]
\item ${}_2F_0[-n,s;t] = {}_2F_0[-m,r;t]$.
\item The following two equalities hold:
\begin{enumerate}
\item[\textnormal{(ii-a)}] $sn=rm$;
\item[\textnormal{(ii-b)}] $sn(s-r+m-n)=0$.
\end{enumerate}
\end{enumerate}
\end{lemma}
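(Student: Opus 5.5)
The plan is to compare coefficients directly. By definition,
\[
{}_2F_0[-n,s;t]=\sum_{k\ge 0}\frac{(-n)_k(s)_k}{k!}t^k .
\]
The coefficient of $t^0$ is $1$ on both sides. The coefficient of $t^1$ is $-ns$ versus $-mr$, and equality gives exactly (ii-a). For $t^2$, I compute $(-n)_2(s)_2/2=\tfrac12 n(n-1)s(s+1)$ and the analogous expression for $(m,r)$. Assuming (ii-a), the equality of the $t^2$-coefficients reduces to
\[
ns\bigl[(n-1)(s+1)-(m-1)(r+1)\bigr]=0
\]
(after factoring out the common value $ns=mr$). Expanding gives
\[
(n-1)(s+1)-(m-1)(r+1)=ns-mr+n-m-s+r=n-m-s+r,
\]
using (ii-a) once more. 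Hence equality of the first three coefficients is equivalent to (ii-a) together with $sn(s-r+m-n)=0$, which is (ii-b) up to sign. This proves (i)$\Rightarrow$(ii).

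For (ii)$\Rightarrow$(i), I split into cases. If $sn=0$, then $mr=0$ as well. In that case both series are identically $1$: when $n=0$ or $s=0$, every term with $k\ge1$ contains a factor $(-n)_k=0$ or $(s)_k=0$, and likewise for the $(m,r)$ side. If $sn\neq0$, then $s-r=n-m$. Combining this with $sn=rm$ gives $s(n)=(s-n+m)m$, which factors as $(s+m)(n-m)=0$. So either $n=m$, and then $s=r$ and the two functions coincide trivially; or $s=-m$ and $r=-n$. In the latter case, ${}_2F_0[-n,-m;t]={}_2F_0[-m,-n;t]$ by symmetry of the upper parameters. This gives (i) in all cases.

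The only delicate point is the sign and algebra in the $t^2$ comparison, together with making sure the degenerate case $sn=0$ is handled correctly by the convention that ${}_2F_0$ with a zero upper parameter equals $1$. I expect the case analysis in the converse direction (solving $s-r=n-m$ and $sn=rm$ simultaneously) to be the main step, but it is a short quadratic factorization.
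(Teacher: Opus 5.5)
Your proof is correct and follows the paper's argument essentially step for step. For (i)$\Rightarrow$(ii) you compare the $t^1$ and $t^2$ coefficients; for the converse you split into the case $sn=0$, where both sides equal $1$, and the case $s-r=n-m$, which together with $sn=rm$ gives $(s+m)(n-m)=0$. Your write-up is slightly more explicit than the paper's: you carry out the $t^2$ expansion and check that $s=-m$ forces $r=-n$, which the paper leaves to the reader.
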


\begin{proof}
Suppose that (i) holds. Then we have
\begin{equation}\label{eqn:nkmb}
(-n)_k(s)_k = (-m)_k(r)_k
\quad
\text{for all $k \in \N$}.
\end{equation}
Note that this equality holds trivially for $k>\max(n,m)$ as both sides are identically zero.
Thus, for $k=1$, we have $sn = rm$.
Similarly, for $k=2$, the equality \eqref{eqn:nkmb} amounts to
$sn(s-r+m-n)=0$, yielding the condition (ii).

In order to show the other implication, suppose that 
(ii-a) and (ii-b) hold. By (ii-b), we have $sn=0$ or $s+m=r+n$.
If $sn=0$, then, (ii-a) forces $rm=0$. Therefore, in this case, we have 
${}_2F_0[-n,s;t] =1 = {}_2F_0[-m,r;t]$. Next, assume that $s+m=r+n$.
Substitute $r=s+m-n$ into (ii-a) to obtain $sn= m(s+m-n)$, which is 
$(s+m)(n-m)=0$. Therefore, we have $s=-m$ or $n=m$.
Since the equality in (i) holds in either case, this concludes the lemma.
\end{proof}

Now we consider the equality \eqref{eqn:2F0_2F0} in each case. 
It follows from Lemma \ref{lem:FF} that the following conditions on 
$(\alpha_1, \alpha_2, a_{0,1}, a_{0,2})$ are equivalent.
\begin{enumerate}
\item[\textnormal{(i)}$'$] 
${}_2F_0[-\alpha_1, \tfrac{a_{0,1}-\alpha_2}{2};t]
=
{}_2F_0[-\alpha_2, \tfrac{a_{0,2}-\alpha_1}{2};t]
$.
\item[\textnormal{(ii)}$'$] 
The following two equalities hold:
\begin{enumerate}
\item[\textnormal{(ii-a)}$'$]
$\alpha_1(a_{0,1}-\alpha_2)=\alpha_2(a_{0,2}-\alpha_1)$;
\item[\textnormal{(ii-b)}$'$] 
$\alpha_1(a_{0,1}-\alpha_2)(a_{0,1}-a_{0,2}+\alpha_2-\alpha_1)=0$.
\end{enumerate}
\end{enumerate}

\begin{proposition}\label{prop:A1B1}
Suppose \textnormal{(A1B1)} holds.
Then, for $(\xi,\xi';\lambda,\lambda')\in L_0(\alpha)$,
we have 
\begin{equation*}
\wSol_{\Ad(e^{\wJ(\alpha)})(\calP,\, \calP)}^{(\alpha)}(\xi,\lambda)
=
\begin{cases}
\C &\textnormal{if $\alpha_1=\alpha_2=0$},\\[3pt]
\C\,{}_2F_0[-(\lambda_3-\lambda_1+2),\lambda_1-\lambda_2-1;t]
&\textnormal{if $\alpha_1=\alpha_2=\lambda_3-\lambda_1+2$},\\[3pt]
\{0\} & \textnormal{otherwise}.
\end{cases}
\end{equation*}
\end{proposition}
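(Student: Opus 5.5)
Since (A1) says $\alpha_1\le\alpha_2$ and (B1) says $\alpha_2\le\alpha_1$, the hypothesis \textnormal{(A1B1)} forces $\alpha_1=\alpha_2=:n$. I will work with this single parameter throughout. By Corollary \ref{cor:Sol2F0}, $\wSol_{\Ad(e^{\wJ(\alpha)}) \calP,\, 1}^{(\alpha)}(\xi,\lambda)$ is the line spanned by ${}_2F_0[-n,\tfrac{a_{0,1}-n}{2};t]$. By Proposition \ref{prop:SolD2}, $\wSol_{\Ad(e^{\wJ(\alpha)}) \calP,\, 2}^{(\alpha)}(\xi,\lambda)$ is the line spanned by ${}_2F_0[-n,\tfrac{a_{0,2}-n}{2};t]$. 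Both spanning polynomials have constant term $1$. Hence the intersection of the two lines is nonzero exactly when the two polynomials are equal, that is, when \eqref{eqn:2F0_2F0} holds, and in that case the intersection is the common line.

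Next I apply Lemma \ref{lem:FF} with $m=n$, $s=\tfrac{a_{0,1}-n}{2}$ and $r=\tfrac{a_{0,2}-n}{2}$. Condition (ii-a) becomes $n(s-r)=0$ and condition (ii-b) becomes $ns(s-r)=0$. So \eqref{eqn:2F0_2F0} holds if and only if $n=0$ or $s=r$. I then substitute $\alpha_1=\alpha_2=n$ into \eqref{eqn:a012}, which gives $a_{0,1}=2(\lambda_1-\lambda_2-1)+n$ and $a_{0,2}=2(\lambda_3-\lambda_2+1)-n$. From these,
\begin{equation*}
s=\lambda_1-\lambda_2-1,\qquad r=\lambda_3-\lambda_2+1-n,
\end{equation*}
so $s=r$ is equivalent to $n=\lambda_3-\lambda_1+2$.

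This yields the three cases of the statement. If $n=0$, then $\Pol_{0}[t]=\C$ and the common solution is the constant polynomial. If $n=\lambda_3-\lambda_1+2$, the common solution is ${}_2F_0[-(\lambda_3-\lambda_1+2),\lambda_1-\lambda_2-1;t]$. In every other case the intersection is $\{0\}$. The two nonzero cases overlap only when $n=0=\lambda_3-\lambda_1+2$, and there both descriptions give $\C$, so the statement is consistent. Membership $(\xi,\xi';\lambda,\lambda')\in L_0(\alpha)$ is used only to fix $\alpha$ and does not impose any further constraint.

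The step I expect to need the most care is the reduction of the intersection of two solution lines to the equality \eqref{eqn:2F0_2F0}. This uses that each space is one-dimensional, or zero, under (A1) and (B1) respectively. It also uses the normalization of the constant term, so that proportionality of the two spanning polynomials is the same as equality. The remaining steps are routine substitutions.
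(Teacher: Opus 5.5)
Your proposal is correct and follows the paper's proof essentially step for step. In both, (A1B1) forces $\alpha_1=\alpha_2$, the intersection of the two solution lines is governed by the identity \eqref{eqn:2F0_2F0}, and Lemma \ref{lem:FF} reduces this to $\alpha_1=0$ or $a_{0,1}=a_{0,2}$, which by \eqref{eqn:a012} is $\alpha_1=\alpha_2=\lambda_3-\lambda_1+2$. Your remark that both spanning polynomials have constant term $1$, so that proportionality is the same as equality, makes explicit a step the paper leaves implicit.
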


\begin{proof}
In this case we have $\alpha_1 \leq \alpha_2$ and 
$\alpha_2 \leq \alpha_1$, which implies
$\alpha_1=\alpha_2$. 
Then, condition (ii)$'$ is equivalent to $\alpha_1 (a_{0,1}-a_{0,2})=0$. Thus, $\alpha_1=0$ or 
$a_{0,1}=a_{0,2}$. By \eqref{eqn:a012}, 
the second condition is equivalent to 
$\alpha_1=\alpha_2 = \lambda_3-\lambda_1+2$.
This proves the proposition.
\end{proof}

\begin{proposition}\label{prop:A1B2_A2B1}
Suppose \textnormal{(A1B2)} holds.
Then, for $(\xi,\xi';\lambda,\lambda')\in L_0(\alpha)$,
we have 
\begin{equation*}
\wSol_{\Ad(e^{\wJ(\alpha)})(\calP,\, \calP)}^{(\alpha)}(\xi,\lambda)
=
\begin{cases}
\C&\textnormal{if $(\alpha_1,\alpha_2)=(0, \lambda_3-\lambda_2+1)$},\\[3pt]
\C&\textnormal{if $(\alpha_1,\alpha_2)=(a_{0,2},a_{0,1})$ with $\alpha_1 >0$},\\[3pt]
\C\,{}_2F_0[-\alpha_1,-\alpha_2;t]
&\textnormal{if $(\alpha_1,\alpha_2)=(-a_{0,2},-a_{0,1})$ with $\alpha_1 >0$},\\[3pt]
\{0\} & \textnormal{otherwise}.
\end{cases}
\end{equation*}
Similarly, if \textnormal{(A2B1)} holds, then  
\begin{equation*}
\wSol_{\Ad(e^{\wJ(\alpha)})(\calP,\, \calP)}^{(\alpha)}(\xi,\lambda)
=
\begin{cases}
\C&\textnormal{if $(\alpha_1,\alpha_2)=(\lambda_2-\lambda_1+1, 0)$},\\[3pt]
\C&\textnormal{if $(\alpha_1,\alpha_2)=(a_{0,2},a_{0,1})$ with $\alpha_2 >0$},\\[3pt]
\C\,{}_2F_0[-\alpha_1,-\alpha_2;t]
&\textnormal{if $(\alpha_1,\alpha_2)=(-a_{0,2},-a_{0,1})$ with $\alpha_2 >0$},\\[3pt]
\{0\} & \textnormal{otherwise}.
\end{cases}
\end{equation*}
\end{proposition}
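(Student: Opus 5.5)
The plan mirrors the proof of Proposition \ref{prop:A1B1}. By Corollary \ref{cor:Sol2F0} and Proposition \ref{prop:SolD2}, under (A1B2) the two one-dimensional solution spaces are spanned by ${}_2F_0[-\alpha_1,\tfrac{a_{0,1}-\alpha_2}{2};t]$ and ${}_2F_0[-\alpha_2,\tfrac{a_{0,2}-\alpha_1}{2};t]$. Both lie in $\Pol_{\min(\alpha)}[t]$ and have constant term $1$, so their intersection is nonzero exactly when \eqref{eqn:2F0_2F0} holds, and it is then spanned by this common polynomial. By Lemma \ref{lem:FF}, the task therefore reduces to solving (ii-a)$'$ and (ii-b)$'$ under the constraints of (A1B2): $\alpha_1<\alpha_2$ and $a_{0,2}\in\{\alpha_1-2k:0\le k\le\alpha_1\}$.

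Split according to (ii-b)$'$. First suppose $\alpha_1(a_{0,1}-\alpha_2)=0$. Then (ii-a)$'$ gives $\alpha_2(a_{0,2}-\alpha_1)=0$, so $a_{0,2}=\alpha_1$ because $\alpha_2>\alpha_1\ge 0$.
\begin{itemize}
\item If $\alpha_1=0$, then $a_{0,2}=0$. Using \eqref{eqn:a012}, this reads $2(\lambda_3-\lambda_2+1-\alpha_2)=0$, i.e.\ $\alpha_2=\lambda_3-\lambda_2+1$, and the solution is the constant $1$.
\item If $\alpha_1>0$, then $a_{0,1}=\alpha_2$ and $a_{0,2}=\alpha_1$. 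This gives $(\alpha_1,\alpha_2)=(a_{0,2},a_{0,1})$, and both ${}_2F_0$ factors collapse to $1$ because their second parameter vanishes.
\end{itemize}

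Otherwise $\alpha_1>0$, $a_{0,1}\ne\alpha_2$, and $a_{0,1}-a_{0,2}+\alpha_2-\alpha_1=0$. Put $s=\tfrac{a_{0,1}-\alpha_2}{2}$ and $r=\tfrac{a_{0,2}-\alpha_1}{2}$; the last equation then says $s-r=\alpha_1-\alpha_2$. Following the proof of Lemma \ref{lem:FF}, (ii-a)$'$ becomes $(s+\alpha_2)(\alpha_1-\alpha_2)=0$, so $s=-\alpha_2$ and $r=-\alpha_1$. In terms of the $a$'s this is $a_{0,1}=-\alpha_2$ and $a_{0,2}=-\alpha_1$, i.e.\ $(\alpha_1,\alpha_2)=(-a_{0,2},-a_{0,1})$. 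The common solution is then ${}_2F_0[-\alpha_1,-\alpha_2;t]$. The condition $a_{0,2}=-\alpha_1$ is compatible with (B2), taking $k=\alpha_1$.

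One point needs care: we must confirm that the cases obtained are exactly the ones allowed. Each listed case must be consistent with (B2), and the case split is exhaustive. The cases can overlap only if $\alpha_1=0$, which is excluded in the second and third cases. The (A2B1) statement follows by the symmetric argument with the roles of $\alpha_1,\alpha_2$ and of (A),(B) interchanged, or via the duality of Section \ref{sec:duality}. In that version $a_{0,1}=0$ translates, again by \eqref{eqn:a012}, into $\alpha_1=\lambda_2-\lambda_1+1$.
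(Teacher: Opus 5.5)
Your proposal is correct and follows essentially the same route as the paper. You reduce, via Corollary \ref{cor:Sol2F0}, Proposition \ref{prop:SolD2} and Lemma \ref{lem:FF}, to solving (ii-a)$'$ and (ii-b)$'$ under (B2), and then do the same elementary case analysis to reach the same three cases. The only differences are the order of the case split (you split on the factors of (ii-b)$'$ first, the paper on $\alpha_1=0$ versus $\alpha_1>0$) and your explicit remark that the two lines, each spanned by a polynomial with constant term $1$, meet nontrivially only when the polynomials coincide — a step the paper leaves implicit.
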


\begin{proof}
We only show the case (A1B2); 
the other case can be shown similarly.
It is clear from (A1) and (B2) that the condition (A1B2) is the same as (B2), namely,
\begin{equation}\label{eqn:B2}
\alpha_2 > \alpha_1
\quad \text{and} \quad
 a_{0,2} \in \{\alpha_1-2k:k=0,1,2,\ldots, \alpha_1\}.
\end{equation}

Suppose that $\alpha_2 > \alpha_1=0$. In this case, (ii)$'$ is 
equivalent to $a_{0,2}=0$, which amounts to 
$\alpha_2=\lambda_3-\lambda_2+1$ by \eqref{eqn:a012}.

Next, suppose that $\alpha_2 > \alpha_1 > 0$.
In this case, (ii)$'$ is equivalent to satisfying the following two equalities:
\begin{enumerate}
\item[\textnormal{(ii-a)}$'$]
$a_{0,1}\alpha_1=a_{0,2}\alpha_2$;
\item[\textnormal{(ii-b)}$'$] 
$(a_{0,1}-\alpha_2)(a_{0,1}-a_{0,2}+\alpha_2-\alpha_1)=0$.
\end{enumerate}
By (ii-b)$'$, we have $a_{0,1}=\alpha_2$ or 
$a_{0,1}+\alpha_2=a_{0,2}+\alpha_1$.
If $a_{0,1}=\alpha_2$, then (ii-a)$'$ implies
$(\alpha_1,\alpha_2) = (a_{0,2}, a_{0,1})$.
Now assume that $a_{0,1}+\alpha_2=a_{0,2}+\alpha_1$.
As $a_{0,1}\alpha_1=a_{0,2}\alpha_2$, substitute 
$a_{0,1}=a_{0,2}\alpha_2/\alpha_1$ into 
$a_{0,1}+\alpha_2=a_{0,2}+\alpha_1$ to obtain 
$(a_{0,2}+\alpha_1)(\alpha_2-\alpha_1)=0$.
Since $\alpha_2>\alpha_1$, this shows that $a_{0,2}+\alpha_1=0$,
which yields $a_{0,1}+\alpha_2=0$. 
Therefore, in this case,
we obtain $(\alpha_1,\alpha_2)=(-a_{0,2},-a_{0,1})$.
This concludes the desired proposition.
\end{proof}

%%%%%%%%%%%%%%%%%%%%%%%%%%%%%%%%%%%%%%%%%
We now summarize the arguments to classify 
$\wSol_{\calP,\, \calP}^{(\alpha)}(\xi,\lambda)$
and 
$\wSol_{\Ad(e^{\wJ(\alpha)})(\calP,\, \calP)}^{(\alpha)}$.
To the end,
we introduce the following conditions on $(\alpha_1, \alpha_2, a_{0,1}, a_{0,2})$:
\begin{enumerate}[label=\normalfont{(\arabic*)}]
\item[(D0)] 
$\alpha_1=\alpha_2=0$;
\vspace{5pt}
\item[(D1)] 
$\alpha_2 \in \N_+$ and 
$(\alpha_1,\alpha_2)=(0,\lambda_3-\lambda_2+1)$;
\vspace{5pt}
\item[(D2)] 
$\alpha_1=\alpha_2=\lambda_3-\lambda_1+2 \in\N_+$;
\vspace{5pt}
\item[(D3)] 
$\alpha_1 \in \N_+$ and 
$(\alpha_1,\alpha_2)=(\lambda_2-\lambda_1+1,0)$;
\vspace{5pt}
\item[(D4)] 
$\alpha_1, \alpha_2 \in \N_+$ with $\alpha_1\neq \alpha_2$ and 
$(a_{0,1},a_{0,2})=(\alpha_2,\alpha_1)$;
\vspace{5pt}
\item[(D5)] 
$\alpha_1, \alpha_2 \in \N_+$ with $\alpha_1\neq \alpha_2$ 
and
$(a_{0,1},a_{0,2})=(-\alpha_2,-\alpha_1)$.
\end{enumerate}

\begin{proposition}\label{eqn:SolPP}
Let $(\xi,\xi';\lambda,\lambda')\in L_0(\alpha)$.
Then the solution spaces 
$\wSol_{\calP,\, \calP}^{(\alpha)}(\xi,\lambda)$
and
$\wSol_{\Ad(e^{\wJ(\alpha)})(\calP,\, \calP)}^{(\alpha)}
(\xi,\lambda)$
are classified as follows.
\begin{align}
\wSol_{\calP,\, \calP}^{(\alpha)}(\xi,\lambda)
&=
\begin{cases}
\CC\,
p_{a_{0,1},\alpha_2}^{(\alpha_1,\alpha_2)}(t) & 
\textnormal{if some (Dj) holds ($j=0,1,\ldots,5$),}\\[3pt]
\{0\} & \textnormal{otherwise,}
\end{cases}\label{eqn:SolCayPP}\\[5pt]
\wSol_{\Ad(e^{\wJ(\alpha)})(\calP,\, \calP)}^{(\alpha)}
(\xi,\lambda)
&=
\begin{cases}
\CC\,
{}_2F_0[-\alpha_1,\tfrac{a_{0,1}-\alpha_2}{2};t]
& 
\textnormal{if some (Dj) holds ($j=0,1,\ldots,5$),}\\[3pt]
\{0\} & \textnormal{otherwise.}
\end{cases}\label{eqn:Sol2F0PP}
\end{align}
More precisely, we have
\begin{align}
\wSol_{\calP,\, \calP}^{(\alpha)}(\xi,\lambda)
&=
\begin{cases}
\C & \textnormal{if (D0), (D1) or (D3) holds},\\[5pt]
\C p^{(\lambda_3-\lambda_1+2,\lambda_3-\lambda_1+2)}_{\lambda_1-2\lambda_2+\lambda_3,\lambda_3-\lambda_1+2}(t) & \textnormal{if (D2) holds},\\[5pt]
\C\,{}_2F_0[-\alpha_1,-\alpha_2;-\tfrac{t}{2}]& \textnormal{if (D4) holds},\\[5pt]
\C\,{}_2F_0[-\alpha_1,-\alpha_2;\tfrac{t}{2}]& \textnormal{if (D5) holds},\\[5pt]
\{0\} & \textnormal{otherwise},
\end{cases}\label{eqn:PP}\\[5pt]
\wSol_{\Ad(e^{\wJ(\alpha)})(\calP,\, \calP)}^{(\alpha)}
(\xi,\lambda)
&=
\begin{cases}
\C & \textnormal{if (D0), (D1) or (D3)  holds},\\[5pt]
\C\,{}_2F_0[-(\lambda_3-\lambda_1+2),\lambda_1-\lambda_2-1;t]& \textnormal{if (D2) holds},\\[5pt]
\C & \textnormal{if (D4) holds},\\[5pt]
\C\,{}_2F_0[-\alpha_1,-\alpha_2;t]& \textnormal{if (D5) holds},\\[5pt]
\{0\} & \textnormal{otherwise}.
\end{cases}\label{eqn:Ad_PP}
\end{align}
\end{proposition}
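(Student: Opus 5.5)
The plan is to assemble the classification from the three admissible cases (A1B1), (A1B2), (A2B1); case (A2B2) is empty, as already observed. Within each case I will translate the conditions into (D0)--(D5). I will handle the ordered side $\wSol_{\Ad(e^{\wJ(\alpha)})(\calP,\,\calP)}^{(\alpha)}$ first and then transport the result to $\wSol_{\calP,\,\calP}^{(\alpha)}$ by applying $e^{-\wJ(\alpha)}$, using the commutative diagram \eqref{eqn:cdSol3b}.

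Ordered side. By Corollary \ref{cor:Sol2F0} and Proposition \ref{prop:SolD2}, the intersection is nonzero exactly when (A$i$) and (B$j$) hold and \eqref{eqn:2F0_2F0} holds; in that case it is $\C\,{}_2F_0[-\alpha_1,\tfrac{a_{0,1}-\alpha_2}{2};t]$. This gives \eqref{eqn:Sol2F0PP}.

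The refined list \eqref{eqn:Ad_PP} then comes from Propositions \ref{prop:A1B1} and \ref{prop:A1B2_A2B1}:
\begin{enumerate}
\item[(i)] $\alpha_1=\alpha_2=0$ gives (D0).
\item[(ii)] $\alpha_1=\alpha_2=\lambda_3-\lambda_1+2>0$ gives (D2). Here $a_{0,1}-\alpha_2=2(\lambda_1-\lambda_2-1)$ after inserting \eqref{eqn:a012}.
\item[(iii)] $(0,\lambda_3-\lambda_2+1)$ and $(\lambda_2-\lambda_1+1,0)$ give (D1) and (D3), with solution $\C$.
\item[(iv)] $(\alpha_1,\alpha_2)=(a_{0,2},a_{0,1})$ gives (D4). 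The second ${}_2F_0$ parameter is then $0$, so the solution is $\C\cdot 1$.
\item[(v)] $(\alpha_1,\alpha_2)=(-a_{0,2},-a_{0,1})$ gives (D5). Here $\tfrac{a_{0,1}-\alpha_2}{2}=-\alpha_2$, so the solution is ${}_2F_0[-\alpha_1,-\alpha_2;t]$.
\end{enumerate}
In (iv) and (v) I must check that $\alpha_1\neq\alpha_2$ comes for free: (B2) or (A2) forces it. Conversely, for $\alpha_1,\alpha_2\in\N_+$ with $\alpha_1\neq\alpha_2$, (D4) or (D5) implies the relevant (B2)/(A2) membership condition, namely $a_{0,2}=\pm\alpha_1\in\{\alpha_1-2k\}$. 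So the case split is exhaustive.

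Symmetrized side. Each (D$j$) implies (A1) or (A2). Hence by Proposition \ref{prop:SolD1} the solution lies in $\C\,p^{(\alpha)}_{a_{0,1},\alpha_2}$, which gives \eqref{eqn:SolCayPP}. For \eqref{eqn:PP} I will compute $e^{-\wJ(\alpha)}$ of each ordered solution using \eqref{eqn:D1}.
\begin{enumerate}
\item[(i)] For (D0), (D1), (D3), $\min(\alpha)=0$, so $e^{-\wJ(\alpha)}1=1$.
\item[(ii)] For (D2), Proposition \ref{prop:SolD1} gives the Cayley polynomial directly. Using $a_{0,1}=\lambda_1-2\lambda_2+\lambda_3$ when $\alpha_1=\alpha_2=\lambda_3-\lambda_1+2$, it has the stated parameters.
\item[(iii)] For (D4), $e^{-\wJ(\alpha)}1=\sum_\ell\frac{(-\alpha_1)_\ell(-\alpha_2)_\ell}{\ell!}(-t/2)^\ell={}_2F_0[-\alpha_1,-\alpha_2;-\tfrac t2]$, as in \eqref{eqn:eJ1}.
\end{enumerate}

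The main obstacle is (D5): showing $e^{-\wJ(\alpha)}{}_2F_0[-\alpha_1,-\alpha_2;t]={}_2F_0[-\alpha_1,-\alpha_2;\tfrac t2]$. I will first try Lemma \ref{lem:eD}: $e^{\wJ}$ carries the $_2F_0$ operator to a conjugate, so it suffices to check that ${}_2F_0[\,\cdot\,;t/2]$ lies in the kernel of $\calP^{(\alpha)}(a_{0,1})$ with $a_{0,1}=-\alpha_2$. This is a coefficient recursion. Since the kernel is one-dimensional, it fixes the polynomial up to scale, and the constant term $1$ normalizes it.

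Failing that, I will use the composition law $e^{\wJ}e^{-\wJ}=\id$ together with \eqref{eqn:eJ1}. Writing ${}_2F_0[-\alpha_1,-\alpha_2;t]$ as $e^{2\wJ}1$, which holds by \eqref{eqn:D1}, gives $e^{-\wJ}e^{2\wJ}1=e^{\wJ}1={}_2F_0[-\alpha_1,-\alpha_2;\tfrac t2]$. This one-line argument is likely the intended route.
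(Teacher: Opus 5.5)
Your proposal is correct, and its architecture is the paper's. You read off \eqref{eqn:Ad_PP} from Propositions \ref{prop:A1B1} and \ref{prop:A1B2_A2B1}. You get \eqref{eqn:SolCayPP} and \eqref{eqn:Sol2F0PP} from the inclusions into the $i=1$ solution spaces (Proposition \ref{prop:SolD1}, Corollary \ref{cor:Sol2F0}). You obtain \eqref{eqn:PP} by applying $e^{-\wJ(\alpha)}$.

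You genuinely differ only in the one nontrivial computation, case (D5). The paper proceeds in three steps:
\begin{enumerate}
\item it rewrites ${}_2F_0[-\alpha_1,-\alpha_2;t]$ as ${}_2F_0[-\alpha_1,\tfrac{a_{0,1}-\alpha_2}{2};t]$;
\item it applies \eqref{eqn:p3F1}, through Remark \ref{rem:3F1_2F0}, to land on $p^{(\alpha)}_{-\alpha_2,\alpha_2}(t)$;
\item it evaluates the Cayley continuants via $\Cay_n(-y,y)=(-y)_n$.
\end{enumerate}

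Your fallback argument instead uses \eqref{eqn:D1} to see that $e^{s\wJ(\alpha)}1={}_2F_0[-\alpha_1,-\alpha_2;\tfrac{st}{2}]$ for every $s$. So the ordered (D5) solution is $e^{2\wJ(\alpha)}1$, and the (D4) case is just $s=-1$. The group law then gives $e^{-\wJ(\alpha)}e^{2\wJ(\alpha)}1=e^{\wJ(\alpha)}1={}_2F_0[-\alpha_1,-\alpha_2;\tfrac t2]$ in one line. The group law is valid because $\wJ(\alpha)$ is nilpotent on $\Pol_{\min(\alpha)}[t]$.

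This route is more elementary. It bypasses both the ${}_3F_1$ rewriting and the Cayley-continuant evaluation at this step. In exchange, the paper's route exhibits the (D5) answer directly as an instance of the uniform formula $p^{(\alpha)}_{a_{0,1},\alpha_2}$. In your write-up, that agreement is obtained only indirectly, from the one-dimensionality of the space in Proposition \ref{prop:SolD1}.

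Your first-choice route for (D5) would also work. That route runs a coefficient recursion showing ${}_2F_0[-\alpha_1,-\alpha_2;\tfrac t2]\in\Ker\calP^{(\alpha)}(-\alpha_2)$, then normalizes by the constant term, which $e^{-\wJ(\alpha)}$ preserves. It is redundant once you have the group-law argument.
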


\begin{proof}
It follows from
Propositions \ref{prop:A1B1} and \ref{prop:A1B2_A2B1} 
that the solution spaces 
$\wSol_{\calP,\, \calP}^{(\alpha)}(\xi,\lambda)$
and
$\wSol_{\Ad(e^{\wJ(\alpha)})(\calP,\, \calP)}^{(\alpha)}
(\xi,\lambda)$
are non-zero if and only if  one of the conditions (Dj) for $j=0,1,\ldots, 5$ holds.
The formulas \eqref{eqn:SolCayPP} and \eqref{eqn:Sol2F0PP} follow
immediately from Proposition \ref{prop:SolD1} and Corollary \ref{cor:Sol2F0},
respectively, as 
$\wSol_{\calP,\, \calP}^{(\alpha)}(\xi,\lambda)
\subset 
\wSol_{\calP,\, 1}^{(\alpha)}(\xi,\lambda)$
and 
$\wSol_{\Ad(e^{\wJ(\alpha)})(\calP,\, \calP)}^{(\alpha)}
(\xi,\lambda)
\subset
\wSol_{\Ad(e^{\wJ(\alpha)}) \calP,\, 1}^{(\alpha)}(\xi,\lambda)$.

Regarding \eqref{eqn:PP} and \eqref{eqn:Ad_PP},
observe that 
\eqref{eqn:Ad_PP} is a direct consequence of 
Propositions \ref{prop:A1B1} and \ref{prop:A1B2_A2B1}.
The classification  \eqref{eqn:PP} is then obtained by
applying  $e^{-J(\alpha)}$ to \eqref{eqn:Ad_PP}.
Indeed, all the cases but (D5) in \eqref{eqn:PP} readily follow from
\eqref{eqn:p3F1} and Remark \ref{rem:3F1_2F0} (see also \eqref{eqn:eJ1} for (Dj) when $j=0,1,3,4$).
For (D5), since $(a_{0,1},a_{0,2})=(-\alpha_2,-\alpha_1)$, we have
\begin{equation*}
{}_2F_0[-\alpha_1, -\alpha_2;t]
={}_2F_0[-\alpha_1, \tfrac{a_{0,1}-\alpha_2}{2};t]
={}_2F_0[-\alpha_2, \tfrac{a_{0,2}-\alpha_1}{2};t].
\end{equation*}
It then follows from \eqref{eqn:p3F1}
and the identity 
$\Cay_n(-y,y)=(-y)_n$
of Cayley continuants
(see  Appendix \ref{appendix:Cayely})
that, 
for ${}_2F_0[-\alpha_1, -\alpha_2;t]
={}_2F_0[-\alpha_1, \tfrac{a_{0,1}-\alpha_2}{2};t]$, 
 we have 
\begin{align*}
e^{-J(\alpha)}{}_2F_0[-\alpha_1,-\alpha_2;t]
&=p_{-\alpha_2,\alpha_2}^{(\alpha)}(t)\\
&=\sum_{n=0}^{\min(\alpha)} 
\frac{(-\alpha_1)_n}{2^n n! }\Cay_n(-\alpha_2,\alpha_2)t^n\\
&=\sum_{n=0}^{\min(\alpha)} 
\frac{(-\alpha_1)_n(-\alpha_2)_n}{2^n n! }t^n\\
&={}_2F_0[-\alpha_1,-\alpha_2;\tfrac{t}{2}].
\end{align*}
It is easily seen that one can obtain the same identity even if 
we use the other equality
${}_2F_0[-\alpha_1, -\alpha_2;t]
={}_2F_0[-\alpha_2, \tfrac{a_{0,2}-\alpha_1}{2};t]$.
Now the proposition follows.
\end{proof}

We are now going to give a proof of
Theorems \ref{thm:class_DIOs} and \ref{thm:cons_DIOs}.

\begin{proof}[Proof of  Theorems \ref{thm:class_DIOs} and \ref{thm:cons_DIOs}]
The conditions (L0)--(L5) are easily obtained from (D0)--(D5) via the identity 
\eqref{eqn:ch42}. Then, by \eqref{eqn:ch41}, the equivalences in 
Theorem \ref{thm:class_DIOs} hold by Proposition \eqref{eqn:SolPP}. The explicit formulas in Theorem \ref{thm:cons_DIOs} (1) and (2) can be obtained by applying $\Symb_0^{-1}$
to \eqref{eqn:PP}, and $\Symb_{\ord}^{-1}$ to \eqref{eqn:Ad_PP}, respectively.
This completes the proof.
\end{proof}

%%%%%%%%%%%%%%%%%%%%%%%%%%%%%%%%%%%%%%%%%
\section{DSBOs \texorpdfstring{$\DD$}{D} for \texorpdfstring{$(SL(3,\R), SL(2,\R))$}{(SL(3,R),SL(2,R))}}
\label{sec:SL}

The aim of this section is to discuss the DSBOs $\DD$ for the pair 
$(SL(3,\R), SL(2,\R))$ by making use of 
the results for $(GL(3,\R), GL(2,\R))$. 
We achieve in Theorems \ref{thm:classSL12} and \ref{thm:classSL3} 
the classification of the parameters such that 
the space of DSBOs $\DD$ is non-zero.
We then show in Theorems \ref{thm:consSL12} and \ref{thm:consSL3} 
that any DSBOs $\DD$ for 
$(SL(3,\R), SL(2,\R))$ can be given as a linear combination of those for $(GL(3,\R), GL(2,\R))$.

%%%%%%%%%%%%%%%%%%%%%%%%%%%%%%%%%%%%%%%%%
\subsection{Notation for \texorpdfstring{$(SL(3,\R), SL(2,\R))$}{(SL(3,R),SL(2,R))}}
\label{sec:SL_notation}

Let $G_{SL}:=SL(3,\R)$ and $\frakg(\R)_{\fraksl}:=\fraksl(3,\R)$.
For any subgroup $H$ of $G=GL(3,\R)$, we write $H_{SL}:=H \cap G_{SL}$.
Recall that $P=MAN_+$ denotes the minimal parabolic subgroup for $G$
defined in Section \ref{sec:notation}.
Then $P_{SL}=M_{SL}A_{SL}N_+$ is 
a minimal  parabolic subgroup of $G_{SL}$.
Similarly, for any subalgebra $\frakh(\R) \subset \frakg(\R)=\mathfrak{gl}(3,\R)$, we write
$\frakh(\R)_{\fraksl}:=\frakh(\R)\cap \frakg(\R)_{\fraksl}$. Then
$\frakp(\R)_{\fraksl}=\fraka(\R)_{\fraksl}\oplus \frakn_+(\R)$
is the minimal parabolic subalgebra corresponding to $P_{SL}=M_{SL}A_{SL}N_+$.

For $\delta=(\delta_1,\delta_2) \in (\Z/2\Z)^2$ and $u=(u_1,u_2) \in \C^2$, 
write
\begin{equation}\label{eqn:delta_u}
\xi(\delta)
:=(\delta_1, 0, \delta_2)
\quad
\text{and}
\quad
\lambda(u)
:= \tfrac{1}{3}(2u_1+u_2,-u_1+u_2,-(u_1+2u_2)).
\end{equation}
We define characters $\chi_{M_{SL},\delta}$ and $\chi_{A_{SL},u}$
of $M_{SL}$ and $A_{SL}$ as 
\begin{equation*}
\chi_{M_{SL},\delta}:=\chi_{M,\xi(\delta)}\vert_{M_{SL}}
\quad
\text{and}
\quad
\chi_{A_{SL},u}:=\chi_{A,\lambda(u)}\vert_{A_{SL}},
\end{equation*}
where 
$\chi_{M,\xi}$ and $\chi_{A,\lambda}$ are 
the characters of $M$ and $A$ defined in 
\eqref{eqn:characterM}
and
\eqref{eqn:characterA}, respectively.
Then, for $M_{SL}=\{\diag(m_1,m_2, m_3) \in M: m_1m_2m_3=1\}$,
the character $\chi_{M_{SL},\delta}$ is given by
\begin{equation}\label{eqn:charSLM}
\chi_{M_{SL},\delta}\colon 
\diag(m_1, m_2, m_3)
\mapsto m_1^{\delta_1}m_3^{\delta_2}.
\end{equation}
For $A_{SL}=\exp(\fraka(\R)_{\fraksl})$, where 
$\fraka(\R)_{\fraksl}=\Span_\R\{H_{1,2}, H_{2,3}\}$ 
with $H_{i,j}:=E_{i,i}-E_{j,j}$, the character $\chi_{A_{SL},u}$ is given by
\begin{equation}\label{eqn:charSLA}
\chi_{A_{SL},u}\colon 
\exp(t_1H_{1,2} + t_2H_{2,3})
\mapsto
\exp(u_1t_1 + u_2t_2).
\end{equation}

For
$\C_\delta:=(\chi_{M_{SL},\delta},\C)$
and
$\C_u:=(\chi_{A_{SL},u},\C)$,  we write
\begin{equation*}
I(\delta, u)_{SL}:=\Ind_{P_{SL}}^{G_{SL}}(\C_{\delta}\boxtimes \C_u)
\end{equation*}
for the principal series representation of $G_{SL}$.

As for $G_{SL}=SL(3,\R)$, we put  $G'_{SL}:= SL(2,\R)$ and 
$\frakg'(\R)_{\fraksl}:=\mathfrak{sl}(2,\R)$.
For $H' \subset G'$ and $\frakh'(\R) \subset \frakg'(\R)$,
we write $H'_{SL}:=H' \cap G'_{SL}$ and $\frakh'(\R)_{\fraksl}:=\frakh'(\R) \cap \frakg'(\R)_{\fraksl}$.
Then $P'_{SL}=M'_{SL}A'_{SL}N_+'$ and $\frakp'(\R)_{\fraksl} =\fraka'(\R)_{\fraksl} \oplus \frakn'(\R)_{+}$ are the minimal parabolic subgroup of $G'_{SL}$ and its Lie algebra, respectively.

For $\sigma \in \Z/2\Z$ and $v \in \C$, we write
\begin{equation*}
\eta(\sigma):=(\sigma,0),
\quad
\text{and}
\quad
\nu(v):=\tfrac{1}{2}(v,-v).
\end{equation*}
We then define characters of $\chi_{M'_{SL},\sigma}$ and $\chi_{A'_{SL},v}$
of $M'_{SL}$ and $A'_{SL}$ as
\begin{equation*}
\chi_{M'_{SL},\sigma}:=\chi_{M',\eta(\sigma)}\vert_{M'_{SL}}
\quad
\text{and}
\quad
\chi_{A'_{SL},v}:=\chi_{A',\nu(v)}\vert_{A'_{SL}},
\end{equation*}
where $\chi_{M',\eta}$ and $\chi_{A',\nu}$ are the characters of 
$M'$ and $A'$ defined in \eqref{eqn:characterMprime} and 
\eqref{eqn:characterAprime}, respectively.
Then, for $M'_{SL}:=\{\diag(m,m):m \in \{\pm 1 \} \}$, the character 
$\chi_{M'_{SL},\sigma}$ is given by
\begin{equation}\label{eqn:charSLMprime}
\chi_{M'_{SL},\sigma}\colon \diag(m,m)\mapsto m^\sigma.
\end{equation}

Write $H'_{1,2}:=E_{1,1}-E_{2,2} \in \frakg'(\R)_{\fraksl}$ so that 
$A'_{SL}=\exp(\R H'_{1,2})$. Then the character
$\chi_{A',\nu}$ is given by
\begin{equation}\label{eqn:charSLAprime}
\chi_{A'_{SL},\nu}\colon 
\exp(tH'_{1,2})\mapsto \exp(tv).
\end{equation}

For 
$\C_\sigma:=(\chi_{M'_{SL},\sigma},\C)$
and
$\C_v: =(\chi_{A'_{SL}, v},\C)$,
we write 
\begin{equation*}
J(\sigma, v)_{SL}:=\Ind_{P'_{SL}}^{G'_{SL}}(\C_{\sigma}\boxtimes \C_v)
\end{equation*}
for the principal series representation of $G'_{SL}$.

For the embedding 
$\iota_i\colon G' \hookrightarrow G$ 
defined in \eqref{eqn:embedding} for $i=1,2,3$, 
we write $G'_{SL,i} :=\iota_i(G'_{SL})\simeq SL(2,\R)$
and $\frakg'(\R)_{\fraksl,i}:=\iota_i(\frakg'(\R)_{\fraksl})$.
The same convention is employed to the subgroups of $G'_{SL}$
and subalgebras of $\frakg'(\R)_{\fraksl}$.
Then $P'_{SL,i}=M'_{SL,i}A'_{SL,i}N_{i,+}'$ and 
$\frakp'(\R)_{\fraksl,i} =\fraka'(\R)_{\fraksl,i} \oplus \frakn'(\R)_{i,+}$
are the minimal parabolic subgroup of $G'_{SL,i}$
and minimal parabolic subalgebra of $\frakg'(\R)_{\fraksl,i}$,
respectively.

Via the embedding $\iota_i\colon G' \hookrightarrow G$, 
we apply $\chi_{M'_{SL},\sigma}$ and $\chi_{A'_{SL},v}$
to $M'_{SL,i}$ and $A'_{SL,i}$, respectively.
By slight abuse of notation, we regard $J(\sigma, v)_{SL}$ as 
a representation of $G'_{SL,i}$.
Then we wish to classify and construct DSBOs
$\DD \in \Diff_{G'_{SL,i}}(I(\delta,u)_{SL}, J(\sigma,v)_{SL})$.

%%%%%%%%%%%%%%%%%%%%%%%%%%%%%%%%%%%%%%%%%
\subsection{Classification and construction of DSBOs \texorpdfstring{$\DD$}{D} 
for \texorpdfstring{$(SL(3,\R), SL(2,\R))$}{(SL(3,R),SL(2,R))}}\label{sec:class-const-SL}

Now we are going to state the main results of this section.

As always, we start with the classification of the parameters 
$(\delta,\sigma; u, v) \in (\Z/2\Z)^3\times \C^3$ such that
$\Diff_{G'_{SL,i}}(I(\delta,u)_{SL}, J(\sigma,v)_{SL})\neq \{0\}$.
For $i=1,2,3$, define
\begin{equation*}
\Supp_{SL,i}(\DD):=\{(\delta,\sigma; u, v) \in (\Z/2\Z)^3\times \C^3: 
\text{(S$i$)  holds}  \},
\end{equation*}
\begin{alignat}{5}
&u_1-v &&\in \Z,
\qquad
&&\delta_1+\sigma
&&\equiv u_1-v &&\bmod{2},
\tag{S1}\\
&u_2-v &&\in \Z,
\qquad
&&\delta_2+\sigma &&\equiv u_2-v &&\bmod{2},
\tag{S2}\\
(u_1+&u_2)-v &&\in -\N,
\qquad
(\delta_1+&&\delta_2)+\sigma &&\equiv (u_1+u_2)-v &&\bmod{2}.
\tag{S3}
\end{alignat}

\begin{theorem}\label{thm:classSL12}
For $i=1,2$, the following conditions on $(\delta,\sigma; u, v)  
\in (\Z/2\Z)^3\times \C^3$ are equivalent.
\begin{enumerate}[label=\normalfont{(\roman*)}]
\item $\Diff_{G'_{SL,i}}(I(\delta,u)_{SL}, J(\sigma,v)_{SL})\neq \{0\}$.
\item $\dim_\C\Diff_{G'_{SL,i}}(I(\delta,u)_{SL}, J(\sigma,v)_{SL})=\infty$.
\item $(\delta,\sigma; u, v) \in \Supp_{SL,i}(\DD)$.
\end{enumerate}
\end{theorem}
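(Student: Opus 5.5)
We work on the Fourier side. For the $SL$ pair, $M_{SL}A_{SL}$ and $M'_{SL,i}A'_{SL,i}$ are the intersections with $MA$ and $M'_iA'_i$, while $\frakn_\pm$ and $N_i^+$ are unchanged. Hence the F-method (Theorem \ref{thm:symb}) identifies $\Diff_{G'_{SL,i}}(I(\delta,u)_{SL},J(\sigma,v)_{SL})$ with the space of $\psi\in\Pol(\frakn_+)$ satisfying two conditions: $\psi$ transforms under $M'_{SL,i}A'_{SL,i}$ by the prescribed character, and $\widehat{d\pi_{\lambda(u)^*}}(N_i^+)\psi=0$. The Fourier-transformed operator depends on $\lambda$ only through $\lambda|_{\fraka_{\fraksl}}$; by Proposition \ref{prop:Fdpi} it involves only $\lambda_1-\lambda_2$ for $i=1$ and $\lambda_2-\lambda_3$ for $i=2$. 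So we may lift to the $GL$ setting with $\lambda=\lambda(u)$, since $\lambda(u)_1-\lambda(u)_2=u_1$ and $\lambda(u)_2-\lambda(u)_3=u_2$.

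\textbf{Step 1 (isotypic decomposition).} The group $A'_{SL,i}$ is one-dimensional, so the $A'_{SL,i}$-weight space is no longer a single $\Pol(\alpha)$. It is the infinite direct sum $\bigoplus_\alpha\Pol(\alpha)$ over all $\alpha\in\N^2$ on a line determined by the weight. By Lemma \ref{lem:MApol}, for $i=1$ the $H_{1,2}$-weight of $\psi_{\alpha,k}$ is $-2\alpha_1+\alpha_2$, so the constraint reads $-2\alpha_1+\alpha_2=\text{const}$ for a constant determined by $u_1-v$. The $M'_{SL,1}$-condition gives the parity constraint. Since $-I\in M'_{SL,1}$ embeds as $\diag(-1,-1,1)$, this matches the parity in (S1). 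Each $\Pol(\alpha)$ remains invariant under $-\zeta_i\widehat{d\pi}(N_i^+)$ by Proposition \ref{prop:frakn-mod}, so the solution space decomposes as $\bigoplus_\alpha \wSol^{(\alpha)}_{\calP,1}$ along this line.

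\textbf{Step 2 (use the $GL$ results).} For each $\alpha$ on the line we choose $\nu$ with $\nu_1-\lambda_1=\alpha_1$ and $\lambda_2-\nu_2=\alpha_1-\alpha_2$, which puts the parameters in $L_1(\alpha)$. We then apply Proposition \ref{prop:SolD1}: the summand is one-dimensional exactly when (A1) or (A2) holds. Condition (A1), $\alpha_1\le\alpha_2$, holds for infinitely many $\alpha$ on the line $\alpha_2=2\alpha_1+c$ with $\alpha_1\in\N$ large, namely whenever $\alpha_1\ge\max(0,-c)$. So as soon as the weight line contains lattice points in $\N^2$, that is $u_1-v\in\Z$ together with the parity condition, the space is infinite-dimensional. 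Conversely, if $u_1-v\notin\Z$ or the parity fails, no $\Pol(\alpha)$ has the right $M'_{SL,1}A'_{SL,1}$-type and the space is zero. This yields (i)$\Leftrightarrow$(ii)$\Leftrightarrow$(iii) for $i=1$.

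For $i=2$ we either repeat the argument using the $H_{2,3}$-weight $\alpha_1-2\alpha_2$ with (B1) and Proposition \ref{prop:SolD2}, or use the duality $\varpi$ of Section \ref{sec:duality}, which swaps $u_1\leftrightarrow u_2$ and $\delta_1\leftrightarrow\delta_2$.

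The main obstacle is Step 1's bookkeeping: checking that $\lambda(u)$ and $\nu(v)$ make the $A'_{SL}$-weight condition exactly $u_1-v\in\Z$ with the stated parity, including the normalizations $2\rho$ and $\nu(v)=\tfrac12(v,-v)$. We must also verify that there is no hidden $\lambda_3$-dependence in the $i=1$ operator $\calP^{(\alpha)}(a_{0,1})$. This holds because $a_{0,1}$ involves only $\lambda_1-\lambda_2$, so we only need the sign and offset of the constant to be consistent with integrality.
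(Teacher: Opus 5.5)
Your proposal is correct and follows the paper's proof. Like the paper, you decompose the weight space $\Pol_1(\ell)$, $\ell=u_1-v$, into $\bigoplus_{\alpha\in\Gamma_1(\ell)}\Pol(\alpha)$ via Proposition~\ref{prop:frakn-mod}, then invoke condition (A1) of Proposition~\ref{prop:SolD1} for the infinitely many $\alpha$ on that line with $\alpha_1\le\alpha_2$, and treat $i=2$ analogously. Your family $\alpha_1\ge\max(0,-\ell)$, $\alpha_2=2\alpha_1+\ell$ is the right one: the paper's $\alpha(j)=(\alpha_1-j,\alpha_2+2j)$ as written leaves $\Gamma_1(\ell)$ and should read $(\alpha_1+j,\alpha_2+2j)$.
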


\begin{theorem}\label{thm:classSL3}
For $i=3$, the following conditions on 
$(\delta,\sigma; u, v)  \in (\Z/2\Z)^3\times \C^3$ are equivalent.
\begin{enumerate}[label=\normalfont{(\roman*)}]
\item $\Diff_{G'_{SL,3}}(I(\delta,u)_{SL}, J(\sigma,v)_{SL})\neq \{0\}$.
\item $(\delta,\sigma; u, v) \in \Supp_{SL,3}(\DD)$.
\end{enumerate}
Moreover, for $(\delta,\sigma; u, v)  \in \Supp_{SL,3}(\DD)$ with $j=-(u_1+u_2-v)\in\N$,
we have
\begin{equation}\label{eqn:DIffDimSL3}
\dim_\C\Diff_{G'_{SL,3}}(I(\delta,u)_{SL}, J(\sigma,v)_{SL}) \geq j+1.
\end{equation}
In addition,  if $v \notin \N$, then 
\begin{equation}\label{eqn:DIffDimSL3b}
\dim_\C\Diff_{G'_{SL,3}}(I(\delta,u)_{SL}, J(\sigma,v)_{SL}) = j+1.
\end{equation}
\end{theorem}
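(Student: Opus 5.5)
We will prove Theorem \ref{thm:classSL3} by reducing the $SL$-problem to the $GL$-solutions already classified in Proposition \ref{prop:Sol_3_characterization}. The F-system for $(G_{SL},G'_{SL,3})$ is the same single equation $\widehat{d\pi_{\lambda^*}}(N_3^+)\psi=0$ as in the $GL$-case. The reason is that $\frakn_+$ and $\frakn'_{3,+}$ do not change, and $\widehat{d\pi}$ depends only on $\lambda$ through its restriction to $\fraka_{\fraksl}$, which is given by $\lambda=\lambda(u)$ up to a shift along the identity matrix; such a shift does not affect $\lambda_i-\lambda_j$. So the only change is in the invariance condition. We must determine $\Hom_{M'_{SL,3}A'_{SL,3}}(\C_\delta\boxtimes\C_u,\Pol(\frakn_+)\otimes(\C_\sigma\boxtimes\C_v))$. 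The group $A'_{SL,3}=\exp(\R(H_1-H_3))$ is one-dimensional, so it no longer separates the two exponents $\alpha_1,\alpha_2$; only $\alpha_1+\alpha_2$ is fixed.

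\emph{Step 1.} Using Lemma \ref{lem:MApol}, we compute that $\psi_{\alpha,k}$ has $H_1-H_3$-weight $-(\alpha_1+\alpha_2)$ and $M'_{SL,3}$-character $m^{\alpha_1+\alpha_2}$. The invariance condition then reads
\[
\alpha_1+\alpha_2=v-(u_1+u_2)=j \quad\text{and}\quad \delta_1+\delta_2+\sigma\equiv j \bmod 2 .
\]
This already gives the necessity of (S3). Consequently
\[
\Sol_{SL,3}=\bigoplus_{\alpha_1+\alpha_2=j}\wSol^{(\alpha)}_{\calL},
\]
where for each of the $j+1$ choices $\alpha=(\alpha_1,j-\alpha_1)$ we choose a lift $(\lambda,\nu)$ with $\nu_1-\lambda_1=\alpha_1$ and $\lambda_3-\nu_2=\alpha_2$. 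The $SL$-data fix only $\lambda$ modulo scalars and $\nu_1-\nu_2=v$, so such a lift exists. The operator $\calL^{(\alpha)}(a_0,b_0)$ depends only on $a_0,b_0$ from \eqref{eqn:ab}, which involve only differences of the $\lambda_i$ and $\alpha$; hence it is determined by the $SL$-data. Parities are matched similarly.

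\emph{Step 2.} By Proposition \ref{Prop:SolGeneralDiffEq1}, every $\wSol^{(\alpha)}_{\calL}$ is nonzero and has dimension $1$ or $2$. The summands for distinct $\alpha$ lie in distinct $\Pol(\alpha)$, so they are linearly independent. Summing over the $j+1$ values of $\alpha$ gives $\dim\ge j+1$ together with the equivalence (i)$\Leftrightarrow$(ii). Theorem \ref{thm:symb} then transfers this to $\Diff_{G'_{SL,3}}$.

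\emph{Step 3.} For the equality when $v\notin\N$, we show that case (C1) always holds, so each summand is one-dimensional. Condition (C1) fails exactly when $b_0=\nu_1-\nu_2-2+\cdots$ lies in $[0,\min(\alpha)-1]\cap\Z$. Unwinding, $b_0=\lambda_1-\lambda_3-2+\alpha_1+\alpha_2=\nu_1-\nu_2-2=v-2$, so $b_0\in\Z_{\ge0}$ forces $v\in\N$. If $v\notin\N$, every summand is therefore one-dimensional and the dimension is exactly $j+1$.

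The main obstacle is the bookkeeping in Step 1. We must check carefully that the $SL$-restriction of the $GL$-data matches the normalizations $\lambda(u)$ and $\nu(v)$, including the $2\rho$ shift, so that $b_0=v-2$ and the weight identity hold with the correct signs.
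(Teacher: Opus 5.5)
Your proposal is correct and follows the paper's argument. You decompose the $M'_{SL,3}A'_{SL,3}$-invariants by $H_{1,3}$-weight into $\bigoplus_{\alpha_1+\alpha_2=j}\Pol(\alpha)$, apply the $GL$ classification (C1)--(C3) to each of the $j+1$ summands, and use $b_0=v-2$ to force (C1) when $v\notin\N$; the one fact you use silently is that $-\zeta_3\widehat{d\pi_{\lambda^*}}(N_3^+)$ preserves each $\Pol(\alpha)$ (Proposition \ref{prop:frakn-mod}), which is what justifies your direct-sum decomposition of the solution space. The remaining difference is cosmetic: you lift to explicit $GL$-parameters in $L_3(\alpha)$ for each $\alpha$, whereas the paper substitutes $\lambda=\lambda(u)$ directly into $a_0,b_0$.
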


\begin{remark}
In Proposition \ref{prop:dim_Diff_SL3}, 
we shall discuss the conditions for \eqref{eqn:DIffDimSL3b} to hold in more detail.
\end{remark}

We shall give a proof of Theorems \ref{thm:classSL12} and \ref{thm:classSL3} 
in Section \ref{sec:ProofSL}. 
Here is an immediate consequence of Theorem \ref{thm:classSL3}.

\begin{corollary}\label{cor:DiffDimSL3}
We have 
\begin{equation*}
\sup_{u\in \C^2}\sup_{v \in \C}
\dim_\C\Diff_{G'_{SL,3}}(I(\delta,u)_{SL}, J(\sigma,v)_{SL}) =\infty.
\end{equation*}
\end{corollary}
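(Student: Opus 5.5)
The corollary is an immediate consequence of the lower bound \eqref{eqn:DIffDimSL3} in Theorem \ref{thm:classSL3}, so the plan is to produce parameters with arbitrarily large $j$. Fix $\delta\in(\Z/2\Z)^2$ and $\sigma\in\Z/2\Z$. For each $j\in\N$ we pick $(u,v)\in\C^2\times\C$ such that $(\delta,\sigma;u,v)\in\Supp_{SL,3}(\DD)$ and $-(u_1+u_2-v)=j$. Theorem \ref{thm:classSL3} then gives $\dim_\C\Diff_{G'_{SL,3}}(I(\delta,u)_{SL},J(\sigma,v)_{SL})\geq j+1$. Letting $j\to\infty$ makes the double supremum infinite.

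The only thing to verify is that such parameters exist for every $j$. Condition (S3) requires $(u_1+u_2)-v\in-\N$, which holds by construction since $(u_1+u_2)-v=-j$. It also requires the parity condition $(\delta_1+\delta_2)+\sigma\equiv -j \bmod 2$. Because $\delta$ and $\sigma$ are fixed, this congruence restricts $j$ to one residue class mod $2$. That class is still unbounded, so restricting to $j\equiv\delta_1+\delta_2+\sigma \bmod 2$ is enough.

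Concretely, take $v=0$, $u_2=0$ and $u_1=-j$ for $j$ in the right parity class. This choice also has $v=0$; the equality statement \eqref{eqn:DIffDimSL3b} would need $v\notin\N$, but we do not need it, since only the lower bound \eqref{eqn:DIffDimSL3} is used. No real obstacle is expected beyond this parity bookkeeping. If the supremum is instead read with $\delta,\sigma$ varying, the argument simplifies further: choose $\delta$ to match the parity of $j$.
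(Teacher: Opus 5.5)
Your proposal is correct and is the same argument as the paper's: choose parameters in $\Supp_{SL,3}(\DD)$ with $j=-(u_1+u_2-v)$ arbitrarily large and apply the lower bound \eqref{eqn:DIffDimSL3} of Theorem \ref{thm:classSL3}. Your parity bookkeeping is slightly more careful than the paper's one-line proof, which claims every $k\in\N$ is attainable. For fixed $\delta,\sigma$, condition (S3) only allows $k\equiv\delta_1+\delta_2+\sigma \bmod 2$, and as you note this set is still unbounded, which suffices.
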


\begin{proof}
Since any $ k \in \N$ can be expressed as 
$k=-(u_1+u_2-v)$ for $(u,v)$ such that $(\delta,\sigma; u, v) \in \Supp_{SL,3}(\DD)$,
the desired assertion follows immediately from 
\eqref{eqn:DIffDimSL3}.
\end{proof}

\begin{remark}
Theorem 2.3 (2) of \cite{Kobayashi14} asserts that 
the space of DSBOs for the $SL$-pair
$(SL(3,\R), SL(2,\R))$
is uniformly bounded.
However, as shown in Theorem \ref{thm:classSL12} and 
Corollary \ref{cor:DiffDimSL3}, the space of DSBOs is in fact
not uniformly bounded. Indeed, the condition (ii) of the cited theorem
about the existence of the open orbit does not hold for the $SL$-pair.
\end{remark}

We next consider the construction of DSBOs
$\DD \in \Diff_{G'_{SL,i}}(I(\delta,u)_{SL}, J(\sigma,v)_{SL})$.
For this, define $\Gamma_i(\ell) \subset \N^2$ for $ \ell \in \Z$ as 
\begin{align}\label{eqn:Gamma_ell}
\Gamma_1(\ell)&:=\{(\alpha_1,\alpha_2) \in \N^2: -2\alpha_1+\alpha_2=\ell\},
\nonumber\\[3pt]
\Gamma_2(\ell)&:=\{(\alpha_1,\alpha_2) \in \N^2: \alpha_1-2\alpha_2=\ell\},\\[3pt]
\Gamma_3(\ell)&:=\{(\alpha_1,\alpha_2) \in \N^2: \alpha_1+\alpha_2=-\ell\}.
\nonumber
\end{align}

\begin{remark}\label{rem:GammaSL}
For $i=1,2$, we have 
\begin{equation}\label{eqn:GammaSL12}
\#\Gamma_i(\ell)=\infty \quad \text{for any $\ell \in \Z$}.
\end{equation}
On the other hand, for $i=3$, we have
\begin{equation}\label{eqn:GammaSL3}
\#\Gamma_3(\ell)=
\begin{cases}
0  &\text{if $\ell \in \N_+$},\\
\ell+1  &\text{if $\ell \in -\N$}.
\end{cases}
\end{equation}
\end{remark}

As above, we treat the cases $i=1,2$ and $i=3$, separately.

%%%%%%%%%%%%%%%%%%%%%%%%%%%%%%%%%%%%%%%%%
\subsubsection{Case \texorpdfstring{$i=1,2$}{i=1,2}}
\label{sec:SL_Case12}

As for $a_{0,1}, a_{0,2}$ in \eqref{eqn:a012},
for $u=(u_1, u_2) \in \C^2$ and $\lambda(u)$ in \eqref{eqn:delta_u}, 
we define
\begin{alignat*}{2}
a_{0,1}(u)
&:=a_{0,1}(\lambda(u);\alpha_1,\alpha_2)
&&=2(u_1-1+\alpha_1-\tfrac{1}{2}\alpha_2),
\\
a_{0,2}(u)
&:=a_{0,2}(\lambda(u);\alpha_1,\alpha_2)
&&= -2(u_2-1-\tfrac{1}{2}\alpha_1+\alpha_2).
\end{alignat*}
Then we consider the following conditions:

\vspace{5pt}

\begin{enumerate}[label=\normalfont{(\arabic*)}]
\item[(U1-1)] $\alpha_1\leq \alpha_2$;
\vspace{5pt}
\item[(U1-2)] $\alpha_1>\alpha_2$ and 
$a_{0,1}(u)\in \{\alpha_2 - 2k: k=0,1,2,\ldots, \alpha_2\}$.
\end{enumerate}

\vspace{5pt}

\begin{enumerate}[label=\normalfont{(\arabic*)}]
\item[(U2-1)] $\alpha_2\leq \alpha_1$;
\vspace{5pt}
\item[(U2-2)] $\alpha_2>\alpha_1$ and 
$a_{0,2}(u)\in \{\alpha_1 - 2k: k=0,1,2,\ldots, \alpha_1\}$.
\end{enumerate}

\vspace{5pt}

Define
\begin{align*}
\DD_{SL,1}(u;\alpha)
&:=\Rest_1\circ
\big(T^{\mathrm{sym}}_{(\alpha_1,\alpha_2)}
p_{a_{0,1}(u),\,\alpha_2}^{(\alpha_1,\alpha_2)}\big)(D_1,D_2,D_3)\\[5pt]
&=\Rest_1\circ 
T^\ord_{(\alpha_1,\alpha_2)}{}_2F_0
[-\alpha_1, \tfrac{a_{0,1}(u)-\alpha_2}{2};  D_1, D_2, D_3],\\[10pt]
\DD_{SL,2}(u;\alpha)
&:=\Rest_2\circ 
\big(T^{\mathrm{sym}}_{(\alpha_2\,\alpha_1)}
p_{a_{0,2}(u),\,\alpha_1}^{(\alpha_2,\alpha_1)}\big)(D_1,D_2,D_3) \\[5pt]
&=\Rest_2\circ  
T^\ord_{(\alpha_2,\,\alpha_1)}
{}_2F_0[-\alpha_2, \tfrac{a_{0,2}(u)-\alpha_1}{2};D_1, D_2, D_3].
\end{align*}

\begin{theorem}\label{thm:consSL12}
Let $i \in \{1, 2\}$ and suppose that $(\delta,\sigma; u, v) \in \Supp_{SL,i}(\DD)$.
If $\ell:=u_i-v \in \N$, then
\begin{equation*}
\Diff_{G'_{SL,i}}(I(\delta,u)_{SL}, J(\sigma,v)_{SL})
=\bigoplus_{\alpha \in\Gamma_i(\ell)}
\Diff_i(u;\alpha),
\end{equation*}
where
\begin{equation*}
\Diff_i(u;\alpha)
=
\begin{cases}
\C\, \DD_{SL,i}(u;\alpha)& \textnormal{if (U$i$-1) or (U$i$-2) holds},\\
0 & \textnormal{otherwise}.
\end{cases}
\end{equation*}
\end{theorem}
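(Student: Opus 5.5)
The plan is to run the F-method for the $SL$-pair directly and reduce the resulting F-system to the one already solved for the $GL$-pair. By Theorem \ref{thm:symb}, $\Diff_{G'_{SL,i}}(I(\delta,u)_{SL},J(\sigma,v)_{SL})$ is isomorphic, via $\Rest_i\circ\Symb_0^{-1}$ (equivalently $\Rest_i\circ\Symb_{\ord}^{-1}$ through Theorem \ref{thm:symb_ord}), to the space of $\psi\in\Hom_{M'_{SL,i}A'_{SL,i}}(\C_\delta\boxtimes\C_u,\Pol(\frakn_+)\otimes(\C_\sigma\boxtimes\C_v))$ with $\widehat{d\pi_{u^*}}(N_i^+)\psi=0$. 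The nilradicals $\frakn_\pm$ and the vector fields $D_1,D_2,D_3$ are unchanged. The infinitesimal character of $\fraka(\R)_{\fraksl}$ is the restriction of $\lambda(u)$, so we may set $\widehat{d\pi_{u^*}}(N_i^+)=\widehat{d\pi_{\lambda(u)^*}}(N_i^+)$. This holds because the formulas in Proposition \ref{prop:Fdpi} depend on $\lambda$ only through differences $\lambda_j-\lambda_k$, which are determined by $u$.

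First I will decompose by isotypic components. The group $M'_{SL,i}A'_{SL,i}$ is one-dimensional in its $A$-part and acts on $\psi_{\alpha,k}$ through a character computed from Lemma \ref{lem:MApol}. For $i=1$, $H'_{1,2}=H_1-H_2$ acts on $\Pol(\alpha)$ by the eigenvalue $-\alpha_1-(\alpha_1-\alpha_2)=-2\alpha_1+\alpha_2$, up to the sign conventions of $\Ad_\#$; for $i=2$ the eigenvalue is $\alpha_1-2\alpha_2$. Matching this with $u_i-v$ coming from $\chi_{A_{SL},u}$ and $\chi_{A'_{SL},v}$ will show that the $M'A'$-maps land exactly in $\bigoplus_{\alpha\in\Gamma_i(\ell)}\Pol(\alpha)$. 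The $M'_{SL,i}$-parity condition is the one in (S$i$), and I will check that it is compatible with every $\alpha\in\Gamma_i(\ell)$. The key point is that the $SL$ constraint involves a single linear combination of $\alpha$, unlike the two constraints in $L_i(\alpha)$. Since each $\Pol(\alpha)$ is stable under $-\zeta_i\widehat{d\pi}(N_i^+)$ (Proposition \ref{prop:frakn-mod}), the F-system splits into a direct sum over $\alpha\in\Gamma_i(\ell)$. Each summand is exactly the $GL$ problem on $\Pol(\alpha)$ with $\lambda=\lambda(u)$.

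Next, on each summand I will apply Proposition \ref{prop:pihat12} with $a_{0,i}(\lambda(u);\alpha)=a_{0,i}(u)$. Proposition \ref{prop:SolD1} for $i=1$ and Proposition \ref{prop:SolD2} for $i=2$ then say that the solution space is $\C\,p^{(\alpha)}_{a_{0,1}(u),\alpha_2}$ (respectively $\C\,p^{(\alpha_2,\alpha_1)}_{a_{0,2}(u),\alpha_1}$) exactly under (U$i$-1) or (U$i$-2), and is zero otherwise. Corollary \ref{cor:Sol2F0} and Proposition \ref{prop:SolD2} give the ordered ${}_2F_0$ versions. Applying $\Rest_i\circ T_\alpha$ together with $\Symb_0^{-1}$ or $\Symb_{\ord}^{-1}$ then produces $\DD_{SL,i}(u;\alpha)$, and the direct sum follows.

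The main obstacle is the bookkeeping in the first step. I need to verify carefully that the $A'_{SL,i}$-weight condition is exactly $\alpha\in\Gamma_i(\ell)$ with $\ell=u_i-v$, since signs from $\Ad_\#$ and from $2\rho$ could shift this. I also need to confirm that the $M'_{SL,i}$ condition imposes no further restriction on $\alpha$; this is where $\delta_i+\sigma\equiv u_i-v$ must be matched against the parity of $-2\alpha_1+\alpha_2\equiv\alpha_2$ in the $i=1$ case. Finally, I must check that the use of $\lambda(u)$ is legitimate, namely that the $GL$ computations depend on $\lambda$ only through $\lambda|_{\fraka_{\fraksl}}$.
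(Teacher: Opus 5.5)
Your proposal is correct and follows essentially the same route as the paper. The paper likewise splits $\Pol_i(\ell)=\bigoplus_{\alpha\in\Gamma_i(\ell)}\Pol(\alpha)$ into summands stable under $-\zeta_i\widehat{d\pi_{\lambda(u)^*}}(N_i^+)$ (Propositions \ref{prop:Invariants_SL}, \ref{prop:SolSLi} and \ref{prop:DSBO_SL}), with your parity check on the single surviving $M'_{SL,i}$-condition being exactly what Proposition \ref{prop:Invariants_SL} encodes, and then reuses the $GL$ solutions of Sections \ref{sec:proof1}--\ref{sec:Emb12} with $\lambda=\lambda(u)$.
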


%%%%%%%%%%%%%%%%%%%%%%%%%%%%%%%%%%%%%%%%%
\subsubsection{Case \texorpdfstring{$i=3$}{i=3}}
\label{sec:SL_Case3}

As for $a_0, b_0$ in  \eqref{eqn:ab}, we define
\begin{align*}
a_0(u) &:=a_0(\lambda(u);\alpha_1,\alpha_2)= u_1-u_2+\alpha_1-\alpha_2, \\[3pt]
b_0(u) &:=b_0(\lambda(u);\alpha_1,\alpha_2)
= u_1+u_2-2+\alpha_1+\alpha_2.
\end{align*} 
Then we consider the following conditions.
\vskip 0.1in
\begin{enumerate}[label=\normalfont{(\arabic*)}]
\item[(V1)] $b_0 (u)\notin [0,\min(\alpha)-1]\cap \Z$;
\vspace{2pt}
\item[(V2)] $b_0(u) \in [0,\min(\alpha)-1]\cap \Z$ and 
$\frac{a_0(u)-b_0(u)}{2} \notin [-b_0(u),0]\cap \Z$;
\vspace{2pt}
\item[(V3)] $b_0(u)\in [0,\min(\alpha)-1]\cap \Z$ 
and $\frac{a_0(u)-b_0(u)}{2}\in [-b_0(u),0]\cap \Z$.
\end{enumerate}

\vspace{5pt}

Define
\vspace{5pt}
\begin{align*}
\DD^{(1)}_{SL,3}(u;\alpha)
&:=\Rest_3\circ
\big(T^{\mathrm{sym}}_{(\alpha_1,\alpha_2)} 
p^{(\alpha_1,\alpha_2)}_{a_0(u), \,b_0(u)}\big)
(D_1, D_2, D_3)\\[5pt]
&=\Rest_3\circ 
T^\ord_{(\alpha_1,\alpha_2)}{}_3F_1
\left[ \begin{matrix}
    -\alpha_1& -\alpha_2 & \tfrac{a_0(u)-b_0(u)}{2}\\
     &-b_0(u)&  
\end{matrix}; D_1, D_2, D_3\right]\\[10pt]
\DD^{(2)}_{SL,3}(u;\alpha)
&:=\Rest_3\circ \big(T^{\mathrm{sym}}_{(\alpha_1,\alpha_2)} 
t^{1+b_0(u)}p^{(\alpha_1-b_0(u)-1, \,\alpha_2-b_0(u)-1)}_{a_0(u),-b_0(u)-2}\big)(D_1, D_2, D_3) \\[5pt]
&=\Rest_3\circ (\star\star),
\end{align*}
where
\begin{equation*}
(\star\star):=
D_3^{1+b_0(u)}
T^\ord_{(\alpha_1,\,\alpha_2)}{}_3F_1\Big [\begin{matrix}
    b_0(u)+1-\alpha_1 & b_0(u)+1-\alpha_2 & \tfrac{a_0(u)+b_0(u)}{2}+1\\
    & 2+b_0(u)& 
\end{matrix}; D_1, D_2, D_3\Big ].
\end{equation*}

\begin{theorem}\label{thm:consSL3}
Let $i=3$ and suppose that $(\delta,\sigma; u, v) \in \Supp_{SL,3}(\DD)$.
If $-j:=u_1+u_2-v\in -\N$, then
\begin{equation*}
\Diff_{G'_{SL,3}}(I(\delta,u)_{SL}, J(\sigma,v)_{SL})
=\bigoplus_{\alpha\in\Gamma_3(-j)}
\Diff_3(u;\alpha),
\end{equation*}
where
\begin{equation*}
\Diff_3(u;\alpha)
=
\begin{cases}
\C\, \DD^{(1)}_{SL,3}(u;\alpha) & \textnormal{if (V1) holds},\\[3pt]
\C\, \DD^{(2)}_{SL,3}(u;\alpha) & \textnormal{if (V2) holds},\\[3pt]
\C\, \DD^{(1)}_{SL,3}(u;\alpha) \oplus 
\C\, \DD^{(2)}_{SL,3}(u;\alpha)
& \textnormal{if (V3) holds}.
\end{cases}
\end{equation*}
\end{theorem}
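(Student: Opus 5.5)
The plan is to reduce Theorem \ref{thm:consSL3} to the $GL$-computation of Sections \ref{sec:proof1}--\ref{sec:proof2}. The F-method setup is the same, since $\frakn_\pm$ do not change when passing from $G$ to $G_{SL}$. So $\Diff_{G'_{SL,3}}(I(\delta,u)_{SL},J(\sigma,v)_{SL})$ is identified via $\Rest_3\circ\Symb_0^{-1}$ (and via $\Rest_3\circ\Symb_\ord^{-1}$ for the ordered form) with the space of $\psi\in\Hom_{M'_{SL,3}A'_{SL,3}}(\C_\delta\boxtimes\C_u,\Pol(\frakn_+)\otimes(\C_\sigma\boxtimes\C_v))$ satisfying $\widehat{d\pi_{\lambda(u)^*}}(N_3^+)\psi=0$.

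The first step is the $M'_{SL,3}A'_{SL,3}$-isotypic decomposition of $\Pol(\frakn_+)$. The group $A'_{SL,3}$ is $\exp(\R(E_{1,1}-E_{3,3}))$. By the transformation laws in the proof of Lemma \ref{lem:MApol}, $\psi_{\alpha,k}$ has weight $-(\alpha_1+\alpha_2)$ under $H_{1,3}$, independent of $k$. The source character $\C_u$ restricts to weight $u_1+u_2$, and $\C_v$ has weight $v$. Hence the only admissible pieces are $\Pol(\alpha)$ with $\alpha_1+\alpha_2=v-u_1-u_2=j$, that is, $\alpha\in\Gamma_3(-j)$. In the same way, $M'_{SL,3}=\{\diag(m,1,m)\}$ yields the parity condition $\delta_1+\delta_2+\sigma\equiv\alpha_1+\alpha_2$, and this reproduces (S3). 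Because the operator $-\zeta_3\widehat{d\pi_{\lambda^*}}(N_3^+)$ preserves each $\Pol(\alpha)$ (Proposition \ref{prop:frakn-mod}), the solution space splits as $\bigoplus_{\alpha\in\Gamma_3(-j)}\Sol^{(\alpha)}$, where each summand is the kernel on $\Pol(\alpha)$ of the same operator as in the $GL$ case with $\lambda=\lambda(u)$. I will check that $\widehat{d\pi_{\lambda^*}}$ depends on $\lambda$ only through the differences that appear in Proposition \ref{prop:Fdpi}. This lets us take $\lambda=\lambda(u)$; the unused central direction is harmless.

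Next I apply the T-saturation. Proposition \ref{prop:ODEdiffoperator3} gives $T_\alpha^\sharp=\calL^{(\alpha)}(a_0,b_0)$, and substituting $\lambda=\lambda(u)$ into \eqref{eqn:ab} produces $a_0(u),b_0(u)$. I then invoke Proposition \ref{prop:SolDsec7} and Proposition \ref{Prop:SolGeneralDiffEq1} for each fixed $\alpha$. They hold for arbitrary $(\alpha,a_0,b_0)$, since their content is only a statement about the ODE on $\Pol_{\min(\alpha)}[t]$; the requirement $(\xi,\eta;\lambda,\nu)\in L_3(\alpha)$ only serves to link $\alpha$ with the parameters. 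This gives the trichotomy (V1)/(V2)/(V3), with polynomial solutions $p^{(\alpha)}_{a_0(u),b_0(u)}$ and $t^{1+b_0(u)}p^{(\alpha_1-b_0(u)-1,\alpha_2-b_0(u)-1)}_{a_0(u),-b_0(u)-2}$, and the corresponding ${}_3F_1$ polynomials for the ordered F-system. Pushing these through $T_\alpha$ followed by $\Rest_3\circ\Symb_0^{-1}$ or $\Rest_3\circ\Symb_\ord^{-1}$ (Theorem \ref{thm:symb_ord}) gives $\DD^{(1)}_{SL,3}(u;\alpha)$ and $\DD^{(2)}_{SL,3}(u;\alpha)$, with $T_\alpha(t^{1+b_0}\cdot)$ turning into the factor $D_3^{1+b_0(u)}$ exactly as in $(\star)$.

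The main obstacle is directness: the sum over $\alpha$ has to stay direct after the operators are restricted. Before $\Rest_3$ this is clear, because the $\Pol(\alpha)$ are independent. I plan to rely on the fact that $\Rest_3\circ\Symb_0^{-1}$ is a linear isomorphism on the full solution space (Theorem \ref{thm:symb}), so directness carries over, instead of trying to separate the restricted operators by their orders. A secondary point is to check carefully that the $H_{1,3}$-weight bookkeeping yields $\alpha_1+\alpha_2=-(u_1+u_2-v)$ with the correct sign under the normalization $\nu(v)=\tfrac12(v,-v)$.
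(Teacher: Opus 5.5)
Your proposal is correct and follows essentially the same route as the paper. The paper decomposes $\Pol_3(-j)=\bigoplus_{\alpha\in\Gamma_3(-j)}\Pol(\alpha)$ by $H_{1,3}$-weights, splits the solution space using Proposition~\ref{prop:frakn-mod} (Propositions~\ref{prop:SolSLi} and~\ref{prop:DSBO_SL}), transports directness through the isomorphism $\Rest_3\circ\Symb_0^{-1}$ as in \eqref{eqn:DiffSolSL}, and then applies the $GL$ analysis of Sections~\ref{sec:proof1}--\ref{sec:proof2} with $\lambda=\lambda(u)$. Your extra remarks spell out what the paper's one-line proof leaves implicit, namely that $\widehat{d\pi_{\lambda^*}}(N_3^+)$ depends only on the $\fraksl$-part of $\lambda$, and that Propositions~\ref{prop:SolDsec7} and~\ref{Prop:SolGeneralDiffEq1} are statements about the ODE valid for arbitrary $(\alpha,a_0,b_0)$.
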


\begin{remark}
It follows from Theorems \ref{thm:consSL12} and \ref{thm:consSL3} that 
any DSBO for the $SL$-case can be given as a linear combination of
DSBOs for the $GL$-case.
\end{remark}

As for Theorems \ref{thm:classSL12} and \ref{thm:classSL3},
we shall prove Theorems \ref{thm:consSL12}  and \ref{thm:consSL3} 
in Section \ref{sec:ProofSL}.

%%%%%%%%%%%%%%%%%%%%%%%%%%%%%%%%%%%%%%%%%
\subsection{\texorpdfstring{$M_{SL,i}'A_{SL,i}'$}{M_{SL,i}'A_{SL,i}'}-representations on \texorpdfstring{$\Pol(\frakn_+)$}{Pol(n_+)}}
\label{sec:SL_notation1}

The rest of this section is devoted to proving Theorems \ref{thm:classSL12}, \ref{thm:classSL3},
\ref{thm:consSL12},
and
\ref{thm:consSL3}.
As for the $GL$-case,
we first compute the $M_{SL,i}'A_{SL,i}'$-decomposition of $\Pol(\frakn_+)$.

Write
\begin{equation*}
h_{1}:=\diag(t, t^{-1},1),
\quad
h_{2}:=\diag(1,t, t^{-1}),
\quad
h_{3}:=\diag(t, 1, t^{-1}).
\end{equation*}
Table \ref{table:Adzeta} summarizes 
the action of $h_{i}$ on $\zeta_k$ via $\Ad_{\#}$. 
\begin{table}[t]
\caption{}
\begin{center}
\renewcommand{\arraystretch}{1.3}
{
\begin{tabular}{c|c|c|c}
\hline
$\Ad_{\#}(h_i)\zeta_k$ & $\zeta_1$ & $\zeta_2$ & $\zeta_3$\\
\hline
$h_1$ & $t^{- 2}\zeta_1$ & $t\zeta_2$ & $t^{-1}\zeta_3$\\
\hline
$h_2$ & $t\zeta_1 $ & $t^{-2}\zeta_2$ & $t^{-1} \zeta_3$\\
\hline
$h_3$ & $t^{-1} \zeta_1$ & $t^{-1} \zeta_2$ & $t^{- 2}\zeta_3$\\
\hline
\end{tabular}
}
\end{center}\label{table:Adzeta}
\end{table}
For $\ell \in \Z$ and $i=1,2,3$, we put
\begin{equation*}
\Pol_{i}(\ell):=\{\psi(\zeta) \in \Pol(\frakn_+): \Ad_{\#}(h_i)\psi(\zeta) = t^\ell \psi(\zeta)\}.
\end{equation*}
Then, it follows from Table \ref{table:Adzeta} that we have
\begin{equation}\label{eqn:Pol_ell}
\Pol(\frakn_+)\big\vert_{M'_{SL,i}A'_{SL,i}}=\bigoplus_{\ell \in \Z} \Pol_{i}(\ell).
\end{equation}
Now, for $(\delta,\sigma; u, v) \in (\Z/2\Z)^3\times \C^3$ and $i=1,2,3$, we put
\begin{align*}
\Hom_{SL,i}(\delta,\sigma; u, v)
&:=\Hom_{M_{SL,i}'A_{SL,i}'}\big((\C_\delta\boxtimes \C_{u})
\big\rvert_{M_{SL,i}'A_{SL,i}'},\Pol(\frakn_+)\otimes (\C_\sigma\boxtimes \C_{v})\big),
\\
\Hom^{(\ell)}_{SL,i}(\delta; u)
&:=\Hom_{M_{SL,i}'A_{SL,i}'}\big((\C_\delta\boxtimes \C_{u})
\big\rvert_{M_{SL,i}'A_{SL,i}'},\Pol_i(\ell)\otimes (\C_\sigma\boxtimes \C_{v})\big).
\end{align*}
By the $M_{SL,i}'A_{SL,i}'$-decomposition \eqref{eqn:Pol_ell}, we have 
\begin{equation}\label{eqn:HomSL}
\Hom_{SL,i}(\delta,\sigma; u, v)
=\bigoplus_{\ell \in \Z} \Hom^{(\ell)}_{SL,i}(\delta; u).
\end{equation}
Further, for $\ell  \in \Z$ and $i=1,2,3$, we set
\begin{equation*}
L_{SL,i}(\ell):=\{(\delta,\sigma; u, v) \in (\Z/2\Z)^3\times \C^3: 
\text{(T$i$)  holds}  \},
\end{equation*}
\begin{alignat}{4}
&u_1-v &&= \ell,
\quad
&&\delta_1+\sigma
&&\equiv \ell \bmod{2},
\tag{T1}\\
&u_2-v &&= \ell,
\quad
&&\delta_2+\sigma &&\equiv \ell \bmod{2},
\tag{T2}\\
(u_1+&u_2)-v &&= \ell,
\quad
(\delta_1+&&\delta_2)+\sigma &&\equiv \ell \bmod{2}.
\tag{T3}
\end{alignat}

\begin{proposition}\label{prop:Invariants_SL} 
For $i=1,2,3$, 
the following conditions on 
$(\delta,\sigma; u, v) \in (\Z/2\Z)^3\times \C^3$
are equivalent.
\begin{enumerate}[label=\normalfont{(\roman*)}]
    \item $\Hom_{SL,i}(\delta,\sigma; u, v)\neq \{0\}$.
    \item There exists some $\ell\in \Z$ such that 
    $(\delta,\sigma; u, v) \in L_{SL, i}(\ell)$.
\end{enumerate}
\end{proposition}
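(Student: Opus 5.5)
Since $\dim V=\dim W=1$, an element of $\Hom_{SL,i}(\delta,\sigma;u,v)$ is simply a polynomial $\psi\in\Pol(\frakn_+)$ on which $M'_{SL,i}A'_{SL,i}$ acts through the character $(\chi_{M_{SL},\delta}\boxtimes\chi_{A_{SL},u})^{-1}\otimes(\chi_{M'_{SL},\sigma}\boxtimes\chi_{A'_{SL},v})$, restricted to $M'_{SL,i}A'_{SL,i}$. I would therefore argue exactly as in Proposition~\ref{prop:Invariants_SB3}. First, use \eqref{eqn:HomSL} to reduce to a single summand $\Hom^{(\ell)}_{SL,i}(\delta;u)$. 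Then identify the character of $M'_{SL,i}A'_{SL,i}$ on each $\Pol_i(\ell)$, and compare it with the character coming from $(\delta,u)$ and $(\sigma,v)$.

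\textbf{Step 1 (the $A'$-part).} The group $A'_{SL,i}$ is the one-parameter group $\exp(\R\,\iota_i(H'_{1,2}))$. Its image is generated by $h_i$ for $t>0$, with $h_1=\diag(t,t^{-1},1)$ and so on. By Table~\ref{table:Adzeta}, $h_i$ acts on $\Pol_i(\ell)$ by $t^\ell$.

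Next, compute $\chi_{A_{SL},u}(h_i)$ by writing $\log h_i$ in the basis $H_{1,2},H_{2,3}$. For $t=e^s$:
\begin{itemize}
\item $\log h_1=sH_{1,2}$, so $\chi_{A_{SL},u}(h_1)=t^{u_1}$;
\item $\log h_2=sH_{2,3}$, so $\chi_{A_{SL},u}(h_2)=t^{u_2}$;
\item $\log h_3=s(H_{1,2}+H_{2,3})$, so $\chi_{A_{SL},u}(h_3)=t^{u_1+u_2}$.
\end{itemize}
On the other side, $\chi_{A'_{SL},v}(h_i)=t^{v}$.

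A nonzero $A'$-equivariant map $\C_u\to\Pol_i(\ell)\otimes\C_v$ exists exactly when $t^{u_\bullet}=t^{\ell+v}$, where $u_\bullet$ denotes $u_1$, $u_2$ or $u_1+u_2$ according to $i=1,2,3$. This gives the first equality in (T$i$). I would check the sign convention against Lemma~\ref{lem:MApol}, which uses the same $\Ad_\#$ action. For instance, $\psi_{\alpha,k}$ has $h_3$-weight $t^{-\alpha_1-\alpha_2}$, and this matches (T3) together with $\Gamma_3$ and the condition $u_1+u_2-v\in-\N$ in (S3). So the weight $\ell$ of $\Pol_i(\ell)$ equals $u_\bullet-v$ with exactly the signs displayed.

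\textbf{Step 2 (the $M'$-part).} The group $M'_{SL,i}$ is generated by $\iota_i(\diag(-1,-1))$:
\begin{itemize}
\item $\diag(-1,-1,1)$ for $i=1$;
\item $\diag(1,-1,-1)$ for $i=2$;
\item $\diag(-1,1,-1)$ for $i=3$.
\end{itemize}
Each of these equals $h_i$ at $t=-1$, so it acts on $\Pol_i(\ell)$ by $(-1)^\ell$.

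By \eqref{eqn:charSLM}, $\chi_{M_{SL},\delta}$ takes the value $(-1)^{\delta_1}$, $(-1)^{\delta_2}$ and $(-1)^{\delta_1+\delta_2}$ on these three elements. By \eqref{eqn:charSLMprime}, $\chi_{M'_{SL},\sigma}$ gives $(-1)^\sigma$. Equivariance is therefore equivalent to the parity condition in (T$i$).

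\textbf{Step 3 (conclusion).} Each $\Pol_i(\ell)$ is nonzero for every $\ell\in\Z$ when $i=1,2$. For $i=3$ it is nonzero exactly when $\ell\le 0$; I would verify this via the monomials $\psi_{\alpha,k}$ and $\Gamma_i(\ell)$ in Remark~\ref{rem:GammaSL}. Because the whole group acts on $\Pol_i(\ell)$ by a single character, the space $\Hom^{(\ell)}_{SL,i}$ is nonzero if and only if $\Pol_i(\ell)\neq0$ and (T$i$) holds.

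The statement asserts (i)$\Leftrightarrow$(ii) with $\ell$ ranging over all of $\Z$. So I must reconcile this with the vanishing of $\Pol_3(\ell)$ for $\ell>0$. The intended reading, consistent with (S3), is that $L_{SL,i}(\ell)$ is only relevant when $\Pol_i(\ell)\ne0$. This emptiness issue for $i=3$, together with the sign conventions in Step~1, is the only delicate point. Everything else is a routine character computation.
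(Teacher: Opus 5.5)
Your argument is the paper's own: restrict $\C_\delta\boxtimes\C_u$ to $M'_{SL,i}A'_{SL,i}$, which gives $\C_{\delta_1}\boxtimes\C_{u_1}$, $\C_{\delta_2}\boxtimes\C_{u_2}$ and $\C_{\delta_1+\delta_2}\boxtimes\C_{u_1+u_2}$, and compare with the single character $t^\ell$ by which $h_i$ ($t\in\R^\times$) acts on $\Pol_i(\ell)$. (Your opening sentence inverts the character carried by $\psi$ --- it must be the source character divided by the target one --- but Step~1 correctly uses $u_\bullet=\ell+v$.)

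The point you flag for $i=3$ is a genuine defect of the statement, not something to reconcile by an ``intended reading'', and the paper's one-line proof passes over it. Since $\Pol_3(\ell)=\{0\}$ for $\ell>0$, the choice $v=u_1+u_2-1$, $\sigma\equiv\delta_1+\delta_2+1 \bmod 2$ gives $(\delta,\sigma;u,v)\in L_{SL,3}(1)$ while $\Hom_{SL,3}(\delta,\sigma;u,v)=\{0\}$. So for $i=3$ the equivalence holds only with $\ell\in-\N$, which is the restriction later built into (S3) and $\Gamma_3(\ell)$. Your Step~3 proves exactly this corrected version, and you should state it that way outright.
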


\begin{proof}
We have 
\begin{equation*}
(\C_\delta\boxtimes \C_{u})
\big\rvert_{M_{SL,i}'A_{SL,i}'}
\simeq
\begin{cases}
\C_{\delta_1}\boxtimes \C_{u_1} &\text{if $i=1$}, \\[3pt]
\C_{\delta_2}\boxtimes \C_{u_2} &\text{if $i=2$}, \\[3pt]
\C_{\delta_1+\delta_2}\boxtimes \C_{u_1+u_2} &\text{if $i=3$}.
\end{cases}
\end{equation*}
Now the desired assertion follows.
\end{proof}

Recall from \eqref{eqn:q_alpha_k} that
we write
$\psi_{\alpha, k}(\zeta)=\zeta_1^{\alpha_1-k}\zeta_1^{\alpha_2-k}\zeta_3^{k}$
for $\alpha=(\alpha_1, \alpha_2) \in \N^2$ and 
$k \in [0,\min(\alpha_1,\alpha_2)] \cap \Z$.
By Table \ref{table:Adzeta}, we have 
\begin{equation*}
\Ad_{\#}(h_i)\psi_{\alpha, k}(\zeta)
=
\begin{cases}
t^{-2\alpha_1+\alpha_2}\psi_{\alpha, k}(\zeta) &\text{if $i=1$},\\[3pt]
t^{\alpha_1-2\alpha_2}\psi_{\alpha, k}(\zeta) &\text{if $i=2$},\\[3pt]
t^{-(\alpha_1+\alpha_2)}\psi_{\alpha, k}(\zeta) &\text{if $i=3$}.
\end{cases}
\end{equation*}
Therefore,
\begin{equation*}
\Pol(\alpha)\big\vert_{M_{SL,i}'A_{SL,i}'}
\simeq 
\begin{cases}
\bigoplus_{k =0}^{\min(\alpha)}
\C_{2\alpha_1+\alpha_2}\boxtimes
\C_{-2\alpha_1+\alpha_2} &\text{if $i=1$},\\[10pt]
\bigoplus_{k =0}^{\min(\alpha)}
\C_{\alpha_1+2\alpha_2}\boxtimes
\C_{\alpha_1-2\alpha_2} &\text{if $i=2$}, \\[10pt]
\bigoplus_{k =0}^{\min(\alpha)}
\C_{\alpha_1+\alpha_2}\boxtimes
\C_{-(\alpha_1+\alpha_2)} &\text{if $i=3$}.
\end{cases}
\end{equation*}

In the next subsection, we investigate $\Pol(\alpha)$ more carefully to understand
the polynomial solutions to the F-system for $(SL(3,\R), SL(2,\R))$.

%%%%%%%%%%%%%%%%%%%%%%%%%%%%%%%%%%%%%%%%%
\subsection{The F-system for \texorpdfstring{$(SL(3,\R), SL(2,\R))$}{(SL(3,R),SL(2,R))}}
\label{sec:DiffEqSL}

We next proceed to solve the F-system for $(SL(3,\R), SL(2,\R))$.
For $(\delta,\sigma; u, v) \in (\Z/2\Z)^3\times \C^3$, $\ell \in \Z$ and $i=1,2,3$, 
we define 
\begin{align*}
\Sol_{SL,i}(\delta,\sigma; u, v)
&:= \{ \psi \in \Hom_{SL,i}(\delta,\sigma; u, v): \text{\eqref{PDE_SL_123} holds}\},\\[3pt]
\Sol_{SL,i}^{(\ell)}(\delta,u) 
&:= \{ \psi \in \Hom^{(\ell)}_{SL,i}(\delta; u):\text{\eqref{PDE_SL_123} holds}\}.
\end{align*}
\begin{equation}\label{PDE_SL_123}
-\zeta_i\widehat{d\pi_{\lambda(u)^*}}(N_i^+)\psi = 0
\end{equation}
It follows from \eqref{eqn:HomSL} that 
\begin{equation}\label{eqn:SolDecompSL}
\Sol_{SL,i}(\delta,\sigma; u, v) 
= \bigoplus_{\ell\in \Z}\Sol_{SL,i}^{(\ell)}(\delta,u).
\end{equation}
Moreover, we have
\begin{equation}\label{eqn:PolSL}
\Pol_i(\ell)
=
\bigoplus_{\alpha \in \Gamma_i(\ell)}
\Pol(\alpha),
\end{equation}
where $\Gamma_i(\ell)$ is the set defined in \eqref{eqn:Gamma_ell}.
Then, via the identification $\Hom_{SL,i}^{(\ell)}(\delta,u) \subset \Pol_i(\ell)$, 
for $\alpha \in \Gamma_i(\ell)$, we define
\begin{equation*}
\Sol_{SL,i}^{(\ell)}(\delta, u;\alpha)
:=\{\psi \in \Hom_{SL,i}^{(\ell)}(\delta,u): 
\text{$\psi \in \Pol(\alpha)$ and \eqref{PDE_SL_123} holds}\}.
\end{equation*}

\begin{proposition}\label{prop:SolSLi}
For $i =1,2,3$, we have
\begin{equation*}
\Sol_{SL,i}^{(\ell)}(\delta,u)
=\bigoplus_{\alpha \in \Gamma_i(\ell)} 
\Sol_{SL,i}^{(\ell)}(\delta, u;\alpha).
\end{equation*}
\end{proposition}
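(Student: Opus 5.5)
The claim is that the F-system operator preserves each $\Pol(\alpha)$, so the solution space inside $\Pol_i(\ell)=\bigoplus_{\alpha\in\Gamma_i(\ell)}\Pol(\alpha)$ splits along this decomposition.

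Take $\psi\in\Sol_{SL,i}^{(\ell)}(\delta,u)$. By \eqref{eqn:PolSL}, $\psi$ lies in $\Pol_i(\ell)$, so it decomposes uniquely as $\psi=\sum_{\alpha\in\Gamma_i(\ell)}\psi_\alpha$ with $\psi_\alpha\in\Pol(\alpha)$. This sum is finite, because $\psi$ is a polynomial.

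The key input is that the operator $A_i:=-\zeta_i\widehat{d\pi_{\lambda(u)^*}}(N_i^+)$ preserves each $\Pol(\alpha)$. This is Proposition \ref{prop:frakn-mod}, applied with $\lambda=\lambda(u)$. That proposition was proved for arbitrary $\lambda\in\C^3$, and the $SL$-principal series is obtained from the $GL$ one by restriction: $\lambda(u)$ restricts to $u$ on $\fraka_{\fraksl}$. Hence the operator $\widehat{d\pi_{\lambda(u)^*}}(N_i^+)$ is the correct $SL$-operator, since $\fraka$ enters $d\pi$ only through $\lambda$. If one prefers not to rely on that, the explicit formulas of Proposition \ref{prop:Fdpi} exhibit the same invariance directly: every term is a polynomial in the Euler operators $\vartheta_k$ and the ratios $\zeta_3/(\zeta_1\zeta_2)$ and $\zeta_1\zeta_2/\zeta_3$, and each of these preserves the bidegree $\alpha$.

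It follows that $A_i\psi=\sum_\alpha A_i\psi_\alpha$ with each $A_i\psi_\alpha\in\Pol(\alpha)$. Because the sum $\Pol(\frakn_+)=\bigoplus\Pol(\alpha)$ is direct, $A_i\psi=0$ holds if and only if $A_i\psi_\alpha=0$ for every $\alpha$.

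The $M'_{SL,i}A'_{SL,i}$-equivariance condition needs no separate check. By the computation preceding this subsection, every element of $\Pol(\alpha)$ with $\alpha\in\Gamma_i(\ell)$ transforms by the same character. So each component $\psi_\alpha$ again lies in $\Hom^{(\ell)}_{SL,i}(\delta;u)$, and therefore $\psi_\alpha\in\Sol_{SL,i}^{(\ell)}(\delta,u;\alpha)$. The decomposition of $\psi$ is unique and the summands lie in distinct $\Pol(\alpha)$, so the resulting sum of solution spaces is direct. The reverse inclusion holds trivially.

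The only delicate point is this invariance step. For $i=1,2$ the index set $\Gamma_i(\ell)$ is infinite, and a finite $M'_{SL,i}A'_{SL,i}$-isotypic component no longer separates the pieces $\Pol(\alpha)$. The argument must therefore rest on invariance under the full torus $MA$, as in Proposition \ref{prop:frakn-mod}, and not on the smaller $SL$-torus.
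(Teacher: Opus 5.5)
Your proof is correct and follows the paper's argument. You decompose $\psi$ along $\Pol_i(\ell)=\bigoplus_{\alpha\in\Gamma_i(\ell)}\Pol(\alpha)$, then use Proposition~\ref{prop:frakn-mod} (valid for every $\lambda$, in particular $\lambda(u)$) to see that $-\zeta_i\widehat{d\pi_{\lambda(u)^*}}(N_i^+)$ preserves each $\Pol(\alpha)$, and conclude componentwise by directness. Your explicit check that each component stays in $\Hom^{(\ell)}_{SL,i}(\delta;u)$ fills in a step the paper leaves implicit.

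One minor point about your closing remark: the need for invariance under the full torus $MA$ is not special to $i=1,2$. The set $\Gamma_3(\ell)$ also has $1-\ell\geq 2$ elements once $\ell\leq -1$.
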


\begin{proof}
By definition, it is clear that
$\Sol_{SL,i}^{(\ell)}(\delta,u)  \supset  \Sol_{SL,i}^{(\ell)}(\delta, u;\alpha)$
for each $\alpha \in \Gamma_i(\ell)$. To prove the converse,
take $\psi(\zeta) \in \Sol_{SL,i}^{(\ell)}(\delta,u)$. 
By \eqref{eqn:PolSL}, there exist 
$\psi^{(\alpha)}(\zeta) \in \Pol(\alpha)$ such that
\begin{equation*}
\psi(\zeta) = \sum_{\alpha \in \Gamma_i(\ell)}\psi^{(\alpha)}(\zeta).
\end{equation*}
Thus,
\begin{equation}\label{eqn:SolSLi}
0=-\zeta_i\widehat{d\pi_{\lambda(u)^*}}(N_i^+)\psi(\zeta) 
= \sum_{\alpha \in \Gamma_i(\ell)}
-\zeta_i\widehat{d\pi_{\lambda(u)^*}}(N_i^+)\psi^{(\alpha)}(\zeta).
\end{equation}
It follows from Proposition \ref{prop:frakn-mod} that 
$-\zeta_i\widehat{d\pi_{\lambda(u)^*}}(N_i^+)\psi^{(\alpha)}(\zeta) \in \Pol(\alpha)$ 
for all $\alpha \in \N^2$. Therefore, the equation \eqref{eqn:SolSLi} is 
equivalent to
\begin{equation*}
-\zeta_i\widehat{d\pi_{\lambda(u)^*}}(N_i^+)\psi^{(\alpha)}(\zeta)=0
\quad
\text{for all $\alpha \in \Gamma_i(\ell)$},
\end{equation*}
which is, $\psi^{(\alpha)}(\zeta) \in \Sol_{SL,i}^{(\ell)}(\delta, u;\alpha)$.
This completes the proof.
\end{proof}

For $u=(u_1, u_2) \in \C^2$ and $v \in \C$, we define 
\begin{equation*}
\ell(i;u,v)
:=
\begin{cases}
u_i-v & \text{if $i=1,2$},\\
(u_1+u_2)-v   & \text{if $i=3$}.
\end{cases}
\end{equation*}

\begin{proposition}\label{prop:DSBO_SL}
For $\ell(i;u,v) \in \Z$,  
the space $\Sol_{SL,i}(\delta,\sigma; u, v)$ can be decomposed as
\begin{equation}\label{SolSL2}
\Sol_{SL,i}(\delta,\sigma; u, v)
=\bigoplus_{\alpha\in\Gamma_i(\ell(i;u,v))}
\Sol_{SL,i}^{(\ell(i;u,v))}(\delta, u;\alpha).
\end{equation}
\end{proposition}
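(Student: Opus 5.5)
The plan is to combine the $\ell$-decomposition \eqref{eqn:SolDecompSL} with Proposition \ref{prop:SolSLi}. The key point is to show that only the single index $\ell=\ell(i;u,v)$ contributes.

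First, I would observe that $\Sol_{SL,i}^{(\ell)}(\delta,u)$ is contained in $\Hom^{(\ell)}_{SL,i}(\delta;u)$. By the computation in the proof of Proposition \ref{prop:Invariants_SL}, $M_{SL,i}'A_{SL,i}'$ acts on the source $(\C_\delta\boxtimes\C_u)|_{M_{SL,i}'A_{SL,i}'}$ through the character with $A'$-parameter $u_1$, $u_2$, or $u_1+u_2$ for $i=1,2,3$ respectively. It acts on $\Pol_i(\ell)\otimes(\C_\sigma\boxtimes\C_v)$ through the weight $t^\ell$ twisted by $v$, using the element $h_i$ and Table \ref{table:Adzeta}. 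An equivariant map can be nonzero only if the $A'_{SL,i}$-weights agree, that is, only if $\ell=\ell(i;u,v)$; this is exactly the first equation in (T$i$). Hence $\Hom^{(\ell)}_{SL,i}(\delta;u)=\{0\}$ for $\ell\neq \ell(i;u,v)$, and consequently $\Sol^{(\ell)}_{SL,i}(\delta,u)=\{0\}$ for those $\ell$.

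Second, under the hypothesis $\ell(i;u,v)\in\Z$, the sum \eqref{eqn:SolDecompSL} collapses to the single term $\Sol_{SL,i}^{(\ell(i;u,v))}(\delta,u)$. Applying Proposition \ref{prop:SolSLi} with $\ell=\ell(i;u,v)$ then gives \eqref{SolSL2}.

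The only point requiring care is the bookkeeping of signs and normalizations in the weight matching. One must check that the $h_i$-weight $t^\ell$ on $\Pol_i(\ell)$, combined with the $\Ad_\#$ convention \eqref{eqn:sharp} and the characters \eqref{eqn:charSLA}, \eqref{eqn:charSLAprime}, yields precisely $u_i-v=\ell$ (respectively $u_1+u_2-v=\ell$), and not its negative. I would verify this on the case $i=3$, $\alpha=(1,0)$, comparing with the $GL$ condition \eqref{eqn:ch32} pulled back via $\lambda(u)$ and $\nu(v)$. The $M$-parity condition imposes no further restriction on $\ell$ beyond what is already recorded in (T$i$), so it does not affect the decomposition.
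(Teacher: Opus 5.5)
Your proposal is correct and takes the paper's route. The paper's proof just cites Propositions \ref{prop:Invariants_SL} and \ref{prop:SolSLi}; your weight-matching step, showing $\Hom^{(\ell)}_{SL,i}(\delta;u)=\{0\}$ unless $\ell=\ell(i;u,v)$ (so that \eqref{eqn:SolDecompSL} collapses to a single summand), is the per-$\ell$ content of the proof of Proposition \ref{prop:Invariants_SL}, and the sign bookkeeping does come out as in (T$i$).
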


\begin{proof}
The proposed identity readily follows from Propositions
\ref{prop:Invariants_SL} and \ref{prop:SolSLi}.
\end{proof}

By applying $\Rest_i \circ \Symb_0^{-1}$ to both sides of \eqref{SolSL2},
we have 
\begin{equation}\label{eqn:DiffSolSL}
\Diff_{G'_{SL,i}}(I(\delta,u)_{SL}, J(\sigma,v)_{SL})
=\bigoplus_{\alpha\in\Gamma_i(\ell(i;u,v))}
(\Rest_i \circ \Symb_0^{-1})(\Sol_{SL,i}^{(\ell(i;u,v))}(\delta, u;\alpha)).
\end{equation}

In the next subsection, we utilize \eqref{eqn:DiffSolSL} to prove 
Theorems \ref{thm:classSL12} and \ref{thm:classSL3} for classification, 
and Theorems \ref{thm:consSL12} and\ref{thm:consSL3} for construction.

%%%%%%%%%%%%%%%%%%%%%%%%%%%%%%%%%%%%%%%%%
\subsection{Proofs of Theorems  
\texorpdfstring{\ref{thm:classSL12}}{11.6}, 
\texorpdfstring{\ref{thm:classSL3}}{11.7},
\texorpdfstring{\ref{thm:consSL12}}{11.17} 
and
\texorpdfstring{\ref{thm:consSL3}}{10.18}}\label{sec:ProofSL}

We start with a proof of Theorems \ref{thm:classSL12} and \ref{thm:classSL3}.

\begin{proof}[Proof of Theorem \ref{thm:classSL12}]
We only demonstrate the case $i=1$; the other case can be shown similarly.
We first show the equivalence of (i) and (iii).
By the definitions of $L_{SL,i}(\ell)$ and $\Supp_{SL,i}(\DD)$, 
and \eqref{eqn:DiffSolSL}, it suffices to show that,
for $\ell(1;u,v)=u_1-v \in \Z$,
there exists $\alpha \in \Gamma_1((\ell(1;u,v))$ such that
$\Sol_{SL,1}^{(\ell(1;u,v))}(\delta, u;\alpha)) \neq \{0\}$.
Indeed, for  $u_1-v \in \Z$, 
take $\alpha=(\alpha_1, \alpha_2)$ such that
$\alpha_1 :=|u_1-v|$ and 
$\alpha_2:=u_1-v+2\alpha_1$. 
Then we have
$\alpha \in \Gamma_1(\ell(1;u,v))$
and $\alpha_1 \leq \alpha_2$.
As the inequality $\alpha_1 \leq \alpha_2$ holds, 
the condition (A1) in Section \ref{sec:Thms12B} shows that 
$\Sol_{SL,1}^{(\ell(1;u,v))}(\delta, u;\alpha)) \neq \{0\}$.
This proves the equivalence of (i) and (iii).

Furthermore, for $j\in \N$, we define $\alpha(j):=(\alpha_1(j), \alpha_2(j))$
as $\alpha_1(j):=\alpha_1-j$ and $\alpha_2(j):=\alpha_2+2j$.
Then we have
$\alpha_1(j) \leq \alpha_2(j)$ and 
$\alpha(j) \in \Gamma_1(\ell(1;u,v))$ for any $j \in \N$.
By the same arguments as above, we have
$\Sol_{SL,1}^{(\ell(1;u,v))}(\alpha(j); \delta, u)) \neq \{0\}$
for all $j \in \N$, which yields the equivalence of (ii) and (iii).
Now the desired theorem follows.
\end{proof}

\begin{proof}[Proof of Theorem \ref{thm:classSL3}]
The equivalence of (i) and (ii) can be shown along the same lines as
Theorem \ref{thm:classSL12}. Indeed, it suffices to show that,
for $\ell(3;u,v)=(u_1+u_2)-v \in -\N$, there exists 
$\alpha \in \Gamma_3((\ell(3;u,v))$ such that
$\Sol_{SL,3}^{(\ell(3;u,v))}(\delta, u;\alpha)) \neq \{0\}$.
Suppose $v-(u_1+u_2) \in \N$ and 
take $\alpha=(\alpha_1,\alpha_2)$ such that 
$\alpha_1+\alpha_2=v-(u_1+u_2)$.
By the choice of $\alpha$, we have $\alpha \in \Gamma_3(\ell(3;u,v))$. 
It follows from the conditions (C1)--(C3) in Section \ref{sec:Step3} 
that $\Sol_{SL,1}^{(\ell(1;u,v))}(\delta, u;\alpha)) \neq \{0\}$,
which concludes the equivalence of (i) and (ii).

In order to prove \eqref{eqn:DIffDimSL3} and \eqref{eqn:DIffDimSL3b} observe that 
it follows from  \eqref{eqn:DiffSolSL} that, for $j=v-(u_1+u_2) \in\N$, we have 
\begin{equation}\label{eqn:DIffDimSL3c} 
\Diff_{G'_{SL,3}}(I(\delta,u)_{SL}, J(\sigma,v)_{SL})
=\bigoplus_{\alpha\in\Gamma_3(-j)}
(\Rest_3 \circ \Symb_0^{-1})(\Sol_{SL,3}^{(-j)}(\delta, u;\alpha)).
\end{equation}
By the aforementioned conditions (C1)--(C3), we have 
$\dim_\C\Sol_{SL,3}^{(-j)}(\delta, u;\alpha)\geq 1$ for 
all $\alpha \in \Gamma_3(-j)$.
The proposed inequality now follows from \eqref{eqn:GammaSL3}.
Moreover, for $\lambda(u) \in \C^2$ in \eqref{eqn:delta_u} and 
$\alpha_1+\alpha_2=v-(u_1+u_2)=j \in \N$,
the number $b_0$ appeared in (C1)--(C3) is given as $b_0=v-2$.
Thus, if $v \notin \N$, then $b_0$ satisfies (C1), which implies  
$\dim_\C\Sol_{SL,3}^{(-j)}(\delta, u;\alpha)= 1$. Since this holds for any 
$\alpha \in \Gamma_3(-j)$, the identity \eqref{eqn:GammaSL3} 
concludes the desired equality.
\end{proof}

We next prove Theorems \ref{thm:consSL12} and\ref{thm:consSL3}.

\begin{proof}[Proofs of Theorems \ref{thm:consSL12} and\ref{thm:consSL3}]
The polynomial solutions in $(\Sol_{SL,i}^{(\ell(i;u,v))}(\delta, u;\alpha))$
are the solutions to the differential equation
$-\zeta_i\widehat{d\pi_{\lambda(u)^*}}(N_i^+)\psi = 0$
on $\Pol(\alpha)$. Therefore, the proposed formulas follow simply from the 
arguments in Sections \ref{sec:proof1}--\ref{sec:Emb12} for 
the case $\lambda = \lambda(u)$.
\end{proof}

%%%%%%%%%%%%%%%%%%%%%%%%%%%%%%%%%%%%%%%%%
\section{Branching laws of Verma modules}
\label{sec:GVM}

The aim of this section is to discuss
the branching laws of  Verma modules
for the pairs $(\mathfrak{g}, \mathfrak{g}')
= (\mathfrak{gl}(3,\C), \mathfrak{gl}(2,\C)), 
(\mathfrak{sl}(3,\C), \mathfrak{sl}(2,\C))$
with respect to the three embeddings 
$\iota_{i}\colon \mathfrak{g}' \hookrightarrow \mathfrak{g}$.

We handle the cases
$(\mathfrak{gl}(3,\C), \mathfrak{gl}(2,\C))$ and $(\mathfrak{sl}(3,\C), \mathfrak{sl}(2,\C))$,
separately, as the behavior of the branching laws is significantly different.
We achieve the branching laws for the $\mathfrak{gl}$-case
in Theorem \ref{thm:BranchingEmb} and Corollary \ref{cor:BranchingEmb}.
The branchings laws for the $\fraksl$-case
are obtained in 
Theorems \ref{thm:BranchingEmb_SL12} and
\ref{thm:BranchingEmb_SL12}, and 
Corollary \ref{cor:BranchingEmb_SL3}.

%%%%%%%%%%%%%%%%%%%%%%%%%%%%%%%%%%%%%%%%%%
\subsection{Kobayashi's character identity}
\label{sec:character}

We start with the brief overview of Kobayashi's character identity
\cite{Kobayashi12} as it is our main machinery.
To discuss it in a general framework, we introduce some notation
 slightly different from the ones used  in Section \ref{sec:GL(3)};
we will resume it in Section \ref{sec:glgl}.

Let $\frakg$ be a complex reductive Lie algebra. 
Choose a Cartan subalgebra $\frakh$ and 
write $\Delta\equiv \Delta(\frakg, \frakh)$ for the set of roots of $\frakg$ with respect to $\frakh$.
Fix a positive system $\Delta^+$ and denote  by 
$\frakb$ the Borel subalgebra of $\frakg$ associated with $\Delta^+$, namely,
$\frakb = \frakh \oplus \fraku_+$ with $\fraku_+=\bigoplus_{\alpha \in \Delta^+} \frakg_\alpha$. 
Here $\frakg_\alpha$ is the root space
for $\alpha \in \Delta^+$. Let $\mathcal{O}$ denote the BGG category of $\frakg$-modules
whose objects are finitely generated $\frakg$-modules that are $\frakh$-semisimple
and locally $\fraku_+$-finite.

Let $\frakp \supset \frakb$ be a standard parabolic subalgebra of $\frakg$. Write 
$\frakp =\frakl \oplus \frakn_+$ for the Levi decomposition of $\frakp$ with $\frakh \subset \frakl$. 
We put $\Delta^+(\frakl):=\{\alpha \in \Delta^+: \frakg_\alpha \subset \frakl\}$. 
We denote by $\mathcal{O}^\frakp$ the parabolic BGG category, which is a full
subcategory of $\mathcal{O}$ whose objects are $\frakl$-semisimple and locally
$\frakn_+$-finite.

Let $\langle\cdot,\cdot\rangle$ denote the inner product on $\frakh^*$ induced from 
a non-degenerate symmetric bilinear form 
of $\frakg$. For $\alpha \in \Delta$, we write 
$\alpha^\vee = 2\alpha/\langle \alpha, \alpha \rangle$.
Then we put
\begin{equation*}
\Lambda^+(\frakl):=\{\lambda \in \frakh^* : 
\langle\lambda,\alpha^\vee\rangle 
\in \N
\;\;
\text{for all $\alpha \in \Delta^+(\frakl)$\}}.
\end{equation*}

For $\lambda \in \Lambda^+(\frakl)$, we denote by $F_\lambda$ the finite-dimensional
simple $\frakl$-module with highest weight $\lambda$. By letting $\frakn_+$ act trivially,
we regard $F_\lambda$ as a $\frakp$-module. We then define the generalized Verma
module $\Mp(\lambda)$ with highest weight $\lambda$ by 
\begin{equation*}
\Mp(\lambda) =U(\frakg)\otimes_{U(\frakp)}F_{\lambda}.
\end{equation*}
If $\frakp=\frakb$, then $\Mb(\lambda)$ is simply called a Verma module.

An element $H \in \frakg$ is called a hyperbolic element if
the eigenvalues of $\ad(H)$ on $\frakg$
are all real-valued. Thus, given a hyperbolic element $H$, 
one can define
subalgebras
\begin{equation*}
\frakn_-(H),
\quad
 \frakl(H),
\quad
\frakn_+(H)
\end{equation*}
as the sum of the eigenspaces of negative, zero and positive 
eiganvalue of $\ad(H)$, respectively.
In that case, we have
$\frakg=\frakn_-(H) \oplus \frakl(H) \oplus \frakn_+(H)$
and  $\frakp = \frakp(H):=\frakl(H)\oplus \frakn_+(H)$ is a Levi decomposition
of a parabolic subalgebra $\frakp$ of $\frakg$.

\begin{definition}\label{def:compatible}
Let $\frakg'$ and $\frakp$ be
a reductive subalgebra and a parabolic subalgebra of $\frakg$, respectively.
We say that 
$\frakp$ is \emph{$\frakg'$-compatible} if there exists a hyperbolic element 
$H \in \frakg'$ such that $\frakp = \frakp(H)$.
\end{definition}

Given a reductive subalgebra $\frakg'$ of $\frakg$,
suppose that  $\frakp=\frakp(H)$ is
a $\frakg'$-compatible parabolic subalgebra of $\frakg$.
The $\frakg'$-compatibility of $\frakp$ implies that
$\frakp':=\frakp \cap \frakg'$ is a parabolic subalgebra of $\frakg'$
with Levi decomposition
\begin{equation*}
\frakp'=\frakl' \oplus \frakn_+' :=(\frakl(H)\cap \frakg') \oplus (\frakn_+(H)\cap \frakg').
\end{equation*}

We choose a Cartan subalgebra
$\frakh' \subset \frakg'$ in such a way that  $H \in \frakh'$ and that
it extends to a Cartan subalgebra $\frakh$ of $\frakg$. Then we have 
$\frakh \subset \frakl(H)$
and $\frakh' \subset \frakg'$. For simplicity, we write 
$\frakl =\frakl(H)$ and $\frakn_{\pm}=\frakn_{\pm}(H)$.

Given a finite-dimensional vector space $V$, we write 
$S(V)=\bigoplus_{k=0}^\infty S^k(V)$ for the symmetric tensor algebra 
of $V$. 
For finite-dimensional simple $\frakl$- and $\frakl'$-modules
$F_\lambda$ and $F'_\nu$ with highest weights $\lambda \in \Lambda^+(\frakl)$
and $\nu \in \Lambda^+(\frakl')$, respectively, we put
\begin{equation*}
m(\nu;\lambda):=
\dim_\C
\Hom_{\frakl'}(F'_\nu, F_\lambda\vert_{\frakl'} \otimes S(\frakn_-/\frakn_-\cap \frakg')).
\end{equation*}
Let $[\Mp(\lambda)]$ and $[\Mpp(\nu)]$ denote the formal characters 
of $\Mp(\lambda)$ and $\Mpp(\nu)$, respectively.
For $\lambda \in \Lambda^+(\frakl)$,
we write  $\supp(\lambda) := \{\nu \in \Lambda^+(\frakl'):m(\nu;\lambda)\neq 0\}$.

\begin{theorem}[{\cite[Thm.\ 3.10]{Kobayashi12}}]\label{thm:bGVM}
Suppose that $\frakp=\frakl \oplus \frakn_+$ is a $\frakg'$-compatible parabolic subalgebra
of $\frakg$. Then, for $\lambda \in \Lambda^+(\frakl)$,  the following hold.

\begin{enumerate}
\item[\emph{(1)}]
$m(\nu;\lambda) < \infty$ for all $\nu \in \Lambda^+(\frakl')$.

\item[\emph{(2)}]
In the Grothendieck group of $\mathcal{O}^{\frakp'}$, we have
\begin{equation}\label{eqn:bGVM}
[\Mp(\lambda)\vert_{\frakg'}]
\simeq 
\bigoplus_{\nu \in \supp(\lambda)}
m(\nu; \lambda) [\Mpp(\nu)].
\end{equation}
\end{enumerate}
\end{theorem}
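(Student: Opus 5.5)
The plan follows Kobayashi's original argument: filter the generalized Verma module by the $\ad(H)$-grading coming from the hyperbolic element $H\in\frakh'$, and identify the graded pieces with $\frakg'$-modules.

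The first step is to use the PBW theorem. It gives an isomorphism of $\frakl$-modules (in fact of $\frakp$-modules up to filtration)
\[
\Mp(\lambda)\simeq U(\frakn_-)\otimes F_\lambda .
\]
Since $\frakn_-=(\frakn_-\cap\frakg')\oplus \frakn_-/(\frakn_-\cap\frakg')$ as $\frakl'$-modules, we can write
\[
U(\frakn_-)\simeq U(\frakn_-')\otimes S(\frakn_-/\frakn_-\cap\frakg')
\]
as $\frakl'$-modules, with $\frakn_-'=\frakn_-\cap\frakg'$, using the symmetrization map on the complement. Every eigenvalue of $\ad(H)$ on $\frakn_-/\frakn_-'$ is negative, and $F_\lambda$ is finite-dimensional. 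Hence each $H$-eigenspace of $F_\lambda\otimes S(\frakn_-/\frakn_-')$ is finite-dimensional. This gives part (1): the multiplicity $m(\nu;\lambda)$ is bounded by the dimension of one fixed $H$-eigenspace, which is determined by $\nu(H)$.

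For part (2), I would build a $\frakg'$-stable filtration of $\Mp(\lambda)|_{\frakg'}$ indexed by $H$-eigenvalues. Let $W$ be the $\frakp'$-submodule $\C\otimes F_\lambda$, and for each $k$ let $\mathcal{F}_k$ be the $U(\frakg')$-submodule generated by $S^{\le k}(\frakn_-/\frakn_-')\otimes F_\lambda$. The successive quotients should receive surjections from
\[
U(\frakg')\otimes_{U(\frakp')}\bigl(S^k(\frakn_-/\frakn_-')\otimes F_\lambda\bigr),
\]
where $\frakn_+'$ acts trivially on the graded piece. The reason is that $\frakn_+'$ lowers $S$-degree modulo lower filtration: $[\frakn_+',\frakn_-]\subset \frakn_-+\frakl$, and it does not increase the degree in the complement.

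Comparing formal characters then shows these surjections are isomorphisms, because both sides have character $\mathrm{ch}\,U(\frakn_-')\cdot \mathrm{ch}\,S^k\cdot\mathrm{ch}\,F_\lambda$. The graded pieces are generalized Verma modules induced from finite-dimensional $\frakl'$-modules, and $\frakl'$ is reductive, so each decomposes as $\bigoplus_\nu m_k(\nu)\,\Mpp(\nu)$. Summing over $k$ gives \eqref{eqn:bGVM} in the Grothendieck group. The sum is locally finite by part (1), so it makes sense in $\mathcal{O}^{\frakp'}$.

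The main obstacle is verifying that the filtration is genuinely $\frakp'$-compatible, meaning the graded pieces are annihilated by $\frakn_+'$. I would handle this by grading through $H$-eigenvalues rather than polynomial degree: elements of $\frakn_+'$ strictly raise the $H$-eigenvalue. Filtering $\Mp(\lambda)$ by $H$-eigenvalue in decreasing order then makes the top weight components killed by $\frakn_+'$, and the character identity alone suffices without constructing explicit maps.
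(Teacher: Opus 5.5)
Part (1) of your proposal is fine. The gap is in (2): the claim that $\frakn_+'$ acts trivially on $\mathcal F_k/\mathcal F_{k-1}$ for your $S$-degree filtration is false in general.

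First, the inclusion $[\frakn_+',\frakn_-]\subset\frakn_-+\frakl$ fails already in the paper's case $i=3$, where $[E_{1,3},E_{2,1}]=-E_{2,3}\in\frakn_+$. More to the point, ``does not increase the degree in the complement'' is not the same as ``lowers it''. Whenever $[\frakn_+',\mathfrak q]$ has a component in the complement $\mathfrak q$ of $\frakn_-'$ in $\frakn_-$, elements of $\frakn_+'$ send $\operatorname{sym}(S^k(\mathfrak q))\otimes F_\lambda$ to vectors of the same $\mathfrak q$-degree $k$ that are not in $\mathcal F_{k-1}$.

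Concretely, take $\frakg=\fraksl(3,\C)\oplus\fraksl(3,\C)\supset\frakg'=\Delta\fraksl(3,\C)$ and $H=(\diag(1,0,-1),\diag(1,0,-1))$, so $\frakp$ is the compatible Borel. Let $X=(E_{1,2},E_{1,2})\in\frakn_+'$ and $Y=(0,E_{3,1})$. Then $X\cdot(Y\otimes v)=-(0,E_{3,2})\otimes v$, which does not lie in $\mathcal F_0=U(\frakn_-')\otimes v$. The same happens for every $\frakl'$-stable choice of complement. So the map from $U(\frakg')\otimes_{U(\frakp')}(S^k\otimes F_\lambda)$ with trivial $\frakn_+'$-action does not exist. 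The subquotient $\mathcal F_k/\mathcal F_{k-1}$ does have the right character, but this filtration does not exhibit it as an induced module.

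Your fallback via $H$-eigenvalues is the right repair, but it has to be set up differently. The $H$-eigenspaces $M^{(c)}$ of $M=\Mp(\lambda)$ are finite-dimensional but not $\frakg'$-stable. So take $\mathcal G_c:=U(\frakg')\bigoplus_{c'\ge c}M^{(c')}=U(\frakn_-')\bigoplus_{c'\ge c}M^{(c')}$, indexed by the discrete set of eigenvalues $\le\lambda(H)$. Then:
\begin{itemize}
\item $\frakn_+'$ kills $M^{(c)}$ modulo $\mathcal G_{>c}$;
\item $M^{(c)}\cap\mathcal G_{>c}=(\frakn_-'M)^{(c)}$;
\item the resulting surjection $U(\frakg')\otimes_{U(\frakp')}(M/\frakn_-'M)^{(c)}\to\mathcal G_c/\mathcal G_{>c}$ is an isomorphism, because $M$ is free over $U(\frakn_-')$ by your PBW factorization;
\item $M/\frakn_-'M\simeq F_\lambda|_{\frakl'}\otimes S(\frakn_-/\frakn_-')$ as $\frakl'$-modules.
\end{itemize}
Note that the inducing pieces are indexed by $H$-eigenvalue, not by $S$-degree, so your character count has to be redone on that indexing.

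For the Grothendieck-group statement itself, even this is more than needed. Once the class makes sense, which is what the $H$-filtration provides, it is determined by the formal character. Your PBW factorization already gives $\mathrm{ch}\,\Mp(\lambda)=\mathrm{ch}\,U(\frakn_-')\cdot\mathrm{ch}\bigl(F_\lambda\otimes S(\frakn_-/\frakn_-')\bigr)=\sum_\nu m(\nu;\lambda)\,\mathrm{ch}\,\Mpp(\nu)$. This is, to my recollection, essentially the argument behind the cited theorem; the paper itself only quotes it.

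Finally, local finiteness of the sum does not follow from (1) alone. It follows from $\mu(H)\le\nu(H)\le\lambda(H)$ for every weight $\mu$ of $\Mpp(\nu)$, together with the finite-dimensionality of the $H$-eigenspaces that you already use for (1).
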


For later convenience we end this subsection by recalling 
an irreducibility criterion for Verma module $\Mb(\lambda)$
with highest weight $\lambda$.

\begin{theorem}[{cf.\ \cite[Thm.\ 4.4]{Hum08}}]\label{thm:VermaCriterion}
The following conditions on $\lambda \in \frakh^*$ are equivalent.
\begin{enumerate}[label=\normalfont{(\roman*)}]
\item
The Verma module $\Mb(\lambda)$ is irreducible. 
\vspace{3pt}

\item
The weight $\lambda \in \frakh^*$ satisfies the following condition:
\begin{equation}\label{eqn:VermaCriterion}
\langle\lambda+\rho,\beta^\vee\rangle 
\not\in \N_+
\;\;
\textnormal{for all $\beta \in \Delta^+$},
\end{equation}
where $\rho$ is half the sum of the positive roots $\beta \in \Delta^+$.
\end{enumerate}
\end{theorem}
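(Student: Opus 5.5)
The statement to prove is the classical irreducibility criterion for Verma modules (Theorem \ref{thm:VermaCriterion}). I would deduce it from the Shapovalov determinant, with a Harish-Chandra central-character argument supplying the composition-factor information.

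The plan is to work weight space by weight space. For each $\mu=\sum_i n_i\alpha_i$ with $n_i\in\N$, consider the contravariant (Shapovalov) form on $\Mb(\lambda)$, restricted to the weight space $\Mb(\lambda)_{\lambda-\mu}$. Its radical is exactly the maximal proper submodule of $\Mb(\lambda)$. Hence $\Mb(\lambda)$ is irreducible if and only if, for every $\mu$, the form on $\Mb(\lambda)_{\lambda-\mu}$ is nondegenerate. In other words, we need $\det\mathrm{Sh}_\mu(\lambda)\neq0$ for all $\mu$.

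The key step is the Shapovalov determinant formula. Up to a nonzero constant,
$$\det \mathrm{Sh}_\mu(\lambda)=\prod_{\beta\in\Delta^+}\prod_{r\ge1}\bigl(\langle\lambda+\rho,\beta^\vee\rangle-r\bigr)^{P(\mu-r\beta)},$$
where $P$ is Kostant's partition function. I would prove it in three stages.

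First, compute the leading term in $\lambda$. The highest-degree part comes from the diagonal pairings of PBW monomials; its total degree is $\sum_{r,\beta}P(\mu-r\beta)$, and its leading coefficient is nonzero.

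Second, identify the linear factors. For each $\beta$ and each $r\in\N_+$, on the hyperplane $\langle\lambda+\rho,\beta^\vee\rangle=r$ there is a singular vector of weight $\lambda-r\beta$. When $\beta$ is simple this is explicit, namely $f_\beta^r v_\lambda$. For general $\beta$ one gets it by a Zariski-density argument: on a dense subset of the hyperplane, $\lambda$ is the $W$-conjugate of a case reducible to the simple one, and a homomorphism $\Mb(s_\beta\cdot\lambda)\to\Mb(\lambda)$ then exists. The image of this homomorphism lies in the radical, and its dimension in weight $\lambda-\mu$ is $P(\mu-r\beta)$. This forces $(\langle\lambda+\rho,\beta^\vee\rangle-r)^{P(\mu-r\beta)}$ to divide the determinant.

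Third, compare degrees. The product of these factors already has the full degree found in the first stage, so the determinant equals this product up to a constant.

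Granting the formula, the criterion follows at once. If (ii) holds, then no factor vanishes, every form is nondegenerate, and $\Mb(\lambda)$ is irreducible. Conversely, suppose $\langle\lambda+\rho,\beta^\vee\rangle=r\in\N_+$ for some $\beta\in\Delta^+$. Take $\mu=r\beta$. Then $P(0)=1$, so the factor for $(\beta,r)$ appears with positive exponent and the determinant vanishes, whence $\Mb(\lambda)$ is reducible.

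The main obstacle is the second stage: producing the singular vectors for non-simple $\beta$ and making the divisibility rigorous. The first thing I would try is the Jantzen-filtration sum formula viewpoint, which is available because $\det$ is polynomial in $\lambda$. Alternatively, since the statement is standard, one may simply cite \cite[Thm.\ 4.4]{Hum08} for it, as the paper does.
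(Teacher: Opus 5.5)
The paper gives no proof of this statement. It quotes the criterion from Humphreys and adds only Remark~\ref{rem:VermaCriterion}, saying the centre of a reductive $\frakg$ plays no role. Your argument also goes through verbatim for reductive $\frakg$, since the contravariant form only sees the coroots.

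So your Shapovalov-determinant proof is a genuinely different, self-contained route, and it is sound in outline. Compare it with the standard proof of the criterion:
\begin{itemize}
\item (i)$\Rightarrow$(ii) is Verma's theorem: if $\langle\lambda+\rho,\beta^\vee\rangle=r\in\N_+$, then $M(s_\beta\cdot\lambda)$ embeds in $M(\lambda)$ as a proper nonzero submodule.
\item (ii)$\Rightarrow$(i) is pure Harish-Chandra theory. A composition factor $L(w\cdot\lambda)$ of $M(\lambda)$ has $\lambda-w\cdot\lambda$ in the root lattice, which forces $w$ into the integral Weyl group $W_{[\lambda]}$. Condition (ii) says $\lambda$ is antidominant for the integral root system, hence minimal in $W_{[\lambda]}\cdot\lambda$. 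So $L(\lambda)$ is the only composition factor.
\end{itemize}
Your Stage~2 \emph{is} Verma's theorem plus a divisibility lemma, so neither route avoids it. What yours adds is Stages~1 and~3 in place of the antidominance combinatorics. That buys the full determinant formula (and hence, e.g., Jantzen's sum formula), at the price of the leading-term computation. The central-character argument you announce in your first sentence is never actually used; it is precisely what would let you drop Stages~1 and~3.

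Three points need tightening.
\begin{itemize}
\item In Stage~2, the dense set cannot be reached by $W$-conjugation alone. For $\lambda$ generic on $\{\langle\lambda+\rho,\beta^\vee\rangle=r\}$ one has $W_{[\lambda]}=\{1,s_\beta\}$, so there are no nonzero maps between $M(\lambda)$ and $M(w^{-1}\cdot\lambda)$ for $w\notin\{1,s_\beta\}$. The dense set has to be the regular integral weights on the hyperplane. Passing there from simple roots to arbitrary $\beta$ needs Verma's lemma on powers of $f_\alpha$ ($\alpha$ simple), applied inductively along a reduced expression, or Shapovalov's explicit elements. Extending to the whole hyperplane then uses that ``the singular space of weight $\lambda-r\beta$ is nonzero'' is a Zariski-closed condition.
\item Your ``this forces divisibility'' requires a lemma: corank $\ge k$ on a Zariski-dense subset of $\{\ell=0\}$ implies $\ell^k$ divides the determinant. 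This follows from Smith normal form over the local ring at $(\ell)$. You also need the remark that distinct pairs $(\beta,r)$ give distinct hyperplanes.
\item Your proposed fallback via Jantzen's sum formula would be circular, since that formula is derived from the factorization of the Shapovalov determinant.
\end{itemize}
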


\begin{remark}\label{rem:VermaCriterion}
In \cite[Thm.\ 4.4]{Hum08}, the Lie algebra $\frakg$ is assumed to be 
complex semisimple. However, one can apply the criterion for a complex 
reductive Lie algebra $\frakg$ as well since the center of $\frakg$ does not 
play any role in the irreducibility of the Verma module 
$\Mb(\lambda)$.
\end{remark}

In the following
we shall return to the setup from
Section \ref{sec:notation} to 
discuss the branching laws $\Mb(\lambda)\vert_{\frakg'}$
of the Verma modules $\Mb(\lambda)$
for $(\mathfrak{g}, \mathfrak{g}')= (\mathfrak{gl}(3,\CC),
\mathfrak{gl}(2,\CC)), (\mathfrak{sl}(3,\CC), \mathfrak{sl}(2,\CC))$.
For a real Lie algebra $\mathfrak{y}(\R)$, 
we write $\mathfrak{y}$ 
for its complexification.

%%%%%%%%%%%%%%%%%%%%%%%%%%%%%%%%%%%%%%%%%
\subsection{\texorpdfstring{$(\mathfrak{gl}(3,\C), \mathfrak{gl}(2,\C))$}{(gl(3,C),gl(2,C))}-case}
\label{sec:glgl}

We start with the $\mathfrak{gl}$-case.

%%%%%%%%%%%%%%%%%%%%%%%%%%%%%%%%%%%%%%%%%
\subsubsection{\texorpdfstring{$\frakg'$}{g}-compatibility of the Borel subalgebra \texorpdfstring{$\frakb$}{b}}
\label{sec:BorelGL}

Let $\frakg(\RR)=\mathfrak{gl}(3,\RR)$ so that 
$\frakg= \mathfrak{gl}(3,\CC)$.
Then the complexification $\frakb\equiv \frakp = \fraka \oplus \frakn_+$  
of the minimal parabolic subalgebra 
$\frakp(\RR)=\fraka(\RR)\oplus \frakn_+(\RR)$ of $\frakg(\RR)$ defined in 
\eqref{eqn:minimal_parabolic} is a Borel subalgebra of $\frakg$. 
For the complexificaion $\frakn_-$ of $\frakn_-(\RR)$, 
we have 
$\frakg = \frakn_- \oplus \frakb =\frakn_-\oplus \fraka \oplus \frakn_+$,
where 
\begin{equation*}
\fraka=\Span_\C\{H_1, H_2, H_3\}
\quad
\text{and}
\quad
\frakn_{\pm}=\Span_\C\{N_1^{\pm},N_2^{\pm},N_3^{\pm}\}.
\end{equation*}
Table \ref{table:adH}  summarizes
the adjoint action $\ad(H_j)$ on $N_k^\pm$ for $j, k \in \{1,2,3\}$.
\begin{table}[ht]
\caption{}
\begin{center}
\renewcommand{\arraystretch}{1.3}
{
\begin{tabular}{c|c|c|c}
\hline
$\ad(H_j)(N_k^\pm)$ & $N_1^\pm$ & $N_2^\pm$ & $N_3^\pm$\\
\hline
$H_1$ & $\pm N_1^\pm$ & $0$ & $\pm N_3^\pm$\\
\hline
$H_2$ & $\mp N_1^\pm $ & $\pm N_2^\pm$ & $0$\\
\hline
$H_3$ & $0$ & $\mp N_2^\pm$ & $\mp N_3^\pm$\\
\hline
\end{tabular}
}
\end{center}\label{table:adH}
\end{table}

Let $H_{j,k}:=E_{j,j}-E_{k,k}$.
For $\ell \in \ZZ$, we write $\frakg(\ell):=\{X \in \frakg: \ad(H_{1,3})X = \ell X\}$.
A simple observation shows that the subalgebras $\fraka$ and $\frakn_{\pm}$
can be characterized as 
\begin{equation*}
\fraka =\frakg(0)
\quad
\text{and}
\quad
\frakn_{\pm} = \frakg(\pm 1) \oplus \frakg(\pm 2).
\end{equation*}
Thus,
the Borel subalgebra $\frakb = \fraka \oplus \frakn_+$ is given by $\frakb = \frakb(H_{1,3})$.

Recall from Section \ref{sec:Gprime} that $\frakg_i'(\RR)$ $(i=1,2,3)$ is 
the image of the embedding 
$\iota\colon \mathfrak{gl}(2,\RR) \hookrightarrow \mathfrak{gl}(3,\RR)$
defined in \eqref{eqn:embedding2}. Then its complexification 
$\frakg_i'$ admits the decomposition
\begin{equation*}
\frakg_i' = \frakn_{i,-}' \oplus \frakb_i' =\frakn_{i,-}' \oplus \fraka_i' \oplus \frakn_{i,+}',
\end{equation*}
where $\frakn_{i,\pm}' = \frakn_{\pm} \cap \frakg_i'$,
$\frakb_i' =\frakb \cap \frakg_i'$ and $\fraka_i=\fraka'\cap \frakg_i'$.
The subalgebras $\fraka_i'$ and $\frakn_{i,-}'$ are given as
\begin{alignat}{6}
&\fraka_1' &&= \C H_1 \oplus \C H_2,
\quad
&\fraka_2' &= \C H_2 \oplus \C H_3,
\quad
&\fraka_3' &= \C H_1 \oplus \C H_3,\nonumber\\[3pt]
&\frakn_{1,\pm}' &&= \C N_1^{\pm},
\quad
&\frakn_{2,\pm}' &= \C N_2^{\pm},
\quad
&\frakn_{3,\pm}' &= \C N_3^{\pm}.\label{eqn:nilpotent}
\end{alignat}

\begin{proposition}\label{prop:compatibility}
The Borel subalgebra $\frakb$ is $\frakg_3'$-compatible, but not 
$\frakg_i'$-compatible for $i=1,2$.
\end{proposition}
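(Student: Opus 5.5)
For $\frakg_3'$ I will exhibit an explicit hyperbolic element. For $\frakg_1'$ and $\frakg_2'$ I will show that no hyperbolic element of $\frakg_i'$ can produce $\frakb$, by looking at what $\frakb=\frakp(H)$ forces on $H$.

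\emph{Compatibility for $i=3$.} It was observed just above that $\frakb=\frakb(H_{1,3})$ with $H_{1,3}=E_{1,1}-E_{3,3}$. By \eqref{eqn:embedding2}, $H_{1,3}=\iota_3(\diag(1,-1))\in\fraka_3'\subset\frakg_3'$. Since $\ad(H_{1,3})$ has real eigenvalues (Table \ref{table:adH}), $H_{1,3}$ is hyperbolic. Hence $\frakb$ is $\frakg_3'$-compatible.

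\emph{Necessary conditions on $H$.} Suppose $H\in\frakg$ is hyperbolic with $\frakp(H)=\frakb$. Then:
\begin{itemize}
\item $\frakl(H)=\frakb\cap\overline{\frakb}=\fraka$, where $\overline{\frakb}$ is the opposite Borel, because $\frakl(H)$ is the Levi factor of $\frakp(H)$.
\item $H$ lies in the center of $\frakl(H)=\fraka$.
\item $\frakn_+(H)=\frakn_+$.
\end{itemize}
So $H=\sum c_jH_j\in\fraka$, and by Table \ref{table:adH} the eigenvalues of $\ad(H)$ on $N_1^+,N_2^+,N_3^+$ are $c_1-c_2$, $c_2-c_3$ and $c_1-c_3$. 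The condition $\frakn_+(H)=\frakn_+$ requires all three to be strictly positive, i.e.\ $c_1>c_2>c_3$ with real $c_j$.

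\emph{Non-compatibility for $i=1,2$.} Any hyperbolic $H\in\frakg_i'$ with $\frakp(H)=\frakb$ would lie in $\fraka\cap\frakg_i'=\fraka_i'$ by \eqref{eqn:nilpotent}.
\begin{itemize}
\item For $i=1$, $\fraka_1'=\C H_1\oplus\C H_2$ forces $c_3=0$. Then $N_3^-=E_{3,1}$ and $N_3^+=E_{1,3}$ commute with $H_1,H_2$ up to the term $\pm c_1$, while $N_2^+=E_{2,3}$ has eigenvalue $c_2$. This alone is not a contradiction: $c_1>c_2>0$ is allowed inside $\fraka_1'$.
\end{itemize}
Here is a gap. $\fraka_1'$ does contain elements such as $\diag(2,1,0)$ with $c_1>c_2>c_3=0$, so the claimed non-compatibility must rely on a convention I have not yet pinned down.

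The likely intended point is that the compatibility framework in Theorem \ref{thm:bGVM} uses the parabolic $\frakp(H)$ and requires $\frakp'=\frakp\cap\frakg'$ to be compatible with $\frakg'$ as a reductive subalgebra whose Cartan contains $H$. Alternatively, the paper may regard $\frakg_i'$ as $\mathfrak{sl}$-type, or consider only $H$ acting with the prescribed grading. The main obstacle is therefore to reconcile the statement with the definition. If $\frakg_1'$ is really $\iota_1(\mathfrak{gl}(2,\C))$, the element $H=\diag(2,1,0)$ seems to give compatibility.

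I would first recheck $\ad(H)$ on $\frakg$ for this $H$: eigenvalue $1$ on $E_{1,2}$, $1$ on $E_{2,3}$, $2$ on $E_{1,3}$, and $0$ only on $\fraka$. So $\frakp(H)=\frakb$ in this case. In the $\mathfrak{sl}$ setting, $\fraka_{\fraksl,1}'=\C H_{1,2}$ is one-dimensional, so $c_3=-c_1-c_2$ with $c_1=-c_2$. Then $c_3=0$ and $c_1>c_2=-c_1<c_3$, which fails; the analogous failure occurs for $i=2$, while $H_{1,3}\in\fraka_{\fraksl,3}'$ works. I would therefore present the argument in this one-dimensional-Cartan form, which is what the non-compatibility claim must mean. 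I would flag that for the $\mathfrak{gl}$ case the center of $\frakg_i'$ must be discarded, i.e.\ $H$ taken in $[\frakg_i',\frakg_i']$, for the statement to hold.
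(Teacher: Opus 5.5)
Your $i=3$ argument is the paper's: $H_{1,3}=\iota_3(\diag(1,-1))$ is hyperbolic and $\frakp(H_{1,3})=\frakb$.

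\textbf{The case $i=1,2$.} The obstacle you hit is not a gap in your proposal; the statement is wrong. Definition \ref{def:compatible} (Kobayashi's definition) asks only for \emph{some} hyperbolic $H\in\frakg'$, and central elements are allowed. Your $H=\diag(2,1,0)=\iota_1(\diag(2,1))$ has $\frakl(H)=\fraka$ and $\frakn_+(H)=\frakn_+$. Likewise $\diag(0,-1,-2)=\iota_2(\diag(-1,-2))$ lies in $\frakg_2'$ and defines $\frakb$.

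So in the $\mathfrak{gl}$ setting $\frakb$ is $\frakg_i'$-compatible for all three $i$. The negative half of the proposition is false, and no convention hidden elsewhere in the paper rescues it. The paper's one-line proof makes exactly the slip you avoided. It only checks that the single element $H_{1,3}$ lies in $\frakg_i'$ iff $i=3$, whereas $\frakp(H)=\frakb$ for every real $\diag(c_1,c_2,c_3)$ with $c_1>c_2>c_3$.

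Nothing downstream is harmed. With these $H$ one gets $\frakp\cap\frakg_i'=\frakb_i'$ with Levi part $\fraka_i'$. So Theorem \ref{thm:bGVM} applies to $i=1,2$ directly, Remark \ref{rem:compatibility} becomes unnecessary, and this agrees with the finite multiplicities of Theorem \ref{thm:BranchingEmb}.

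\textbf{The $\fraksl$ version.} The claim is true for the $\fraksl$ pairs, i.e.\ Corollary \ref{cor:compatibility}, which is equivalent to your variant with $H\in[\frakg_i',\frakg_i']$. Your one-dimensional-Cartan argument proves it, whereas the paper's proof of the corollary repeats the same shortcut.

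One step needs repair: $\frakp(H)=\frakb$ does not force $\frakl(H)=\fraka$. Conjugating $\diag(2,1,0)$ by $\exp(N_1^+)$ keeps $\frakp(H)=\frakb$ but moves $\frakl(H)$ off $\fraka$. What you do get is that $H\in\frakl(H)\subset\frakb$ is regular semisimple. The fix is as follows.
\begin{enumerate}
\item For $i=1$, write $H=cH_{1,2}+xN_1^+\in\frakb_{\fraksl}\cap\frakg'_{\fraksl,1}$ with $c\neq0$.
\item Conjugate by $\exp(sN_1^+)$ with $s=x/(2c)$. This normalizes $\frakb_{\fraksl}$ and reduces to $H=cH_{1,2}$.
\item By Table \ref{table:adH2}, $\ad(cH_{1,2})$ has eigenvalue $2c$ on $N_1^+$ and $-c$ on $N_2^+$. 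These cannot both be positive, so $\frakn_+(H)\neq\frakn_+$.
\item For $i=2$, use $cH_{2,3}+xN_2^+$ and the same conjugation. The eigenvalues are then $-c$ on $N_1^+$ and $2c$ on $N_2^+$.
\end{enumerate}
This is consistent with the infinite multiplicities in Theorem \ref{thm:BranchingEmb_SL12}, which Theorem \ref{thm:bGVM}(1) rules out in the compatible case.
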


\begin{proof}
As $\frakb = \frakb(H_{1,3})$,
the proposed statement follows that 
$H_{1,3} \in \frakg_i'$ if and only if $i=3$.
\end{proof}

\begin{remark}\label{rem:compatibility}
Although the Borel subalgebra $\frakb=\fraka \oplus \frakn_+$ is only $\frakg_3'$-compatible,
a careful reading of the proof of Theorem \ref{thm:bGVM} shows
that the character formula \eqref{eqn:bGVM} holds also for the other 
two subalgebras $\frakg_i'$ for $i=1,2$
as $\fraka_i'=\fraka \cap \frakg_i'$ and 
$\frakn_{i,\pm}' = \frakn_{\pm}\cap \frakg_i'$. 
Therefore, one can apply the 
character formula \eqref{eqn:bGVM} for all three cases.
\end{remark}

%%%%%%%%%%%%%%%%%%%%%%%%%%%%%%%%%%%%%%%%%
\subsubsection{The branching laws \texorpdfstring{$\Mb(\lambda)\vert_{\frakg_i'}$}{M(λ)|_{g_i}}}
\label{sec:BranchingLaw_GL}

As the Levi part $\frakl$ of the Borel subalgebra
$\frakb=\fraka \oplus \frakn_+$ is $\frakl=\fraka$, the weight space
$\Lambda^+(\frakl)$ in consideration is 
\begin{equation*}
\Lambda^+(\frakl) = \fraka^* \simeq \CC^3.
\end{equation*}
Similarly, the Levi part $\frakl_i'$ of  $\frakb_i'$ 
is $\frakl_i' = \fraka_i'$. Therefore,
\begin{equation*}
\Lambda^+(\frakl_i') = (\fraka_i')^* \simeq \CC^2.
\end{equation*}

As in \eqref{eqn:characterA}, we denote by $\C_{\lambda}$ for 
$\lambda=(\lambda_1,\lambda_2,\lambda_3) \in \C^3$
the one-dimensional $\fraka$-module defined by
\begin{equation*}
z_1 H_1 + z_2 H_2 + z_3H_3 
\longmapsto \lambda_1z_1 + \lambda_2z_2  +\lambda_3 z_3.
\end{equation*}
Likewise, we define the one-dimensional 
$\fraka_i'$-module $\C_{\nu}$ for $\nu=(\nu_1,\nu_2) \in \C^2$
as in \eqref{eqn:characterAprime} via the embeddings 
$\iota\colon \mathfrak{gl}(2,\RR) \hookrightarrow \mathfrak{gl}(3,\RR)$.
Then we have 
\begin{alignat*}{2}
z_1 H_1 + z_2 H_2  
&\longmapsto \nu_1z_1 + \nu_2 z_2 &&\quad \text{for $i=1$},\\[3pt]
z_1 H_2 + z_2 H_3  
&\longmapsto \nu_2 z_1 + \nu_3 z_2 &&\quad \text{for $i=2$},\\[3pt]
z_1 H_1 + z_2 H_3  
&\longmapsto \nu_1 z_1 + \nu_3 z_2 &&\quad \text{for $i=3$}.
\end{alignat*}
Then the restriction $\C_{(\lambda_1,\lambda_2,\lambda_3)}\vert_{\fraka_i'}$
of the $\fraka$-module $\C_{(\lambda_1,\lambda_2,\lambda_3)}$
to $\fraka_i'$ is given as
\begin{equation}\label{eqn:restriction_to_ai}
\C_{(\lambda_1, \lambda_2,\lambda_3)}\vert_{\fraka_1'} = \C_{(\lambda_1,\lambda_2)},
\quad
\C_{(\lambda_1, \lambda_2,\lambda_3)}\vert_{\fraka_2'} = \C_{(\lambda_2,\lambda_3)},
\quad
\C_{(\lambda_1, \lambda_2,\lambda_3)}\vert_{\fraka_3'} = \C_{(\lambda_1,\lambda_3)}.
\end{equation}

For $(\lambda,\nu) \in \C^5$, we write
\begin{align}
m_i(\nu;\lambda)&:=
\dim_\CC \Hom_{\fraka_i'}(\C_\nu, \C_\lambda \otimes 
S(\frakn_-/\frakn_{i,-}')),\label{eqn:mult_i}\\[3pt]
\supp_i(\lambda)&:=\{\nu \in \CC^2 : m_i(\nu,\lambda)\neq 0\}.\nonumber
\end{align}
Then, 
by Theorem \ref{thm:bGVM},
Proposition \ref{prop:compatibility} and
Remark \ref{rem:compatibility},
we have 
\begin{equation}\label{eqn:bGVM2}
[\Mb(\lambda)\vert_{\frakg_i'}]
\simeq 
\bigoplus_{\nu \in \supp_i(\lambda)}
m_i(\nu; \lambda) [M^{\frakg'_i}_{\frakb'_i}(\nu)].
\end{equation}

We now wish to determine $m_i(\nu; \lambda)$ and $\supp_i(\lambda)$. 
To the end, define
\begin{align}
\supp_{1} &:= \{(\lambda, \nu) \in \C^5: 
\nu_1-\lambda_1,\nu_2-\lambda_2 \in -\N\},\nonumber\\[3pt]
\supp_{2} &:= \{(\lambda, \nu) \in \C^5: 
\lambda_2-\nu_1,\lambda_3-\nu_2 \in -\N\},\label{eqn:supp_i}\\[3pt]
\supp_{3} &:= \{(\lambda, \nu) \in \C^5: 
\nu_1-\lambda_1, \lambda_3-\nu_2 \in -\N\}.\nonumber
\end{align}

%%%%%%%%%%%%%%%%%%%%%%%%%%%%%%%%%%%%%%%%%
\begin{proposition}\label{prop:BranchingEmb}
For $i=1,2,3$, the following conditions on 
$(\lambda, \nu) \in \CC^5$ are 
equivalent.

\begin{enumerate}[label=\normalfont{(\roman*)}]
\item
$m_i(\nu;\lambda)\neq 0$.
\vspace{3pt}

\item
$m_i(\nu;\lambda)=1$.
\vspace{3pt}

\item $(\lambda,\nu) \in \supp_{i}$.

\end{enumerate}

\end{proposition}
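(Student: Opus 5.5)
Since $\fraka_i'$ is abelian, the multiplicity $m_i(\nu;\lambda)$ in \eqref{eqn:mult_i} is simply the dimension of the $\nu$-weight space of $\C_\lambda\vert_{\fraka_i'}\otimes S(\frakn_-/\frakn_{i,-}')$. The proof therefore reduces to counting weights. By \eqref{eqn:nilpotent}, $\frakn_-/\frakn_{i,-}'$ is two-dimensional, spanned by the images of the two root vectors $N_j^-$ with $j\neq i$. Hence $S(\frakn_-/\frakn_{i,-}')$ has the monomial basis $(N_j^-)^{p}(N_k^-)^{q}$ with $p,q\in\N$ and $\{j,k\}=\{1,2,3\}\setminus\{i\}$. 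Each such monomial is an $\fraka_i'$-weight vector, of weight $p\,\mathrm{wt}(N_j^-)+q\,\mathrm{wt}(N_k^-)$ restricted to $\fraka_i'$.

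First I record the restricted weights from Table \ref{table:adH}, writing weights of $\fraka_i'$ in the coordinates of \eqref{eqn:restriction_to_ai}.
\begin{itemize}
\item For $i=1$: $N_2^-$ has weight $(0,-1)$ and $N_3^-$ has weight $(-1,0)$ on $(H_1,H_2)$.
\item For $i=2$: $N_1^-$ has weight $(1,0)$ and $N_3^-$ has weight $(0,1)$ on $(H_2,H_3)$.
\item For $i=3$: $N_1^-$ has weight $(-1,0)$ and $N_2^-$ has weight $(0,1)$ on $(H_1,H_3)$.
\end{itemize}

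Combining these with $\C_\lambda\vert_{\fraka_i'}$ from \eqref{eqn:restriction_to_ai}, the weight of $1\otimes(N_j^-)^p(N_k^-)^q$ is as follows.
\begin{itemize}
\item $i=1$: $(\lambda_1-q,\ \lambda_2-p)$.
\item $i=2$: $(\lambda_2+p,\ \lambda_3+q)$.
\item $i=3$: $(\lambda_1-p,\ \lambda_3+q)$.
\end{itemize}

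In every case the map $(p,q)\mapsto$ weight is injective, because the two weight vectors are linearly independent; in fact they are coordinate vectors up to sign. So each weight occurs with multiplicity at most one, which gives (i)$\Leftrightarrow$(ii). Moreover, $\nu$ occurs if and only if the corresponding $p,q$ lie in $\N$:
\begin{itemize}
\item for $i=1$: $\nu_1-\lambda_1,\ \nu_2-\lambda_2\in-\N$;
\item for $i=2$: $\lambda_2-\nu_1,\ \lambda_3-\nu_2\in-\N$;
\item for $i=3$: $\nu_1-\lambda_1,\ \lambda_3-\nu_2\in-\N$.
\end{itemize}
These are exactly the sets $\supp_i$ in \eqref{eqn:supp_i}, giving (i)$\Leftrightarrow$(iii).

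The only point needing care is the sign bookkeeping of $\ad(H_j)$ on $N_k^-$ from Table \ref{table:adH}, together with matching the coordinate conventions for $\nu$ on $\fraka_i'$; in particular, for $i=2$ the labels $(\nu_1,\nu_2)$ sit on $(H_2,H_3)$. I expect no real obstacle beyond this.
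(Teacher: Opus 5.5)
Your proof is correct and follows the paper's own argument. The paper likewise splits $\C_\lambda\otimes S(\frakn_-/\frakn_{i,-}')$ into the one-dimensional $\fraka_i'$-weight lines spanned by the monomials $(N_j^-)^\ell(N_k^-)^r$ via Table~\ref{table:adH}, obtaining exactly your weights, and reads off (i)$\Leftrightarrow$(ii)$\Leftrightarrow$(iii). Your injectivity remark spells out the multiplicity-one step that the paper leaves implicit. Your reading of the $\nu$-coordinates on $\fraka_2'$ as $(\nu_1,\nu_2)$ on $(H_2,H_3)$ is the one consistent with \eqref{eqn:restriction_to_ai} and \eqref{eqn:supp_i}; the same reading applies to $\fraka_3'$.
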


\begin{proof}
It follows from \eqref{eqn:nilpotent} that the quotient space
$\frakn_-/\frakn_{i,-}'$ is given as
\begin{equation*}
\frakn_-/\frakn_- \cap \frakg_i' = 
\Span_\C\{N_j^-, N_k^-: j,k\neq i\},
\end{equation*}
which yields
\begin{equation*}
S(\frakn_-/\frakn_{i,-}')
= \bigoplus_{\ell, r \in \N} \C(N_j^-)^\ell(N_k^-)^r.\qquad(j,k\neq i)
\end{equation*}
Thus, by Table \ref{table:adH}, we have
\begin{alignat*}{3}
\C_{\lambda}\otimes S(\frakn_-/\frakn_{1,-}')
&= \C_{\lambda}\otimes \bigoplus_{\ell, r \in \N} \C(N_2^-)^\ell(N_3^-)^r
&&\simeq \bigoplus_{\ell,r \in \N}\C_{(\lambda_1-r,\lambda_2-\ell)}
&&\quad \text{as $\fraka_1'$-modules},\\[3pt]
\C_{\lambda}\otimes S(\frakn_-/\frakn_{2,-}')  
&= \C_{\lambda}\otimes\bigoplus_{\ell, r \in \N} \C(N_1^-)^\ell(N_3^-)^r 
&&\simeq \bigoplus_{\ell,r \in \N}\C_{(\lambda_2+\ell,\lambda_3+r)}
&&\quad \text{as $\fraka_2'$-modules},\\[3pt]
\C_{\lambda}\otimes S(\frakn_-/\frakn_{3,-}')
&=\C_{\lambda}\otimes \bigoplus_{\ell,r \in \N} \C(N_1^-)^\ell(N_2^-)^r
&&\simeq \bigoplus_{\ell,r \in \N}\C_{(\lambda_1-\ell,\lambda_3+r)}
&&\quad \text{as $\fraka_3'$-modules}.
\end{alignat*}
Now the proposition follows.
\end{proof}

\begin{corollary}\label{cor:support}
For $\lambda=(\lambda_1,\lambda_2,\lambda_3) \in \C^3$, we have 
\begin{align*}
\supp_1(\lambda)
=\{(\lambda_1-r, \lambda_2-\ell): r, \ell \in \N\},\\[3pt]
\supp_2(\lambda)
=\{(\lambda_2+r, \lambda_3+\ell): r, \ell \in \N\},\\[3pt]
\supp_3(\lambda)
=\{(\lambda_1-r, \lambda_3+\ell): r, \ell \in \N\}.
\end{align*}
\end{corollary}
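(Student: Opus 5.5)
This is a direct consequence of Proposition \ref{prop:BranchingEmb}: the plan is simply to unwind the definition of $\supp_i(\lambda)$. By definition, $\supp_i(\lambda)=\{\nu\in\C^2 : m_i(\nu;\lambda)\neq 0\}$. By the equivalence (i)$\Leftrightarrow$(iii) in Proposition \ref{prop:BranchingEmb}, this is exactly the set of $\nu$ with $(\lambda,\nu)\in\supp_i$, where the sets $\supp_i$ are given in \eqref{eqn:supp_i}.

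It remains to rewrite each membership condition as a parametrization of $\nu$.

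For $i=1$, the condition $\nu_1-\lambda_1,\ \nu_2-\lambda_2\in-\N$ means $\nu=(\lambda_1-r,\lambda_2-\ell)$ with $r,\ell\in\N$.

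For $i=2$, the condition $\lambda_2-\nu_1,\ \lambda_3-\nu_2\in-\N$ means $\nu=(\lambda_2+r,\lambda_3+\ell)$.

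For $i=3$, the condition $\nu_1-\lambda_1,\ \lambda_3-\nu_2\in-\N$ means $\nu=(\lambda_1-r,\lambda_3+\ell)$.

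Alternatively, one can read these sets off directly from the $\fraka_i'$-decompositions of $\C_\lambda\otimes S(\frakn_-/\frakn_{i,-}')$ in the proof of Proposition \ref{prop:BranchingEmb}. For example, the weights appearing for $i=1$ are $(\lambda_1-r,\lambda_2-\ell)$, so the indexing there matches the statement up to renaming $r,\ell$.

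There is no real obstacle. The only care needed is consistency of signs and of the index names $r,\ell$ between \eqref{eqn:supp_i} and the decompositions in that proof, which is a bookkeeping check.
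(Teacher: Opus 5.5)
Your proposal is correct and matches the paper's argument, which likewise deduces the corollary immediately from the equivalence (i)$\Leftrightarrow$(iii) of Proposition~\ref{prop:BranchingEmb} and the explicit sets $\supp_i$ in \eqref{eqn:supp_i}. Your sign bookkeeping is right for all three embeddings.
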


\begin{proof}
The proposed assertion follows immediately from Proposition \ref{prop:BranchingEmb}.
\end{proof}

\begin{theorem}\label{thm:BranchingEmb}
For $\lambda=(\lambda_1, \lambda_2,\lambda_3) \in \CC^3$,
we have
\begin{align}
[\Mb(\lambda)\vert_{\frakg_1'}]
&\simeq 
\bigoplus_{\ell, r \in \N}
[M^{\frakg'_1}_{\frakb'_1}(\lambda_1-r, \lambda_2-\ell)],\label{eqn:bGVM_Emb1}\\[3pt]
[\Mb(\lambda)\vert_{\frakg_2'}]
&\simeq 
\bigoplus_{\ell, r \in \N}
[M^{\frakg'_2}_{\frakb'_2}(\lambda_2+r, \lambda_3+\ell)],\label{eqn:bGVM_Emb2}\\[3pt]
[\Mb(\lambda)\vert_{\frakg_3'}]
&\simeq 
\bigoplus_{\ell, r \in \N}
[M^{\frakg'_3}_{\frakb'_3}(\lambda_1-r, \lambda_3+\ell)].\label{eqn:bGVM_Emb3}
\end{align}
\end{theorem}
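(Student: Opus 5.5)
The plan is to substitute the multiplicities computed in Proposition \ref{prop:BranchingEmb} and the supports described in Corollary \ref{cor:support} into the character identity \eqref{eqn:bGVM2}. By Theorem \ref{thm:bGVM}, combined with Proposition \ref{prop:compatibility} for $i=3$ and Remark \ref{rem:compatibility} for $i=1,2$, we already have, in the Grothendieck group,
\begin{equation*}
[\Mb(\lambda)\vert_{\frakg_i'}]
\simeq
\bigoplus_{\nu \in \supp_i(\lambda)}
m_i(\nu; \lambda) [M^{\frakg'_i}_{\frakb'_i}(\nu)].
\end{equation*}
The task therefore reduces to rewriting the right-hand side explicitly.

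First, Proposition \ref{prop:BranchingEmb} gives $m_i(\nu;\lambda)=1$ for every $\nu\in\supp_i(\lambda)$, so all coefficients on the right-hand side equal one. Second, Corollary \ref{cor:support} parametrizes $\supp_i(\lambda)$ by pairs $(r,\ell)\in\N^2$: for $i=1$ one has $\nu=(\lambda_1-r,\lambda_2-\ell)$, for $i=2$ one has $\nu=(\lambda_2+r,\lambda_3+\ell)$, and for $i=3$ one has $\nu=(\lambda_1-r,\lambda_3+\ell)$.

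Third, I need the map $(r,\ell)\mapsto\nu$ to be injective, so that reindexing the sum over $\nu$ as a sum over $(r,\ell)\in\N^2$ creates no repetitions. This holds because each coordinate of $\nu$ depends on exactly one of $r,\ell$, and the relevant $\lambda_j$ is fixed. The weight decompositions in the proof of Proposition \ref{prop:BranchingEmb} confirm this: distinct monomials $(N_j^-)^\ell(N_k^-)^r$ carry distinct $\fraka_i'$-weights, which is precisely why the multiplicity is one. Substituting these parametrizations then yields \eqref{eqn:bGVM_Emb1}--\eqref{eqn:bGVM_Emb3}.

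There is no serious obstacle here. The only point requiring care is the applicability of \eqref{eqn:bGVM2} for $i=1,2$, where $\frakb$ is not $\frakg_i'$-compatible. For these cases I will rely on Remark \ref{rem:compatibility}, which states that the proof of Theorem \ref{thm:bGVM} only uses $\fraka_i'=\fraka\cap\frakg_i'$ and $\frakn_{i,\pm}'=\frakn_\pm\cap\frakg_i'$.
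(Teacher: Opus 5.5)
Your proposal is correct and is exactly the paper's argument: the theorem follows by inserting the multiplicity-one statement of Proposition \ref{prop:BranchingEmb} and the parametrization of Corollary \ref{cor:support} into \eqref{eqn:bGVM2}. As a side remark, for $i=1,2$ you need not lean on the somewhat informal Remark \ref{rem:compatibility}. In the $\mathfrak{gl}$-case one already has $\frakb=\frakp(H)$ for the hyperbolic elements $H=\diag(2,1,0)\in\frakg_1'$ and $H=\diag(0,-1,-2)\in\frakg_2'$. So, contrary to Proposition \ref{prop:compatibility}, $\frakb$ is $\frakg_i'$-compatible for every $i$ and Theorem \ref{thm:bGVM} applies directly; genuine incompatibility only arises in the $\fraksl$-case.
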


\begin{proof}
The statement is an immediate consequence 
of \eqref{eqn:bGVM2}, Proposition \ref{prop:BranchingEmb}
and Corollary \ref{cor:support}.
\end{proof}

Theorem \ref{thm:BranchingEmb} provides the actual branching laws
for generic parameters for $i=1,2,3$. More precisely,
define 
\begin{equation}\label{eqn:Irreducible_Parameters}
\irred(\frakg,\frakb):=\{\lambda \in \C^3: \text{$\Mb(\lambda)$ is irreducible}\}.
\end{equation}
The integral condition  \eqref{eqn:VermaCriterion} yields
\begin{equation}\label{eqn:Irreducible_Parameters2}
\irred(\frakg,\frakb)=\{(\lambda_1,\lambda_2, \lambda_3) \in \C^3:
\lambda_1-\lambda_2,\,
\lambda_2-\lambda_3,\, 
\lambda_1-\lambda_3+1 \notin \N\}.
\end{equation}
For $i=1,2,3$, we then put
\begin{align*}
\irred_{i}(\frakg,\frakb)
&:=\{(\lambda_1,\lambda_2, \lambda_3)\in \irred(\frakg,\frakb): 
\lambda_i -\lambda_{i+1}\notin \Z \}\quad \text{for $i=1,2$},\\[3pt]
\irred_{3}(\frakg,\frakb)
&:=\{(\lambda_1,\lambda_2, \lambda_3)\in \irred(\frakg,\frakb): 
\lambda_1-\lambda_3 \notin \N \}.
\end{align*}

\begin{corollary}\label{cor:BranchingEmb}
For $i \in \{1,2,3\}$ and 
$\lambda=(\lambda_1, \lambda_2) \in \irred_{i}(\frakg,\frakb)$,
we have
\begin{align*}
\Mb(\lambda)\vert_{\frakg_1'}
&\simeq 
\bigoplus_{\ell, r \in \N}
M^{\frakg'_1}_{\frakb'_1}(\lambda_1-r, \lambda_2-\ell),\\[3pt]
\Mb(\lambda)\vert_{\frakg_2'}
&\simeq 
\bigoplus_{\ell, r \in \N}
M^{\frakg'_2}_{\frakb'_2}(\lambda_2+r, \lambda_3+\ell),\\[3pt]
\Mb(\lambda)\vert_{\frakg_3'}
&\simeq 
\bigoplus_{\ell, r \in \N}
M^{\frakg'_3}_{\frakb'_3}(\lambda_1-r, \lambda_3+\ell).
\end{align*}
\end{corollary}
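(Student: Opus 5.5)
The plan is to upgrade the Grothendieck-group identities of Theorem \ref{thm:BranchingEmb} to genuine direct sum decompositions, using that every Verma module on the right-hand side is irreducible and that the relevant infinitesimal characters separate the constituents. Fix $i$ and $\lambda\in\irred_i(\frakg,\frakb)$. First I will show that each $M^{\frakg'_i}_{\frakb'_i}(\nu)$ with $\nu\in\supp_i(\lambda)$ is irreducible. By Theorem \ref{thm:VermaCriterion} (with Remark \ref{rem:VermaCriterion}) for $\mathfrak{gl}(2,\C)$, $M^{\frakg'_i}_{\frakb'_i}(\nu)$ is irreducible iff $\nu_1-\nu_2+1\notin\N_+$, i.e.\ $\nu_1-\nu_2\notin\N$. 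Corollary \ref{cor:support} gives:
\begin{itemize}
\item for $i=1$, $\nu_1-\nu_2=\lambda_1-\lambda_2-r+\ell$;
\item for $i=2$, $\nu_1-\nu_2=\lambda_2-\lambda_3+r-\ell$;
\item for $i=3$, $\nu_1-\nu_2=\lambda_1-\lambda_3-r-\ell$.
\end{itemize}
For $i=1,2$ the assumption $\lambda_i-\lambda_{i+1}\notin\Z$ makes every such value non-integral. For $i=3$ the assumption $\lambda_1-\lambda_3\notin\N$ implies $\lambda_1-\lambda_3-r-\ell\notin\N$ for all $r,\ell\in\N$. So all constituents are irreducible.

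Second, I will show that $\Mb(\lambda)\vert_{\frakg'_i}$ is a direct sum of its constituents. The restriction lies in a category $\mathcal{O}$-type category for $\frakg'_i$ (using $\fraka'_i$-weights and local finiteness of $\frakn'_{i,+}$, as in Remark \ref{rem:compatibility}). It has finite-dimensional $\fraka'_i$-weight spaces, since each weight of $\C_\lambda\otimes S(\frakn_-/\frakn'_{i,-})$ occurs with multiplicity one by Proposition \ref{prop:BranchingEmb}. I then decompose by the generalized eigenvalues of the centre $Z(\frakg'_i)$, i.e.\ by infinitesimal character. Two constituents $M(\nu)$ and $M(\nu')$ can be linked only if $\nu'+\rho'$ is a $W'$-conjugate of $\nu+\rho'$ with the same central character. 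For $\mathfrak{gl}(2)$ this forces $\nu'=s\cdot\nu$, which requires $\nu_1-\nu_2\in\Z$, and then the only nontrivial link is $\nu'=(\nu_2-1,\nu_1+1)$. This has $\nu_1-\nu_2+1\ge 1$ or $\le -1$, and we have already excluded $\nu_1-\nu_2\in\N$.

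The main obstacle is the remaining case $\nu_1-\nu_2\in\Z_{<0}$ (possible only for $i=3$), where two constituents could share an infinitesimal character. I would handle it by noting that in this case both $M(\nu)$ and $M(s\cdot\nu)$ are irreducible Verma modules with distinct highest weights that lie in the same block only if one weight is below the other. But an irreducible Verma module in that position is the antidominant simple projective-free object, and $\operatorname{Ext}^1$ between two distinct irreducible Verma modules of $\mathfrak{gl}(2)$ in a regular block vanishes, since one of them would have to be non-simple. So each generalized eigenspace of $Z(\frakg'_i)$ is a module whose composition factors are irreducible Verma modules with no extensions among them, and hence it is semisimple.

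Finally, semisimplicity together with the multiplicities in Theorem \ref{thm:BranchingEmb} gives the stated isomorphisms. As a cross-check, each summand should also come from the highest weight vectors produced, via the duality with DSBOs, by the $\frakn'_{i,+}$-invariants found in Sections \ref{sec:Emb3}--\ref{sec:Emb12}.
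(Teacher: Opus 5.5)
Your first step is exactly the paper's proof. The paper checks, via Theorem \ref{thm:VermaCriterion}, that every $M^{\frakg'_i}_{\frakb'_i}(\nu)$ with $\nu\in\supp_i(\lambda)$ (and $\Mb(\lambda)$ itself) is irreducible, and then simply declares that this yields the direct sums. That last inference is left implicit in the paper: a module whose Grothendieck-group constituents are irreducible is not a priori their direct sum. Your second step supplies it correctly, by splitting $\Mb(\lambda)\vert_{\frakg'_i}$ into generalized $Z(\frakg'_i)$-eigenspaces and showing that each contains one constituent. So your proposal is the paper's argument plus the missing semisimplicity step.

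The paragraph on the ``main obstacle'' must be rewritten. As stated it is false, and the case is in fact vacuous. Suppose $\nu\in\supp_i(\lambda)$ with $\nu_1-\nu_2\in\Z$ and $\nu_1-\nu_2\le -2$. Then $s\cdot\nu=(\nu_2-1,\nu_1+1)$ has $(s\cdot\nu)_1-(s\cdot\nu)_2=-(\nu_1-\nu_2)-2\in\N$. So $M^{\frakg'_i}_{\frakb'_i}(s\cdot\nu)$ is reducible, and by your own first step $s\cdot\nu\notin\supp_i(\lambda)$. If $\nu_1-\nu_2=-1$, then $s\cdot\nu=\nu$. Hence distinct constituents always have distinct infinitesimal characters.

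The specific problems with that paragraph are these:
\begin{itemize}
\item Your claim that in this case ``both $M(\nu)$ and $M(s\cdot\nu)$ are irreducible'' is wrong.
\item ``Antidominant simple projective-free object'' has no meaning.
\item The $\operatorname{Ext}^1$ sentence presupposes both modules are simple and then concludes that one is not.
\end{itemize}
Once this is fixed, no $\operatorname{Ext}$ computation is needed. Since $m_i(\nu;\lambda)=1$, each generalized eigenspace has a single composition factor and therefore equals the simple module $M^{\frakg'_i}_{\frakb'_i}(\nu)$.

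Two small points of justification also need repair:
\begin{itemize}
\item Local $Z(\frakg'_i)$-finiteness holds because every vector generates a finite-length module in category $\mathcal{O}$ for $\frakg'_i$, since $\fraka'_i$ acts semisimply and $N_i^+$ acts locally nilpotently.
\item Finite-dimensionality of the $\fraka'_i$-weight spaces comes from the $\fraka'_i$-weights of all of $N_1^-,N_2^-,N_3^-$ lying in an open half-space. It does not follow from Proposition \ref{prop:BranchingEmb} alone, which only concerns $S(\frakn_-/\frakn'_{i,-})$.
\end{itemize}

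For comparison, the paper's tacit step can also be justified without infinitesimal characters. First, $\operatorname{Ext}^1_{\mathcal{O}}(M(\mu),M(\mu'))\neq 0$ forces $\mu<\mu'$. Second, the duality on $\mathcal{O}$ fixes simple modules, so $\operatorname{Ext}^1_{\mathcal{O}}(L(\mu),L(\mu'))\cong\operatorname{Ext}^1_{\mathcal{O}}(L(\mu'),L(\mu))$. If both $L(\mu)$ and $L(\mu')$ are Verma modules, a nonzero extension would force $\mu<\mu'<\mu$. Applied to finitely generated submodules, this shows that any locally finite module in $\mathcal{O}$ whose composition factors are simple Verma modules is semisimple, with no multiplicity assumption. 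Your route instead uses the $\mathfrak{gl}(2)$-specific fact that the constituents are separated by central characters. That is more hands-on, and it additionally shows the decomposition is canonical, since each summand is a $Z(\frakg'_i)$-eigenspace.
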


\begin{proof}
It suffices to show that the Verma modules appeared 
in Theorem \ref{thm:BranchingEmb}
are all irreducible for $\lambda \in \irred_{i}(\frakg,\frakb)$.
Indeed, the Verma module $\Mb(\lambda)$ is irreducible 
for  $\lambda \in \irred(\frakg,\frakb)$ by the definition of $\irred(\frakg,\frakb)$.
For $i=1,2$,
if $\lambda \in \irred_{i}(\frakg,\frakb)$, then 
$(\lambda_i\pm r, \lambda_{i+1}\pm \ell)$ 
satisfy \eqref{eqn:VermaCriterion} for all $\ell,r \in \N$.
Similarly, if $\lambda \in \irred_{3}(\frakg,\frakb)$, then 
$(\lambda_1-r,\lambda_3+\ell)$ satisfies \eqref{eqn:VermaCriterion}
for all $\ell, r\in \N$.
Theorem \ref{thm:VermaCriterion} now concludes the desired decompositions.
\end{proof}

%%%%%%%%%%%%%%%%%%%%%%%%%%%%%%%%%%%%%%%%%
\subsection{\texorpdfstring{$(\mathfrak{sl}(3,\C), \mathfrak{sl}(2,\C))$}{(gl(3,C),gl(2,C))}-case}
\label{sec:slsl}

We next consider the $\fraksl$-case.

%%%%%%%%%%%%%%%%%%%%%%%%%%%%%%%%%%%%%%%%%
\subsubsection{\texorpdfstring{$\frakg'$}{g}-compatibility of the Borel subalgebra \texorpdfstring{$\frakb$}{b}}
\label{sec:BorelSL}

Let $\frakg_{\fraksl}=\fraksl(3,\CC)$. Then $\frakb_{\mathfrak{sl}} := \frakb \cap \frakg_{\fraksl}$ is a Borel subalgebra of $\frakg_{\fraksl}$.
The Lie algebra $\frakg_{\fraksl}$ can be decomposed as
\begin{equation*}
\frakg_{\fraksl}
=\frakn_{-} \oplus \frakb_{\fraksl} 
= \frakn_{-} \oplus \fraka_{\fraksl} \oplus \frakn_{+},
\end{equation*}
where 
$\fraka_{\fraksl} :=\fraka \cap \frakg_{\fraksl}$.
For $H_{j,k}=E_{j,j}-E_{k,k}$, the subalgebra $\fraka_{\fraksl}$ is given as
\begin{equation*}
\fraka_{\fraksl}=\Span_\C\{H_{1,2}, H_{2,3}\}.
\end{equation*}

Let $\frakg_{\fraksl,i}'$ denote the complexification of 
the image of the embedding
$\iota_i \vert_{\mathfrak{sl}(2,\RR)} \colon \mathfrak{sl}(2,\RR) \hookrightarrow 
\mathfrak{sl}(3,\RR)$ for $i=1,2,3$; 
in particular, we have $\frakg_{\fraksl,i}' \simeq \fraksl(2,\CC)$.
Then $\frakb_{\mathfrak{sl},i}':=\frakb_i'\cap \frakg_{\fraksl,i}'$ is a Borel
subalgebra of $\frakg_{\fraksl,i}'$.
The Lie algebra $\frakg_{\fraksl,i}'$ admits the decomposition
\begin{equation*}
\frakg_{\fraksl,i}'
=\frakn_{i,-}' \oplus \frakb_{\fraksl,i}'
= \frakn_{i,-}' \oplus \fraka_{\fraksl,i}' \oplus \frakn_{i,+}',
\end{equation*}
where
$\fraka_{\fraksl,i}' :=\fraka_i' \cap \frakg_{\fraksl,i}'$.
The subalgebras $\fraka_{\fraksl,i}'$ for $i=1,2,3$ are given as
\begin{equation*}
\fraka_{\fraksl,1}' = \C H_{1,2},
\quad
\fraka_{\fraksl,2}' = \C H_{2,3},
\quad
\fraka_{\fraksl,3}' = \C H_{1,3}.
\end{equation*}
Table \ref{table:adH2}  summarizes
the adjoint action $\ad(H_{j,k})$ on $N_\ell^\pm$.
\begin{table}[ht]
\caption{}
\begin{center}
\renewcommand{\arraystretch}{1.3}
{
\begin{tabular}{c|c|c|c}
\hline
$\ad(H_{j,k})(N_\ell^\pm)$ & $N_1^\pm$ & $N_2^\pm$ & $N_3^\pm$\\
\hline
$H_{1,2}$ & $\pm 2N_1^\pm$ & $\mp N_2^\pm$ & $\pm N_3^\pm$\\
\hline
$H_{2,3}$ & $\mp N_1^\pm $ & $\pm 2N_2^\pm$ & $\pm N_3^\pm$\\
\hline
$H_{1,3}$ & $\pm N_1^\pm$ & $\pm N_2^\pm$ & $\pm 2N_3^\pm$\\
\hline
\end{tabular}
}
\end{center}\label{table:adH2}
\end{table}

\begin{corollary}\label{cor:compatibility}
The Borel subalgebra $\frakb_{\mathfrak{sl}}$ is 
$\frakg_{\fraksl,3}'$-compatible, but not 
$\frakg_{\fraksl,i}'$-compatible for $i=1,2$.
\end{corollary}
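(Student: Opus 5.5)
The plan mirrors the proof of Proposition \ref{prop:compatibility}. First, I characterize $\frakb_{\fraksl}$ as $\frakb_{\fraksl}(H)$ for a hyperbolic element $H$ lying in $\frakg_{\fraksl}$. The natural candidate is $H=H_{1,3}=E_{1,1}-E_{3,3}$, which is traceless and therefore lies in $\fraka_{\fraksl}$. By Table \ref{table:adH2}, $\ad(H_{1,3})$ has eigenvalue $0$ on $\fraka_{\fraksl}$. On $N_1^{+},N_2^{+}$ the eigenvalue is $+1$ and on $N_3^{+}$ it is $+2$, with the negatives on the $N_k^{-}$. Hence $\frakl(H_{1,3})=\fraka_{\fraksl}$ and $\frakn_{\pm}(H_{1,3})=\frakn_{\pm}$, so $\frakb_{\fraksl}=\frakb_{\fraksl}(H_{1,3})$. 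Since $H_{1,3}\in\fraka_{\fraksl,3}'\subset\frakg_{\fraksl,3}'$, this proves $\frakg_{\fraksl,3}'$-compatibility.

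For $i=1,2$, I argue by contradiction. Suppose $H\in\frakg_{\fraksl,i}'$ is hyperbolic with $\frakp(H)=\frakb_{\fraksl}$. Then $\frakl(H)=\fraka_{\fraksl}$, so $H$ centralizes $\fraka_{\fraksl}$ and hence $H\in\fraka_{\fraksl}$, since $\fraka_{\fraksl}$ is a Cartan subalgebra and is self-centralizing. Intersecting with $\frakg_{\fraksl,i}'$ gives $H\in\fraka_{\fraksl}\cap\frakg_{\fraksl,i}'=\fraka_{\fraksl,i}'$. That is, $H=cH_{1,2}$ for $i=1$, or $H=cH_{2,3}$ for $i=2$.

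Table \ref{table:adH2} then gives the contradiction. For $i=1$, $\ad(cH_{1,2})$ has eigenvalues $2c$ on $N_1^+$ and $-c$ on $N_2^+$. For $i=2$, $\ad(cH_{2,3})$ has eigenvalues $-c$ on $N_1^+$ and $2c$ on $N_2^+$. In both cases these two eigenvalues cannot both be positive, so $\frakn_+\not\subset\frakn_+(H)$, contradicting $\frakp(H)=\frakb_{\fraksl}$.

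The only step needing care is the reduction $H\in\fraka_{\fraksl}$. I would justify it directly: $\frakl(H)$ is the centralizer of $H$, it contains $\fraka_{\fraksl}$, and so $H$ commutes with all of $\fraka_{\fraksl}$.
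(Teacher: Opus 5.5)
Your argument for $i=3$ is correct and matches the paper's: $\frakb_{\fraksl}=\frakb_{\fraksl}(H_{1,3})$ with $H_{1,3}\in\frakg'_{\fraksl,3}$. For $i=1,2$ you rightly see that the paper's one-line observation is not enough on its own. That observation is that $H_{1,3}\in\frakg'_{\fraksl,i}$ only for $i=3$, and it does not rule out other hyperbolic elements of $\frakg'_{\fraksl,i}$.

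However, your reduction step is false as stated: $\frakp(H)=\frakb_{\fraksl}$ does \emph{not} imply $\frakl(H)=\fraka_{\fraksl}$. It only gives $\frakn_+(H)=\frakn_+$, the nilradical. Then $\frakl(H)$ is \emph{some} Levi complement of $\frakn_+$ in $\frakb_{\fraksl}$, unique only up to conjugation by $\exp(\frakn_+)$. For instance, take $H:=H_{1,3}+N_3^+=\Ad(\exp(-\tfrac12 N_3^+))H_{1,3}$. It is hyperbolic and lies in $\frakg'_{\fraksl,3}$. It satisfies $\frakp(H)=\frakb_{\fraksl}$, because $\exp(\C N_3^+)$ normalizes $\frakb_{\fraksl}$. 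Yet $H\notin\fraka_{\fraksl}$, so $\frakl(H)\neq\fraka_{\fraksl}$. Your justification ``$\frakl(H)$ contains $\fraka_{\fraksl}$'' is exactly the unproved, and in general wrong, point.

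The repair is short. Use only $H\in\frakl(H)\subset\frakp(H)=\frakb_{\fraksl}$. For $i=1$ this gives $H\in\frakb_{\fraksl}\cap\frakg'_{\fraksl,1}=\C H_{1,2}\oplus\C N_1^+$, say $H=cH_{1,2}+dN_1^+$.

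\begin{itemize}
\item In the basis $H_{1,2},H_{2,3},N_1^+,N_2^+,N_3^+$ of $\frakb_{\fraksl}$, the operator $\ad(H)$ is triangular, since the part $d\,\ad(N_1^+)$ strictly raises height. Its diagonal entries are $0,0,2c,-c,c$.
\item Since $\frakp(H)$ is the sum of the eigenspaces of $\ad(H)$ with eigenvalue $\geq 0$, all of these entries must be $\geq 0$. This forces $c=0$.
\item Then $H=dN_1^+$ is nilpotent. A hyperbolic nilpotent element is $0$, because $\ad(H)$ is then diagonalizable and nilpotent, and $\frakg_{\fraksl}$ has trivial center.
\item Hence $\frakp(H)=\frakg_{\fraksl}\neq\frakb_{\fraksl}$, a contradiction.
\end{itemize}

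The case $i=2$ is identical with $H=cH_{2,3}+dN_2^+$, whose diagonal entries are $0,0,-c,2c,c$.

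Alternatively, for $c\neq0$ you can conjugate by $\exp(sN_i^+)$ for a suitable $s$. This preserves both $\frakg'_{\fraksl,i}$ and $\frakb_{\fraksl}$, and moves $H$ into $\fraka'_{\fraksl,i}$. After that, your computation with Table~\ref{table:adH2} goes through verbatim.
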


\begin{proof}
This readily follows from the observation that 
$H_{1,3} \in \frakg_{\fraksl,i}'$ if and only if $i=3$.
\end{proof}

\begin{remark}
By the same arguments given in Remark \ref{rem:compatibility}, 
one can apply the character formula \eqref{eqn:bGVM} 
to $(\frakg_{\fraksl}, \frakg_{\fraksl,i}')$ for all $i=1,2,3$.
However, it will turn out that
the finite multiplicity phenomenon in Theorem \ref{thm:bGVM} (1) does not hold
in the non-compatible cases. 
(See Theorems \ref{thm:BranchingEmb_SL12} and \ref{thm:BranchingEmb_SL3}.)
\end{remark}

%%%%%%%%%%%%%%%%%%%%%%%%%%%%%%%%%%%%%%%%%
\subsubsection{The branching laws \texorpdfstring{$\Mbsl(\lambda)\vert_{\frakg_{\fraksl,i}'}$}{M(λ)|_{g_{sl,i}}}}
\label{sec:BranchingLaw_SL}

Since the Levi part $\frakl$ of $\frakb_{\fraksl}$ and 
$\frakl'$ of $\frakb_{\fraksl,i}'$
are $\frakl=\fraka_{\fraksl}$ and 
$\frakl'=\fraka_{\fraksl,i}'$, we have 
\begin{equation*}
\Lambda^+(\frakl) = \fraka_{\fraksl}^* \simeq \CC^2
\quad
\text{and}
\quad
\Lambda^+(\frakl') = \fraka_{\fraksl,i}^* \simeq \CC.
\end{equation*}

\vspace{10pt}

As in \eqref{eqn:charSLA},
for $u=(u_1,u_2) \in \C^2$,
let $\C_{u}$ denote  the one-dimensional $\fraka_{\fraksl}$-module 
defined by
\begin{equation*}
z_1 H_{1,2} + z_2 H_{2,3}  
\longmapsto 
u_1z_1  + u_2z_2.
\end{equation*}
Likewise, as in \eqref{eqn:charSLAprime}, we denote by
$\C_{v}$ for $v\in \C$
the one-dimensional $\fraka_{\fraksl,i}'$-module defined by
\begin{alignat*}{2}
z H_{1,2}  &\longmapsto vz  &&\quad \text{for $i=1$},\\[3pt]
z H_{2,3}  &\longmapsto vz  &&\quad \text{for $i=2$},\\[3pt]
z H_{1,3}  &\longmapsto vz  &&\quad \text{for $i=3$}.
\end{alignat*}
Then the restriction $\C_{(u_1,u_2)}\vert_{\fraka_{\fraksl,i}'}$ is given as
\begin{equation}\label{eqn:restriction_to_ai2}
\C_{(u_1, u_2)}\vert_{\fraka_{\fraksl,1}'} = \C_{u_1},
\quad
\C_{(u_1, u_2)}\vert_{\fraka_{\fraksl,2}'} = \C_{u_2},
\quad
\C_{(u_1, u_2)}\vert_{\fraka_{\fraksl,3}'} = \C_{u_1+u_2}.
\end{equation}

For $(u,v) \in \C^3$, we write
\begin{align}
m_{\fraksl,i}(v;u)&:=
\dim_\CC \Hom_{\fraka_{\fraksl,i}'}(\C_v, \C_u \otimes 
S(\frakn_{\fraksl,-}/\frakn_{\fraksl, i,-}')),\label{eqn:mult_i_sl}\\[3pt]
\supp_{\fraksl,i}(u)&:=\{v \in \CC : m_{\fraksl,i}(v,u)\neq 0\}.\nonumber
\end{align}
Then, as in \eqref{eqn:bGVM2}, we have 
\begin{equation}\label{eqn:bGVM_SL2}
[\Mbsl(u)\vert_{\frakg_{\fraksl,i}'}]
\simeq 
\bigoplus_{v \in \supp_{\fraksl,i}(u)}
m_{\fraksl,i}(v; u) 
[M^{\frakg'_{\fraksl,i}}_{\frakb'_{\fraksl,i}}(v)].
\end{equation}

We wish to determine $m_{\fraksl,i}(v; u)$ 
and $\supp_{\fraksl,i}(u)$. 
First we consider $i=1,2$.
Define
\begin{equation}\label{eqn:supp_i_sl}
\supp_{\fraksl,i} := \{(u, v) \in \C^3: 
v-u_i \in \N\}.
\end{equation}

\begin{proposition}\label{prop:BranchingSLEmb12}
For $i=1,2$, the following conditions on 
$(u, v) \in \CC^3$ are 
equivalent.
\begin{enumerate}[label=\normalfont{(\roman*)}]
\item
$m_{\fraksl,i}(v;u) \neq 0$.
\vspace{3pt}

\item
$m_{\fraksl,i}(v;u) =\infty$.
\vspace{3pt}

\item $(u,v) \in \supp_{\fraksl,i}$.
\end{enumerate}
\end{proposition}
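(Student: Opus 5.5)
The argument mirrors the proof of Proposition \ref{prop:BranchingEmb}. I compute the $\fraka_{\fraksl,i}'$-weights occurring in $\C_u\otimes S(\frakn_{\fraksl,-}/\frakn_{\fraksl,i,-}')$ and count how often each one occurs. I treat $i=1$ in detail; the case $i=2$ is obtained by interchanging the roles of $N_1^-$ and $N_2^-$ (and of $H_{1,2}$ and $H_{2,3}$).

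For $i=1$ we have $\frakn_{\fraksl,1,-}'=\C N_1^-$. Hence $\frakn_{\fraksl,-}/\frakn_{\fraksl,1,-}'$ is spanned by the images of $N_2^-$ and $N_3^-$, and
\[
S(\frakn_{\fraksl,-}/\frakn_{\fraksl,1,-}')=\bigoplus_{\ell,r\in\N}\C (N_2^-)^\ell (N_3^-)^r .
\]
By Table \ref{table:adH2}, $\ad(H_{1,2})$ acts on $N_2^-$ by $+1$ and on $N_3^-$ by $-1$. Using \eqref{eqn:restriction_to_ai2}, this gives the decomposition
\[
\C_u\otimes S(\frakn_{\fraksl,-}/\frakn_{\fraksl,1,-}')\simeq\bigoplus_{\ell,r\in\N}\C_{u_1+\ell-r}
\]
as $\fraka_{\fraksl,1}'$-modules. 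Consequently $m_{\fraksl,1}(v;u)=\#\{(\ell,r)\in\N^2:\ell-r=v-u_1\}$.

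This count is nonzero exactly when $v-u_1\in\Z$. In that case it is infinite: if $\ell-r=k$, then $(\ell+s,r+s)$ satisfies the same relation for every $s\in\N$. This proves (i)$\Leftrightarrow$(ii), and it shows that $m_{\fraksl,1}(v;u)\neq0$ precisely when $v-u_1\in\Z$.

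For $i=2$ the same computation applies. Now $\ad(H_{2,3})$ acts on $N_1^-$ by $+1$ and on $N_3^-$ by $-1$, which yields the condition $v-u_2\in\Z$.

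The main obstacle is reconciling this computation with the stated support $\supp_{\fraksl,i}=\{v-u_i\in\N\}$. The weight computation produces every integer shift $v-u_i\in\Z$, not only nonnegative ones. Before finalizing, I would recheck the sign conventions for $\C_v$ and for the root vectors, and then either establish (iii) with the paper's definition or, if the computation stands, note that the correct condition is $v-u_i\in\Z$. The latter would be consistent with (S1) and (S2) in Theorem \ref{thm:classSL12}.
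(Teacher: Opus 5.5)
Your computation is correct and follows the same route as the paper: decompose $\C_u\otimes S(\frakn_-/\frakn'_{\fraksl,i,-})$ into $\fraka'_{\fraksl,i}$-weights and count multiplicities. The discrepancy you flag is a genuine error in the statement and in the paper's proof, not in your argument. Drop the hedge rather than try to recover (iii) as written.

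Rechecking sign conventions cannot help. Your eigenvalues are right: $[H_{1,2},E_{32}]=E_{32}$, $[H_{1,2},E_{31}]=-E_{31}$, and $\ad(H_{2,3})$ acts on $N_1^-,N_3^-$ by $+1,-1$. More to the point, the two generators of the symmetric algebra always have eigenvalues of opposite sign. So under any convention the set of attainable shifts is symmetric and equals $\Z$.

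The paper's proof goes wrong when it rewrites $\bigoplus_{\ell,r\in\N}\C(N_2^-)^\ell(N_3^-)^r$ as $\bigoplus_{j\in\N}\bigoplus_{\ell\geq j}\C(N_2^-)^\ell(N_3^-)^{\ell-j}$. This keeps only the monomials with $r\leq\ell$ and drops, for example, $N_3^-$ itself.

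Concretely, take $v=u_i-1$. The monomials with $(\ell,r)=(s,s+1)$, $s\in\N$, give $m_{\fraksl,i}(v;u)=\infty$, although $v-u_i\notin\N$. So (i)$\Leftrightarrow$(ii) and (iii)$\Rightarrow$(i) hold, but (i)$\Rightarrow$(iii) is false as stated.

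What your argument proves is the corrected proposition with (iii) replaced by $v-u_i\in\Z$. Correspondingly, Corollary \ref{cor:support_SL12} should read $\supp_{\fraksl,i}(u)=u_i+\Z$, and the sum in Theorem \ref{thm:BranchingEmb_SL12} should run over $j\in\Z$. This matches the rest of the paper:
\begin{itemize}
\item the integrality conditions in (S1) and (S2) are $u_i-v\in\Z$;
\item the proof of Theorem \ref{thm:classSL12} also produces nonzero F-system solutions for $u_1-v<0$, via $\alpha=(|u_1-v|,|u_1-v|)$;
\item Remark \ref{rem:GammaSL} records $\#\Gamma_i(\ell)=\infty$ for every $\ell\in\Z$.
\end{itemize}
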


\begin{proof}
It follows from Table \ref{table:adH2} that, for $\ell, r \in \N$, we have
\begin{equation*}
\ad(H_{1,2})(N_2^-)^\ell(N_3^-)^r=(\ell-r)(N_2^-)^\ell(N_3^-)^r.
\end{equation*}
\begin{equation*}
\ad(H_{2,3})(N_1^-)^\ell(N_3^-)^r=(\ell-r)(N_1^-)^\ell(N_3^-)^r.
\end{equation*}
Therefore,
\begin{alignat*}{3}
\C_{u}\otimes S(\frakn_-/\frakn_{\fraksl, 1,-}') 
&= \C_{u}\otimes \bigoplus_{j \in \N} \bigoplus_{\ell \geq j}
\C(N_2^-)^\ell(N_3^-)^{\ell-j}
&&\simeq \bigoplus_{j \in \N} \bigoplus_{\ell \geq j} \C_{u_1+j}
\quad &&\text{as $\fraka_{\fraksl,1}'$-modules},\\[3pt]
\C_{u}\otimes S(\frakn_-/\frakn_{\fraksl, 2,-}') 
&= \C_{u}\otimes \bigoplus_{j \in \N} \bigoplus_{\ell \geq j}
\C(N_1^-)^\ell(N_3^-)^{\ell-j}
&&\simeq \bigoplus_{j \in \N} \bigoplus_{\ell \geq j} \C_{u_2+j}
\quad &&\text{as $\fraka_{\fraksl,2}'$-modules}.
\end{alignat*}
This proves the proposition.
\end{proof}

\begin{corollary}\label{cor:support_SL12}
For $u=(u_1,u_2) \in \C^2$, we have 
\begin{equation*}
\supp_{\fraksl,i}(u)=\{u_i+j : j \in \N\}.
\end{equation*}
\end{corollary}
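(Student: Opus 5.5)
The claim is that $\supp_{\fraksl,i}(u)=\{u_i+j : j\in\N\}$ for $i=1,2$. By definition, $\supp_{\fraksl,i}(u)=\{v\in\C : m_{\fraksl,i}(v;u)\neq 0\}$. Proposition \ref{prop:BranchingSLEmb12} shows that $m_{\fraksl,i}(v;u)\neq 0$ is equivalent to $(u,v)\in\supp_{\fraksl,i}$. The plan is simply to unwind these two definitions.

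First, I will invoke the equivalence (i)$\Leftrightarrow$(iii) of Proposition \ref{prop:BranchingSLEmb12}. This gives $v\in\supp_{\fraksl,i}(u)$ if and only if $(u,v)\in\supp_{\fraksl,i}$. By the definition \eqref{eqn:supp_i_sl}, the latter condition means $v-u_i\in\N$. Writing $j:=v-u_i$ then yields $\supp_{\fraksl,i}(u)=\{u_i+j : j\in\N\}$.

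As a consistency check, I will compare this with the explicit decomposition in the proof of Proposition \ref{prop:BranchingSLEmb12}. There, $\C_u\otimes S(\frakn_-/\frakn'_{\fraksl,i,-})$ splits as a sum of copies of $\C_{u_i+j}$ with $j\in\N$, each occurring for every $\ell\ge j$. So the weights occurring are exactly $u_i+j$ for $j\in\N$.

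There is no real obstacle, since the argument is only bookkeeping. The one point to check is the sign convention: the weights must be $u_i+j$, coming from the eigenvalue $\ell-r=j\ge 0$, and not $u_i-j$.
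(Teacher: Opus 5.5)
Your deduction is the same as the paper's: the corollary is recorded there as a direct consequence of Proposition~\ref{prop:BranchingSLEmb12}. As an unwinding of that proposition it is fine. The problem is that the proposition's condition~(iii), and hence the corollary as stated, is false, and your consistency check is exactly where this should have surfaced.

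You assert that the relevant eigenvalue is $\ell-r=j\ge 0$, but $\ell$ and $r$ range independently over $\N$. The decomposition $\bigoplus_{j\in\N}\bigoplus_{\ell\ge j}\C(N_2^-)^\ell(N_3^-)^{\ell-j}$ used there contains only the monomials with $r\le\ell$, and omits, for example, $N_3^-$ itself. By Table~\ref{table:adH2}, $\ad(H_{1,2})N_2^-=N_2^-$ and $\ad(H_{1,2})N_3^-=-N_3^-$, so
\[
\C_u\otimes S(\frakn_-/\frakn'_{\fraksl,1,-})\simeq\bigoplus_{\ell,r\in\N}\C_{u_1+\ell-r}\quad\text{as $\fraka'_{\fraksl,1}$-modules}.
\]
Hence every weight $u_1+k$ with $k\in\Z$ occurs, each with infinite multiplicity. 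In particular $m_{\fraksl,1}(u_1-1;u)=\infty$, coming from the monomials $(N_2^-)^{\ell}(N_3^-)^{\ell+1}$. So $u_1-1\in\supp_{\fraksl,1}(u)$, even though it is not of the form $u_1+j$ with $j\in\N$. The case $i=2$ is the same, since $N_1^-$ and $N_3^-$ have $\ad(H_{2,3})$-eigenvalues $+1$ and $-1$.

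So the statement cannot be proved as written. The correct support is $\supp_{\fraksl,i}(u)=\{u_i+j : j\in\Z\}$; equivalently, (iii) in Proposition~\ref{prop:BranchingSLEmb12} should read $v-u_i\in\Z$, while (i)$\Leftrightarrow$(ii) is unaffected. With that correction your unwinding goes through verbatim.

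This corrected version is also the one compatible with the rest of the paper:
\begin{itemize}
\item Condition (S$i$) in Theorem~\ref{thm:classSL12} only requires $u_i-v\in\Z$. For $u_1-v=-1$, the pair $\alpha=(1,1)\in\Gamma_1(-1)$ satisfies (U1-1), so nonzero DSBOs exist there for suitable $\delta,\sigma$.
\item The negative shift is also visible directly on the Verma side. For $u_1\neq 0$, the vector $N_3^-\otimes 1+u_1^{-1}N_2^-N_1^-\otimes 1\in\Mbsl(u)$ is annihilated by $N_1^+$ and has $H_{1,2}$-weight $u_1-1$.
\end{itemize}
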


\begin{proof}
This is a direct consequence of 
Proposition \ref{prop:BranchingSLEmb12}.
\end{proof}

\begin{theorem}\label{thm:BranchingEmb_SL12}
Let $i \in \{1,2\}$, 
Then, for $u=(u_1, u_2) \in \CC^2$,
we have
\begin{equation*}
[\Mbsl(u)\vert_{\frakg_{\fraksl,i}'}]
\simeq 
\bigoplus_{j \in \N}
m_{\fraksl,i}(u_i+j;u)
[M^{\frakg'_{\fraksl,i}}_{\frakb'_{\fraksl,i}}(u_i + j)].
\end{equation*}
with $m_{\fraksl,i}(u_i+j;u)=\infty$.
\end{theorem}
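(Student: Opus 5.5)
The plan is to specialize the character identity \eqref{eqn:bGVM_SL2} and substitute the multiplicity and support computations already made for $i=1,2$. The identity itself holds by Theorem \ref{thm:bGVM}, together with the remark following Corollary \ref{cor:compatibility}. That remark explains that the proof of Theorem \ref{thm:bGVM} still applies when $\frakb_{\fraksl}$ is not $\frakg'_{\fraksl,i}$-compatible. The reason is that $\fraka'_{\fraksl,i}=\fraka_{\fraksl}\cap\frakg'_{\fraksl,i}$ and $\frakn'_{i,\pm}=\frakn_\pm\cap\frakg'_{\fraksl,i}$. The formal character computation survives, with the one caveat that multiplicities may now be infinite; such terms are understood formally in the Grothendieck group.

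The argument then has three steps. First, I would invoke Corollary \ref{cor:support_SL12} to replace the index set $\supp_{\fraksl,i}(u)$ in \eqref{eqn:bGVM_SL2} by $\{u_i+j : j\in\N\}$. Second, I would reindex the sum over $v$ as a sum over $j\in\N$ via $v=u_i+j$, which yields the displayed decomposition. Third, I would read off $m_{\fraksl,i}(u_i+j;u)=\infty$ from Proposition \ref{prop:BranchingSLEmb12}, since every $v$ in the support satisfies (iii) there.

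For the third step it is worth recording why the multiplicity is infinite. Take $i=1$. The proof of Proposition \ref{prop:BranchingSLEmb12} shows that the $\fraka'_{\fraksl,1}$-weight $u_1+j$ occurs in $\C_u\otimes S(\frakn_-/\frakn'_{\fraksl,1,-})$ once for each monomial $(N_2^-)^\ell(N_3^-)^{\ell-j}$ with $\ell\ge j$. There are infinitely many such $\ell$, so the weight space is infinite-dimensional and $\dim\Hom_{\fraka'_{\fraksl,1}}(\C_{u_1+j},\cdot)=\infty$. The case $i=2$ is identical, using the monomials $(N_1^-)^\ell(N_3^-)^{\ell-j}$ instead.

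The main obstacle is justifying \eqref{eqn:bGVM_SL2} itself in the non-compatible setting with infinite multiplicities. There the formal character sum is no longer a locally finite sum in $\mathcal{O}^{\frakp'}$. I would handle this as follows. Write $\mathrm{ch}\,\Mbsl(u)$ as $e^{u}$ times the product of $(1-e^{-\beta})^{-1}$ over the three negative roots $-\beta$. Restrict this to $\fraka'_{\fraksl,i}$, and factor off the $\frakn'_{i,-}$ contribution, which is exactly the character of a $\frakg'_{\fraksl,i}$-Verma module. What remains is the character of $\C_u\otimes S(\frakn_-/\frakn'_{\fraksl,i,-})$. The theorem is then an identity of formal characters, in which each coefficient $m_{\fraksl,i}(u_i+j;u)$ is interpreted as $\infty$.
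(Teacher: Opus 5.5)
Your argument follows the paper's route: specialize \eqref{eqn:bGVM_SL2}, replace the index set using Corollary~\ref{cor:support_SL12}, and read off $m_{\fraksl,i}=\infty$ from Proposition~\ref{prop:BranchingSLEmb12}.

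\textbf{The first step fails.} The error is inherited from that corollary: $\supp_{\fraksl,i}(u)$ is $u_i+\Z$, not $u_i+\N$.
\begin{itemize}
\item Take $i=1$. The space $S(\frakn_-/\frakn'_{\fraksl,1,-})$ has basis $(N_2^-)^\ell(N_3^-)^r$ with $(\ell,r)$ running over all of $\N^2$.
\item $N_2^-$ and $N_3^-$ have $H_{1,2}$-weights $+1$ and $-1$, so this monomial has weight $\ell-r$.
\item The decomposition in the proof of Proposition~\ref{prop:BranchingSLEmb12}, which you quote, keeps only the monomials with $r\le\ell$.
\item The omitted monomials $(N_2^-)^\ell(N_3^-)^{\ell+j}$ with $j\ge 1$ put every weight $u_1-j$ into the support as well, again with infinite multiplicity.
\item The case $i=2$ is identical, with $N_1^-,N_3^-$ in place of $N_2^-,N_3^-$.
\end{itemize}
This is consistent with condition (S1) of Theorem~\ref{thm:classSL12}, which reads $u_1-v\in\Z$, not $u_1-v\in\N$. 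If you carry out the computation in your last paragraph, you get $e^{u_1}\sum_{\ell,r\in\N}e^{\ell-r}$. So a correct specialization of \eqref{eqn:bGVM_SL2} is a sum over $j\in\Z$.

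\textbf{What is needed to reach the displayed formula.} You need an observation that the paper's proof also lacks: with infinite coefficients the expansion is not unique, so the terms with $j<0$ can be absorbed.
\begin{itemize}
\item Write $M'(v)$ for the $\frakg'_{\fraksl,1}$-Verma module of highest weight $v$. Its character is $\sum_{k\in\N}e^{v-2k}$.
\item For each $m\in\Z$, the weight $u_1+m$ occurs in $M'(u_1+j)$ for the infinitely many $j=m+2k\ge0$.
\item Hence $\sum_{j\in\N}\infty\cdot\mathrm{ch}\,M'(u_1+j)$ and $\sum_{j\in\Z}\infty\cdot\mathrm{ch}\,M'(u_1+j)$ both equal $\infty\cdot\sum_{m\in\Z}e^{u_1+m}$.
\item This is also the restricted character of $\Mbsl(u)$, since $-2a+b-c=m$ has infinitely many solutions $(a,b,c)\in\N^3$.
\end{itemize}
With this, the statement holds as an identity of formal characters with coefficients in $\N\cup\{\infty\}$. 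Your move to formal characters is the right framework, since the restriction has $H_{1,2}$-weights unbounded above and is not in category $\mathcal{O}$. However, the identity cannot be used to determine the support: $m_{\fraksl,i}(u_i-j;u)=\infty$ for $j>0$ as well.
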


\begin{proof}
The theorem follows simply from 
\eqref{eqn:bGVM_SL2}, 
Proposition \ref{prop:BranchingSLEmb12} and
Corollary \ref{cor:support_SL12}.
\end{proof}

We next consider the case $i=3$.
Define
\begin{equation}\label{eqn:supp_3_sl}
\supp_{\fraksl,3} 
:= \{(u, v) \in \C^3: v -(u_1+u_2) \in -\N\}.
\end{equation}

\begin{proposition}\label{prop:BranchingSLEmb3}
The following conditions on 
$(u, v) \in \CC^3$ are 
equivalent.
\begin{enumerate}[label=\normalfont{(\roman*)}]
\item
$m_{\fraksl,3}(v;u) \neq 0$.
\vspace{3pt}
\item 
$(u,v) \in \supp_{\fraksl,3}$.
\end{enumerate}
Further, for $(u,v) \in \supp_{\fraksl,3}$, we have 
\begin{equation*}
m_{\fraksl,3}(v;u) = u_1+u_2-\nu+1.
\end{equation*}
\end{proposition}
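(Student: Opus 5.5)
The plan is to mirror the proof of Proposition \ref{prop:BranchingSLEmb12}: compute the $\fraka_{\fraksl,3}'$-weights of $\C_u\otimes S(\frakn_{\fraksl,-}/\frakn_{\fraksl,3,-}')$ explicitly and count the weight multiplicities.

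By \eqref{eqn:nilpotent}, the quotient $\frakn_-/\frakn_{3,-}'$ is spanned by the images of $N_1^-$ and $N_2^-$. Hence
\begin{equation*}
S(\frakn_-/\frakn_{3,-}')=\bigoplus_{\ell,r\in\N}\C(N_1^-)^\ell(N_2^-)^r .
\end{equation*}
From the row $H_{1,3}$ of Table \ref{table:adH2}, $\ad(H_{1,3})$ acts on $(N_1^-)^\ell(N_2^-)^r$ by the scalar $-(\ell+r)$. By \eqref{eqn:restriction_to_ai2}, $\C_u$ restricts to $\C_{u_1+u_2}$ on $\fraka_{\fraksl,3}'$. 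Therefore, as $\fraka_{\fraksl,3}'$-modules,
\begin{equation*}
\C_u\otimes S(\frakn_-/\frakn_{\fraksl,3,-}')\simeq\bigoplus_{\ell,r\in\N}\C_{u_1+u_2-(\ell+r)} .
\end{equation*}

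It follows that $m_{\fraksl,3}(v;u)$ is the number of pairs $(\ell,r)\in\N^2$ with $\ell+r=u_1+u_2-v$. This number is nonzero exactly when $u_1+u_2-v\in\N$, i.e.\ when $v-(u_1+u_2)\in-\N$, which is the condition $(u,v)\in\supp_{\fraksl,3}$. When it is nonzero, the count equals $u_1+u_2-v+1$, as in $\#\Gamma_3$ from \eqref{eqn:GammaSL3}. The ``$\nu$'' in the displayed formula should be read as $v$.

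There is no genuine obstacle here. The only point to verify is the sign in Table \ref{table:adH2}: both $N_1^-$ and $N_2^-$ have $H_{1,3}$-eigenvalue $-1$, so every monomial of total degree $n=\ell+r$ has weight shift exactly $-n$. This makes the multiplicities finite, in contrast to the cases $i=1,2$, where the weights $\ell-r$ cancel and each multiplicity is infinite. Finiteness here is also consistent with the $\frakg_{\fraksl,3}'$-compatibility established in Corollary \ref{cor:compatibility} and with Theorem \ref{thm:bGVM}(1).
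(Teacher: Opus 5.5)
Your proposal is correct and takes the same approach as the paper's proof: you decompose $S(\frakn_-/\frakn_{3,-}')$ into the monomials $(N_1^-)^\ell(N_2^-)^r$, read off the weight $u_1+u_2-(\ell+r)$ from the $H_{1,3}$-row of Table~\ref{table:adH2}, and count the $n+1$ monomials of total degree $n=u_1+u_2-v\in\N$. You also rightly read $\nu$ as $v$, and writing $N_2^-$ instead of the paper's $(N_3^-)^{j-\ell}$ in the decomposition corrects a typo in the paper's own display.
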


\begin{proof}
It follows from Table \ref{table:adH2} that, for $\ell, r \in \N$, we have
\begin{equation*}
\ad(H_{1,3})(N_1^-)^\ell(N_2^-)^r=-(\ell+r)(N_1^-)^\ell(N_2^-)^r.
\end{equation*}
Therefore, as $\fraka_{\fraksl,3}'$-modules, we have 
\begin{equation*}
\C_u \otimes
S(\frakn_-/\frakn_{\fraksl, 3,-}') 
%&= \C_u \otimes \bigoplus_{\ell, r \in \N} \C(N_1^-)^\ell(N_2^-)^r\\[3pt]
= \C_u \otimes \bigoplus_{j \in \N} \bigoplus_{\ell=0}^j
\C(N_1^-)^\ell(N_3^-)^{j-\ell}\\[3pt]
\simeq
\bigoplus_{j \in \N} \bigoplus_{\ell=0}^j
\C_{u_1+u_2-j}.
\end{equation*}
The desired proposition now follows.
\end{proof}

\begin{corollary}\label{cor:support_SL3}
For $u=(u_1,u_2) \in \C^2$, we have 
\begin{equation*}
\supp_{\fraksl,3}(u)
=\{u_1+u_2-j: j \in \N\}.
\end{equation*}
\end{corollary}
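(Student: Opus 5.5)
This is a direct consequence of Proposition \ref{prop:BranchingSLEmb3}. That proposition computes $\C_u\otimes S(\frakn_-/\frakn_{\fraksl,3,-}')$ as an $\fraka_{\fraksl,3}'$-module, and we only need to read off which weights occur.

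First, recall that $\frakn_-/\frakn_{\fraksl,3,-}'$ is spanned by the images of $N_1^-$ and $N_2^-$. By Table \ref{table:adH2}, both of these have $\ad(H_{1,3})$-eigenvalue $-1$. Hence the monomial $(N_1^-)^\ell(N_2^-)^r$ has weight $-(\ell+r)$. After tensoring with $\C_u$, whose restriction to $\fraka_{\fraksl,3}'$ is $\C_{u_1+u_2}$ by \eqref{eqn:restriction_to_ai2}, we get
\begin{equation*}
\C_u\otimes S(\frakn_-/\frakn_{\fraksl,3,-}')\simeq\bigoplus_{j\in\N}\bigoplus_{\ell=0}^{j}\C_{u_1+u_2-j}.
\end{equation*}
Every summand here is one-dimensional.

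Next, from the definition \eqref{eqn:mult_i_sl}, $m_{\fraksl,3}(v;u)$ is the number of pairs $(\ell,r)\in\N^2$ with $u_1+u_2-(\ell+r)=v$. This count is nonzero exactly when $j:=u_1+u_2-v\in\N$. Equivalently, $v\in\{u_1+u_2-j: j\in\N\}$, which is precisely the condition $(u,v)\in\supp_{\fraksl,3}$ of \eqref{eqn:supp_3_sl}.

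Finally, since $\supp_{\fraksl,3}(u)$ is by definition the set of $v$ with $m_{\fraksl,3}(v;u)\neq0$, the previous step gives the stated description. There is no real obstacle. The only care needed is sign bookkeeping: the weights decrease from $u_1+u_2$ because $N_1^-$ and $N_2^-$ both carry $H_{1,3}$-weight $-1$.
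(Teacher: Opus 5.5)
Your proof is correct and follows the paper's route. The paper reads the corollary off directly from Proposition \ref{prop:BranchingSLEmb3}. You re-derive that proposition's weight computation before reading off the support: the monomials $(N_1^-)^\ell(N_2^-)^r$ have $H_{1,3}$-weight $-(\ell+r)$, so each weight $u_1+u_2-j$ with $j\in\N$ occurs with multiplicity $j+1$.
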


\begin{proof}
This is a straightforward consequence of Proposition \ref{prop:BranchingSLEmb3}.
\end{proof}

\begin{theorem}\label{thm:BranchingEmb_SL3}
For $u=(u_1, u_2) \in \CC^2$,
we have
\begin{equation}\label{eqn:bGVM_Emb3_sl}
[\Mbsl(u)\vert_{\frakg_{\fraksl,3}'}]
\simeq 
\bigoplus_{j\in \N}
(j+1)
[M^{\frakg'_{\fraksl,3}}_{\frakb'_{\fraksl,3}}(u_1+u_2-j)].
\end{equation}
\end{theorem}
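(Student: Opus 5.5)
The plan is to follow exactly the pattern of the proofs of Theorems \ref{thm:BranchingEmb} and \ref{thm:BranchingEmb_SL12}. We combine the character identity \eqref{eqn:bGVM_SL2}, which is valid here because $\frakb_{\fraksl}$ is $\frakg'_{\fraksl,3}$-compatible by Corollary \ref{cor:compatibility}, so Theorem \ref{thm:bGVM} applies directly and no appeal to Remark \ref{rem:compatibility} is needed. The other two inputs are the support computation in Corollary \ref{cor:support_SL3} and the multiplicity formula in Proposition \ref{prop:BranchingSLEmb3}.

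First, by Corollary \ref{cor:support_SL3}, the index set $\supp_{\fraksl,3}(u)$ in \eqref{eqn:bGVM_SL2} is exactly $\{u_1+u_2-j: j\in\N\}$. This set is parametrized bijectively by $j\in\N$, since distinct $j$ give distinct weights. We can therefore rewrite the sum over $v$ as a sum over $j\in\N$ with $v=u_1+u_2-j$.

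Second, we determine the coefficient $m_{\fraksl,3}(u_1+u_2-j;u)$. The proof of Proposition \ref{prop:BranchingSLEmb3} shows that $S(\frakn_-/\frakn'_{\fraksl,3,-})$ is spanned by the monomials $(N_1^-)^\ell(N_2^-)^r$. By Table \ref{table:adH2}, each such monomial has $H_{1,3}$-weight $-(\ell+r)$. Tensoring with $\C_u$, whose restriction to $\fraka'_{\fraksl,3}$ is $\C_{u_1+u_2}$ by \eqref{eqn:restriction_to_ai2}, the weight $u_1+u_2-j$ occurs once for each pair $(\ell,r)\in\N^2$ with $\ell+r=j$. There are exactly $j+1$ such pairs, so $m_{\fraksl,3}(u_1+u_2-j;u)=j+1$. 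This agrees with the formula $u_1+u_2-v+1$ in Proposition \ref{prop:BranchingSLEmb3}, where the symbol $\nu$ there should be read as $v$. Substituting into \eqref{eqn:bGVM_SL2} gives \eqref{eqn:bGVM_Emb3_sl}.

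There is essentially no obstacle here. The only point needing care is the bookkeeping in the proof of Proposition \ref{prop:BranchingSLEmb3}: the monomials there should be $(N_1^-)^\ell(N_2^-)^{j-\ell}$, not $(N_3^-)^{j-\ell}$, since $N_3^-$ lies in $\frakn'_{\fraksl,3,-}$. The count $j+1$ is unaffected by this.
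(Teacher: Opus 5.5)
Your proposal is correct and follows the paper's own proof, which likewise combines the character identity \eqref{eqn:bGVM_SL2} (valid here by the $\frakg'_{\fraksl,3}$-compatibility of $\frakb_{\fraksl}$) with Corollary \ref{cor:support_SL3} and the multiplicity count $j+1$ from Proposition \ref{prop:BranchingSLEmb3}. You are also right that the monomials in that proof should be $(N_1^-)^\ell(N_2^-)^{j-\ell}$ and that $\nu$ should read $v$ in the multiplicity formula; neither typo affects the count.
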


\begin{proof}
Proposition \ref{prop:BranchingSLEmb3} and
Corollary \ref{cor:support_SL3}  conclude the desired theorem.
\end{proof}

We are now ready to 
give actual branching laws for generic parameters for $i=1,2,3$.
To the end, define
\begin{equation*}
\irred(\frakg_{\fraksl},\frakb_{\fraksl})
:=\{(u \in \C^2:\text{$\Mbsl(u)$ is irreducible}\}.
\end{equation*}
By Theorem \ref{thm:VermaCriterion}, one has
\begin{equation}\label{eqn:irred_sl}
\irred(\frakg_{\fraksl},\frakb_{\fraksl})=\{(u_1,u_2) \in \C^2:
2u_1-u_2,\,
2u_2-u_1,\, 
u_1+u_2+1 \notin \N\}.
\end{equation}
For $i=1,2,3$, we then put
\begin{align*}
\irred_i(\frakg_{\fraksl},\frakb_{\fraksl})
&:=\{(u_1,u_2)\in \irred(\frakg_{\fraksl},\frakb_{\fraksl}): 
u_i \notin \Z \}\quad \text{for $i=1,2$},\\[3pt]
\irred_3(\frakg_{\fraksl},\frakb_{\fraksl})
&:=\{(u_1,u_2)\in \irred(\frakg_{\fraksl},\frakb_{\fraksl}): 
u_1+u_2 \notin \N \}.
\end{align*}

\begin{corollary}\label{cor:BranchingEmb_SL3}
For $i \in \{1,2,3\}$ and 
$u=(u_1, u_2) \in \irred_i(\frakg_{\fraksl},\frakb_{\fraksl})$,
we have
\begin{align*}
\Mbsl(u)\vert_{\frakg_{\fraksl,i}'}
&\simeq 
\bigoplus_{j \in \N}
m_{\fraksl,i}(u_i+j;u)
M^{\frakg'_{\fraksl,i}}_{\frakb'_{\fraksl,i}}(u_i + j)
\quad
\textnormal{for $i=1,2$,}\\[3pt]
\Mbsl(u)\vert_{\frakg_{\fraksl,3}'}
&\simeq 
\bigoplus_{j\in \N}
(j+1)
M^{\frakg'_{\fraksl,3}}_{\frakb'_{\fraksl,3}}(u_1+u_2-j)
\end{align*}
with $m_{\fraksl,i}(u_i+j;u)=\infty$.
\end{corollary}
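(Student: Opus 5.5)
The plan follows the proof of Corollary \ref{cor:BranchingEmb}: use the Grothendieck-group identities of Theorems \ref{thm:BranchingEmb_SL12} and \ref{thm:BranchingEmb_SL3}, and show that for $u\in\irred_i(\frakg_{\fraksl},\frakb_{\fraksl})$ every module involved is irreducible. Identities in the Grothendieck group then upgrade to direct sum decompositions. This requires some care, since $\Mbsl(u)\vert_{\frakg'_{\fraksl,i}}$ is not finitely generated and the multiplicities may be infinite.

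\textbf{Step 1: irreducibility of the source.} For $u\in\irred_i(\frakg_{\fraksl},\frakb_{\fraksl})\subset\irred(\frakg_{\fraksl},\frakb_{\fraksl})$, the Verma module $\Mbsl(u)$ is irreducible by \eqref{eqn:irred_sl}.

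\textbf{Step 2: irreducibility of the target Verma modules.} By Theorem \ref{thm:VermaCriterion} for $\fraksl(2,\C)$, a module $M^{\frakg'_{\fraksl,i}}_{\frakb'_{\fraksl,i}}(v)$ is irreducible if and only if $v\notin\N$.
\begin{itemize}
\item For $i=1,2$, the weights are $v=u_i+j$ with $j\in\N$. Since $u_i\notin\Z$, none of these lies in $\N$.
\item For $i=3$, the weights are $v=u_1+u_2-j$. If such a $v$ lay in $\N$, then $u_1+u_2=v+j$ would lie in $\N$, which $\irred_3$ excludes.
\end{itemize}
So all target Verma modules are irreducible.

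\textbf{Step 3: upgrade to direct sums.} I would argue weight-space by weight-space. The restriction $\Mbsl(u)\vert_{\frakg'_{\fraksl,i}}$ is $\fraka'_{\fraksl,i}$-semisimple and locally $\frakn'_{i,+}$-finite, because $\frakn'_{i,+}\subset\frakn_+$ acts locally nilpotently on the Verma module. Moreover, the $\fraka'_{\fraksl,i}$-weights are confined to finitely many cosets of the root lattice, bounded above.

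In the compatible case $i=3$, the weight spaces for $H_{1,3}$ are finite-dimensional, so the restriction lies in a category of finite-length-per-weight modules. The irreducible subquotients $M(u_1+u_2-j)$ have pairwise distinct highest weights. Each of them is projective in the $\fraksl_2$ category $\mathcal{O}$: it is a dominant Verma module, since $v\notin\N$ means its highest weight is not linked to a higher one. Hence every extension splits, and a standard exhaustion by highest-weight vectors gives the direct sum, with multiplicity $j+1$.

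For $i=1,2$ the weight spaces are infinite-dimensional, so the argument needs more work. I would take a primitive (i.e.\ $\frakn'_{i,+}$-annihilated) vector of maximal weight, use Step 2 to see that it generates a copy of $M(u_i+j)$, and split it off using that $M(u_i+j)$ is both projective and injective in the integral block. The latter holds because it is simple and projective for $v\notin\N$ (it is even its own dual Verma module). Iterating transfinitely, or using a weight-space filtration, then yields the direct sum with infinite multiplicities.

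\textbf{Main obstacle.} I expect the splitting in the non-compatible cases $i=1,2$ to be the hardest step, because infinite multiplicity breaks finite-length arguments. To handle it, I would apply the standard fact that for $v\notin\N$ the simple Verma module is projective in the category of $\fraka'$-semisimple, locally $\frakn'_+$-finite $\fraksl_2$-modules of bounded-above weights. This gives complete reducibility of the restriction, with the multiplicities then read off from Theorem \ref{thm:BranchingEmb_SL12}.
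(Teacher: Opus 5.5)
For $i=1,2$ there is a genuine gap, and it cannot be closed, because the decomposition you are aiming at is false. Write $M'(v)$ for $M^{\frakg'_{\fraksl,i}}_{\frakb'_{\fraksl,i}}(v)$.

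\textbf{The case $i=1,2$.} Your Step~3 asserts that the $\fraka'_{\fraksl,i}$-weights of the restriction are bounded above. By Table~\ref{table:adH2}, however, $N_2^-$ and $N_3^-$ have $H_{1,2}$-weights $+1$ and $-1$. So $(N_2^-)^\ell(N_3^-)^r v_u$ has weight $u_1+\ell-r$, and these weights exhaust $u_1+\Z$. In fact $(N_2^-)^n v_u$ is primitive of weight $u_1+n$ for every $n$, so there is no primitive vector of maximal weight from which to start your exhaustion.

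The computation in Proposition~\ref{prop:BranchingSLEmb12}, and hence Theorem~\ref{thm:BranchingEmb_SL12} which you quote, counts only the monomials with $r\le\ell$. The other monomials do contribute. Set $x:=(u_1+1)N_3^-v_u+N_1^-N_2^-v_u$. It has weight $u_1-1$, and $N_1^+x=0$ by $[N_1^+,N_3^-]=-N_2^-$, $[N_1^+,N_1^-]=H_{1,2}$ and $[N_1^+,N_2^-]=0$. So for $u_1\notin\Z$, $x$ generates a copy of the simple module $M'(u_1-1)$. This module is isomorphic to no $M'(u_1+j)$ with $j\in\N$, so it cannot sit inside the asserted direct sum of simple modules.

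The correct statement is $\Mbsl(u)\vert_{\frakg'_{\fraksl,i}}\simeq\bigoplus_{m\in\Z}M'(u_i+m)^{\oplus\infty}$ whenever $u_i\notin\Z$. This is consistent with Theorem~\ref{thm:classSL12}, where $u_i-v$ runs over all of $\Z$. It also has a short proof, which replaces your transfinite splitting. Take $i=1$; the case $i=2$ is symmetric.
\begin{enumerate}
\item The space $W:=\C[N_2^-,N_3^-]v_u$ is $\frakb'_{\fraksl,1}$-stable.
\item PBW gives $\Mbsl(u)\vert_{\frakg'_{\fraksl,1}}\simeq U(\frakg'_{\fraksl,1})\otimes_{U(\frakb'_{\fraksl,1})}W$.
\item The degree-$n$ part of $W$ is isomorphic, as a $\frakb'_{\fraksl,1}$-module, to $\C_{u_1}\otimes F_n$, where $F_n$ is the $(n+1)$-dimensional simple module.
\item The tensor identity then yields $\bigoplus_{n\in\N}M'(u_1)\otimes F_n$.
\item Each $M'(u_1)\otimes F_n\simeq\bigoplus_{k=0}^{n}M'(u_1+n-2k)$, since all these highest weights are non-integral.
\end{enumerate}

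\textbf{The case $i=3$.} Here your route is the paper's, whose proof only says, as in Corollary~\ref{cor:BranchingEmb}, that all constituents are irreducible. Your explicit splitting step is welcome, since the restriction is not finitely generated. But the reason you give for the splitting is wrong. The set $\irred_3(\frakg_{\fraksl},\frakb_{\fraksl})$ contains, for example, $u=(-1,-1)$. Then the constituents $M'(-2-j)$ are simple but antidominant, being linked to the higher weight $j$, and hence not projective in $\mathcal{O}$. The condition $v\notin\N$ gives simplicity, not dominance.

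What actually makes the splitting work is that two non-isomorphic simple Verma modules of $\fraksl(2,\C)$ never lie in the same block: if $v'=-v-2$ with $v\in\Z$, one of $v,v'$ lies in $\N$. Since the weight spaces are finite-dimensional, split the restriction by generalized Casimir eigenvalue and by weight coset modulo $2\Z$. Each piece $N$ then has all constituents isomorphic to a single simple $M'(v)$. Hence $N_v$ is $\frakn'_{3,+}$-invariant, and $U(\frakg'_{\fraksl,3})N_v\simeq M'(v)^{\oplus\dim N_v}$ by simplicity. The character identity of Theorem~\ref{thm:BranchingEmb_SL3} then forces $N=U(\frakg'_{\fraksl,3})N_v$.
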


\begin{proof}
Since the proof is analogous to that of Corollary \ref{cor:BranchingEmb},
we omit the proof.
\end{proof}

%%%%%%%%%%%%%%%%%%%%%%%%%%%%%%%%%%%%%%%%%
\subsection{The relation to DSBOs \texorpdfstring{$\DD$}{D}}
\label{sec:BranchingDSBO}

To conclude this section, we remark on 
the relation of the branching laws of 
Verma modules to DSBOs $\DD$.
As in the previous subsections, we consider the $GL$-case and 
$SL$-case, separately.

%%%%%%%%%%%%%%%%%%%%%%%%%%%%%%%%%%%%%%%%%
\subsubsection{The \texorpdfstring{$GL$}{GL}-case}
\label{sec:BranchingDSBO_GL}

We start with the $GL$-case.
Let $\C_{\xi}$ denote the one-dimensional representation of $M$
defined in \eqref{eqn:characterM}. 
Then we write 
\begin{equation*}
\Mb(\xi, \lambda):=U(\frakg)\otimes_{U(\frakb)}(\C_{\xi}\boxtimes \C_{\lambda})
\end{equation*}
for the Verma module induced from the $P$-representation 
$\C_{\xi}\boxtimes \C_{\lambda}$. 
The Verma module $\Mb(\xi, \lambda)$ is a $(\frakg, P)$-module
via the diagonal action of $P$.

Likewise, we denote by
$\C_{\eta}$ the one-dimensional representation of $M_i'$ 
defined in \eqref{eqn:characterMprime} via the embedding
$\iota_i \colon GL(2,\R) \hookrightarrow GL(3,\R)$.
Then write
\begin{equation*}
M^{\frakg'_i}_{\frakb'_i}(\eta,\nu)
:=U(\frakg_i')\otimes_{U(\frakb_i')}(\C_{\eta}\boxtimes \C_{\nu})
\end{equation*}
for the  $(\frakg_i', P_i')$-module induced from 
$\C_{\eta}\boxtimes \C_{\nu}$.

By the duality theorem \cite{KoPe16a} 
between DSBOs and homomorphisms between generalized 
Verma modules, there exists a linear isomorphism
\begin{equation}\label{eqn:duality}
\Hom_{\frakg_i', P_i'}
(M^{\frakg'_i}_{\frakb'_i}(\eta,-\nu), \Mb(\xi, -\lambda))
\simeq 
\Diff_{G_i'}(I(\xi, \lambda), J(\eta,\nu)).
\end{equation}
Define
\begin{equation*}
\irred(\frakg_{i}',\frakb_{i}')
:=\{\nu \in \C^2 : \text{$M^{\frakg'_i}_{\frakb'_i}(\nu)$ is irreducible}\}.
\end{equation*}
Theorem \ref{thm:VermaCriterion} yields
\begin{equation}\label{eqn:irred_prime}
\irred(\frakg_{i}',\frakb_{i}')=\{\nu \in \C^2 : \nu_1-\nu_2 \notin \N\}.
\end{equation}
Then, for $\supp_i$ in \eqref{eqn:supp_i} and $\irred(\frakg,\frakb)$ in
\eqref{eqn:Irreducible_Parameters2}, we put
\begin{equation*}
\irredsupp_i:=
\big(\irred(\frakg,\frakb)\times \irred(\frakg_{i}',\frakb_{i}')\big) \cap \supp_i.
\end{equation*}
For $(-\lambda, -\nu) \in \irredsupp_i$,
we have
\begin{align*}
\dim_\C\Hom_{\frakg_i', P_i'}
(M^{\frakg'_i}_{\frakb'_i}(\eta,-\nu), \Mb(\xi, -\lambda))
&\leq 
\dim_\C\Hom_{\frakg_i'}
(M^{\frakg'_i}_{\frakb'_i}(-\nu), \Mb(-\lambda))\\[3pt]
&=m_i(-\nu;-\lambda),
\end{align*}
where $m_i(\nu;\lambda)$ is the multiplicity for the branching law
$[\Mb(\lambda)\vert_{\frakg_i'}]$ defined in \eqref{eqn:mult_i}.
It follows from Proposition \ref{prop:BranchingEmb} that 
$m_i(-\nu;-\lambda)=1$ for $(-\lambda, -\nu) \in \irredsupp_i$.
Thus, via \eqref{eqn:duality}, we obtain
\begin{equation}\label{eqn:DSBO_inequality}
\dim_\C\Diff_{G_i'}(I(\xi, \lambda), J(\eta,\nu)) \leq 1
\quad
\text{for $(-\lambda,-\nu) \in \irredsupp_i$}.
\end{equation}

In the following, we show 
that the inequality \eqref{eqn:DSBO_inequality} indeed holds for
$(-\lambda, -\nu) \in \irredsupp_i$ from the results in Sections \ref{sec:Emb3}--\ref{sec:Emb12}.
We start with the case $i=1,2$.

\begin{proposition}
Let $i \in \{1,2\}$. 
If $(-\lambda, -\nu) \in \irredsupp_i$, then
\begin{equation*}
\dim_\C\Diff_{G_i'}(I(\xi, \lambda), J(\eta,\nu)) \leq 1.
\end{equation*}
In addition, if $\xi, \eta$ satisfy the parity condition in 
\eqref{eq:condition_full_set_1}, then
\begin{equation*}
\dim_\C\Diff_{G_i'}(I(\xi, \lambda), J(\eta,\nu)) =1.
\end{equation*}
\end{proposition}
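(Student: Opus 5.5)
The plan is to verify the statement directly from the classification in Theorem \ref{thm:class1}, rather than through the Verma-module inequality \eqref{eqn:DSBO_inequality}. That inequality already gives the upper bound, but an independent check against the explicit classification is the point of the proposition.

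For the upper bound, Theorem \ref{thm:class1} shows that $\Diff_{G_i'}(I(\xi,\lambda),J(\eta,\nu))$ is either $\{0\}$ or one-dimensional, for every parameter. In particular its dimension is at most one on $\irredsupp_i$, and no case analysis is needed for the first assertion.

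For the equality, the support conditions must be unpacked. Take $i=1$; the case $i=2$ then follows by the duality $(\xi,\eta;\lambda,\nu)\mapsto(\varpi_3'(\xi),\varpi_2'(\eta);\varpi_3(\lambda),\varpi_2(\nu))$ used in Proposition \ref{prop:Sol_2_characterization}. The condition $(-\lambda,-\nu)\in\supp_1$ in \eqref{eqn:supp_i} says $-\nu_1+\lambda_1,\,-\nu_2+\lambda_2\in-\N$, that is, $\nu_1-\lambda_1\in\N$ and $\nu_2-\lambda_2\in\N$. This is exactly the defining condition of $\Phi_{1,a}$ in \eqref{eqn:Phi1a}, so $(\lambda,\nu)\in\Phi_1$. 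If in addition the parity condition in \eqref{eq:condition_full_set_1} holds, then $(\xi,\eta;\lambda,\nu)\in\Supp_1(\DD)$. Theorem \ref{thm:class1} then gives dimension exactly one, and the space is spanned by $\DD_1(\lambda,\nu)$ from Theorem \ref{thm:cons1}.

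Note that the irreducibility conditions $\irred(\frakg,\frakb)\times\irred(\frakg_1',\frakb_1')$ are not used for the equality; they only ensure that the upper bound from \eqref{eqn:DSBO_inequality} is consistent with the result. The main obstacle is bookkeeping: correctly translating between the sign conventions $(-\lambda,-\nu)$ and $\supp_i$, and checking that the parity condition of \eqref{eq:condition_full_set_1} (and of \eqref{eq:condition_full_set_2} for $i=2$) is the one intended. For $i=2$ I would verify directly that $(-\lambda,-\nu)\in\supp_2$ gives $\lambda_2-\nu_1,\lambda_3-\nu_2\in\N$, which is $\Phi_{2,a}$.
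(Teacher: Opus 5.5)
Your proposal is correct and matches the paper's proof. The upper bound is read off from Theorem~\ref{thm:class1}. For the equality, $(-\lambda,-\nu)\in\supp_1$ is exactly $(\lambda,\nu)\in\Phi_{1,a}$, so with the parity condition you land in $\Supp_1(\DD)$. You are also right that for $i=2$ the intended hypotheses are $(\lambda,\nu)\in\Phi_{2,a}$ together with the parity condition in \eqref{eq:condition_full_set_2}, which you can check directly without invoking the duality.
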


\begin{proof}
We only discuss the case $i=1$; the other case can be handled similarly.
The first inequality follows immediately from Theorem \ref{thm:class1}.
To show the second equality,
recall from \eqref{eqn:Phi1a} and \eqref{eqn:supp_i} that we have 
\begin{align*}
\Phi_{1,a} &:= \{(\lambda, \nu) \in \C^5: \nu_1-\lambda_1,\, \nu_2-\lambda_2 \in \N\},\\
\supp_{1} &= \{(\lambda, \nu) \in \C^5: \nu_1-\lambda_1,\nu_2-\lambda_2 \in -\N\}.
\end{align*}
Thus, the condition $(-\lambda, -\nu) \in \supp_{1}$ is equivalent to
$(\lambda, \nu) \in \Phi_{1,a}$. 
Therefore, by the definition of $\Supp_1(\DD)$ 
in Section \ref{sec:ClassThmsDSBOs2}, 
if $(-\lambda, -\nu) \in \supp_{1}$ and 
$\xi,\eta$ satisfy the parity condition in \eqref{eq:condition_full_set_1}, 
then $(\xi,\eta;\lambda,\nu) \in \Supp_i(\DD)$.
Now Theorem \ref{thm:class1} concludes the desired equality.
\end{proof}

We next discuss the case $i=3$. For this purpose, we first show the following lemma.

\begin{lemma}\label{lem:nunu}
Suppose that $(\lambda_1,\lambda_2,\lambda_3)\in \C^3$ 
and $(\nu_1,\nu_2) \in \C^2$ satisfy the following conditions.
\vskip 0.1in

\begin{itemize}

\item $\nu_1-\lambda_1, \lambda_3-\nu_2 \in \N$.
\vskip 0.1in

\item $\lambda_1-\lambda_3-1,\nu_1-\nu_2 \notin -\N$.
\vskip 0.1in
\end{itemize}
Then we have 
\begin{equation}\label{eqn:nunu}
\nu_1-\nu_2-1 \notin [1, \min(\nu_1-\lambda_1,\lambda_3-\nu_2)]\cap \Z.
\end{equation}

\end{lemma}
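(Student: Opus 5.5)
We argue by contradiction. Set $\alpha_1:=\nu_1-\lambda_1\in\N$, $\alpha_2:=\lambda_3-\nu_2\in\N$ and $k:=\nu_1-\nu_2-1$, and suppose that $k\in[1,\min(\alpha_1,\alpha_2)]\cap\Z$. The aim is to derive a contradiction with one of the two non-integrality hypotheses $\lambda_1-\lambda_3-1\notin-\N$ and $\nu_1-\nu_2\notin-\N$. Only integrality and sign information matters here, so the whole argument reduces to elementary bookkeeping with the identity
\begin{equation*}
\lambda_1-\lambda_3=(\nu_1-\alpha_1)-(\nu_2+\alpha_2)=(\nu_1-\nu_2)-\alpha_1-\alpha_2=k+1-\alpha_1-\alpha_2.
\end{equation*}

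The key step is therefore to write $\lambda_1-\lambda_3-1=k-\alpha_1-\alpha_2$. Under the assumption $1\le k\le\min(\alpha_1,\alpha_2)$ this is an integer. Since $k\le\alpha_1$ and $\alpha_2\ge k\ge 1>0$, we get $k-\alpha_1-\alpha_2\le -\alpha_2<0$, so $\lambda_1-\lambda_3-1\in-\N$. This contradicts the second hypothesis, and hence \eqref{eqn:nunu} holds.

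We also note that the assumption on $\nu_1-\nu_2$ is in fact automatic in this case: $k\ge1$ gives $\nu_1-\nu_2=k+1\ge2$, so it is not in $-\N$ and causes no conflict. Only the hypothesis on $\lambda_1-\lambda_3-1$ is actually used. The proof is a one-line computation and we do not expect any real obstacle; the only care needed is to get the signs right in expressing $\lambda_1-\lambda_3$ through $\alpha_1,\alpha_2,\nu_1-\nu_2$. (Conceptually, the lemma says that on the irreducible locus we are always in $\Phi_{3,a}$, so the dimension there is at most one, matching the multiplicity-one branching law \eqref{eqn:bGVM_Emb3}.)
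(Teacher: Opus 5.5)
Your proof is correct and uses the same identity as the paper, namely $\lambda_1-\lambda_3-1=(\nu_1-\nu_2)-(\nu_1-\lambda_1)-(\lambda_3-\nu_2)-1$. The paper argues directly by splitting into the cases $\nu_1-\nu_2\notin\Z$ and $\nu_1-\nu_2\in\Z$, and in the integral case deduces $\nu_1-\nu_2-1>\max(\nu_1-\lambda_1,\lambda_3-\nu_2)$; your argument by contradiction removes the need for that case split. You are also right that the hypothesis $\nu_1-\nu_2\notin-\N$ is never used, and this holds for the paper's argument as well.
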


\begin{proof}
Write 
$a:=\nu_1-\nu_2 \notin -\N$ and 
$b:=\nu_1-\lambda_1, c:=\lambda_3-\nu_2 \in \N$.
Then we have
\begin{equation}\label{eqn:abc}
a-b-c-1=
\lambda_1-\lambda_3-1 \notin -\N.
\end{equation}
Now consider the following two cases for $a \notin -\N$:
(1) $a \notin \Z$ and (2) $a \in \Z$. 

If $a \notin \Z$, then the condition \eqref{eqn:nunu} clearly holds,
as $\nu_1-\nu_2-1=a-1 \notin \Z$.
Next  suppose that $a \in \Z$.
By \eqref{eqn:abc}, we have
$a-b-c-1 \in \N_+$.
Therefore, there exists $d \in \N$ such that 
$a-b-c-1=d+1$, yielding
\begin{equation*}
\nu_1-\nu_2-1=a-1=b+c+d+1>b, c.
\end{equation*}
Since $b=\nu_1-\lambda_1, c=\lambda_3-\nu_2$, this shows that 
$\nu_1-\nu_2-1 > \nu_1-\lambda_1, \lambda_3-\nu_2$.
This completes the proof.
\end{proof}

\begin{proposition}
For $(-\lambda, -\nu) \in \irredsupp_3$, we have 
\begin{equation*}
\dim_\C\Diff_{G_3'}(I(\xi, \lambda), J(\eta,\nu)) \leq 1.
\end{equation*}
In addition, if $\xi, \eta$ satisfy \eqref{eq:condition_full_set_3}, then 
\begin{equation*}
\dim_\C\Diff_{G_3'}(I(\xi, \lambda), J(\eta,\nu)) =1.
\end{equation*}
\end{proposition}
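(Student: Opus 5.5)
The plan mirrors the proof of the preceding proposition for $i=1,2$, with Lemma \ref{lem:nunu} supplying the one extra ingredient. First I unpack the hypothesis $(-\lambda,-\nu)\in\irredsupp_3$. Membership in $\supp_3$ of \eqref{eqn:supp_i} means $-\nu_1+\lambda_1,\,-\lambda_3+\nu_2\in-\N$, i.e.\ $\nu_1-\lambda_1,\lambda_3-\nu_2\in\N$, so $(\lambda,\nu)\in\Phi_3$ by \eqref{eqn:Phi3}. Membership of $-\lambda$ in $\irred(\frakg,\frakb)$, by \eqref{eqn:Irreducible_Parameters2}, gives in particular $-\lambda_1+\lambda_3+1\notin\N$, i.e.\ $\lambda_1-\lambda_3-1\notin-\N$. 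Membership of $-\nu$ in $\irred(\frakg_3',\frakb_3')$, by \eqref{eqn:irred_prime}, gives $-\nu_1+\nu_2\notin\N$, i.e.\ $\nu_1-\nu_2\notin-\N$. These are exactly the hypotheses of Lemma \ref{lem:nunu}.

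Applying Lemma \ref{lem:nunu}, I obtain \eqref{eqn:nunu}, which is precisely condition \eqref{eq:condition_set_3a}. Hence $(\lambda,\nu)\in\Phi_{3,a}$, and so $(\lambda,\nu)\notin\Phi_{3,b}\cup\Phi_{3,c}$, since both of the latter require the negation of \eqref{eq:condition_set_3a}. By Theorem \ref{thm:class3} (equivalently Theorem \ref{thm:cons3}), the space $\Diff_{G_3'}(I(\xi,\lambda),J(\eta,\nu))$ is therefore either $\{0\}$ or $\C\,\DD_3^{(1)}(\lambda,\nu)$. This gives the inequality $\dim\le 1$. It could also be read off from \eqref{eqn:DSBO_inequality}, but the point is to verify it from the explicit classification.

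For the equality, assume $\xi,\eta$ satisfy the parity condition \eqref{eq:condition_full_set_3}. Together with $(\lambda,\nu)\in\Phi_{3,a}$, this puts $(\xi,\eta;\lambda,\nu)$ in $\Supp_{3,a}(\DD)$, and Theorem \ref{thm:class3} (the dimension-two case occurring only on $\Supp_{3,c}(\DD)$) yields dimension exactly one.

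The only substantive step is the sign bookkeeping: translating the irreducibility conditions for $-\lambda,-\nu$ correctly into the hypotheses of Lemma \ref{lem:nunu}. Everything else is direct citation.
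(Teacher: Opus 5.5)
Your proposal is correct and follows the paper's proof essentially verbatim: translate $(-\lambda,-\nu)\in\irredsupp_3$ into $(\lambda,\nu)\in\Phi_3$ together with the hypotheses of Lemma \ref{lem:nunu}, deduce $(\lambda,\nu)\in\Phi_{3,a}$, and read off both claims from Theorem \ref{thm:class3}. Your sign bookkeeping is right, and you correctly state the conclusion as $(\lambda,\nu)\in\Phi_{3,a}$, whereas the paper's text has the slip $(-\lambda,-\nu)\in\Phi_{3,a}$.
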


\begin{proof}
Recall from \eqref{eqn:Phi3} and \eqref{eqn:supp_i} that we have 
\begin{align*}
\Phi_{3}&:= \{(\lambda, \nu) \in \C^5: \nu_1-\lambda_1,\, \lambda_3-\nu_2 \in \N\},\\
\supp_{3} &:= \{(\lambda, \nu) \in \C^5: \nu_1-\lambda_1,\, \lambda_3-\nu_2 \in -\N\}.
\end{align*}
Thus, the condition $(-\lambda, -\nu) \in \supp_{3}$ is equivalent to
$(\lambda, \nu) \in \Phi_{3}$. 
Moreover, it follows from
\eqref{eqn:Irreducible_Parameters2} and
 \eqref{eqn:irred_prime} that 
if $(-\lambda, -\nu) \in \irredsupp_3$,
then $(\lambda,\nu)$ satisfies the conditions in Lemma \ref{lem:nunu}.
Thus, by the lemma, we have 
\begin{equation*}
\nu_1-\nu_2-1 \notin [1, \min(\nu_1-\lambda_1,\lambda_3-\nu_2)]\cap \Z,
\end{equation*}
which shows that $(-\lambda,-\nu) \in \Phi_{3,a}$.
Now, the proposed inequality and equality follow from Theorem \ref{thm:class3}.
\end{proof}

%%%%%%%%%%%%%%%%%%%%%%%%%%%%%%%%%%%%%%%%%
\subsubsection{The \texorpdfstring{$SL$}{SL}-case}
\label{sec:BranchingDSBO_SL}

We next consider the $SL$-case.
As for the $GL$-case, let 
$\C_{\delta}$ and $\C_{\sigma}$ denote the 
one-dimensional representations $M_{SL}$ and $M'_{SL,i}$
defined in \eqref{eqn:charSLM}
and
\eqref{eqn:charSLMprime}, 
respectively.
We then write
\begin{align*}
\Mbsl(\delta,u)
&:=U(\frakg_{\fraksl})\otimes_{U(\frakb_{\fraksl})}(\C_{\delta}\boxtimes \C_{u}),\\[3pt]
M^{\frakg'_{\fraksl,i}}_{\frakb'_{\fraksl,i}}(\sigma,v)
&:=U(\frakg_{\fraksl,i}')\otimes_{U(\frakb'_{\fraksl,i})}(\C_{\sigma}\boxtimes \C_{v})
\end{align*}
for the $(\frakg_{\fraksl},P_{SL})$-module and 
$(\frakg'_{\fraksl, i},P'_{SL,i})$-module induced from 
$\C_{\delta}\boxtimes \C_{u}$ and 
$\C_{\sigma}\boxtimes \C_{v}$, respectively.

Define
\begin{equation*}
\irred(\frakg_{\fraksl,i}',\frakb_{\fraksl,i}')
:=\{v \in \C : \text{$M^{\frakg'_{\fraksl,i}}_{\frakb'_{\fraksl,i}}(v)$ is irreducible}\}.
\end{equation*}
It follows from Theorem \ref{thm:VermaCriterion} that 
\begin{equation}\label{eqn:irred_prime_sl}
\irred(\frakg_{\fraksl,i}',\frakb_{\fraksl,i}')=\{v \in \C : v \notin \N\}.
\end{equation}
Then, for $\supp_{\fraksl,i}$ in \eqref{eqn:supp_i_sl} and 
\eqref{eqn:supp_3_sl}, and 
$\irred(\frakg_{\fraksl},\frakb_{\fraksl})$ in \eqref{eqn:irred_sl}, we put
\begin{equation*}
\irredsupp_{\fraksl,i}:=
\big(\irred(\frakg_{\fraksl},\frakb_{\fraksl})\times 
\irred(\frakg_{\fraksl,i}',\frakb_{\fraksl, i}')\big) \cap \supp_{\fraksl,i}.
\end{equation*}
The same argument in Section \ref{sec:BranchingDSBO_GL} shows that,
for $(-u, -v) \in \irredsupp_{\fraksl,i}$,we have
\begin{equation}\label{eqn:DSBO_inequality_SL}
\dim_\C\Diff_{G_{SL,i}'}(I(\delta, u)_{SL}, J(\sigma,v)_{SL}) \leq m_{\fraksl,i}(-v;-u),
\end{equation}
where $m_{\fraksl,i}(v;u)$ is the multiplicity of the branching law 
$[\Mbsl(u)\vert_{\frakg_{\fraksl,i}'}]$ defined in \eqref{eqn:mult_i_sl}.

We now show the inequality \eqref{eqn:DSBO_inequality_SL} indeed holds for
$(-u, -v) \in \irredsupp_{\fraksl,i}$ from the results in Section \ref{sec:SL}.

As for the $GL$-case, we start with the case $i=1,2$.
It follows from 
Proposition \ref{prop:BranchingSLEmb12} that, in this case, we have 
$m_{\fraksl,i}(-v;-u)=\infty$. Thus the inequality \eqref{eqn:DSBO_inequality_SL}
trivially holds. We then show that the equality could be attained
for appropriate $\delta, \sigma$.

\begin{proposition}
Let $i \in \{1,2\}$. If $(-\lambda, -\nu) \in \irredsupp_{\fraksl,i}$ and $\delta, \sigma$
satisfy the condition (Si) in Section \ref{sec:class-const-SL}, then
\begin{equation*}
\dim_\C\Diff_{G_{SL,i}'}(I(\delta, u)_{SL}, J(\sigma,v)_{SL}) =\infty.
\end{equation*}
\end{proposition}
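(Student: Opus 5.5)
The claim follows directly from Theorem \ref{thm:classSL12}: for $i\in\{1,2\}$, condition (iii) there implies condition (ii). The only thing to check is that the hypothesis puts the parameters in $\Supp_{SL,i}(\DD)$. (The statement writes $(-\lambda,-\nu)$; in the $\fraksl$ setting we read this as $(-u,-v)\in\irredsupp_{\fraksl,i}$.)

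The first step is to unwind the definitions. From \eqref{eqn:supp_i_sl}, the condition $(-u,-v)\in\supp_{\fraksl,i}$ means $-v-(-u_i)=u_i-v\in\N$. In particular $u_i-v\in\Z$, which is the integrality part of condition (S$i$). The parity part of (S$i$), namely $\delta_i+\sigma\equiv u_i-v \bmod 2$, is assumed outright in the statement. Together these give $(\delta,\sigma;u,v)\in\Supp_{SL,i}(\DD)$.

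The irreducibility part of $\irredsupp_{\fraksl,i}$ plays no role for the lower bound; it is only the setting in which \eqref{eqn:DSBO_inequality_SL} compares the DSBO dimension with $m_{\fraksl,i}(-v;-u)=\infty$. With membership in $\Supp_{SL,i}(\DD)$ established, Theorem \ref{thm:classSL12} gives $\dim_\C\Diff_{G'_{SL,i}}(I(\delta,u)_{SL},J(\sigma,v)_{SL})=\infty$, which is the claim.

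For concreteness, the infinite family of operators is the one from the proof of Theorem \ref{thm:classSL12}. Set $\ell:=u_i-v\in\N$. For $i=1$, take $\alpha(j)=(\alpha_1-j,\alpha_2+2j)\in\Gamma_1(\ell)$ with $\alpha_1\le\alpha_2$. Condition (A1) then holds for each such $\alpha(j)$, so each summand $\Diff_1(u;\alpha(j))$ in Theorem \ref{thm:consSL12} is nonzero. These summands are independent because they lie in distinct isotypic pieces $\Pol(\alpha)$. The case $i=2$ is the same, using (U2-1) and the duality of Section \ref{sec:duality}.

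The only real obstacle is a possible index mismatch: the parameterization in that proof reads $\alpha_1=|u_1-v|$, and with $\ell\ge0$ the $j$-shift $\alpha_1-j$ could become negative. If so, I would instead use $\alpha(j)=(j,\ell+2j)$ for $j\in\N$. This lies in $\Gamma_1(\ell)$ and satisfies $\alpha_1\le\alpha_2$ for all $j$, so it still gives infinitely many independent operators.
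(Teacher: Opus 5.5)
Your argument is correct and is the same as the paper's: $(-u,-v)\in\supp_{\fraksl,i}$ means $u_i-v\in\N$, so with (S$i$) you get $(\delta,\sigma;u,v)\in\Supp_{SL,i}(\DD)$, and Theorem \ref{thm:classSL12} finishes. One correction to your optional ``for concreteness'' paragraph: $(\alpha_1-j,\alpha_2+2j)$ actually lies in $\Gamma_1(\ell+4j)$, not in $\Gamma_1(\ell)$ (the shift that stays in $\Gamma_1(\ell)$ is $(\alpha_1+j,\alpha_2+2j)$), so your fallback family $(j,\ell+2j)$ is the right one to use there.
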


\begin{proof}
It follows from \eqref{eqn:supp_i_sl} that 
if $(-\lambda, -\nu) \in \irredsupp_{\fraksl,i}$ and $\delta, \sigma$
satisfy the condition (Si), then $(\delta,\sigma; u, v) \in \Supp_{SL,i}(\DD)$.
Then Theorem \ref{thm:classSL12} concludes the proposition.
\end{proof}

We next discuss the case $i=3$. 
As for the $GL$-case, we first show one technical lemma.

\begin{lemma}\label{lem:nunuSL}
Let $\alpha_1, \alpha_2 \in \N$,
and suppose that $(u_1,u_2)\in \C^2$ and $v \in \C$ satisfy the following conditions.

\begin{itemize}

\item $v-(u_1+u_2)=\alpha_1+\alpha_2 \in \N$.
\vskip 0.1in

\item $u_1+u_2-1, v \notin -\N$.
\vskip 0.1in
\end{itemize}
Then we have 
\begin{equation}\label{eqn:nunuSL}
v-2 \notin [0, \min(\alpha_1,\alpha_2)-1]\cap \Z.
\end{equation}

\end{lemma}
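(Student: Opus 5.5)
The plan is to argue by contradiction, in the same spirit as the proof of Lemma \ref{lem:nunu}, using only the first bullet and the condition $u_1+u_2-1\notin -\N$. Put $s:=u_1+u_2$, so the first hypothesis reads $v=s+\alpha_1+\alpha_2$. If $\min(\alpha_1,\alpha_2)=0$, the interval $[0,\min(\alpha_1,\alpha_2)-1]$ is empty by our convention and \eqref{eqn:nunuSL} holds trivially. So assume $\min(\alpha_1,\alpha_2)\geq 1$, and suppose for contradiction that $v-2=k$ for some integer $k$ with $0\leq k\leq \min(\alpha_1,\alpha_2)-1$.

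First, $v=k+2$ is an integer. Since $\alpha_1,\alpha_2\in\N$, it follows that $s=v-\alpha_1-\alpha_2$ is also an integer.

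Next, use $\min(\alpha_1,\alpha_2)+\max(\alpha_1,\alpha_2)=\alpha_1+\alpha_2$ to bound $s$:
\begin{equation*}
s-1=k+1-\alpha_1-\alpha_2\leq \min(\alpha_1,\alpha_2)-\alpha_1-\alpha_2=-\max(\alpha_1,\alpha_2)\leq 0 .
\end{equation*}
Hence $s-1=u_1+u_2-1$ is a non-positive integer, that is, $u_1+u_2-1\in-\N$. This contradicts the second bullet, and \eqref{eqn:nunuSL} follows.

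There is no real obstacle here; the argument is a short integrality-and-inequality check, and the hypothesis $v\notin-\N$ is not needed. I expect the hypothesis $v\notin -\N$ to matter only later, when the lemma is combined with \eqref{eqn:irred_prime_sl} and the condition (V1) to obtain the expected equality of dimensions for $i=3$. The only care required is to treat the empty-interval case $\min(\alpha_1,\alpha_2)=0$ separately, as done above.
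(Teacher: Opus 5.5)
Your proof is correct and is essentially the paper's argument run by contradiction: the paper assumes $v\in\Z$, transfers integrality to $u_1+u_2$, uses $u_1+u_2-1\notin-\N$ to get $u_1+u_2\geq 2$, and concludes $v-2\geq \alpha_1+\alpha_2\geq\min(\alpha_1,\alpha_2)$, which is your inequality read in the other direction. Your remark that the hypothesis $v\notin-\N$ is superfluous is also right: the paper uses it only to get $v\in\N_+$, and this already follows from $v=u_1+u_2+\alpha_1+\alpha_2\geq 2$ once $v\in\Z$.
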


\begin{proof}
If $v \notin \Z$, then \eqref{eqn:nunuSL} clearly holds.
So, assume that $v \in \Z$. As $v \notin -\N$, we have
$v \in \N_+$. Since $v-(u_1+u_2)$ is assumed to be $v-(u_1+u_2) \in \N$,
it implies that $u_1+u_2 \in \Z$. Moreover, since $u_1+u_2-1 \notin -\N$,
it shows that $u_1+u_2\in 2+ \N$. Thus, there exists $a\in \N$ such that 
$u_1+u_2 = a + 2$.
Then we have 
\begin{equation*}
v-2 
= (\alpha_1+\alpha_2) + (u_1+u_2)-2
=(\alpha_1+\alpha_2) + a 
\geq \min(\alpha_1,\alpha_2);
\end{equation*}
that is, $v-2$ satisfies \eqref{eqn:nunuSL}.
This concludes the lemma.
\end{proof}

To state the assertion for the case $i=3$, observe that,
by definition \eqref{eqn:supp_3_sl}, 
if $(-u, -v) \in \supp_{\fraksl,3}$, then $(u_1+u_2)-v \in -\N$.

\begin{proposition}\label{prop:dim_Diff_SL3}
For $(-u, -v) \in \irredsupp_{\fraksl,3}$ with
$j=-(u_1+u_2-v)\in\N$, we have 
\begin{equation*}
\dim_\C\Diff_{G_{SL,i}'}(I(\delta, u)_{SL}, J(\sigma,v)_{SL}) \leq j+1.
\end{equation*}
In addition, if $\delta, \sigma$ satisfy (S3) in Section \ref{sec:class-const-SL}, then
\begin{equation*}
\dim_\C\Diff_{G_{SL,i}'}(I(\delta, u)_{SL}, J(\sigma,v)_{SL}) =j+1.
\end{equation*}
\end{proposition}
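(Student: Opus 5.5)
We mirror the $GL$-case argument for $i=3$, now relying on Theorem \ref{thm:classSL3} and the decomposition \eqref{eqn:DIffDimSL3c}. The first step is to unwind the hypotheses. By \eqref{eqn:supp_3_sl}, $(-u,-v)\in\supp_{\fraksl,3}$ means $-v+(u_1+u_2)\in\N$ after the sign change, that is, $j=v-(u_1+u_2)\in\N$. The irreducibility conditions \eqref{eqn:irred_sl} and \eqref{eqn:irred_prime_sl}, applied to $-u$ and $-v$, give $-u_1-u_2+1\notin\N$ and $-v\notin\N$. Equivalently, $u_1+u_2-1\notin-\N$ and $v\notin-\N$. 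These are exactly the hypotheses of Lemma \ref{lem:nunuSL}, for every $\alpha=(\alpha_1,\alpha_2)\in\Gamma_3(-j)$.

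The second step is to apply Lemma \ref{lem:nunuSL}. As observed in the proof of Theorem \ref{thm:classSL3}, for $\alpha\in\Gamma_3(-j)$ with $\lambda=\lambda(u)$ we have $b_0(u)=u_1+u_2-2+\alpha_1+\alpha_2=v-2$. The lemma then gives $b_0(u)\notin[0,\min(\alpha)-1]\cap\Z$, so condition (V1) (equivalently (C1)) holds for every $\alpha\in\Gamma_3(-j)$. By Proposition \ref{Prop:SolGeneralDiffEq1} together with Theorem \ref{thm:consSL3}, this yields $\dim_\C\Sol_{SL,3}^{(-j)}(\delta,u;\alpha)\le 1$ for each $\alpha$, with the space spanned by $\DD^{(1)}_{SL,3}(u;\alpha)$ whenever the parity condition holds.

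The third step sums over $\alpha$. Using \eqref{eqn:DIffDimSL3c} and $\#\Gamma_3(-j)=j+1$ from \eqref{eqn:GammaSL3}, we get $\dim\le j+1$ in general. This matches the bound \eqref{eqn:DSBO_inequality_SL}, since $m_{\fraksl,3}(-v;-u)=j+1$ by Proposition \ref{prop:BranchingSLEmb3}. If $(S3)$ holds, then $(\delta,\sigma;u,v)\in\Supp_{SL,3}(\DD)$, and each summand is exactly one-dimensional. The parity condition in $L_{SL,3}(-j)$ is uniform in $\alpha\in\Gamma_3(-j)$, because it depends only on $\ell=-j$. Hence equality $j+1$ holds; alternatively, one can combine the lower bound \eqref{eqn:DIffDimSL3} of Theorem \ref{thm:classSL3} with the upper bound just proved.

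The only delicate point is the sign bookkeeping in translating $(-u,-v)\in\irredsupp_{\fraksl,3}$ into the hypotheses of Lemma \ref{lem:nunuSL}. In particular, the condition $v\notin-\N$ has to be checked carefully, because $b_0=v-2$ means a nonnegative integer $v$ could otherwise land in the (V2)/(V3) range. Once that translation is confirmed, the remaining steps are direct citations of the results above.
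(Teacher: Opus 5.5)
Your proposal is correct and follows the paper's proof essentially step for step. It uses the decomposition \eqref{eqn:DIffDimSL3c} over $\Gamma_3(-j)$ with $\#\Gamma_3(-j)=j+1$, Lemma~\ref{lem:nunuSL} to force (C1) via $b_0=v-2$, and the one-dimensionality of each summand under (S3); the only cosmetic difference is that you get the inequality by bounding each summand by $1$ directly, whereas the paper disposes of the case where (S3) fails via Theorem~\ref{thm:classSL3}. Two minor slips: $(-u,-v)\in\supp_{\fraksl,3}$ gives $(u_1+u_2)-v\in-\N$, not $\in\N$, though your conclusion $j\in\N$ is right; and it is really $u_1+u_2-1\notin-\N$, not $v\notin-\N$, that keeps an integer $v$ out of the (V2)/(V3) range.
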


\begin{proof}
Observe that, for $(-u,-v) \in \supp_{\fraksl,3}$, we have 
$(\delta, \sigma;u,v) \in \Supp_{SL,3}(\DD)$ if and only if 
$\delta, \sigma$ satisfy (S3) in Section \ref{sec:class-const-SL}.
Thus, by Theorem \ref{thm:classSL3}, it suffices to show the second assertion. 
Suppose that $\delta, \sigma$ satisfy (S3).
Since $(\delta, \sigma;u,v) \in \Supp_{SL,3}(\DD)$,
it follows from \eqref{eqn:GammaSL3} and 
\eqref{eqn:DIffDimSL3c} that
for $j=-(u_1+u_2-v)\in\N$, we have 
\begin{equation*}
\Diff_{G'_{SL,3}}(I(\delta,u)_{SL}, J(\sigma,v)_{SL})
=\bigoplus_{\alpha\in\Gamma_3(-j)}
(\Rest_3 \circ \Symb_0^{-1})(\Sol_{SL,3}^{(-j)}(\delta, u;\alpha))
\end{equation*}
with $\# \Gamma_3(-j) = j+1$. Since $(-u, -v) \in \irredsupp_{\fraksl,3}$,
it follows from \eqref{eqn:irred_sl} and \eqref{eqn:irred_prime_sl} that
$u_1, u_2, v$ satisfy the conditions in Lemma \ref{lem:nunuSL}.
Thus, by the lemma, we have $v-2 \notin [0,\min(\alpha_1,\alpha_2)-1]\cap \Z$.
As in the proof of Theorem \ref{thm:classSL3} in Section \ref{sec:ProofSL}, 
the number $b_0$ for 
the condition (C1) in Section \ref{sec:Step3} is given as $b_0=v-2$.
Thus, we have $b_0 \notin  [0, \min(\alpha_1,\alpha_2)-1]\cap \Z$,
which implies that 
$\dim_\C\Sol_{SL,3}^{(-j)}(\delta, u;\alpha)= 1$ for all $\alpha \in \Gamma_3(-j)$.
Therefore,
\begin{equation*}
\dim_\C \Diff_{G'_{SL,3}}(I(\delta,u)_{SL}, J(\sigma,v)_{SL})
=\sum_{\alpha \in \Gamma_3(-j)} \dim_\C\Sol_{SL,3}^{(-j)}(\delta, u;\alpha) = j+1.
\end{equation*}
This completes the proof.
\end{proof}

%%%%%%%%%%%%%%%%%%%%%%%%%%%%%%%%%%%%%%%%%
\begin{appendices}

%%%%%%%%%%%%%%%%%%%%%%%%%%%%%%%%%%%%%%%%%
\section{Symmetrization operator for the Heisenberg Lie algebra}
\label{appendix:Heis}

The aim of this appendix is to generalize the arguments in Sections \ref{sec:Upn} and \ref{sec:T-saturation} for the three-dimensional Heisenberg Lie algebra.
In particular, in Proposition \ref{appendix:eJU}, we give the symmetrization operator $S_{\ord}$ for the Heisenberg Lie algebra $\frakh_{2d+1}$ of dimension $2d+1$. The T-saturation of this operator is then obtained in Proposition \ref{appendix:DD}.

Throughout this appendix, let $\F:=\R$ or $\C$.

%%%%%%%%%%%%%%%%%%%%%%%%%%%%%%%%%%%%%%%%%
\subsection{Symmetrization operator}
\label{appendix:Heis1}

For $j=1,2,\ldots, d$, we put $\bar{j}:=j+d$. Then we write
\begin{equation*}
\frakh_{2d+1}:=\Span_\F\{X_j,X_{\bar{j}}, X_{2d+1}: j=1,\ldots, d\}
\end{equation*}
for the Heisenberg Lie algebra of dimension $2d+1$ 
with commutation relations
\begin{equation*}
[X_j, X_k] = 0,
\quad
[X_{\bar{j}}, X_{\bar{k}}] = 0,
\quad
[X_j, X_{\bar{k}}] = -\delta_{j,k} X_{2d+1},
\quad
[X_{2d+1},X_j]=0
\end{equation*}
for all $j,k=1,2,\ldots, d$, where $\delta_{j,k}$ is the Kronecker delta.

Fix an ordered basis $\ord$ of $\frakh$ as
\begin{equation}\label{eqn:ord-Heisenberg}
\ord:=\{X_1,\ldots, X_d, X_{\bar{1}}, \ldots, X_{\bar{d}}, X_{2d+1}\}.
\end{equation}
Thus, the corresponding PBW-basis $\mathrm{PBW}_\ord$ of 
$U(\frakh)$ is 
\begin{equation*}
\mathrm{PBW}_\ord
=\{X_1^{r_1}\cdots X_d^{r_d}
X_{\bar{1}}^{r_{\bar{1}}}\cdots X_{\bar{d}}^{r_{\bar{d}}} X_{2d+1}^{r_{2d+1}}
: r_j, r_{\bar{j}}, r_{2d+1} \geq 0\}.
\end{equation*}
As in \eqref{eqn:X-ord}, we write
\begin{equation*}
(X_{j_1}X_{j_2}\cdots X_{j_m})_\ord
=
X_{j_{\tau(1)}}X_{j_{\tau(2)}} \cdots X_{j_{\tau(m)}}
\in \mathrm{PBW}_\ord
\end{equation*}
for $\tau \in \mathfrak{S}_m$ such that $j_{\tau(k)} \leq j_{\tau(k+1)}$
for all $k=1,2,\ldots, m-1$.

We write 
$(\zeta_1, \dots, \zeta_d, \zeta_{\bar{1}},\ldots, \zeta_{\bar{d}}, \zeta_{2d+1})$
for the coordinates 
on $\frakh^\vee$ with respect to the 
dual basis of $\ord$.
Then the ordering map 
$\Upsilon_{\ord}$ from $\Pol(\frakh^\vee)$ to $U(\frakh)$ 
is given by
\begin{equation}\label{eqn:upsilon_ord3}
\Upsilon_{\ord} \colon \Pol(\frakh^\vee) \stackrel{\sim}{\to} U(\frakh),
\quad
\zeta_{j_1}\zeta_{j_2}\cdots \zeta_{j_m} 
\longmapsto
\big(X_{j_1}X_{j_2}\cdots X_{j_m}\big)_\ord.
\end{equation}
If $d=1$, then \eqref{eqn:upsilon_ord3} recovers the ordering map in 
\eqref{eqn:upsilon_ord2}.

The symmetrization $\sym_{\ord}$ on $U(\frakh_{2d+1})$ is computed by the following identity.

\begin{proposition}\label{appendix:SymmetrizationMapIdentity}
Let $k_j, k_{\bar{j}} ,r \in \N$ for  $j=1,\ldots, d$. 
Then, for $n_j:=k_j+k_{\bar{j}}$ and $n:=r+\sum_{j=1}^d n_j$,
we have 
\begin{align}\label{appendix:HeisSym1}
\sym_n\big(X_{2d+1}^r\prod_{j=1}^{d}X_j^{k_j}X_{\bar{j}}^{k_{\bar{j}}} \big)
&= X_{2d+1}^r\prod_{j=1}^d \sym_{n_j}(X_j^{k_j}X_{\bar{j}}^{k_{\bar{j}}}),
\end{align}
where
\begin{equation}\label{appendix:HeisSym2}
\sym_{n_j}(X_j^{k_j}X_{\bar{j}}^{k_{\bar{j}}})
=
\sum_{\ell=0}^{\min(k_j,k_{\bar{j}})}
\frac{(-k_{j})_\ell(-k_{\bar{j}})_\ell}{2^\ell \ell!}
X_j^{k_j-\ell}X_{\bar{j}}^{k_{\bar{j}}-\ell}X_{2d+1}^{\ell}.
\end{equation}
\end{proposition}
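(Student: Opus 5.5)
The plan is to mimic the generating-function argument of Proposition \ref{Prop:SymmetrizationMapIdentity}, now with one formal parameter per basis vector. Introduce formal commuting variables $s_1,\dots,s_d,s_{\bar 1},\dots,s_{\bar d},s$ and consider the element
\[
Y:=\sum_{j=1}^d (s_jX_j+s_{\bar j}X_{\bar j})+sX_{2d+1}
\]
of $\frakh_{2d+1}\otimes\F[[s]]$. On the one hand, the multinomial expansion gives
\[
\exp(Y)=\sum \frac{s^r\prod_j s_j^{k_j}s_{\bar j}^{k_{\bar j}}}{r!\prod_j k_j!\,k_{\bar j}!}\,
\sym_n\Big(X_{2d+1}^r\prod_{j=1}^{d}X_j^{k_j}X_{\bar{j}}^{k_{\bar{j}}}\Big),
\]
exactly as in \eqref{eqn:expSym2}. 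This is because the coefficient of a given monomial in $Y^n/n!$ is the average over all orderings of the corresponding word, times the multinomial coefficient.

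On the other hand, the planes $Z_j:=s_jX_j+s_{\bar j}X_{\bar j}$ pairwise commute for different $j$, since $[X_j,X_{\bar k}]=0$ for $j\ne k$. Likewise $X_{2d+1}$ is central. Hence
\[
\exp(Y)=\exp(sX_{2d+1})\prod_{j=1}^d\exp(Z_j),
\]
and the factors may be taken in any order. Each $\exp(Z_j)$ is the exponential in a copy of $\frakh_3$ spanned by $X_j,X_{\bar j},X_{2d+1}$ with $[X_j,X_{\bar j}]=-X_{2d+1}$. Expanding gives
\[
\exp(Z_j)=\sum_{k_j,k_{\bar j}}\frac{s_j^{k_j}s_{\bar j}^{k_{\bar j}}}{k_j!\,k_{\bar j}!}\sym_{n_j}(X_j^{k_j}X_{\bar j}^{k_{\bar j}}).
\]

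Multiplying these expansions and comparing coefficients of $s^r\prod_j s_j^{k_j}s_{\bar j}^{k_{\bar j}}$ with the first expansion yields \eqref{appendix:HeisSym1}. Note that the multinomial denominators match. Moreover, the product of the $\sym_{n_j}$ factors is order-independent, since elements of $U$ of different $j$-blocks commute.

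Identity \eqref{appendix:HeisSym2} is then literally Proposition \ref{Prop:SymmetrizationMapIdentity} applied to the Heisenberg subalgebra $\Span\{X_j,X_{\bar j},X_{2d+1}\}\simeq\frakh_3$. That subalgebra has the same bracket normalization $[X_j,X_{\bar j}]=-X_{2d+1}$, and its universal enveloping algebra embeds in $U(\frakh_{2d+1})$.

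The only delicate point is making the formal exponential manipulations rigorous. I would either work in the completed algebra $U(\frakh_{2d+1})[[s,s_j,s_{\bar j}]]$, where $\exp(A+B)=\exp(A)\exp(B)$ holds for commuting $A,B$ with no convergence issue, or argue directly. The direct argument observes that for pairwise commuting $A_i$ one has $\sym$ of a word in disjoint commuting blocks equal to the product of the block symmetrizations. This follows by counting shuffles: a uniformly random ordering of the whole word restricts to uniformly random orderings of each block, and commuting blocks can be separated. I expect no real obstacle beyond this bookkeeping.
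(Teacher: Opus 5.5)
Your proof is correct and follows the paper's route. The paper obtains \eqref{appendix:HeisSym1} from the fact that letters in different blocks commute, with $X_{2d+1}$ central; you make this explicit either through the multi-parameter exponential factorization or through the shuffle-counting argument. Like the paper, you obtain \eqref{appendix:HeisSym2} from Proposition~\ref{Prop:SymmetrizationMapIdentity}, applied to the copy of $\frakh_3$ spanned by $X_j, X_{\bar j}, X_{2d+1}$.
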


\begin{proof}
Both identities follow immediately from a property of $\sym_n$ and Proposition \ref{Prop:SymmetrizationMapIdentity}.
Indeed,  the first identity \eqref{appendix:HeisSym1} is given by
the fact that all basis elements except the pair $X_j, X_{\bar{j}}$ commute,
while 
the second identity \eqref{appendix:HeisSym2} can be shown 
exactly in the same way as 
Proposition \ref{Prop:SymmetrizationMapIdentity}.
\end{proof}

Now we compute the symmetrization operator 
$S_{\ord}$ on  
$\Pol(\frakh_{2d+1}^\vee)$ associated with the symmetrization
$\sym_\ord$ on $U(\frakh_{2d+1})$. That is, we aim to compute
a linear isomorphism
\begin{equation*}
S_{\ord} \colon 
\Pol(\frakh_{2d+1}^\vee) \stackrel{\sim}{\To} \Pol(\frakh_{2d+1}^\vee)
\end{equation*}
 such that
\begin{equation*}
\Upsilon_{\ord} \circ S_{\ord}=
\sym_{\ord} \circ \Upsilon_{\ord}.
\end{equation*}
In other words, we wish to find a linear map
$S_{\ord}$ on $\Pol(\frakh^\vee)$
such that
\begin{equation*}
(\Upsilon_{\ord} 
\circ
S_{\ord} )
\Big(
\zeta_{2d+1}^r
\prod_{j=1}^d
\zeta_j^{k_j}\zeta_{\bar{j}}^{k_{\bar{j}}}
\Big)
=
\sym_{\ord}\big(X_{2d+1}^r\prod_{j=1}^{d}X_j^{k_j}X_{\bar{j}}^{k_{\bar{j}}} \big)
\end{equation*}
for any $\zeta_{2d+1}^r
\prod_{j=1}^d
\zeta_j^{k_j}\zeta_{\bar{j}}^{k_{\bar{j}}} \in \Pol(\frakh^\vee)$.

To the end, for $i=1.\ldots, d$, we put
\begin{equation*}
J_i'
:=
\frac{\zeta_{2d+1}}{2}\frac{\partial^2}{\partial\zeta_{i}\partial\zeta_{\bar{i}}}
\end{equation*}
and 
formally define
\begin{equation*}
e^{J_i'}:=\sum_{\ell=0}^\infty \frac{(J_{i}')^\ell}{\ell!}.
\end{equation*}
As in \eqref{eqn:ijk}, a direct computation shows that,
for $\alpha_i, \alpha_{\bar{i}} \in \N^2$, we have 
\begin{equation}\label{appendix:ijk}
e^{J_{i}'}
\zeta_i^{\alpha_i}\zeta_{\bar{i}}^{\alpha_{\bar{i}}}
=\sum_{\ell=0}^{\min(\alpha_i,\alpha_{\bar{i}})}
 \frac{(-\alpha_i)_\ell(-\alpha_{\bar{i}})_\ell}{2^\ell \ell!}
\zeta_i^{\alpha_i-\ell}\zeta_{\bar{i}}^{\alpha_{\bar{i}}-\ell}\zeta_{2d+1}^{\ell}.
\end{equation}
Since
\begin{equation}\label{appendix:eJi}
e^{J_{i}'}
\Big(
\zeta_{2d+1}^r
\prod_{j=1}^d
\zeta_j^{\alpha_j}\zeta_{\bar{j}}^{\alpha_{\bar{j}}}
\Big)
 = 
\zeta_{2d+1}^r
\prod_{j\neq i}
\zeta_j^{\alpha_j}\zeta_{\bar{j}}^{\alpha_{\bar{j}}}
\cdot
e^{J_{i}'}
\big(\zeta_i^{\alpha_i}\zeta_{\bar{i}}^{\alpha_{\bar{i}}}\big),
\end{equation}
the operator $e^{J_{i}'}$ is well-defined  on $\Pol(\frakh_{2d+1}^\vee)$.

Now we write
\begin{equation*}
J_{2d+1}
:=
\sum_{i=1}^d J_i'
\end{equation*}
and put
\begin{equation*}
e^{J_{2d+1}}
:=\sum_{\ell=0}^\infty \frac{(J_{2d+1})^\ell}{\ell!}.
\end{equation*}
Since the $J_i'$ commute with each other, we have 
\begin{equation}\label{appendix:eJd}
e^{J_{2d+1}}=\prod_{i=1}^d e^{J_i'}.
\end{equation}

\begin{proposition}\label{appendix:eJU}
With respect to the ordered basis 
$\ord$ in \eqref{eqn:ord-Heisenberg}, 
we have
\begin{equation*}
S_{\ord} =e^{J_{2d+1}}.
\end{equation*}
\end{proposition}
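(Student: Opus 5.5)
Since $\Upsilon_{\ord}$ is a linear isomorphism, the characterization $\Upsilon_{\ord}\circ S_{\ord}=\sym_{\ord}\circ\Upsilon_{\ord}$ determines $S_{\ord}$ uniquely. It therefore suffices to verify $\Upsilon_{\ord}\circ e^{J_{2d+1}}=\sym_{\ord}\circ\Upsilon_{\ord}$ on the monomial basis $\zeta_{2d+1}^r\prod_{j=1}^d\zeta_j^{k_j}\zeta_{\bar{j}}^{k_{\bar{j}}}$ of $\Pol(\frakh_{2d+1}^\vee)$. The argument mirrors that of Proposition \ref{prop:eJU}, with Proposition \ref{appendix:SymmetrizationMapIdentity} replacing Proposition \ref{Prop:SymmetrizationMapIdentity}.

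\emph{Right-hand side.} By \eqref{eqn:Uu}, $\Upsilon_{\ord}$ sends this monomial to the PBW element $\prod_j X_j^{k_j}\prod_j X_{\bar{j}}^{k_{\bar{j}}}X_{2d+1}^r$. Applying $\sym_{\ord}$ gives $\sym_n$ of that element, which Proposition \ref{appendix:SymmetrizationMapIdentity} evaluates as
\begin{equation*}
X_{2d+1}^r\prod_{j=1}^d\sum_{\ell_j=0}^{\min(k_j,k_{\bar{j}})}\frac{(-k_j)_{\ell_j}(-k_{\bar{j}})_{\ell_j}}{2^{\ell_j}\ell_j!}X_j^{k_j-\ell_j}X_{\bar{j}}^{k_{\bar{j}}-\ell_j}X_{2d+1}^{\ell_j}.
\end{equation*}
Expanding the product yields a sum over $(\ell_1,\dots,\ell_d)$.

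\emph{Left-hand side.} By \eqref{appendix:eJd}, $e^{J_{2d+1}}=\prod_i e^{J_i'}$. By \eqref{appendix:eJi}, each $e^{J_i'}$ acts only on the pair $\zeta_i,\zeta_{\bar{i}}$, treating $\zeta_{2d+1}$ and the remaining variables as constants. Applying the factors successively and using \eqref{appendix:ijk} produces the analogous sum over $(\ell_1,\dots,\ell_d)$ of
\begin{equation*}
\prod_j\frac{(-k_j)_{\ell_j}(-k_{\bar{j}})_{\ell_j}}{2^{\ell_j}\ell_j!}\,\zeta_{2d+1}^{r+\sum_j\ell_j}\prod_j\zeta_j^{k_j-\ell_j}\zeta_{\bar{j}}^{k_{\bar{j}}-\ell_j}.
\end{equation*}
One point needs checking here: a later factor $e^{J_i'}$ acts on the extra powers of $\zeta_{2d+1}$ created by earlier factors. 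This is harmless, because $J_i'$ differentiates only in $\zeta_i,\zeta_{\bar{i}}$ and merely multiplies by $\zeta_{2d+1}$.

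\emph{Comparison.} Applying $\Upsilon_{\ord}$ term by term reorders each monomial into PBW order, giving $\prod_jX_j^{\cdot}\prod_jX_{\bar{j}}^{\cdot}X_{2d+1}^{\cdot}$. On the right-hand side, $X_{2d+1}$ is central and $X_j,X_{\bar{k}}$ commute for $j\neq k$, so the product of the pairwise factors can be rearranged into the same PBW order with no correction terms. The coefficients then match term by term.

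The only genuinely delicate step is this reordering on the $U(\frakh)$ side. The products $\prod_j(X_j^{a_j}X_{\bar{j}}^{b_j})$ arise in interleaved order and must be brought into the order $X_1,\dots,X_d,X_{\bar{1}},\dots,X_{\bar{d}}$. This moves $X_{\bar{j}}$ past $X_k$ only for $k>j$, i.e.\ $k\neq j$, and those elements commute. So no commutators appear, and the identity holds on the basis and hence everywhere.
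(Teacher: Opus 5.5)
Your proposal is correct and follows essentially the same route as the paper. Both factor $e^{J_{2d+1}}=\prod_i e^{J_i'}$ via \eqref{appendix:eJd} and \eqref{appendix:eJi}, then match against \eqref{appendix:HeisSym1}--\eqref{appendix:HeisSym2} using \eqref{appendix:ijk}. The only difference is that you spell out the PBW reordering on the $U(\frakh_{2d+1})$ side, which the paper leaves implicit: no correction terms arise because $X_j$ and $X_{\bar{k}}$ commute for $j\neq k$ and $X_{2d+1}$ is central.
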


\begin{proof}
It follows from \eqref{appendix:eJi} and \eqref{appendix:eJd} that 
\begin{equation*}
e^{J_{2d+1}}
\Big(
\zeta_{2d+1}^r
\prod_{j=1}^d
\zeta_j^{\alpha_j}\zeta_{\bar{j}}^{\alpha_{\bar{j}}}
\Big)
 = 
\zeta_{2d+1}^r
\prod_{j=1}^d
e^{J_{j}'}
\big(\zeta_j^{\alpha_j}\zeta_{\bar{j}}^{\alpha_{\bar{j}}}\big).
\end{equation*}
Now \eqref{appendix:HeisSym2} and \eqref{appendix:ijk} yield the desired assertion.
\end{proof}

For $j=1,2,\ldots, d$, 
write $\valpha_j:=(\alpha_j, \alpha_{\bar{j}}) \in \N^2$.
Then, for $k_j =0,1,\ldots, \min(\valpha_j)$, we put 
\begin{equation*}
\psi_{\valpha_j,k_j}
:=\zeta_j^{\alpha_j-k_j}\zeta_{\bar{j}}^{\alpha_{\bar{j}}-k_{j}}\zeta_{2d+1}^{k_j}
\end{equation*}
and
\begin{equation*}
\Pol(\valpha_j):=
\Span_\F\{\psi_{\valpha_j,k_j} : k_j = 0, 1,\ldots, \min(\valpha_j)\}.
\end{equation*}
For $\valpha:=(\valpha_1,\valpha_2,\ldots, \valpha_d, \alpha_{2d+1}) \in \N^{2d+1}$,
we define
\begin{equation*}
\Pol(\valpha)
:=\Span_{\C}\Big\{\prod_{j=1}^d \psi_{\valpha_j,k_j}: 
\psi_{\valpha_j,k_j} \in \Pol(\valpha_j)\Big\}.
\end{equation*}

\begin{proposition}\label{appendix:symU}
The subspace $\Pol(\valpha)$ is $e^{J_{2d+1}}$-invariant.
\end{proposition}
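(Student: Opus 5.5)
The plan is to prove the invariance by a direct computation on spanning monomials, modelled on the three-dimensional case (Proposition \ref{prop:symU}). Note that $\Pol(\valpha)$ as defined is spanned by products $\prod_{j=1}^d \psi_{\valpha_j,k_j}$, so it suffices to show that $e^{J_{2d+1}}$ maps each such product into $\Pol(\valpha)$. (If $\alpha_{2d+1}$ is meant to record an extra factor $\zeta_{2d+1}^{\alpha_{2d+1}}$, the same argument applies, since $\zeta_{2d+1}$ is untouched by all the derivatives occurring in $J_{2d+1}$.)

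First, using \eqref{appendix:eJd}, write $e^{J_{2d+1}}=\prod_{i=1}^d e^{J_i'}$. The operators $e^{J_i'}$ commute, and $J_i'$ differentiates only in $\zeta_i,\zeta_{\bar i}$ while multiplying by $\zeta_{2d+1}$. Hence, by \eqref{appendix:eJi}, $e^{J_i'}$ acts only on the $i$-th factor, treating the other factors $\psi_{\valpha_j,k_j}$ ($j\neq i$) as constants. The point to check is that the powers of $\zeta_{2d+1}$ coming from the other factors do not interfere: $J_i'$ contains no derivative in $\zeta_{2d+1}$, so this holds.

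Next, for a single factor, compute
\[
e^{J_i'}\psi_{\valpha_i,k_i}=\zeta_{2d+1}^{k_i}\,e^{J_i'}\big(\zeta_i^{\alpha_i-k_i}\zeta_{\bar i}^{\alpha_{\bar i}-k_i}\big).
\]
By \eqref{appendix:ijk}, this equals a linear combination of the monomials $\zeta_i^{\alpha_i-k_i-\ell}\zeta_{\bar i}^{\alpha_{\bar i}-k_i-\ell}\zeta_{2d+1}^{k_i+\ell}=\psi_{\valpha_i,k_i+\ell}$ with $0\le \ell\le \min(\valpha_i)-k_i$. Therefore it lies in $\Pol(\valpha_i)$.

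Finally, apply the $e^{J_i'}$ one at a time. Each step replaces the $i$-th factor by an element of $\Pol(\valpha_i)$ and leaves the other factors unchanged, so the product remains in the span of $\prod_j\psi_{\valpha_j,k_j'}$, that is, in $\Pol(\valpha)$. The only point needing care is this bookkeeping of the shared variable $\zeta_{2d+1}$, which is where the multi-index structure could a priori break down. The alternative route via Corollary \ref{cor:G_orderization} would require identifying $\Pol(\valpha)$ as an isotypic component for a torus in $\Aut(\frakh_{2d+1})$; I would avoid it, because the direct computation is simpler.
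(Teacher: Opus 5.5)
Your proof is correct and is the same argument as the paper's, whose entire proof is the remark that the claim follows from \eqref{appendix:ijk}. The only difference is that you spell out the bookkeeping the paper leaves implicit: $J_i'$ commutes with multiplication by anything independent of $\zeta_i,\zeta_{\bar i}$, including the other factors and the powers of $\zeta_{2d+1}$. So each $e^{J_i'}$ sends $\psi_{\valpha_i,k_i}$ to a combination of the $\psi_{\valpha_i,k_i+\ell}$ and leaves the remaining factors untouched.
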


\begin{proof}
This simply follows from \eqref{appendix:ijk}.
\end{proof}

We write
\begin{align*}
J'_{j}(\valpha)
&:=J'_{j} \big\vert_{\Pol(\valpha)},\\[3pt]
J_{2d+1}(\valpha)
&:=J_{2d+1} \big\vert_{\Pol(\valpha)}.
\end{align*}
Then, by Propositions \ref{appendix:eJU} and \ref{appendix:symU}, we have 
\begin{equation}\label{eqn:symU1}
\sym_{\ord} \circ \Upsilon_{\ord}\vert_{\Pol(\valpha)} 
= \Upsilon_{\ord} \circ e^{J_{2d+1}(\valpha)}.
\end{equation}
Equivalently, the following diagram commutes.
\begin{equation}\label{appendix:diagram0}
\begin{tikzcd}[row sep=1cm, column sep=1cm]
\Pol(\valpha)
\arrow[r, "\Upsilon_{\ord}", hook]
& 
U(\frakh)
\arrow[dl, pos=0.5, phantom, "\circlearrowleft"]
\\
\Pol(\valpha)
\arrow[u, "e^{J_{2d+1}(\valpha)}","\sim"' sloped]
\arrow[r, "\Upsilon_{\ord}"', hook]
& U(\frakh)
\arrow[u, "\sim" sloped, "\sym_{\ord}"']
\end{tikzcd}
\end{equation}

%%%%%%%%%%%%%%%%%%%%%%%%%%%%%%%%%%%%%%%%%
\subsection{T-saturation}
\label{appendix:T-saturation}

As in \eqref{eqn:Tsat}, the space $\Pol(\valpha_j)$ can be identified with 
$\Pol_{\min(\valpha_j)}[t_j]$ via a T-saturation map
\begin{equation}\label{appendix:Tsat}
T_{\valpha_j}\colon
\Pol_{\min(\valpha_j)}[t_j]\xrightarrow{\sim}\Pol(\valpha_j),\quad 
p(t_j)\mapsto 
T_{\valpha_j}(p)(\zeta),
\end{equation}
where
\begin{equation*}
T_{\valpha_j}(p)(\zeta)
:=
\zeta_j^{\alpha_j}\zeta_{\bar{j}}^{\alpha_{\bar{j}}}p\Big(\frac{\zeta_{2d+1}}{\zeta_j\zeta_{\bar{j}}}\Big).
\end{equation*}
Define a subspace 
$\Pol_{\valpha}[t] \subset \Pol[t]:=\Pol[t_1,t_2,\ldots, t_d]$
 as
\begin{equation*}
\Pol_{\valpha}[t]:=
\Span_\F\big\{\prod_{j=1}^{d} p_j(t_j): p_j(t_j) \in \Pol_{\min(\valpha_j)}[t_j]\big\}.
\end{equation*}
Then the map
\begin{equation}\label{appendix:Tsat2}
T_{\valpha}\colon
\Pol_{\valpha_j}[t]\xrightarrow{\sim}\Pol(\valpha),\quad 
\prod_{j=1}^{d} p_j(t_j)\mapsto
\prod_{j=1}^{d} T_{\valpha_j}(p_j)(\zeta)
\end{equation}
is a T-saturation map for $\Pol(\valpha)$.

We wish to compute the T-saturation 
$T_{\valpha}^\sharp (e^{ J_{2d+1}(\valpha)})$ of $e^{J_{2d+1}(\valpha)}$
with respect to $T_{\valpha}$ in \eqref{appendix:Tsat2}.
To that end, 
for $\valpha_j=(\alpha_j,\alpha_{\bar{j}}) \in \NN^2$, we put
\begin{equation}\label{appendix:Da}
\wJ'_j(\valpha_j):=\frac{t_j}{2}(\vartheta_j-\alpha_j)(\vartheta_{\bar{j}}-\alpha_{\bar{j}}),
\end{equation}
where $\vartheta_j:=t_j \frac{d}{d t_j}$.
Then, as in \eqref{eqn:D1}, we have
\begin{equation}\label{appendix:D1}
\wJ'_j(\valpha)^\ell t_j^{k_j} 
=\frac{(k_j-\alpha_j)_\ell(k_j-\alpha_{\bar{j}})_\ell}{2^\ell}
t_j^{k_j+\ell},
\end{equation}
so, $\wJ_j'(\valpha) \in \End_\CC(\Pol_{\min(\valpha_j)}[t_j])$.
Then define 
$e^{\wJ_j'(\valpha_j)} \in GL(\Pol_{\min(\valpha_j)}[t_j])$ by
\begin{equation*}
e^{\wJ_j'(\valpha_j)}
:=\sum_{\ell=0}^\infty \frac{\wJ_j'(\valpha_j)^\ell}{\ell!}.
\end{equation*}
We write
\begin{equation*}
\wJ_{2d+1}(\valpha):=\sum_{j=1}^{d}\wJ'_j(\valpha_j)
\end{equation*}
and put
\begin{equation*}
e^{\wJ_{2d+1}(\valpha)}
:=\sum_{\ell=0}^\infty \frac{\wJ_{2d+1}(\valpha)^\ell}{\ell!}.
\end{equation*}
As for $e^{J_{2d+1}}$ in \eqref{appendix:eJd}, 
since the $\wJ'_j(\valpha_j)$ commute with each other, we have 
\begin{equation}\label{appendix:e_wJ_j}
e^{\wJ_{2d+1}(\valpha)}=\prod_{i=1}^d e^{\wJ_j'(\valpha_j)}.
\end{equation}

\begin{proposition}\label{appendix:DD}
We have 
\begin{equation*}
e^{\wJ_{2d+1}(\valpha)}=T_{\valpha}^\sharp (e^{ J_{2d+1}(\valpha)}).
\end{equation*}
\end{proposition}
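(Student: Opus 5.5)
We reduce the statement to the three-dimensional case, Proposition~\ref{prop:DD}, by exploiting that everything factorizes over the index $j$. The T-saturation map $T_{\valpha}$ in \eqref{appendix:Tsat2} is the tensor product of the one-variable saturation maps $T_{\valpha_j}$. On the other side, $e^{J_{2d+1}(\valpha)}=\prod_j e^{J'_j(\valpha)}$ by \eqref{appendix:eJd}, and each $J'_j$ involves only the variables $\zeta_j,\zeta_{\bar j}$ together with multiplication by $\zeta_{2d+1}$. The claim therefore reduces to the single-factor identity
\[
T_{\valpha}\circ \wJ'_j(\valpha_j) = J'_j(\valpha)\circ T_{\valpha}\quad (j=1,\ldots,d).
\]
Once this holds, it passes to powers and hence to the exponentials (both series terminate on the finite-dimensional spaces in question). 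Combining with \eqref{appendix:e_wJ_j} then gives $T_{\valpha}^\sharp(e^{J_{2d+1}(\valpha)})=e^{\wJ_{2d+1}(\valpha)}$.

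We verify the single-factor identity on the monomial basis $\prod_i t_i^{k_i}$ with $k_i\le\min(\valpha_i)$. We have $T_{\valpha}(\prod_i t_i^{k_i})=\prod_i \zeta_i^{\alpha_i-k_i}\zeta_{\bar i}^{\alpha_{\bar i}-k_i}\zeta_{2d+1}^{k_i}$, so the total power of $\zeta_{2d+1}$ is $\sum_i k_i$. Applying $J'_j=\frac{\zeta_{2d+1}}{2}\partial_{\zeta_j}\partial_{\zeta_{\bar j}}$ differentiates only the $j$-th factor. It multiplies by $\frac12(\alpha_j-k_j)(\alpha_{\bar j}-k_j)=\frac12(k_j-\alpha_j)(k_j-\alpha_{\bar j})$, lowers the exponents of $\zeta_j,\zeta_{\bar j}$ by one, and raises the exponent of $\zeta_{2d+1}$ by one. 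The result equals $T_{\valpha}$ applied to $\frac12(k_j-\alpha_j)(k_j-\alpha_{\bar j})\,t_j^{k_j+1}\prod_{i\ne j}t_i^{k_i}$, which by \eqref{appendix:D1} with $\ell=1$ is exactly $T_{\valpha}(\wJ'_j(\valpha_j)\prod_i t_i^{k_i})$. The power of $\zeta_{2d+1}$ bookkeeps correctly because it is shared multiplicatively across factors.

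The main point to watch is notational. In \eqref{appendix:Da}, $\vartheta_j$ and $\vartheta_{\bar j}$ should both be read as the Euler operator $t_j\frac{d}{dt_j}$, since there is only one variable $t_j$ per pair; with that reading \eqref{appendix:D1} holds. We must also check that $\wJ'_j$ preserves $\Pol_{\valpha}[t]$, i.e.\ that the degree in $t_j$ does not exceed $\min(\valpha_j)$. This holds because the coefficient vanishes when $k_j=\min(\valpha_j)$. Beyond this, the argument is routine and parallels the proof of Proposition~\ref{prop:DD}.
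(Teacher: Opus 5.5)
Your proposal is correct and follows the paper's proof. The paper likewise reduces the claim to the generator identity $\wJ_{2d+1}(\valpha)=T_{\valpha}^\sharp J_{2d+1}(\valpha)$ and checks it via \eqref{appendix:ijk} and \eqref{appendix:D1}, which is your factor-by-factor monomial computation summed over $j$; your reading of $\vartheta_{\bar j}$ in \eqref{appendix:Da} as $t_j\frac{d}{dt_j}$ is the intended one, since that is what makes \eqref{appendix:D1} hold.
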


\begin{proof}
It suffices to show
\begin{equation*}
\wJ_{2d+1}(\valpha)=T_{\valpha}^\sharp J_{2d+1}(\valpha).
\end{equation*}
The lemma then immediately follows from \eqref{appendix:ijk} and \eqref{appendix:D1}.
\end{proof}

By Proposition \ref{appendix:DD},
the following commutative diagram  holds.
\begin{equation}\label{appendix:diagram01}
\begin{tikzcd}[row sep=1cm, column sep=1cm]
\Pol_{\valpha}[t]
\arrow[dr, pos=0.2, phantom, "\circlearrowleft"]
\arrow[r, "T_{\valpha}", "\sim"']
& 
\Pol(\valpha)
\arrow[r, "\Upsilon_{\ord}", hook]
& 
U(\frakh_{2d+1})
\arrow[dl, pos=0.5, phantom, "\circlearrowleft"]
\\
\Pol_{\valpha}[t]
\arrow[u, "e^{\wJ_{2d+1}(\valpha)}","\sim"' sloped]
\arrow[r,  "\sim", "T_{\valpha}"']
& 
\Pol(\valpha)
\arrow[u, "e^{ J_{2d+1}(\valpha)}","\sim"' sloped]
\arrow[r, "\Upsilon_{\ord}"', hook]
& U(\frakh_{2d+1})
\arrow[u, "\sim" sloped, "\sym_{\ord}"']
\end{tikzcd}
\end{equation}

\begin{remark}\label{appendix:Heis_2F0}
For each $j=1,\ldots, d$,  the images of $1$ under the maps
 in \eqref{appendix:diagram01} are traced as 
follows.
\begin{equation*}
\begin{tikzcd}[row sep=1cm, column sep=1cm]
{}_2F_0[-\alpha_j, -\alpha_{\bar{j}};\tfrac{t_j}{2}]
\arrow[r, "T_{\valpha_j}", mapsto]
& 
T_{\valpha_j}({}_2F_0[-\alpha_j, -\alpha_{\bar{j}};\tfrac{t_j}{2}])
\arrow[dl, pos=0.5, phantom, "\circlearrowleft"]
\arrow[r, "\Upsilon_{\ord}", mapsto]
& 
\sym_{\alpha_j+\alpha_{\bar{j}}}(X_j^{\alpha_j}X_{\bar{j}}^{\alpha_{\bar{j}}})
\arrow[dl, pos=0.5, phantom, "\circlearrowleft"]
\\
1
\arrow[u, "e^{\wJ'_j(\valpha_j)}", mapsto]
\arrow[r, "T_{\valpha_j}"', mapsto]
& 
\zeta_j^{\alpha_j}\zeta_{\bar{j}}^{\alpha_{\bar{j}}}
\arrow[u, "e^{J_j'(\valpha)}",mapsto]
\arrow[r, "\Upsilon_{\ord}"', mapsto]
& X_j^{\alpha_j}X_{\bar{j}}^{\alpha_{\bar{j}}}
\arrow[u,  "\sym_{\ord}"', mapsto]
\end{tikzcd}
\end{equation*}
Moreover, by \eqref{appendix:D1} and \eqref{appendix:e_wJ_j}, we have
\begin{equation*}
e^{\wJ_{2d+1}(\valpha)} 1
= \prod_{j=1}^d
\sum_{\ell=0}^{\min(\valpha_j)}
\frac{(-\alpha_j)_\ell(-\alpha_{\bar{j}})_\ell}{2^\ell \ell!}t_j^\ell
=\prod_{j=1}^d {}_2F_0[-\alpha_j, -\alpha_{\bar{j}};\tfrac{t_j}{2}].
\end{equation*}
\end{remark}

%%%%%%%%%%%%%%%%%%%%%%%%%%%%%%%%%%%%%%%%%
\section{Cayley continuants and Proof of Proposition \texorpdfstring{\ref{Prop:SolGeneralDiffEq1}}{8.12}}
\label{appendix:Cayely}

The aim of this section is to give a quick overview of a certain family of polynomials 
$\Cay_n(x,y)$ called the Cayley continuants. We also give a proof of 
Proposition \ref{Prop:SolGeneralDiffEq1}, which concerns these polynomials.

%%%%%%%%%%%%%%%%%%%%%%%%%%%%%%%%%%%%%%%%%
\subsection{Cayley continuants}
\label{appendix:CayleyCont}

We start with the definition of the Cayley continuants $\Cay_n(x,y)$. 
The \emph{Cayley continuants} $\Cay_n(x,y)$ for $m\in \NN_0$ are
the following polynomials of two variables $x,y$
(cf.\ \cite{Cayley58,MT05}):
\begin{itemize}
\item $n=0$: $\Cay_0(x,y)=1$, \vspace{3pt}
\item $n=1$: $\Cay_1(x,y)=x$, \vspace{3pt}
\item $n\geq 2$: 
\begin{equation}\label{eqn:Cayley}
\Cay_n(x,y)=\det \begin{pmatrix}
    x & 1 &     &    & \\
    y &  x & 2 &    &\\
        & y-1 & x & 3 & \\
    & & \ddots & \ddots & \ddots &\\
    & & & y-n+3 & x & n-1\\
    & & & &y-n+2 & x
\end{pmatrix}.
\end{equation}
\end{itemize}

It is known that the Cayley continuants $\Cay_n(x,y)$ enjoy the following 
generating function (cf.\ \cite[(4)]{MT05}):
\begin{equation}\label{eqn:gen_Cay}
(1+t)^{\frac{y+x}{2}}(1-t)^{\frac{y-x}{2}} = 
\sum_{m=0}^\infty\Cay_n(x,y) \frac{t^n}{n!}.
\end{equation}
It follows from \eqref{eqn:gen_Cay} that 
$\Cay_n(x,y)$ can be expressed in terms of the Pochhammer symbol $(x)_n=x(x+1)\cdots(x+n-1)=\frac{\Gamma(x+n)}{\Gamma(x)}$ as
\begin{equation}\label{eq:CayleyContExpansion}
\Cay_n(x,y)=\sum_{j=0}^n(-1)^j\binom{n}{j}\Big(\frac{x-y}{2}\Big)_{n-j}\Big(-\frac{x+y}{2}\Big)_{j}.
\end{equation}
In particular, we have
\begin{equation*}
\Cay_n(y,y)= (-1)^n(-y)_n
\quad
\text{and}
\quad
\Cay_n(-y,y)=(-y)_n.
\end{equation*}
For more details, see, for instance, \cite{MT05}.

%%%%%%%%%%%%%%%%%%%%%%%%%%%%%%%%%%%%%%%%%
\subsection{Proof of Proposition \texorpdfstring{\ref{Prop:SolGeneralDiffEq1}}{8.12}}
\label{appendix:CayleyCont3}

We next prove Proposition \ref{Prop:SolGeneralDiffEq1}.
Recall from Section \ref{sec:T-saturation} that,
for $\alpha=(\alpha_1,\alpha_2) \in \NN^2$, we write
\begin{equation}\label{eqn:Da2}
\wJ(\alpha)=
\wJ(\alpha_1, \alpha_2):=\frac{t}{2}(\vartheta_t-\alpha_1)(\vartheta_t-\alpha_2),
\end{equation}
and the operator
$e^{\wJ(\alpha)} \in GL(\Pol_{\min(\alpha)}[t])$ is defined by
\begin{equation*}
e^{\wJ(\alpha)}=
e^{\wJ(\alpha_1, \alpha_2)}
:=\sum_{\ell=0}^\infty \frac{\wJ(\alpha_1,\alpha_2)^\ell}{\ell!}.
\end{equation*}
For $k,\ell\in \N$, we have
\begin{equation}\label{eqn:wJtk}
\wJ(\alpha_1, \alpha_2)^\ell t^k 
=\frac{(k-\alpha_1)_\ell(k-\alpha_2)_\ell}{2^\ell}
t^{k+\ell},
\end{equation}
where $(x)_\ell:=x(x+1) \cdots (x+\ell-1) = \frac{\Gamma(x+\ell)}{\Gamma(x)}$.
Thus, for $k=0, 1\ldots, \min(\alpha)$, we have
\begin{equation}\label{eqn:eJk}
e^{\tilde{J}(\alpha_1, \alpha_2)}t^k
= \sum_{n=0}^{\infty}\frac{(k-\alpha_1)_n(k-\alpha_2)_n}{2^nn!}t^{n+k}
= \sum_{n=0}^{\min(\alpha)-k}\frac{(k-\alpha_1)_n(k-\alpha_2)_n}{2^nn!}t^{n+k}.
\end{equation}

\begin{proposition}\label{prop:Cayley_and_3F1_relation}
Let $a, b, c \in \C$, $\ell \in \N$ and $\alpha = (\alpha_1, \alpha_2), \beta = (\beta_1, \beta_2) \in \N^2$.
Then, the following identities hold whenever ${}_3F_1$ is 
well-defined.
\begin{align}\label{eq:Cayley_and_3F1_relation}
e^{-\wJ(\alpha_1, \alpha_2)}{}_3F_1\left[\begin{matrix}
    -\alpha_1, -\alpha_2, \frac{a-b}{2}\\
    -b
\end{matrix}; t\right] = \sum_{n=0}^{\min(\alpha)} \frac{(-\alpha_1)_n(-\alpha_2)_n}{2^n n! (-b)_n}\Cay_n(a,b)t^n,\\
\label{eq:Cayley_and_3F1_relation_2}
e^{-\wJ(\beta_1 + \ell, \beta_2 + \ell)}t^{\ell}{}_3F_1\left[\begin{matrix}
    -\beta_1, -\beta_2, \frac{a-c}{2}\\
    -c
\end{matrix}; t\right] = t^\ell \sum_{n=0}^{\min(\beta)} \frac{(-\beta_1)_n(-\beta_2)_n}{2^n n! (-c)_n}\Cay_n(a,c)t^n.
\end{align}
\end{proposition}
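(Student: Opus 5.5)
The plan is to prove \eqref{eq:Cayley_and_3F1_relation} by comparing coefficients of $t^m$ on both sides, and then to reduce \eqref{eq:Cayley_and_3F1_relation_2} to the same computation. Since $e^{-\wJ(\alpha)}$ is the inverse of $e^{\wJ(\alpha)}$ and the $\wJ(\alpha)^\ell$ act diagonally up to shift, \eqref{eqn:wJtk} gives, for $0\le k\le\min(\alpha)$,
\begin{equation*}
e^{-\wJ(\alpha_1,\alpha_2)}t^k=\sum_{n\ge 0}\frac{(-1)^n(k-\alpha_1)_n(k-\alpha_2)_n}{2^n n!}\,t^{n+k}.
\end{equation*}
Apply this termwise to ${}_3F_1[-\alpha_1,-\alpha_2,\tfrac{a-b}{2};-b;t]=\sum_k \frac{(-\alpha_1)_k(-\alpha_2)_k(\frac{a-b}{2})_k}{(-b)_k k!}t^k$. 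For the coefficient of $t^m$ with $m=n+k$, use the Pochhammer identity $(-\alpha_i)_k(k-\alpha_i)_{m-k}=(-\alpha_i)_m$. This pulls out the common factor $(-\alpha_1)_m(-\alpha_2)_m/(2^m m!)$. It also shows that the sum truncates at $m\le\min(\alpha)$, matching the right-hand side.

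What remains to prove is the scalar identity
\begin{equation*}
\frac{\Cay_m(a,b)}{(-b)_m}=\sum_{k=0}^m\binom{m}{k}(-1)^{m-k}2^k\frac{(\frac{a-b}{2})_k}{(-b)_k}.
\end{equation*}
Multiplying by $(-b)_m$ and using $(-b)_m/(-b)_k=(k-b)_{m-k}$, this becomes the polynomial identity
\begin{equation*}
\Cay_m(a,b)=\sum_{k=0}^m\binom{m}{k}(-1)^{m-k}2^k\Big(\frac{a-b}{2}\Big)_k(k-b)_{m-k}.
\end{equation*}
I will prove it from the generating function \eqref{eqn:gen_Cay}. Set $u=2t/(1+t)$, so that $1-t=(1+t)(1-u)$. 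Then
\begin{equation*}
(1+t)^{\frac{a+b}{2}}(1-t)^{\frac{b-a}{2}}=(1+t)^b(1-u)^{-\frac{a-b}{2}}=\sum_k\frac{(\frac{a-b}{2})_k}{k!}2^kt^k(1+t)^{b-k}.
\end{equation*}
Extracting the coefficient of $t^m$ requires $\binom{b-k}{m-k}(m-k)!=(-1)^{m-k}(k-b)_{m-k}$, and multiplying by $m!$ then gives exactly the displayed sum.

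This change of variables is the main step. The rest is Pochhammer bookkeeping, where the one thing to watch is the signs.

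The identity should be read for $b$ with $(-b)_k\neq0$ for all $k\le\min(\alpha)$; this is what ``whenever ${}_3F_1$ is well-defined'' means here. Both sides of the displayed polynomial identity are polynomial in $(a,b)$, so no genericity argument is needed at that stage.

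For \eqref{eq:Cayley_and_3F1_relation_2}, I will apply $e^{-\wJ(\beta_1+\ell,\beta_2+\ell)}$ to $t^{\ell+k}$. Since $(\ell+k-(\beta_i+\ell))_n=(k-\beta_i)_n$, the computation is identical to the previous one with $\alpha$ replaced by $\beta$ and an overall factor $t^\ell$. It is therefore the first identity with $(\alpha,b)\to(\beta,c)$, multiplied by $t^\ell$.
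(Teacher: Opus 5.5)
Your proposal is correct. Its first half coincides with the paper's proof: termwise application of $e^{-\wJ(\alpha)}$ via \eqref{eqn:wJtk}, the factorization $(-\alpha_i)_k(k-\alpha_i)_{m-k}=(-\alpha_i)_m$, and the reduction of \eqref{eq:Cayley_and_3F1_relation_2} to \eqref{eq:Cayley_and_3F1_relation} via $(\ell+k-(\beta_i+\ell))_n=(k-\beta_i)_n$.

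The difference is in how you prove the remaining scalar identity $\Cay_m(a,b)/(-b)_m=(-1)^m\,{}_2F_1[-m,\tfrac{a-b}{2};-b;2]$.
\begin{itemize}
\item The paper rewrites $\Cay_m(a,b)$ via \eqref{eq:CayleyContExpansion} as $(\tfrac{a-b}{2})_m\,{}_2F_1[-m,-\tfrac{a+b}{2};1-\tfrac{a-b}{2}-m;-1]$ and then quotes a terminating ${}_2F_1$ transformation from DLMF.
\item You extract the coefficient of $t^m$ from the generating function \eqref{eqn:gen_Cay} after the substitution $u=2t/(1+t)$. This is in effect a self-contained proof of that transformation in exactly the case needed.
\end{itemize}

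The paper's route is shorter once the formula is cited. However, its intermediate ${}_2F_1$ with lower parameter $1-\tfrac{a-b}{2}-m$, and the hypotheses of the cited formula, only make sense for generic parameters, so a tacit continuity argument is needed. Your route gives the polynomial identity $\Cay_m(a,b)=\sum_{k}\binom{m}{k}(-1)^{m-k}2^k(\tfrac{a-b}{2})_k(k-b)_{m-k}$ for all $(a,b)$, using only \eqref{eqn:gen_Cay}. The sole restriction is then $(-b)_k\neq 0$ when you divide at the end.

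Your form also clarifies a degenerate case the paper needs (for (C3)) but does not discuss, and which your reading of ``well-defined'' excludes. Take $b\in[0,\min(\alpha)-1]\cap\Z$ and $\tfrac{a-b}{2}\in[-b,0]\cap\Z$. Then ${}_3F_1$ terminates, but for $m>b$ both $(-b)_m$ and $\Cay_m(a,b)$ vanish, since every term of your sum contains a zero factor. Your computation shows the identity still holds if $\Cay_m(a,b)/(-b)_m$ is read as the terminating sum $\sum_{k\le -\frac{a-b}{2}}\binom{m}{k}(-1)^{m-k}2^k\frac{(\frac{a-b}{2})_k}{(-b)_k}$, which is the limit with $\tfrac{a-b}{2}$ held fixed.
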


\begin{proof} We start with \eqref{eq:Cayley_and_3F1_relation}.
Firstly, it follows from \eqref{eq:CayleyContExpansion} that
$\Cay_n(a,b)$ can be rewritten as
\begin{equation*}
\Cay_n(a,b) = \left(\frac{a-b}{2}
\right)_n {}_2F_1\left[\begin{matrix}
     -n, \frac{-a-b}{2}\\
    1-\frac{a-b}{2}-n
\end{matrix}; -1 \right].
\end{equation*}
Therefore, the right-hand side of \eqref{eq:Cayley_and_3F1_relation} is given as
\begin{align*}
\text{(RHS)}
&=\sum_{n=0}^{\min(\alpha)} \frac{(-\alpha_1)_n(-\alpha_2)_n}{2^n n! (-b)_n}\Cay_n(a,b)t^n\\[3pt]
&=\sum_{n=0}^{\min(\alpha)} \frac{(-\alpha_1)_n(-\alpha_2)_n}{2^n n! (-b)_n}\left(\frac{a-b}{2}
\right)_n {}_2F_1\left[\begin{matrix}
     -n, \frac{-a-b}{2}\\
    1-\frac{a-b}{2}-n
\end{matrix}; -1 \right]t^n.
\end{align*}
On the other hand, by \eqref{eqn:eJk} and the property 
$(x)_r(x+r)_k = (x)_{r+k}$ of the Pochhammer symbol, 
the left-hand side can be computed as
\begin{align*}
\text{(LHS)}
&=e^{-\wJ(\alpha_1, \alpha_2)}{}_3F_1\left[\begin{matrix}
    -\alpha_1, -\alpha_2, \frac{a-b}{2}\\
    -b
\end{matrix}; t\right]\\[3pt]
&\sum_{k=0}^{\min(\alpha)}\frac{(-\alpha_1)_k(-\alpha_2)_k(\frac{a-b}2)_k}{(-b)_k k!}\left(\sum_{r=0}^{\min(\alpha)-k}\frac{(k-\alpha_1)_r(k-\alpha_2)_r}{2^r r!}(-1)^r t^{k+r}\right) \\[3pt]
=& 
\sum_{n=0}^{\min(\alpha)} \sum_{k=0}^{n} \frac{(-\alpha_1)_k(-\alpha_2)_k(k-\alpha_1)_{n-k}(k-\alpha_2)_{n-k}(\frac{a-b}2)_k (-1)^{n-k}}{2^{n-k}k!(-b)_k(n-k)!}t^n\\[3pt]
=& \sum_{n=0}^{\min(\alpha)}\frac{(-\alpha_1)_n(-\alpha_2)_n}{2^n n!} (-1)^n {}_2F_1\left[\begin{matrix}
-n, \frac{a-b}{2}\\
-b
\end{matrix}; 2
\right]t^n.
\end{align*}
The lemma follows now from the following transformation formula for ${}_2F_1$ 
for $n \in \NN$ and $x, y \in \C$ with $y, 1-x-n <-n$
(cf.\ \cite[Eq.\ (15.8.6)]{DLMF}):
\begin{equation*}
{}_2F_1 \left[\begin{matrix}
-n, x\\
y
\end{matrix}; t
\right] = \frac{(x)_n}{(y)_n}(1-t)^n
{}_2F_1 \left[\begin{matrix}
-n, y-x\\
1-x-n
\end{matrix}; \frac{1}{1-t}
\right].
\end{equation*}

To show the second identity
\eqref{eq:Cayley_and_3F1_relation_2},
observe that, for $p(t) \in \Pol_{\min(\beta)}[t]$,
we have 
\begin{equation*}
(-\wJ(\beta_1+\ell, \beta_2+\ell)^nt^\ell)p(t) 
=t^\ell(-\wJ(\beta_1, \beta_2)^n)p(t) 
\end{equation*}
for any $n, \ell \in \NN$ by \eqref{eqn:wJtk}, which implies that
\begin{equation}\label{eqn:etn}
\big(e^{-\wJ(\beta_1+\ell, \beta_2+\ell)} t^\ell)
p(t)
= \big(t^\ell e^{-\wJ(\beta_1, \beta_2)}\big)p(t).
\end{equation}
The identity \eqref{eq:Cayley_and_3F1_relation_2}
is then a direct consequence of the first one and \eqref{eqn:etn}.
This completes the proof.
\end{proof}

Recall from \eqref{eqn:pab} that we define a polynomial $p^{(\alpha_1, \alpha_2)}_{a,b}(t) \in \Pol_{\min(\alpha)}[t]$ as 
\begin{equation*}
p^{(\alpha_1, \alpha_2)}_{a,b}(t)
:=\sum_{n=0}^{\min(\alpha)}\frac{(-\alpha_1)_n(-\alpha_2)_n}{2^n n!(-b)_n}
\Cay_n(a,b)\,t^n.
\end{equation*}
In terms of $p^{(\alpha_1, \alpha_2)}_{a,b}(t)$, the identities
\eqref{eq:Cayley_and_3F1_relation} and \eqref{eq:Cayley_and_3F1_relation_2}
are given as
\begin{align}
p^{(\alpha_1, \alpha_2)}_{a,b}(t)&=
e^{-\wJ(\alpha_1, \alpha_2)}{}_3F_1\left[\begin{matrix}
    -\alpha_1, -\alpha_2, \frac{a-b}{2}\\
    -b
\end{matrix}; t\right],\label{eqn:p3F1}\\[3pt]
t^\ell p^{(\beta_1, \beta_2)}_{a,c}(t)&=e^{-\wJ(\beta_1 + \ell, \beta_2 + \ell)}t^{\ell}{}_3F_1\left[\begin{matrix}
    -\beta_1, -\beta_2, \frac{a-c}{2}\\
    -c
\end{matrix}; t\right]. \nonumber
\end{align}

%%%%%%%%%%%%%%%%%%%%%%%%%%%%%%%%%%%%%%%%%
\section{Alternative proof of Corollary \texorpdfstring{\ref{cor:Sol2F0}}{9.27}}
\label{appendix:2F0}

The aim of this section is to give an alternative proof of Corollary \ref{cor:Sol2F0},
that is, the space
$\wSol_{\Ad(e^{\wJ(\alpha)}) \calP,\, 1}^{(\alpha)}(\xi,\lambda)$
of polynomial solutions
in $\Pol_{\min(\alpha)}[t]$ 
is classified as follows.
\begin{equation}\label{eqn:CorSol}
\wSol_{\Ad(e^{\wJ(\alpha)}) \calP,\, 1}^{(\alpha)}(\xi,\lambda)
=
\begin{cases}
\CC\, 
{}_2F_0[-\alpha_1,\tfrac{a_{0,1}-\alpha_2}{2};t]
& 
\textnormal{if (A1) or (A2) holds,}\\[3pt]
\{0\} & \textnormal{otherwise,}
\end{cases}
\end{equation}
where (A1) and (A2) are some conditions on 
 $(\alpha_1, \alpha_2, a_{0,1})$ given in Section \ref{sec:Thms12B}.

We start with a lemma playing a key role in the alternative proof.

\begin{lemma}\label{lem:eDD}
As elements of $GL(\Pol_{\min(\alpha)}[t])$ we have the following identity:
\begin{equation*}
e^{\tilde{J}(\alpha_1,\alpha_2+1)}e^{-\tilde{J}(\alpha_1,\alpha_2)}=1+\tfrac{t}{2}(\alpha_1-\vartheta_t).
\end{equation*}
\end{lemma}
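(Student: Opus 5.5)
The plan is to verify the identity directly on the monomial basis $\{t^k : 0\le k\le \min(\alpha)\}$ of $\Pol_{\min(\alpha)}[t]$. Structurally, the key observation is
\[
\wJ(\alpha_1,\alpha_2+1)=\wJ(\alpha_1,\alpha_2)-\tfrac{t}{2}(\vartheta_t-\alpha_1).
\]
However, the two summands do not commute, so a BCH-type argument would be messy. I will instead compute coefficients explicitly, using \eqref{eqn:wJtk} and \eqref{eqn:eJk}.

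\textbf{Step 1.} Fix $k$. By \eqref{eqn:wJtk},
\[
e^{-\wJ(\alpha_1,\alpha_2)}t^k=\sum_{n}\frac{(-1)^n(k-\alpha_1)_n(k-\alpha_2)_n}{2^n n!}\,t^{k+n}.
\]
Applying \eqref{eqn:wJtk} again with $(\alpha_1,\alpha_2+1)$ to each $t^{k+n}$ gives
\[
e^{\wJ(\alpha_1,\alpha_2+1)}t^{k+n}=\sum_r\frac{(k+n-\alpha_1)_r(k+n-\alpha_2-1)_r}{2^r r!}\,t^{k+n+r}.
\]
The sums are automatically finite because the Pochhammer factors $(k-\alpha_1)_n$ vanish once the degree exceeds $\alpha_1$. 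So no separate truncation argument is needed, and the composition is computed in $\C[t]$ and lands in $\Pol_{\min(\alpha)}[t]$ as soon as the final answer does.

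\textbf{Step 2.} Collect the coefficient of $t^{k+N}$. Using $(k-\alpha_1)_n(k-\alpha_1+n)_{N-n}=(k-\alpha_1)_N$, this coefficient is
\[
\frac{(k-\alpha_1)_N}{2^N N!}\sum_{n=0}^N(-1)^n\binom{N}{n}(x)_n(x+n-1)_{N-n},\qquad x:=k-\alpha_2 .
\]

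\textbf{Step 3.} Evaluate the inner sum. This is the only real step; it is elementary but needs the right rewriting. I will use
\[
(x)_n(x+n-1)_{N-n}=(x+n-1)\,(x)_{N-1}\qquad (N\ge1),
\]
which follows by splitting off the first factor of $(x+n-1)_{N-n}$. The sum then becomes
\[
(x)_{N-1}\sum_n(-1)^n\binom{N}{n}(x-1+n).
\]
This is an $N$-th finite difference of a linear polynomial in $n$, so it vanishes for $N\ge2$. For $N=1$ it equals $(x-1)-x=-1$, and for $N=0$ the sum is $1$.

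\textbf{Step 4.} Combine. The total is
\[
t^k-\tfrac{1}{2}(k-\alpha_1)t^{k+1}=\bigl(1+\tfrac{t}{2}(\alpha_1-\vartheta_t)\bigr)t^k,
\]
which proves the lemma.

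The main obstacle is Step 3, namely spotting that the mixed Pochhammer product collapses to a linear function of $n$ times $(x)_{N-1}$. If that rewriting failed, the fallback would be to treat the sum as a terminating ${}_2F_1$ at argument $1$ and evaluate it by Chu--Vandermonde.
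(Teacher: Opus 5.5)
Your proof is correct and follows the paper's argument essentially step by step: expand both exponentials on $t^k$ via \eqref{eqn:wJtk}, absorb $(k-\alpha_1)_n(k-\alpha_1+n)_{N-n}=(k-\alpha_1)_N$, rewrite the mixed Pochhammer product as a fixed Pochhammer symbol times a linear function of $n$, and observe that the alternating binomial sum vanishes for $N\ge 2$. The only cosmetic difference is in the middle step: the paper writes that product as $\frac{(k-\alpha_2-1)_N}{k-\alpha_2-1}(n+k-\alpha_2-1)$, which divides by $k-\alpha_2-1\neq 0$, whereas your form $(x)_{N-1}(x+n-1)$ avoids the division altogether.
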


\begin{proof}
Take $t^k \in \Pol_{\min(\alpha)}[t]$. We wish to prove
\begin{equation*}
e^{\tilde{J}(\alpha_1,\alpha_2+1)}e^{-\tilde{J}(\alpha_1,\alpha_2)}t^k
=\big(1+\tfrac{t}{2}(\alpha_1-\vartheta_t)\big)t^k.
\end{equation*}
Clearly, we have 
\begin{equation*}
\big(1+\tfrac{t}{2}(\alpha_1-\vartheta_t)\big)t^k 
= \big(1+\tfrac{t}{2}(\alpha_1- k)\big)t^k.
\end{equation*}
Thus we aim to show
\begin{equation*}
e^{\tilde{J}(\alpha_1,\alpha_2+1)}e^{-\tilde{J}(\alpha_1,\alpha_2)}t^k
= \big(1+\tfrac{t}{2}(\alpha_1- k)\big)t^k.
\end{equation*}

In the following, we simply write finite sums as infinite sums as in \eqref{eqn:eJk}.
Then, by \eqref{eqn:eJk}, we have 
\begin{equation*}
e^{-\tilde{J}(\alpha_1, \alpha_2)}t^k
= \sum_{n=0}^{\infty}(-1)^n\frac{(k-\alpha_1)_n(k-\alpha_2)_n}{2^nn!}t^{n+k}.
\end{equation*}
Thus, $e^{\tilde{J}(\alpha_1,\alpha_2+1)}e^{-\tilde{J}(\alpha_1,\alpha_2)}t^k$ 
is computed as
\begin{multline}
e^{\tilde{J}(\alpha_1,\alpha_2+1)}e^{-\tilde{J}(\alpha_1,\alpha_2)}t^k
=\\
\sum_{m=0}^{\infty}\sum_{n=0}^{\infty}
(-1)^n\frac{(k-\alpha_1)_n(k-\alpha_2)_n(n+k-\alpha_1)_m(n+k-\alpha_2-1)_m}{2^{n+m}n!m!} t^{n+m+k}. \label{eqn:eJeJ}
\end{multline}
By the property $(x)_\ell(x+r)_k = (x)_{r+k}$,
the expression \eqref{eqn:eJeJ} can be reduced to
\begin{align}
\eqref{eqn:eJeJ}
&=\sum_{m=0}^{\infty}\sum_{n=0}^{\infty}
(-1)^n\frac{(k-\alpha_1)_n(k-\alpha_2)_n(n+k-\alpha_1)_m(n+k-\alpha_2-1)_m}{2^{n+m}n!m!} t^{n+m+k} \nonumber \\[3pt]
&=t^k \sum_{\ell=0}^\infty \frac{(k-\alpha_1)_\ell}{2^\ell}t^\ell
\left(\sum_{n+m=\ell} (-1)^n\frac{(k-\alpha_2)_n(n+k-\alpha_2-1)_m}{n!m!}\right)\nonumber\\[3pt]
&=\frac{t^k}{k-\alpha_2-1}
 \sum_{\ell=0}^\infty \frac{(k-\alpha_1)_\ell(k-\alpha_2-1)_\ell}{2^\ell}t^\ell
\left(\sum_{n+m=\ell} \frac{(-1)^n}{n!m!}(n+k-\alpha_2-1)\right)\nonumber\\[3pt]
&=\frac{t^k}{k-\alpha_2-1}
 \sum_{\ell=0}^\infty \frac{(k-\alpha_1)_\ell(k-\alpha_2-1)_\ell}{2^\ell \ell!}t^\ell
\left(\sum_{n=0}^\ell(-1)^n\binom{\ell}{n}(n+k-\alpha_2-1)\right).\label{eqn:eJeJ2}
\end{align}
For the inner sum $\sum_{n=0}^\ell(-1)^n\binom{\ell}{n}(n+k-\alpha_2-1)$ 
in \eqref{eqn:eJeJ2}, observe that 
\begin{alignat*}{2}
\sum_{n=0}^\ell(-1)^n\binom{\ell}{n}
&= (1-x)^\ell\big\vert_{x=1} 
&&= 
\begin{cases}
1 & \text{if $\ell = 0$},\\
0 & \text{otherwise},
\end{cases}\\[5pt]
\sum_{n=1}^\ell(-1)^n\binom{\ell}{n}n
&= \frac{d}{dx}\bigg\vert_{x=1}(1-x)^\ell 
&&= 
\begin{cases}
-1 & \text{if $\ell = 1$},\\
0 & \text{otherwise}.
\end{cases}\\[3pt]
\end{alignat*}
Thus, the sum $\sum_{n=0}^\ell(-1)^n\binom{\ell}{n}(n+k-\alpha_2-1)$ is 
evaluated to
\begin{align*}
\sum_{n=0}^\ell(-1)^n\binom{\ell}{n}(n+k-\alpha_2-1)
=
\begin{cases}
(k-\alpha_2-1) & \text{if $\ell=0$},\\[5pt]
-1 & \text{if $\ell=1$},\\[3pt]
0 & \text{otherwise}.
\end{cases}
\end{align*}
Hence, we have 
\begin{align*}
\eqref{eqn:eJeJ2}
&=\frac{t^k}{k-\alpha_2-1}
\sum_{\ell=0}^\infty \frac{(k-\alpha_1)_\ell(k-\alpha_2-1)_\ell}{2^\ell \ell!}t^\ell
\left(\sum_{n=0}^\ell(-1)^n\binom{\ell}{n}(n+k-\alpha_2-1)\right)\\[5pt]
&=\frac{t^k}{k-\alpha_2-1}
\big( (k-\alpha_2-1)-\tfrac{t}{2} (k-\alpha_2-1)(k-\alpha_1)\big)\\[5pt]
&=\big(1+\tfrac{t}{2}(\alpha_1-k)\big)t^k.
\end{align*}
This completes the proof.
\end{proof}

\begin{proposition}\label{prop:eDsec9}
For any $a\in \C$, 
as elements of $\End_\CC(\Pol_{\min(\alpha)}[t])$, we have
\begin{equation*}
(1+\tfrac{t}{2}(\alpha_1-\vartheta_t))\Ad(e^{\wJ(\alpha)})\calP^{(\alpha)}(a)
={}_2\calF_0(-\alpha_1,\tfrac{a-\alpha_2}{2};t).
\end{equation*}
\end{proposition}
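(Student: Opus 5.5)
The plan is to reduce the statement to Proposition \ref{prop:AdL} through the factorization \eqref{eqn:LPapx1} and Lemma \ref{lem:eDD}. Write $\alpha^+:=(\alpha_1,\alpha_2+1)$. By \eqref{eqn:LPapx1} with $a$ in place of $a_{0,1}$ (that identity holds for any $a$, being a direct comparison of \eqref{eqn:Lalpha} and \eqref{eqn:Palpha}), we have
\begin{equation*}
\calL^{(\alpha^+)}(a,\alpha_2)=\calP^{(\alpha)}(a)\,(\alpha_2+1-\vartheta_t).
\end{equation*}
Conjugating by $e^{\wJ(\alpha^+)}$ and applying Proposition \ref{prop:AdL} with $b=\alpha_2$ gives
\begin{equation*}
-\Ad(e^{\wJ(\alpha^+)})\calL^{(\alpha^+)}(a,\alpha_2)
={}_3\calF_1(-\alpha_1,-\alpha_2-1,\tfrac{a-\alpha_2}{2};-\alpha_2;t)
=\vartheta_t(\vartheta_t-\alpha_2-1)-t(\vartheta_t-\alpha_1)(\vartheta_t-\alpha_2-1)(\vartheta_t+\tfrac{a-\alpha_2}{2}).
\end{equation*}
Since $\vartheta_t$ commutes with $(\vartheta_t-\alpha_2-1)$ and $t(\vartheta_t-\alpha_2-1)=(\vartheta_t-\alpha_2-2)t$, this factors on the right as $\big[\vartheta_t-t(\vartheta_t-\alpha_1)(\vartheta_t+\tfrac{a-\alpha_2}{2})\big](\vartheta_t-\alpha_2-1)$, i.e.\ $-{}_2\calF_0(-\alpha_1,\tfrac{a-\alpha_2}{2};t)\,(\alpha_2+1-\vartheta_t)$ up to the sign conventions.

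On the other side, I will compute $\Ad(e^{\wJ(\alpha^+)})(\alpha_2+1-\vartheta_t)$ by Lemma \ref{lem:eD}: it equals $\alpha_2+1-\vartheta_t+\wJ(\alpha^+)$. The key observation is that on $\Pol[t]$, $\wJ(\alpha^+)=\tfrac{t}{2}(\vartheta_t-\alpha_1)(\vartheta_t-\alpha_2-1)$, so $\alpha_2+1-\vartheta_t+\wJ(\alpha^+)=\big(1+\tfrac{t}{2}(\alpha_1-\vartheta_t)\big)(\alpha_2+1-\vartheta_t)$. Then, writing $e^{\wJ(\alpha^+)}=\big(e^{\wJ(\alpha^+)}e^{-\wJ(\alpha)}\big)e^{\wJ(\alpha)}$ and using Lemma \ref{lem:eDD},
\begin{equation*}
\Ad(e^{\wJ(\alpha^+)})\big(\calP^{(\alpha)}(a)(\alpha_2+1-\vartheta_t)\big)
=\big(1+\tfrac{t}{2}(\alpha_1-\vartheta_t)\big)\Ad(e^{\wJ(\alpha)})\calP^{(\alpha)}(a)\,\big(1+\tfrac{t}{2}(\alpha_1-\vartheta_t)\big)^{-1}\,\Ad(e^{\wJ(\alpha^+)})(\alpha_2+1-\vartheta_t),
\end{equation*}
and the last two factors combine to $(\alpha_2+1-\vartheta_t)$. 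Comparing both expressions and cancelling the invertible right factor $(\alpha_2+1-\vartheta_t)$ (invertible on $\Pol_{\min(\alpha)}[t]$ as in the proof of Proposition \ref{prop:SolD1}) yields the claimed identity, after fixing the overall sign.

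The main obstacle is bookkeeping of domains: $\wJ(\alpha^+)$ and $\wJ(\alpha)$ act on $\Pol_{\min(\alpha^+)}[t]$ versus $\Pol_{\min(\alpha)}[t]$, and $1+\tfrac t2(\alpha_1-\vartheta_t)$ raises degree, so the identity must be read either on all of $\Pol[t]$ (where the exponentials are still well-defined as locally finite? — not obviously, so I would instead verify degreewise on monomials $t^k$, $k\le\min(\alpha)$, extending Lemma \ref{lem:eDD}) or with explicit truncation. If the conjugation manipulation proves awkward, the fallback is a direct monomial check: apply both sides to $t^k$ using \eqref{eqn:eJk} and Lemma \ref{lem:eD}, and compare coefficients of $t^{k},t^{k+1},t^{k+2}$.
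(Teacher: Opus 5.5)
Your proposal is correct and is essentially the paper's own proof. Like the paper, you factor $\calL^{(\alpha_1,\alpha_2+1)}(a,\alpha_2)=\calP^{(\alpha)}(a)(\alpha_2+1-\vartheta_t)$, conjugate by $e^{\wJ(\alpha_1,\alpha_2+1)}$, evaluate one side with Proposition~\ref{prop:AdL} and the other with Lemmas~\ref{lem:eD} and~\ref{lem:eDD}, and cancel the invertible right factor $\alpha_2+1-\vartheta_t$; the paper only differs cosmetically, avoiding $K^{-1}$ by moving $e^{\wJ(\alpha_1,\alpha_2+1)}e^{-\wJ(\alpha)}$ directly across the conjugated $\calP^{(\alpha)}(a)$.

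The domain issue you flag is left implicit in the paper as well, and no fallback coefficient comparison is needed. It disappears if you work on $\Pol_{\alpha_1}[t]\supseteq\Pol_{\min(\alpha)}[t]$. A one-line check on monomials shows this space is preserved by $\calP^{(\alpha)}(a)$, ${}_2\calF_0(-\alpha_1,\tfrac{a-\alpha_2}{2};t)$, $1+\tfrac{t}{2}(\alpha_1-\vartheta_t)$, $\wJ(\alpha)$ and $\wJ(\alpha_1,\alpha_2+1)$. The last two annihilate $t^{\alpha_1}$ and are nilpotent there, so both exponentials are finite sums. The monomial computation of Lemma~\ref{lem:eDD} extends to all $t^k$ with $k\le\alpha_1$. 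Finally, you cancel $\alpha_2+1-\vartheta_t$ only after restricting to $\Pol_{\min(\alpha)}[t]$, where it is bijective.
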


\begin{proof}
As in \eqref{eqn:LPapx1}, we have 
\begin{equation*}
\calP^{(\alpha)}(a)(\alpha_2+1-\vartheta_t)
=
\calL^{(\alpha_1,\alpha_2+1)}(a,\alpha_2),
\end{equation*}
where $\calL^{(\alpha_1,\alpha_2)}(a,b)$ is the differential operator defined in 
\eqref{eqn:Lalpha}. By applying $\Ad(e^{\widetilde{J}(\alpha_1,\alpha_2+1)})$
to both sides of the identity, we have 
\begin{equation}\label{eqn:AdLP}
\Ad(e^{\widetilde{J}(\alpha_1,\alpha_2+1)}) 
\big(\calP^{(\alpha)}(a)(\alpha_2+1-\vartheta_t)\big)
=
\Ad(e^{\widetilde{J}(\alpha_1,\alpha_2+1)})
\calL^{(\alpha_1,\alpha_2+1)}(a,\alpha_2).
\end{equation}
It follows from Proposition \ref{prop:AdL} that the right-hand side \eqref{eqn:AdLP}
evaluates to 
\begin{align}
\Ad(e^{\widetilde{J}(\alpha_1,\alpha_2+1)})\calL^{(\alpha_1,\alpha_2+1)}(a,\alpha_2)
&=
- {}_3\calF_1(-\alpha_1,-\alpha_2-1,\tfrac{a-\alpha_2}{2};-\alpha_2;t) \nonumber\\[3pt]
&=
{}_2\calF_0(-\alpha_1,\tfrac{a-\alpha_2}{2};t)(\alpha_2+1-\vartheta_t).
\label{eqn:Ad2F0}
\end{align}
On the other hand, the left-hand side of \eqref{eqn:AdLP} can be given as
\begin{align}
&\Ad(e^{\widetilde{J}(\alpha_1,\alpha_2+1)}) 
\big(\calP^{(\alpha)}(a)(\alpha_2+1-\vartheta_t)\big) \nonumber\\
&
\hspace{100pt}
=
\Big(\Ad(e^{\tilde{J}(\alpha_1,\alpha_2+1)})\calP^{(\alpha)}(a)\Big)\Big(\Ad(e^{\tilde{J}(\alpha_1,\alpha_2+1)})(\alpha_2+1-\vartheta_t)\Big),\label{eqn:AdLP2}
\end{align}
where, by Lemma \ref{lem:eD} and \ref{lem:eDD}, 
the expression $\Ad(e^{\tilde{J}(\alpha_1,\alpha_2+1)})(\alpha_2+1-\vartheta_t)$
simplifies to
\begin{align*}
\Ad(e^{\tilde{J}(\alpha_1,\alpha_2+1)})(\alpha_2+1-\vartheta_t)&=\alpha_2+1-\vartheta_t+\tilde{J}(\alpha_1,\alpha_2+1)\\
&=(1+\tfrac{t}{2}(\alpha_1-\vartheta_t))(\alpha_2+1-\vartheta_t)\\
&=e^{\tilde{J}(\alpha_1,\alpha_2+1)}e^{-\tilde{J}(\alpha_1,\alpha_2)}(\alpha_2+1-\vartheta_t).
\end{align*}
Therefore, the right-hand side of \eqref{eqn:AdLP2} evaluates to
\begin{align}
&\Big(\Ad(e^{\tilde{J}(\alpha_1,\alpha_2+1)})\calP^{(\alpha)}(a)\Big)
\Big(\Ad(e^{\tilde{J}(\alpha_1,\alpha_2+1)})(\alpha_2+1-\vartheta_t)\Big)
\nonumber\\[3pt]
&=
\Big(\Ad(e^{\tilde{J}(\alpha_1,\alpha_2+1)})\calP^{(\alpha)}(a)\Big)
e^{\tilde{J}(\alpha_1,\alpha_2+1)}e^{-\tilde{J}(\alpha_1,\alpha_2)}(\alpha_2+1-\vartheta_t)\nonumber\\[3pt]
&=e^{\tilde{J}(\alpha_1,\alpha_2+1)}e^{-\tilde{J}(\alpha_1,\alpha_2)}
\Big(\Ad(e^{\tilde{J}(\alpha_1,\alpha_2)})\calP^{(\alpha)}(a)\Big)(\alpha_2+1-\vartheta_t)
\nonumber \\[3pt]
&=(1+\tfrac{t}{2}(\alpha_1-\vartheta_t))\Big(\Ad(e^{\tilde{J}(\alpha_1,\alpha_2)})\calP^{(\alpha)}(a)\Big)(\alpha_2+1-\vartheta_t),\label{eqn:AdLP3}
\end{align}
where Lemma \ref{lem:eDD} is applied from line three to line four.
By \eqref{eqn:Ad2F0} and \eqref{eqn:AdLP3}, we have 
\begin{equation*}
(1+\tfrac{t}{2}(\alpha_1-\vartheta_t))\Big(\Ad(e^{\tilde{J}(\alpha_1,\alpha_2)})\calP^{(\alpha)}(a)\Big)(\alpha_2+1-\vartheta_t)
={}_2\calF_0(-\alpha_1,\tfrac{a-\alpha_2}{2};t)(\alpha_2+1-\vartheta_t).
\end{equation*}
As $(\alpha_2+1-\vartheta_t) \in GL(\Pol_{\min(\alpha)}[t])$, 
the inverse $(\alpha_2+1-\vartheta_t)^{-1}$ is well-defined in 
$\End_{\CC}(\Pol_{\min(\alpha)})$.
Then the desired identity is obtained by 
applying $(\alpha_2+1-\vartheta_t)^{-1}$  from the right to both sides.
\end{proof}

We define
\begin{equation*}
\Sol^{(\alpha)}_{2\calF0}(a)
:=\{q(t) \in \Pol_{\min{(\alpha)}}[t]: 
{}_2\calF_0(-\alpha_1,\tfrac{a-\alpha_2}{2};t)q(t)=0\}.
\end{equation*}

\begin{corollary}\label{cor:Sol2F02}
We have 
\begin{equation*}
\wSol_{\Ad(e^{\wJ(\alpha)}) \calP,\, 1}^{(\alpha)}(\xi,\lambda) 
= \Sol^{(\alpha)}_{2\calF0}(a_{0,1}).
\end{equation*}
\end{corollary}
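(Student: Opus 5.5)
The statement asserts that $\wSol_{\Ad(e^{\wJ(\alpha)}) \calP,\, 1}^{(\alpha)}(\xi,\lambda)$, the kernel of $\Ad(e^{\wJ(\alpha)})\calP^{(\alpha)}(a_{0,1})$ on $\Pol_{\min(\alpha)}[t]$, coincides with the kernel of ${}_2\calF_0(-\alpha_1,\tfrac{a_{0,1}-\alpha_2}{2};t)$ on the same space. By Proposition \ref{prop:eDsec9} with $a=a_{0,1}$, the second operator is the product
\[
{}_2\calF_0(-\alpha_1,\tfrac{a_{0,1}-\alpha_2}{2};t)=\bigl(1+\tfrac{t}{2}(\alpha_1-\vartheta_t)\bigr)\,\Ad(e^{\wJ(\alpha)})\calP^{(\alpha)}(a_{0,1})
\]
as endomorphisms of $\Pol_{\min(\alpha)}[t]$. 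The two kernels therefore agree as soon as the left factor $1+\tfrac{t}{2}(\alpha_1-\vartheta_t)$ is injective on the relevant space, and the plan reduces to verifying that.

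The inclusion $\subset$ is immediate from the factorization: if $\Ad(e^{\wJ(\alpha)})\calP^{(\alpha)}(a_{0,1})q=0$, then applying the left factor gives ${}_2\calF_0(\cdots)q=0$.

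For $\supset$, let $r:=\Ad(e^{\wJ(\alpha)})\calP^{(\alpha)}(a_{0,1})q\in\Pol_{\min(\alpha)}[t]$ and suppose $\bigl(1+\tfrac{t}{2}(\alpha_1-\vartheta_t)\bigr)r=0$. The operator acts on $t^k$ by $t^k\mapsto t^k+\tfrac{\alpha_1-k}{2}t^{k+1}$, so it is unipotent upper triangular in the monomial basis: $1$ plus a degree-raising (nilpotent) part. Comparing lowest-degree terms, if $r\neq0$ with lowest term $c\,t^m$, the image has lowest term $c\,t^m\neq0$. Hence $r=0$, and the factor is injective on all of $\Pol[t]$.

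Injectivity alone is not quite the whole issue, because the factorization is stated on $\Pol_{\min(\alpha)}[t]$ while the left factor need not preserve that space. Lemma \ref{lem:eDD}, however, identifies $1+\tfrac{t}{2}(\alpha_1-\vartheta_t)$ with $e^{\wJ(\alpha_1,\alpha_2+1)}e^{-\wJ(\alpha_1,\alpha_2)}$, which is an element of $GL(\Pol_{\min(\alpha)}[t])$. Invertibility then gives the kernel equality directly, and this well-definedness point is the only step that needs care. Both parts of the argument hold for every $(\xi,\eta;\lambda,\nu)\in L_1(\alpha)$.
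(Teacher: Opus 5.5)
This is correct and is the paper's argument: you factor ${}_2\calF_0(-\alpha_1,\tfrac{a_{0,1}-\alpha_2}{2};t)$ via Proposition~\ref{prop:eDsec9} and cancel the left factor $1+\tfrac{t}{2}(\alpha_1-\vartheta_t)$. Your lowest-degree-term proof that this factor is injective on all of $\Pol[t]$ is in fact the right justification; the paper instead cites $\Ker(\alpha_2+1-\vartheta_t)=\{0\}$, which is not what this step uses.

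You should drop your final paragraph. For $\alpha_1>\alpha_2$ the factor sends $t^{\alpha_2}$ to $t^{\alpha_2}+\tfrac{\alpha_1-\alpha_2}{2}t^{\alpha_2+1}\notin\Pol_{\alpha_2}[t]$, so it is not an element of $GL(\Pol_{\min(\alpha)}[t])$, despite the wording of Lemma~\ref{lem:eDD}. Similarly, $r$ need not lie in $\Pol_{\min(\alpha)}[t]$. Neither point matters, because injectivity on $\Pol[t]$ already gives the kernel equality.
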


\begin{proof}
As $\Ker\big( (\alpha_2+1-\vartheta_t)\vert_{\Pol_{\min(\alpha)}}\big)=\{0\}$,
it follows from Proposition \ref{prop:eDsec9} that
the following conditions on $q(t) \in \Pol_{\min(\alpha)}[t]$ are equivalent:
\vspace{3pt}
\begin{enumerate}[label=\normalfont{(\roman*)}]
\item ${}_2\calF_0(-\alpha_1,\tfrac{a_{0,1}-\alpha_2}{2};t)q(t)=0$;
\vspace{2pt}
\item 
$(1+\tfrac{t}{2}(\alpha_1-\vartheta_t))\Ad(e^{\wJ(\alpha)})\calP^{(\alpha)}(a_{0,1})q(t)=0$;
\vspace{3pt}
\item $\Ad(e^{\wJ(\alpha)})\calP^{(\alpha)}(a_{0.1})q(t)=0$,
\end{enumerate}
which concludes the proposed equality.
\end{proof}

We are now ready to give an alternative proof of Corollary \ref{cor:Sol2F0}.
Observe that, for any $a\in \C$,  we have 
\begin{equation*}
\Sol^{(\alpha)}_{2\calF0}(a) \subseteq \C 
{}_2F_0[-\alpha_1, \frac{a-\alpha_2}{2};t].
\end{equation*}
The equality holds if and only if 
${}_2F_0[-\alpha_1, \frac{a-\alpha_2}{2};t] \in \Pol_{\min(\alpha)}[t]$.

\begin{proof}[Alternative proof of Corollary \ref{cor:Sol2F0}]
We wish to show \eqref{eqn:CorSol}. 
By a direct observation on the generalized hypergeometric polynomial
${}_2F_0[-\alpha_1, \frac{a_{0,1}-\alpha_2}{2};t]$ that 
\begin{equation}\label{eqn:CorSol2}
\Sol^{(\alpha)}_{2\calF0}(a_{0,1})
=
\begin{cases}
\CC\, 
{}_2F_0[-\alpha_1,\tfrac{a_{0,1}-\alpha_2}{2};t]
& 
\textnormal{if (A1) or (A2) holds,}\\[3pt]
\{0\} & \textnormal{otherwise,}
\end{cases}
\end{equation}
Now Corollary \ref{cor:Sol2F02} and \eqref{eqn:CorSol2}
conclude \eqref{eqn:CorSol}.
\end{proof}

%%%%%%%%%%%%%%%%%%%%%%%%%%%%%%%%%%%%%%%%%

\end{appendices}

%%%%%%%%%%%%%%%%%%%%%%%%%%%%%%%%%%%%%%%%%
\noindent
\textbf{Acknowledgments.}
The first author was supported by the Carlsberg Foundation (grant no. CF24-045).
The second author was partially supported by 
Grant-in-Aid for Scientific Research(C) (JP22K03362 and JP26K06853) and
the third author was supported by Grant-in-Aid for 
JSPS International Research Fellows (JP24KF0075).

%%%%%%%%%%%%%%%%%%%%%%%%%%%%%%%%%%%%%%%%%
\bibliographystyle{amsplain}

\bibliography{bibdb}

%%%%%%%%%%%%%%%%%%%%%%%%%%%%%%%%%%%%%%%%%

\end{document}